\title[]{Real submanifolds of maximum complex tangent space at a CR singular point}
\author[]{Xianghong Gong}
\address{Department of Mathematics,
 University of Wisconsin, Madison, WI 53706, U.S.A.}
 \email{gong@math.wisc.edu}
\author{Laurent Stolovitch}
\address{CNRS and Laboratoire J.-A. Dieudonn\'e
U.M.R. 6621, Universit\'e de Nice - Sophia Antipolis, Parc Valrose
06108 Nice Cedex 02, France.}
\email{stolo@unice.fr}
\thanks{Research of L. Stolovitch was supported by ANR grant ``ANR-10-BLAN 0102'' for the project DynPDE}
 \keywords{ Local analytic geometry, CR singularity, normal form, integrability, reversible mapping, linearization, small divisors, hull of holomorphy}
 \subjclass[2010]{32V40, 37F50, 32S05, 37G05}
\newtheorem{thm}{Theorem}[section]
\newtheorem{cor}[thm]{Corollary}
\newtheorem{prop}[thm]{Proposition}
\newtheorem{lemma}[thm]{Lemma}
\newcommand{\diag}{\operatorname{diag}}
\theoremstyle{definition}
\newtheorem{defn}[thm]{Definition}
\newtheorem{exmp}[thm]{Example}
\newtheorem{rem}[thm]{Remark}
\renewcommand{\th}[1]{\begin{thm}\label{#1}}
\newcommand{\eth}{\end{thm}}
\newcommand{\co}[1]{\begin{cor}\label{#1}}
\newcommand{\eco}{\end{cor}}
\renewcommand{\le}[1]{\begin{lemma}\label{#1}}
\newcommand{\ele}{\end{lemma}}
\newcommand{\pr}[1]{\begin{prop}\label{#1}}
\newcommand{\epr}{\end{prop}}
\newcommand{\ga}{\begin{gather}}
\newcommand{\ega}{\end{gather}}
\newcommand{\gan}{\begin{gather*}}
\newcommand{\egan}{\end{gather*}}
\newcommand{\al}{\begin{align}}
\newcommand{\eal}{\end{align}}
\newcommand{\aln}{\begin{align*}}
\newcommand{\ealn}{\end{align*}}
\newcommand{\eq}[1]{\begin{equation}\label{#1}}
\newcommand{\eeq}{\end{equation}}
\newcommand{\ci}{~\cite}
\newcommand{\f}[2]{\frac{#1}{#2}}
\newcommand{\fix}{\operatorname{Fix}}
\newcommand{\cc}{{\bf C}}
\newcommand{\nn}{{\bf N}}
\newcommand{\zz}{{\bf Z}}
\newcommand{\rr}{{\bf R}}
\newcommand{\qq}{{\bf Q}}
\newcommand{\ov}{\overline}
\newcommand{\ord}{\operatorname{ord}}
\newcommand{\id}{\operatorname{I}}
\newcommand{\RE}{\operatorname{Re}}
\newcommand{\IM}{\operatorname{Im}}
\newcommand{\cL}{\mathcal}
\newcommand{\I}{\operatorname{I}}
\newcommand{\all}{\alpha}
\newcommand{\gaa}{\gamma}
\newcommand{\del}{\delta}
\newcommand{\Del}{\Delta}
\newcommand{\var}{\varphi}
\newcommand{\e}{\epsilon}
\newcommand{\om}{\omega}
\newcommand{\Om}{\Omega}
\newcommand{\la}{\lambda}
\newcommand{\pd}{\partial}
\newcommand{\yt}{\frac{1}{2}}
\newcommand{\re}[1]{(\ref{#1})}
\newcommand{\rea}[1]{$(\ref{#1})$}
\newcommand{\rl}[1]{Lemma~\ref{#1}}
\newcommand{\nrc}[1]{Corollary~\ref{#1}}
\newcommand{\rp}[1]{Proposition~\ref{#1}}
\newcommand{\rt}[1]{Theorem~\ref{#1}}
\newcommand{\rrem}[1]{Remark~\ref{#1}}
\newcommand{\rla}[1]{Lemma~$\ref{#1}$}
\newcommand{\rpa}[1]{Proposition~$\ref{#1}$}
\newcommand{\rta}[1]{Theorem~$\ref{#1}$}
\newcounter{pp}
\newcommand{\bpp}{\begin{list}{$\hspace{-1em}\alph{pp})$}{\usecounter{pp}}}
\newcommand{\epp}{\end{list}}
\newcounter{ppp}
\newcommand{\bppp}{\begin{list}{$\hspace{-1em}(\roman{ppp})$}{\usecounter{ppp}}}
\newcommand{\eppp}{\end{list}}
\def\beq{\begin{equation}}
\def\eeq{\end{equation}}
\begin{document}

 
\begin{abstract} We study a germ of  real analytic $n$-dimensional submanifold  of ${\mathbf C}^n$ that has a complex tangent space of maximal dimension at a CR singularity. 
Under the condition that its complexification admits the maximum number of deck transformations, 
we study its transformation to a normal form under the action of local (possibly formal) biholomorphisms at the singularity. 
We first conjugate formally its associated reversible map $\sigma$ to suitable normal forms and show that all these normal forms can be divergent. If the singularity is {\it abelian}, we show, under some assumptions on the linear part of $\sigma$ at the singularity, that the real submanifold is holomorphically equivalent to an analytic normal form. 
We also show that if a real submanifold is formally equivalent 
 to a quadric,  it is actually holomorphically equivalent to it,  if a small divisors  condition is satisfied. Finally, we prove that, in general, there exists a complex submanifold of positive dimension in ${\mathbf C}^n$  that intersects 
 a real submanifold along two totally and real analytic submanifolds that intersect transversally at  a   CR singularity of the {\it complex type}. 
\end{abstract}

\date{\today}
 \maketitle

\tableofcontents

\setcounter{section}{0}
\setcounter{thm}{0}\setcounter{equation}{0}
\section{Introduction and main results}

\subsection{Introduction}
We are concerned with the local holomorphic invariants of a real analytic submanifold $M$ in $\cc^n$.
The tangent space of $M$ at a point $x$ contains a maximal complex subspace of dimension $d_{x}$. When this dimension is constant,
$M$ is called a  Cauchy-Riemann (CR) submanifold. The CR  submanifolds have been extensively studied since E.~Cartan. 
 For the analytic real hypersurfaces in $\cc^n$ with a non-degenerate Levi-form, 
 the normal form problem has a complete 
  theory   achieved through the works of E.~Cartan~\cite{Ca32}, \cite{Ca33},  Tanaka~\cite{Ta62}, and Chern-Moser~\cite{chern-moser}.  In another direction, the  relations between formal and holomorphic equivalences
   of real analytic hypersurfaces have been investigated by  Baouendi-Ebenfelt-Rothschild ~\cite{BER97}, ~\cite{BER00}, Baouendi-Mir-Rothschild~\cite{BMR02}, Juhlin-Lamel~\cite{JL13}, where  positive results were obtained. 
 In a recent preprint, Kossovskiy and  Shafikov~\cite{KS13} showed that there are real analytic real hypersurfaces
 which are formally but not holomorphically equivalent.    
    
We say that a point $x_0$ in
 a  real submanifold $M$ in $\cc^n$ is a complex tangent, or a CR singularity, 
 if the complex tangent spaces $T_xM\cap J_xT_xM$  do not 
have a constant dimension in any neighborhood of $x_0$. A real submanifold with a CR singularity must have codimension at least $2$. 
The study of real submanifolds with CR singularities was initiated by E.~Bishop in his pioneering work~\cite{Bi65},
when the complex tangent space of $M$ at a CR singularity is minimal, that is exactly one-dimensional. 
 The very elementary models of this kind of manifolds are   classified as  certain quadrics which depend on one    non-negative number, 
the Bishop invariant. 
They  are  the Bishop quadrics, given by
$$
Q\subset\cc^2\colon z_2=|z_1|^2+\gaa(z_1^2+\ov z_1^2), \quad 0\leq\gaa<\infty.
$$
The origin is a complex tangent which is said to be {\it elliptic} if $0\leq\gaa<1/2$,  {\it parabolic} if $\gaa=1/2$, or {\it hyperbolic} if $\gaa>1/2$. 

In ~\cite{MW83}, Moser and Webster studied the normal form problem of  a real analytic surface 
$M$ in $\cc^2$
which is the higher order perturbation of $Q$. They showed that
 when $0<\gaa<1/2$, $M$ is holomorphically equivalent to a normal form which
is  an algebraic surface that depends only on $\gaa$ and two discrete invariants. They also constructed a 
formal normal form of $M$ when the origin is a    non-exceptional hyperbolic complex tangent point;
 although the normal form is still convergent, they showed that the normalizing transformation is divergent in general for the hyperbolic case. 
We  mention that  the Moser-Webster normal form  theory, as in Bishop's work,
actually deals with an $n$-dimensional 
real submanifold $M$ in $\cc^n$,  of which 
the complex tangent space has   (minimum) dimension $1$
at a CR singularity. 

The main purpose of this work is to investigate an  $n$-dimensional real analytic  submanifold $M$ in $\cc^n$ of which the complex tangent space has the 
{\it largest}
possible 
dimension at a  given CR singularity. The dimension must be $p=n/2$. Therefore,  $n=2p$ is even. We are interested in 
the geometry,  the analytic classification, and  the normal form problem of such real analytic manifolds. 

In  suitable holomorphic coordinates, a $2p$-dimensional real analytic submanifold $M$ in $\cc^{2p}$
 that has a complex tangent space of maximum dimension at the origin  is given by 
\eq{mzpjintr}
M\colon z_{p+j}=E_j(z',\ov z'),
\quad 1\leq j\leq p,
\eeq
where $z'=(z_1,\ldots, z_p)$ and
$$
E_j(z',\ov{z'})=h_j(z',\ov z')+q_j(\ov z') + O(|(z',\ov z')|^3). 
$$
Moreover,  each  $h_j(z',\ov z')$ is a homogeneous quadratic polynomial in $z',\ov z'$ without holomorphic or anti-holomorphic terms,
  and each $q_j(\ov z')$ is a homogeneous quadratic polynomial in $\ov z'$. 
  One of our goals is to seek suitable normal forms 
  of perturbations of quadrics at the CR singularity (the origin). 

The study of these kind of real submanifolds, with $p>1$, was initiated in \cite{St07} by the second-named author. 
 
 \subsection{Basic invariants}
To study $M$, we consider its complexification in $\cc^{2p}\times\cc^{2p}$ defined by
\begin{equation} 
{\mathcal M}\colon
\begin{cases}
z_{p+i} = E_{i}(z',w'), & i=1,\ldots, p,
\\
w_{p+i} = \bar E_i(w',z'),& i=1,\ldots, p.\\
\end{cases}
\nonumber
\end{equation}
 It is a complex submanifold of complex
dimension $2p$ with coordinates $(z',w')\in\cc^{2p}$.  Let $\pi_1,\pi_2$ be the restrictions of the projections $(z,w)\to z$
and   $(z,w)\to w$ to $\cL M$, respectively.
Note that $\pi_2=\rho_0\pi_1\rho_0$, where  $\rho_0$ is the restriction to $\cL M$ of the anti-holomorphic involution $(z,w)\to(\ov w,\ov z)$. 

  Our basic assumption is the following condition.

\medskip
\noindent
{\bf Condition B.}  $q(z')=(q_1(z'),\ldots, q_p(z'))$ satisfies $q^{-1}(0)=\{0\}.$
\medskip

 Let us first describe the significance of condition B. 
When $p=1$ this corresponds to the case that the Bishop invariant $\gaa$ of $M$
 at the origin does not vanish.  
  When $\gaa=0$,  Moser \cite{moser-zero} obtained a  formal  normal form that is still subject to
  further  formal changes of coordinates. 
In \cite{HY09},  Huang and Yin obtained a formal normal form with a complete set of  formal holomorphic invariants of $M$ when  $\gaa=0$. 
   They used their formal normal form
 to show that  two such real analytic surfaces are holomorphically equivalent 
  if and only if they have the same formal normal form. 
    The formal normal forms for co-dimension two real submanifolds  in $\cc^n$ have been further studied  
  by Huang-Yin~\cite{HY12} and Burcea~\cite{Bu13}.
 Note that by a rapid iteration method,  Coffman~\cite{Co06} showed that  any  
 $m$ dimensional real analytic submanifold in $\cc^n$ of  one-dimensional complex tangent
 space at  a  CR singularity satisfying certain non-degeneracy conditions 
 is locally holomorphically equivalent to a unique algebraic submanifold,
 provided $2(n+1)/3\leq m<n$.

	
When $M$ is a {\it quadric}, i.e. each $E_j$ in \re{mzpjintr} is a quadratic  polynomial, our basic condition~B
is equivalent to $\pi_1$ being a $2^p$-to-1 branched covering. 
Since $\pi_2=\rho_0\pi_1\rho_0$, then $\pi_2$ is also a $2^p$-to-$1$ branched covering.
We will see that the CR singularities of the real submanifolds are closely connected with these branched coverings and their deck transformations.


We now introduce our main results. Some  of them are analogous to the Moser-Wester theory. We will underline  major differences which arise with $p>1$. 

\subsubsection{Branched covering and deck transformations} 
In section~\ref{secinv}, we study the existence of deck transformations for $\pi_1$. 
We will show that they must be involutions and they
commute pairwise. We show that they form a group of order $2^k$ for some $0\leq k\leq p$. This is a major difference between the real submanifolds with  one  dimensional complex tangent space at a 
CR singularity
and the ones with maximum complex tangent space, when $p>1$.  Indeed, we recall that 
in  the Moser-Webster theory, the branched covering $\pi_1$ is $2$-to-$1$ and consequently the group of deck transformations of $\pi_1$ has order $2$.
The group is then generated by a unique involution $\tau_1$. 

In this paper, we will focus on the case where the group of deck transformations of $\pi_1$ has the maximum order $2^p$.  Thus, we will
impose the following condition.

\medskip
\noindent
{\bf Condition D.}  {\it $M$ satisfies condition $B$ and the branched   covering   $\pi_1$ 
of $\cL M$
admits the maximum $2^p$ deck transformations.}
\medskip

Condition D gives rise to two families of commuting involutions $\{\tau_{i1},\ldots, \tau_{i2^p}\}$
intertwined by the anti-holomorphic
involution $\rho_0\colon(z',w')\to(\ov w',\ov z')$ such that $\tau_{2j}=\rho_0\tau_{1j}\rho_0$ $(1\leq j\leq 2^p)$ are deck
transformations of $\pi_2$.    We will call $\{\tau_{11},\ldots, \tau_{12^p},\rho_0\}$ the set of {\it Moser-Webster involutions}. 
We will show that there is a unique set of $p$ generators for the deck transformations  of $\pi_1$, denoted by $\tau_{11},\ldots, \tau_{1p}$, which are characterized by
the property that each $\tau_{1j}$   fixes a hypersurface in $\cL M$ pointwise.  Then
$$\tau_1=\tau_{11}\circ\cdots\circ\tau_{1p}$$
 is the unique deck transformation of which the set of fixed-points has the smallest dimension $p$. 
 Let  $\tau_2=\rho_0\tau_1\rho_0$ and 
  $$
 \sigma=\tau_1\tau_2.
 $$
 Then $\sigma$ is {\it reversible} by $\tau_j$ and $\rho_0$, i.e. 
 $\sigma^{-1}=\tau_j\sigma\tau_j^{-1}$ and $\sigma^{-1}=\rho_0\sigma\rho_0$.

 As in the Moser-Webster theory, the existence of such $2^p$ deck transformations  allows us to transfer the normal form problem for the real submanifolds into the normal form problem for the sets
  of involutions $\{\tau_{11},\ldots,\tau_{1p},
  \rho_0\}$.

In this paper we will make the following assumption. 
 
 \medskip
 \noindent
{\bf Condition E.} {\it   $M$ satisfies condition D and $M$ has distinct eigenvalues, while
the latter means that   $\sigma$ has
  $2p$ distinct eigenvalues.}
  
\medskip

  
Note that the condition excludes the higher dimensional analogous complex tangency of {\it parabolic} type, i.e. of $\gaa=1/2$. 
The normal form problem for the parabolic complex tangents has been studied by Webster~\cite{We92}, and in~\cite{Go96} where the normalization is divergent in general.  In~\cite{AG09}, Ahern and Gong constructed a moduli space for real analytic submanifolds that are formally equivalent to the Bishop quadric with $\gamma=1/2$.

\medskip

 We now introduce our main results. 
 
Our first step is to normalize   $\{\tau_1,\tau_2, 
\rho_0\}$.  When $p=1$, this normalization is   the main step in order to obtain the Moser-Webster normal form; in fact a simple further normalization allows Moser and Webster to achieve a convergent normal form under a suitable non-resonance condition even for the  non-exceptional  hyperbolic complex tangent. 

When $p>1$, we need to carry out a further normalization for $\{\tau_{11},\ldots, \tau_{1p},\rho_0\}$; this is our second step. Here the normalization has a large degree of freedom  as shown by  our  formal and convergence results.
 
In sections $2$ through $7$, we will study the formal normal forms and the relations on the
convergence of normalizations in these two steps. 
Let us first describe main results in these sections.

\subsubsection{Normal forms of quadrics with the maximum  number of deck transformations}

The basic model for quadric manifolds with such a CR singularity is a product of  3 types of quadrics defined by
\begin{gather*}
 Q_{\gamma_e}\subset\cc^2 \colon z_{2}= (z_1+2\gaa_e\ov z_1)^2;\\
 Q_{\gamma_h}\subset\cc^2\colon z_{2}= (z_{1}+2\gaa_{h}\ov z_1)^2;\\
 Q_{\gamma_s}\subset\cc^4\colon z_{3}= (z_1+2\gamma_s\ov z_{2})^2,
\quad z_{4}=( z_{2}+2
(1-\ov\gamma_{s})  \ov z_{1})^{2}.
\end{gather*}
Here
\eq{0ge1}
0<\gamma_e<1/2, \quad
1/2<\gamma_h<\infty, \quad \RE
\gaa_s>1/2, \quad  \IM\gaa_s>0.
\eeq
Note that $Q_{\gaa_e}, Q_{\gaa_h}$ are elliptic and hyperbolic Bishop quadrics, respectively.  Realizing 
a type of pairs
 of involutions  introduced in~\cite{St07}, we will say that the complex tangent of $Q_{\gaa_s}$ at the origin is {\it complex}.  We emphasize that this last type of quadric 
 is new 
as we will show that 
$Q_{\gaa_s}$ is not holomorphically equivalent to a product of two Bishop surfaces. 
A  real submanifold of dimension $n$
 in $\cc^n$ with $n=2p$ that is a product of the above quadrics
 will be called a {\em product of quadrics}, or a {\it product quadric}. 

In section~\ref{secquad}, we study all quadrics which admit the maximum number of deck transformations. For such quadrics,
 all deck transformations are linear. 
Under condition E,  we will first normalize 
$\sigma, \tau_1,\tau_2$ and $\rho_0$
into $\hat S, \hat T_1,\hat T_2$ and  $\rho$ where  
\begin{equation}
\begin{array}{rrclrcl} 
  \hat T_1\colon  &  \xi_j'   &\!\! = \!\!&  \la_j^{-1}\eta_j,  &\quad \eta_j'     &\!\!\! = \!\!\!& \la_j\xi_j, \\ 
  \hat T_2\colon  & \xi_j'    &\!\! = \!\!&  \la_j\eta_j,          &\quad  \eta_j'        &\!\!\! = \!\!\!& \la_j^{-1}\xi_j, \\ 
\hat S\colon       &\xi_j'     &\!\! = \!\!&  \mu_j\xi_j,          &\quad  \eta_j'       &\!\!\! = \!\!\!& \mu_j^{-1}\eta_j 
\end{array}
\nonumber
\end{equation}
with
\eq{muss}\nonumber
 \la_e>1,   \quad |\la_h|=1, \quad |\la_s|>1, \quad\la_{s+s_*}=\ov\la_s^{-1}, \quad \mu_j=\la_j^2.
\eeq
Here $1\leq j\leq p$.  Throughout the paper,   the indices $e,h,s$ have the ranges:  $1\leq e\leq e_*$,
$ e_*<h\leq e_*+h_*$,  $ e_*+h_*<s\leq p-s_*$. Thus $e_*+h_*+2s_*=p$.  
We will call $e_*,h_*, s_*$ 
the numbers
of {\it elliptic, hyperbolic} and {\it complex} components of a product quadric, respectively. 
As in the Moser-Webster theory, at the complex tangent (the origin)  an {\it elliptic} component of a product quadric corresponds 
a  {\it hyperbolic} component of $\hat S$, while a {\it hyperbolic} component of the quadric corresponds an {\it elliptic}
component of $\hat S$. One could identify a {\it complex} component of the quadric with a {\it hyperbolic} (instead of complex)
component of $\hat S$; however, each type of complex tangents exhibits   striking differences  in the formal normal
forms, the convergence of normalizations, and the existence of attached complex submanifolds, as illustrated 
by the  results in this section. 

For the above normal form of $\hat T_1,\hat T_2$ and $\hat S$,  we always normalize the anti-holomorphic involution
$\rho_0$ as
\begin{gather}\label{57rhoz}
\rho\colon\left\{ \begin{array}{lcllcl}
\xi_e' &=&\ov \eta_e, \qquad&\eta_e'&=&\ov\xi_e,\\
\xi_h' &=&\ov \xi_h, & \eta_h'&=&\ov\eta_h,\\
\xi_s' &=&\ov\xi_{s+s_*}, & \eta_s'&=&\ov\eta_{s+s_*},\\
\xi_{s+s_*}'   &=&\ov \xi_s, & \eta_{s+s_*}'&=&\ov\eta_s.
\end{array} \right.
\end{gather}
 With the above normal forms $\hat T_1,\hat T_2,\hat S, \rho$ with $\hat S=\hat T_1\hat T_2$, we will
then normalize the  $\tau_{11},\ldots, \tau_{1p}$ under linear transformations that
commute with $\hat T_1, \hat T_2$, and $\rho$, i.e. the
linear transformations belonging to the {\it centralizer}
of  $\hat T_1,\hat T_2$ and $\rho$. This is a subtle  step.
Instead of normalizing the involutions directly,
we will use the  pairwise commutativity of  $\tau_{11}, \ldots, \tau_{1p}$ to associate to
these $p$ involutions 
a non-singular $p\times p$ matrix $\mathbf B$. The normalization
of $\{\tau_{11}, \ldots,\tau_{1p},
\rho\}$
 is then identified with the normalization of the matrices  $\mathbf B$ under a suitable
equivalence relation.   The latter  is easy to solve. Our normal form of $\{\tau_{11}, \ldots,\tau_{1p},
\rho\}$ is then constructed from the normal forms of $T_1,T_2,\rho$, and the matrix $\mathbf B$.
Following Moser-Webster~\cite{MW83}, we will construct the normal form of the quadrics 
 from the normal form of involutions.

\begin{thm}\label{thm1intr}Let $M$ be a quadratic submanifold defined by
$$
z_{p+j}=h_j(z',\ov z')+q_j(\ov z'),
\quad 1\leq j\leq p.
$$
Suppose that $M$ satisfies condition  $E$, 
i.e. 
  the branched covering 
of  $\pi_1$ of complexification $\cL M$ has   $2^p$  deck transformations 
and  $M$ has $2p$ distinct eigenvalues. 
Then $M$ is holomorphically equivalent to
  \begin{gather}\label{qbgaintro}
  \nonumber
Q_{\mathbf B,\boldsymbol{\gamma}}\colon
z_{p+j}=L_j^2(z',\ov z'), \quad 1\leq j\leq p
\end{gather}
where $(L_1(z',\ov z'), \ldots, L_p(z',\ov z'))^t=\mathbf B(z'-2\boldsymbol{\gamma}\ov z')$,
 $\mathbf B\in GL_p(\cc)$ and 
\begin{gather}\nonumber
\boldsymbol{\gamma}:=\begin{pmatrix}
  \boldsymbol{\gaa}_{e_*}    &\mathbf{0} &\mathbf{0} &\mathbf{0}\\
    \mathbf{0}&\boldsymbol\gaa_{h_*}&\mathbf{0}& \mathbf{0}  \\
   \mathbf{0}&\mathbf{0}&\mathbf{0}&\boldsymbol{\gaa}_{s_*} \\
   \mathbf{0}&\mathbf{0}&{\mathbf I_{s_*}}-\ov{\boldsymbol{\gaa}}_{s_*}&\mathbf{0}
\end{pmatrix}.
\end{gather}
Here $p=e_*+h_*+2s_*$, $\mathbf I_{s_*}$ denotes the $s_*\times s_*$ identity matrix, and
\gan \boldsymbol{\gaa}_{e_*}=\diag(\gamma_1,\ldots, \gamma_{e_*}), \quad
\boldsymbol{\gaa}_{h_*}=\diag(\gamma_{e_*+1},\ldots, \gamma_{e_*+h_*}),\\ 
\boldsymbol{\gaa}_{s_*}=\diag(\gamma_{e_*+h_*+1},\ldots, \gamma_{p-s_*})
\end{gather*}
with  $\gaa_e,\gaa_h,$ and $\gaa_s$ satisfying \rea{0ge1}.
 Moreover, $\mathbf B$  is uniquely determined by   an equivalence
  relation 
 $
 \mathbf B\sim \mathbf C\mathbf B\mathbf R
 $ for suitable non-singular matrices $\mathbf C,\mathbf R$ which have exactly $p$ non-zero entries. 
 %
 %
\end{thm}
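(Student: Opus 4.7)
My plan is to reduce the classification of the quadric $M$ to the classification of its Moser--Webster involutions and then read off the normal form of $M$ from the normalized involutions. Because $M$ is quadratic, every object produced from the complexification $\cL M$ in section~\ref{secinv} is linear. Condition E supplies the $2^p$ linear deck transformations of $\pi_1$, the preferred generators $\tau_{11},\dots,\tau_{1p}$ (each fixing a hypersurface), their product $\tau_1=\tau_{11}\circ\cdots\circ\tau_{1p}$, the conjugate $\tau_2=\rho_0\tau_1\rho_0$, and the reversible map $\sigma=\tau_1\tau_2$ with $2p$ distinct eigenvalues. The proof then proceeds in three steps: first normalize $\{\sigma,\tau_1,\tau_2,\rho_0\}$ to the model $\{\hat S,\hat T_1,\hat T_2,\rho\}$; second, normalize the $\tau_{1j}$ within the centralizer of that model by encoding them in a single matrix $\mathbf B$; third, reconstruct the quadric from the normalized involutions.

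For the first step, the identities $\sigma^{-1}=\tau_1\sigma\tau_1=\rho_0\sigma\rho_0$ force the $2p$ distinct eigenvalues of $\sigma$ to appear in reciprocal pairs $\{\mu,\mu^{-1}\}$ that are also stable under complex conjugation. These pairs split into the three classes appearing in \rea{0ge1}: pairs of real reciprocals giving elliptic components of the quadric, unimodular pairs giving hyperbolic components, and genuine complex pairs $\{\mu,\ov\mu^{-1}\}$ coupled with their conjugates by $\rho_0$ to produce the complex type. Diagonalizing $\sigma$ in coordinates adapted to $\rho_0$, and using reversibility to recover $\tau_1$ up to a choice of square roots $\la_j$ with $\mu_j=\la_j^2$, yields the stated forms of $\hat S,\hat T_1,\hat T_2,\rho$. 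The residual coordinate changes preserving this model form a group $Z$, the centralizer of $\{\hat T_1,\hat T_2,\rho\}$, which is essentially a diagonal group with $\rho$-reality constraints dictated by the three eigenvalue types.

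The delicate second step normalizes the $p$ commuting involutions $\tau_{11},\dots,\tau_{1p}$ inside $Z$. Each is a linear involution whose $(-1)$-eigenspace is a hyperplane, and their product is the already-normalized $\tau_1$; simultaneous diagonalizability lets me attach to the family a nonsingular matrix $\mathbf B\in GL_p(\cc)$ recording, in the basis of the $(-1)$-eigenspace of $\tau_1$ from step one, the defining linear forms of the hyperplanes fixed by the $\tau_{1j}$. Conjugation by an element of $Z$ acts on $\mathbf B$ by $\mathbf B\mapsto\mathbf C\mathbf B\mathbf R$, where $\mathbf C$ comes from the diagonal rescalings available in $Z$ and $\mathbf R$ from a relabelling and sign change of the generators; after imposing the reality constraints from $\rho$ one checks that $\mathbf C$ and $\mathbf R$ must be the monomial matrices with $p$ nonzero entries described in the statement. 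The main obstacle I expect is tracking these reality constraints for the three eigenvalue types in parallel and verifying that no further invariant of $\mathbf B$ beyond this $\mathbf C$-$\mathbf R$ equivalence survives. Once this is done, the quadric is recovered by the Moser--Webster recipe: pushing $\mathrm{Fix}(\rho)$ down through $\pi_1$ in the diagonalized coordinates produces equations of the form $z_{p+j}=L_j^2$ with $L_j=(\mathbf B(z'-2\boldsymbol\gamma\ov z'))_j$, and the block structure of $\boldsymbol\gamma$ together with the Bishop-type invariants $\gaa_e,\gaa_h,\gaa_s$ expressed as standard rational functions of $\la_e,\la_h,\la_s$ drops out of a direct quadratic computation.
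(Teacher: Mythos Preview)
Your proposal is correct and follows essentially the same route as the paper: reduce to the Moser--Webster involutions via \rp{inmae}, normalize $\{\sigma,\tau_1,\tau_2,\rho_0\}$ to $\{\hat S,\hat T_1,\hat T_2,\rho\}$ by the eigenvector analysis of \rl{t1t2sigrho}, encode the generators $T_{1j}$ by a matrix $\mathbf B$ via the change of variables $\varphi_1$ to $(z^+,z^-)$ coordinates (\rl{sett}, \rl{unsol}), and finally reconstruct the quadric by computing the invariant functions. One small imprecision: the right factor $\mathbf R=\diag_\nu\mathbf d$ does not come from ``sign changes of the generators'' but from the permutation $\nu$ of the generators together with the residual freedom $\mathbf B\sim\mathbf B\mathbf D$ for an arbitrary diagonal $\mathbf D\in GL_p(\cc)$, reflecting that $\phi_1$ is determined only up to the centralizer of $\{Z_1,\dots,Z_p\}$.
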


See~\rt{quadclass} for detail of the equivalence relation. The scheme of finding quadratic
normal forms turns out to be useful. 
It 
will be applied to the study of  normal forms of  the general real submanifolds.

\subsection{Formal normalization and divergence of normal forms}

\subsubsection{Formal submanifolds, formal involutions, and formal centralizers}

In section~\ref{fsubm}, we  show that   
the formally holomorphic classification of formal submanifolds with the maximum number of formal deck transformations 
and  
the formally holomorphic classification of suitable  families of involutions $\{\tau_{11},\ldots, \tau_{1p},\rho\}$
are equivalent. This equivalence will be used to derive the formal normal forms of the submanifolds.
As mentioned earlier, we will 
first normalize $\sigma=\tau_1\tau_2$ under general formal biholomorphic transformations.  
The normal forms of $\sigma$ turn out to be in the centralizer of $\hat S$, the  normal form of the
linear part of $\sigma$. 
 The family is subject to a second step of normalization,  
under mappings which again turn out to be in the centralizer  of $\hat S$. Thus, before we introduce normalization, we will first study various centralizers. 
We will discuss the centralizer of $\hat S$ as well as the centralizer of $\{\hat T_1,\hat T_2\}$ in section~\ref{fsubm}. The centralizer of $\{\hat T_{11},\ldots, \hat T_{1p},\rho\}$ is more complicated, which will be discussed in section~\ref{rigidquad}. 

\subsubsection{Normalization of $\sigma$}

As mentioned earlier, we will divide the normalization for the families of non-linear involutions  into two steps.
This division will serve  two purposes: first, it helps us to find the formal normal forms of the family of involutions $\{\tau_{11},\ldots, \tau_{1p},\rho\}$; second, it  helps us  understand
the convergence of normalization of the original normal form problem for the real submanifolds.  For purpose of normalization, we will assume that $M$
is {\it non-resonant}, i.e. $\sigma$ is {\it non-resonant}, if its eigenvalues $\mu_1, \ldots,\mu_p,  \mu_1^{-1}, \ldots, \mu_p^{-1}$ satisfy
$$
\mu^Q\neq1, \qquad \forall Q\in\zz^p, \quad |Q|\neq0. 
$$

In section~\ref{secnfs}, we obtain the normalization of $\sigma$ by proving the following.
\begin{thm}\label{ideal0intr} Let $\sigma$ be a holomorphic map with linear part $\hat S$. 
Assume that  $\mu_1,\ldots, \mu_p$ are non-resonant. Suppose that $\sigma=\tau_1\tau_2$ where $\tau_1$ is a holomorphic involution, $\rho$ is an anti-holomorphic involution, and $\tau_2=\rho\tau_1\rho$.
Then there exists a formal map $\Psi$
such that $\rho:=\Psi^{-1}\rho\Psi$ is given by \rea{57rhoz},  $\sigma^*=\Psi^{-1}\sigma\Psi$   and $\tau_{i}^*=\Psi^{-1}\tau_i\Psi$ have the form
 \begin{gather}\label{sigma0}
\sigma^*\colon\xi_j'=M_j(\xi\eta)\xi_j, \quad \eta_j'=M_j^{-1}(\xi\eta)\eta_j, \quad 1\leq j\leq p,\\
\tau_i^*=\Lambda_{ij}(\xi\eta)\eta_j, \quad \eta_j'=\Lambda_{ij}^{-1}(\xi\eta)\xi_j. 
\nonumber
\end{gather}
 Here, $\xi\eta=(\xi_1\eta_1,\ldots,\xi_p\eta_p)$. Assume further that  $\log M$ $($see  
 \rea{logm} for definition$)$  is tangent to the identity. 
  Under a further change of coordinates that preserves $\rho$,    $\sigma^*$  and
$\tau_i^*$ are  transformed into
\begin{gather} \label{hmjw}
\hat\sigma\colon \xi_j'=\hat M_j(\xi\eta)\xi_j, \quad \eta_j'=\hat M_j^{-1}(\xi\eta)\eta_j, \quad 1\leq j\leq p,\\
\hat\tau_i=\hat\Lambda_{ij}(\xi\eta)\eta_j, \quad \eta_j'=\hat\Lambda_{ij}^{-1}(\xi\eta)\xi_j, 
\quad\hat\Lambda_{2j}=\hat\Lambda_{1j}^{-1}.
\nonumber
\end{gather}
 Here the $j$th component of  $\log \hat M(\zeta)- I$ 
  is independent of $\zeta_ j$.
Moreover, $\hat M$ is unique. 
\end{thm}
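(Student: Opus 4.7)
The plan is to perform the two normalizations in sequence: first a Poincar\'e--Dulac type reduction to obtain the form (\ref{sigma0}), then a further conjugation within the centralizer of $\hat S$ to obtain (\ref{hmjw}) and establish uniqueness of $\hat M$.

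For the first normalization, choose initial coordinates in which the linear parts of $\sigma$, $\tau_1$, and $\rho$ are already $\hat S$, $\hat T_1$, and (\ref{57rhoz}) respectively. The non-resonance hypothesis $\mu^Q\neq 1$ for $Q\in\zz^p\setminus\{0\}$ implies that the only monomials $\xi^\alpha\eta^\beta$ with $\hat S$-weight $\mu_j$ (resp.\ $\mu_j^{-1}$) are $\xi_j(\xi\eta)^K$ (resp.\ $\eta_j(\xi\eta)^K$). Build $\Psi$ inductively on homogeneous degree: at each degree $d$, choose a homogeneous correction $\Psi_d$ solving the standard cohomological equation to kill the non-resonant part of $\sigma$ at that degree. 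The space of admissible $\Psi_d$ splits into $\rho$-invariant and $\rho$-anti-invariant parts, and choosing the $\rho$-invariant representative preserves the normal form (\ref{57rhoz}) of $\rho$. In the resulting coordinates, $\sigma^*$ commutes with $\hat S$, so $\sigma^*\colon\xi_j'=M_j(\xi\eta)\xi_j$, $\eta_j'=N_j(\xi\eta)\eta_j$ for formal series $M_j, N_j$ with $M_j(0)=\mu_j$; the identity $\sigma^*(\sigma^*)^{-1}=I$, combined with the invariance of each $\xi_j\eta_j$ under $\sigma^*$, forces $N_j=M_j^{-1}$.

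From this, the form of $\tau_1^*$ is forced by the relations $(\tau_1^*)^2=I$ and $\sigma^*=\tau_1^*\tau_2^*$ with $\tau_2^*=\rho\tau_1^*\rho$. Writing $\tau_1^*=(A,B)$ with linear parts $A_j=\lambda_j^{-1}\eta_j+\cdots$ and $B_j=\lambda_j\xi_j+\cdots$, the reversibility $\sigma^*\tau_1^*=\tau_1^*(\sigma^*)^{-1}$ reads
\begin{equation*}
A_j\bigl(M^{-1}(\xi\eta)\xi,\,M(\xi\eta)\eta\bigr)=M_j(AB)\,A_j(\xi,\eta),
\end{equation*}
with an analogous equation for $B_j$. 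Expanding $A_j=\sum a_{\alpha,\beta}^{(j)}\xi^\alpha\eta^\beta$ and comparing homogeneous components, the non-resonance condition forces $a_{\alpha,\beta}^{(j)}=0$ unless $\xi^\alpha\eta^\beta=\eta_j(\xi\eta)^K$, and similarly for $B_j$. The involution condition $(\tau_1^*)^2=I$ then pins down the scalar coefficient, yielding $A_j=\Lambda_{1j}(\xi\eta)\eta_j$ and $B_j=\Lambda_{1j}^{-1}(\xi\eta)\xi_j$; the form of $\tau_2^*=\rho\tau_1^*\rho$ follows.

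For the second stage, the residual freedom is conjugation by a formal map $\phi$ commuting with $\hat S$ and preserving $\rho$; every such $\phi$ takes the form $\xi_j\mapsto F_j(\zeta)\xi_j$, $\eta_j\mapsto G_j(\zeta)\eta_j$ with $\zeta_j=\xi_j\eta_j$ and $F_j, G_j$ subject to the reality conditions imposed by $\rho$. The induced action on $\log M$ is an affine cohomological operator whose image is a complement to the subspace of series with $j$-th component independent of $\zeta_j$; solving this cohomological equation inductively on the total degree in $\zeta$ produces the desired further change of coordinates, and uniqueness of $\hat M$ follows by examining the lowest-degree nontrivial term of any conjugation that preserves the normalization condition. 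The hard part will be the coordinated preservation of all three structures---$\sigma$ diagonal, $\rho$ standard, $\tau_i$ in the required involutive form---through each inductive step, which requires verifying that the cohomological equations are solvable within the $\rho$-equivariant subspace of conjugations at every degree.
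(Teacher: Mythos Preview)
Your overall two-stage strategy matches the paper's (Proposition~5.1, Lemma~5.2, Theorem~5.3), but there is a real gap in the middle.

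The circular step is your claim that ``$\sigma^*(\sigma^*)^{-1}=I$, combined with the invariance of each $\xi_j\eta_j$ under $\sigma^*$, forces $N_j=M_j^{-1}$.'' The invariance of $\xi_j\eta_j$ under $\sigma^*$ \emph{is} the identity $M_jN_j=1$; you have not derived it, and nothing in ``$\sigma^*\in\cL C(\hat S)$'' alone implies it. This then contaminates your reversibility equation for $A_j$, which you wrote using $(\sigma^*)^{-1}\colon\xi\mapsto M^{-1}\xi,\ \eta\mapsto M\eta$; without $N=M^{-1}$ the inverse of $\sigma^*$ does not have that clean form. More seriously, even granting $N=M^{-1}$, the reversibility equation at degree $d$ reads $(\mu_j-\mu^{\beta-\alpha})\,a^{(j)}_{\alpha,\beta}=(\text{lower order data})$ on each monomial $\xi^\alpha\eta^\beta$; this determines the non-resonant coefficients but does not force them to zero unless you have already shown the right-hand side vanishes. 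As written, your induction does not close.

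The paper's fix is short and worth adopting. Let $T_0(\xi,\eta)=(\eta,\xi)$. From $\tau_1^*\sigma^*(\tau_1^*)^{-1}=(\sigma^*)^{-1}$ and $T_0(\sigma^*)^{-1}T_0=\sigma^*$ (a one-line check using the explicit form of $\sigma^*$) one gets $(\tau_1^*T_0)^{-1}\sigma^*(\tau_1^*T_0)=\sigma^*$. Now invoke the uniqueness already established for the Poincar\'e--Dulac step: any invertible map that conjugates $\sigma^*$ into $\cL C(\hat S)$ must itself lie in $\cL C(\hat S)$. Hence $\tau_1^*T_0\in\cL C(\hat S)$, so $\tau_1^*T_0\colon(\xi,\eta)\mapsto(\Lambda_1(\xi\eta)\xi,\Lambda_1^*(\xi\eta)\eta)$, and composing with $T_0$ gives $\tau_1^*$ in the stated form; $(\tau_1^*)^2=I$ then yields $\Lambda_1^*=\Lambda_1^{-1}$. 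The relation $N_j=M_j^{-1}$ now follows \emph{a posteriori} from $\sigma^*=\tau_1^*\tau_2^*$. Note also that the paper obtains $\rho\Psi=\Psi\rho$ by the same uniqueness trick (observe $\rho\Psi\rho$ is again normalized and transforms $\sigma$ into $\cL C(\hat S)$), rather than by splitting into $\rho$-invariant parts at each step as you propose; either route works, but uniqueness is cleaner. Finally, your second stage omits the intermediate normalization achieving $\hat\Lambda_{2j}=\hat\Lambda_{1j}^{-1}$ (done in the paper via a unique $\psi_0\in\cL C^{\mathsf c}(\hat T_1,\hat T_2)$), after which the remaining freedom acts on $\log M$ by right composition with maps preserving the coordinate hyperplanes, and Lemma~5.2 gives the unique normal form---not a cohomological equation per se, but a direct implicit-function argument.
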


\begin{rem} 
The condition that $\log M$ is tangent to identity at the origin has to be understood as a non-degeneracy condition of which it is the simplest instance. When there is no ambiguity, ``tangent to identity'' stands for ``tangent to identity at the origin''.
\end{rem} 

We will conclude section~\ref{secnfs} with an example showing that although $\sigma,\tau_1,\tau_2$ are both linear, $\{\tau_{11},\ldots,\tau_{1p},\rho\}$ are not necessary linear, provided $p>1$.


Section \ref{div-sect} is devoted to the proof of the following divergence result. 
\begin{thm}\label{divsig} There exists a non-resonant real analytic submanifold $M$ with
pure elliptic complex tangent in $\cc^6$ 
such that if its corresponding $\sigma$  is transformed into a map $\sigma^*$ 
which commutes with the linear part of $\sigma$ at the origin,
then $\sigma^*$ must diverge. 
\end{thm}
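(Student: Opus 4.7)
The plan is to work backwards: instead of starting from a real submanifold $M\subset\cc^6$ and analyzing its $\sigma$, I would construct a reversible convergent $\sigma$ with the divergence property and then recover $M$ through the correspondence, developed earlier in the paper, between real submanifolds with pure elliptic CR singularity and triples of commuting involutions $\{\tau_{11},\tau_{12},\tau_{13}\}$ together with the anti-holomorphic involution $\rho_0$. Pure ellipticity fixes $p=3$, $e_*=3$, $h_*=s_*=0$, so the eigenvalues $\la_j>1$ of the linear part are real and $\mu_j=\la_j^2$.

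First I would select $\la_1,\la_2,\la_3>1$ so that $\log\mu_1,\log\mu_2,\log\mu_3$ are $\zz$-linearly independent (hence $\sigma$ is non-resonant) while a Liouville-type condition is satisfied: there exists a sequence $K^{(n)}\in\zz^3$ with $|K^{(n)}|\to\infty$ and $|\mu^{K^{(n)}}-1|$ decreasing faster than any geometric rate. Such triples are obtained by a standard diagonal construction, e.g.\ forcing $\log\mu_2/\log\mu_1$ and $\log\mu_3/\log\mu_1$ to be Liouville numbers together with $1$. Then I would choose a convergent polynomial $R$ reversed by the linear $\hat T_1,\hat T_2$ and intertwined by $\rho$, and set $\sigma=\hat S+R$; $R$ is tailored so that, in the coordinates of \rt{ideal0intr}, a specific low-degree monomial of it survives into the cohomological equation at each $K^{(n)}$.

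Next I would run the degree-by-degree coefficient analysis. Writing $\sigma\Psi=\Psi\sigma^*$ and projecting onto the centralizer of $\hat S$ and its complement yields the usual recursive pair: the non-centralizer coefficients $\Psi_K$ satisfy $(\mu^K-1)\Psi_K=N_K$, while the centralizer coefficients of $\sigma^*$, i.e.\ the Taylor coefficients of the $\hat M_j(\xi\eta)$, are expressed in terms of $\sigma$ and the lower order $\Psi_L$. The aim is to trace the seed monomial through this recursion and to show that for each $n$ there is a multi-index $\alpha^{(n)}$ for which
\[
 \bigl|(\hat M_j)_{\alpha^{(n)}}\bigr|\;\geq\;c\,\bigl|\mu^{K^{(n)}}-1\bigr|^{-1},
\]
which by the Liouville choice of $\mu_j$ violates geometric growth. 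The uniqueness clause for $\hat M$ in \rt{ideal0intr} then yields that \emph{every} centralizer-valued $\sigma^*$ must diverge.

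The main obstacle is the non-cancellation in this lower bound: the Liouville denominators that enter through the $\Psi_L$ with $|L|<|K^{(n)}|$ must not cancel out when they are fed into the centralizer projection at degree $|K^{(n)}|$. This is the delicate step in Stolovitch-type divergence constructions and would be controlled by choosing the seed monomial in $R$ so that a single, combinatorially unambiguous product of small divisors appears in $(\hat M_j)_{\alpha^{(n)}}$ with non-zero coefficient; reversibility is preserved by imposing that $R$ lies in the $\pm 1$ eigenspace decomposition of the linear involutions. Finally, the recovery of $M$ from $\sigma$, via involutions $\tau_1=\tau_{11}\tau_{12}\tau_{13}$ and $\tau_2=\rho_0\tau_1\rho_0$ chosen compatibly with the reversibility of $\sigma$ as in the correspondence developed in the paper, yields a real analytic germ in $\cc^6$ with pure elliptic CR singularity, completing the argument.
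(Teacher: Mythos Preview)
Your overall architecture is correct: construct a reversible holomorphic $\sigma$ with Liouville eigenvalues, show that the unique normalized normal form $\hat M$ diverges, and then invoke uniqueness to conclude that every $\sigma^*\in\cL C(\hat S)$ diverges; recover $M$ via the realization of $\{\tau_{11},\tau_{12},\tau_{13},\rho\}$. This is exactly the paper's route. However, two of your steps have genuine gaps.

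\textbf{How small divisors reach the normal form.} You write the cohomological pair and expect to get $|(\hat M_j)_{\alpha^{(n)}}|\geq c\,|\mu^{K^{(n)}}-1|^{-1}$ from ``the seed monomial surviving.'' But at first order in the conjugacy equation, the small divisors appear only in the \emph{transformation} coefficients $\Psi_{PQ}$, not in the normal-form coefficients $M_P$: the paper's formulas for $M_P$ (and $N_P$) contain no small divisors at that stage. The mechanism that actually injects small divisors into $\hat M$ is a \emph{second-order} effect, isolated in \rl{mj0mj}: when $(f,g)=O(d)$ and $|P|=d$, one has
\[
M_{j,P}=M_{j,P}^0+\mu_j\Bigl\{2(U_jV_j)_{PP}+(U_j^2)_{(P+e_j)(P-e_j)}\Bigr\}+\{Df_j\cdot(U,V)\}_{(P+e_j)P},
\]
so a \emph{product} of two small divisors (essentially $(\mu^{P_k}-\mu_3)^{-1}(\mu^{-P_k}-\mu_3^{-1})^{-1}$) appears. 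To make this product dominate, the paper carefully arranges the degrees and the exceptional small divisors of a given ``height'' (\rl{smallvec+}), and the eventual lower bound has a \emph{square} in the denominator. Your first-order accounting misses this, and without it there is nothing forcing $\hat M_P$ to be large.

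\textbf{A single polynomial $R$ is not enough.} Controlling non-cancellation at one degree by ``choosing the seed monomial in $R$'' does not propagate: the lower-order contributions $\tilde f^{(k)}_{3,P_k0},\,\tilde g^{(k)}_{3,0P_k}$ depend on all previous data and may kill the product term. The paper resolves this by constructing $\sigma^\infty$ as a convergent limit of $\sigma^k=\tau_1^k\tau_2^k$ with $\tau_1^k=\varphi_k^{-1}\tau_1^{k-1}\varphi_k$, where at each scale $d_k=|P_k|$ a single coefficient $h^{(k)}_{3,P_k}\in\{0,\pm1,\pm i\}$ is chosen, \emph{after} the lower-order data are known, to force $|T_k|\geq\tfrac14\mu^{2P_k-2e_3}$. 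This sequential feedback is essential; a fixed polynomial perturbation does not provide it.

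Two smaller points: the transfer from ``$\hat\sigma$ diverges'' to ``every $\sigma^*\in\cL C(\hat S)$ diverges'' uses \rt{ideal5}\,(iii), which requires the condition that $\log M$ be tangent to the identity; the paper secures this by starting from $\sigma^0$ with $(\Lambda_{1j}^0)^2=\mu_j e^{\xi_j\eta_j}$. Your sketch should make this explicit. Also, the specific Liouville eigenvalues in the paper are $\mu_1=e^{-1}$, $\mu_2=e^{\nu}$, $\mu_3=e^{e}$ with a tailor-made $\nu$ (Lemmas~\ref{smallvec}--\ref{smallvec+}), so that the exceptional small divisors of each height are completely identified; this precision is what makes the dominance argument go through.
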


Note that the  theorem says that all normal forms of $\sigma$ (by definition, they belong to the centralizer of its linear part,
i.e. they are in the Poincar\/e-Dulac normal forms) are divergent.   It implies  that any transformation for $M$ that  transforms  $\sigma$ into a Poincar\'e-Dulac normal
form  must diverge. This is in contrast with the Moser-Webster theory: 
For  $p=1$, a convergent  normal form can always be achieved even if the  associated transformation is divergent (in the case of hyperbolic complex tangent), and 
furthermore in case of $p=1$ and elliptic complex tangent with a non-varnishing Bishop invariant, the normal form can   be achieved by a convergent
transformation. 
The divergent Birkhoff normal form for the classical Hamiltonian systems was  
obtained in \cite{Go12} by the first-named author. We refer to \cite{siegel-moser-book, moser-princeton} as general references concerning Hamiltonian and reversible dynamics.


\subsubsection{Normalization on the family $\{\tau_{ij},\rho\}$}
In section~\ref{nfin}, we will follow the scheme developed for the quadric
normal forms in order to normalize $\{\tau_{11},\ldots,\tau_{1p},\rho\}$. 
Let $\hat\sigma$ be given by \re{hmjw}. We define
$$
\hat\tau_{1j}\colon\xi_j'=\hat\Lambda_{1j}(\xi\eta)\eta_j, \quad \eta_j'=\hat\Lambda_{1j}^{-1}(\xi\eta)\xi_j, \quad\xi_k'=\xi_k, \quad \eta_k'=\eta_k,
$$
where $k\neq j$, 
 $\hat\Lambda_{1j}(0)=\la_j$, and
 $\hat M_j=\hat\Lambda_{1j}^2$.  We have the following formal normal form.
\begin{thm}\label{nfofM}Let $M$ be a real analytic submanifold that is a
 higher order perturbation of a non-resonant product
quadric. Suppose that 
its associated  $\sigma$ is formally equivalent to $\hat\sigma$ given by \rea{hmjw}. 
 Suppose that  the formal mapping  $\log\hat M$  in \rta{ideal0intr} is tangent to the identity. 
Then the formal normal form of $M$ is completely determined by 
\eq{Phi1}\nonumber
\hat M(\zeta), \quad  \Phi. 
\eeq
Here 
$\Phi$ is a formal invertible
mapping in $\cL C^c(\hat\tau_{11},\ldots, \hat\tau_{1p})$. Moreover, $\Phi$ is uniquely determined up to the equivalence relation $\Phi\sim R_\e\Phi R_\e^{-1}$ with  $R_\e\colon\xi_j=\e_j\xi,\eta_j'=\e_j\eta_j$ $(1\leq j\leq p)$ and $R_\e^2=I$. Furthermore, if the normal form \rea{sigma0} of $\sigma$ can be achieved by a convergent transformation, so does the normal form of $M$.
\end{thm}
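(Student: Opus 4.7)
The plan is to reduce the normalization of $M$ to the normalization of the associated involution family $\{\tau_{11},\ldots,\tau_{1p},\rho\}$, using the dictionary from Section~\ref{fsubm}, and then to execute a two-stage scheme that parallels the quadric normal form of~\rt{thm1intr}, but now in the formal (and ultimately convergent) category.

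For the first stage I would apply \rt{ideal0intr} directly. Because $\sigma$ is non-resonant and $\log \hat M$ is tangent to the identity, there is a formal map $\Psi$ that conjugates $\sigma$ to $\hat\sigma$, sends $\rho$ to \rea{57rhoz}, and produces a $\hat M$ that is unique. Under $\Psi$, the deck generators $\tau_{11},\ldots,\tau_{1p}$ transport to pairwise commuting formal involutions $\tilde\tau_{1j}$, each fixing a hypersurface, whose product is $\hat\tau_1$ and whose conjugation by $\rho$ recovers the $\tilde\tau_{2j}$. In particular every $\tilde\tau_{1j}$ lies in the centralizer of $\hat\sigma$ and is compatible with the standardized $\rho$, so any further normalization must take place inside this centralizer while respecting $\rho$.

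For the second stage I would take the standard generators $\hat\tau_{1j}$ described just before the theorem and encode the passage from $\hat\tau_{1j}$ to $\tilde\tau_{1j}$ by a formal map $\Phi$ in the class $\cL C^c(\hat\tau_{11},\ldots,\hat\tau_{1p})$ introduced in Section~\ref{rigidquad}. This is the nonlinear analog of the matrix $\mathbf B$ in \rt{thm1intr}: the $p$ fixed hypersurfaces of the $\tilde\tau_{1j}$, together with the pairwise commutation relations and the requirement that their product equal $\hat\tau_1$, are strong enough to reconstruct the whole family from the single datum $\Phi$. The inner symmetries of $\{\hat\tau_{1j}\}$ are the coordinate sign flips $R_\e\colon\xi_j\mapsto\e_j\xi_j,\ \eta_j\mapsto\e_j\eta_j$, which permute the standard fixed hypersurfaces and produce exactly the residual ambiguity $\Phi\sim R_\e\Phi R_\e^{-1}$ stated in the theorem. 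Combining this with the uniqueness of $\hat M$ from \rt{ideal0intr} and undoing $\Psi$ yields a normal form $M_{\hat M,\Phi}$ for $M$ with $(\hat M,\Phi)$ as a complete set of formal invariants.

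The main obstacle is the second stage: once $\hat\sigma$ is fixed one must verify that the hypersurface-fixing involutions $\tilde\tau_{1j}$ are captured, uniquely modulo the $R_\e$-action, by some $\Phi$ in the prescribed class, with no further formal normalization available. The non-resonance of $\sigma$ and the tangent-to-identity assumption on $\log\hat M$ enter precisely here, as they make the centralizer of $\hat\sigma$ tractable enough that the reconstruction of $\Phi$ is purely algebraic bookkeeping on the fixed hypersurfaces and the sign freedom. Convergence of the full normalization given convergence of the first stage is then automatic: if $\Psi$ converges, the $\tilde\tau_{1j}$ are convergent involutions, and the extraction of $\Phi$ from them involves no cohomological equation and no small-divisor analysis, so the resulting normalization of $M$ is convergent as well.
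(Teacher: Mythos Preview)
Your two-stage scheme is the paper's own route (\rt{ideal5} followed by Section~\ref{nfin}), and your convergence claim for the second stage is correct. Two points need correction, however.

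First, your description of the $R_\e$ is wrong: they do not ``permute the standard fixed hypersurfaces.'' Each $R_\e$ commutes with every $\hat\tau_{1j}$ and therefore preserves each fixed hypersurface setwise. The reason the $R_\e$ constitute the residual ambiguity is that, by \rt{ideal5}(ii) (this is where the tangent-to-identity hypothesis on $\log\hat M$ enters), they form the entire centralizer of $\{\hat\tau_1,\hat\tau_2\}$; and since $M$ is a perturbation of a \emph{product} quadric ($\mathbf B=\mathbf I$), each $R_\e$ also lies in $\cL C(\hat\tau_{11},\ldots,\hat\tau_{1p})$ and preserves $\cL C^{\mathsf c}(\hat\tau_{11},\ldots,\hat\tau_{1p})$ under conjugation. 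This is what reduces the equivalence to $\Phi\sim R_\e\Phi R_\e^{-1}$; see \rp{ideal6+}.

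Second, you do not say how $\Phi$ is actually extracted, and this is where both existence and convergence live. The paper's mechanism is: (a) simultaneous linearization of the commuting involutions $\tilde\tau_{1j}$ and of $\hat\tau_{1j}$ (\rl{sd}) produces some $\Phi_1\in\cL C(\hat\tau_1)$, tangent to the identity, with $\tilde\tau_{1j}=\Phi_1\hat\tau_{1j}\Phi_1^{-1}$ (\rl{clas}); (b) by \rl{cnnl}, which is \rl{lehphi} transported by the conjugation $E_{\boldsymbol{\hat\Lambda}_1}$, one decomposes $\Phi_1=\Phi\,\Phi_0^{-1}$ with $\Phi\in\cL C^{\mathsf c}(\hat\tau_{11},\ldots,\hat\tau_{1p})$ and $\Phi_0\in\cL C(\hat\tau_{11},\ldots,\hat\tau_{1p})$. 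Since $\Phi_0$ commutes with each $\hat\tau_{1j}$, the normalized $\Phi$ still conjugates, and it is this decomposition (a majorant/implicit-function argument via \rl{fhg-}, not a cohomological equation) that makes both steps convergent when the $\tilde\tau_{1j}$ and $\hat\tau_{1j}$ are convergent. Also, $\cL C^{\mathsf c}(\hat\tau_{11},\ldots,\hat\tau_{1p})$ is defined in \rl{cnnl} (Section~\ref{nfin}), not in Section~\ref{rigidquad}.
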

The   set $\cL C^c(\hat\tau_{11},\ldots, \hat\tau_{1p})$ is defined by Definition~\ref{cnnl}.

\vspace{1cm}

The second part of the paper is devoted to geometric properties of $M$ and in particular those obtained through convergent
normalization, under additional assumptions on $M$.

We first turn to  a holomorphic normalization of a real analytic submanifold $M$ with the so-called abelian CR singularity.  This will be achieved by 
studying an integrability problem on a general family of commuting biholomorphisms described below. The holomorphic normalization will
be used to construct the local hull of holomorphy of $M$. 
 We will also study the 
 rigidity problem 
 of a quadric under higher order analytic perturbations,  
 i.e.  the problem if such a perturbation  remains holomorphically equivalent to the quadric if  it is   formally equivalent
 to the quadric.
  The rigidity problem is  reduced to a theorem of holomorphic linearization of one or several commuting diffeomorphisms along a suitable ideal that was devised in \cite{stolo-bsmf}.
 Finally,  we will study the existence of
 holomorphic  submanifolds attached to the real submanifold $M$. These are complex submanifolds of dimension $p$ intersecting $M$ along two totally real analytic submanifolds that intersect  transversally at a CR singularity.  Attaching complex submanifolds has less constraints than 
 finding a convergent normalization. Therefore, our only assumption is that $M$ is non-resonant and admits the maximum number of deck transformations.  A remarkable feature
of   attached complex submanifolds is that their existence depends only on the existence of suitable (convergent)
 invariant submanifolds of $\sigma$. When the real submanifold has a complex tangent of pure complex type, the existence 
   is ensured under
 a mild non-resonance condition but without any further restriction such as small divisors condition
  on the eigenvalues of $M$. 

\subsection{
Abelian CR singularity and analytic hull of holomorphy}
\subsubsection{Normal form of commuting biholomorphisms} 
Let $\cL F=\{F_1,\ldots, F_\ell\}$ be a finite family of germs of biholomorphisms of $\cc^n$
fixing the origin. 
 Let $D_m$ be the linear part of $F_m$ at the origin.  We say that the family $\cL F$ is (formal)  {\it completely integrable}, if there is a (formal) biholomorphic mapping $\Phi$ such that 
$\{\Phi^{-1}F_m\Phi\colon 1\leq m\leq \ell\}=\{\hat F_m\colon 1\leq m\leq \ell\}$ satisfies 
\bppp
\item $\hat F_m(z)=(\mu_{m 1}(z)z_1,\ldots, \mu_{m n}(z)z_n)$ and $ \mu_{m j}\circ D_{m'}=\mu_{m j}$ for
 $1\leq m,m'\leq \ell$  and $ 1\leq j\leq n$.   In particular,  $\hat F_m$ commutes with $D_{m'}$ for all $1\leq m,m'\leq\ell$. 
\item  For each $j$ and each $Q\in\nn^n$ with $|Q|>1$,  $\mu_m^Q(0)=\mu_{m j}(0)$ hold for all $m$ if and only if
 $\mu_{m}^Q(z)=\mu_{mj}(z)$ hold  for all $m$. 
\eppp
Note that a necessary condition for $\cL F$ to be formally completely integrable is that $F_1,\ldots, F_\ell$ commute pairwise. 
The main result of section~\ref{nfcb} is the following. 
\begin{thm} \label{lFba}
Let $\cL F$ be a family of finitely many
germs of biholomorphisms  at the origin.  Suppose that $\cL F$ is formally completely integrable.  Then it is holomorphically  completely integrable,
provided 
the family of linear parts $\cL D$ of $\cL F$ is of   the Poincar\'e type.
In particular, $\cL F$ is holomorphically equivalent to a normal form in which each element commutes with $D_1,\ldots, D_\ell$. 
\end{thm}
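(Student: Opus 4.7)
The plan is to run a Poincar\'e--Dulac style argument adapted to the commuting family $\cL F$, using the Poincar\'e hypothesis on $\cL D$ both to polynomialize the normal form and to keep the small divisors bounded away from zero.

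First I would write $\Phi(z)=z+\phi(z)$ with $\phi(z)=O(|z|^2)$ and translate the identities $\Phi^{-1}F_m\Phi=\hat F_m$ into the simultaneous cohomological system
\begin{equation}\nonumber
F_m(z+\phi(z))=\hat F_m(z)+\phi(\hat F_m(z)),\qquad 1\leq m\leq\ell.
\end{equation}
Expanding in homogeneous degree and extracting the coefficient of $z^Q$ in the $j$-th component, each equation takes the form
\begin{equation}\nonumber
\bigl(\mu_{mj}(0)-\mu_m^Q(0)\bigr)\phi_{Q,j}=\mathcal N_{m,Q,j}\bigl(\phi^{(<|Q|)},\hat F_m\bigr),
\end{equation}
where $\mathcal N_{m,Q,j}$ depends only on the previously determined Taylor coefficients of $\phi$ and on the normal-form data of $\hat F_m$. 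The Poincar\'e property of $\cL D$ immediately implies that the joint resonant set
\begin{equation}\nonumber
\cL R=\bigl\{(Q,j):|Q|\geq 2\text{ and }\mu_m^Q(0)=\mu_{mj}(0)\text{ for all }m\bigr\}
\end{equation}
is \emph{finite}; condition $(ii)$ then forces the Taylor coefficients of $\mu_{mj}(z)-\mu_{mj}(0)$ to be supported on $\cL R$, so every $\hat F_m$ is automatically a polynomial whose coefficients are recovered from the formal normalization by solving the finitely many resonant equations.

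For $(Q,j)\notin\cL R$ the Poincar\'e hypothesis supplies some index $m=m(Q,j)$ for which $|\mu_{mj}(0)-\mu_m^Q(0)|\geq c$, with $c>0$ independent of $Q$; this is the standard uniform lower bound valid in the Poincar\'e domain. I would solve for $\phi_{Q,j}$ from the $m(Q,j)$-th cohomological equation, observing that the remaining $\ell-1$ equations must then be automatically satisfied: the pairwise commutation of the $F_m$ and the prescribed commutativity of the $\hat F_m$ produce algebraic compatibility relations on the right-hand sides $\mathcal N_{m,Q,j}$, and the hypothesis of joint formal complete integrability guarantees that these relations hold at each step. A classical Poincar\'e--Dulac majorant estimate, run with the uniform divisor bound $c$ and with the polynomial normal form $\hat F_m$ playing the role of the resonant part, then bounds the $\phi_{Q,j}$ by the Taylor coefficients of an analytic function, giving convergence of $\Phi$ in some polydisc.

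The step I expect to be the real obstacle is precisely the compatibility argument of the previous paragraph: one must check that choosing a \emph{single} equation $m(Q,j)$ at each non-resonant index still yields a $\phi$ that normalizes every member of $\cL F$ simultaneously, and that the resulting $\hat F_m$ all lie in the centralizer of their respective linear parts $D_1,\dots,D_\ell$. This is where the full strength of the joint formal complete integrability --- not merely individual formal normalizability of each $F_m$ --- must be used, in tandem with the commuting structure of the family; the Poincar\'e hypothesis on $\cL D$ then converts the resulting formally consistent construction into a convergent one via the majorant device above.
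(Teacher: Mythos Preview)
Your plan rests on a misreading of the paper's ``Poincar\'e type'' hypothesis. You are treating it as the classical Poincar\'e domain condition (all eigenvalues on one side of the unit circle), from which you deduce that the joint resonant set $\cL R$ is finite and hence each $\hat F_m$ is a polynomial. But the paper's Definition of Poincar\'e type is strictly more general: it only asks that for each non-resonant $(j,Q)$ there be a \emph{replacement} multi-index $Q'$ with $\mu_k^{Q'}=\mu_k^Q$ for all $k$ and with $\max(|\mu_i^{Q'}|,|\mu_i^{-Q'}|)>c^{-1}d^{|Q'|}$ for some $i$. In the paper's main application the linear parts $S_j$ have eigenvalues $\mu_j,\mu_j^{-1}$ and $1$ (with multiplicity $2p-2$); the resonant set for the $\xi_k$-component is then $\{(P,P')\in\nn^{2p}:p_i=p_i'\ (i\neq k),\ p_k=p_k'+1\}$, which is infinite. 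So the normal form is genuinely a power series, your polynomial claim is false, and your ``uniform divisor bound plus classical majorant'' scheme collapses: the right-hand side $\mathcal N_{m,Q,j}$ carries contributions from all the nonlinear Taylor coefficients of the $\hat\mu_{mj}$, and there is no a priori reason these are tame.

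What the paper actually does is majorize the \emph{nonlinear} divisor. Writing the conjugacy equation and projecting along equivalence classes $\delta$ (defined by the full functions $\mu_{ij}-\hat\mu_i^Q(x)$, not just their values at $0$), one gets
\[
\phi_\delta=\bigl(\hat\mu_i^{Q_\delta}(x)-\mu_{ij_\delta}\bigr)^{-1}\{f_i(\Phi)\}_\delta.
\]
Complete integrability is used here in an essential way: since $\mu_k^{Q_\delta'}=\mu_k^{Q_\delta}$ for all $k$, Remark on complete integrability gives $\hat\mu_i^{Q_\delta'}(x)=\hat\mu_i^{Q_\delta}(x)$ as power series, so one may replace $Q_\delta$ by the Poincar\'e-type witness $Q_\delta'$. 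With $M_{ij}=\mu_{ij}^{-1}\hat\mu_{ij}$ and $m$ a common majorant of the $M_{ij}$, the growth condition $|\mu_i^{\pm Q'}|>c^{-1}d^{|Q'|}$ then yields a uniform majorization
\[
\bigl(\hat\mu_i^{Q'}(x)-\mu_{ij}\bigr)^{-1}\prec S(m-1)
\]
by a \emph{single} convergent series $S$ independent of $\delta$. This is the step your outline is missing. The proof finishes by coupling the majorant for $\phi$ with a majorant for $m-1$ (extracted from the resonant projection of the same equation) into a single scalar implicit-function problem for $W(t)=\phi(t)+(m(t)-1)t$. Your compatibility worry --- that choosing one index $m(Q,j)$ per divisor must be shown consistent --- is handled upstream: the normalized $\Phi$ is unique (Lemma on uniqueness of the normalized conjugacy), so all $\ell$ equations determine the same $\phi_\delta$; the issue is purely the estimate, and that is where the Poincar\'e-type $Q'$ and complete integrability enter.
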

The definition of   Poincar\'e type is in Definition~\ref{small divisors}.
Such  results for commuting germs of vector fields were obtained in \cite{St00,  stolo-annals} under a collective Brjuno-type of small   divisors condition.  For a single germ of real analytic hyperbolic area-preserving mapping, the result  was due to 
Moser~\cite{moser-hyperbolic},
and for a single germ of reversible hyperbolic  holomorphic mapping $\sigma=\tau_1\tau_2$
of which $\tau_1$ fixes a hypersurface, 
 this result was due to Moser-Webster~\cite{MW83}. 
Our proof is inspired by these proofs.

\subsubsection{Convergence of normalization for the abelian CR singularity} In sections 7 we will obtain the convergence of normalization for an  {\it abelian} CR singularity which we now define. 
We characterize the abelian CR singularity 
as follows.   We first consider 
a  product quadric $Q$ which    satisfies condition E.  So the branched covering $\pi_1$
for  the complexification of
$Q$ are generated by $p$ involutions of which each fixes a hypersurface pointwise. We denote them by $T_{11}, \ldots, T_{1p}$.
 Let $T_{2j}=\rho T_{1j}\rho$.
It turns out that 
  each $T_{1j}$ commutes with all $T_{ik}$
   except one,  $T_{2k_j}$ for some $1\leq k_j\leq p$. When we formulate $S_j=T_{1j}T_{2k_j}$ for $1\leq j\leq p$,
  then $S_1, \ldots, S_p$ commute pairwise. 
  Consider a general  $M$ that  is a third-order perturbation of product quadric $Q$ and
  satisfies  condition E.  We   define $\sigma_j=\tau_{1j}\tau_{2k_j}$. In suitable coordinates, $T_{ij}$
  (resp. $S_j$) is the linear part of $\tau_{ij}$ (resp. $\sigma_j$) at the origin. 
  We say that the complex tangent of 
  a third order perturbation $M$ of a product quadric
  at the origin is of {\it abelian type},  if   
  $\sigma_1, \dots, \sigma_p$
  commute pairwise. 
 If each linear part  $S_j$ of $\sigma_j$ has exactly two eigenvalues $\mu_j,
\mu_j^{-1}$ that are different from $1$,  then $\cL S:=\{S_1,\ldots, S_p\}$ is of Poincar\'e type if and only if $|\mu_j|\neq1$ for all $j$.
 As mentioned previously,  Moser and Webster 
 actually dealt with $n$-dimensional real submanifolds in $\cc^n$ that
have the minimal dimension of complex tangent subspace at a CR singular point. 
In their situation, there is only one possible composition, that is $\sigma=\tau_1\tau_2$. When the complex tangent 
has an elliptic but non-vanishing Bishop invariant, $\sigma$ has exactly two positive  eigenvalues that are separated by
$1$, while the remaining eigenvalues are $1$ with multiplicity $n-2$.

  As an application of \rt{lFba}, we will prove  the following convergent normalization.
  \begin{thm}\label{iabelm}
Let $M$  be a germ of real analytic submanifold  in $\cc^{2p}$ at an abelian CR singularity. Suppose that $M$  has
distinct eigenvalues
and has no hyperbolic component of complex tangent. Then $M$ is holomorphically equivalent to
\begin{gather}\nonumber 
\widehat M\colon
z_{p+j}=\Lambda_{1j}(\zeta)\zeta_j,\quad 1\leq j\leq
p,
\end{gather}
where $\zeta=(\zeta_1,\ldots, \zeta_p)$ are the convergent solutions to
\begin{align*}\nonumber 
\zeta_e&=
 A_e(\zeta)z_e\ov z_e-
 B_e(\zeta)(z_e^2+\ov z_e^2),\\ 
\zeta_s&=
 A_s(\zeta)z_s\ov z_{s+s_*}-
 B_s(\zeta)(z_s^2+\Lambda_{1s}^2(\zeta)\ov z_{s+s_*}^2),\\
\zeta_{s+s_*}&= 
A_{s+s_*}(\zeta) \ov z_s z_{s+s_*}-
B_{s+s_*}(\zeta)(z_{s+s_*}^2+\Lambda_{1(s+s_*)}^2(\zeta)\ov z_{s}^2)
\nonumber
\end{align*}  
with 
\begin{equation}\nonumber
\begin{array}{rcl}
 A_e(\zeta) &\!\!\! :=\!\!\! &\dfrac{1+\Lambda_{1e}^{2}(\zeta)}{(1-\Lambda_{1e}^2(\zeta))^{2}}, 
  \quad
A_j(\zeta)    :=  \Lambda_{1j}(\zeta)\dfrac{1+\Lambda_{1j}^2(\zeta)}{(1-\Lambda_{1j}^2(\zeta))^2}, \  j=s,s+s_*, \\
B_j(\zeta) & \!\!\! :=\!\!\!  & \dfrac{\Lambda_{1j}(\zeta)}{(1-\Lambda_{1j}^{2}(\zeta))^2}, \quad  j=e,s,s+s_*. \end{array}
\end{equation}
Moreover,    $\Lambda_{1j}(0)=\la_j$, and 
$\Lambda_1=(\Lambda_{11}, \ldots, \Lambda_{1p})$ commutes with the anti-holomorphic involution 
$
\rho_z\colon\zeta_e\to\ov\zeta_e,  \zeta_s\to\ov\zeta_{s+s_*}, \zeta_{s+s_*}\to\ov\zeta_s.$
\end{thm}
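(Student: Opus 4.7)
The plan is to produce a convergent holomorphic change of coordinates that simultaneously normalizes the commuting family of reversible maps $\sigma_1,\dots,\sigma_p$ and then to read off the normal form of $M$ from the induced normalization of the Moser--Webster involutions.

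First, I would set up the complexification. Because $M$ satisfies condition $E$, $\cL M$ admits the full group of $2^p$ deck transformations generated by commuting involutions $\tau_{11},\dots,\tau_{1p}$, together with the anti-holomorphic involution $\rho_0$; define $\tau_{2j}=\rho_0\tau_{1j}\rho_0$ and $\sigma_j=\tau_{1j}\tau_{2k_j}$, which commute pairwise by the abelian hypothesis. Since $M$ has only elliptic and complex components, each non-trivial eigenvalue $\mu_j$ of the linear part $S_j$ satisfies $|\mu_j|\neq 1$, so $\{S_1,\dots,S_p\}$ is of Poincar\'e type in the sense of Definition~\ref{small divisors}.

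Next, I would invoke the formal normalization results of \rt{ideal0intr} and \rt{nfofM}: after a formal biholomorphism, the $\sigma_j$ are simultaneously diagonalized and each $\hat\sigma_j$ lies in the common centralizer of $S_1,\dots,S_p$. This is precisely the formal complete integrability hypothesis of \rt{lFba}. Applying that theorem yields a convergent $\Psi$ that conjugates every $\sigma_j$ to a diagonal normal form
\begin{equation*}
\hat\sigma_j\colon\ \xi_k'=M_{jk}(\xi\eta)\xi_k,\qquad \eta_k'=M_{jk}^{-1}(\xi\eta)\eta_k,\quad 1\leq k\leq p,
\end{equation*}
so that the monomials $\xi_j\eta_j$ are common holomorphic invariants of the family. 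The reversibility relations $\tau_{1j}\sigma_j\tau_{1j}^{-1}=\sigma_j^{-1}$ and $\rho_0\sigma_j\rho_0=\sigma_j^{-1}$ are intrinsic, so $\Psi^{-1}\tau_{1j}\Psi$ and $\Psi^{-1}\rho_0\Psi$ satisfy the analogous relations with respect to the $\hat\sigma_j$; combining this with the uniqueness clause of \rt{nfofM} forces $\Psi^{-1}\tau_{1j}\Psi=\hat\tau_{1j}$ with $\hat\Lambda_{1j}^{2}=\hat M_j$ and $\hat\Lambda_{1j}(0)=\la_j$, and $\Psi^{-1}\rho_0\Psi$ equal to the standard $\rho$ of \rea{57rhoz}.

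Finally, to recover $M$, use that $M=\pi_1(\fix(\rho_0))$. Define $\zeta_j$ to be the restriction of $\xi_j\eta_j$ to $\fix(\rho)$, transported back through $\Psi$ and $\pi_1$. A direct computation in the elliptic and complex blocks, using the quadratic leading relations $\xi_j=L_j(z,\ov z)$ dictated by the product-quadric model, expresses each $\zeta_j$ in terms of $(z,\ov z)$ by exactly the implicit system for $\zeta_e,\zeta_s,\zeta_{s+s_*}$ stated in the theorem, with coefficients $A_j, B_j$ as given; the identities $z_{p+j}=\Lambda_{1j}(\zeta)\zeta_j$ then follow by rewriting the relation $\xi_{p+j}=\Lambda_{1j}(\xi\eta)\eta_j$ obtained from $\hat\tau_{1j}$ on the $\rho$-fixed locus, and the commutation of $\Lambda_1$ with $\rho_z$ is the image of the $\rho$-equivariance of the normal form. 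The main obstacle is matching the formal output of Theorems~\ref{ideal0intr}--\ref{nfofM} with the exact hypotheses of \rt{lFba}---in particular, confirming that the Poincar\'e condition applies despite the repeated eigenvalue $1$ of each $S_j$ in directions transverse to its own block, and that conjugation by the convergent $\Psi$ preserves both the reversibility and the anti-holomorphic structure. Once this is verified, the extraction of $\widehat M$ is a bookkeeping computation.
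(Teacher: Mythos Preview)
Your overall strategy---verify Poincar\'e type, establish formal complete integrability of the commuting family $\{\sigma_1,\dots,\sigma_p\}$, apply \rt{lFba}, then realize $\widehat M$ from the normalized involutions---matches the paper's second proof of \rt{abelinv} followed by \rt{abelm}. But two of your intermediate steps are not supported by the results you cite.

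First, Theorems~\ref{ideal0intr} and~\ref{nfofM} do not give simultaneous diagonalization of the \emph{family} $\{\sigma_j\}$. Theorem~\ref{ideal0intr} normalizes only the single composite $\sigma=\tau_1\tau_2$, and Theorem~\ref{nfofM} requires the extra hypothesis that $\log\hat M$ is tangent to the identity, which is not assumed here. The paper obtains formal complete integrability by a different route: apply Moser--Webster's normalization (\cite{MW83}, Lemma~3.2 and Theorem~4.1) \emph{formally and successively} to $\sigma_1,\sigma_2,\dots$; the key point is that the unique normalized map for $\sigma_m$ automatically commutes with the already-linear $S_1,\dots,S_{m-1}$ because $\sigma_m$ does and the normalizing map is unique. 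Since no $\mu_j$ is a root of unity, the resulting normal form satisfies condition (ii) of complete integrability.

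Second, your derivation of $\Psi^{-1}\tau_{1j}\Psi=\hat\tau_{1j}$ and of $\Psi^{-1}\rho_0\Psi=\rho$ via ``the uniqueness clause of \rt{nfofM}'' does not work for the reason above. The paper instead argues: once all $\sigma_j$ are in normal form, so are the $\tau_{kj}$ (this is the Moser--Webster structure lemma); the $\rho$-equivariance of the normalizing map is obtained by decomposing $\Psi=\Psi_1\Psi_0^{-1}$ with $\Psi_1\in\cL C^{\mathsf c}(S_1,\dots,S_p)$, noting $\rho\Psi_1\rho$ is again normalized, and invoking uniqueness of the normalized transformation (\rl{lem-nf-nf}). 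A further explicit diagonal change $\phi_2$ with $\xi_j'=(\Lambda_{1j}^{1/2}M_j^{-1/4})\xi_j$, $\eta_j'=(\Lambda_{1j}^{-1/2}M_j^{1/4})\eta_j$ is then needed to achieve $\Lambda_{2j}=\Lambda_{1j}^{-1}$.

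Your verification of Poincar\'e type is correct in spirit; the paper carries out the estimate by choosing, for given $(j,Q)$, the index $i$ maximizing $q_i'+q_{i+p}'$ after reducing $Q$ modulo $\sum\min(q_k,q_{k+p})(e_k+e_{k+p})$.

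Finally, the realization step is more than bookkeeping: the paper defines specific generators $f_j=\xi_j+\xi_j\circ\tau_{1j}$ and $F_j=(\xi_j\circ\tau_{1j})\cdot\xi_j=\Lambda_{1j}(\zeta)\zeta_j$, checks via \rl{twosetin} that they generate all $\{\tau_{1k}\}$-invariants, solves explicitly for $\xi,\eta$ in terms of $z',w'$, and computes $\zeta_j=\xi_j\eta_j$. The stated $A_j,B_j$ drop out of that calculation. The paper also offers an alternative first proof that bypasses \rt{lFba} entirely, applying the Moser--Webster convergence theorem directly and successively to each $\sigma_m$.
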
 
  We will also present a more direct proof by using a    convergence theorem of Moser and Webster~\cite{MW83}
and some formal results from 
section~\ref{nfcb}.

 In the above theorem
$M_j=\Lambda_{1j}^2$, and they are obtained by \rt{abelinv}
for the Jacobian matrix of $\log M$ to be arbitrary. 
When 
 $2\log \diag(\Lambda_{11},\dots\Lambda_{1p})$ is tangent to the identity,
 $M$ can be further  uniquely
normalized  in suitable holomorphic
coordinates to obtain a unique normal form for $M$; see Remark~\ref{presform}.  When $p>1$, the unique normal form shows
that $M$ has infinitely many holomorphic invariants and $M$ is not biholomorphic to the product
of Bishop surfaces in $\cc^2$ even if the CR singularity  has pure elliptic type.  
As an application of \rt{iabelm}, 
we will prove the following. 
\begin{cor}Under the conditions in \rta{iabelm},  the manifold $M$  can be holomorphically flattened. More precisely, in suitable holomorphic coordinates, $M$
is contained in the linear subspace $\cc^p\times\rr^p$ defined by $z_{p+e}=\ov z_{p+e}$ and $z_{p+s}=\ov z_{p+s+s_*}$ where $1\leq e\leq 
e_*$ and $ e_*< s\leq e_*+s_*$.
\end{cor}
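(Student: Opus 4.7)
The plan is to read off the flattening directly from the normal form supplied by \rt{iabelm}. After the normalizing biholomorphism, $M$ takes the form $z_{p+j}=\Lambda_{1j}(\zeta)\zeta_j$ with $\zeta=(\zeta_1,\dots,\zeta_p)$ determined by the implicit system displayed in the theorem, and with $\Lambda_1$ commuting with the anti-holomorphic involution $\rho_z$. Because the hypothesis forbids hyperbolic components, every index $j$ is either elliptic ($e$) or complex ($s$ or $s+s_*$), so the two families of implicit equations listed in \rt{iabelm} cover all the defining functions, and no further case needs to be considered.

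First I transfer the reality structure from the complexification back to the $\zeta$-coordinates. On the real submanifold $M$, the complexification variable $w'$ is specialized to $\ov{z'}$, and this reality is preserved under the normalizing change of coordinates because the construction in \rt{ideal0intr}, refined by \rt{iabelm}, intertwines the original anti-holomorphic involution $\rho_0$ with the model $\rho$. I will verify that the unique small implicit solution $\zeta(z',\ov{z'})$ to the system of \rt{iabelm} is $\rho_z$-invariant, i.e.\ $\ov{\zeta_e}=\zeta_e$ and $\ov{\zeta_s}=\zeta_{s+s_*}$ on $M$. For the elliptic index, this is clear because the equation for $\zeta_e$ is manifestly symmetric in $z_e\leftrightarrow\ov{z_e}$ once the coefficients $A_e,B_e$ are real on the $\rho_z$-fixed set, which follows from the commutation of $\Lambda_1$ with $\rho_z$. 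For a complex index, I conjugate the equation for $\zeta_s$ and use the same commutation to match it with the equation for $\zeta_{s+s_*}$; uniqueness of the small implicit solution then forces $\ov{\zeta_s}=\zeta_{s+s_*}$.

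With reality of $\zeta$ on $M$ in hand, the corollary is essentially automatic. At a $\rho_z$-fixed point $\zeta$ the commutation of $\Lambda_1$ with $\rho_z$ gives $\Lambda_{1e}(\zeta)\in\rr$ and $\ov{\Lambda_{1s}(\zeta)}\zeta_{s+s_*}=\Lambda_{1(s+s_*)}(\zeta)\zeta_{s+s_*}$ after the appropriate identification, so that on $M$
\begin{gather*}
\ov{z_{p+e}}=\ov{\Lambda_{1e}(\zeta)\zeta_e}=\Lambda_{1e}(\zeta)\zeta_e=z_{p+e},\\
\ov{z_{p+s}}=\ov{\Lambda_{1s}(\zeta)\zeta_s}=\Lambda_{1(s+s_*)}(\zeta)\zeta_{s+s_*}=z_{p+s+s_*}.
\end{gather*}
Hence $M$ is contained in the real-linear subspace $\{z_{p+e}=\ov{z_{p+e}},\ z_{p+s}=\ov{z_{p+s+s_*}}\}$, which has real codimension $e_*+2s_*=p$ (using $h_*=0$) and is naturally identified with $\cc^p\times\rr^p$, as claimed.

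The main obstacle is the reality verification in the middle paragraph: one must track how the $\rho$-equivariance of the normalization in \rt{iabelm} manifests in the explicit implicit system for $\zeta$. This is essentially bookkeeping, since the whole normal form construction proceeds within the $\rho$-equivariant category, but one has to confirm that the precise commutation relation asserted for $\Lambda_1$ produces the required reality of the coefficients $A_j,B_j$ and the symmetry $\ov{\Lambda_{1s}^2}=\Lambda_{1(s+s_*)}^2$ on the $\rho_z$-fixed set, and hence the symmetry of the implicit equations that ultimately yields the flattening.
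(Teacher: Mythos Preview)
Your proposal is correct and follows essentially the same route as the paper. The paper's argument (carried out at the end of the proof of \rt{abelm}) sets $\zeta_e^*=\ov\zeta_e$, $\zeta_s^*=\ov\zeta_{s+s_*}$, $z_{p+e}^*=\ov z_{p+e}$, $z_{p+s}^*=\ov z_{p+s+s_*}$, conjugates the full system \re{Hmpp}--\re{zsss9}, uses the reality relations \re{lam1e}--\re{lam1s} on the $\Lambda_{1j}$ to verify that the starred quantities satisfy the same equations, and then invokes uniqueness of the implicit solution $\zeta$ to conclude $\zeta^*=\zeta$ and hence $z_{p+j}^*=z_{p+j}$---exactly your outline of establishing $\rho_z$-invariance of $\zeta$ and then reading off the flattening from the reality of $\Lambda_1$.
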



One of significances of the Bishop quadrics is that their  higher order analytic perturbation at an elliptic    complex tangent has
 a non-trivial hull of holomorphy. 
As another application of the above normal form, we will  construct the local hull of holomorphy of $M$,  that is the intersection of domains of holomorphy in $\cc^n$ that contain $M$,
via higher dimensional non-linear analytic polydiscs. 
\begin{cor} \label{andiscver}
Let $M$ be a germ of real analytic submanifold at an abelian CR singularity. Suppose that $M$ 
  has distinct eigenvalues 
and has only elliptic component of complex tangent. Then   in suitable holomorphic coordinates, 
$\cL H_{loc}(M)$,  the local hull of holomorphy of $M$, is filled 
by a real analytic family of analytic polydiscs of dimension $p$. Moreover, $\cL H_{loc}(M)$ is the transversal
 intersection of $p$
real analytic submanifolds $\cL H_j(M)$ with boundary and in suitable holomorphic coordinates
all $\cL H_j(M)$ are contained in $\rr^p\times\cc^p$. 
\end{cor}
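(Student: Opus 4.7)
By Theorem~\ref{iabelm} we may choose holomorphic coordinates $(z',w)=(z_1,\ldots,z_p,w_1,\ldots,w_p)$ in which
\[
M\colon\ w_e=\Lambda_{1e}(\zeta)\zeta_e,\qquad \zeta_e=A_e(\zeta)|z_e|^2-B_e(\zeta)(z_e^2+\ov z_e^2),\quad 1\leq e\leq p.
\]
In the pure elliptic case $\la_e>0$, so each $\Lambda_{1e}$, $A_e$, $B_e$ is real valued, each $\zeta_e$ is real, and $M\subset\cc^p\times\rr^p$ as in the preceding corollary.

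\textbf{Step 2 (Polydisc family).} For each $\zeta\in\rr^p$ with $\zeta_e\geq 0$ small, the single-variable Bishop equation in the $z_e$-plane defines a real analytic ellipse $\gamma_{\zeta,e}$, convex because $A_e(0)\pm 2B_e(0)>0$ (equivalent to $0<\gaa_e<1/2$); it bounds an analytic disc $\Delta_{\zeta,e}\subset\cc_{z_e}$. Form the analytic polydisc
\[
D_\zeta=\Delta_{\zeta,1}\times\cdots\times\Delta_{\zeta,p}\times\{w=(\Lambda_{11}(\zeta)\zeta_1,\ldots,\Lambda_{1p}(\zeta)\zeta_p)\},
\]
a $p$-dimensional analytic polydisc with $\partial D_\zeta\subset M$, and $\zeta\mapsto D_\zeta$ is a real analytic $p$-parameter family.

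\textbf{Step 3 (Hull equals the polydisc union).} By the continuity principle every holomorphic germ on $M$ extends to each $D_\zeta$, so $\bigcup_{\zeta\geq 0}D_\zeta\subset\cL H_{loc}(M)$. Conversely, this union equals the open region
\[
\Omega=\{(z',w)\in\cc^p\times\rr^p\colon \zeta_e(w)\geq A_e(\zeta)|z_e|^2-B_e(\zeta)(z_e^2+\ov z_e^2),\ 1\leq e\leq p\},
\]
where $\zeta(w)$ is solved from $w_e=\Lambda_{1e}(\zeta)\zeta_e$. One verifies that $\Omega$ is a local domain of holomorphy, for instance by checking that each real analytic function
\[
\psi_j(z',w)=\zeta_j(w)-A_j(\zeta)|z_j|^2+B_j(\zeta)(z_j^2+\ov z_j^2)
\]
is plurisubharmonic on the polydisc family, and together the $\psi_j$ exhaust $\Omega$. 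This pins $\cL H_{loc}(M)=\Omega$.

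\textbf{Step 4 (Intersection of the $\cL H_j$).} Set $\cL H_j(M)=\{(z',w)\in\cc^p\times\rr^p\colon \psi_j(z',w)\geq 0\}$. Each is a real analytic submanifold with boundary contained in $\cc^p\times\rr^p$, which matches $\rr^p\times\cc^p$ in the statement after a permutation of the real and complex factors. Since $\partial\psi_j/\partial\zeta_j=1+O(|z|^2)$ dominates $\partial\psi_j/\partial\zeta_k=O(|z|^2)$ for $k\ne j$, the boundaries $\{\psi_j=0\}$ meet transversally at the origin, and $\cL H_{loc}(M)=\Omega=\bigcap_{j=1}^p\cL H_j(M)$ is the asserted transversal intersection. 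The main obstacle is Step~3: while each fixed polydisc lies in the hull by a standard Cauchy/continuity argument, matching the union of polydiscs with the hull precisely requires a plurisubharmonic majorization that exploits the full $p$-parameter family globally and depends crucially on the transversal coupling among the $\zeta_j$'s provided by the convergent normal form of Theorem~\ref{iabelm}; without this globally controlled family one could only conclude a one-sided inclusion.
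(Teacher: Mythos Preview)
Your overall architecture matches the paper's (Theorem~\ref{hullj}): pass to the flattened normal form, foliate a neighborhood of the origin by analytic polydiscs whose distinguished boundaries sweep out $M$, extend holomorphic functions from a neighborhood of $M$ across the polydiscs by an iterated Cauchy integral, and finally identify the hull as an intersection of half-spaces $\cL H_j$.  Two points need correction.

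First, a minor one: what lies in $M$ is the \emph{distinguished} boundary $\partial^*D_\zeta=\gamma_{\zeta,1}\times\cdots\times\gamma_{\zeta,p}\times\{w\}$, not the full topological boundary $\partial D_\zeta$. Accordingly the extension in Step~3 is not the one-dimensional continuity principle but the iterated Cauchy integral over $\partial^*D_\zeta$, which the paper carries out explicitly, deforming the contours $\partial D_j^\delta(x'')$ to fixed contours $\partial D_j^\delta(x_0'')$ to check holomorphy of the extension in all $2p$ variables.

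Second, and this is the real gap: your functions $-\psi_j$ are \emph{not} plurisubharmonic on $\cc^{2p}$, so they do not show that $\Omega$ is a domain of holomorphy.  The complex Hessian of
\[
A_j(x'')|z_j|^2-B_j(x'')(z_j^2+\ov z_j^2)-x_{p+j}
\]
is positive in the $t_j$ direction but identically zero in the $t_i$ directions for $i\neq j$, $i\leq p$, and in the $t_{p+k}$ directions; the dependence of $A_j,B_j$ on $x''$ produces off-diagonal terms of size $O(|z_j|)$ in the $(t_j,t_{p+k})$ block, so the Hessian can be slightly indefinite away from $z_j=0$. The paper repairs this by introducing a family of auxiliary functions
\[
\rho_j^{\delta'}=A_j|z_j|^2-B_j(z_j^2+\ov z_j^2)-x_{p+j}
+\Bigl(\sum_i\delta_i^{-1}\Bigr)\sum_i y_{p+i}^2
+\sum_{i\neq j}\delta_i^{-1}\bigl(A_i|z_i|^2-B_i(z_i^2+\ov z_i^2)-x_{p+i}\bigr),
\]
whose complex Hessian is now strictly positive definite for all $0<\delta_i<1$ and $|z|$ small (the added terms contribute $\frac12\delta_i^{-1}|t_{p+i}|^2$ and $\delta_i^{-1}A_i|t_i|^2$, which dominate the cross terms by Cauchy--Schwarz). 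One then sets $\rho^\delta=\max_j\{\rho_j^{\delta'},|y''|^2-\delta_0^2,x_{p+j}^2-\e^2\}$, obtains the pseudoconvex domains $\cL H_\delta^\e=\{\rho^\delta<0\}$, and shows $\cL H^\e=\bigcap_{\delta}\cL H_\delta^\e$ by letting the $\delta_i\to 0$. Your remark that the ``transversal coupling among the $\zeta_j$'s'' is essential is on target, but it enters through this $\delta$-weighted construction, not through plurisubharmonicity of the bare $\psi_j$.
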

For a precise statement of the corollary, see \rt{hullj}.
The hulls of holomorphy for real submanifolds with a CR singularity have been studied extensively, starting with the work of Bishop. In the real analytic case with minimum complex tangent space at an elliptic
complex tangent, 
we refer to Moser-Webster~\cite{MW83} for $\gaa>0$, and Krantz-Huang \cite{HK95} for $\gaa=0$.  For the smooth case, see Kenig-Webster~\cite{KW82,KW84}, Huang~\cite{H98}.
For global
results on hull of holomorphy, we refer to \cite{bedford-gaveau, bedford-kling}.

\subsection{Rigidity of quadrics}

  In  Section 10, as an  application of   the theorem of linearization of holomorphic mappings on an ideal $\cL I$
 \cite{stolo-bsmf} (see \rt{theo-invariant}  below), 
we will prove the following theorem, which corresponds to the case $\cL I=0$~: 
\begin{thm} 
 Let $M$ be 
a germ of real analytic submanifold at the origin of $\cc^n$. Suppose that $M$ is formally equivalent
a product quadric   that has distinct eigenvalues. 
Suppose that each hyperbolic component has an eigenvalue $\mu_h$ which
is either a root of unity or satisfies   Brjuno small divisors condition. Then $M$ is holomorphically equivalent
to the product quadric.
\end{thm}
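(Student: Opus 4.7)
The plan is to translate the rigidity question into a linearization problem for the associated family of involutions, and then deduce analytic linearization from the formal one by invoking \rt{theo-invariant} of \cite{stolo-bsmf} in the degenerate case $\cL I=0$.

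First, I would use the equivalence between the formal/holomorphic classification of $M$ and that of the family $\{\tau_{11},\ldots,\tau_{1p},\rho\}$ (established in the formal normal form section of the paper). Under the hypothesis that $M$ is formally equivalent to a product quadric with distinct eigenvalues, the involutions $\tau_{1j}$ and $\rho$ are formally conjugate to the linear involutions $\hat T_{1j}$ and $\rho$ arising from the quadric, via a single formal biholomorphism $\Psi$. Equivalently, the reversible mapping $\sigma=\tau_1\tau_2$ and each of the commuting $\sigma_j=\tau_{1j}\tau_{2k_j}$ (as in the abelian setup of \rt{iabelm}) are formally conjugate to their linear parts $\hat S$ and $S_j$, simultaneously and compatibly with $\rho$.

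The core step is then to upgrade this formal linearization to a convergent one. This is exactly the situation of the linearization theorem along an ideal (\rt{theo-invariant}) applied with $\cL I=0$, yielding the holomorphic linearization of a family of formally linearizable commuting biholomorphisms under a suitable small divisors assumption on their linear parts. The linear parts fall into three types, corresponding to the decomposition $p=e_*+h_*+2s_*$: for elliptic components the eigenvalues $\mu_e=\la_e^2$ are real and greater than $1$; for complex components $|\mu_s|=|\la_s|^2\neq 1$; both are therefore of Poincar\'e type and present no small divisor obstruction. The only arithmetic issue lives in the hyperbolic blocks, where $|\mu_h|=1$. The hypothesis handles these in two mutually exclusive ways: if $\mu_h$ is a root of unity, then on the corresponding two-dimensional block $\sigma$ has finite order up to the centralizer, so the formal conjugacy is automatically a polynomial relation whose convergence is trivial once one accounts for the induced resonances; if $\mu_h$ satisfies the Brjuno condition, one has the required control on $|\mu^Q-\mu_j|$ and the Brjuno-type series converges, which matches precisely the hypothesis of \rt{theo-invariant}.

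To carry this out, I would first check that the joint linear part $\cL D=\{\hat T_{11},\ldots,\hat T_{1p},\rho\}$, together with the derived family $\{S_j\}$, satisfies the collective small divisors condition of \rt{theo-invariant} under the stated hypothesis. This amounts to a book-keeping exercise: separate the Brjuno indices from the root-of-unity indices among the hyperbolic blocks, treat root-of-unity indices as generating a finite cyclic resonance lattice on which the formal conjugating series is trivial (so the ideal $\cL I=0$ is preserved), and bound the small divisor series for the Brjuno indices using the standard majorant argument. Then apply the theorem to obtain a convergent $\Psi$ conjugating $\{\tau_{1j},\rho\}$ to $\{\hat T_{1j},\rho\}$, which by the correspondence translates into a convergent biholomorphism mapping $M$ onto the product quadric.

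The main obstacle I anticipate is the simultaneous treatment of root-of-unity and Brjuno hyperbolic components, and in particular the interaction between the cross-resonances they may create with elliptic/complex eigenvalues. The formal equivalence hypothesis rules out obstructions at the formal level; nonetheless, one has to check carefully that the linearizing series produced by \rt{theo-invariant} respects the reality structure $\rho$ (otherwise one recovers a linearization of $\sigma$ but not of the full involution system and hence not of $M$). Overcoming this should reduce to verifying that the construction in \cite{stolo-bsmf} can be performed in the centralizer of $\rho$, which follows because both the formal conjugacy and the linear part commute with $\rho$, so the uniqueness inherent in the normalization scheme forces the convergent conjugacy to commute with $\rho$ as well.
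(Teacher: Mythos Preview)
Your plan correctly identifies \rt{theo-invariant} with $\cL I=0$ as the key analytic input, and you are right that the small-divisor condition for the family $\{S_1,\ldots,S_p\}$ reduces exactly to the stated hypothesis on the $\mu_h$ (the elliptic and complex blocks contribute no small divisors since $|\mu_e|,|\mu_s|\neq 1$, and the hyperbolic blocks decouple). However, there is a genuine gap in the passage from ``$\{\sigma_j\}$ is holomorphically linearizable'' to ``$\{\tau_{1j},\rho\}$ is holomorphically linearizable.'' The theorem from \cite{stolo-bsmf} applies to commuting biholomorphisms with \emph{diagonal} linear part; it linearizes the $\sigma_j$, but it says nothing directly about the involutions $\tau_{1j}$, whose linear parts $T_{1j}$ are not diagonal (they swap $\xi_j\leftrightarrow\la_j\eta_j$). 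Your proposal jumps straight to ``a convergent $\Psi$ conjugating $\{\tau_{1j},\rho\}$ to $\{\hat T_{1j},\rho\}$'' without explaining this step.

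The paper fills this gap with a separate, substantial argument (\rp{linearS}) that has nothing to do with small divisors. Once the $\sigma_j$ are linear, one shows by a majorant method that (i) a suitably normalized map in $\cL C(\cL S,\rho)$ linearizing $\tau_1,\tau_2$ converges, and then (ii) a further normalized map in $\cL C(\cL S,T_1,T_2,\rho)$ linearizing all $\tau_{1j}$ converges. Both steps exploit the explicit description of the centralizers $\cL C(\cL S,T_1,\rho)$ and $\cL C(\cL T_1,\cL T_2,\rho)$ and their complements (\rp{STiR}, \rl{FHG-}): after decomposing the formal conjugacy as $\Psi=\Psi_1\Psi_0^{-1}$ with $\Psi_0$ in the centralizer and $\Psi_1$ normalized, the normalizing conditions force many coefficients of $\Psi_1$ to vanish and reduce the rest to a fixed-point equation solvable by the analytic implicit function theorem. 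This is where the work in Section~\ref{rigidquad} actually lies, and it is missing from your outline. The concern you flag about $\rho$-compatibility is real but secondary; it is handled by uniqueness of the normalized conjugacy at each of the three stages.
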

Brjuno small divisors condition is defined by \re{bruno-cond}. When $p=1$, this result is due to the first-named author under a stronger small divisor condition, namely Siegel's condition \cite{Go94}. In the case $p=1$ with a vanishing Bishop invariant, such rigidity result was obtained by 
Moser~\cite{moser-zero}  and by Huang-Yin \cite{HY09b} in a more general context. 

%
%

\subsection{Attached complex submanifolds}
We now describe  convergent results for attached complex submanifolds.  The convergent
results are for a   general 
 $M$, including the one of which the complex tangent might not be of abelian type.
 
We say that a formal complex submanifold $K$ is  {\it attached} to $M$ if $K\cap M$
 contains at least two germs of  totally real and formal submanifolds $K_1, K_2$
that intersect
 transversally at a given CR singularity. In~\cite{Kl85}, Klingenberg showed that when
$M$ is non-resonant and $p=1$, there is   a unique   formal holomorphic
curve attached to $M$ with a hyperbolic complex tangent. He also proved the convergence of the attached formal holomorphic curve under
a Siegel small divisors  
 condition.  When $p>1$, we will show that generically there is no formal complex submanifold that can be attached
to $M$ if $M$ does not admit the maximum number of  deck transformations or if the CR singularity has
an elliptic component.   When $p>1$ and $M$ is a higher order perturbation of a product quadric  of
 $Q_{\gaa_h},
Q_{\gaa_s}$,
we will encounter  various  interesting situations.

Firstly, by adapting Klingenberg's proof for $p=1$ 
and using a theorem of P\"oschel~\cite{Po86}, we will prove the following. 

\begin{prop}\label{oneconv}
Let $M$ be a third order perturbation of a product of  quadrics of which each has complex type at
the origin. 
Suppose that $M$  admits the maximum number of deck transformations
and it is non-resonant. 
 Then $M$ admits an attached complex submanifold.\end{prop}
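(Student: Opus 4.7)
My strategy is to recast the existence of an attached complex submanifold as the existence of a convergent $\sigma$-invariant complex $p$-dimensional submanifold of the complexification $\cL M$ through the origin, and then produce this submanifold via a theorem of P\"oschel~\cite{Po86}. The assumption that every factor of the quadric is of complex type will be decisive at the convergence step, because it forces the eigenvalues of $\hat S$ strictly off the unit circle.

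First, by condition~D, $\cL M$ carries the full set of $2^p$ deck transformations and in particular the involutions $\tau_1,\tau_2,\rho_0$. A germ of $p$-dimensional complex submanifold $K\subset\cc^{2p}$ whose intersection with $M$ consists of two transverse totally real $p$-submanifolds at the CR singularity corresponds, via the branched projection $\pi_1$, to a germ $\cL K\subset\cL M$ of complex dimension $p$ through the origin which is invariant under $\sigma=\tau_1\tau_2$; the two totally real traces then arise from the $\rho_0$-related pair $\cL K,\rho_0\cL K$. Thus it suffices to produce such an invariant $\cL K$ on $\cL M$.

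Second, I would construct $\cL K$ formally by adapting Klingenberg's argument from~\cite{Kl85}. In coordinates where the linear part $\hat S$ is $\diag(\mu_1,\ldots,\mu_p,\mu_1^{-1},\ldots,\mu_p^{-1})$, one seeks $\cL K$ as a graph tangent to the $\mu$-eigenspace; expanding the $\sigma$-invariance relation order by order yields homological equations whose denominators are of the form $\mu^Q-\mu_j$. The non-resonance hypothesis keeps these nonzero, so one obtains a unique formal graph, the higher-dimensional analogue of Klingenberg's formal curve for $p=1$.

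For convergence, I would apply~\cite{Po86}. Since every factor is of complex type, $|\la_s|>1$ and $\la_{s+s_*}=\ov\la_s^{-1}$, so every $\mu_j=\la_j^2$ lies strictly off the unit circle; hence the spectrum of $\hat S$ on the $\mu$-eigenspace is hyperbolic relative to the complementary spectrum $\{\mu_j^{-1}\}$. This is precisely the Poincar\'e-type spectral configuration in which P\"oschel upgrades a formal invariant manifold into an analytic one, the only arithmetic input being the non-resonance $\mu^Q\neq\mu_j$ already in force. No Brjuno or Siegel condition on the $\mu_j$ is required here, which is why the pure complex case is qualitatively cleaner than the hyperbolic case of~\cite{Kl85}. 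The main obstacle is to verify that the reversible two-involution structure of $\sigma$ on $\cL M$ fits into the hypotheses of~\cite{Po86}, and then to recover the two transverse totally real traces on $M$ from $\cL K$ and $\rho_0\cL K$ after pushing forward by $\pi_1$.
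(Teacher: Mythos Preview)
Your strategy—reduce to a $\sigma$-invariant $p$-submanifold of $\cL M$, get it formally via a Klingenberg-type argument, and upgrade to convergence by P\"oschel—is exactly the paper's route (Lemma~\ref{asynum}\,(iv), Theorem~\ref{invep}, and the verification of \re{omnu}). Two points need correcting.

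First, the two totally real leaves do not arise from the pair $(\cL K,\rho_0\cL K)$. In fact the unique formal $\sigma$-invariant submanifold tangent to a $\rho_0$-invariant linear space is itself $\rho_0$-invariant (by uniqueness, since $\rho_0\sigma\rho_0=\sigma^{-1}$), so your pair would collapse. The correct pair is $(\cL K,\tau_1\cL K)$: because $\tau_1\sigma\tau_1=\sigma^{-1}$, the image $\tau_1\cL K$ is again $\sigma$-invariant, and since $\tau_1$ is a deck transformation of $\pi_1$ the two project to the \emph{same} complex submanifold $K=\pi_1(\cL K)$. Their intersections with $\fix(\rho_0)$ are the two transverse totally real pieces $K_1,K_2\subset M$.

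Second, ``tangent to the $\mu$-eigenspace'' is not the right choice, and ``all $\mu_j$ off the unit circle'' is not by itself enough. With $\mu_{s+s_*}=\ov\mu_s^{-1}$, the $\xi$-subspace carries both expanding ($\mu_s$) and contracting ($\ov\mu_s^{-1}$) tangent eigenvalues, so products $\mu^P$ can accumulate on the normal spectrum and P\"oschel's condition is not automatic. The paper instead selects the eigenspace with \emph{all} tangent eigenvalues on one side of the unit circle—equivalently, it picks $\nu$ with $\nu_s=\mu_s$, $\nu_{s+s_*}=\ov\mu_s$, so $|\nu_j|>1$ for every $j$. Then one finds $r$ with $\min_j|\nu_j|^r>\max_j|\nu_j|$, whence $|\nu^P-\nu_j^{\pm1}|\geq c>0$ for all $|P|\geq2$, and \re{omnu} is trivially satisfied. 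This is the precise reason no small-divisor hypothesis is needed in the pure complex case.
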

 The proposition  does not need any small divisors condition and   a
 more detailed version   in  the presence of hyperbolic components  is in \rt{pocor}.
 Furthermore, the non resonance condition is satisfied for $\gaa_1,\ldots, \gaa_{s_*}$
 outside the union of countable algebraic hypersurfaces.

Secondly, we will show that a non-resonant product quadric has a unique attached complex
manifolds. However, under a perturbation of the quadric, 
the attached complex submanifold  of the   quadric
can split into different attached formal submanifolds which may or may not be convergent.
In fact, we will show that the coexistence of divergent and convergent attached complex submanifolds for a complex tangent of the
complex type; see \rp{coexist}.  

Finally, for the convergence of {\it all} attached  formal complex submanifolds, 
we have the following. 
\begin{thm}\label{allconv}
Let $M$ be a third order perturbation of a product quadric.  
Suppose that $M$  admits the maximum number of deck transformations and 
is non resonant. Suppose that $M$ has no elliptic component and the eigenvalues of $\sigma$ satisfy a Bruno type condition,  then
 all attached formal submanifolds are convergent. 
\end{thm}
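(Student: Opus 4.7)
My approach is to identify attached formal complex submanifolds of $M$ with formally $\sigma$-invariant submanifolds in the complexification $\cL M$, and then to invoke a P\"oschel--Bruno type convergence theorem.

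The reduction step goes as follows. If $K\subset\cc^{2p}$ is a formal complex submanifold attached to $M$ at the origin, then $\widetilde K:=\pi_1^{-1}(K)\subset\cL M$ is a formal complex submanifold invariant under the deck transformations $\tau_{11},\dots,\tau_{1p}$ (because $K$ is the $\pi_1$-image of a $\pi_1$-saturated set over a complex submanifold) and under the antiholomorphic involution $\rho_0$ (because $K\cap M$ is totally real of the top dimension $p$). Conversely, any such jointly invariant formal submanifold descends through $\pi_1$ to an attached formal submanifold of $M$. Using $\tau_2=\rho_0\tau_1\rho_0$ together with the factorizations $\sigma_j=\tau_{1j}\tau_{2k_j}$, the joint invariance is equivalent to $\sigma$-invariance plus $\rho_0$-compatibility. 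Thus the problem reduces to understanding the formal complex submanifolds through the origin that are invariant under $\sigma$ and compatible with $\rho_0$.

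In the formal coordinates provided by \rt{ideal0intr}, $\sigma$ is diagonalized in the form $\xi_j'=\hat M_j(\xi\eta)\xi_j$, $\eta_j'=\hat M_j^{-1}(\xi\eta)\eta_j$, and the non-resonance hypothesis forces every formal $\sigma$-invariant smooth complex submanifold through the origin to be a coordinate subspace $\{\xi_I=0,\,\eta_J=0\}$ with $I\sqcup J=\{1,\dots,p\}$. Imposing $\rho_0$-compatibility and totally-real transversality with $M$ selects finitely many candidate coordinate subspaces, each of which is formally invariant by construction. The remaining task is therefore to prove that the formal conjugacy to the normal form of $\sigma$ converges when restricted to each such candidate subspace.

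For this last step the key observation is that, since $M$ carries no elliptic component, every eigenvalue $\mu_j$ of $\sigma$ is either off the unit circle (from a complex component, the Poincar\'e case) or on the unit circle and distinct from $\pm 1$ (from a hyperbolic component, the Siegel case). Along the Poincar\'e directions convergence of the invariant manifold follows from the classical stable/unstable manifold theorem and is automatic; along the Siegel directions the Bruno condition on the eigenvalues of $\sigma$ permits a P\"oschel-type convergence argument, in the spirit of \cite{Po86} and of the linearization-on-an-ideal technique of \cite{stolo-bsmf} used in the rigidity section of the paper. The main technical obstacle is to carry the Bruno small-divisor estimates through this mixed spectrum and to construct, for each selected coordinate subspace $\{\xi_I=0,\eta_J=0\}$, an analytic parametrization that realizes the formal invariant submanifold; the simplification here is that the off-circle eigenvalues contribute uniformly bounded divisors, so only the on-circle eigenvalues enter the small-divisor estimate along the invariant manifold and the Bruno hypothesis on $(\mu_1,\dots,\mu_p)$ suffices. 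Once each candidate invariant submanifold is realized analytically in $\cL M$, pushing forward by $\pi_1$ yields a convergent complex submanifold attached to $M$; since every formal candidate converges, all attached formal submanifolds of $M$ converge, which is the assertion of the theorem.
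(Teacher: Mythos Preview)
Your reduction step and your convergence step both have gaps, and the paper's route is different from the one you sketch.

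\textbf{Reduction.} Taking $\widetilde K=\pi_1^{-1}(K)$ does not give a single complex submanifold: $\pi_1$ is a $2^p$-sheeted branched covering, so $\pi_1^{-1}(K)$ generically has $2^p$ irreducible sheets. The paper (Theorem~\ref{invep}) instead complexifies one totally real piece $K_1\subset M\subset\cL M$ to obtain a single $p$-dimensional complex submanifold $\cL K_1\subset\cL M$, checks directly that $\rho(\cL K_1)=\cL K_1$ and that $\tau_1(\cL K_1)=\cL K_2$, and hence that $\cL K_1$ is $\sigma$-invariant. The individual $\tau_{1j}$ and the $\sigma_j$ play no role here; only $\tau_1,\tau_2,\rho$ and $\sigma=\tau_1\tau_2$ are used. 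Conversely, convergence of $\cL K_1$ immediately gives convergence of $K_1$ (hence of $K$) via its defining equation $w'=\ov\rho_1(z')$.

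\textbf{Convergence.} The paper does \emph{not} argue subspace by subspace via P\"oschel. It invokes Theorem~\ref{theo-invariant} (linearization on an ideal, \cite{stolo-bsmf}) for the single map $\sigma$ and the resonant ideal $\cL I$ generated by $\xi_1\eta_1,\dots,\xi_p\eta_p$. The Bruno-type hypothesis is precisely the diophantine condition \eqref{bruno-cond} on $\cL I$, and the conclusion is a \emph{single} holomorphic change of coordinates in which $\sigma$ is linear modulo $\cL I$; in those coordinates every coordinate subspace $\{\xi_I=0,\eta_J=0\}$ with $I\sqcup J=\{1,\dots,p\}$ is $\sigma$-invariant and convergent simultaneously (Corollary~\ref{stcor}). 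Combining with the reduction above gives the theorem.

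Your per-subspace strategy via P\"oschel is not wrong in principle --- indeed, for each sign choice $\e$ the P\"oschel divisors $\nu^P-\nu_j^{\pm1}$ with $\nu=\mu_\e$, $P\in\nn^p$, embed into the divisors $\mu^Q-\mu_k$ with $(\xi,\eta)^Q\notin\cL I$, so \eqref{bruno-cond} implies \eqref{omnu} for every $\e$ --- but your justification (``off-circle eigenvalues contribute uniformly bounded divisors'') is not the correct mechanism: in the mixed hyperbolic/complex case $|\nu^P|$ can match $|\nu_j|$ for infinitely many $P$, and the control comes from the inclusion of divisor families just described, not from a modulus argument. Also, invoking the stable/unstable manifold theorem for the complex directions is misplaced: the subspaces $\cL H_\e$ are not in general the stable or unstable subspaces of $\sigma$ (for each complex pair $s,s+s_*$ one has $|\nu_s|>1$ and $|\nu_{s+s_*}|<1$ simultaneously), so one really needs a P\"oschel-type result, or the ideal linearization the paper uses.
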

The above  theorem  for
hyperbolic complex tangency was drafted in \cite{St07}. 
For the Bruno type of condition in the theorem, see
\rea{bruno-cond}, which was introduced in~\cite{stolo-bsmf} for linearization on ideals. 


%
%
%

\subsubsection{Notation}
We briefly introduce the notation used in the paper. The identity map is denoted by $I$.  We 
denote by $LF$ the linear part at the origin of a mapping $F\colon\cc^m\to\cc^n$
with $F(0)=0$.
 Let $F'(0)$ or $DF(0)$ denote the Jacobian matrix of the $F$ 
 at the origin. Then
$LF(z)=F'(0)z$.  
We also denote by 
$DF(z)$ or simply $DF$, the Jacobian matrix of $F$
at 
$z$, when there is no ambiguity.
 By an analytic (or holomorphic) function, we shall mean a {\it germ} of analytic function at a point (which will be defined by the context) otherwise stated. 
 We shall denote by ${\cL O}_n$ (resp. $\widehat {\cL O}_n$, $\mathfrak M_n$, $\widehat{\mathfrak M}_n$) the space of germs of holomorphic functions of $\cc^n$ at the origin (resp. of formal power series in $\cc^n$,  holomorphic
germs, and formal germs vanishing at the origin).

\medskip
\noindent 
{\bf Acknowledgment.} This joint work was completed while the first-named author was visiting at  SRC-GAIA of
POSTECH.  X.G. is grateful to Kang-Tae Kim for hospitality. 
\setcounter{thm}{0}\setcounter{equation}{0}

\section{
CR singularities and deck transformations}
\label{secinv}
 
We will consider a real submanifold $M$ of $\cc^n$. The simplest
local holomorphic invariant of $M$ is the dimension of its complex tangent
subspace $T_{x_0}^{(1,0)}M$ at a given point $x_0$. Here $T_{x_0}^{(1,0)}M$ is the  space of tangent vectors of $M$ at $x_0$   of the form $\sum_{j=1}^na_j\f{\pd}{\pd z_j}$.   Let $M$ have dimension $n$. In this paper, we assume that
$T_{x_0}^{(1,0)}M$
 has the largest possible dimension $p=n/2$ at a given   point $x_0$,   or equivalently, that
 the complexified tangent space $T_{x_0}M\otimes \cc$ is the direct sum of $T_{x_0}^{(1,0)}M$ and its complex conjugate.
We    study local invariants of   $M$ under a local holomorphic change of coordinates fixing the $x_0$.
  In suitable
holomorphic affine coordinates, we have $x_0=0$ and
\begin{equation}\label{variete-orig}
M\colon 
z_{p+j} = E_{j}(z',\bar z'), \quad 1\leq j\leq p.
\end{equation}
Here we have set $z'=(z_1,\ldots,z_p)$ and we will denote $z''=(z_{p+1},\ldots,z_{2p})$. The   $1$-jet at the origin of the complex analytic functions $E_j$ vanishes; in other words,
$E_j$ together with
their first order derivatives vanish at $0$.
 The tangent space $T_0M$ is then the $z'$-subspace. 
  For the local theory, the only interesting case is when $M$ is not a complex submanifold, that is that 
  $E(z',\ov z')$ is not holomorphic in $z'$, 
 which we assume throughout the paper.
   
  The main purpose of this section is to obtain some basic invariants and a relation between two families of involutions
 and the real analytic submanifolds which we want to normalize. 
 
\subsection{CR singular set} 
Let $M$ be given by \re{variete-orig}.  Then $$
X=\sum_{j=1}^p\left\{ a_j\frac{\partial}{\partial z_j}+b_j\frac{\partial}{\partial z_{p+j}}\right\}
$$ is tangent
to $M$ at $(z',z'')$ if and only if
$$
  b_k=\sum_{j=1}^pa_j \frac{\partial E_{k}(z',\ov z')}{\partial z_j}, \quad \sum_{j=1}^p a_j \frac{\partial\ov E_{k}(\ov z',z')}{\partial z_j}=0,
\quad 1\leq k\leq p.
$$
 To consider the   second set of equations,  we introduce
\eq{czzp}
 C(z',\ov z'):= \begin{vmatrix}
    \f{\pd E_1}{\pd\ov z_1} & \cdots & \f{\pd E_1}{\pd \ov z_p} \\
   \vdots & \vdots & \vdots \\
   \f{\pd E_p}{\pd\ov z_1} & \cdots & \f{\pd E_p}{\pd \ov z_p}
  \end{vmatrix}.
\eeq
 Note that $M$ is totally real at $(z',z'')\in M$ if and only if $C(z',\ov{z'})\neq0$. 
We will assume that   
 $C(z',\ov z')$ is not identically zero in any neighborhood of the origin. 
 Then the zero set of $C$ on $M$, denoted by $M_{CRsing}$,  is called {\it CR singular set}
 of $M$, or the set of {\it complex tangents} of $M$. 
   We assume that $M$ is real analytic. Then
 $M_{CRsing}$ is a possibly singular
  proper real analytic subset of $M$ that contains the origin. 

\subsection{Existence of deck transformations and examples}

We first derive some quadratic invariants.
Applying a quadratic change of holomorphic coordinates, we obtain
\eq{variete-orig+}
E_j(z',\ov z')=h_j(z',\ov z')+q_j(\ov z')+O(|(z',\ov z')|^3).
\eeq
Here we have used the convention that if $x=(x_1,\ldots, x_n)$,
then $O(|x|^k)$ denotes a formal power series in $x$ without
terms of order  $<k$.
A biholomorphic map $f$ that preserves the form of the above submanifolds
$M$ and fixes the origin must preserve their complex tangent spaces
at the origin, i.e.  $z''=0$. Thus if $\tilde z$
denote the old coordinates and $z$ denote the new coordinates then $f$ has the form $$
 \tilde z'=  \mathbf{A}  z'+\mathbf{B}   z''+O(|  z|^2), \quad \tilde z''= \mathbf{U}  z''+O(|  z|^2).
$$
Here $\mathbf{A}$ and $\mathbf{U}$ are non-singular  $p\times p$ complex matrices.
Now $f(M)$ is given by
$$
\mathbf{U}  z''=h(\mathbf{A}  z',\ov{\mathbf{A}}\ov{  z}')+q(\ov{\mathbf{A}}\ov z')+O(|z|^3).
$$
We multiply the both sides by $\mathbf{U}^{-1}$ and
solve  for $z''$;  
the vectors of $p$ quadratic forms  $\{h(\tilde z',\ov {\tilde z'}), q(\tilde z')\}$ are transformed into
\eq{hhzp}
\{\hat h(z',z'),\hat q(\ov z')\}=\{\mathbf{U}^{-1}h(\mathbf{A}z',\ov{\mathbf{A}}\ov z'),\mathbf{\mathbf{U}}^{-1}q(\ov {\mathbf{A}}\ov z')\}.
\eeq
This shows that if $M$ and $\hat M$ are holomorphically equivalent,  their corresponding quadratic terms  are
equivalent via \re{hhzp}.
 Therefore, we obtain a holomorphic
invariant
$$
q_*=\dim_{\cc}\{z'\colon q_1(z')=\cdots =q_p(z')=0\}.
$$
We remark that when $M,\hat M$
are quadratic (i.e. when their corresponding $E, \hat E$ are homogeneous quadratic polynomials),
the equivalence relation \re{hhzp} implies that
$M, \hat M$ are linearly equivalent,
Therefore,  the above transformation
of $h$ and $q$
via $\mathbf{A}$ and $\mathbf{U}$ determines the classifications of  the quadrics under  local biholomorphisms as wells as under linear biholomorphisms.
We have shown that the two classifications for the quadrics are identical.


Recall that $M$ is real analytic.
Let us complexify such a real submanifold $M$ by replacing $\bar z'$ by $w'$
to obtain a complex   $n$-submanifold of $\cc^{2n}$, defined by
\begin{equation}\nonumber
{\mathcal M}\colon
\begin{cases}
z_{p+i} = E_{i}(z',w'), 
\\
w_{p+i} = \bar E_i(w',z'),\quad i=1,\ldots, p.\\
\end{cases}
\end{equation}
We  use $(z',w')$ as holomorphic coordinates of $\mathcal M$ and
 define the anti-holomorphic involution $\rho$ on it by
\begin{equation}\label{antiholom-invol}
\rho(z',w')=(\bar w',\bar z').
\end{equation}
  Occasionally we will also denote the above $\rho$ by $\rho_0$ 
for clarity.  
We will  identify $M$ with a totally real and real analytic submanifold of $\cL M$ via embedding $z\to (z,\ov z)$.
We have $M={\mathcal M}\cap \text{Fix}(\rho)$ where $\text{Fix}(\rho)$ denotes the set of fixed points of $\rho$.
Let $\pi_1\colon \mathcal M\mapsto{\cc}^n$ be the restriction of the projection $(z,w)\to z$
and let $\pi_2$ be the restriction of $(z,w)\to w$. It is clear that $\pi_2=\ov{\pi_1\rho}$ on $\cL M$. Throughout the paper, $\pi_1,\pi_2,\rho$ are restricted on $\cL M$
unless stated otherwise.

Our first basic assumption on $M$ is the following condition.

\medskip
\noindent
{\bf Condition   B.}  $q_*=0$.

\medskip

Note that a necessary condition for $q_*=0$
is that functions $q_1(z')$, $ q_2(z'), \ldots, q_p(z')$
are linearly independent, since the intersection
of $k$ germs of holomorphic hypersurfaces at $0$ in $\cc^p$ has dimension at least
$p-k$. (See~\cite{Ch89}, p. 35;  \cite{Gunning2}[Corollary 8, p. 81].)

  When $\pi_1\colon\cL M\to\cc^{2p}$ is a branched covering,  we define a {\it deck transformation} on $\cL M$ for $\pi_1$ to
be   a germ of biholomorphic mapping $F$ defined at $0\in\cL M$ that satisfies  $\pi_1\circ F=\pi_1$. In other words, 
$F(z',w')=(z',f(z',w'))$  and
$$ 
E_i(z',w')=E_i(z',f(z',w')),\quad i=1,\dots, p.
$$ 

\begin{lemma}\label{2p1} Suppose that $q_*=0$.   Then $M_{CRsing}$ is a proper real analytic subset of $M$ and
 $M$ is totally real away from $M_{CRsing}$, i.e. the CR dimension of $M$
is zero.  Furthermore, $\pi_1$ is a
$2^p$-to-$1$ branched   covering.  The group of deck transformations  of $\pi_1$
consists of $2^\ell$ commuting  involutions with $0\leq \ell\leq p$.
\end{lemma}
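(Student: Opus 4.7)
The plan is to reduce the lemma to the study of the algebraic group $G_q := \{L \in GL_p(\cc) : q \circ L = q\}$ of linear symmetries of the vector-valued quadratic $q = (q_1,\ldots,q_p)$, exploiting condition B (equivalently $q^{-1}(0) = \{0\}$) throughout.

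First, I would handle the covering degree and the CR-singular set. Since $q_1,\ldots,q_p$ form a regular sequence of homogeneous quadratics in $\cc\{w'\}$, a Bezout-type count gives $\dim_\cc \cc\{w'\}/(q_1,\ldots,q_p) = 2^p$; the perturbation $E_j(0,w') = q_j(w') + O(|w'|^3)$ preserves this complete intersection structure near the origin, so $\pi_1^{-1}(0) = \{0\}$ with multiplicity $2^p$ and $\pi_1$ is a $2^p$-to-$1$ branched covering at $0$. For the CR claim, the finite map $q$ must be generically \'etale (otherwise its image would be of lower dimension), so $\det(\partial q/\partial w')$ is a non-zero polynomial; evaluating $C(z',\ov z')$ at $z' = 0$ reduces its leading behavior to $\det(\partial q/\partial \ov z')$, whence $C \not\equiv 0$ on $M$, and $M_{CRsing}$ is a proper real analytic subset with $M$ totally real off it.

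Next I would embed the deck group $G$ into $G_q$. A deck transformation is a germ $\tau(z',w') = (z',f(z',w'))$ satisfying $E(z',f) = E(z',w')$. Writing $f = L_{21}z' + L_{22}w' + O(2)$ and matching quadratic terms of this identity yields: the pure-$w'$ part gives $q \circ L_{22} = q$, so $L_{22} \in G_q$, while the mixed $(z',w')$ part, combined with the non-degeneracy of the bilinear form $b$ polarizing $q$ (itself forced by $q^{-1}(0)=\{0\}$), shows $L_{22} = I$ implies $L_{21} = 0$. Since $G$ is finite (it acts freely on the $2^p$-point generic fiber), Bochner linearization applies, and any $\tau$ with trivial linear part must be the identity. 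Hence $\tau \mapsto L_{22}$ is an injective homomorphism $G \hookrightarrow G_q$.

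The main difficulty lies in the third step: showing $G_q$ is an elementary abelian $2$-group of order at most $2^p$. As an algebraic subgroup of $GL_p(\cc)$, any $L \in G_q$ has a Jordan decomposition $L = L_s L_u$ with both factors in $G_q$. For an eigenvector $v$ of $L$ with eigenvalue $\lambda$, the identity $q(Lv) = \lambda^2 q(v) = q(v)$ and $q(v) \neq 0$ (forced by $v \neq 0$ and $q_* = 0$) yield $\lambda = \pm 1$. To eliminate the unipotent factor $L_u = I + N$, I would take a generalized eigenvector chain $v_0,\ldots,v_k$ with $N v_0 = 0$ and $N v_i = v_{i-1}$, then compare the leading $m$-dependence in the identity $q(L_u^m v_k) = q(v_k)$: the coefficient of $m^{2k}$ in $q(L_u^m v_k)$ is proportional to $q(v_0)$, which forces $q(v_0) = 0$, hence $v_0 = 0$, hence $k = 0$. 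Thus $L_u = I$, every $L \in G_q$ is semisimple with $L^2 = I$, and exponent two gives abelianness automatically. Simultaneous diagonalization then conjugates $G_q$ into $\diag(\pm 1,\ldots,\pm 1)$, so $|G_q| \leq 2^p$, and hence $G$ is a group of $2^\ell$ commuting involutions with $0 \leq \ell \leq p$.
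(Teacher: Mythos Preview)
Your argument is correct and arrives at the same conclusion, but the route is genuinely different from the paper's.

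The paper does not introduce the algebraic symmetry group $G_q$ at all. Instead it first uses the covering degree $2^p$ to conclude that every deck transformation $f$ has period $m=2^\ell$, then linearizes $f$ by Bochner averaging to $\hat f(z',w')=(z',\mathbf{A}w'+\mathbf{B}z')$ with $\mathbf{A}^m=\mathbf{I}$. Because $\mathbf{A}$ has finite order it is diagonalizable, and in diagonal coordinates the invariance of $q$ under $\diag(a_1,\dots,a_p)$ is tested by restricting $q$ to each coordinate axis: $q^{-1}(0)=\{0\}$ forces $q(\tilde w_1,0,\dots,0)\not\equiv0$, hence $a_j^2=1$. The off-diagonal block $\mathbf{B}$ is killed separately via the identity $(\mathbf{A}^{m-1}+\cdots+\mathbf{I})\mathbf{B}=0$, which for even $m$ and $\mathbf{A}^2=\mathbf{I}$ collapses to $(\mathbf{A}+\mathbf{I})\mathbf{B}=0$, giving $\hat f^2=\mathrm{I}$. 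Commutativity is then read off from $(fg)^2=\mathrm{I}$.

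Your approach packages the structure differently: you first establish the injective homomorphism $G\hookrightarrow G_q$, $\tau\mapsto L_{22}$, using non-degeneracy of the polarization of $q$ to kill $L_{21}$, and then analyze $G_q$ as a linear algebraic group. The Jordan-decomposition step and the Jordan-chain argument (degree-$2k$ coefficient in $m$ of $q_j(L_u^m v_k)$ is proportional to $q_j(v_0)$) replace the paper's use of finite order plus axis restriction. This buys you a stronger intermediate statement---the full group $G_q$, not just the image of $G$, is elementary abelian of exponent two and order at most $2^p$---at the cost of invoking the Jordan decomposition in algebraic groups. The paper's argument is more elementary, needing only that a finite-order matrix is diagonalizable, but it does not isolate $G_q$ as an object. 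Your handling of $L_{21}$ is also cleaner: once the homomorphism is injective, $L_{22}(\tau)^2=\mathrm{I}$ immediately gives $\tau^2=\mathrm{I}$, with no separate bookkeeping for the off-diagonal block.
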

\begin{proof}  Since $q^{-1}(0)=\{0\}$, then  $z'\to q(0,z')$ is a finite holomorphic map. Hence its Jacobian determinant
is not identically zero. In particular, $C(z',\ov z')$, defined by \re{czzp}, 
is not identically zero. This shows that $M$ has CR dimension $0$. 

Since $w'\to q(w')$ is a homogeneous quadratic
mapping of the same space  which vanishes only at the origin, then
$$
|q(w')|\geq c|w'|^2.
$$
We want to verify that
$\pi_1$ is a $2^p$--to--$1$
branched   covering.  Let $\Delta_r=\{z\in\cc\colon|z|<r\}$.
 We choose  $C>0$ such that
$\pi_1(z,w)=(z',E(z',w'))$
defines a proper and onto mapping
\eq{pi1M}
\pi_1\colon \cL M_1:=  \mathcal M\cap((\Delta_{\delta}^{p}\times \Delta_{\delta^2}^{p})
\times(\Delta_{C\delta}^{p}\times \Delta_{C\delta^2}^{p}))\mapsto \Delta_{\delta}^{p}\times \Delta_{\delta^2}^{p}.
\eeq
By Sard's theorem, the regular values of $\pi_1$ have the full measure. For each regular
value $z$,  $\pi_1^{-1}(z)$ has exactly $2^p$ distinct points (see  \cite{Ch89}, p.~105 and
p.~112). It is obvious that  $\mathcal M_1$ is smooth and connected.
We   fix a fiber $F_z$ of $2^p$ points.   Then 
 the group of deck transformations of $\pi_1$  acts on $F_z$ in such a way that if a deck transformation fixes
a point in $F_z$, then it must be the identity.  Therefore, the number of  
deck transformations divides $2^p$
and each deck transformation has period $2^\ell$ with $0\leq \ell\leq p$.

We first show that each deck transformation $f$ of $\pi_1$
is an involution.
We know that  $f$ is periodic and
has the form $$z'\to z',\quad
w'\to \mathbf{A}w'+\mathbf{B}z'+O(2),$$ where $\mathbf{A},\mathbf{B}$ are  matrices.
Assume that $f$  has period $m$. Then
$\hat f(z',w')=(z',\mathbf{A}w'+\mathbf{B}z')$ satisfies $\hat f^m= I $ and $f$  is locally equivalent to $\hat f$; indeed $\hat fgf^{-1}=g$ for
$$
g=\sum_{i=1}^m(\hat f^{i})^{-1}\circ f^i.
$$
Therefore, it suffices to show that $\hat f$ is an involution.

 We have
$$
\hat f^m(z',w')=(z',\mathbf{A}^mw'+(\mathbf{A}^{m-1}+\cdots+\mathbf{A}+\mathbf{I})\mathbf{B}z').
$$
Since $f$ is a deck
transformation, then $E(z',w')$ is invariant under $f$.
Recall from \re{variete-orig+} that $E(z',\ov z')$
starts with quadratic terms of the form $h(z',\ov z')+q(\ov z')$.
Comparing quadratic terms in
$
E(z',w')=E\circ\hat f(z',w'),
$
we see  that the linear map $\hat f$ has   invariant functions
$$
z''=h(z',w')+q(w').
$$
We know that $\mathbf{A}^m=\mathbf{I}$. By the Jordan normal form,
we choose a linear transformation $\tilde w'= \mathbf{S}w'$  such that $\mathbf{S}\mathbf{A}\mathbf{S}^{-1}$ is the diagonal matrix $\diag \mathbf{a}$
with $\mathbf{a}=(a_1,\ldots, a_p)$.
In  $(z',\tilde w')$ coordinates, the
mapping $\hat f$ has the form $( z',\tilde w')\to(z',(\diag \mathbf{ a})\tilde w'+\mathbf{S}\mathbf{B} z')$. Now  $$\tilde h_j( z',\tilde w')+\tilde q_{j}(\tilde w'):=h_j( z',\mathbf{S}^{-1}\tilde w)+q_{ j}(\mathbf{S}^{-1}\tilde w')$$  are invariant under $\hat f$. Hence
  $\tilde q_j(\tilde w')$   are invariant under    $\tilde w'\mapsto(\diag \mathbf{a})\tilde w'$. Since the common zero set of $q_1(w'),\ldots, q_p(w')$ is the origin, then
$$
V=\{\tilde w'\in\cc^p \colon \tilde q(\tilde w')=0\}=\{0\}.
 $$
We conclude that $\tilde q(\tilde w_1,0,\ldots,0)$ is not identically zero; otherwise $V$ would contain the $\tilde w_1$-axis.
 Now
$ \tilde q((\diag \mathbf{a})\tilde w')=\tilde q(\tilde w')$,  restricted to $\tilde w'=(\tilde w_1,0,\ldots, 0)$,
implies that $a_1=\pm1$.  By the same argument, we get  $a_j=\pm1$
for all $j$.
This shows that $\mathbf A^2=\mathbf I$. Let us combine it with
$$
\mathbf{A}^m=\mathbf{I}, \quad (\mathbf{A}^{m-1}+\cdots+\mathbf{A}+\mathbf{I})\mathbf{B}=\mathbf{ 0}.
$$
If $m=1$, it is obvious that $\hat f=I$.
If $m=2^\ell>1$, then $(\mathbf{A}+\mathbf{I})\mathbf{B}=\mathbf{ 0}$. Thus $\hat
f^2(z',w')=(z',\mathbf{A}^2w'+(\mathbf{A}+\mathbf{I})\mathbf{B}z')=(z',w')$.  This shows that every deck transformation of $\pi_1$ is an involution.

For any two deck transformations $f$ and $g$, $fg$ is still a deck
transformation. Hence $(fg)^2= I $ implies that $fg=gf$.
\end{proof}

Next, we want to introduce types of complex tangents. When $p=1$, the types give the classification for quadratic parts of 
the real submanifolds. For higher dimensions, the types serves a crude classification, but they are significant to characterize 
our results.

Let us first recall types of complex tangents for   surfaces. 
The Moser-Webster theory deals with the case $p=1$ for a real
analytic surface 
\eq{z2z1}
\nonumber
z_2=|z_1|^2+\gaa_1(z_1^2+\ov z_1^2)+O(|z_1|^3).
\eeq
Here $\gaa_1\geq0$ is the Bishop invariant of $M$. The complex tangent of $M$ is said to be {\it elliptic}, {\it parabolic}, or {\it hyperbolic}
according to $0\leq\gamma_1<1/2, \gamma_1=1/2$ or $\gamma_1>1/2$, respectively. 
One of most important properties of the Moser-Webster theory
is the existence of the above mentioned deck transformations.
  When $\gamma_1\neq0$, there is a pair of Moser-Webster involutions $\tau_{1}, \tau_2$ with $\tau_2=\rho\tau_1\rho$
such that
 $\tau_1$ generates the deck transformations of $\pi_1$.    In fact, $\tau_1$ is the only possible non-trivial deck 
transformation of $\pi_1$.
When $\gamma_1\neq1/2$,  in suitable coordinates
their composition $\sigma=\tau_1\tau_2$ is  of the form
$$
\tau\colon\xi'=\mu\xi+O(|(\xi,\eta)|^2), \quad \eta'=\mu^{-1}\eta+O(|(\xi,\eta)|^2). 
$$
 Here $\rho(\xi,\eta)=(\ov\eta,\ov\xi)$ when $0<\gaa<1/2$, and $\rho(\xi,\eta)
=(\ov\xi,\ov\eta)$ when $\gaa>1/2$. 
  When the complex tangent is elliptic, $\sigma$ is
{\it hyperbolic} with $\mu>1$;  when the complex tangent is  hyperbolic,  then
 $\sigma$ is {\it elliptic } with $|\mu|=1$.
When the complex tangent is parabolic,  the linear part of $\sigma$ is not diagonalizable and $1$ is the eigenvalue.
We also remark that the Moser-Webster theory deals with  a more general
case where $n$-dimensional submanifolds
$M$ in $\cc^n$ have the form
$$
z_2=|z_1|^2+\gaa_1(z_1^2+\ov z_1^2)+O(3), \quad y_j=
O(2), \quad 2\leq j\leq n
$$
with the Bishop invariant $0<\gaa_1<\infty$. Here $n>1$ is not necessarily even. The origin is then a complex
tangent of $M$ of which  the complex tangent space at the origin has the minimum dimension $1$.

Our basic model is 
 the product of the above-mentioned  Bishop
quadrics
$$
Q_{\gamma}\colon
z_{p+j}=|z_j|^2+\gaa_j(z_j^2+\ov z_j^2), \quad 1\leq j\leq p.
$$
Here $0<\gaa_j<\infty$, $\gaa_j\neq1/2$,   $\gamma=(\gaa_1,\ldots,\gaa_p)$ and $Q_{\gamma}:=Q_{\gamma_1}\times\cdots\times Q_{\gamma_p}$.
We will see later that with $p\geq2$, there is yet another simple
model that is not in the product. This is the quadric
in $\cc^4$ defined by
\eq{z3z4} Q_{\gaa_s}\colon 
  z_3=z_1\ov z_2+\gaa_s\ov z_2^2+(1- \gaa_s)z_1^2, \quad z_4=\ov z_3.
\eeq
Here $\gaa_s$ is a complex number.  
We will, however, exclude $\gaa_s=0$ or equivalently $\gaa_s=1$   by condition B. We also exclude $\gaa_s=1/2$
by condition E. Note that $\gaa_s=1/2$ does not correspond to a product  Bishop  quadrics either,   by
examining the CR singular sets. 
Under these mild non degeneracy conditions, we will show that $\gaa_s$ is an invariant when it is normalized to the range
     \eq{gsra}
\gaa_s\in (1/2, \infty)+i(0,\infty).
  \eeq
In this case, the complex  tangent is said of {\it complex} type.
Notice that    $Q_{\gaa_j}$ is contained in a real
hyperplane when  $\gaa_j\geq0$, while $Q_{\gaa_s}$ is contained in $\cc^2\times\rr^2$.

We have introduced the types of the complex tangent at the origin. Of course a product of quadrics, or a product quadric, 
can exhibit a combination of the above basic $4$ types. 
We will see soon that 
quadrics have other invariants when $p>1$. Nevertheless, in our results,  
the above invariants  
that describe the types of the complex tangent will
play a   major role in the convergence or divergence of normalizations. 

Before we proceed   to discussing
the deck transformations,
we give some examples. The first example turns out to be
a holomorphic equivalent form of a real submanifold that
admits the maximum number of deck transformations
and satisfies other mild conditions.
\begin{exmp}
Let $\mathbf{B}=(b_{jk})$ be a non-singular $p\times p$ matrix. Let $M$ be defined by
\eq{reformu}
z_{p+j}=\left(\sum_k b_{jk}\ov z_k+R_j(z',\ov z')\right)^2, \quad 1\leq j\leq p,
\eeq
  where each $R_j(0,\ov z')$ starts with terms of order at least $2$. 
Then $M$ admits $2^p$ 
deck transformations for $\pi_1$.   Indeed,  let $\mathbf E_1,\ldots,\mathbf E_{2^p}$ be the set of
 diagonal $p\times p$ matrices with $\mathbf E_j^2=
\mathbf I$, and let $\mathbf R$ is the column vector $(R_1,\dots, R_p)^t$.
Any deck transformation $(z',w')\to(z',\tilde w')$ must satisfy 
\eq{inveq}
\mathbf B\tilde w'+\mathbf R(z',\tilde w')=\mathbf E_j(\mathbf Bw'+\mathbf R(z',w')),
\eeq
for some $\mathbf E_j$. 
  Since $\mathbf B$ is invertible, it has a unique solution
$$
\tilde w=\mathbf B^{-1}\mathbf E_j\mathbf Bw'+O(|z'|)+O(|w'|^2). 
$$
Finally, $(z',w')\to (z',\tilde w')$ is an involution, as if $(z',w',\tilde w')=(z',w',f(z',w'))$ satisfy \re{inveq} if and only if $(z',f(z', w'),w')$,
substituting for $(z',w',\tilde w')$ in \re{inveq}, 
satisfy \re{inveq}.  \end{exmp}

\begin{exmp} Let $M$ be defined by
\begin{alignat*}{4}
z_{p+j}&=z_j\ov z_j+b_j\ov z_j^2+E_j(z',\ov z_j),\quad &&1\leq j\leq p-2s_*,\\
z_{s}&=\ov z_sz_{s+s_*}+b_{s+s_*}\ov z_s^2+E_s(z',\ov z_s),\\
z_{s_*+s}&=\ov z_{s+s_*}z_s+b_s\ov z_{s+s_*}^2+E_{s+s_*}(z',\ov z_{s+s_*}), \quad && p-2s_*
\leq s\leq p-s_*.
\end{alignat*}
Here $b_j\neq 0$ and $E_j=O(3)$ for $1\leq j\leq p$. 
Then $M$ admits $2^p$ 
deck transformations for $\pi_1$.
\end{exmp}
We now present two examples to show that the deck
transformations can be destroyed by perturbations
when $p>1$. This is the major difference between real submanifolds with $p>1$ and the ones with $p=1$.

The first example
shows that a small perturbation can reduce
the number of deck transformations to any number
$2^\ell$.
\begin{exmp}\label{mgae}
Let $M_{\gaa,\e}$ be defined by
$$
z_{p+j}=z_j\ov z_j+\gaa_j\ov z_j^2+\e_{j-1}\ov z_{j-1}^2, \quad 1\leq j\leq p
$$
with $z_0=z_p$. Suppose that $\e_j\neq0$ and
\begin{equation}\label{inver-cond}
\gaa_1\cdots\gaa_{p}+(-1)^{p-1}\e_0\cdots\e_{p-1}\neq0.
\end{equation}  We want to show that $M_{\gaa,\e}$
admits the identity deck transformation only.
Let $\tau(z',w')= (z',A(z',w'))$ be a deck transformation. Then
\eq{zjAj}
z_jA_j(z',w')+\gaa_jA_j^2(z',w')+
\e_{j-1}A_{j-1}^2(z',w')=z_jw_j+\gaa_jw_j^2+
\e_{j-1}w_{j-1}^2.
\eeq
Let $a_j(w')=A_j(0,w')$. Set $z'=0$. By $(\ref{inver-cond})$, we can solve
for $a_j^2$ to get unique solutions
$$
a_j^2(w')=w_j^2.
$$
This shows that $a_j(w')=\pm w_j$. Since $\e_{j-1}\neq0$,
setting $w_j=0$ and comparing the coefficients of $z_iw_{j-1}$ in
\re{zjAj} yield $A_{j-1}(z',0)= O(|z'|^2)$.
Comparing the coefficients of $z_j  w_j$ in \re{zjAj},
we conclude that $A_j(z',w')=w_j+O(|(z',w')|^2)$. This shows that $L\tau=I$.
Since $\tau$ is periodic, then $\tau= I$.
\end{exmp}
The next example shows that 
the number of deck transformations can be reduced to any number $2^\ell$ by a higher order perturbation, too.
\begin{exmp}Let $N_{\gaa,\e}$ be a perturbation
 of $Q_\gaa$ defined by
$$
z_{p+j}=z_j\ov z_j+\gaa_j\ov z_j^2+\e_{j-1} \ov z_{j-1}^3,
\quad 1\leq j\leq p.
$$
Here $\e_j\neq0$ for all $j$.
Let $\tau$ be a deck transformation of $N_{\gaa,\e}$ for $\pi_1$. We know that $\tau$ has the form
$$
z_j'=z_j, \quad w_j'=A_j(z',w')+B_j(z',w')+O(|(z',w')|^3).
$$
Here $A_j$ are linear and $B_j$ are homogeneous
quadratic polynomials.
We then have
\ga\label{zjBj-}
z_jA_j(z',w')+\gaa_jA_j^2(z',w')+ A_{j-1}^2(z',w')=z_jw_j+\gaa_jw_j^2, \\
\label{zjBj} z_jB_j(z',w')+2\gaa_j (A_jB_j)(z',w')+ A_{j-1}^3(z',w')= w_{j-1}^3.
\end{gather}
We know that $L\tau$ is a deck transformation for
$Q_\gaa$. Thus $a_j(w')=A_j(0,w')=\pm w_j$.
Set $z_j=0$ in \re{zjBj-}-\re{zjBj} to get $a_j(w')|\e_{j-1}(w_{j-1}^3-a_{j-1}^3 (w'))$. Thus $a_{j-1}(w')=w_{j-1}$. Hence, the matrix
of $L\tau$ is triangular and its diagonal entries are
$1$. Since $L\tau$
is periodic then $L\tau= I$. Since $\tau$ is periodic,
then $\tau= I$.
\end{exmp}

Based the above two examples,  we impose the second basic assumption.

\medskip
\noindent
{\bf Condition  D.} {\it  $M$ satisfies condition B and the branched   covering $\pi_1$  of $\cL M$
admits the maximum $2^p$ deck transformations.}
\medskip

Let us first derive some significant properties for real submanifolds that satisfy conditions B and D.

\subsection{
Real submanifolds and 
Moser-Webster involutions}

The main result of this subsection is to show the equivalence 
of classification of the real submanifolds with that of  families of involutions $\{\tau_{11}, \ldots, \tau_{1p},\rho\}$.  The relation between two classifications plays an important role
in the Moser-Webster theory for $p=1$. This 
 will be the base of our approach to the normal form problems.

Let $\cL F$ be a family of holomorphic maps in $\cc^n$ with coordinates $z$.  Let $L\cL F$ denote the set of linear maps $z\to f'(0)z$ with $f\in \cL F$.
Let $\cL O_n^\cL F$ denote the set of germs of holomorphic functions
$h$ at $0\in\cc^n$
so that $h\circ f=h$ for each $f\in \cL F$.
Let $[\mathfrak M_n]_1^{L\cL F}$ be the subset of linear functions   of  
$\mathfrak M_n^{L\cL F}$.
\begin{lemma}\label{sd}
Let $G$ be an abelian group of
 holomorphic $($resp. formal$)$ involutions fixing $0\in\cc^n$. Then $G$ has $2^\ell$ elements 
 which are simultaneously
diagonalizable by a holomorphic $($resp. formal$)$ transformation.
If $k=\dim_\cc[\mathfrak M_n]_1^{LG}$ then  $\ell\leq n-k$.  Assume furthermore that $\ell=n-k$ then, in suitable holomorphic $(z_1,\ldots, z_n)$ coordinates, the group $G$ is generated by $Z_{k+1},\ldots, Z_n$ with
\eq{zjzjp}
Z_j\colon z_j'=-z_j, \quad z_i'=z_i, \quad i\neq j, \quad 1\leq i\leq n.
\eeq
In the $z$ coordinates,
 the set of convergent $($resp. formal$)$
  power series in $z_1,\ldots, z_k$, $z_{k+1}^2,\ldots,z_{n}^2$  is equal
 to $\cL O_n^{G}$
 $($resp. $\widehat{\cL O}_n^{G})$, and with $Z=Z_{n-k}\cdots Z_n$,
 \eq{Mn1g}
 [\mathfrak M_n]_1^{G}=[\mathfrak M_n]_1^{Z}, \quad
 \fix(Z)=\bigcap_{j=k+1}^n\fix(Z_j).\eeq
\end{lemma}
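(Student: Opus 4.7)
The plan is to reduce the whole statement to linear algebra over $\mathbb{F}_2$ by first putting $G$ into a diagonal normal form via Bochner averaging, and then reading off everything from the combinatorial structure of $G$ as a subgroup of $\{\pm 1\}^n$.

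\emph{Step 1: simultaneous linearization and diagonalization.} Each $g \in G$ is an involution fixing the origin, so its linear part $Lg$ satisfies $(Lg)^2 = I$, and $L\colon G \to GL_n(\cc)$ is a homomorphism by the chain rule. Being abelian and generated by elements of order $\leq 2$, $G$ is isomorphic to $(\mathbb{F}_2)^\ell$ for some $\ell \geq 0$, so $|G| = 2^\ell$. I would apply the Bochner trick by setting
\[
\phi(z) \;=\; \frac{1}{|G|}\sum_{g \in G}(Lg)^{-1}\, g(z),
\]
which is a biholomorphism (resp.\ a formally invertible series) since $L\phi = I$. Using that $L$ is a homomorphism and the substitution $g \mapsto gh$, one verifies $\phi \circ h = Lh \circ \phi$ for every $h \in G$, so after conjugating by $\phi$ we may assume each $g$ equals its own linear part. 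The resulting commuting family of linear involutions is then simultaneously diagonalized by a further linear change of coordinates, embedding $G$ as an order-$2^\ell$ subgroup of the sign group $\{\pm 1\}^n$ acting diagonally.

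\emph{Step 2: the bound $\ell \leq n - k$.} Viewing $G$ as an $\mathbb{F}_2$-subspace of $(\mathbb{F}_2)^n$ of dimension $\ell$, let $\pi_i \colon G \to \mathbb{F}_2$ be the $i$-th sign projection. A linear form $z_i$ is $LG$-invariant if and only if $\pi_i \equiv 0$, so $k = \#\{i : \pi_i = 0\}$. The $\pi_i$ separate points of $G$, so they span the dual $G^* \cong (\mathbb{F}_2)^\ell$, forcing at least $\ell$ of them to be nonzero. Hence $n - k \geq \ell$.

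\emph{Step 3: equality case and the invariant descriptions.} Suppose $\ell = n - k$. After relabeling, $\pi_1 = \cdots = \pi_k = 0$ while $\pi_{k+1}, \ldots, \pi_n$ are nonzero. These $n - k = \ell$ nonzero elements of the $\ell$-dimensional space $G^*$ form a basis, so the map $(\pi_{k+1}, \ldots, \pi_n)\colon G \to (\mathbb{F}_2)^{n-k}$ is an injection between groups of equal order, hence an isomorphism. Thus $G$ is exactly the full coordinate sign group on $(z_{k+1}, \ldots, z_n)$, generated by the reflections $Z_{k+1}, \ldots, Z_n$ of \re{zjzjp}. A power series (convergent or formal) is $G$-invariant if and only if it is even in each of $z_{k+1}, \ldots, z_n$, i.e.\ a series in $z_1, \ldots, z_k, z_{k+1}^2, \ldots, z_n^2$. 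Both $G$ and the single map $Z = Z_{k+1}\cdots Z_n$ fix exactly the linear span of $z_1, \ldots, z_k$, so $[\mathfrak M_n]_1^G = [\mathfrak M_n]_1^Z$, and $\fix(Z) = \{z_{k+1} = \cdots = z_n = 0\} = \bigcap_{j=k+1}^n \fix(Z_j)$ is immediate. The main technical point is the Bochner averaging in Step 1; the rest is routine $\mathbb{F}_2$-linear algebra, and the whole argument goes through uniformly in the convergent and the formal categories because $G$ is finite.
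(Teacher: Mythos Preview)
Your argument is correct in outline and arguably cleaner than the paper's, but there is one genuine gap in Step~1. You write ``Being abelian and generated by elements of order $\leq 2$, $G$ is isomorphic to $(\mathbb{F}_2)^\ell$ for some $\ell \geq 0$, so $|G| = 2^\ell$,'' and then immediately form the Bochner average $\frac{1}{|G|}\sum_{g\in G}(Lg)^{-1}g$. But an abelian group of exponent $2$ is only an $\mathbb{F}_2$-vector space; nothing you have said forces the dimension to be finite, and without finiteness the averaging makes no sense. The paper is careful here: it first observes that any finite subgroup has order $2^\ell$, then proves the bound $\ell\leq n-k$ for an arbitrary such subgroup, and only then concludes $G$ is finite. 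The easiest patch for your argument is to note that $L\colon G\to GL_n(\cc)$ is injective (a holomorphic or formal involution tangent to the identity is the identity, by comparing lowest-order terms in $g^2=I$) and that $LG$, being a commuting family of diagonalizable linear involutions, embeds in $\{\pm 1\}^n$; hence $|G|\leq 2^n$ and you may proceed.

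Once finiteness is in hand, your approach and the paper's diverge in two ways, both of which work. For linearization, the paper proceeds sequentially: $I+f_1'(0)^{-1}f_1$ linearizes $f_1$, then $I+f_2'(0)^{-1}f_2$ (which commutes with the now-linear $f_1$) linearizes $f_2$, and so on. Your single Bochner average handles all elements at once and is more conceptual. For the bound $\ell\leq n-k$ and the equality case, the paper decomposes $\cc^n$ into joint $(\pm 1)$-eigenspaces $E^{(i_1,\dots,i_s)}$ and counts the nontrivial summands, while you dualize: the coordinate projections $\pi_i\in G^*$ separate points, so the nonzero ones span $G^*$, and in the equality case they are forced to be a basis. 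These are two sides of the same $\mathbb{F}_2$-linear algebra; your duality argument makes the isomorphism $G\cong(\mathbb{F}_2)^{n-k}$ in Step~3 drop out a bit more directly. The remaining claims about $\cL O_n^G$, $[\mathfrak M_n]_1^G=[\mathfrak M_n]_1^Z$, and $\fix(Z)$ are handled identically.
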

\begin{proof}
We first want to show that $G$ has $2^\ell$ elements.
  Suppose that it has more than one element and
we have already found a subgroup of $G$ that
has $2^{i}$ elements $f_1,\ldots, f_{2^i}$. Let $g$ be an element
in $G$ that is different from the $2^i$ elements. 
Since $g$ is an involution and commutes
with  each $f_j$, then
$$
f_1,\ldots, f_{2^{i}},\quad  gf_1,
\ldots, gf_{2^{i}}
$$
form a group of   $2^{i+1}$ elements.
We have proved that every finite subgroup of $G$ has exactly $2^\ell$
elements. Moreover, if $G$ is infinite then it contains a subgroup of $2^\ell$ elements   for every $\ell\geq0$. Let $\{f_1,\ldots, f_{2^\ell}\}$ be such a subgroup of $G$. It suffices to show that $\ell\leq n-k$.
We first linearize  all $f_j$ simultaneously.
We know that $Lf_1, \ldots, Lf_{2^\ell}$  commute pairwise. Note that $ I +f_1'(0)^{-1}f_1$ linearizes $f_1$.
Assume that $f_1$ is linear. Then $f_1=Lf_1$ and $Lf_2$ commute, and
$ I+f_2'(0)^{-1}f_2$ commutes with $f_1$ and linearizes $f_2$. Thus $f_j$ can be simultaneously linearized by a holomorphic (resp. formal) change of coordinates. Without loss of generality, we may assume that each $f_j$ is linear. We
want to diagonalize all $f_j$ simultaneously. Let $E_i^{\,1}$ and
$E_i^{-1}$ be the eigenspaces of $f_i$
with eigenvalues $1$ and $-1$, respectively. Since $f_i=f_j^{-1}f_if_j$, each eigenspace of $f_i$
is invariant under $f_j$. Then we can decompose
\eq{ccn}
\cc^n=\bigoplus_{(i_1, \ldots, i_s)}E_1^{i_1}\cap\cdots\cap E_s^{i_s}.
\eeq
Here $
(i_1,\ldots, i_s)$ runs over $\{-1,1\}^s$ with subspaces $E^{(i_1,\ldots, i_s)}:= E_{1}^{i_1}\cap\cdots\cap E_s^{i_s}\neq\{0\}$. On each of these subspaces,
$f_j= I$ or $- I$.
We are ready to choose
 a new basis for $\cc^n$ whose elements are in the subspaces. Under
the new basis, all $f_j$ are diagonal.

Let us rewrite \re{ccn} as
$$
\cc^n=V_1\oplus V_2\oplus\cdots\oplus V_{\ell}.
$$
Here  $V_j=E^{I_j}$ and $I_1=(1,\ldots, 1)$. Also,
$I_j\neq (1,\ldots, 1)$
and $\dim V_j>0$ for $j>1$. We have $\dim_\cc\fix (G)=\dim_\cc V_1=\dim_\cc[\mathfrak M_n]_1^{LG}=k$.  Therefore, $\ell\leq
n-\dim_\cc V_1\leq n-k$.
  We have proved that in suitable coordinates $G$ is generated by $Z_{k+1}, \ldots, Z_n$. The remaining assertions follow easily.
\end{proof}

We will need an elementary result about invariant functions. 
\le{twosetin} Let  $Z_{k+1}, \ldots, Z_n$ be defined by \rea{zjzjp}.
Let $F=\{f_{k+1}, \dots, f_n\}$ be a family of germs of  holomorphic  
mappings at the origin $0\in\cc^n$. Suppose that the family $F$ is holomorphically  
equivalent to $\{Z_{k+1}, \dots, Z_n\}$. 
Let $b_{1}(z), \dots, b_n(z)$ be germs of holomorphic functions that are invariant under $F$. 
Suppose that  for $1\leq j\leq k$, $b_j(0)=0$ and the linear part of $b_j$ at the origin is $\tilde b_j$.
Suppose that 
for $i>k$, 
$b_i(z)=O(|z|^2)$ and the quadratic part of $b_i$ at the origin is $b_i^*$.  Suppose that $\tilde b_1, \dots, \tilde b_k$ are linear independent,  and that $b_{k+1}^*,
\dots, b_n^*$ are linearly independent modulo $\tilde b_1, \dots, \tilde b_k$,  i.e.
$$
\sum c_ib_i^*( z)=\sum d_j(z)\tilde b_j(z) + O(|z|^3)
$$
holds for some constants $c_i$ and formal power series $d_j$,
 if and only if all $c_i$ are zero.  Then  invariant functions
of $F$ are power series in $b_1, \dots, b_n$. Furthermore, $F$ is uniquely determined by $b_1,\ldots, b_n$.
The same conclusion holds if $F$ and $b_j$ are given by formal power series. 
\ele
\begin{proof} Without loss of generality, we may assume that $F$ is $\{Z_{k+1}, \dots, Z_n\}$.  Hence, for all $j$, there is a formal power series $a_j$ such that $b_j(z)= a_j(z_1, \dots, z_k, z_{k+1}^2, \dots, z_n^2)$. Let us show that the map $w\to a(w)=(a_1(w),\ldots, a_n(w))$ is invertible.

By \rl{sd},
$\tilde b_1(z), \dots,\tilde b_k(z)$ are linear combinations of $z_1,\ldots, z_k$, and
vice versa. By \rl{sd} again, $b_{k+1}^*,
\dots, b_n^*$ are linear combinations of $z_{k+1}^2, \dots, z_n^2$ modulo $z_1,\ldots, z_k$. This shows
that 
\eq{bjsz}\nonumber
b_i^*(z)=\sum_{j>k} c_{ij}z_j^2+\sum_{\ell\leq k} d_{i\ell}(z)\tilde b_\ell(z), \quad i>k.
\eeq
Since $b_{k+1}^*, \dots, b_n^*$ are linearly independent modulo $\tilde b_1,\dots, \tilde b_k$. Then $(c_{ij})$
is invertible; so is the linear part of $a$.

To show that $F$ is uniquely determined by its invariant functions, let $\tilde F$ be another such 
family that is equivalent to $\{Z_{k+1},\dots, Z_n\}$. 
 Assume that $F$ and $\tilde F$ 
have the same invariant functions.  Without loss of generality, assume that $\tilde F$
is $\{Z_{k+1},\dots, Z_n\}$.
 Then $z_1,\ldots, z_k$ are invariant by each $F_j$, i.e. the $i$th component of $F_j(z)$
is $z_i$ for $i\leq k$. Also $F_{j,\ell}^2(z)=z_\ell^2$ for $\ell>k$. We get $F_{j,\ell}=\pm z_\ell$. Since $z_\ell$
is not invariant by $\tilde F$, then it is not invariant by $F$ either. Then $F_{j_\ell,\ell}(z)=-z_{\ell}$ for
some $\ell_j>k$.  Since $F_{j_\ell}$ is equivalent to some   $Z_i$, the set of fixed points of $F_{j_\ell}$
is a hypersurface. This shows that $F_{j_\ell}=Z_{\ell}$. So the family $F$ is $\{Z_{k+1},\dots, Z_n\}$. 
\end{proof}

 We now want to find a special set of generators
 for the deck transformations  and its basic properties, which will be important
 to our study of the normal form problems.
\begin{lemma}\label{sd1} Let $M$ be defined by \rea{variete-orig} and
\rea{variete-orig+} with $q_*=0$.
Suppose that $\cL T_i$, the group of deck transformations
 of $\pi_i\colon \cL M\to\cc^p$,  has
 exactly $2^p$ elements. Then the followings hold.
 \bppp
\item $\cL T_1$ is generated by $p$
 distinct involutions $\tau_{1j}$ such that $\fix(\tau_{11})$,
 $\dots$, $ \fix(\tau_{1p})$ are hypersurfaces
 intersecting transversally at $0$.
And $\tau_1=\tau_{11}\cdots\tau_{1p}$   is the unique deck transformation
 of which the set of fixed points
 has dimension $p$. Moreover, $\fix(\tau_1)=\bigcap\fix(\tau_{1j})$.
 \item $\cL O_n^{\cL T_1}$ $($resp. $\widehat{\cL O}_n^{\cL T_1})$
is precisely  the set of convergent $($resp. formal$)$ power series in
 $z'$ and $ E(z',w')$. $\cL O_n^{\cL T_2}$ $($resp. $\widehat{\cL O}_n^{\cL T_2})$
is the set of convergent $($resp. formal$)$ power series in
 $w'$ and $ \ov E(w',z')$. In particular, in $(z',w')$ coordinates   of $\cL M$, $\cL T_1$ and $\cL T_2$ satisfy
 \ga\label{i1i1}
 [\mathfrak M_n]_1^{L\cL T_1}\cap[\mathfrak M_n]_1^{L\cL T_2}
 =\{0\},\\  
 \dim\fix(\tau_i)=p, \quad \fix(\tau_1)\cap\fix(\tau_2)=\{0\}.\nonumber
 \end{gather}
 Here $[\mathfrak M_n]_1$ is the set of linear functions in $z',w'$ without constant terms.
  \eppp
\end{lemma}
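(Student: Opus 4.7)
The plan is to apply Lemma \ref{sd} to the abelian group $\mathcal{T}_1$ of holomorphic involutions on the $2p$-dimensional complex manifold $\cL M$ (using $(z',w')$ as local coordinates) and then to invoke Lemma \ref{twosetin} to pin down the ring of invariants.

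First I would identify $[\mathfrak M_n]_1^{L\mathcal{T}_1}$. Because every $\tau\in\mathcal{T}_1$ satisfies $\pi_1\circ\tau=\pi_1$, the $p$ linear functions $z_1,\ldots,z_p$ are $L\mathcal{T}_1$-invariant, so $k:=\dim[\mathfrak M_n]_1^{L\mathcal{T}_1}\geq p$. On the other hand, since $\mathcal{T}_1$ has order $2^p$ (so $\ell=p$ generators in the sense of Lemma \ref{sd}), Lemma \ref{sd} gives the reverse inequality $\ell\leq n-k=2p-k$. Hence $k=p$ and $[\mathfrak M_n]_1^{L\mathcal{T}_1}=\operatorname{span}(z_1,\ldots,z_p)$. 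Lemma \ref{sd} then furnishes holomorphic coordinates $u=(u',u'')$ on $\cL M$ in which I may take $u'=z'$, together with generators $\tau_{1j}=Z_{p+j}$ acting by negating only $u_{p+j}$. From this presentation, part (i) is bookkeeping: each $\fix(\tau_{1j})=\{u_{p+j}=0\}$ is a smooth hypersurface, the $p$ defining functions have linearly independent differentials at $0$ (so the hypersurfaces meet transversally), and $\tau_1=\tau_{11}\cdots\tau_{1p}$ acts as $u\mapsto(u',-u'')$, so $\fix(\tau_1)=\bigcap_j\fix(\tau_{1j})$ has dimension $p$. Uniqueness of $\tau_1$ follows because any other nontrivial element of $\mathcal{T}_1$ is a product $\prod_{j\in S}\tau_{1j}$ with $S\subsetneq\{1,\ldots,p\}$, hence has fixed set of dimension $2p-|S|>p$.

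For part (ii) I would observe that both $z_1,\ldots,z_p$ and $E_1(z',w'),\ldots,E_p(z',w')$ are $\mathcal{T}_1$-invariant: the latter because on $\cL M$ they equal $z_{p+1},\ldots,z_{2p}$, which each $\tau\in\mathcal{T}_1$ preserves. To show these exhaust the invariants, I plan to apply Lemma \ref{twosetin} with $b_j=z_j$ for $1\leq j\leq p$ and $b_{p+i}=E_i$ for $1\leq i\leq p$. The linear parts $\tilde b_j=z_j$ are visibly linearly independent. The crux of the argument is the linear independence of the quadratic parts $b_{p+i}^*(z',w')=h_i(z',w')+q_i(w')$ modulo $z_1,\ldots,z_p$: I expect this to be the main obstacle, but it reduces cleanly by setting $z'=0$ in any putative relation $\sum c_i b_{p+i}^*=\sum d_j z_j+O(3)$, giving $\sum c_i q_i(w')=0$; Condition B ($q^{-1}(0)=\{0\}$) then forces the $q_i$'s to be linearly independent (as remarked immediately after its statement), whence $c_i=0$. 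Lemma \ref{twosetin} then identifies $\cL O_n^{\mathcal{T}_1}$ (resp.\ $\widehat{\cL O}_n^{\mathcal{T}_1}$) with convergent (resp.\ formal) power series in $z'$ and $E(z',w')$. The description for $\mathcal{T}_2$ is obtained by conjugating with $\rho$, which interchanges the roles of $(z',w')$ and $(w',z')$ and sends $\mathcal{T}_1$ to $\mathcal{T}_2$.

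Finally, for the identities in \re{i1i1} and what follows, the symmetric argument yields $[\mathfrak M_n]_1^{L\mathcal{T}_2}=\operatorname{span}(w_1,\ldots,w_p)$, whose intersection with $\operatorname{span}(z_1,\ldots,z_p)$ is $\{0\}$. The equality $\dim\fix(\tau_i)=p$ is part of (i). For the germ equality $\fix(\tau_1)\cap\fix(\tau_2)=\{0\}$, I would use that each $\fix(\tau_i)$ is a smooth $p$-dimensional submanifold whose tangent space at $0$ is $\fix(L\tau_i)$; the coordinate description above shows that $\fix(L\tau_1)$ is the $z'$-subspace of $\cL M$ and (after applying $\rho$) $\fix(L\tau_2)$ is the $w'$-subspace, so these tangent spaces meet only at $0$. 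Transversality of the two $p$-dimensional submanifolds in the $2p$-dimensional ambient $\cL M$ then forces the germ of intersection to reduce to the origin.
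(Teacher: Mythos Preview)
Your argument for (i) and for $[\mathfrak M_n]_1^{L\cL T_1}\cap[\mathfrak M_n]_1^{L\cL T_2}=\{0\}$ follows the paper's approach via Lemma~\ref{sd}. For the invariant ring in (ii) your direct appeal to Lemma~\ref{twosetin}---checking that the quadratic parts $h_i+q_i$ are independent modulo $z_1,\ldots,z_p$ by setting $z'=0$ and using that $q_*=0$ forces the $q_i$ to be linearly independent---is correct and a clean shortcut: the paper instead works in the Lemma~\ref{sd} coordinates $(\xi,\eta)$, writes $z'=f(\xi,\eta^2)$ and $E=g(\xi,\eta^2)$, and verifies by hand that $(f,g)$ is invertible, which is essentially the content of Lemma~\ref{twosetin} specialised to this case. (The side claim that one may take $u'=z'$ in the Lemma~\ref{sd} coordinates is true but not automatic; it needs the observation that replacing $\xi$ by the invariant functions $z'(\xi,\eta^2)$ yields new coordinates in which each $Z_{p+j}$ retains its form.)

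The genuine gap is in your last paragraph. You claim that $\fix(L\tau_1)$ is the $z'$-subspace $\{w'=0\}$, but this is false: the linear part of any deck transformation of $\pi_1$ has the form $(z',w')\mapsto(z',\mathbf{A}w'+\mathbf{B}z')$ (cf.\ the proof of Lemma~\ref{2p1}), and for $\tau_1$ one finds $\mathbf{A}=-\mathbf{I}$, so $\fix(L\tau_1)=\{w'=\tfrac12\mathbf{B}z'\}$---a graph over the $z'$-axis, not the axis itself. Already for the Bishop quadric $z_2=|z_1|^2+\gamma(z_1^2+\bar z_1^2)$ one has $\mathbf{B}=-\gamma^{-1}\neq 0$. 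Your coordinates $(z',u'')$ diagonalise $\tau_1$, but $u''\neq w'$, so $\{u''=0\}$ is not $\{w'=0\}$; and applying $\rho$ yields a \emph{different} chart adapted to $\tau_2$, so the two fixed sets cannot be intersected just by naming axes. The paper argues differently here: it chooses a basis $u_1,\ldots,u_p$ of $\fix(L\tau_1)$, extends to a basis of $\cc^{2p}$, writes the $L\tau_2$-invariant linear forms as $f_j=\sum_k(a_{jk}\xi_k+b_{jk}\eta_k)$ in the resulting coordinates, and uses the already-proved $[\mathfrak M_n]_1^{L\tau_1}\cap[\mathfrak M_n]_1^{L\tau_2}=\{0\}$ to conclude that $(b_{jk})$ is nonsingular, from which transversality of $\fix(L\tau_2)$ to $\fix(L\tau_1)=\operatorname{span}(u_i)$ is read off.
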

\begin{proof} (i). 
%
 Since $z_1,\ldots, z_p$ are invariant under deck transformations of $\pi_1$, we have $p'=\dim_\cc  [\cL O_n]_1^{L\cL T_1}\geq p$. By \rl{sd}, $\pi_1$ has at most $2^{2p-p'}$
deck transformations. Therefore, $p'=p$.
By \rl{sd} again,  we may assume that
 in suitable
 $(\xi,\eta)$ coordinates, the deck transformations are generated by
\eq{t1j}
Z_j\colon (\xi,\eta)\to(\xi,\eta_1,\ldots, \eta_{j-1},-\eta_j,\eta_{j+1},\ldots, \eta_{p}),
 \quad1\leq j\leq p.
 \eeq
It follows that $Z=Z_{1}\cdots Z_{p}$ is the unique  deck transformation of $\pi_1$,  of which the set of fixed points has dimension $p$.

(ii). We have proved that in $(\xi,\eta)$ coordinates the deck transformations are generated by the above $Z_{1}, \ldots,Z_{p}$. Thus, the
 invariant holomorphic functions of  $Z_{1}, \ldots, Z_{p}$ are precisely the holomorphic
functions in $\xi_1, \ldots, \xi_p, \eta^2_{1}, \ldots, \eta^2_{p}$.
Since $z_1, \ldots, z_p$ and $E_i(z',w')$ are invariant under deck transformations,
then on $\cL M$
\eq{zpfx}
z'=f(\xi, \eta_1^2, \ldots, \eta_p^2), \quad
E(z',w')=g(\xi,\eta_1^2, \ldots, \eta_p^2).
\eeq
Since $(z',w')$ are local coordinates of $\cL M$,
  the differentials of $z_1,\ldots, z_p$ under any coordinate 
  system of $\cL M$
 are linearly independent. Computing the differentials of $z'$ in variables $\xi,\eta$ by using \re{zpfx}, we see that
the mapping
 $\xi \to f(\xi,0)$ is a local biholomorphism. Expressing both sides of
  the second identity in \re{zpfx} as power series in $\xi,\eta$, we obtain
\gan
E(f(\xi,0),w')=g(\xi,\eta_1^2,\ldots, \eta_p^2)+O(|(\xi,\eta)|^3).
\end{gather*}
We  set $\xi=0$, compute the left-hand
side, and rewrite the identity as
\ga\label{g0et} g(0,\eta_1^2,\ldots,\eta_p^2)
=q(w')+O(|(\xi,\eta)|^3).
\end{gather}
 As coordinate systems, $(z',w')$ and $(\xi,\eta)$
 vanish at $0\in\cL M$. We now use $(z',w')=O(|(\xi,\eta)|)$. By \re{zpfx}, $f(0)=g(0)=0$ and
 $g(\xi,0)=O(|\xi|^2)$.
Let us   verify that  the linear parts of $g_1(0,\eta), \ldots, g_p(0,\eta)$ are
linearly independent. Suppose that
$\sum_{j=1}^pc_jg_j(0,\eta)=O(|\eta|^2)$. Replacing  $\xi,\eta$ by $O(|(z',w')|)$ in \re{g0et} and setting $z'=0$, we obtain
$$
\sum_{j=1}^pc_jq_j(w')=O(|w'|^3),\quad i.e. \quad\sum_{j=1}^pc_jq_j(w')=0.
$$
As remarked after condition B was introduced, $q_*=0$ implies that $q_1(w'), \ldots, q_p(w')$ are linearly independent. Thus all $c_j$ are $0$. We have verified that $\xi\to f(\xi,0)$
is biholomorphic near $\xi=0$. Also $\eta\to g(0,\eta)$ is biholomorphic near $\eta=0$
and  $g(\xi,0)=O(|\xi|^2)$.
Therefore, $(\xi,\eta)\to(f,g)(\xi,\eta)$ is invertible near $0$. By solving \re{zpfx},
 the functions $\xi,\eta_1^2,\ldots, \eta_p^2$ are expressed as power series
  in $z'$ and $E(z',w')$.

It is clear that $z_1,\ldots, z_p$ are invariant under $\tau_{1j}$. From linearization of $\cL T_1$,
we know that the space of invariant linear functions of $L\cL T_1$ is the same
as the space  of linear invariant functions of $L\tau_1$, which
has dimension $p$. This shows that $z_1,\ldots, z_p$ span the space of linear invariant functions of $L\tau_1$. Also $w_1,\ldots, w_p$ span
the space of linear invariant functions of $L\tau_2$. We obtain $[\mathfrak M_n]_1^{L\cL T_1}\cap[\mathfrak M_n]_1^{L\cL T_2}=\{0\}$.  We have verified   \re{i1i1}.

In view of the linearization of $\cL T_1$ in (i), we obtain $\dim\fix(\tau_1)=\dim\fix(\cL T_1)=p$.
Moreover,  $\fix(\tau_i)$ is a smooth submanifold of which the tangent space at the origin
is   $\fix(L\tau_i)$.
We choose a basis $u_1,\ldots, u_p$ for  $\fix(L\tau_1)$. Let $v_1,\ldots, v_p$ be any $p$
vectors such that $u_1,\ldots, u_p$,  $v_1,\ldots, v_p$ form a basis of $\cc^n$. In new coordinates defined by
$\sum\xi_iu_i+\eta_iv_i$, we know that linear invariant functions of $L\tau_1$ are spanned by $\xi_1,\ldots, \xi_p$. The linear invariant functions in $(\xi,\eta)$
that are invariant by $L\tau_2$ are spanned by
$f_j(\xi,\eta)=\sum_k (a_{jk}\xi_k+b_{jk}\eta_k)$
for $1\leq j\leq p$. Since $[\mathfrak M_n]^{L\tau_1}\cap[\mathfrak M_n]^{L\tau_2}=\{0\}$,
 then $\xi_1,\ldots, \xi_p, f_1,\ldots$,   $ f_p$ are linearly
independent. Equivalently, $(b_{jk})$ is non-singular.  Now $\fix(L\tau_2)$ is spanned by vectors  $\sum_k (a_{jk}u_k+b_{jk}v_k)$.
This shows that $\fix(L\tau_1)\cap\fix(L\tau_2)=\{0\}$. Therefore, $\fix(\tau_1)$ intersects
 $\fix(\tau_2)$  transversally at the origin and the intersection must
be the origin.
\end{proof}

We remark that the proof of the above lemma actually gives us a more general result.

\begin{cor} \label{sd2}  Let $0\leq p\leq n$.
Let $\mathfrak{I}$ be a group of commuting holomorphic $($formal$)$
involutions on $\cc^n$.
 \bppp
\item $\fix(L  \mathfrak{I})=\{0\}$ if and only if  $[\mathfrak M_n]_1^{L  \mathfrak{I}}$  has dimension $0$.
 \item Let $\widetilde{ \mathfrak{ I}}$ be another family of commuting holomorphic $($resp. formal$)$ involutions such that
 $[\mathfrak M_n]^{L  \mathfrak{I}}\cap[\mathfrak M_n]^{L\widetilde{  \mathfrak{I}}}=\{0\}$. Then
 $\fix(L  \mathfrak{I} )\cap\fix(L\widetilde{  \mathfrak{I}})=\{0\}$. Moreover,  $\fix(  \mathfrak{I} )\cap\fix(\widetilde{  \mathfrak{I}})=\{0\}$ if $  \mathfrak{I}$
 and $\widetilde{  \mathfrak{I}}$ consist of convergent involutions.
   \eppp
\end{cor}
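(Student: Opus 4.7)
The strategy is to repeat the argument given at the end of the proof of \rl{sd1}, observing that it does not make essential use of the specific dimension $\dim\fix(\tau_i)=p$ nor of the ambient dimension being $2p$. For (i), \rl{sd} lets us simultaneously diagonalize the commuting group $L\mathfrak{I}$; in the resulting coordinates $\fix(L\mathfrak{I})$ is the span of the basis vectors on which every generator of $L\mathfrak I$ acts as $+1$, while $[\mathfrak M_n]_1^{L\mathfrak{I}}$ is the span of the dual coordinates with the same index set. The two subspaces therefore have equal dimension, and one is $\{0\}$ exactly when the other is.

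For the linear part of (ii), I would pick a basis $u_1,\dots,u_k$ of $V=\fix(L\mathfrak{I})$ and complete it by $(-1)$-eigenvectors $v_1,\dots,v_{n-k}$ of the generators of $L\mathfrak{I}$, producing coordinates $(\xi,\eta)$ in which the invariant linear forms of $L\mathfrak{I}$ are exactly $\xi_1,\dots,\xi_k$. Expanding a basis of $[\mathfrak M_n]_1^{L\widetilde{\mathfrak{I}}}$ as
\[
f_j=\sum_i a_{ji}\xi_i+\sum_i b_{ji}\eta_i,\qquad 1\leq j\leq k',
\]
the hypothesis $[\mathfrak M_n]_1^{L\mathfrak{I}}\cap[\mathfrak M_n]_1^{L\widetilde{\mathfrak{I}}}=\{0\}$ is equivalent to linear independence of $\{\xi_1,\dots,\xi_k,f_1,\dots,f_{k'}\}$, equivalently to the block $(b_{ji})$ having full row rank $k'$. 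By the duality between invariant linear forms and fixed subspaces of an involution used in \rl{sd1}, $\fix(L\widetilde{\mathfrak{I}})$ is spanned by the vectors $\sum_i(a_{ji}u_i+b_{ji}v_i)$, and the non-singularity of $(b_{ji})$ prevents any nontrivial combination of these vectors from lying in the $\eta$-zero subspace $V$, giving $V\cap\fix(L\widetilde{\mathfrak{I}})=\{0\}$.

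The convergent statement would then follow from the linear one by the inverse function theorem: each of $\fix(\mathfrak{I})$ and $\fix(\widetilde{\mathfrak{I}})$ is a smooth germ of submanifold with tangent space at the origin equal to $\fix(L\mathfrak{I})$ and $\fix(L\widetilde{\mathfrak{I}})$ respectively, so the differential at $(0,0)$ of the difference map $(p,q)\mapsto p-q$ on $\fix(\mathfrak{I})\times\fix(\widetilde{\mathfrak{I}})$ is injective, and the two germs meet only at the origin in a neighborhood. The delicate step in the whole plan is the identification, in (ii), of $\fix(L\widetilde{\mathfrak{I}})$ with the coefficient span of its invariant linear forms; I expect to handle it exactly as in \rl{sd1}, by comparing the left and right kernels of $L\widetilde{\mathfrak{I}}-I$ in the chosen basis and exploiting the self-adjointness that a commuting family of involutions enjoys with respect to the bilinear form that makes $(u_i,v_j)$ orthonormal.
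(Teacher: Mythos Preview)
Your plan follows the paper's, which simply remarks that the proof of \rl{sd1} already yields the corollary. Your argument for (i) is correct, and in (ii) the passage from the linear statement to the convergent one via transversality of tangent spaces is also fine.

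The step you yourself flag as ``delicate'' in (ii) is a genuine gap, and the self-adjointness remedy cannot work. The involutions in $L\widetilde{\mathfrak I}$ have no reason to be symmetric for the bilinear form attached to the basis $(u_i,v_j)$, since that basis was chosen to diagonalize $L\mathfrak I$, not $L\widetilde{\mathfrak I}$; consequently the coefficient vectors of the $f_j$ need not span $\fix(L\widetilde{\mathfrak I})$. In fact the linear implication is false in this generality: with $n=2$, $L\mathfrak I$ generated by $\diag(1,-1)$, and $L\widetilde{\mathfrak I}$ generated by
\[
A=\begin{pmatrix}1&-2\\0&-1\end{pmatrix},
\]
one has $A^2=I$, $[\mathfrak M_2]_1^{L\mathfrak I}=\cc\, x_1$ and $[\mathfrak M_2]_1^{L\widetilde{\mathfrak I}}=\cc\,(x_1-x_2)$, whose intersection is $\{0\}$, yet $\fix(L\mathfrak I)=\fix(L\widetilde{\mathfrak I})=\cc\,e_1$. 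Thus the argument of \rl{sd1} does not carry over verbatim; in that lemma the specific origin of $\tau_1,\tau_2$ as deck transformations for the two projections (in particular the relation $\tau_2=\rho\tau_1\rho$) supplies additional structure that is absent from the bare hypotheses of the corollary.
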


In view of  \rl{sd1},
we will refer   to
 $$
 \{\tau_{1j}, \tau_{2j}, \rho;\; 1\leq j\leq p\} $$
as the Moser-Webster involutions, while the two groups of the $2^p$
 involutions intertwined by $\rho$
will be called the extended family of Moser-Webster involutions.
 Recall that $\tau_{2j}=\rho\tau_{1j}\rho$. Let us denote
$$
\cL T_1:=\{\tau_{11}, \ldots, \tau_{1p}\},
\quad \cL T_2:=\{\tau_{21}, \ldots, \tau_{2p}\}.
$$
Thus the sets of involutions are uniquely determined by
$$
\{\cL T_1,\rho\}=\{\tau_{11}, \ldots, \tau_{1p},\rho\}.
$$

  The significance of the two sets of
involutions is the following proposition that transforms the normalization of the real manifolds
into that 
of two families of commuting involutions.

 For clarity, recall the anti-holomorphic involution $\rho_0\colon(z',w')\to(\ov{w'},\ov{z'})$. 
\begin{prop}\label{inmae}
Let $M$ and $\widetilde M$ be two real analytic submanifolds of the form \rea{variete-orig} 
and \rea{variete-orig+}
 that
admit Moser-Webster involutions $\{\cL T_1,\rho_0\}$ 
and $\{\widetilde{\cL T_1},\rho_0\}$,
respectively. Then $M$ and $\widetilde M$ are holomorphically equivalent
if and only if $\{\cL T_1,\rho_0\}$ and $\{\widetilde{\cL T_1},\rho_0\}$
are holomorphically   equivalent, i.e. if there is a biholomorphic map $f$ commuting with $\rho_0$
such that $f\cL T_1f^{-1}=\widetilde{\cL T_1}$, that is that $f\tau_{1j}f^{-1}=\tilde\tau_{1i_j}$ for $1\leq j\leq p$. Here $\{i_1,\ldots, i_p\}=\{1,\ldots, p\}$.

Let  $\cL T_1=\{\tau_{11},\ldots,\tau_{1p}\}$ be a family of $p$ distinct
commuting   holomorphic
involutions.  Suppose that $\fix(\tau_{11}), \ldots, \fix(\tau_{1p})$ are hypersurfaces
 intersecting transversely at the origin.
 Let $\rho$ be an anti-holomorphic involutions and let $\cL T_2$ be the family of
involutions $\tau_{2j}=\rho\tau_{1j}\rho$ with $1\leq j\leq p$.
Suppose that
\eq{ilii+}
 [\mathfrak M_n]_1^{L\cL T_1}\cap[\mathfrak M_n]_1^{L\cL T_2}
 =\{0\}.
 \eeq
There exists a real analytic real $n$-submanifold
\eq{realM2}
M\subset\cc^{2p}
\colon z_{p+j}=A_j^2(z',\bar z'), \quad1\leq j\leq p
\end{equation} such that the set 
of Moser-Webster involutions
$\{\widetilde{\cL T_1},\rho_0\}$ of $M$
is holomorphically equivalent to
 $\{\cL T_1,\rho\}$.
\end{prop}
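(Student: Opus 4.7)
The plan is to route everything through the complexification $\cL M$, where the projections $\pi_1,\pi_2$ together with the real structure $\rho_0$ encode both the real manifold and its Moser--Webster involutions.

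For the forward direction, I would complexify a given biholomorphism $f\colon M\to\widetilde M$ by setting $F(z,w):=(f(z),\bar f(w))$ with $\bar f(w):=\overline{f(\bar w)}$. Then $F\colon \cL M\to\widetilde{\cL M}$ is biholomorphic, commutes with $\rho_0$, and satisfies $\pi_1\circ F=f\circ\pi_1$, so it conjugates the deck group of $\pi_1|_{\cL M}$ to that of $\pi_1|_{\widetilde{\cL M}}$. The unique characterization of $\tau_{11},\ldots,\tau_{1p}$ in \rl{sd1}(i) as the $p$ involutions whose fixed loci are hypersurfaces then forces $F\tau_{1j}F^{-1}=\widetilde\tau_{1i_j}$ for some permutation.

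For the reverse direction, given $F$ commuting with $\rho_0$ and conjugating $\cL T_1$ to $\widetilde{\cL T_1}$, I would argue that $F$ has the product form $F(z,w)=(f(z),\bar f(w))$. The first half $\pi_1\circ F$, being $\cL T_1$-invariant on $\cL M$, factors through $\pi_1$ by \rl{sd1}(ii); commutativity with $\rho_0$ forces the second half to be the complex conjugate of the first. Restricting to $\cL M\cap\fix(\rho_0)=M$ delivers $f\colon M\to\widetilde M$.

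For the realization statement, apply \rl{sd} to put $\cL T_1$ in the normal form $\tau_{1j}=Z_j$ of \rea{zjzjp} in coordinates $(\xi,\eta)$, so that $\cL T_1$-invariants are power series in $\xi$ and $\eta_1^2,\ldots,\eta_p^2$. Define the holomorphic functions
$$
w'_j(\xi,\eta):=\overline{\xi_j\circ\rho},\quad 1\leq j\leq p.
$$
Using $\rho^2=I$ and $\rho\tau_{2k}=\tau_{1k}\rho$, a direct check shows each $w'_j$ is $\cL T_2$-invariant; since the $\xi_j\circ\rho$ are linearly independent as pull-backs of independent forms by the diffeomorphism $\rho$, the $w'_j$ span $[\mathfrak M_n]_1^{L\cL T_2}$. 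Hypothesis \rea{ilii+} then guarantees that $(z',w'):=(\xi,w'(\xi,\eta))$ is a holomorphic coordinate system; write $\eta_j=A_j(z',w')$ for the inverse and set
$$
M\colon z_{p+j}=A_j(z',\bar z')^2,\quad 1\leq j\leq p.
$$
Parametrize the intended complexification by $\Phi(\xi,\eta):=(\xi,\eta^2,w'(\xi,\eta),\overline{\eta^2\circ\rho})$. Two direct computations close the argument: $\Phi\circ\rho=\rho_0\circ\Phi$, identifying the pulled-back real structure with $\rho$; and $\pi_1|_{\cL M}$, read off in the $(\xi,\eta)$-parametrization, is $(\xi,\eta)\mapsto(\xi,\eta^2)$, whose deck group is exactly $\{Z_1,\ldots,Z_p\}=\cL T_1$.

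The main subtlety is verifying that the constructed $M$ itself satisfies condition B, so that \rl{sd1} applies back to identify its Moser--Webster involutions with $\{\cL T_1,\rho\}$. This reduces to showing that the quadratic antiholomorphic parts of the $A_j(z',\bar z')^2$ have only the origin as common zero, which should follow from the facts that $\eta\mapsto(\eta_1^2,\ldots,\eta_p^2)$ has trivial zero fiber and that $(\xi,w')$ form coordinates, pushing the non-degeneracy of the $w'_j$ in the $\eta$-direction into the required non-degeneracy of $q$. A secondary point, on the reverse direction, is that the descent of $F$ through the $\cL T_1$-invariant ring yields a genuine biholomorphism of ambient $\cc^{2p}$ (not merely of the image of $\pi_1$); this is handled by the transversality $\fix(\tau_1)\cap\fix(\tau_2)=\{0\}$ of \rl{sd1}(ii), which makes $\pi_1|_M$ an embedding onto an open neighbourhood in $\cc^{2p}$.
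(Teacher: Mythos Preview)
Your proposal is correct and follows essentially the same route as the paper, the only variation being that your reverse-direction descent of $F$ to $f$ goes through the $\cL T_1$-invariant ring (\rl{sd1}(ii)) rather than the paper's fiber-by-fiber definition $f=\pi_1\circ g\circ\pi_1^{-1}$ over the regular locus followed by Riemann extension. One correction: the transversality $\fix(\tau_1)\cap\fix(\tau_2)=\{0\}$ is not what makes the descended $f$ a biholomorphism of ambient $\cc^{2p}$---the image of $\pi_1$ is already an open neighborhood of $0$ since $\pi_1$ is a branched cover, and invertibility of $f$ follows simply by running the same descent on $F^{-1}$.
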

\begin{proof} We  recall from \re{pi1M} the branched  covering
\eq{pi1M+}\nonumber
\pi_1\colon \cL M_1:= \mathcal M\cap((\Delta_{\delta}^{p}\times \Delta_{\delta^2}^{p})
\times(\Delta_{C\delta}^{p}\times \Delta_{C\delta^2}^{p}))\longrightarrow \Delta_{\delta}^{p}\times \Delta_{\delta^2}^{p}.
\eeq
Here $C\geq1$.
  Let $\pi_1$
be restricted to $\cL M_1$.  
  Then $\pi_2=\ov{\pi_1\circ\rho}$ is defined on $\rho(\cL M_1)$. Note that
\eq{pi2M}\nonumber
\pi_2\colon \cL \rho(\cL M_1)
\longrightarrow \Delta_{\delta}^{p}\times \Delta_{\delta^2}^{p}.
\eeq
We have $\pi_1^{-1}(z)\cap \fix(\rho)=\{(z,\ov z)\}$ for $z\in M$
and $\pi_1(\fix(\rho))=M$.
Let   $\cL B_0\subset\Delta_{\delta}^{p}\times \Delta_{\delta^2}^{p}$
be the branched  locus. Take $\cL B=\pi_1^{-1}(\cL B_0)$.
We will denote by $\widetilde {\cL M}_1,\tilde{\cL B}$ and $\tilde{\cL B_0}$ the corresponding
  data for $\widetilde M$. Here $\widetilde {\cL M}_1$ is an analogous
   branched  covering over $\pi_1(\widetilde{\cL M}_1)$. We assume that
  the latter contains $f(\pi_1(\cL M_1))$ if $\widetilde M$ is equivalent to $M$
  via $f$.

Assume that $f$ is a biholomorphic map sending $M$ into $\widetilde M$. Let
$f^c$ be the restriction of biholomorphic map
$f^c(z,w)=(f(z),\ov f(w))$ to $\cL M$. Let $M$ be defined by $z''=E(z',\ov z')$
 and $\widetilde M$ be defined by $z''=\tilde E(z',\ov z')$.
 By $f(M)\subset\widetilde M$, $f=(f',f'')$ satisfies
$$
f''(z',E(z',\ov z'))=\tilde E(f'(z', E(z',\ov z')),\ov{f'}(\ov z',\ov E(\ov z',z'))).
$$
Using the defining equations for $\cL M$,  we get
$f^c(\cL M)\subset\widetilde{\cL M}$
and $\rho f^c=f^c\rho$ on $\cL M\cap\rho(\cL M)$.
We will also assume that $ f^c(\cL M_1)$ is contained in $\widetilde{\cL  M}_1$. It is clear that $f^c$
sends a fiber $\pi_1^{-1}(z)$ onto the fiber $\pi_1^{-1}(f(z))$ for $z\in \Om=
\cL \pi_1(\cL M_1)\setminus(\cL B_0\cup f^{-1}
(\tilde B_0))$, since the two fibers have the same
number of points and $f$ is injective.
 Thus $f^c\tau_{1j}=\tilde\tau_{1i_j}f^c$ on $\pi_1^{-1}(\Om)$. Here $i_j$ is of course locally determined on
 $\pi_1^{-1}(\Om)$.
 Since $\cL B$ has positive codimension in $\cL M_1$
then $\cL M_1\setminus\cL B$ is connected. Hence $i_j$ is well-defined on $\pi_1^{-1}(\Om)$.
Then $f^c\tau_{1j}=\tilde\tau_{1i_j} f^c$ on $\cL M_1\setminus B$. This
shows that
$f^c$ conjugates simultaneously the deck transformations of ${\mathcal M}$ to the deck transformations of $\widetilde {\mathcal M}$ for $\pi_1$.  The same conclusion holds for
 $\pi_2$.

Conversely, assume that there is a biholomorphic map $g\colon \cL M\to \widetilde{\cL  M}$
such that $\rho g=g\rho$ and $g\tau_{1i}=\tilde\tau_{1j_i}g$. Since $\tau_{11}, \ldots, \tau_{12^p}$ are distinct
and $M_1\setminus\cL B$ is connected, then $\bigcup_{j\neq i}\{ x\in\cL M_1\setminus\cL B\colon\tau_{1i}(x)=\tau_{1j}(x)\}$
is a complex subvariety  of positive codimension in
$\cL M_1\setminus\cL B$. Its image under the proper projection $\pi_1$ is  a subvariety of positive codimension in $
\Delta_{\delta}^p\times\Delta_{\delta^2}^p\setminus\cL B_0$. This shows that the latter contains a non-empty open subset $\omega$ such that $\{\tau_{11}(x),\ldots,\tau_{12^p}(x)\}=\pi_1^{-1}\pi_1(x)$ has $2^p$ distinct points
 for each $\pi_1(x)\in\omega$. Therefore, $\tau_{11},\ldots, \tau_{12^p}$ are
  all deck transformations of $\pi_1$ over $\om$. Hence they are
   all deck transformations of $\pi_1\colon\cL M_1\setminus\cL B\to\Delta_{\delta}^p\times\Delta_{\delta^2}^p\setminus\cL B_0$, too.
 This shows that $\pi_1^{-1}(\pi_1(x))=\{\tau_{1j}(x)\colon1\leq j\leq 2^p\}$ for $x\in\cL M_1\setminus\cL B$.
Now, $g$ sends $\tau_{1j}(x)$ to $\tilde\tau_{1i_j}(g(x))$ for each $j$. Hence
  $f(z)=\pi_1g\pi_1^{-1}(z)$
is well-defined and holomorphic for $z\in\Delta_{\delta}^p\times\Delta_{\delta^2}^p\setminus\cL B_0$.
By the Riemann extension for bounded holomorphic functions, $f$ extends to a holomorphic mapping, still denoted by $f$,
which is defined near the origin.   We know that
 $f$ is invertible and  in fact the inverse
  can be obtained by extending  the mapping $z\to
 \pi_1g^{-1}\pi_1(z)$.
 If $z=(z',E(z',w'))\in M$, then $w'=\ov z'$
and
$f(z)=\pi_1g\pi_1^{-1}(z)=\pi_1g(z,\ov z)$ with $(z,\ov z)\in \fix(\rho)$. Since $\rho g=g\rho$,
then $g(z,\ov z)\in \fix(\rho)$. Thus $f(z)=\pi_1g(z,\ov z)\in \widetilde M$.

Assume that $\{\tau_{1j}\}$ and $\rho$
are germs of involutions defined at the origin of $\cc^n$. Assume that they satisfy the conditions in the proposition.   From 
\rl{sd} it follows that $\tau_{11}, \ldots, \tau_{1p}$ generate a group of $2^p$ involutions, while the $p$ generators are the only elements of which each
fixes a hypersurface pointwise.
To realize them as deck transformations of the complexification of a real analytic submanifold, we apply   \rl{sd}
to find  a
coordinate map $(\xi,\eta)\to \phi(\xi,\eta)=(A,B)(\xi,\eta)$  such that invariant holomorphic functions of $\{\tau_{1j}\}$ are precisely holomorphic functions in
$$
 z'=(A_1(\xi,\eta),\ldots,
A_p(\xi,\eta)), \quad  z''=(B_1^2(\xi,\eta), \ldots, B_p^2(\xi,\eta)).
$$
Note that $B_j$ is skew-invariant under $\tau_{1j}$ and
is invariant under $\tau_{1i}$
for $i\neq j$ and $A$ is invariant under all $\tau_{1j}$.
Set $$
w_j'=\ov{A_j\circ\rho(\xi,\eta)}, \quad w''_j=\ov{B_j^2\circ\rho(\xi,\eta)}.
$$
Since $\tau_{2j}=\rho\tau_{1j}\rho$, the holomorphic functions invariant under all $\tau_{2j}$ are precisely the holomorphic functions in the above $w',w''$. We now draw conclusions for the linear parts of invariant functions and
involutions. Since $\phi$ is biholomorphic, then
$LA_1,\ldots, LA_p$ are linearly independent. They are also invariant under $L\tau_{1j}$. Since $\tau_{2j}=\rho\tau_{1j}\rho$,  the $p$ functions $L\ov {A_i\circ\rho}$
 are linearly independent and   invariant under $L\tau_{2j}$. Thus $$LA_1,\ldots, LA_p,\
 L\ov {A_1\circ\rho}, \ldots, L\ov{A_p\circ\rho}$$ are linearly independent,
since $[\mathfrak M_n]_1^{L\cL T_1}\cap[\mathfrak M_n]_1^{L\cL T_2}=\{0\}$. This shows that
  the map $(\xi,\eta)\to  (z',w')=(A(\xi,\eta),\ov{A\circ\rho(\xi,\eta)})$
  has
an inverse $(\xi,\eta)=\psi(z',w')$. Define
$$
M\colon z''=(B_1^2,\ldots, B_p^2)\circ\psi(z',\ov z').
$$
The  complexification of $M$ is given by
$$
\cL M\colon
z''=(B_1^2,\ldots, B_p^2)\circ\psi(z',w'),\quad
w''=(\ov B_1^2,\ldots, \ov B_p^2)\circ\ov\psi(w',  z').
$$
Note that $\phi\circ\psi(z',w')=(z',B\circ\psi(z',w'))$ is biholomorphic. In particular, we can write
$$
B_j^2\circ\psi(z',\ov z')=h_j(z',\ov z')+q_j(\ov z')+b_j(z')+O(|(z',\ov z')|^3).
$$
Here $q_j(\ov z')=\tilde q^2_j(\ov z')$, and $\tilde q(w')$
is the linear part of $w'\to B\circ\psi(0,w')$. Therefore, $|q(w')|
\geq c|w'|^2$ and $q_*=0$.  By \rl{2p1}, $\pi_1\colon\cL M\to\cc^p$
is a $2^p$-to-$1$ branched covering defined near
$0\in\cL M$.
Since  $B^2$ is invariant by $\tau_{1j}$, then $z''=B^2\circ\psi(z',w')$ is invariant by $\psi^{-1}\tau_{1j}\psi(z',w')$.
Also $A$ is invariant under $\tau_{1j}$. Then $z'=
A\circ\psi(z',w')$ is invariant by $\psi^{-1}\tau_{1j}\psi(z',w')$.
 This show that $\{\psi^{-1}\tau_{1j}\psi\}$ has the same invariant functions
as of the deck transformations of $\pi_1$. By \rl{twosetin}, $\{\psi^{-1}\tau_{1j}\psi\}$ 
agrees with the set of deck transformations of $\pi_1$.
For $\rho_0(z',w')=(\ov{w'},\ov{z'})$ we have $\rho_0\psi^{-1}=\psi^{-1}\rho.$
This shows that $M$ is a realization for    $\{\tau_{11},\ldots, \tau_{1p},\rho\}$.
\end{proof}

\begin{rem} (i)
We choose the realization in such a way that $z_{p+j}$ are square functions. Of course, the choice is not unique. In fact, we can
replace $z''$ by  $f(z)=(f_{p+1}(z),\ldots, f_{2p}(z))$ as long as  the mapping  $z\to(z',f(z))$ is biholomorphic.  However, this particular holomorphic equivalent form of $M$ will be crucial
to study the asymptotic manifolds in section~\ref{secideal}.   In fact,  by 
Example~\ref{mgae}, \re{reformu} provides a general equation
for $M$ to admit $2^p$ deck transformations. 
(ii)
An interesting  case is when $f(z)$ can be so chosen that $M$ is holomorphically flattened, i.e, $M$ is contained in $\IM z''=0$.
In \cite{MW83},  such a choice is always possible at least at the formal level. We will discuss the holomorphic flatness  in \rt{abelm}.
\end{rem}

Next we want to compute the deck transformations for a product quadric. 
We will  first recall the Moser-Webster involutions for elliptic and hyperbolic complex tangents.  We will then
compute the deck transformations for complex tangents of complex type.

Let us first recall involutions in \cite{MW83} where the complex tangents
are elliptic (with non-vanishing Bishop invariant) or hyperbolic. 
When $\gaa_1\neq0$, the non-trivial
 deck transformations of 
\eq{z2z12}
\nonumber
z_2=|z_1|^2+\gaa_1(z_1^2+\ov z_1^2)
\eeq
 for $\pi_1,\pi_2$ are $\tau_1,\tau_2$,  respectively. They are
\eq{tau10e}
\nonumber
\tau_1\colon  z_1' = z_1, \quad w_1'=-w_1 -\gamma_1^{-1}z_1; \quad \tau_2=\rho\tau_1\rho
\eeq
with $\rho$ being defined by (\ref{antiholom-invol}). Note that $\tau_1$ and $\tau_2$
do not commute and $\sigma=\tau_1\tau_2$ satisfies
 \eq{} \nonumber
 \sigma^{-1}=\tau_i\sigma\tau_i=\rho\sigma\rho, \quad \tau_i^2= I,\quad\rho^2= I.
 \eeq
When the complex tangent is not parabolic, the eigenvalues of $\sigma$ are   $\mu,\mu^{-1}$ with
$\mu=\la^2$ and
$$
\gaa\la^2 -\la +\gaa=0.
$$
For the elliptic complex tangent, we can choose a solution  $\la>1$, and in suitable coordinates we obtain
\begin{gather}
\label{tau1e}
\tau_1\colon\xi'=\la\eta+O(|(\xi,\eta)|^2), \quad \eta'=\la^{-1}\xi+O(|(\xi,\eta)|^2),\\
\tau_2=\rho\tau_1\rho, \nonumber
\quad 
\nonumber
\rho(\xi,\eta)=(\ov\eta,\ov\xi),\\
\sigma\colon\xi'=\mu\xi+O(|(\xi,\eta)|^2), \quad \eta'=\mu^{-1}\eta+O(|(\xi,\eta)|^2),\quad \mu=\la^2.\nonumber
\end{gather}
When the complex tangent is hyperbolic, i.e.
$1/2<\gaa<\infty$,    $\tau_i$ and  $\sigma$ still have the above form, while $|\mu|=1=|\la|$ and 
\eq{rhohy}
\nonumber
\rho(\xi,\eta)
=(\ov\xi,\ov\eta).
\eeq
When the complex tangent is parabolic, i.e. $\gaa=1/2$, the pair of involutions
still exists. However, $L\sigma$ is not diagonalizable and $1$ is its only eigenvalue.

 For the complex type, new situations arise.   Recall that such a quadric has the form
 \eq{z3z42} Q_{\gaa_s}\colon  z_3=z_1\ov z_2+\gaa_s\ov z_2^2+(1- \gaa_s)z_1^2, \quad z_4=\ov z_3.
\eeq
Here $\gaa_s$ is a complex number. 
Let us first check  that such a quadric   is not the product of two Bishop quadrics~:
 Its CR singular set is defined by
$$
(z_1+2\gaa_s \ov z_2)(z_2+2(1-\ov\gaa_s)\ov z_1)=0. 
$$
It is the union of a complex line and a totally real plane, or two totally real planes. 
The CR singular set of  a quadric defined by $z_3=|z_1|^2+\gaa_1(z_1^2+\ov z_1^2)$
and $z_4=|z_2|^2+\gaa_2(z_2^2+\ov z_2^2)$ is given by
$$
(z_1+2\gaa_1 \ov z_1)(z_2+2\gaa_2\ov z_2)=0. 
$$
It is the union of two complex lines, or one complex line and a $3$ dimensional plane.

By condition B, we know that $\gaa_s\neq0,1$.   Let us compute the deck transformations of the complexification of  \re{z3z42}. 
 According to   \rl{sd1} (i), the deck transformations for $\pi_1$ are generated by two involutions
\ga\tau_{11}
\label{tau1112}
\nonumber
\colon \begin{cases}
z_1'=z_1,\\
z_2'=z_2,\\
w_1'=-w_1-\gaa_s^{-1} z_2,\\
w_2'=w_2;
\end{cases}\quad
\tau_{12}\colon \begin{cases}
z_1'=z_1,\\
z_2'=z_2,\\
w_1'=w_1,\\
w_2'=-w_2-(1-\ov\gaa_s)^{-1} z_1.
\end{cases}
\end{gather}
We still have $\rho$ defined by (\ref{antiholom-invol}).  Let $\tau_{2j}=\rho\tau_{1j}\rho$. Then $\tau_{21},\tau_{22}$
generate the deck transformations of $\pi_2$. Note that
\ga
\nonumber
\tau_{21}\colon \begin{cases}
z_1'=-z_1-\ov\gaa_s^{-1} w_2,\\
z_2'=z_2,\\
w_1'=w_1,\\
w_2'=w_2;
\end{cases}\quad
\tau_{22}\colon \begin{cases}
z_1'=z_1,\\
z_2'=-z_2- (1-\gaa_s)^{-1} w_1,\\
w_1'=w_1,\\
w_2'=w_2.
\end{cases}
\end{gather}
Recall that $\tau_i=\tau_{i1}\tau_{i2}$ is the unique deck transformation of $\pi_i$ that has the smallest dimension of the set of fixed-points among all deck transformations. They are
\ga\tau_{1}\colon \begin{cases}
z_1'=z_1,\\
z_2'=z_2,\\
w_1'=-w_1-\gaa_s^{-1} z_2,\\
w_2'=-w_2-(1-\ov\gaa_s)^{-1} z_1;
\end{cases}
\tau_{2}\colon   \begin{cases}
z_1'=-z_1-\ov\gaa_s^{-1} w_2,\\
z_2'=-z_2- (1-\gaa_s)^{-1} w_1,\\
w_1'=w_1,\\
w_2'=w_2.
\end{cases}
\nonumber
\end{gather}
  And $\tau_1\tau_2$ is given by
\ga\sigma_s\colon \begin{cases}
z_1'=-z_1-\ov\gaa_s^{-1}w_2,\\
z_2'=-z_2- (1-\gaa_s)^{-1} w_1,\\
w_1'=\gaa_s^{-1} z_2+((\gaa_s-\gaa_s^2)^{-1}-1)w_1,\\
w_2'= (1-\ov\gaa_s)^{-1}z_1+((\ov\gaa_s-\ov\gaa_s^2)^{-1}-1)w_2.
\end{cases}
\nonumber
\end{gather}

In contrast to the elliptic and hyperbolic cases, $\tau_{11}$  and $\rho\tau_{11}\rho$ commute; in other words, $\tau_{11}\rho\tau_{11}\rho$ is actually an involution.
And $\tau_{12}$ and $\rho\tau_{12}\rho$ commute, too.  However, $\tau_{11}$ and $ \tau_{22}$ do not commute,  and $\tau_{12}, \tau_{21}$ do not commute either.  Thus, we form compositions $$\sigma_{s1}=\tau_{11}\tau_{22}, \quad \sigma_{s2}=\tau_{12}\tau_{21}, \quad\sigma_{s2}^{-1}=\rho\sigma_{s1}\rho.$$
By a simple computation, we have
\ga  \nonumber
\sigma_{s1}\colon \begin{cases}
z_1'=z_1,\\
z_2'=-z_2- (1-\gaa_s)^{-1} w_1,\\
w_1'=\gaa_s^{-1} z_2+((\gaa_s-\gaa_s^2)^{-1}-1)w_1,\\
w_2'=w_2;
\end{cases}\\
\sigma_{s2}\colon \begin{cases}
z_1'=-z_1-\ov\gaa_s^{-1}w_2,\\
z_2'=z_2,\\
w_1'=w_1,\\
w_2'= (1-\ov\gaa_s)^{-1}z_1+((\ov\gaa_s-\ov\gaa_s^2)^{-1}-1)w_2.
\end{cases}
\nonumber
\end{gather}
We verify that $$
\sigma_{s1}\sigma_{s2}=\sigma_s=\tau_1\tau_2.
$$
This allows us to compute the eigenvalues of $\sigma_{s1}\sigma_{s2}$ easily: 
\begin{gather}\label{msms-}
\mu_s, 
\quad  \mu_s^{-1}, 
\quad
  \ov \mu_s^{-1}, \quad 
  \ov\mu_s, \\  
\mu_s=(\gaa_s^{-1}-1)^{-1}.
\nonumber
\end{gather} 
In fact we compute them by observing that the first two  in \re{msms-} and $1$ with multiplicity are eigenvalues of $\sigma_{s1}$, while the last two  in \re{msms-} and $1$ with multiplicity are eigenvalues of $\sigma_{s2}$. Therefore,  for $\gaa_s\neq1/2$, i.e.
 $\mu_s\neq1$, we can find a linear transformation of the form
$$
\psi\colon (z_1,w_2)\to(\xi_2,\eta_2)=\ov\phi(z_1,w_2), \quad
(z_2,w_1)\to(\xi_1,\eta_1)=\phi(w_1,z_2)$$
 such that $\sigma_{s1},\sigma_{s2},\sigma_s=\sigma_{s1}\sigma_{s2}$ are simultaneously diagonalized as
\begin{equation}\label{linears}
\begin{array}{rrclrclrclrcl}
\sigma_{s1}\colon & \xi_1'&\!\!=\!\!&\mu_s\xi_1, \quad & \eta_1'&\!\!=\!\!&\mu_s^{-1}\eta_1,& \quad \xi_2'&\!\!=\!\!&\xi_2,\quad& \eta_2'&\!\!=\!\!&\eta_2, \\
\sigma_{s2}\colon &\xi_1'&\!\!=\!\!&\xi_1,\quad &\eta_1'&\!\!=\!\!&\eta_1,\quad&\xi_2'&\!\!=\!\!&\ov\mu_s^{-1}\xi_2,\quad&\eta_2'&\!\!=\!\!&\ov\mu_s\eta_2,\\
\sigma_s\colon&\xi_1'&\!\!=\!\!&\mu_s\xi_1, \quad&\eta_1'&\!\!=\!\!&\mu_s^{-1}\eta_1, \quad& \xi_2'&\!\!=\!\!&\ov\mu_s^{-1}\xi_2,\quad&\eta_2'&\!\!=\!\!&\ov\mu_s\eta_2.
\end{array}
\end{equation}
Under the transformation $\psi$, the involution $\rho$, defined by (\ref{antiholom-invol}),
takes the form
 \eq{rhos}
\rho(\xi_1,\xi_2,\eta_1,\eta_2)=(\ov\xi_2,\ov\xi_1,\ov\eta_2,\ov\eta_1).\eeq
  Moreover, for $i,j=1,2$, we have
\ga
\tau_{ij}\colon\xi_j'=\la_j\eta_j, \quad \eta_j'=\la_j^{-1}\xi_j; \quad\xi_i'=\xi_i, \quad\eta_i'=\eta_i, \quad i\neq j;\label{taus}\\
\la_1=\la_s, \quad \la_2=\ov\la_s^{-1}, \quad\mu_s=\la_s^2.\nonumber 
\end{gather}
 When $\gaa_s=1/2$,   the only eigenvalue of $\sigma_{s1}$ is $1$.     We can choose a suitable $\phi$ such that $\psi$ transforms $\sigma_{s1},\sigma_{s2},\sigma_s$ into
\begin{equation}\label{jsig}
\begin{array}{rrclrclrclrcl}
\sigma_{s1}\colon &\xi_1'&\!\!=\!\!&\xi_1, \quad &\eta_1'&\!\!=\!\!&\eta_1+\xi_1, \quad &\xi_2'&\!\!=\!\!&\xi_2,\quad &\eta_2'&\!\!=\!\!&\eta_2\\
\sigma_{s2}\colon &\xi_1'&\!\!=\!\!&\xi_1,\quad &\eta_1'&\!\!=\!\!&\eta_1,\quad  &\xi_2'&\!\!=\!\!&\xi_2,\quad&\eta_2'&\!\!=\!\!&-\xi_2+\eta_2,\\
\sigma_s\colon      &\xi_1'&\!\!=\!\!&\xi_1, \quad &\eta_1'&\!\!=\!\!&\xi_1+\eta_1, \quad &\xi_2'&\!\!=\!\!& \xi_2,\quad&\eta_2'&\!\!=\!\!&-\xi_2+\eta_2.
\end{array}
\end{equation}
Note that  eigenvalues formulae \re{msms-} and  the Jordan normal form \re{jsig} tell us that $\tau_1$ and $\tau_2$ do not commute, while $\sigma_{s1},\sigma_{s2}$ commute as mentioned earlier.

\begin{rem}
The mappings $\sigma_{s1},\sigma_{s2}$ behave like a hyperbolic mapping  when $|\mu_s|>1$,   an elliptic mapping when $|\mu_s|=1$,  or
a parabolic mapping when $\mu_s=1$. Recall that $\sigma$ has $4$ distinct eigenvalues for the first case,
$2$ distinct eigenvalues with multiplicity for the second case, and only eigenvalue of 1 for the last
case. The $\sigma$ is diagonalizable for the first two cases, but it has a Jordan bock with multiplicity
for the last case.
In this paper, we will only study the first case of complex type, i.e.
$$
|\mu_s|>1,
$$
 which follows from  condition E. 
\end{rem}

For later purpose, we summarize some   facts for complex type in the following.
\begin{prop}\label{sigs} Let $Q_{\gaa_s}\subset\cc^4$ be the   quadric defined by \rea{z3z4} and \rea{gsra}.  Then $\pi_1$ admits two deck transformations $\tau_{11},\tau_{12}$ such that 
the set of fixed points of each $\tau_{1j}$ has dimension $3$. Also, $\tau_{2j}=\rho\tau_{1j}\rho$ are the deck transformations of $\pi_2$ and \ga
\tau_{11}\tau_{21}=\tau_{21}\tau_{11}, \quad
\tau_{12}\tau_{22}=\tau_{22}\tau_{12}.
 \nonumber\end{gather}
Let $\sigma_{s1}=\tau_{11}\tau_{22}$, $\sigma_{s2}=\tau_{12}\tau_{21}$, $\tau_i=\tau_{i1}\tau_{i2}$, and $\sigma_s=\tau_1\tau_2$. Then
\eq{}  \nonumber
\sigma=\sigma_{s1}\sigma_{s2}=\sigma_{s2}\sigma_{s1}, \quad \sigma_{s2}^{-1}=\rho\sigma_{s1}\rho, \quad \sigma_s^{-1}=\rho\sigma_s\rho.
\eeq
In suitable coordinates $\sigma_{s1},\sigma_{s2},\sigma,\rho_s$ are given by \rea{linears}-\rea{rhos}
when   $\gaa_s\neq1/2$; when $\gaa_s=1/2$, they are given by \rea{rhos}
and
\rea{jsig}.
If $$\gaa_s\in \{z\in\cc\colon \RE z>1/2, \IM z>0\},$$ 
then $\sigma_s$ admits $4$ distinct eigenvalues  $(\gaa_s^{-1}-1)^{-1},  \ov\gaa_s^{-1}-1,
\gaa_s^{-1}-1$, and $(\ov\gaa_s^{-1}-1)^{-1}$.
\end{prop}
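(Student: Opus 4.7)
The plan is to obtain everything by direct computation, exploiting the fact that the defining equations of $Q_{\gaa_s}$ can already be written in the form $z_{p+j}=L_j(z',\ov z')^2/\text{const}$ after a linear change of coordinates, so that Example~\ref{reformu}'s construction applies and the existence of $2^p=4$ deck transformations is automatic. First I would verify conditions~B and~D for $Q_{\gaa_s}$: the quadratic anti-holomorphic parts are $q_1(\ov z')=\gaa_s\ov z_2^2$ and $q_2(\ov z')=(1-\ov\gaa_s)\ov z_1^2$, whose common zero set is $\{0\}$ because $\gaa_s\neq 0,1$ by \rea{gsra}, so condition~B holds. Writing the defining equations of $Q_{\gaa_s}$ as $z_3=(z_1+\alpha\ov z_2)(\cdots)$ and extracting square-root structure, or more directly by writing down the formulas for $\tau_{11}$ and $\tau_{12}$ displayed in the text above the proposition, one verifies by substitution into the complexified equations of $Q_{\gaa_s}$ that each $\tau_{1j}$ is a biholomorphism satisfying $\pi_1\circ\tau_{1j}=\pi_1$, and that the set of fixed points of $\tau_{1j}$ is a hyperplane in the $4$-dimensional $\cL M$ (cut out by $w_1+\gaa_s^{-1}z_2=0$ and $w_2+(1-\ov\gaa_s)^{-1}z_1=0$ respectively). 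By \rl{sd1}(i), these two generate the full group of deck transformations, and assertion~(i) follows.

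Next, assertion~(ii) is immediate from the construction: $\pi_2=\ov{\pi_1\circ\rho}$, so $\pi_2\circ(\rho\tau_{1j}\rho)=\ov{\pi_1\circ\tau_{1j}\rho\rho}=\ov{\pi_1\circ\rho}=\pi_2$. The commutation relations $\tau_{11}\tau_{21}=\tau_{21}\tau_{11}$ and $\tau_{12}\tau_{22}=\tau_{22}\tau_{12}$ I would verify directly from the explicit formulas (each pair acts on disjoint coordinate subsets in the sense that $\tau_{11}$ only modifies $w_1$ while $\tau_{21}$ only modifies $z_1$, and analogously for the other pair). From commutativity, $(\tau_{11}\tau_{21})$ and $(\tau_{12}\tau_{22})$ are themselves involutions, and a short algebraic check shows $\sigma_{s1}\sigma_{s2}=\sigma_{s2}\sigma_{s1}$ (using again that $\sigma_{s1}$ fixes the $z_1,w_2$ coordinates and $\sigma_{s2}$ fixes the $z_2,w_1$ coordinates), and that their product equals $\tau_1\tau_2=\sigma_s$. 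The identity $\sigma_s^{-1}=\rho\sigma_s\rho$ follows from $\tau_i^2=I$ and $\tau_2=\rho\tau_1\rho$, and $\sigma_{s2}^{-1}=\rho\sigma_{s1}\rho$ follows from the definitions $\sigma_{s1}=\tau_{11}\tau_{22}$ and $\sigma_{s2}=\tau_{12}\tau_{21}$ together with $\tau_{2j}=\rho\tau_{1j}\rho$.

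For the normal forms \rea{linears}--\rea{rhos} and \rea{jsig}, I would  compute the linear parts of $\sigma_{s1}$ and $\sigma_{s2}$ from the explicit formulas and observe that they are already linear on $Q_{\gaa_s}$ (the manifold is quadratic). Since $\sigma_{s1}$ acts trivially on the $(z_1,w_2)$-block and $\sigma_{s2}$ acts trivially on the $(z_2,w_1)$-block, each is essentially a $2\times 2$ linear map on its own invariant plane, with the other plane being fixed. A direct computation of $\sigma_{s1}$ on the $(z_2,w_1)$-plane yields the characteristic equation whose roots are $\mu_s=(\gaa_s^{-1}-1)^{-1}$ and $\mu_s^{-1}$; when $\mu_s\neq 1$ one simultaneously diagonalizes by picking eigenvectors $\phi(w_1,z_2)=(\xi_1,\eta_1)$, and analogously $\ov\phi(z_1,w_2)=(\xi_2,\eta_2)$ diagonalizes $\sigma_{s2}$ with eigenvalues $\ov\mu_s^{-1},\ov\mu_s$. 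Because $\rho$ conjugates $\sigma_{s1}$ to $\sigma_{s2}^{-1}$ and the diagonalizations are related by the same $\phi\leftrightarrow\ov\phi$, the antiholomorphic involution takes the form \rea{rhos}. The description of $\tau_{ij}$ in \rea{taus} follows because each $\tau_{1j}$ is a square-root of the relevant diagonal factor in the sense that $\tau_{1j}^2=I$ and $\tau_{1j}\rho\tau_{1j}\rho = \sigma_{sj}$ on its invariant plane while acting as the identity on the complementary plane, and so in the eigen-coordinates it must swap $\xi_j$ and $\eta_j$ up to factors $\la_j,\la_j^{-1}$ with $\la_j^2=\mu_{s_j}$. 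When $\gaa_s=1/2$, i.e.\ $\mu_s=1$, the $2\times 2$ blocks are non-diagonalizable and a Jordan basis produces \rea{jsig}. Finally, the assertion about the four distinct eigenvalues of $\sigma_s$ follows since under the condition $\RE\gaa_s>1/2$, $\IM\gaa_s>0$ we have $|\mu_s|>1$ and $\mu_s\notin\rr$, so $\mu_s,\mu_s^{-1},\ov\mu_s,\ov\mu_s^{-1}$ are pairwise distinct. The only step requiring attention is the simultaneous diagonalization of $\sigma_{s1},\sigma_{s2}$ together with the normalization of $\rho$ in the same coordinates, which hinges on the compatibility of the eigenbases of $\sigma_{s1}$ and $\sigma_{s2}$ under the anti-holomorphic swap $\rho$; this is exactly what the formula $\sigma_{s2}^{-1}=\rho\sigma_{s1}\rho$ provides.
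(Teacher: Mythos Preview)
Your plan is correct and mirrors the paper's own treatment: the proposition is a summary of the explicit computations carried out in the text immediately preceding it, and your outline recapitulates exactly those steps (explicit formulas for the $\tau_{ij}$, block-structure commutativity, $2\times2$ diagonalization on each invariant plane, and compatibility of the two diagonalizations via $\sigma_{s2}^{-1}=\rho\sigma_{s1}\rho$).

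One correction worth making: in justifying the form \rea{taus} you assert $\tau_{1j}\rho\tau_{1j}\rho=\sigma_{sj}$, but this is false---$\tau_{11}\rho\tau_{11}\rho=\tau_{11}\tau_{21}$, which is the commuting involution you already noted, not $\sigma_{s1}=\tau_{11}\tau_{22}$. The correct reason $\tau_{11}$ swaps $\xi_1$ and $\eta_1$ up to scalars is that $\tau_{11}$ \emph{reverses} $\sigma_{s1}$: from $\sigma_{s1}=\tau_{11}\tau_{22}$ and $\tau_{11}^2=I$ one gets $\tau_{11}\sigma_{s1}\tau_{11}=\tau_{22}\tau_{11}=\sigma_{s1}^{-1}$, so $\tau_{11}$ interchanges the $\mu_s$- and $\mu_s^{-1}$-eigenspaces. (Note also that \rea{taus} appears in the text but is not part of the proposition itself, so this step is optional for what you are asked to prove.) A second, harmless slip: the fixed-point set of $\tau_{11}$ is $2w_1+\gaa_s^{-1}z_2=0$, not $w_1+\gaa_s^{-1}z_2=0$.
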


The commutativity of $\sigma_h,\sigma_e,\sigma_{s1},\sigma_{s2}$ will be important to understand the convergence of normalization for the  abelian CR singularity to be introduced in section~\ref{abeliancr}.

 Let us summarize some facts in this section. 

Let $\tau_i=\tau_{i1}\cdots\tau_{ip}$ for $i=1,2$.
 Note
that  they
are intertwined by the anti-holomorphic involution via $\tau_2=\rho \tau_1\rho$.
Each
$\tau_i$ is the unique deck transformation  for $\pi_i$ whose set of fixed points has minimum dimension $p$. Then
$\sigma=\tau_1\tau_2$ is {\it reversible} by   $\tau_i$ and $\rho$ in the sense that
$$
\tau_i\sigma\tau_i=\sigma^{-1}, \quad\rho\sigma\rho=\sigma^{-1},
\quad\tau_i^2= I,\quad \rho^2= I.
$$
The reversible map $\sigma$
will play a central role to the study of the submanifolds $M$, as we will demonstrate this
in the classification of quadratic manifolds. In particular, they carry
 some geometry and dynamics  associated to the real manifolds; for instance the attached complex submanifolds are closely related the 
 invariant submanifolds of $\sigma$, which is discussed in section~\ref{secideal}.
We will also call $\tau_{11},\ldots, \tau_{1p}$ the generators of the deck transformations, which
are unique as  each $\fix(\tau_{1j})$
has codimension $1$.

For various reversible mappings and their relations with general mappings, the reader is referred  to \cite{OZ11} for recent results and references therein.

To derive our normal forms, we shall transform  $\{\tau_1,\tau_2,\rho\}$ into 
a 
normal form first.
We will  further normalize $\{\tau_{1j},\rho\}$ by using the group of biholomorphic maps
that preserve the normal form of $\{\tau_1,\tau_2,\rho\}$, i.e. the centralizer of the normal form  of $\{\tau_1,\tau_2,\rho\}$.

\setcounter{thm}{0}\setcounter{equation}{0}

\section{
Quadrics with the maximum number of deck transformations}
\label{secquad}
 In section~\ref{secinv}, 
we  establish  the basic relation between the classification of real manifolds and that of  two families of involutions intertwined
by an antiholomorphic involution; see \rp{inmae}.
As a first application, we obtain  in this section a normal form for  two families of linear involutions
and use it to construct the normal form for their associated quadrics. This section
also serves an introduction to our approach to find the normal forms of the real submanifolds   at least at
the formal level.

\subsection{Normal form of two families of linear involutions}

To formulate our results, we first discuss the normal forms  which we are seeking for the involutions.
We are given two families of commuting linear involutions
$ \cL T_1
=\{T_{11}, \ldots, T_{1p}\}$ and $\cL T_2=\{T_{21},\ldots, T_{2p}\}$ with $T_{2j}=\rho T_{1j}\rho$. Here $\rho$ is a linear anti-holomorphic involution. We 
 set
$$  T_1=T_{11}\cdots T_{1p}, \quad T_2=\rho T_1\rho. $$
Recall that our involutions satisfy the additional   \re{Mn1g} and \re{ilii+}. Thus
\ga\label{ilii++}
  \dim [\mathfrak M_n]_1^{\cL T_i}=p,\quad  [\mathfrak M_n]_1^{\cL T_i}= [\mathfrak M_n]_1^{T_i},
 \\
 [\mathfrak M_n]_1^{ T_1}\cap[\mathfrak M_n]_1^{T_2}
 =\{0\}.\label{bilii++}
 \end{gather}
  Recall that  $[\mathfrak M_n]_1$ denotes the linear functions   without constant terms.   We would like to find
a change of coordinates $\var$ such that $\var^{-1} T_{1j}\var$ 
and
$\var^{-1}\rho\var$ have a simpler form. We would like
to show that two such families of involutions $\{\cL T_1,\rho\}$ and
$\{\widetilde{\cL T_1},\tilde\rho\}$ are holomorphically equivalent, if there are
normal forms are equivalent under a much smaller set of changes of coordinates,
or if they are identical in the ideal situation.

Next, we describe our plans to derive the normal forms for linear involutions.
The scheme  to derive the linear normal forms
turns out to be essential to understand the derivation of normal forms for non-linear involutions and the perturbed quadrics. We define 
$$
 S=T_1T_2.
$$
Besides   conditions   \re{ilii++}-\re{bilii++}, we will soon impose condition E below that $S$ has $2p$ distinct eigenvalues. 

We  first use a linear map $\psi$ to
 diagonalize $S$ to its normal form
$$
\hat S\colon \xi_j'=\mu_j\xi, \quad \eta_j'=\mu_j^{-1}\eta_j,\quad 1\leq j\leq p.
$$
The choice of $\psi$ is not unique.
We   further normalize $T_1,T_2, \rho$ under linear transformations commuting with $\hat S$, i.e.
the   invertible mappings in the 
{\it linear centralizer} of $\hat S$. We   use a linear map that commutes with $\hat S$ to
 transform $\rho$ into a
normal form too, which is still
 denoted by $\rho$. 
 We then use a transformation $\psi_0$ in the linear centralizer of $\hat S$ and $\rho$
  to normalize the   $T_1,T_2$ into the normal form
$$
\hat T_i\colon \xi_j'=\lambda_{ij}\eta_j, \quad \eta_j'=\lambda_{ij}^{-1}\xi_j, \quad 1\leq j\leq p.
$$
Here we require $\lambda_{2j}=\lambda_{1j}^{-1}$. Thus $\mu_j=\lambda_{1j}^2$ for $1\leq j\leq p$, and
 $\lambda_{11}, \ldots, \lambda_{1p}$ form a complete set of invariants
of $T_1,T_2,\rho$, provided they are normalized into the regions 
$$
\la_{1e}>1, \quad \IM\la_{1h}>0, \quad \arg\la_{1s}\in(0,\pi/2), \quad |\la_s|>1.
$$

 Next we   normalize the family  $\cL T_1$ of linear involutions
 under mappings in the linear centralizer of $\hat T_1, \rho$. Let us assume that   $T_1,\rho$ 
 are in the normal forms $\hat T_1,\rho$.
To normalize the families $\{\cL T_1,\rho\}$,   we use the crucial property that $T_{11},\ldots, T_{1p}$
commute pairwise and each $T_{1j}$  fixes a hyperplane. 
 This allows us to express the family of involutions via a single linear mapping $\phi_1$:
$$
T_{1j}=\var_1\phi_1Z_j\phi_1^{-1}\var_1^{-1}.
$$
Here the linear mapping $\var_1$ depends only on $\la_1,\ldots, \la_p$  and 
$$
Z_j\colon \xi'=\xi, \quad \eta_i'=\eta_i \ (i\neq j), \quad \eta_j'=-\eta_j.
$$
Expressing  $\phi_1$
 in  a  non-singular $p\times p$  constant
matrix $\mathbf B$, the normal form for $\{T_{11}, \ldots, T_{1p},\rho\}$ consists of invariants $\la_1,\ldots, 
\la_p$ and a normal form  of $\mathbf B$.  After we obtain the normal form for  $\mathbf B$,
we will construct the normal form of the quadrics by using the realization procedure  in the proof
of \rp{inmae}.

\medskip

We now carry out the details.

Let  $T_1=T_{11}\cdots T_{1p}$,
   $T_2=\rho T_1\rho$ and
$
S=T_1T_2.
$
   Since $T_i$ and $\rho$ are involutions, then $S$ is reversible with respect to $T_i$
   and $\rho$, i.e. $$S^{-1}=T_i^{-1}ST_i, \quad S^{-1}=\rho^{-1} S\rho, \quad T_i^2= I, \quad\rho^2= I.
   $$
   Therefore,
  if $\kappa$ is an eigenvalue of $S$ with a  (non-zero)   eigenvector $u$, then
\eq{} 
Su=\kappa u, \quad S(  T_iu)=\kappa^{-1}T_iu, \quad S(\rho u)=\ov\kappa^{-1}\rho u,\quad
S(\rho T_iu )=\ov\kappa\rho T_iu. \nonumber
\eeq
  Following \cite{MW83} and  [St07], we will divide eigenvalues into $4$ types: $\mu$ is {\it elliptic} if $\mu\neq\pm1$
and $\mu$ is real, $\mu$ is {\it hyperbolic} if $|\mu|=1$ and $\mu\neq1$,  $\mu$ is
{\it parabolic } if $\mu=1$, and $\mu$ is {\it complex} otherwise.   The classification of $\sigma$ into the types 
corresponds to the classification of the types of complex tangents described in section~\ref{secinv}; namely,  an elliptic (resp. hyperbolic)  complex tangent is tied to 
 a hyperbolic (resp. elliptic) mapping $\sigma$. A complex tangent of parabolic (reps. complex) type is tied to a mapping
of parabolic (resp.
complex)  type.

  To classify the families of linear involutions,  we   need  a mild assumption to
exclude multiplicity in $\gaa_1,\ldots, \gaa_p$
and also parabolic complex tangent at the origin.  We therefore impose the following condition on quadrics.

\medskip
\noindent
{\bf Condition E.}
The composition $S$ has $2p$ distinct eigenvalues.
 \medskip

\begin{lemma}\label{incomp}
Under conditions E and \rea{bilii++},  neither $1$ nor $-1$ is an eigenvalue of $S$.
\end{lemma}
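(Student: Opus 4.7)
My plan is to show that the characteristic polynomial $\chi_S(x) = \det(xI - S)$ is palindromic of degree $2p$, and then exploit this symmetry to rule out $\pm 1$ as roots under the distinctness assumption.

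First, I would compute $\det S$. From \rea{ilii++} we have $\dim[\mathfrak M_n]_1^{T_i} = p$, and for a linear involution this dimension equals $\dim\fix(T_i)$; hence each $T_i$ has $+1$ and $-1$ as eigenvalues, each with multiplicity $p$. Therefore $\det T_i = (-1)^p$ and
\begin{equation*}
\det S = \det T_1\cdot \det T_2 = (-1)^{2p} = 1.
\end{equation*}

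Second, I would use the reversibility to establish the palindromic identity. A direct computation gives $T_1 S T_1 = T_1(T_1 T_2)T_1 = T_2 T_1 = (T_1 T_2)^{-1} = S^{-1}$, so $S$ and $S^{-1}$ are similar and therefore $\chi_S = \chi_{S^{-1}}$. Combined with the elementary identity $\chi_{S^{-1}}(x) = (-1)^{2p}(\det S)^{-1} x^{2p}\chi_S(1/x)$ for an invertible $2p\times 2p$ matrix, and $\det S = 1$, this yields the palindromic identity
\begin{equation*}
\chi_S(x) = x^{2p}\chi_S(1/x).
\end{equation*}

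Finally, I would show that this identity forces even multiplicity at $\varepsilon = \pm 1$. Write $\chi_S(x) = (x - \varepsilon)^m r(x)$ with $r(\varepsilon)\neq 0$. Substituting into the palindromic identity and using $1/\varepsilon = \varepsilon$ to simplify $(1/x-\varepsilon)^m = x^{-m}(-\varepsilon)^m(x-\varepsilon)^m$, one obtains after cancellation
\begin{equation*}
r(x) = (-\varepsilon)^m x^{2p-m} r(1/x).
\end{equation*}
Evaluating at $x = \varepsilon$ and using $\varepsilon^{2p} = 1$ gives $(-1)^m = 1$, so $m$ is even. In particular, if $\varepsilon$ were an eigenvalue of $S$, then $m\geq 2$, contradicting condition~E, which asserts that all $2p$ eigenvalues of $S$ are distinct.

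The computation is elementary and I do not expect a substantive obstacle. The essential conceptual step is recognizing that the reversibility $T_1 S T_1 = S^{-1}$, together with $\det S = 1$ forced by \rea{ilii++}, constrains $\chi_S$ to be palindromic of even degree, a symmetry incompatible with a simple root at $\pm 1$.
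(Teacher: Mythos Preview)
Your proof is correct and takes a genuinely different route from the paper's. The paper argues geometrically: since eigenvalues off $\{\pm 1\}$ come in pairs $\mu,\mu^{-1}$, if $1$ is an eigenvalue then by parity so is $-1$; the corresponding eigenvectors $u,v$ are simultaneous eigenvectors of $T_1,T_2$, and a dimension count on the complementary $(2p-2)$-dimensional $S$-invariant subspace forces $\fix(T_1)\cap\fix(T_2)\neq\{0\}$, contradicting \rea{bilii++}. Your argument is purely algebraic: from $\dim\fix(T_i)=p$ you extract $\det S=1$, conclude that $\chi_S$ is palindromic of even degree, and show that such a polynomial has even multiplicity at $\pm 1$, which is incompatible with condition~E. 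Your route is more self-contained and, notably, never invokes the transversality \rea{bilii++} itself---only \rea{ilii++}, which the paper's proof also uses implicitly when asserting $\dim\fix(T_i|_V)$. The paper's argument, by contrast, makes essential use of $\fix(T_1)\cap\fix(T_2)=\{0\}$, so the two proofs draw on different parts of the standing hypotheses.
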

\begin{proof}
 Assume for the sake
of contradiction that $1$ is an eigenvalue. We have seen that
eigenvalues arrive in pairs $\mu, \mu^{-1}$ if $\mu\neq\pm1$.
Since there are $n=2p$ eigenvalues by condition E,  both $-1$ and $1$ are eigenvalues. Let $u, v$ be
eigenvectors such that
\begin{gather*}
Su=u, \quad T_1u=\e_1 u, \quad T_2u=\e_1u, \quad \e_1=\pm1;\\
Sv=-v, \quad T_1v=\e_2 v, \quad T_2v=-\e_2v, \quad\e_2=\pm1.
\end{gather*}
Since $\fix(T_1)\cap\fix(T_2)=\{0\}$, then $\e_1=-1$. Without loss of generality, we
may assume that $\e_2=-1$. Let $V$ be the span of eigenvectors of $S$ with
eigenvalues other than $\pm1$.
Thus $T_i$ preserves $V$, $\dim V=2p-2$,
$\dim\fix(T_1|_V)=p$, and
 $\dim\fix(T_2|_V)=p-1$. Since $p+(p-1)>2p-2$, then $\fix(T_1)\cap\fix(T_2)$ has dimension
 at least one, a contradiction.
 \end{proof}

We now assume conditions E and     \re{ilii++}-\re{bilii++} for the rest of the section to derive a normal form for $T_{1j}$ and $\rho$.

We need to choose the eigenvectors of $S$  and their  
eigenvalues in such a way that $T_1,T_2$ and $\rho$ are in a normal form. We will first
choose eigenvectors to put
$\rho$ into a normal form. After normalizing $\rho$, we will then choose eigenvectors to normalize $T_1$
and $T_2$.

First, let us consider  an elliptic eigenvalue $\mu_e$. Let $u$ be an eigenvector of $\mu_e$. Then $u$ and $v=\rho (u)$ satisfy
\begin{equation}\label{svmN}
S(v)=\mu_e^{-1}v, \quad T_j(u)=\la_j^{-1}v, \quad \mu_e=\la_1\la_2^{-1}.
\end{equation}
Now $T_2(u)=\rho T_1\rho(u)$ implies that
$$\lambda_2=\ov\la_1^{-1},\quad \mu_e=|\la_1|^2.
$$
 Replacing $(u,v)$ by $(cu,\ov cv)$, we may assume that $\la_1>0$
 and $\la_2=\la_1^{-1}$. Replacing $(u,v)$ by $(v,u)$ if necessary,
  we may further  achieve
 $$
 \rho(u)=v, \quad \la_1=\la_e>1, \quad\mu_e=\la_e^2>  1.
 $$
We still have the freedom to replace $(u,v)$ by $(ru,rv)$ for $r\in\rr^*$, while
preserving the above conditions. 

Next,  let $\mu_h$ be a
 hyperbolic eigenvalue of $S$ and $S(u)=\mu_h u$. Then $u$ and
$v=T_1(u)$ satisfy
$$
\rho (u)=au, \quad \rho (v)=bv, \quad |a|=|b|=1.
$$
Replacing $(u,v)$ by $(cu,v)$, we may assume that $a=1$.
Now $T_2(v)=\rho T_1\rho(v)=\ov bu$.   To obtain $b=1$, we replace $(u,v)$ by $(u,\sqrt b^{-1}v)$.
This give us \re{svmN} with $|\la_j|=1$. Replacing $(u,v)$ by $(v,u)$ if necessary,
we may further achieve
\eq{}
\nonumber
\rho (u)=u, \quad \rho( v)=v, \quad \la_1=\la_h, \quad \mu_h=\la_h^2, \quad  \arg\la_h\in(0,\pi/2).
\eeq
Again, we  have the freedom to replace $(u,v)$ by $(ru,rv)$ for $r\in\rr^*$, while
preserving the above conditions.

 Finally, we consider a complex eigenvalue $\mu_s$. Let $S(u)=\mu_s u$. Then $\tilde u=\rho (u)$
satisfies $S(\tilde u)=\ov\mu_s^{-1}\tilde u$.  Let $u^*=T_1(u)$
and $\tilde u^*=\rho(u^*)$. Then $S(u^*)=\mu_s^{-1}u^*$ and $S(\tilde u^*)=\ov\mu_s \tilde u^*$. We change eigenvectors by
$$(u,\tilde u,u^*,\tilde u^*)\to (u,\tilde u, cu^*,\ov c\tilde u^*)$$
so that
\gan
\rho(u)=\tilde u, \quad\rho(u^*)=\tilde u^*,
\\ T_j(u)=\la_j^{-1}u^*,\quad
T_j(\tilde u)=\ov\la_j\tilde u^*,\quad \la_2=\la_1^{-1}.
\end{gather*}
Note that $S(u)=\la_1^2 u$, $S(u^*)=\la_1^{-2}u^*$, $S(\tilde u)=\ov\la_1^{-2}\tilde u$, and $S(\tilde u^*)=\ov\la_1^2\tilde u^*$.
Replacing $(u,\tilde u,u^*,\tilde u^*)$ by $(u^*,\tilde u^*,u,\tilde u)$ changes the argument and the modulus of $\la_1$ as  $\la_1^{-1}$
becomes $\la_1$. Replacing them by $(\tilde u,u,\tilde u^*,u^*)$ changes only the modulus as
 $\la_1$ becomes $ \bar \la_1^{-1}$ and then replacing them by $(u^*,\tilde u^*, -u,-\tilde u)$ changes the sign of $\la_1$. Therefore, we
may achieve
\eq{las}\nonumber
\mu_s=\la_s^2, \quad\la_1=\la_s, \quad  \arg\gaa_s\in(0,\pi/2), \quad |\la_s|>1.
\eeq
 We still have the freedom to replace $(u,u^*,\tilde u,\tilde u^*)$ by $(cu,cu^*,\ov c\tilde u,\ov c\tilde u^*)$.

We  summarize  the above choice of eigenvectors and their corresponding coordinates.
First, $S$
has distinct eigenvalues
$$
\lambda_e^2=\ov\la_e^2, \quad\lambda_e^{-2};\qquad
\lambda_h^2,\quad\ov\lambda_h^2=\lambda_h^{-2}; \qquad \lambda_s^2,\quad \lambda_s^{-2},
\quad\overline\lambda_s^{-2},\quad\overline\lambda_s^2.
$$
Also, $S$ has linearly independent  eigenvectors satisfying
\gan
Su_e=\lambda_e^2u_e, \quad Su_e^*=\lambda_e^{-2}u^*_e,\\
 Sv_h=\la_h^2v_h,\quad Sv_h^*=\la_h^{-2}v_h^*,\\
  Sw_s=\lambda_s^2w_s,
\quad   Sw_s^*=\lambda_s^{-2}w_s^*,
\quad S\tilde w_s=\ov\lambda_s^{-2}\tilde w_s,\quad S\tilde w_s^*=\ov\lambda_s^2 \tilde w_s^*.
\end{gather*}
Furthermore,  the $\rho$, $T_1$, and the chosen eigenvectors of $S$ satisfy
\begin{gather*}
 \rho u_e= u^*_e,\quad  T_1u_e=\lambda_e^{-1}u_e^*;\\
 \rho v_h= v_h,\quad \rho v_h^*=v_h^*,
 \quad T_1v_h=\lambda_h^{-1}v_h^*; \\
 \rho w_s= \tilde w_s,\quad \rho w_s^*=\tilde w_s^*,
 \quad T_1w_s=\lambda_s^{-1}  w_s^*,\quad T_1\tilde w_s=\ov\lambda_s\tilde w_s^*.
 \end{gather*}

 For normalization, we collect elliptic eigenvalues
$\mu_e$ and $ \mu_e^{-1}$,    hyperbolic eigenvalues 
$\mu_h$ and $\mu_h^{-1}$, and   complex eigenvalues
in $\mu_s,\mu_s^{-1}, \ov\mu_s^{-1}$ and $\ov\mu_{s}$.
We put them in the order
\gan
\mu_e=\ov\mu_e,\quad \mu_{p+e}=\mu_e^{-1},  \\
\mu_h,\quad\mu_{p+h_*+h}=\ov\mu_h,   \\
 \mu_s,\quad\mu_{s+s_*}=
\overline\mu_s^{-1},\quad \mu_{p+s}= \mu_s^{-1},
\quad\mu_{p+s_*+s}=\overline\mu_s .
\end{gather*}
Here and throughout the paper the ranges of subscripts $e,h, s$ are restricted to
$$
1\leq e\leq e_*, \quad e_*< h\leq e_*+h_*, \quad e_*+h_*<s\leq p-s_*.
$$
 Thus
$
e_*+h_*+2s_*=p.
$
Using the new coordinates
$$
\sum(\xi_eu_e+\eta_eu_e^*)+\sum(\xi_hv_h+\eta_hv_h^*)+
\sum (\xi_{s} w_s+\xi_{s+s_*}\tilde w_s+\eta_{s} w^*_s+ \eta_{s+s_*}\tilde w^*_s),
$$
we have normalized $\sigma, T_1, T_2$ and $\rho$.  In summary, we have the following normal form.
\begin{lemma}\label{t1t2sigrho}Let $T_1,T_2$ be linear holomorphic involutions on $\cc^{n}$ that satisfy \rea{bilii++}.
Then $n=2p$ and $\dim [\mathfrak M_n]_1^{T_i}=p$. Suppose that $T_2= \rho_0 T_1\rho_0$ for some anti-holomorphic linear involution $\rho_0$.
Assume that $S=T_1T_2$ has $n$ distinct eigenvalues.
There exists a linear change of holomorphic coordinates that transforms 
 $T_1,T_2, S,\rho_0$  simultaneously   into the normal forms $\hat T_1,\hat T_2,\hat S,\rho:$
\ga
\label{hatt1}
\hat T_1\colon \xi_j'=\la_j\eta_j,\quad \eta_j'=\la_j^{-1}\xi, \quad 1\leq j\leq p;
\\
\hat T_2\colon \xi_j'=\la_j^{-1}\eta_j,\quad \eta_j'=\la_j\xi_j, \quad 1\leq j\leq p;\\
\label{hsxie}\hat S\colon\xi_j'=\mu_j\xi_j, \quad\eta_j'=\mu_j^{-1}\eta_j, \quad 1\leq j\leq p;\\
\label{eqrh}
\rho\colon \left\{\begin{array}{lr}
\xi_e'=\ov\eta_e, &  \qquad \eta_e'= \ov\xi_e,\vspace{.75ex}\\
\xi_h'=\ov\xi_h, & \qquad \eta_h'=\ov\eta_h,\\
\xi_{s}'=\ov\xi_{s+s_*}, &\xi_{s+s_*}'=\ov\xi_{s},\\
\eta_{s}'=\ov\eta_{s+s_*}, &\eta_{s+s_*}'=\ov\eta_{s}.
\end{array}\right.
\end{gather}
Moreover, the eigenvalues $\mu_1,\ldots, \mu_p$ satisfy
\ga\label{mula}
\mu_j=\la_j^2, \quad 1\leq j\leq  p;\\
\label{muor}
\la_e>1,\quad |\la_h|=1,\quad |\la_s|>1,\quad\la_{s+s_*}=\ov\la_{s}^{-1};\\
\arg\la_{h}\in(0,\pi/2), \quad\arg\la_s\in(0,\pi/2);\\
\label{laee} \la_{e'}<\la_{e'+1}, \quad 0<\arg\la_{h'}<\arg\la_{h'+1}<\pi/2;\\
 \label{lass}\text{$\arg\la_{s'}<\arg\la_{s'+1}$, or $\arg\la_{s'}=\arg\la_{s'+1}$
 and $|\la_{s'}|<|\la_{s'+1}|$.}
\end{gather}
Here $1\leq e'<e_*$, $e_*<h'<e_*+h_*$, and $e_*+h_*<s'<p-s_*$. And $1\leq e\leq e_*$, $e_*<h\leq e_*+h_*$, and $e_*+h_*<s\leq p-s_*$.
 If $\tilde S$
is also in the normal form \rea{hsxie} for possible different eigenvalues $\tilde \mu_1,\dots, \tilde \mu_p$
satisfying \rea{mula}-\rea{lass},  then $S$ and $\tilde S$
are equivalent if and only if their eigenvalues are identical.
\end{lemma}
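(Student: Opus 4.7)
\medskip

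\noindent\textbf{Proof proposal for Lemma \ref{t1t2sigrho}.}

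The plan is to work through the three successive normalizations outlined in the paragraphs preceding the statement: diagonalize $S=T_1T_2$, then fit $\rho_0$ into its model, and finally adjust bases of eigenspaces so that $T_1$ (hence $T_2$) takes the target off‑diagonal form, while preserving the earlier normalizations.

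First I would establish $n=2p$ and $\dim[\mathfrak M_n]_1^{T_i}=p$. Each linear involution $T_i$ is diagonalizable with eigenvalues $\pm1$, and the $+1$‑eigenspace of $T_i$ is dual to $[\mathfrak M_n]_1^{T_i}$. Writing $k_i=\dim[\mathfrak M_n]_1^{T_i}$, condition \eqref{bilii++} says the two spaces of linear invariants intersect trivially, so $k_1+k_2\le n$. On the other hand, $\rho_0$ is an antilinear isomorphism intertwining $T_1$ and $T_2$, so it induces a real‑linear bijection between $[\mathfrak M_n]_1^{T_1}$ and $[\mathfrak M_n]_1^{T_2}$; thus $k_1=k_2$. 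Combined with the transversality this forces $k_1=k_2=n/2$ and $n=2p$. Next I note the reversibility identities $S^{-1}=T_iST_i=\rho_0 S\rho_0$, which imply that the spectrum of $S$ is stable under $\mu\mapsto\mu^{-1}$ and under $\mu\mapsto\overline\mu^{\,-1}$, hence also under $\mu\mapsto\overline\mu$. Together with Lemma \ref{incomp}, the $2p$ distinct eigenvalues split into real pairs $\{\mu_e,\mu_e^{-1}\}$ with $\mu_e\notin\{\pm1\}$ (elliptic), unit‑modulus pairs $\{\mu_h,\overline\mu_h\}=\{\mu_h,\mu_h^{-1}\}$ (hyperbolic), and complex quadruples $\{\mu_s,\mu_s^{-1},\overline\mu_s^{-1},\overline\mu_s\}$ (complex).

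The main step is the simultaneous adjustment of bases. For each elliptic $\mu_e$, pick an eigenvector $u$ of $S$ with eigenvalue $\mu_e$ and set $v=\rho_0 u$; the reversibility identities show that $v$ is an eigenvector for $\mu_e^{-1}$ and $T_1u$ is proportional to $v$, yielding \eqref{svmN} with $\lambda_2=\overline\lambda_1^{-1}$. Rescaling $(u,v)\mapsto(cu,\bar cv)$ makes $\lambda_1$ positive, and possibly swapping the pair gives $\lambda_1=\lambda_e>1$. For each hyperbolic $\mu_h$, take $u$ eigenvector and $v=T_1u$; normalize the action of $\rho_0$ on $u$ and on $v$ separately by rescalings to make $\rho_0u=u$, $\rho_0v=v$, at the cost of placing $\arg\lambda_h\in(0,\pi/2)$ after a possible swap. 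For each complex quadruple, choose a single $u$ with $Su=\mu_s u$; set $\tilde u=\rho_0u$, $u^\ast=T_1u$, $\tilde u^\ast=\rho_0u^\ast$, and use a common rescaling $(u,\tilde u,u^\ast,\tilde u^\ast)\mapsto(cu,\bar c\tilde u,cu^\ast,\bar c\tilde u^\ast)$ together with the three available swaps (exchange $u\leftrightarrow u^\ast$, $u\leftrightarrow\tilde u$, and a sign flip) to place $\lambda_s$ in the octant $\{\arg\lambda_s\in(0,\pi/2),\ |\lambda_s|>1\}$. In the coordinates $(\xi,\eta)$ dual to the chosen basis, ordered as in the statement, the maps $\hat S,\hat T_1,\hat T_2,\rho$ then read off as \eqref{hatt1}--\eqref{eqrh} with the prescribed relations \eqref{mula}--\eqref{muor}. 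The ordering conditions \eqref{laee}--\eqref{lass} on elliptic, hyperbolic, and complex blocks are purely bookkeeping: they fix a unique enumeration of blocks that are otherwise permutable.

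The hard part is the last step, where the two independent normalizations of $\rho_0$ and of $T_1$ must be compatible. The key point is that once $S$ has been diagonalized and an eigenbasis aligned with $\rho_0$ chosen, the residual gauge freedom (rescalings inside each eigenline/eigenpair/eigenquadruple that preserve both $\hat S$ and $\rho$) is exactly large enough to pin $T_1$ into the off‑diagonal form \eqref{hatt1} with the stated sign/argument conventions on $\lambda_j$; this is what the case analysis above is designed to verify. Finally, for the uniqueness clause, two normal forms $\hat S,\tilde{\hat S}$ with eigenvalues ordered as in \eqref{muor}--\eqref{lass} have the same spectrum if and only if their diagonal entries coincide in order, since the type of each eigenvalue (elliptic/hyperbolic/complex) and the ordering within each type are intrinsic invariants of the similarity class. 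Any linear conjugacy between $\hat S$ and $\tilde{\hat S}$ must permute eigenspaces compatibly with eigenvalues, and the imposed orderings leave only the identity permutation, giving $\mu_j=\tilde\mu_j$ for all $j$.
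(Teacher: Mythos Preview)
Your outline follows the paper's approach exactly: diagonalize $S$, then adjust the eigenbasis case by case (elliptic, hyperbolic, complex) so that $\rho_0$ and $T_1$ simultaneously assume the stated forms, and finally order the blocks. Two steps, however, do not go through as written.

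First, your deduction of $n=2p$ and $\dim[\mathfrak M_n]_1^{T_i}=p$ is incomplete. From $k_1+k_2\le n$ and $k_1=k_2$ you only obtain $k_1\le n/2$, not equality. The missing half comes from the distinct-eigenvalue hypothesis together with Lemma~\ref{incomp}: since $\pm1$ are excluded, the spectrum of $S$ splits into $p$ pairs $\{\mu,\mu^{-1}\}$ (so $n=2p$), and $T_1$ interchanges the two one-dimensional eigenspaces in each pair, forcing its $(+1)$-eigenspace to have dimension exactly $p$. (The paper in effect takes $\dim[\mathfrak M_n]_1^{T_i}=p$ as the standing assumption \eqref{ilii++}.)

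Second, in the complex case your rescaling is the residual freedom, not the normalizing step. With $u^\ast:=T_1u$ one has $T_1u=u^\ast$, and the map $(u,\tilde u,u^\ast,\tilde u^\ast)\mapsto(cu,\bar c\tilde u,cu^\ast,\bar c\tilde u^\ast)$ gives $T_1(cu)=cu^\ast$, so $\lambda_1$ is unchanged; starting from $\lambda_1=1$, no combination of the discrete swaps lands in the open octant $\{|\lambda|>1,\ \arg\lambda\in(0,\pi/2)\}$. The paper instead rescales only the starred pair, $(u,\tilde u,u^\ast,\tilde u^\ast)\mapsto(u,\tilde u,cu^\ast,\bar c\tilde u^\ast)$, which sets $\lambda_1=c$; choosing $c^2=\mu_s$ yields $\lambda_1^2=\mu_s$ and $\lambda_2=\lambda_1^{-1}$, after which the swaps place $\lambda_s$ in the prescribed octant. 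The rescaling you wrote is what remains \emph{after} that normalization and does not affect $\lambda_s$.
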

The above normal form of $\rho$ 
 will be fixed
for the rest of paper.  
 Note that in case of non-linear involutions
$\{\tau_{11},\ldots,\tau_{1p},\rho\}$ of which the
linear part are given by $\{T_{11},\dots,T_{1p},\rho\}$
we can always linearize $\rho$ first
under a holomorphic  map of which the
linear part at the origin is described  in above 
normalization 
for the linear part of $\{\tau_{11},\ldots, \tau_{1p},\rho\}$.
Indeed, we may assume that the linear part of the latter family is already 
in the normal form. Then $\psi=\f{1}{2}(I+(L\rho)\circ\rho)$ is tangent
to the identity and $(L\rho)\circ\psi \circ\rho=\psi$, i.e. $\psi$
transforms $\rho$ into $L\rho$ while preserving the linear parts 
of $\tau_{11},\dots, \tau_{1p}$.
  Therefore in the non-linear case, we can 
assume that $\rho$ is given by the above normal form.
 The above lemma   tells us  the ranges
of eigenvalues $\mu_e,\mu_h$ and $\mu_s$ that can be realized by  quadrics that satisfy conditions E and     \re{ilii++}-\re{bilii++}.

Having normalized $T_1$ and $\rho$, we want to  further
 normalize $\{T_{11},\ldots, T_{1p}\}$ under linear maps that preserve the normal forms   of
$   \hat T_1$ and $\rho$.
 We know that the composition of $T_{1j}$ is in the normal form, i.e.
\eq{t11h}
T_{11}\cdots T_{1p}= \hat T_1
\eeq
   is given in \rl{t1t2sigrho}.
We  first need to find an expression for  all   $T_{1j}$ that  commute pairwise and satisfy \re{t11h}, by using
 invariant and skew-invariant functions of $\hat T_1$.   Let
\eq{var1z}\nonumber
(\xi,\eta)=\var_1(z^+,z^-)\eeq
 be defined  by
\begin{alignat}{3}\label{zep1}
&z_e^+ &&=\xi_e+\la_e\eta_e, \quad &&
z_e^-=\eta_e-\la^{-1}_e\xi_e, \\
&z_h^+ &&=\xi_h+\la_h\eta_h, \quad
&&z_h^-=\eta_h-\ov\la_h\xi_h, \\
&z_{s}^+&&=\xi_{s}+\la_s\eta_{s}, \quad
&&z_{s}^-=\eta_{s}-\la^{-1}_s\xi_{s}, \\
 &z_{s+s_*}^+&&=\xi_{s+s_*}+\ov\la^{-1}_s\eta_{s+s_*}, \quad
 &&z_{s+s_*}^-=\eta_{s+s_*}-\ov\la_s\xi_{s+s_*}.
 \label{zep4}
\end{alignat}
 In $(z^+,z^-)$ coordinates, $\var_1^{-1}\hat T_1\var_1$ becomes
$$
Z\colon z^+\to z^+, \quad z^-\to -z^-.
$$
We decompose $Z=Z_{1}\cdots Z_{p}$ by using
$$
Z_{j}\colon (z^+,z^-)\to (z^+, z_1^-,\ldots, z^-_{j-1},-z_j^-,z_{j+1}^-, \ldots,
z_{p}^-).
$$

To keep simple notation, let us use the same notions $x,y$ for a linear transformation $y=A(x)$
and its matrix representation:
$$
A\colon x\to \mathbf Ax.
$$
The following lemma, which can be verified immediately, shows the advantages of coordinates $z^+, z^-$.
\begin{lemma}\label{zcent}The   linear centralizer of $Z$ is the set of  mappings of the form
\eq{vaph}
\phi\colon (z^+,z^-)\to (\mathbf{A}z^+,\mathbf{B}z^-),
\eeq
where $\mathbf{A},\mathbf{B}$ are constant and possibly singular 
matrices.
 Let $\nu$ be a permutation of $\{1,\ldots, p\}$.
Then $Z_j\phi=\phi Z_{\nu(j)}$ for all $j$ if and only  if $\phi$ has the above form with
$\mathbf{B}=\diag_\nu \mathbf{d}$. Here
\eq{dfndiag}
\diag_\nu(d_1,\ldots, d_p):= (b_{ij})_{p\times p}, \quad b_{j\nu(j)}=d_j, \quad b_{jk}=0 \ \text{if $k\neq\nu(j)$}.
\eeq
In particular, the linear centralizer of
$\{Z_{1},\ldots, Z_{p}\}$ is   the set of  mappings \rea{vaph} in  which  $\mathbf{B}$ are diagonal.
 \end{lemma}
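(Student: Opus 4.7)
The plan is to work directly in the $(z^+,z^-)$ block coordinates, write an arbitrary linear map as a $2\times 2$ block matrix, and read off what the commutation relations force on each block.

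First I would parametrize an arbitrary linear self-map of $\cc^{2p}$ as $\phi(z^+,z^-)=(\mathbf A z^++\mathbf C z^-,\,\mathbf D z^++\mathbf B z^-)$. Computing $Z\phi$ versus $\phi Z$ one finds that the $+$-block components match automatically, the off-diagonal block identity forces $\mathbf C=-\mathbf C$ and $\mathbf D=-\mathbf D$, so $\mathbf C=\mathbf D=0$. This gives the first assertion that the centralizer of $Z$ consists exactly of the block-diagonal maps $(z^+,z^-)\mapsto(\mathbf Az^+,\mathbf Bz^-)$.

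Next, assume $Z_j\phi=\phi Z_{\nu(j)}$ for all $j$. Composing over $j$ and using pairwise commutativity of the $Z_j$ shows that $\phi$ in particular commutes with $Z=Z_1\cdots Z_p$, so by the first step $\phi$ is already of the block-diagonal form $(\mathbf Az^+,\mathbf Bz^-)$. Writing $Z_j$ in the $z^-$-block as the diagonal involution $E_j$ that flips only the $j$th coordinate, the hypothesis reduces to the matrix identity $E_j\mathbf B=\mathbf B E_{\nu(j)}$ for every $j$. Looking at this entry by entry in $\mathbf B=(b_{ik})$, I would check the four cases according to whether $i=j$ and whether $k=\nu(j)$: three of the four cases give either $b_{jk}=0$ (for $k\neq\nu(j)$) or $b_{i,\nu(j)}=0$ (for $i\neq j$), and the remaining entry $b_{j,\nu(j)}$ is unconstrained. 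Hence $\mathbf B=\diag_\nu\mathbf d$ in the notation of \re{dfndiag}. Conversely, any such $\mathbf B$ obviously satisfies $E_j\mathbf B=\mathbf B E_{\nu(j)}$, so the characterization is sharp.

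The final statement is then immediate: to lie in the centralizer of every $Z_j$ one must take $\nu$ to be the identity permutation in the previous step, which exactly says that $\mathbf B$ is diagonal. There is no genuine obstacle in this proof; the only point to be careful about is to include the reduction from ``intertwined by $\nu$'' to ``block-diagonal'' before analyzing the $\mathbf B$-block, since otherwise the off-diagonal blocks would contaminate the entrywise computation.
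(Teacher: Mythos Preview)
Your argument is correct and is exactly the direct verification the paper has in mind; the paper itself gives no proof beyond the remark that the lemma ``can be verified immediately.'' One small wording point: in your first step the constraint $\mathbf C=-\mathbf C$ actually comes from equating the $z^+$-components (and $\mathbf D=-\mathbf D$ from the $z^-$-components), not from a single ``off-diagonal block identity,'' but this is cosmetic and your conclusion is right.
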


 To continue our normalization for the family $\{T_{1j}\}$, we note that
 $\var_1^{-1}T_{11}\var_1,\ldots$, $\var_1^{-1}T_{1p}\var_1$  generate an abelian group of $2^p$  involutions
 and each of these $p$ generators fixes
a hyperplane. By \rl{sd}
 there is a linear transformation $\phi_1$ such that
\eq{phi1-}\nonumber
\phi_1^{-1}\var_1^{-1}T_{1j}\var_1\phi_1=Z_{j}, \quad 1\leq j\leq p.
\eeq
 Computing two compositions on both sides, we see that   $\phi_1$  must be in the linear centralizer of $Z$. Thus, it is in the form \re{vaph}.
 Of course,
$\phi_1$ is not unique; 
$\tilde\phi_1$ is another such linear map for the same $T_{1j}$ if and only if  $\tilde\phi_1=\phi_1\psi_1$
with $\psi_1\in\cL C(Z_{1},\ldots, Z_{p})$.
By \re{vaph}, we may restrict ourselves to $\phi_1$  given by
\eq{phi1z}
\phi_1\colon (z^+,z^-)\to (z^+,\mathbf{B}z^-).
\eeq
Then $\tilde \phi_1$ yields  the same
$T_{1j}$ if and only if its corresponding matrix
 $\widetilde{\mathbf{B}}=\mathbf{B}\mathbf{D}$ for a diagonal matrix $\mathbf{D}$.

In the above we have expressed all $T_{11},\ldots, T_{1p}$ via equivalence classes of matrices. It will be convenient to
restate them via matrices.

For simplicity,  $T_i$ and $S$ denote $\hat T_i,\hat S$, respectively.  In matrices, we write
$$
 T_1\colon 
\left(\begin{array}{c}\xi \\ \eta \end{array}\right)
 \to \mathbf{T}_1\left(\begin{array}{c}\xi \\ \eta \end{array}\right),\quad
 \rho\colon \left(\begin{array}{c}\xi \\ \eta \end{array}\right)\to \boldsymbol{\rho}\left(\begin{array}{c}
 \ov\xi \\ 
 \ov\eta \end{array}\right), \quad
S\colon \left(\begin{array}{c}\xi \\ \eta \end{array}\right)\to \mathbf{S}\left(\begin{array}{c}\xi \\ \eta \end{array}\right).
$$
Recall that  the bold faced  $\mathbf{ A}$  represents a linear map $A$.  Then
\gan
\mathbf{T_1}=\begin{pmatrix}
\mathbf{ 0} &\mathbf{\Lambda}_1  \\
\mathbf{\Lambda}_1^{-1}  &\mathbf{ 0}
\end{pmatrix}_{2p\times2p}, \quad \mathbf{S}=\begin{pmatrix}
\mathbf{\Lambda}_1^2 & \mathbf{ 0} \\
\mathbf{ 0} & \mathbf{\Lambda}_1^{-2}
\end{pmatrix}_{2p\times2p}.
\end{gather*}
We will abbreviate
$$
{\boldsymbol\xi}_{e_*}=(\xi_1,\ldots,\xi_{e_*}), \quad
{\boldsymbol\xi}_{h_*}=(\xi_{e_*+1},\ldots, \xi_{e_*+h_*}), \quad
{\boldsymbol\xi}_{2s_*}=(\xi_{e_*+h_*+1},\ldots,\xi_p).
$$
We use the same abbreviation for $\eta$. Then $({\boldsymbol\xi}_{e_*},{\boldsymbol\eta}_{e_*})$, $({\boldsymbol\xi}_{h_*},{\boldsymbol\eta}_{h_*})$, and
$({\boldsymbol\xi}_{2s_*},{\boldsymbol\eta}_{2s_*})$ subspaces are invariant under $T_{1j}$,
$T_1$, and $\rho$. We also denote by
$T^{e_*}_1, T^{h_*}_1,T^{s_*}_1$ the restrictions of $T_1$ to these
subspaces. Define analogously for the restrictions of $\rho, S$ to these subspaces. 
Define diagonal matrices $\mathbf{\Lambda}_{1e_*}, \boldsymbol{\Lambda}_{1h_*}, \mathbf{\Lambda}_{1s_*}$, of size $e_*\times e_*,h_*\times h_*$ and $ s_*\times s_*$ respectively, by
$$
\boldsymbol{\Lambda}_{1}=\begin{pmatrix}
\mathbf{\Lambda}_{1e_*} &\mathbf{ 0 } &\mathbf{ 0} &\mathbf{ 0} \\
\mathbf{ 0} & \mathbf{\Lambda}_{1h_*}&  \mathbf{ 0}&\mathbf{ 0}\\
\mathbf{ 0} & \mathbf{ 0} &\mathbf{\Lambda}_{1s_*} &\mathbf{ 0}\\
\mathbf{ 0}&\mathbf{ 0}&\mathbf{ 0}&\ov {\mathbf{\Lambda}}_{1s_*}^{-1}
\end{pmatrix} ,\quad
\ov{\boldsymbol{\Lambda}_{1}}=\begin{pmatrix}
\mathbf{\Lambda}_{1e_*} &\mathbf{ 0}  &\mathbf{ 0} &\mathbf{ 0} \\
\mathbf{ 0} & \mathbf{\Lambda}_{1h_*}^{-1}& \mathbf{  0}&\mathbf{0}\\
\mathbf{0} & \mathbf{0} &\ov \Lambda_{1s_*} &\mathbf{0}\\
\mathbf{0}&\mathbf{\mathbf{0}}&\mathbf{0}&\mathbf{\Lambda}_{1s_*}^{-1}
\end{pmatrix}.
$$
Thus, we can
express  $T_1^{s_*}$ and $S^{s_*}$ in $(2s_*)\times(2s_*)$
matrices
$$
\mathbf{T}_1^{s_*}
=\begin{pmatrix}
\mathbf{0}& \mathbf{0} &\mathbf{\Lambda}_{1s_*} &  \mathbf{0} \\
 \mathbf{0}& \mathbf{0} &\mathbf{0}&\ov{\mathbf{\Lambda}}_{1s_*}^{-1}\\
 \mathbf{\Lambda}_{1s_*}^{-1}&\mathbf{0}&\mathbf{0} &\mathbf{0}\\
\mathbf{0} &\ov{\mathbf{\Lambda}}_{1s_*}&\mathbf{0} & \mathbf{0}
\end{pmatrix},\quad
\mathbf{S}^{s_*}=\begin{pmatrix}
\mathbf{\Lambda}_{1s_*}^2 & \mathbf{0}& \mathbf{0} & \mathbf{0} \\
\mathbf{0} &\ov{\mathbf{\Lambda}}_{1s_*}^{-2} & \mathbf{0} & \mathbf{0}\\
\mathbf{0}&\mathbf{0} & \mathbf{\Lambda}_{1s_*}^{-2}&\mathbf{0}\\
\mathbf{0}&\mathbf{0} & \mathbf{0} &\ov{\mathbf{\Lambda}}_{1s_*}^{2}
\end{pmatrix}.
$$
Let $\mathbf{ I}_k$ denote the $k\times k$
identity matrix.
With the abbreviation, we can express $\rho$ as
\begin{gather*}
\boldsymbol{\rho}^{e_*}=\begin{pmatrix}
\mathbf{0} & \mathbf{I}_{e_* }\\
\mathbf{I} _{e_* }& \mathbf{0}
\end{pmatrix},\quad
\boldsymbol{\rho}^{h_*}=\mathbf{I}_{2h_*},\quad
\\
\boldsymbol{\rho}^{s_*}
=\begin{pmatrix}
\mathbf{0} & \mathbf{I}_{s_*}
& \mathbf{0} & \mathbf{0} \\
\mathbf{I}_{s_*} & \mathbf{0} & \mathbf{0} & \mathbf{0}\\
\mathbf{0}&\mathbf{0} & \mathbf{0}& \mathbf{I}_{s_*}\\
\mathbf{0}&\mathbf{0} & \mathbf{I} _{s_*}&\mathbf{0}
\end{pmatrix}.
\end{gather*}
Note that   $\rho$ is anti-holomorphic linear transformation.  
If $A$ is a complex linear transformation,  in $(\xi,\eta)$ coordinates the matrix of $\rho A$ is 
$\boldsymbol{\rho}\ov{\mathbf{A}}$, i.e. 
$$
\rho A\colon
\begin{pmatrix}\xi\\ \eta\end{pmatrix}\to \boldsymbol{\rho}\ov{\mathbf A}\begin{pmatrix}\ov \xi\\ \ov\eta\end{pmatrix}
$$
 with  
\ga\label{lrho}
\nonumber
\boldsymbol{\rho}=
\begin{pmatrix}
\mathbf{0} &\mathbf{0}  &\mathbf{0}   &\mathbf{0}&\mathbf{I}_{e_*}  & \mathbf{0} & \mathbf{0} &\mathbf{0}  \\
\mathbf{0}&\mathbf{I}_{h_*}   & \mathbf{0} & 0&\mathbf{0}&\mathbf{ 0}  &\mathbf{ 0}  &\mathbf{ 0}  \\
\mathbf{ 0} & \mathbf{ 0} &  \mathbf{  0} & \mathbf{I}_{s_*} &\mathbf{ 0 } & \mathbf{ 0} & \mathbf{ 0} &\mathbf{0}\\
\mathbf{0} & \mathbf{0} &  \mathbf{I}_{s_*} & \mathbf{0}&\mathbf{0} & \mathbf{0} & \mathbf{0} & \mathbf{0} \\
\mathbf{I}_{e_*} & \mathbf{0} &\mathbf{0} &\mathbf{0} & \mathbf{0} & \mathbf{0} & \mathbf{0} & \mathbf{0} \\
\mathbf{0} &\mathbf{0}  & \mathbf{0}  & \mathbf{0}&\mathbf{0} &\mathbf{I}_{h_*}  &\mathbf{0}  & \mathbf{0} \\
\mathbf{0} & \mathbf{0} & \mathbf{0}  &\mathbf{0}&\mathbf{0}  &\mathbf{0}  & \mathbf{0} &  \mathbf{I}_{s_*} \\
 \mathbf{0}&\mathbf{0}  & \mathbf{0}  &\mathbf{0}& \mathbf{0} & \mathbf{0} & \mathbf{I}_{s_*}  &\mathbf{0}
\end{pmatrix}.
\end{gather}

For an invertible $p\times p$ matrix $\mathbf A$,  let us define an $n\times n$ matrix $\mathbf E_{\mathbf A}$ by 
\eq{Elam}
\mathbf{E}_{\mathbf A}:=\f{1}{2}\begin{pmatrix}
\mathbf{I}_p & -\mathbf{A} \\
\mathbf{A}^{-1} & \mathbf{I}_p
\end{pmatrix},\quad
\mathbf{E}_{\mathbf A}^{-1}=\begin{pmatrix}
\mathbf{I}_p & \mathbf{A} \\
-\mathbf{A}^{-1} & \mathbf{I}_p
\end{pmatrix}.
\eeq
For a $p\times p$ matrix $\mathbf{B}$,  we define
\eq{bstar}
\mathbf{B}_*:=\begin{pmatrix}
\mathbf{I} _p&\mathbf{0} \\
\mathbf{0} & \mathbf{B}
\end{pmatrix}.
\eeq
Therefore,   we can express
\ga\label{t1jd}
\mathbf{T}_{1j}=\mathbf{E}_{\mathbf \Lambda_1} \mathbf{B}_*\mathbf{Z}_j\mathbf{B}_*^{-1}\mathbf{E}_{\mathbf \Lambda_1}^{-1}, \quad \mathbf{T}_{2j}=\boldsymbol{\rho} \ov{\mathbf{T}_{1j}}\boldsymbol{\rho}, \\
\mathbf{Z}_j=\diag(1,\ldots,1,-1,1,\ldots, 1).\label{t1jd+}
\end{gather}
Here $-1$ is at the $(p+j)$-th place. Moreover, $\mathbf{B}$ is uniquely determined up to equivalence relation via diagonal matrices $\mathbf{D}$:
\eq{bsdb}
\mathbf{B}\sim \mathbf{B}\mathbf{D}.
\eeq
We have  expressed all $\{T_{11},\ldots, T_{1p},\rho\}$    for which
  $\hat T_1=T_{11}\cdots T_{1p}$ and $\rho$ are in the normal forms  in \rl{t1t2sigrho} and we have found an equivalence relation
 to classify the involutions. Let us summarize the results in a lemma.
\begin{lemma}\label{sett}
 Let $\{T_{11},\ldots, T_{1p},\rho\}$ be the involutions of a quadric manifold $M$. Assume that $
 S =T_1\rho T_1\rho$ has
distinct eigenvalues. Then in suitable linear $(\xi,\eta)$
 coordinates, $T_{11},\ldots, T_{1p}$ are given by   \rea{t1jd}, while $T_{11}\cdots T_{1p}=\hat T_1$ and $ \rho$ are given by
\rea{hatt1} and \rea{eqrh}, respectively.
Moreover, $\mathbf{B}$ in \rea{t1jd} is uniquely determined by the equivalence relation \rea{bsdb} for diagonal matrices $\mathbf{D}$.
\end{lemma}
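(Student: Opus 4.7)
The plan is to collect and systematize the normalization steps already performed in the discussion leading up to the lemma. First I would invoke \rl{t1t2sigrho} to simultaneously linearly conjugate $T_1 = T_{11}\cdots T_{1p}$, $T_2 = \rho T_1 \rho$, $S = T_1T_2$, and $\rho$ into the normal forms $\hat T_1, \hat T_2, \hat S$ and the standard $\rho$ of \re{hatt1}--\re{eqrh}. The hypotheses needed for that lemma---namely \re{bilii++} and the simplicity of the spectrum of $S$---are in force: the first follows from \rl{sd1}(ii) applied to the deck transformations of the quadric $M$, and the second is the standing assumption.

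Next I would introduce the linear change of coordinates $\varphi_1\colon (z^+, z^-) \mapsto (\xi, \eta)$ defined by \re{zep1}--\re{zep4} and verify that $\varphi_1^{-1}\hat T_1 \varphi_1 = Z_1\cdots Z_p = Z$, and that, as a block matrix, $\varphi_1$ is exactly $\mathbf E_{\mathbf \Lambda_1}$ of \re{Elam}. The conjugates $\varphi_1^{-1} T_{1j} \varphi_1$ then form an abelian group of $2^p$ linear involutions, each fixing a hyperplane, whose product equals $Z$; applying \rl{sd} to this family yields a linear map $\phi_1$ with $\phi_1^{-1}(\varphi_1^{-1} T_{1j}\varphi_1)\phi_1 = Z_j$ for every $j$. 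Taking the product over $j$ shows $\phi_1$ centralizes $Z$, so by the first part of \rl{zcent} it has block form $(z^+, z^-) \mapsto (\mathbf A z^+, \mathbf B z^-)$; the remaining freedom $\phi_1 \mapsto \phi_1 \psi_1$ with $\psi_1$ centralizing $Z$ lets me absorb $\mathbf A$ into the identity, reducing $\phi_1$ to the form \re{phi1z}, which in matrix language is the matrix $\mathbf B_*$ of \re{bstar}. Reading off $\mathbf T_{1j} = \varphi_1 \phi_1 \mathbf Z_j \phi_1^{-1}\varphi_1^{-1}$ yields \re{t1jd} for $\mathbf T_{1j}$, and the formula for $\mathbf T_{2j}$ is immediate from $\tau_{2j} = \rho\tau_{1j}\rho$ together with the already-normalized $\rho$.

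For uniqueness, if $\widetilde{\mathbf B}$ yields the very same family $\{T_{1j}\}$ through the same recipe, then $\widetilde \phi_1 = \phi_1 \psi_1$ where $\psi_1$ now commutes with every $Z_j$ individually. Since $\widetilde \phi_1$ is required to retain the normalized form $(z^+, z^-) \mapsto (z^+, \widetilde{\mathbf B} z^-)$, the map $\psi_1$ is forced to have the same shape $(z^+, z^-) \mapsto (z^+, \mathbf D z^-)$; the last clause of \rl{zcent} then forces $\mathbf D$ to be diagonal, giving exactly the equivalence \re{bsdb}.

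The main obstacle I foresee is essentially bookkeeping: verifying that the coordinate map $\varphi_1$ of \re{zep1}--\re{zep4}, whose definition is split across the elliptic, hyperbolic, and complex blocks so as to keep $\rho$ in the form \re{eqrh}, admits the uniform block-matrix description $\mathbf E_{\mathbf \Lambda_1}$ once $\boldsymbol \Lambda_1$ is organized into the four blocks introduced after \re{lass}. Once this identification is in place, all the remaining assertions follow at once from the centralizer structure supplied by \rl{zcent}.
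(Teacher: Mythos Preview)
Your proposal is correct and is precisely the paper's own argument: the lemma is stated as a summary of the normalization steps carried out in the discussion immediately preceding it, and you have reconstructed those steps faithfully (Lemma~\ref{t1t2sigrho} for $\hat T_1,\hat T_2,\hat S,\rho$; the change $\varphi_1$ of \re{zep1}--\re{zep4} identified with $\mathbf E_{\mathbf\Lambda_1}$; Lemma~\ref{sd} and Lemma~\ref{zcent} to extract $\mathbf B$; and the last clause of Lemma~\ref{zcent} for the uniqueness up to $\mathbf B\sim\mathbf B\mathbf D$). One small tightening: the freedom in the choice of $\phi_1$ is $\phi_1\mapsto\phi_1\psi_1$ with $\psi_1\in\cL C(Z_1,\ldots,Z_p)$, not merely $\cL C(Z)$; since the particular $\psi_1\colon(z^+,z^-)\mapsto(\mathbf A^{-1}z^+,z^-)$ you need to absorb $\mathbf A$ does lie in $\cL C(Z_1,\ldots,Z_p)$, your conclusion stands.
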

  We remind the reader that we divide the classification for $\{T_{11},\ldots, T_{1p},\rho\}$ into two steps.
We have obtained the classification for the composition $T_{11}\cdots T_{1p}=\hat T_1$ and $\rho$ in \rl{t1t2sigrho}.
Having found   all $\{T_{11},\ldots, T_{1p},\rho\}$ and an equivalence relation,  
we are ready to reduce their classification  
to an equivalence problem that involves two dilatations and a coordinate permutation. 
\begin{lemma}\label{unsol}    Let $\{T_{i1},\ldots, T_{ip},\rho\}$ be given by  \rea{t1jd}.  Suppose that $\hat T_1=T_{11}\cdots T_{1p}$,  $\rho$,   $\hat T_2=\rho\hat T_1\rho$, and
$\hat S=\hat T_1\hat T_2$ 
have the form in \rla{t1t2sigrho}.  Suppose that $\hat S$ has distinct eigenvalues. 
 Let $\{\hat{T}_{11},\ldots,\hat{ T}_{1p},\rho\}$ be given by \rea{t1jd}
where $\la_j$ are unchanged  and $\mathbf B$ is replaced by   $\hat{\mathbf B}$. Suppose that
 $R^{-1}T_{1j}R=\widehat T_{1\nu(j)}$ for all $j$ and $R\rho=\rho R$.
  Then the matrix of $R$ is
$\mathbf R=\diag( \mathbf{a}, \mathbf{a})$ with $\mathbf{a}=(\mathbf{ a}_{e_*},\mathbf{ a}_{h_*},\mathbf{ a}_{s_*},\mathbf{ a}_{s_*}')$,
while $\mathbf a$ satisfies  the reality condition
\ga
\label{aeae} \mathbf{ a}_{e_*}  \in(\rr^*)^{e_*}, \quad \mathbf{ a}_{h_*}\in(\rr^*)^{h_*}, \quad
\ov{\mathbf{ a}_{s_*}}=\mathbf{ a}_{s_*}'\in(\cc^*)^{s_*}.
\end{gather}
Moreover, there exists $\mathbf{d}\in(\cc^*)^p$  such that
\ga\label{bcbd-}
\hat {\mathbf{B}}=(\diag \mathbf{a} )^{-1}\mathbf{B}(\diag_{ \nu} \mathbf{d}), \quad i.e.,
\quad a_i^{-1}b_{i\nu^{-1}(j)}d_{\nu^{-1}(j)}=\hat b_{ij}, 
 \quad
1\leq i,j\leq p.
\end{gather}
Conversely, if $\mathbf{ a},\mathbf{ d}$ satisfy \rea{aeae} and \rea{bcbd-}, then $R^{-1}T_{1j}R=\hat T_{1\nu(j)}$
and $R\rho=\rho R$.
\end{lemma}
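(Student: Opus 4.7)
The plan is to peel off the structure of $R$ in three stages, and then read off the equivalence relation from a direct computation. First I would observe that the two hypotheses $R^{-1}T_{1j}R=\hat T_{1\nu(j)}$ ($1\le j\le p$) and $R\rho=\rho R$ together force $R$ to commute with $\hat S=\hat T_1\hat T_2$. Indeed, since $\hat T_{11},\ldots,\hat T_{1p}$ commute pairwise, any reshuffling by $\nu$ leaves their product invariant; multiplying the relations gives $R^{-1}\hat T_1 R=\hat T_1$. Combining with $R\rho=\rho R$ and $\hat T_2=\rho\hat T_1\rho$ yields $R\hat T_2=\hat T_2R$, hence $R\hat S=\hat S R$.

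Second, since $\hat S$ has $2p$ distinct eigenvalues $\mu_1,\ldots,\mu_p,\mu_1^{-1},\ldots,\mu_p^{-1}$ and is diagonal in the $(\xi,\eta)$ basis, the commutant of $\hat S$ consists of diagonal matrices, so $\mathbf R=\diag(a_1,\ldots,a_p,b_1,\ldots,b_p)$. Feeding this into $\mathbf R\mathbf T_1=\mathbf T_1\mathbf R$ and using the block anti-diagonal shape of $\mathbf T_1$ with blocks $\mathbf\Lambda_1,\mathbf\Lambda_1^{-1}$, one gets $\mathbf A\mathbf\Lambda_1=\mathbf\Lambda_1\mathbf B$. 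Since everything is diagonal, this forces $a_j=b_j$ and so $\mathbf R=\diag(\mathbf a,\mathbf a)$ with $\mathbf A=\diag(\mathbf a)$. Next, writing the anti-linear commutation as $\mathbf R\boldsymbol\rho=\boldsymbol\rho\,\overline{\mathbf R}$ and inspecting the blocks of $\boldsymbol\rho$ given by \eqref{eqrh}: the elliptic block (swap composed with conjugation) gives $\mathbf a_{e_*}=\overline{\mathbf a_{e_*}}\in(\rr^*)^{e_*}$; the hyperbolic block (conjugation only) gives $\mathbf a_{h_*}\in(\rr^*)^{h_*}$; the complex block (swap $s\leftrightarrow s+s_*$ composed with conjugation) gives $a_{s+s_*}=\bar a_s$. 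This is exactly \eqref{aeae}.

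Third, I would derive \eqref{bcbd-} from the $T_{1j}$-relation. The crucial point is that because $\mathbf A$ and $\mathbf\Lambda_1$ are both diagonal, a direct computation gives $\mathbf R\mathbf E_{\mathbf\Lambda_1}=\mathbf E_{\mathbf\Lambda_1}\mathbf R$ (each of the four blocks of $\mathbf E_{\mathbf\Lambda_1}$ commutes with the scalar blocks of $\mathbf R$). Substituting the formula $\mathbf T_{1j}=\mathbf E_{\mathbf\Lambda_1}\mathbf B_*\mathbf Z_j\mathbf B_*^{-1}\mathbf E_{\mathbf\Lambda_1}^{-1}$ into $R^{-1}T_{1j}R=\hat T_{1\nu(j)}$ and cancelling the $\mathbf E_{\mathbf\Lambda_1}$ factors leaves
$$
\mathbf R^{-1}\mathbf B_*\mathbf Z_j\mathbf B_*^{-1}\mathbf R=\hat{\mathbf B}_*\mathbf Z_{\nu(j)}\hat{\mathbf B}_*^{-1},\qquad 1\le j\le p.
$$
Setting $\mathbf C:=\hat{\mathbf B}_*^{-1}\mathbf R^{-1}\mathbf B_*=\diag(\mathbf A^{-1},\hat{\mathbf B}^{-1}\mathbf A^{-1}\mathbf B)$, the above becomes $\mathbf C\mathbf Z_j=\mathbf Z_{\nu(j)}\mathbf C$ for every $j$. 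Applying \rl{zcent} with $\phi=\mathbf C^{-1}$ (whose lower block is $\mathbf B^{-1}\mathbf A\hat{\mathbf B}$) and the permutation $\nu$ tells us that this lower block must equal $\diag_\nu\mathbf d$ for some $\mathbf d\in(\cc^*)^p$. Rearranging gives
$$
\hat{\mathbf B}=(\diag\mathbf a)^{-1}\mathbf B(\diag_\nu\mathbf d),
$$
which in indices is precisely \eqref{bcbd-}.

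For the converse, given $\mathbf a,\mathbf d$ satisfying \eqref{aeae}--\eqref{bcbd-} one sets $\mathbf R=\diag(\mathbf a,\mathbf a)$ and runs the chain of equivalences backwards: the reality conditions give $R\rho=\rho R$, commutation with $\mathbf E_{\mathbf\Lambda_1}$ is automatic, and the identity $\hat{\mathbf B}=(\diag\mathbf a)^{-1}\mathbf B(\diag_\nu\mathbf d)$ together with \rl{zcent} produces $R^{-1}T_{1j}R=\hat T_{1\nu(j)}$. The only real delicacy is bookkeeping: tracking the direction of $\nu$ versus $\nu^{-1}$ when invoking \rl{zcent} and keeping $\mathbf B_*$ distinct from its inverse; once the commutation $\mathbf R\mathbf E_{\mathbf\Lambda_1}=\mathbf E_{\mathbf\Lambda_1}\mathbf R$ is verified, the remaining work is essentially linear algebra.
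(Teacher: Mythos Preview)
Your proof is correct and follows essentially the same approach as the paper's: deduce $R\hat T_1=\hat T_1R$ and $R\hat S=\hat SR$ from the hypotheses, use distinct eigenvalues to force $\mathbf R$ diagonal, read off $\mathbf R=\diag(\mathbf a,\mathbf a)$ from commutation with $\hat T_1$ and the reality conditions from $R\rho=\rho R$, then exploit the commutation of $\mathbf R$ with $\mathbf E_{\mathbf\Lambda_1}$ to reduce to \rl{zcent}. The only cosmetic difference is that you work with $\mathbf C=\hat{\mathbf B}_*^{-1}\mathbf R^{-1}\mathbf B_*$ whereas the paper sets $\boldsymbol\psi=\mathbf B_*^{-1}\mathbf R\hat{\mathbf B}_*=\mathbf C^{-1}$; the conclusion from \rl{zcent} is identical.
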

\begin{proof}
Suppose that $R ^{-1}  T_{1j} R=\widehat T_{1\nu(j)}$ and $ R \rho=\rho R $. Then $ R^{-1}\hat T_1 R= \hat T_1$ and $R^{-1}\hat SR=\hat S$.  The latter implies that the matrix of
 $R$ is diagonal. The former   implies that
\eq{phi0}
\nonumber
 R \colon \xi_j'=a_j\xi_j,\quad \eta_j'=a_j\eta_j
\eeq
with $  a_j\in\cc^*$.  Now $ R \rho=\rho R $ implies \re{aeae}.
We express $R ^{-1}  T_{1j} R=\widehat T_{1\nu(j)}$ via matrices:
\eq{ela1t}
\mathbf{E}_{\mathbf \Lambda_1}  \widehat{\mathbf{B}}_*\mathbf{Z}_{\nu(j)}\widehat{\mathbf{B}}_*^{-1} \mathbf{E}_{\mathbf \Lambda_1}^{-1}
=\mathbf{R}^{-1}\mathbf{E}_{\mathbf \Lambda_1} \mathbf{B}_*\mathbf{Z}_j\mathbf{B}_*^{-1}\mathbf{E}_{\mathbf \Lambda_1}^{-1}\mathbf{R}.
\eeq
In view of formula \re{Elam}, we see that $\mathbf{E}_{\mathbf \Lambda_1}$ commutes with $\mathbf{R}=\diag(\mathbf{a},\mathbf{a})$.
 The above is equivalent to that $\boldsymbol{\psi}:=\mathbf{B}_*^{-1}\mathbf{R}\widehat{\mathbf{B}}_*$ satisfies $ \mathbf{Z}_{\nu(j)}={\boldsymbol{\psi}}^{-1}\mathbf{Z}_j\boldsymbol{\psi}$.
   By \rl{zcent}  we obtain $\boldsymbol{\psi}=\diag(\mathbf{A},\diag_{\nu}\mathbf{d})$. This shows that
$$
\begin{pmatrix}
\mathbf{A}&\mathbf{ 0}\\
 \mathbf{0}  &  \diag_{\nu}\mathbf{d}
\end{pmatrix} =\begin{pmatrix}
\mathbf{I } &\mathbf{0}\\
 \mathbf{0}  &\mathbf{B}
\end{pmatrix}^{-1}\begin{pmatrix}
\diag \mathbf{a}  &\mathbf{0}\\
 \mathbf{0}  &\diag \mathbf{a}
\end{pmatrix}\begin{pmatrix}
\mathbf{ I} &\mathbf{0}\\
 \mathbf{0}  &  \widehat{\mathbf{B}}
\end{pmatrix}.
$$
The matrices on diagonal yield $\mathbf{A}=\diag \mathbf{a}$  and \re{bcbd-}.
The lemma is proved.
\end{proof}

\rl{unsol}  does not give us an explicit description of the normal form  for the families of
involutions $\{T_{11},\ldots, T_{1p},\rho\}$.  Nevertheless by the lemma, we can always choose a $\nu$ and $\diag {\mathbf d}$ such that the diagonal elements of $\mathbf{ \tilde B}$,
corresponding to $\{\tilde T_{1\nu(1)},\ldots, \tilde T_{1\nu(p)}, \rho\}$,  are $1$.

\begin{rem}
In what follows, we will fix a $\mathbf{B}$ and its associated  $\{\cL T_1,\rho\}$ to further
study our normal form problems.
\end{rem}

\subsection{Normal form of the  quadrics}

  We now use  the matrices $\mathbf{B}$  to express
the normal form for the quadratic submanifolds. Here we follow the realization procedure in the proof of \rp{inmae}.
We will use the coordinates $z^+, z^-$ again to express invariant functions of $T_{1j}$ and
use them to construct the corresponding quadric.
We will then pull back  the quadric to the $(\xi,\eta)$ coordinates and then to the $z,\ov z$ coordinates
to achieve the final normal form of the quadrics.

We return to the construction of invariant and skew-invariant functions $z^+,z^-$ in \re{zep1}-\re{zep4}.
when $\mathbf{B}$ is the identity matrix. For a general $\mathbf{B}$, we define $\Phi_1$ and the matrix
$\mathbf{\Phi}_1^{-1}$ by
$$
\Phi_1(Z^+,Z^-)=(\xi,\eta), \quad
\mathbf{\Phi}_1^{-1}:=
 \mathbf{B}_*^{-1}\mathbf{E}_{\mathbf \Lambda_1}^{-1}= \begin{pmatrix}
\mathbf{I} &\mathbf{\Lambda}_1 \\
 -\mathbf{B}^{-1}\mathbf{\Lambda}_1^{-1} & \mathbf{B}^{-1}
\end{pmatrix}.
$$
Note that $Z^+=z^+$ and $\Phi_1^{-1}T_{1j}\Phi_1=Z_j$.  The  $Z^+, Z_i^-$ with $i\neq j$
are invariant functions of $T_{1j}$, while $Z_j^-$
is a skew-invariant function
of $T_{1j}$. They can be written as
$$
Z^+=\xi+\mathbf{\Lambda}_1\eta, \quad Z^-=\mathbf{B}^{-1}(-\mathbf{\Lambda}_1^{-1}\xi+\eta).
$$
Therefore, the invariant functions of $\cL T_1$ are generated
by
$$
Z_j^+=\xi_j+\la_{j}\eta_j, \quad (Z_j^-)^2=(\tilde {\mathbf{ B}}_j(-\mathbf{\Lambda}_1^{-1}\xi+\eta))^2, \quad 1\leq j\leq p.
$$
Here 
$\tilde {\mathbf{ B}}_j$ is the $j$th row of $\mathbf{B}^{-1}$.
The invariant (holomorphic)
 functions of $\cL T_2$ are generated
by
\eq{wjpm}
W_j^+=\ov{Z_j^+\circ\rho}, \quad( W_j^-)^2=(\ov{Z^-_j\circ\rho})^2,
\quad 1\leq j\leq p.
\eeq
  Here $W_j^-=\ov{Z^-_j\circ\rho}$.
We will soon verify that
$$
m\colon (\xi,\eta)\to (z',w')=(Z^+(\xi,\eta), W^+(\xi,\eta))
$$
is biholomorphic.  A straightforward computation shows that
$m\rho m^{-1}$ equals 
$$
\rho_0\colon (z',w')\to(\ov {w'},\ov {z'}).
$$
We define
\ga\label{m0zp}
\nonumber
M\colon z_{p+j}''=(Z_j^-\circ m^{-1}(z',\ov{z'}))^2.\end{gather}
We want to find a simpler expression for $M$.
We first separate $B$ from $Z^-$ by writing
\ga\label{Zhmi}
\mathbf{  \hat Z}^-:= (-{\mathbf \Lambda}_1^{-1}\  \mathbf{ I}),
 \quad \mathbf{ Z}^-=\mathbf{ B}^{-1}\mathbf{ \hat Z}^-.
\end{gather}
Note that $m$ does not depend on $\mathbf B$.  To compute $\hat Z^-\circ m^{-1}$, we will use
matrix expressions for  $({\boldsymbol\xi}_{e_*},{\boldsymbol\eta}_{e_*})$, $({\boldsymbol\xi}_{h_*},{\boldsymbol\eta}_{h_*})$
and $({\boldsymbol\xi}_{2s_*}, {\boldsymbol\eta}_{2s_*})$ subspaces. 
Let $m_{e_*},m_{h_*},m_{s_*}$ be the restrictions  $m$ to these
subspaces. In the matrix form,
we have   by \re{wjpm}
$$
\mathbf{ W}^+=\ov {\mathbf{ Z}^+\boldsymbol{\rho}}, \quad \mathbf{ W}^-=\ov{\mathbf{  Z}^-
\boldsymbol{\rho}}.
$$
Recall that $\mathbf{\Lambda}_1=\diag({\mathbf \Lambda}_{e_*},{\mathbf \Lambda}_{h_*},{\mathbf \Lambda}_{1s_*}, \ov{\mathbf \Lambda}_{1s_*}^{-1})$. Thus
\aln
\mathbf{ m}_{e_*}&=\begin{bmatrix}
\mathbf{ I} & {\mathbf \Lambda}_{1{e_*}} \\
{\mathbf \Lambda}_{1{e_*}} & \mathbf{ I}
\end{bmatrix},\quad
\mathbf{ m}_{e_*}^{-1}=
\begin{bmatrix}
\mathbf{ I} &- {\mathbf \Lambda}_{1{e_*}} \\
-{\mathbf \Lambda}_{1{e_*}} &\mathbf{  I}
\end{bmatrix}\begin{bmatrix}
(\mathbf{ I}-{\mathbf \Lambda}_{1{e_*}}^2)^{-1} &\mathbf{0} \\
\mathbf{0} & (\mathbf{ I}-{\mathbf \Lambda}_{1{e_*}}^2)^{-1}
\end{bmatrix},\\
\mathbf{ m}_{h_*}&=\begin{bmatrix}
\mathbf{ I} & {\mathbf \Lambda}_{1{h_*}} \\
\mathbf{ I} & {\mathbf \Lambda}_{1{h_*}}^{-1}
\end{bmatrix},\hspace{5ex}
\mathbf{ m}_{h_*}^{-1}=
\begin{bmatrix}
\mathbf{  I }& -{\mathbf \Lambda}_{1{h_*}}^2 \\
-{\mathbf \Lambda}_{1{h_*}} &{\mathbf \Lambda}_{1{h_*}}
\end{bmatrix}\begin{bmatrix}
(\mathbf{ I}-{\mathbf \Lambda}_{1{h_*}}^2)^{-1}&\mathbf{0} \\
\mathbf{0} &(\mathbf{ I}-{\mathbf \Lambda}_{1{h_*}}^2)^{-1}
\end{bmatrix},
\\
\mathbf{ m}_{s_*}&= 
\begin{bmatrix}
{\mathbf  I} & {\mathbf  0}&{\mathbf \Lambda}_{1{s_*}} &{\mathbf  0}\\
{\mathbf  0}&{\mathbf  I} & {\mathbf  0}&\ov{\mathbf \Lambda}_{1{s_*}}^{-1}\\
{\mathbf  0}&{\mathbf  I}&{\mathbf  0}&\ov{\mathbf \Lambda}_{1{s_*}}\\
{\mathbf  I}&{\mathbf  0}&{\mathbf \Lambda}_{1{s_*}}^{-1}&{\mathbf  0}
\end{bmatrix},\\
\mathbf{ m}_{s_*}^{-1}&=
\begin{bmatrix} 
{\mathbf \Lambda}_{1{s_*}}^{-1}&{\mathbf  0}&{\mathbf  0}&-{\mathbf \Lambda}_{1{s_*}}\\
{\mathbf  0}&\ov{\mathbf \Lambda}_{1{s_*}} &-\ov{\mathbf \Lambda}_{1{s_*}}^{-1}&{\mathbf  0}\\
-{\mathbf  I}&{\mathbf  0}&{\mathbf  0}&{\mathbf  I}\\
{\mathbf  0}&-{\mathbf  I}&{\mathbf  I}&{\mathbf  0}
\end{bmatrix}
\begin{bmatrix}
\mathbf{ L}_{s_*}&{\mathbf  0}\\
{\mathbf  0}&-\ov{\mathbf L}_{s_*}
\end{bmatrix},\\
\mathbf{ L}_{s_*}&=\begin{bmatrix}
({\mathbf \Lambda}_{1{s_*}}^{-1}-{\mathbf \Lambda}_{1{s_*}})^{-1}&{\mathbf  0}\\
{\mathbf  0}& (\ov{\mathbf \Lambda}_{1{s_*}}-\ov{\mathbf \Lambda}_{1{s_*}}^{-1})^{-1}
\end{bmatrix}.
\end{align*}
Note that $\mathbf{ I}-{\mathbf \Lambda}_{1}^2$ is diagonal.  Using \re{Zhmi} and
the above formulae, the matrices of $\hat Z_{e_*}^{-1}\circ m^{-1}$,
$\hat Z_{h_*}^-\circ m^{-1}$, and $\hat Z_{s_*}^{-1}\circ m^{-1}$ are respectively given by
\begin{align*}
\mathbf{\hat Z}^-_{e_*}\mathbf{ m}_{e_*}^{-1}
&=\mathbf{ L}_{e_*}
\begin{bmatrix}
\mathbf{ I}  &-2({\mathbf \Lambda}_{1{e_*}}+{\mathbf \Lambda}_{1{e_*}}^{-1})^{-1}
\end{bmatrix},\\
\mathbf{ L}_{e_*}&=
(\mathbf{ I}-{\mathbf \Lambda}_{1{e_*}}^2)^{-1}(-{\mathbf \Lambda}_{1{e_*}}-{\mathbf \Lambda}_{1{e_*}}^{-1}),\\
\mathbf{ \hat Z}_{h_*}^-\mathbf{ m}_{h_*}^{-1}
&=\mathbf{ L}_{h_*}
\begin{bmatrix}
\mathbf{ I} & -2{\mathbf \Lambda}_{1{h_*}}({\mathbf \Lambda}_{1{h_*}}+{\mathbf \Lambda}_{1{h_*}}^{-1})^{-1}
\end{bmatrix},\\
\mathbf{ L}_{h_*}&=(\mathbf{ I}-{\mathbf \Lambda}_{1{h_*}}^2)^{-1}
(-{\mathbf \Lambda}_{1{h_*}}-{\mathbf \Lambda}_{1{h_*}}^{-1}),
\\ 
\mathbf{ \hat Z}_{s_*}^- \mathbf{ m}_{s_*}^{-1}&=
\begin{bmatrix}
-\mathbf{ I}-{\mathbf \Lambda}_{1{s_*}}^{-2}&{\mathbf  0}&{\mathbf  0}&2\mathbf{ I}\\
{\mathbf  0}&-\mathbf{ I}-\ov{\mathbf \Lambda}_{1{s_*}}^{2} &2\mathbf{ I} & {\mathbf  0}
\end{bmatrix}
\begin{bmatrix}
\mathbf{ L}_{s_*}&{\mathbf  0}\\
{\mathbf  0}&-\ov{ \mathbf{ L}}_{s_*}
\end{bmatrix}
\\
&=\mathbf{ \tilde L}_{s_*}
\begin{bmatrix}
\mathbf{ I}&{\mathbf  0}&{\mathbf  0}  &-  2(\mathbf{ I}+{\mathbf \Lambda}_{1{s_*}}^{-2})^{-1}\\
{\mathbf  0}&\mathbf{ I}&-  2 (\mathbf{ I}+\ov{\mathbf \Lambda}_{1{s_*}}^{2})^{-1} &{\mathbf  0}
\end{bmatrix},\\
\mathbf{ \tilde L}_{s_*}&=
\begin{bmatrix}
(\mathbf{ I}+{\mathbf \Lambda}_{1{s_*}}^{-2})({\mathbf \Lambda}_{1{s_*}}-{\mathbf \Lambda}_{1{s_*}}^{-1})^{-1}&{\mathbf  0}\\
{\mathbf  0}&(\mathbf{ I}+\ov{\mathbf \Lambda}_{1{s_*}}^{2})(\ov{\mathbf \Lambda}_{1{s_*}}^{-1}-\ov{\mathbf \Lambda}_{1{s_*}})^{-1}
\end{bmatrix}.
\end{align*}
Combining the above identities, we obtain
$$
\mathbf{ \hat Z}^{-1}\mathbf{ m}^{-1}=
\diag(\mathbf{ L}_{e_*}, \mathbf{ L}_{h_*},\mathbf{ \tilde L}_{s_*})\left(\mathbf{ I}_p, 
  -2\diag\left(\boldsymbol{ \Gamma}_{e_*},{\mathbf \Lambda}_{1{h_*}}\boldsymbol{ \Gamma}_{h_*},
   \begin{bmatrix}0&{\boldsymbol{ \Gamma}}_{s_*}\\
  \tilde{\boldsymbol{ \Gamma}}_{s_*} &0\end{bmatrix}\right)\right)
$$
with  $\tilde{\mathbf{ \Gamma}}_{s_*}=\mathbf{ I}-\ov{\mathbf{ \Gamma}}_{1{s_*}}$  and  \ga\label{Gehs}
\mathbf{ \Gamma}_{e_*}=({\mathbf \Lambda}_{1{e_*}}+{\mathbf \Lambda}_{1{e_*}}^{-1})^{-1},
\quad\mathbf{  \Gamma}_{h_*}=({\mathbf \Lambda}_{1{h_*}}+{\mathbf \Lambda}_{1{h_*}}^{-1})^{-1},\quad
\mathbf{ \Gamma}_{s_*}= (\mathbf{ I}+{\mathbf \Lambda}_{1{ s_*}}^{-2})^{-1}.
\end{gather}
  We define $\tilde{\mathbf B}_j$ to be the $j$-th row of
\eq{deftb}
\tilde {\mathbf B}:=\mathbf{ B}^{-1}\diag(\mathbf{ L}_{e_*}, \mathbf{ L}_{h_*},\mathbf{ \tilde L}_{s_*}).
\eeq
 
With $\mathbf  z_{s_*}'=(z_{p-s_*+1}, \ldots, z_p)$,  the defining equations of $M$ are given by
$$
z_{p+j}''=\left\{\mathbf{ \tilde B}_j\diag(\mathbf z_{{e_*}}-2\mathbf{ \Gamma}_{e_*}\ov {\mathbf z}_{e_*}, 
\mathbf z_{h_*}
-   
2\mathbf{ \Gamma}_{h_*}\mathbf{\Lambda}_{1h_*}\ov{ \mathbf  z}_{h_*}, \mathbf z_{s_*}-2\mathbf{ \Gamma}_{s_*}\ov{ \mathbf  z}_{s_*}', 
{\mathbf  z}_{s_*}'-2(\mathbf{ I}-\ov{\mathbf{ \Gamma}_{s_*}})\ov {\mathbf  z}_{s_*})\right\}^2.
$$
 Let us replace   $z_j$ with $j\neq h$,  $z_h$ by $iz_j$ and  $i\sqrt{\la_h}^{-1}z_h$, respectively.
We also multiply the $h$-th column of $\tilde
{\mathbf B}  
$ by $-i\sqrt{\la_h}$ and its $j$-th column, $j\neq h$,  by $-i$. In the new coordinates, $M$ is given
by 
$$
z_{p+j}''=\left\{\hat{\mathbf{B}}_j\diag(\mathbf z_{{e_*}}+2\mathbf{ \Gamma}_{e_*}\ov {\mathbf z}_{e_*}, 
\mathbf z_{h_*}
+  2\mathbf{ \Gamma}_{h_*}\ov{ \mathbf  z}_{h_*}, \mathbf z_{s_*}+2\mathbf{ \Gamma}_{s_*}\ov{ \mathbf  z}_{s_*}', 
{\mathbf  z}_{s_*}'+2(\mathbf{ I}-\ov{\mathbf{ \Gamma}}_{s_*})\ov {\mathbf  z}_{s_*})\right\}^2.
$$
 Explicitly, we have
\begin{align}\label{qbga}
Q_{\mathbf B,\gamma}\colon
z_{p+j}&=\Bigl(  
\sum_{\ell=1}^{e_*+h_*} \hat b_{j\ell}(z_\ell+2\gamma_\ell\ov z_\ell)
\\ &\qquad  
   +\sum_{s=e_*+h_*+1}^{p-s_*}
 \hat b_{js}(z_s+2\gamma_s\ov z_{s+s_*})  +
 \hat b_{j(s+s_*)}(z_{s+s_*}+2
 \gamma_{s+s_*}\ov z_{s})\Bigr)^2  \nonumber
\end{align}
for $ 1\leq j\leq p$. Here   
\eq{gsss}
\gaa_{s+s_*}=1-\ov\gaa_s.
\eeq
By \re{deftb}, we also obtain the following identity
\eq{defhb}
\nonumber
\hat{\mathbf B}= -i\mathbf B^{-1} \diag(\mathbf{ L}_{e_*}, \mathbf{ L}_{h_*},\mathbf{ \tilde L}_{s_*})
\diag(\mathbf I_{e_*}, \mathbf{\Lambda}_{1h_*}^{1/2}, \mathbf I_{2s_*})
\eeq
The equivalence relation \re{bcbd-} on the set of non-singular matrices $\mathbf B$ now takes the form
\eq{hbsim}
 \widehat{\tilde{\mathbf B}}= (\diag_{ \nu}\mathbf d)^{-1}\hat{\mathbf B}\diag\mathbf a, 
\eeq
where $\mathbf a$ satisfies \re{aeae} and   $\diag_{ \nu}\mathbf d$  is defined in \re{dfndiag}.

Therefore, by \rp{inmae} we obtain the following classification for
the quadrics.

\begin{thm}\label{quadclass}
Let $M$ be a quadratic submanifold defined by \rea{variete-orig} and
\rea{variete-orig+} with $q_*=0$. Assume that the branched covering
 $\pi_1$ has   $2^p$  deck transformations.
 Let $T_1,T_2$ be the pair of Moser-Webster involutions
 of $M$. Suppose that  $S=T_1T_2$
has $2p$ distinct eigenvalues.
Then $M$ is holomorphically equivalent to
\rea{qbga} with  $\hat{\mathbf{ B}}\in GL(p,\cc)$ being uniquely determined by   the equivalence
  relation \rea{hbsim}.
\end{thm}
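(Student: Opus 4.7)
The plan is to reduce the classification of such quadrics $M$ to the linear classification of $\{T_{11},\ldots,T_{1p},\rho\}$ via Proposition~\ref{inmae}, then apply the three lemmas (\ref{t1t2sigrho}, \ref{sett}, \ref{unsol}) already established, and finally execute the realization procedure explicitly to recover the defining equations in the form \re{qbga}.

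First I would invoke \rp{inmae}: since $M$ is quadratic with $q_*=0$ and $\pi_1$ admits $2^p$ deck transformations, holomorphic equivalence of $M$ with another such quadric $\widetilde M$ is the same as holomorphic (hence, by linearity of the involutions on a quadric, linear) equivalence of the Moser--Webster data $\{\cL T_1,\rho_0\}$. Using condition~E, \rl{incomp} rules out $\pm 1$ as eigenvalues of $S=T_1T_2$, so \rl{t1t2sigrho} applies and produces a linear change of coordinates in which $\hat T_1,\hat T_2,\hat S,\rho$ take the normal forms \re{hatt1}--\re{eqrh} with eigenvalues satisfying \re{mula}--\re{lass}. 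This fixes the parameters $\la_1,\ldots,\la_p$ and hence $\gaa_1,\ldots,\gaa_p$ via \re{Gehs} and \re{gsss}.

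Next I would apply \rl{sett} to write every admissible $T_{1j}$ in the form \re{t1jd} with a non-singular matrix $\mathbf B$ uniquely determined up to $\mathbf B\sim \mathbf B\mathbf D$ for diagonal $\mathbf D$, and then use \rl{unsol} to identify the full equivalence relation $\mathbf B\sim(\diag\mathbf a)^{-1}\mathbf B(\diag_\nu\mathbf d)$ coming from all linear changes of coordinates that preserve $\hat T_1$ and $\rho$, subject to the reality constraint \re{aeae}. At this point the classification of $\{\cL T_1,\rho\}$ (and therefore of $M$) is completely encoded by the orbit of $\mathbf B$ under this action.

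To pass from $\mathbf B$ to the explicit normal form \re{qbga}, I would follow the realization procedure in the proof of \rp{inmae}: introduce $Z^+=\xi+\boldsymbol\Lambda_1\eta$ and $Z^-=\mathbf B^{-1}(-\boldsymbol\Lambda_1^{-1}\xi+\eta)$, so that $Z^+$ and $(Z_j^-)^2$ generate the invariants of $\cL T_1$, and set $W^\pm=\ov{Z^\pm\circ\rho}$. The map $m\colon(\xi,\eta)\mapsto(Z^+,W^+)$ conjugates $\rho$ to $\rho_0$, and $M$ is defined by $z_{p+j}=(Z_j^-\circ m^{-1}(z',\ov{z'}))^2$. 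Block-diagonalizing $m$ into its elliptic, hyperbolic, and complex pieces, inverting each block, and multiplying out $\hat Z^-\mathbf m^{-1}$ yields precisely the matrix $\tilde{\mathbf B}$ of \re{deftb}; after the cosmetic rescalings $z_h\mapsto i\sqrt{\la_h}^{-1}z_h$ and $z_j\mapsto iz_j$ ($j\neq h$) that absorb the factor $-i\,\diag(\mathbf I_{e_*},\boldsymbol\Lambda_{1h_*}^{1/2},\mathbf I_{2s_*})$, the defining equations become \re{qbga} with $\hat{\mathbf B}$ as displayed, and the equivalence relation \re{bcbd-} translates verbatim into \re{hbsim}. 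The only nontrivial step is the block-wise matrix inversion of $m_{e_*},m_{h_*},m_{s_*}$; this is the main computational obstacle but is pure linear algebra driven by the explicit formulas for $\boldsymbol\Lambda_1$ and $\boldsymbol\rho$.
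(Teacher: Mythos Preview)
Your proposal is correct and follows essentially the same approach as the paper: the theorem is the culmination of the preceding lemmas, and the paper's proof consists precisely of invoking \rp{inmae} to reduce to the linear classification of $\{T_{1j},\rho\}$, applying Lemmas~\ref{t1t2sigrho}, \ref{sett}, \ref{unsol} in that order, and then carrying out the realization computation with $Z^\pm$, the block inversion of $m_{e_*},m_{h_*},m_{s_*}$, and the final rescalings you describe.
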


 When $\hat{\mathbf{ B}}$  is the identity, we obtain
 the product of 3 types of quadrics
\gan
\cL Q_{\gamma_e}\colon z_{p+e}= (z_e+2\gaa_e\ov z_e)^2;\\
\cL Q_{\gamma_h}\colon z_{p+h}= (z_{h}+2\gaa_{h}\ov z_h)^2;\\
\cL Q_{\gamma_s}\colon z_{p+s}= (z_s+2\gamma_s\ov z_{s+s_*})^2,
\quad z_{p+s+s_*}=( z_{s+s_*}+2
(1-\ov\gamma_{s})  \ov z_{s})^{2}
\end{gather*}
with
\eq{gens}
\gamma_e=\frac{1}{\la_e+\la_e^{-1}}, \quad
\gamma_h=\frac{ 1}{\la_h+\ov\la_h},\quad
\gamma_s=\frac{1}{1+\la_s^{ -2}}.
\eeq
Note that $\arg\la_s
\in(0,\pi/2)$ and $|\la_s|>1$. Thus
\eq{gehs}
\nonumber
 0<\gaa_e<1/2, \quad
 \gaa_h>1/2, \quad
\gaa_s\in \{z\in\cc\colon \RE z>1/2, \IM z>0\}.
\eeq

We define the following invariants.
\begin{defn}We call $\mathbf{ \Gamma}=\diag(\mathbf{ \Gamma}_{e_*},\mathbf{ \Gamma}_{h_*},\mathbf{ \Gamma}_{s_*}, \mathbf I_{s_*}-\bar{\boldsymbol \Gamma}_{s_*})$,  given by formulae \re{Gehs},
 the {\em Bishop invariants} of the quadrics. The  equivalence classes $\hat{\mathbf B}$ of non-singular matrices
 $\mathbf{ B}$ under the equivalence relation \re{bcbd-} are called the {\it extended   Bishop invariants}
  for the quadrics.
\end{defn}
Note that $\mathbf{ \Gamma}_{e_*}$ has
diagonal elements in $(0,1/2)$, and $\mathbf{ \Gamma}_{h_*}$
has   diagonal elements in $(1/2,\infty)$,
and $\mathbf{ \Gamma}_{s_*}$ has diagonal elements
in $(1/2, \infty)+ i (0,\infty)$.

We remark that $Z_j^-$ is
skew-invariant by $T_{1i}$ for $i\neq j$ and invariant by
$\tau_{1j}$. Therefore, the square of a linear combination
of $Z_1^-, \ldots, Z_p^-$ might not be invariant by all
$T_{1j}$. This explains the presence of $\mathbf B$ as invariants
in the normal form.

It is worthy stating the following normal form
for two families of linear holomorphic involutions
which may not satisfy the reality
condition.  

\begin{prop}\label{2tnorm}
Let $\cL T_i=\{T_{i1},\ldots, T_{ip}\},i=1,2$ be two families
of distinct and commuting
linear holomorphic involutions on $\cc^n$.
Let $T_i=T_{i1}\cdots T_{ip}$. Suppose that for each $i$, $\fix(T_{i1}), \ldots$, $\fix(T_{ip})$
are hyperplanes intersecting transversally. Suppose that  $T_1,T_2$ satisfy \rea{bilii++} and 
$S=T_1T_2$ has $2p$ distinct eigenvalues.
In suitable linear coordinates, the matrices of $T_i,S$ are 
\gan
\mathbf{ T}_i=\begin{pmatrix}
                 \mathbf{  0 }& {\mathbf \Lambda}_i \\
                  {\mathbf \Lambda}_i^{-1} & \mathbf{ 0} \\
                \end{pmatrix},\quad
\mathbf{ S}=\begin{pmatrix}
               {\mathbf \Lambda}_1^2   & \mathbf{ 0} \\
               \mathbf{  0} & {\mathbf \Lambda}_1^{-2} \\
                \end{pmatrix}
\end{gather*}
with ${\mathbf \Lambda}_2={\mathbf \Lambda}_1^{-1}$  being diagonal matrix whose entries
do not contain $\pm1, \pm i$. The  ${\mathbf \Lambda}_1^2$ is uniquely determined
up to a permutation in diagonal entries.  Moreover, the matrices of $T_{ij}$ are
\eq{btij}
\mathbf{ T}_{ij}=\mathbf{ E}_{{\mathbf \Lambda}_i}(\mathbf{ B}_i)_*\mathbf{ Z}_j(\mathbf{ B}_i)_*^{-1}\mathbf{ E}_{{\mathbf \Lambda}_i}^{-1}
\eeq
for some non-singular complex matrices $\mathbf{ B}_1, \mathbf{ B}_2$ 
uniquely determined by   the equivalence
relation
\eq{tb1tb2}
(\mathbf{ B}_1,\mathbf{ B}_2)\sim (\mathbf{ \tilde B}_1,\mathbf{ \tilde B}_2):=
(\mathbf{ R}^{-1}\mathbf{ B}_1\diag_{  \nu_1}\mathbf d_1,\mathbf{R}^{-1}\mathbf{ B}_2\diag_{ \nu_2}\mathbf d_2),
\end{equation}
where $\diag_{\nu_1}\mathbf{ d}_1,\diag_{ \nu_2}\mathbf{ d}_2$ are defined as in \rea{dfndiag},
and $\mathbf{ R}$ is a non-singular diagonal  complex matrix   representing  the linear transformation $\var$ such that
\eq{}
\nonumber
\var^{-1}T_{ij}\var=\tilde T_{i\nu_i(j)}, \quad i=1,2, j=1, \ldots, p.
\eeq
 Here $\tilde {\cL T}_i$ is the family of  the involutions  associated to the matrices $\mathbf {\tilde B}_i$, and
 $\mathbf E_{\mathbf \Lambda_i}$ and $\mathbf B_*$ are defined by  \rea{Elam} and
\rea{bstar}.
\end{prop}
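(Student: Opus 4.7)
My plan is to parallel the proofs of \rl{t1t2sigrho}, \rl{sett} and \rl{unsol}, stripping out every use of the anti-holomorphic involution $\rho$. Throughout, hypothesis \rea{bilii++} will play the role of the reality-free half of condition~E: together with \nrca{sd2} it yields $\fix(T_1)\cap\fix(T_2)=\{0\}$, which is the only geometric input from the $\rho$-setting that is truly used.

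First I would show that neither $1$ nor $-1$ is an eigenvalue of $S$. The proof of \rl{incomp} transcribes verbatim: the reversibility relations $T_iST_i=S^{-1}$ pair eigenvalues as $\mu\leftrightarrow\mu^{-1}$; distinctness of the $2p$ eigenvalues forces $1$ and $-1$ to occur with the same parity; and the transversality $\fix(T_1)\cap\fix(T_2)=\{0\}$ then rules out both possibilities. Since $\mathbf{\Lambda}_1^2$ will be, by construction, the list of eigenvalues of $S$, this delivers the stated restriction $\la_j\notin\{\pm 1,\pm i\}$.

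Next I would simultaneously normalize $S$, $T_1$ and $T_2$. Group the eigenvalues of $S$ as ordered pairs $(\mu_j,\mu_j^{-1})$; for each pair pick an eigenvector $u_j$ of $\mu_j$ and set $v_j=T_1u_j$, which is automatically an eigenvector of $S$ for $\mu_j^{-1}$ because $T_1$ conjugates $S$ to $S^{-1}$. Rescaling $v_j\mapsto \la_j v_j$ with $\la_j$ a chosen square root of $\mu_j$, the basis $(u_1,\dots,u_p,v_1,\dots,v_p)$ puts $T_1$ in the off-diagonal block form with $\mathbf{\Lambda}_1=\diag(\la_1,\dots,\la_p)$, makes $S$ into $\diag(\mathbf{\Lambda}_1^2,\mathbf{\Lambda}_1^{-2})$, and then a direct computation gives $T_2=T_1S$ in the same off-diagonal form with $\mathbf{\Lambda}_2=\mathbf{\Lambda}_1^{-1}$. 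Uniqueness of $\mathbf{\Lambda}_1^2$ up to permutation is immediate, since $S$ is a similarity invariant whose eigenvalues are paired canonically by the involutive action of $T_1$.

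For the last step I would realize each family $\cL T_i$ through a single matrix $\mathbf{B}_i$. With $T_i$ in the above form, the linear change of coordinates $\mathbf{E}_{\mathbf{\Lambda}_i}$ of \rea{Elam} conjugates $T_i$ to $Z=Z_1\cdots Z_p$ and sends $\{T_{i1},\dots,T_{ip}\}$ to a family of $p$ commuting involutions, each fixing a hyperplane and with product $Z$. By \rl{sd} and \rl{zcent} such a family is necessarily of the form $(\mathbf{B}_i)_*Z_j(\mathbf{B}_i)_*^{-1}$ with $\mathbf{B}_i$ unique up to right multiplication by a diagonal matrix, giving \rea{btij}. For the equivalence of pairs, suppose $\var$ satisfies $\var^{-1}T_{ij}\var=\widetilde T_{i\nu_i(j)}$ for $i=1,2$; then $\var$ commutes with $T_1$, $T_2$, and hence with $S$, so distinctness of the eigenvalues of $S$ forces the matrix $\mathbf R$ of $\var$ to be diagonal, while commutation with the off-diagonal block $\mathbf{T}_1$ forces $\mathbf R=\diag(\mathbf a,\mathbf a)$. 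Substituting this $\mathbf R$ into the conjugation identity and invoking \rl{zcent} exactly as in the proof of \rl{unsol} — now without the reality constraint \rea{aeae} on $\mathbf a$ — yields \rea{tb1tb2}, and the converse is a direct verification. I expect no genuine obstacle: the only novelty is to confirm that dropping the real structure imposes no further constraint on $\mathbf a$, which is immediate from the identity $\mathbf R\mathbf{E}_{\mathbf{\Lambda}_1}=\mathbf{E}_{\mathbf{\Lambda}_1}\mathbf R$ for diagonal $\mathbf R$.
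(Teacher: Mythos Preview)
Your proposal is correct and follows essentially the same route as the paper's own proof: exclude $\pm1$ as eigenvalues via the argument of \rl{incomp}, diagonalize $S$ and normalize $T_1,T_2$ by choosing eigenvectors $u_j$ for $\mu_j$ and $v_j=\la_jT_1u_j$ for $\mu_j^{-1}$, then express each family $\cL T_i$ through a matrix $\mathbf{B}_i$ via \re{phi1z}--\re{t1jd} and derive the equivalence relation by the computation of \rl{unsol} with the reality constraint \rea{aeae} dropped. Your explicit verification that $\var$ commutes with $T_1,T_2,S$ and hence has matrix $\diag(\mathbf a,\mathbf a)$ is exactly what the paper invokes by reference to \re{ela1t}.
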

\begin{proof}  Let $\kappa$ be an eigenvalue of $S$ with (non-zero) eigenvector $u$. Since $T_iST_i=S^{-1}$. 
Then $S(T_i(u))=\kappa^{-1}T_i(u)$. This shows that $\kappa^{-1}$ is also an eigenvalue of $S$. By \rl{incomp}, $1$ and $ -1$
are not eigenvalues of $S$. Thus, we can list the eigenvalues of $S$ as $\mu_1, \ldots, \mu_p, \mu_1^{-1}, \ldots, \mu_p^{-1}$. 
Let $u_j$ be an eigenvector of $S$ with eigenvalue $\mu_j$. Fix $\la_j$ such that $\la_j^2=\mu_j$.
Then $v_j:=\la_j T_1(u_j)$ is an eigenvector of $S$ with eigenvalue $\mu_j^{-1}$. The $\sum\xi_ju_j+\eta_jv_j$ defines
a coordinate system on $\cc^n$ such that $T_i,S$ have  the above matrices $\mathbf{\Lambda}_i$ and $\mathbf S$,
respectively.  By \re{phi1z} and  \re{t1jd}, 
$T_{ij}$ can be expressed in \re{btij}, where each $\mathbf B_i$ is uniquely determined up to
$\mathbf B_i\diag\mathbf d_i$. Suppose that $  \{\tilde T_{1j}\}, \{\tilde T_{2j}\}$ 
are another pair of
families of linear involutions of which the corresponding matrices are $ \mathbf{\tilde B}_1, \mathbf {\tilde B}_2$.
  If there is a linear change of coordinates $\var$ such
that $\var^{-1} T_{ij}\var=\tilde T_{i\nu_i(j)}${, 
then  in} 
the matrix $\mathbf R$ of $\var$, we obtain \re{tb1tb2}; see a similar computation
for \re{bcbd-} by using \re{ela1t}.
Conversely, \re{ela1t} implies that the corresponding pairs of families
of involutions are equivalent.
\end{proof}

\setcounter{thm}{0}\setcounter{equation}{0}

\setcounter{thm}{0}\setcounter{equation}{0}
\section{
Formal deck
transformations  and centralizers}\label{fsubm}

In section~\ref{secinv} we  show 
 the equivalence of   the classification of real analytic submanifolds $M$ that admit the maximum number of deck transformations and the classification of the families of
involutions $\{\tau_{11}, \ldots, \tau_{1p},\rho\}$ that satisfy some mild conditions  (see \rp{inmae}). To classify the families of involutions and to find their normal forms, we will first study  normal forms at the formal level. The main purpose of this section is to show that
at the formal level, the classification of the formal submanifolds of the desired CR singularity and the classification of 
$\{\tau_{11}, \ldots, \tau_{1p},\rho\}$ are equivalent
under these mild conditions.

We will also study the centralizers of various linear maps to deal with  resonance.  
This is relevant as the normal form of $\sigma$ will belong to the centralizer of its linear part
and any further normalization will also be performed by transformations that are
in the centralizer. 

\subsection{Formal submanifolds and formal deck transformations}

We  first need some notation.
Let $I$ be an ideal
of the ring $\rr[[x]]$ of formal power series in   $x=(x_1,\ldots, x_N)$.
Since $\rr[[x]]$ is noetherian, then $I$ and its radical $\sqrt I$ are finitely generated. We say that $I$ defines a formal  submanifold $M$ of dimension $N-k$ if $\sqrt I$ is generated by $r_1,\ldots, r_k$ such that at the origin all $r_j$ vanish and $dr_1,\ldots, dr_k$
  are linearly independent. For such an $M$, let $I(M)$ denote $\sqrt I$ and let   $T_0M$ be defined by $dr_1(0)=\cdots=dr_k(0)=0$. If $F=(f_1,\ldots, f_N)$ is a formal mapping
  with $f_j\in\rr[[x]]$, we say that its set of (formal) fixed points is a submanifold if
  the ideal generated by $f_1(x)-x_1, \ldots, f_N(x)-x_N$ defines a submanifold.
Let $I,\tilde I$ be   ideals of $\rr[[x]], \rr[[y]]$ and
 let $\sqrt I, \sqrt{\tilde I}$ define two formal submanifolds $M, \tilde M$,
 respectively. We say
  that a formal map $y=G(x)$ maps $M$ into $\widetilde{M}$ if $\tilde I\circ G\subset \sqrt I$.
If $M,\tilde M$ are   in the same space, we write $M\subset\tilde M$ if $\tilde I\subset\sqrt I$. We say that a formal map $F$ fixes $M$
pointwise if $I(M)$ contain each component of the mapping $F-\id$.

We now  consider a   formal $p$-submanifold in $\cc^{2p}$
defined by
\eq{fmzp}
M\colon
z_{p+j} = E_{j}(z',\bar z'), \quad 1\leq j\leq p.
\eeq
Here $E_j$ are   formal power series in $z',\ov z'$.  We  assume that
\eq{fmzp+}
E_j(z',\bar z')=h_j(z',\ov z')+q_j(\ov z')+O((|(z',\ov z')|^3)
\eeq
and $h_j, q_j$ are homogeneous quadratic
polynomials. The formal complexification of $M$ is  defined by
\begin{equation}\nonumber
\label{variete-complex}
\begin{cases}
z_{p+i} = E_{i}(z',w'),\quad i=1,\ldots, p,\\
w_{p+i} = \bar E_i(w',z'),\quad i=1,\ldots, p.\\
\end{cases}
\end{equation}
 We define a {\it formal deck transformation}
of $\pi_1$ to be a formal biholomorphic map
$$
\tau\colon (z',w')\to (z',f(z',w')), \quad \tau(0)=0
$$
such that $\pi_1\tau=\pi_1$, i.e. $E\circ\tau=E$. Recall that condition B says that $q_*=\dim\{z'\in\cc^n\colon q(z')=0\}$ is zero, i.e.  $q$ vanishes only at
the origin
in $\cc^p$.
\begin{lemma} 
Let $M$ be a formal submanifold defined by \rea{fmzp}-\rea{fmzp+}. Suppose that
$M$ satisfies condition {\rm B}. Then formal
deck transformations of $\pi_1$ are commutative  involutions. Each formal deck transformation $\tau$ of $\pi_1\colon \cL M\to \cc^p$ is 
uniquely determined by its linear part $L\tau$
in the $(z',w')$ coordinates, while $L\tau$ is a deck transformation for the complexification for $\pi_1\colon \cL Q\to\cc^p$, where $\cL Q$ is the complexification of
the quadratic part  $Q$ of $M$. 
 If $M$ is real analytic, all
formal
deck transformations of $\pi_1$ are convergent. 
\end{lemma}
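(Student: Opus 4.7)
The plan is to reduce all three assertions to a single rigidity statement: if $\tau$ is a formal deck transformation of $\pi_1$ with $L\tau=I$, then $\tau=I$. Part~(ii) is then essentially this rigidity combined with the observation, obtained by matching the quadratic part of the functional equation $E(z',\tau(z',w'))=E(z',w')$, that $L\tau$ must be a deck transformation of the complexification $\cL Q$ of the quadratic part $Q$ of $M$. Part~(i) follows from rigidity as well: since \rl{2p1} applied to the real analytic quadric $\cL Q$ (which satisfies condition~B) gives a commuting group of involutive deck transformations, any commutator $\tau_1\tau_2\tau_1^{-1}\tau_2^{-1}$ or any square $\tau_i^2$ of formal deck transformations of $\cL M$ has trivial linear part, hence equals $I$ by rigidity.

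To prove rigidity, write $\tau(z',w')=(z',w'+\phi(z',w'))$ with $\phi$ vanishing to order $k\ge 2$, and let $\phi_k$ be its lowest nonzero homogeneous component. Taylor-expanding $E(z',w'+\phi)=E(z',w')$ and extracting the order $k+1$ terms gives
\begin{equation*}
\sum_j\bigl(\partial_{w_j}h_i(z',w')+\partial_{w_j}q_i(w')\bigr)\phi_{k,j}(z',w')=0,\qquad 1\le i\le p.
\end{equation*}
The structural facts to exploit are that $\partial_{w_j}h_i(z',w')$ is linear in $z'$ with no $w'$-dependence, whereas $\partial_{w_j}q_i(w')$ is linear in $w'$ with no $z'$-dependence. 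Decomposing $\phi_{k,j}=\sum_{b=0}^{k}\phi_{k,j}^{(b)}$ according to degree $b$ in $w'$ and isolating the bi-degree $(0,k+1)$ component of the equation yields $Dq(w')\phi_k^{(k)}(w')=0$. Condition~B forces $q$ to be a finite polynomial map (by homogeneity, $q^{-1}(c)$ is bounded for each $c$), hence $\det Dq\not\equiv 0$ and $\phi_k^{(k)}\equiv 0$. A downward induction on $b$ then finishes: at each step, the bi-degree $(k+1-b,b)$ part of the equation contains an $h$-contribution involving $\phi_k^{(b)}$, which vanishes by the inductive hypothesis, together with a $q$-contribution forcing $\phi_k^{(b-1)}\equiv 0$. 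Hence $\phi_k\equiv 0$, contradicting the choice of $k$.

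For part~(iii), assume $M$ is real analytic and let $\hat\tau(z',w')=(z',\hat f(z',w'))$ be a formal deck transformation. The system $E(z',w')-E(z',f(z',w'))=0$ is a system of analytic equations in the parameters $(z',w')$ and the unknown $f$, admitting the formal power series solution $\hat f$. Artin's approximation theorem produces, for any $N$, a convergent solution $f$ with $f\equiv\hat f$ modulo order $N$; this $f$ itself defines a convergent deck transformation of $\cL M$. Taking $N\ge 2$ aligns the linear parts, so by the uniqueness in part~(ii), $f=\hat f$ as formal series, and $\hat f$ is convergent. The main obstacle is the bi-degree bookkeeping in the rigidity step, but once the separation of variables between $h$ (only $z'$-dependent derivatives) and $q$ (only $w'$-dependent derivatives) is set up, the induction is straightforward.
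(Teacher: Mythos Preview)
Your proof is correct and follows the same overall architecture as the paper: reduce everything to the rigidity statement ``$L\tau=I\Rightarrow\tau=I$'', observe that $L\tau$ must be a deck transformation of the quadric $\cL Q$, invoke \rl{2p1} for the involution and commutativity structure on the linear level, and apply Artin's approximation theorem for convergence.

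The one genuine difference is in how you establish rigidity. The paper observes that since $\pi_1$ for $\cL Q$ is a finite branched covering, the full Jacobian satisfies $\det\partial_{w'}\{h(z',w')+q(w')\}\not\equiv 0$ as a polynomial in $(z',w')$ (indeed, setting $z'=0$ gives $\det Dq(w')\not\equiv 0$). The single equation $\partial_{w'}\{h+q\}\cdot u_k=0$ then kills $u_k$ immediately. Your bi-degree decomposition and downward induction on the $w'$-degree reach the same conclusion but with more bookkeeping; it only ever uses the invertibility of $Dq(w')$, not of the full $\partial_{w'}(h+q)$. Both arguments are valid, but the paper's one-line use of the full Jacobian is shorter and avoids the induction entirely.
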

\begin{proof} Let us recall some results about the quadric $Q$.
We already know that $q_*=0$ implies that $\pi_1$ for the complexification of
$Q$ is a branched   covering. As used in the proof of \rl{2p1},  $\pi_1$  is an open mapping near
 the origin and its regular values are dense.
In particular, we have
\eq{dwph}
\det\pd_{w'}\{h(z',w')+  q(w')\}\not\equiv0.
\eeq

Let $\tau$ be a formal deck transformation for  $M$. To show that $\tau$ is an involution, we note that its linear part at the origin, $L\tau$, is a deck transformation
of $Q$. Hence $L\tau$ is an involution.
Replacing $\tau$ by the deck transformation $\tau^2$, we may assume that
$\tau$ is tangent to the identity. Write
$$
\tau(z',w')=(z',w'+u(z',w')).
$$
We want to show that $u=0$. Assume that $u(z',w')=O(|(z',w')|^k)$  and let $u_k$ be
 homogeneous and of degree $k$ such that $u(z',w')=u_k(z',w')+O(|(z',w')|^{k+1})$. We have
$$
E(z',w'+u(z',w'))=E(z',w').
$$
Comparing terms of order $k+1$, we get
$$
\pd_{w'}\{h(z',w')+ q(w')\}u_k(z',w')=0.
$$
By \re{dwph}, $u_k=0$. This shows that each formal deck transformation $\tau$ of $\pi_1$ for $M$ is an involution. As mentioned above, $L\tau$ is a deck transformation of $\pi_1$ for $Q$.
Also if $\tau,\tilde\tau$  are commuting formal involutions then $\tau^{-1}\tilde\tau$ is an involution and $\tau=\tilde\tau$ if and only if $L\tau= L\tilde\tau$.

 Assume now that $M$ is real analytic. We want to show that  each formal deck transformation $\tau$ is convergent.
By a theorem of Artin~\cite{artin68}, there is a convergent $\tilde\tau(z',w')=\tau(z',w')
+O(|(z',w')|^2)$ such that $E\circ\tilde\tau=E$, i.e. $\tilde\tau$ is a deck
transformation. Then $\tilde\tau^{-1}\tau$ is a deck transformation   tangent to the identity. Since it is a
formal involution by the above argument,
then it must be identity. Therefore,
 $\tau=\tilde\tau$ converges.
\end{proof}

Analogous to real analytic submanifolds, we say that a formal manifold defined
by \rea{fmzp}-\rea{fmzp+} satisfies condition D if its  formal branched  covering $\pi_1$ admits
$2^p$ formal deck transformations.

 Recall from section~\ref{secinv}
that
it is crucial to distinguish a special set of generators
for the deck transformations in order to relate the classification
of real analytic manifolds to the classification of
  certain $\{\tau_{11}, \ldots, \tau_{1p},\rho\}$.
The set of generators is uniquely determined by
the dimension of  fixed-point sets.
We want to extend these results at the
formal level.
 
\begin{prop} \label{mmtp}
Let $M,\tilde M$ be formal $p$-submanifolds 
in $\cc^n$ of the form \rea{fmzp}-\rea{fmzp+}.
Suppose that   $M,\tilde M$ satisfy condition  {\rm D}. Then the following hold~$:$
\bppp
\item $M$ and $\tilde M$ are
formally equivalent
if and only if  their associated families of involutions 
$\{\tau_{11}, \ldots, \tau_{1p},
\rho\}$ and $\{\tilde\tau_{11}, \ldots, \tilde\tau_{1p},\rho\}$
are   formally equivalent.
\item
Let  $\cL T_1=\{\tau_{11}, \ldots,\tau_{1p}\}$ be a family of  
 formal holomorphic
involutions which commute pairwise. Suppose that 
 the tangent spaces of $\fix(\tau_{11}),\ldots,
 \fix(\tau_{1p})$  are hyperplanes
   intersecting  transversally at the origin. Let $\rho$
 be an anti-holomorphic formal involution and let $\cL T_2=\{\tau_{21},\ldots,\tau_{2p}\}$   with $\tau_{2j}=\rho\tau_{1j}\rho$.
Suppose that  $\sigma=\tau_1\tau_2$ has distinct eigenvalues for $\tau_i=\tau_{i1}\cdots\tau_{ip}$, and 
$$
 [\mathfrak M_n]_1^{L\cL T_1}\cap[\mathfrak M_n]_1^{L\cL T_2}
 =\{0\}.
 $$
There exists a formal  submanifold  defined by 
\begin{equation}\label{masym}
 z''=(B_1^2,\ldots, B_p^2)(z',\ov z')
\end{equation}
for some formal power series $B_1,\ldots, B_p$
such that $M$ satisfies condition  {\rm D}.  The set
of involutions  $\{ \tilde\tau_{11},\ldots, \tilde\tau_{1p},   \rho_0\}$  of $M$ is formally equivalent to
 $\{\tau_{11}, \ldots, \tau_{1p},\rho\}$.  
 \eppp
\end{prop}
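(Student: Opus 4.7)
The plan is to adapt the proof of Proposition~\ref{inmae} to the formal category. All the main tools used there---the simultaneous diagonalization of commuting involutions (Lemma~\ref{sd}), the characterization of invariant functions (Lemma~\ref{twosetin}), and the formal inverse function theorem---go through verbatim for formal power series, so the work reduces to verifying that the intermediate objects (deck fibers, generators of invariant rings, and formal inverses) are all well-defined at the formal level.

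For part (i), I would argue as in \rp{inmae}. If $f\colon M\to\widetilde M$ is a formal equivalence, its complexification $f^c(z,w):=(f(z),\bar f(w))$ is a formal biholomorphism of $\cc^{2n}$ sending $\mathcal M$ into $\widetilde{\mathcal M}$ and commuting with $\rho_0$. By \rl{sd1}(ii), the ring $\widehat{\mathcal O}_n^{\mathcal T_1}$ is generated by $z_1,\ldots,z_p$ and the components of $E(z',w')$; together with the identity $\pi_1\circ f^c=f\circ\pi_1$ on $\mathcal M$, this forces $f^c$ to send the group generated by $\mathcal T_1$ into the group generated by $\widetilde{\mathcal T}_1$. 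The uniqueness clause in \rl{sd1}(i) (each $\tau_{1j}$ is singled out among deck transformations by the property that $\fix(\tau_{1j})$ is a hypersurface, a property preserved by formal conjugation) then identifies $f^c\tau_{1j}(f^c)^{-1}$ with $\widetilde\tau_{1\nu(j)}$ for some permutation $\nu$. Conversely, given a formal intertwining $g$, recover $f$ by the formula $f=\pi_1\circ g\circ\pi_1^{-1}$, interpreted formally: the functions $z'$ and the components of $E(z',w')$ viewed on $\widetilde M$ pull back by $g$ to $\mathcal T_1$-invariant formal power series on $\mathcal M$, hence to formal power series in $z'$ and $E(z',w')$, and the $z'$- and $E$-parts define the components of $f$.

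For part (ii), I would follow the realization procedure of \rp{inmae} verbatim at the formal level. Apply \rl{sd} formally to $\mathcal T_1$ to obtain formal coordinates $(\xi,\eta)\mapsto(A,B)(\xi,\eta)$ such that the $\mathcal T_1$-invariants are formal power series in $z':=A(\xi,\eta)$ and $z''_j:=B_j^2(\xi,\eta)$, with each $B_j$ skew-invariant under $\tau_{1j}$ and invariant under the remaining $\tau_{1k}$. Set $w':=\overline{A\circ\rho}(\xi,\eta)$; the hypothesis $[\mathfrak M_n]_1^{L\mathcal T_1}\cap[\mathfrak M_n]_1^{L\mathcal T_2}=\{0\}$ makes $(\xi,\eta)\mapsto(A,\overline{A\circ\rho})$ formally invertible with inverse $\psi$, so setting
\begin{gather*}
M\colon z''_j=B_j^2\circ\psi(z',\bar z'), \quad 1\leq j\leq p,
\end{gather*}
defines a formal submanifold of the form~\re{masym}. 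The distinct-eigenvalue hypothesis on $\sigma$, together with the transversality assumption, ensures that the quadratic parts of the defining equations satisfy condition~B (the common zero set of the resulting $q_j$ reduces to the origin), so condition~D holds by the formal analogue of \rl{2p1}. Finally \rl{twosetin} identifies the deck transformations of $\pi_1$ for this $M$ with $\{\psi^{-1}\tau_{1j}\psi\}$, and $\rho_0=\psi^{-1}\rho\psi$ follows immediately from the defining formulas for $w'$ and $w''$.

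The main obstacle is purely bookkeeping: one must verify that \rl{sd}, \rl{sd1}, \rl{twosetin} and \rl{2p1} all remain valid in the formal category. Since each of these is proved either algebraically (diagonalization of a finite abelian group of involutions acting on $\widehat{\mathcal O}_n$) or by formal degree-by-degree analysis of invariant functions, no convergence input is needed, and no appeal to Artin approximation is required because the realization in part (ii) is explicit.
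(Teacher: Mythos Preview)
Your proposal is correct and follows essentially the same approach as the paper: both parts mirror the proof of \rp{inmae}, replacing convergent power series and fiber-counting arguments by the formal invariant-ring characterization from \rl{sd1}(ii) and \rl{twosetin}. One small imprecision: in part (ii) you invoke the distinct-eigenvalue hypothesis on $\sigma$ to verify condition~B, but this is not needed—the paper obtains $q_*=0$ directly from the fact that $\phi\circ\psi(z',w')=(z',B\circ\psi(z',w'))$ is formally invertible, which forces the linear part of $w'\mapsto B\circ\psi(0,w')$ to be nonsingular.
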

\begin{proof} (i) Let $M$ and $\tilde M$ be given by $z''=E(z',\ov{z'})$ and $\tilde z''=\tilde E(\tilde z',\ov{\tilde z'})$, respectively. 
Suppose that $f$ is a formal holomorphic
 transformation sending $M$ into $\tilde M$.  We have
\eq{fppz}
f''(z',E(z',w'))=\tilde E(f'(z',E(z',w')),\ov f'(w',\ov E(w',z'))).
\eeq
Here $f=(f',f'')$. Recall that 
 $\rho_0(z',w')=(\ov{w'},\ov{z'})$. Define a formal mapping $(z',w')\to (\tilde z',\tilde w')=F(z',w')$ by
\eq{Fzpwp}
F(z',w'):= (f'(z',E(z',w')),\ov {f'}(w',\ov E(w',z'))).
\eeq
It is clear that $ F\rho_0=\rho_0 F$.
  By \rl{twosetin}, we know that $\tilde z'$ and $\tilde z''=\tilde E(\tilde z', \tilde w')$ generate invariant formal
power series of $\{\tilde\tau_{1j}\}$.
Thus, $\tilde z'\circ F(z',w')=f'(z',E(z',w'))$ and $\tilde E\circ F(z',w')$  are invariant by  $F^{-1}\circ\tilde\tau_{1j}\circ F$. By \re{fppz} and the definition of $F$,
 $$\tilde E\circ F(z',w')=f''(z',E(z',w')).$$
This shows that $f(z', E(z',w'))$  is invariant under
 $F^{-1}\circ\tilde\tau_{1j}\circ F$. Since $f$ is invertible, then $z'$ and $E(z',w')$
  are invariant under $F^{-1}\circ\tilde\tau_{1j}\circ F$.
 Therefore, $\{\tau_{1j}\}$ and $\{F^{-1}\circ\tilde\tau_{1i}\circ F\}$  are the same by \rl{twosetin} as
 they
 have the same invariant functions.  

Assume now that $\{\tau_{1j}\}=\{F^{-1}\circ\tilde\tau_{1i}\circ F\}$ for some
formal biholomorphic map $F$ commuting with $\rho_0$.  Recall that $\tilde z',\tilde z''$
are invariant by $ \tilde\tau_{1j}$. Then $\tilde z'\circ F$ and $\tilde E\circ F$ are
 invariant by $\{\tau_{1j}\}$.    By \rl{twosetin}, invariant power series of ${\tau_{1j}}$
are generated by $z',E(z',w')$. Thus
we can write 
\ga \nonumber
\tilde z'\circ F(z',w')=f'(z',E(z',w')), \\
\label{hefp} \tilde E\circ F(z',w')=f''(z',E(z',w'))
 \end{gather}
for some formal power series map $f=(f',f'')$.   Since $\rho_0 F=F\rho_0$, 
then by \re{Fzpwp} $$F(z',w') 
=(f'(z',0),\ov f'(w',0))+O(|(z',w')|^2).$$ 
Since $F$ is (formal) biholomorphic then $z'\to f'(z',0)$ is biholomorphic. 
Then
$$
f''(0,E(0,w'))=\tilde E(0,\ov f'(w',0))+O(|w'|^3).
$$
 We have $E(0,w')=q(w')+O(|w'|^3)$ and $\tilde E(0,w')=\tilde q(w')+O(|w'|^3)$. Here $q(w'),\tilde q(w')$ are quadratic. By condition $q_*=0$, we know that $\tilde
 q_1, \ldots,\tilde q_p$ and hence $\tilde q_1\circ L, \ldots, \tilde q_p\circ L$ are linearly independent. 
 Here $L$ is the linear part of the mapping $w'\to \ov f'(w',0)$, which is invertible. This shows that
the linear part of $w'\to f''(0,w')$
 is biholomorphic. 
 By \re{hefp}, $f''(z',0)=O(|z'|^2)$. Hence $f=(f',f'')$ is biholomorphic.
By a simple computation, we have $f(M)=\tilde M$, i.e.
$$
\tilde E(f'(z),\ov{f'(z)})=f''(z)
$$
for $z''=E(z',\ov z')$.

(ii)
Assume that $\{\tau_{1j}\}$ and $\rho$
are given in the $(\xi,\eta)$
space.  We want to show that a formal holomorphic
 equivalence class of $\{\tau_{1j},\rho\}$ can be realized by a formal submanifold satisfying condition D.
The proof is almost identical to the realization proof of \rp{inmae} and we will be brief.  Using a formal, instead of convergent,
change of coordinates,  we
know that invariant formal power series of $\{\tau_{1j}\}$ are generated by
$$
 z'=(A_1(\xi,\eta),\ldots,
A_p(\xi,\eta)), \quad  z''=(B_1^2(\xi,\eta), \ldots, B_p^2(\xi,\eta)),
$$
where $B_j$ is skew-invariant by $  \tau_{1j}$, and $A,B_i$ are invariant under $\tau_{1j}$
for $i\neq j$.  Moreover, $\phi(\xi,\eta)=(A,B)(\xi,\eta)$ is formal biholomorphic.
Set $$
w_j'=\ov{A_j\circ\rho(\xi,\eta)}, \quad w''_j=\ov{B_j^2\circ\rho(\xi,\eta)}.
$$
Then $(\xi,\eta)\to (A(\xi,\eta),\ov{A\circ\rho(\xi,\eta)})$
has an inverse $\psi$.  Define
$$
M\colon z''=(B_1^2,\ldots, B_p^2)\circ\psi(z',\ov z').
$$
The complexification of $M$ is given by
$$
\cL M\colon
z''=(B_1^2,\ldots, B_p^2)\circ\psi (z',w'),\quad
w''=(\ov B_1^2,\ldots, \ov B_p^2)\circ\ov\psi(w',  z').
$$
Note that $\phi\circ\psi (z',w')=(z',B\circ\psi(z',w'))$. Since $\phi\psi$ is invertible, 
the linear part $D$ of $B\circ\psi$ satisfies
$$
|D(0,w')|\geq |w'|/C.
$$
This shows that $q_*=0$.
As in the proof of \rp{inmae}, we can verify that $M$ is a realization for $\{\tau_{1j},\rho\}$.\end{proof}

\subsection{Centralizers and normalized transformations}
In this subsection, we   describe several centralizers regarding $\hat S, \hat T_1$ and $\hat{\mathcal T}_1$. 
We will also describe the complement sets of the centralizers, i.e. the sets
of mappings which satisfy suitable normalizing conditions. Roughly speaking, our normal forms are   in 
the centralizers and coordinate transformations that achieve the normal forms are normalized, while
an arbitrary formal transformation admits a unique  decomposition of a mapping in a centralizer and a mapping in the complement of
the centralizer. The description of the centralizer of $\{\mathcal T_1,\rho\}$ is more complicated and it will be given in 
section~\ref{secideal}. We will also deal with the convergence for the decomposition. 

Recall that
\ga\label{sxij}
\hat S\colon\xi_j'=\mu_j\xi_j, \quad\eta_j'=\mu_j^{-1}\eta_j, \quad 1\leq j\leq p,\\
\hat T_i\colon \xi_j'=\la_{ij}\eta_j,\quad \eta_j'=\la_{ij}^{-1}\xi, \quad 1\leq j\leq p
\label{tixi}
\end{gather}
with $\mu_j=\la_{1j}^2$ and $\la_{2j}^{-1}=\la_{1j}=\la_j$.

\begin{defn}Let $\cL  F$ be a family of formal mappings  on $\cc^n$  fixing
the origin.
Let $\cL C(\cL  F)$ 
 be the  {\it centralizer} of $ \cL   F$, i.e.
 the set of formal  holomorphic mappings $g$ that fix  the origin and
  commute with each element $f$ of $\cL  F$, i.e., $f\circ g=g\circ f$.
\end{defn}

 Note that we do not require that elements in $\cL C(\cL  F)$   be invertible or convergent.

We first compute the centralizers.
\begin{lemma}\label{cents}
Let $\hat S$  be given by \rea{sxij} with  $\mu_1,\ldots, \mu_p$
being non-resonant.
Then ${\cL C}(\hat S)$ consists of mappings of the form
\eq{pajb}
\psi\colon \xi_j'= a_j(\xi\eta)\xi_j,\quad \eta_j'= b_j(\xi\eta)\eta_j, \quad 1\leq j\leq p.
\eeq
Let $\tau_1,\tau_2$ be  formal  holomorphic
involutions  such that $\hat S=\tau_1\tau_2$. Then
$$
\tau_{i}\colon\xi_j'=\Lambda_{ij}(\xi\eta)\eta_j, \quad \eta_j'=\Lambda_{ij}^{-1}(\xi\eta)\xi_j, \quad 1\leq j\leq p
$$
with $\Lambda_{1j}\Lambda_{2j}^{-1}=\mu_j$. 
The   centralizer of   $\{\hat T_1,\hat T_2\}$ consists of the above transformations satisfying
 \eq{bjaj}
 b_j=a_j, \quad 1\leq j\leq p.
 \eeq
\end{lemma}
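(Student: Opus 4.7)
The plan is to handle each of the three statements in turn, all via the Poincaré--Dulac comparison of monomials against the eigenvalues $\mu_1,\dots,\mu_p$, exploiting non-resonance in the only essential way.

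For the centralizer of $\hat S$, I would expand an arbitrary $\psi\in\mathcal C(\hat S)$ as formal power series $\psi_{\xi_j}=\sum A_{j,\alpha\beta}\xi^\alpha\eta^\beta$ and $\psi_{\eta_j}=\sum B_{j,\alpha\beta}\xi^\alpha\eta^\beta$, and compare the two sides of $\psi\circ\hat S=\hat S\circ\psi$ monomial by monomial. A monomial $\xi^\alpha\eta^\beta$ in $\psi_{\xi_j}$ survives only when $\mu^{\alpha-\beta}=\mu_j$, and non-resonance (i.e.\ $\mu^Q=1\Rightarrow Q=0$) forces $\alpha-\beta=e_j$. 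Thus the surviving terms are $\xi_j(\xi\eta)^\beta$, giving $\psi_{\xi_j}=a_j(\xi\eta)\xi_j$. The analogous argument on $\psi_{\eta_j}$ yields $\psi_{\eta_j}=b_j(\xi\eta)\eta_j$, which is \eqref{pajb}.

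For the involutions, I first observe that $\hat S=\tau_1\tau_2$ with $\tau_i^2=I$ implies $\tau_i\hat S\tau_i^{-1}=\hat S^{-1}$. Writing $\tau_1=(P,Q)$, the relation $P_j\circ\hat S=\mu_j^{-1}P_j$ and the monomial argument above (this time with exponent equation $\mu^{\alpha-\beta}=\mu_j^{-1}$) show $P_j(\xi,\eta)=\eta_j\Lambda_{1j}(\xi\eta)$; similarly $Q_j(\xi,\eta)=\xi_j\tilde\Lambda_{1j}(\xi\eta)$, where the linear-part assumption fixes $\Lambda_{1j}(0)=\lambda_{1j}$ and $\tilde\Lambda_{1j}(0)=\lambda_{1j}^{-1}$. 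To pin down $\tilde\Lambda_{1j}=\Lambda_{1j}^{-1}$, I substitute $\tau_1$ into itself: a short calculation shows that $\tau_1^2=I$ is equivalent, via the non-resonance structure of the arguments, to the map
\[
\Psi\colon\zeta\mapsto\bigl(\zeta_j\,\Lambda_{1j}(\zeta)\tilde\Lambda_{1j}(\zeta)\bigr)_{j=1}^p
\]
being a formal involution on $(\cc^p,0)$ tangent to the identity. The standard order-by-order argument ($\Psi=I+h_k+\cdots$ with $h_k$ the homogeneous part of lowest order $k\ge 2$ gives $\Psi^2=I+2h_k+\cdots$) forces $\Psi=I$, hence $\Lambda_{1j}\tilde\Lambda_{1j}\equiv 1$. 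The form of $\tau_2$ follows instantly from $\tau_2=\tau_1\hat S$, since the substitution $(\xi,\eta)\mapsto(\mu\xi,\mu^{-1}\eta)$ leaves the componentwise product $\xi\eta$ invariant; reading off the prefactors gives $\Lambda_{2j}=\mu_j^{-1}\Lambda_{1j}$, hence $\Lambda_{1j}\Lambda_{2j}^{-1}=\mu_j$.

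For the centralizer of $\{\hat T_1,\hat T_2\}$, I start from a $\psi$ of the form \eqref{pajb} (already forced by $\psi\in\mathcal C(\hat S)$) and compute $\psi\circ\hat T_1$ versus $\hat T_1\circ\psi$ directly. Since the coordinates $\xi_j\eta_j$ are each invariant under $\hat T_1$, the arguments of $a_j$ and $b_j$ are unaffected, and equality of the two compositions reduces to the single relation $a_j(\xi\eta)\lambda_{1j}\eta_j=\lambda_{1j}b_j(\xi\eta)\eta_j$, i.e.\ \eqref{bjaj}. Because $\hat T_2=\hat T_1^{-1}\hat S$ is generated by $\hat T_1$ and $\hat S$, commutation with $\hat T_1$ is automatically inherited as commutation with $\hat T_2$, so no further restriction arises.

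The only non-routine step is the identification of $\tilde\Lambda_{1j}$ with $\Lambda_{1j}^{-1}$; the remaining work is a direct eigenvalue-bookkeeping exercise using non-resonance.
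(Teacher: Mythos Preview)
Your proposal is correct and follows essentially the same route as the paper. The paper packages the second step slightly differently: it introduces the swap $T_0(\xi,\eta)=(\eta,\xi)$, observes that $\tau_1T_0$ commutes with $\hat S$ (since $T_0\hat S T_0=\hat S^{-1}$ and $\tau_1\hat S\tau_1=\hat S^{-1}$), and thereby reduces to the first part, whereas you redo the monomial comparison with eigenvalue $\mu_j^{-1}$; these are the same computation. For $\tilde\Lambda_{1j}=\Lambda_{1j}^{-1}$ the paper writes out the involution equation $\Lambda_{1j}(\Psi(\zeta))\tilde\Lambda_{1j}(\zeta)=1$ and inducts on degree, which is exactly your ``involution tangent to the identity equals the identity'' argument in explicit form. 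Two small wording fixes: the lemma carries no hypothesis on the linear part of $\tau_i$, so $\Lambda_{1j}(0)\tilde\Lambda_{1j}(0)=1$ should be deduced from $(L\tau_1)^2=I$ rather than asserted; and $\tau_1^2=I$ only \emph{implies} that $\Psi$ is an involution (not equivalent), but that implication together with $\Psi=I$ and the definition $\Psi_j=\Lambda_{1j}\tilde\Lambda_{1j}\,\zeta_j$ is all you use.
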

\begin{proof} Let $e_j=(0,\ldots, 1,\ldots, 0)\in \nn^p$, where $1$ is at the $j$th place.  Let $\psi$ be given by $$\xi_j'=\sum a_{j,PQ}\xi^P\eta^Q,\quad\eta_j'=\sum b_{j,PQ}\xi^P\eta^Q.
$$
By the non-resonance condition, it is straightforward that if $\psi \hat S=\hat S\psi$, then $a_{j,PQ}=b_{j,QP}=0$
if $P-Q\neq e_j$.
Note that $\hat S^{-1}=T_0\hat ST_0$ for $T_0\colon(\xi,\eta)\to(\eta,\xi)$. Thus $\tau_1T_0$ commutes with
$\hat S$. So $\tau_1T_0$ has the form \re{pajb} in which we rename $a_j,b_j$ by $\Lambda_{1j},\tilde\Lambda_{1j}$, respectively. Now $\tau_1^2=\id$ implies that
$$
\Lambda_{1j}((\Lambda_{11}\tilde\Lambda_{11})(\zeta)\zeta_1,\ldots, (\Lambda_{1p}\tilde\Lambda_{1p})(\zeta)\zeta_p)\tilde\Lambda_{1j}(\zeta)=1, \quad 1\leq j\leq p.
$$
Then $\Lambda_{1j}(0)\tilde\Lambda_{1j}(0)=1$. Applying induction on $d$, we verify that for all $j$
 $$\Lambda_{1j}(\zeta)\tilde\Lambda_{1j}(\zeta)=1+O(|\zeta|^d), \quad d>1.
 $$
 Having found the formula for
   $\tau_1T_0$, we obtain the desired formula of $\tau_1$ via composition $(\tau_1T_0)T_0$.
\end{proof}

Let ${\mathbf D}_1:=\text{diag}(\mu_{11},\ldots,\mu_{1n}),\ldots, {\mathbf D}_\ell:=\text{diag}(\mu_{\ell 1},\ldots,\mu_{\ell n})$ be diagonal invertible matrices of $\cc^n$. Let us set $D:=\{{\mathbf D}_iz\}_{i=1,\ldots \ell}$.
\begin{defn}\label{ccst}  Let $F$ be a formal mapping of $\cc^n$ that is tangent to the identity. 
\bppp
\item Let $n=2p$.   $F$   is  {\it normalized} with respect to $\hat S$,  if $F=(f,g)$ is tangent to the identity 
 and $F$ contains no resonant terms, i.e.
 \eq{fj0g}
 \nonumber
 f_{j,(A+e_j)A}=0=g_{j,A(A+e_j)},  \quad  |A|>1. 
 \eeq
 \item Let $n=2p$. 
$F$ is {\it normalized} with respect to $\{\hat T_1,\hat T_2\}$, if $F=(f,g)$ is tangent to
the identity and
\eq{fjmg}
\nonumber
 f_{j,(A+e_j)A}=-g_{j,A(A+e_j)},\quad |A|>1.
\eeq
\item
$F$ is {\it normalized} with respect to $D$ if it does not have components along the centralizer of $D$, i.e.   for each 
$Q$ with $|Q|\geq2$,
\eq{norm-D}
\nonumber
f_{ j,Q}=0,\quad\text{if}\; \mu_i^Q=\mu_{ij}\; \text{for all}\; i. 
\eeq
\eppp
Let ${\cL C}^{\mathsf{ c}}(\hat S)$ (resp. ${\cL C}^{\mathsf{ c}}(\hat T_1,\hat T_2)$, ${\cL C}^{\mathsf{ c}}(D)$) denote the set of formal mappings  normalized with respect to $\hat S$ (resp. $\{\hat T_1,\hat T_2\}$, the family $D$).
\end{defn}

 For convenience, we 
let ${\cL C}^{\mathsf{ c}}_2(\hat S)$ (resp. ${\cL C}^{\mathsf{ c}}_2(\hat T_1,\hat T_2)$, ${\cL C}^{\mathsf{ c}}_2(D)$) denote the set of formal mappings  $F-\I$ with 
$F\in {\cL C}^{\mathsf{ c}}(\hat S)$ (resp. ${\cL C}^{\mathsf{ c}}(\hat T_1,\hat T_2)$, ${\cL C}^{\mathsf{ c}}(D)$).

\begin{rem}\label{rem-rho}
 Note that if $f\in {\cL C}^{\mathsf{ c}}(\hat S)$  (resp. $ {\cL C}^{\mathsf{ c}}(\hat T_1,\hat T_2)$),
 then $\rho f\rho$ is in $ {\cL C}^{\mathsf{ c}}(\hat S)$ (resp. ${\cL C}^{\mathsf{ c}}(\hat T_1,\hat T_2))$.
 \end{rem}

We now deal with the following decomposition problem: Let $\mathcal C$ be a set of analytic mappings. We shall decompose an arbitrary invertible
 mapping into the composition of an element of a centralizer of $\mathcal C$ and an element which is normalized with respect to $\mathcal C$. We shall also deal with the convergence issue.
The following lemma, which deals with a general situation,  will be used several times. 
\begin{defn}
Let $\cL A$ be a group of permutations of $\{1,\ldots, n\}$. Then 
$\cL A$ acts on the right (resp. on the left) on $\widehat{\cL O}_n^n$ by permutation of variables $z=(z_1,\ldots, z_n)$ as follows: Let $F(z)=\sum_{|Q|>0}F_Qz^Q$ be a formal mapping from $\cc^n$ to $\cc^n$,   and let $\nu, \mu\in \cL A$; set
$$\nu\circ F\circ\mu(z):= \sum_{Q\in \nn^n}F_{\nu(i),\mu^{-1}(Q)}z^Q.
$$
Define the components $(\cL AF)_i, (F\cL A)_i$, and consequently $(\cL AF\cL A)_i$ by
\aln
(\cL A F)_i(z)& :=\sum_{Q\in \nn^n}\max_{\nu\in \cL A}|F_{\nu(i),Q}|z^Q,\\
(F\cL A)_i(z) &:=\sum_{Q\in \nn^n}\max_{\mu\in\cL A}|F_{i,\mu^{-1}(Q)}|z^Q,
\\
(\cL AF\cL A)_i(z) &\, =\sum_{Q\in \nn^n}\max_{(\nu,\mu)\in \cL A^2}|F_{\nu(i),\mu^{-1}(Q)}|z^Q.
 \end{align*}
\end{defn}
We see that $F\cL A$ is the smallest (w.r.t. $\prec$) power series mapping that majorizes $F$
and is right-invariant under   $\cL A$, while $\cL AF$ is the smallest power series
mapping that majorizes $F$ and is left-invariant under   $\cL A$. 
In particular, if $F, G$ are mappings without constant or linear terms, then
\ga
\label{FiGp}
\cL A(F\circ (I+G))\cL A\prec (\cL AF\cL A)(\cL A I\cL A+\cL AG\cL A),
\end{gather}
where the last relation holds if the composition is well-defined.

To simply our notation, we will take $\cL A$ to be the full permutation group of $\{1, \ldots, n\}$. We will denote
$$
F_{sym}=\cL AF\cL A. 
$$


\begin{lemma}\label{fhg-}  Let  $\hat {\cL H}$ be  a real subspace of $({\widehat { \mathfrak M}}_n^2)^n$.
Let $\pi : ({\widehat { \mathfrak M}_n^2)^n}\rightarrow \hat  {\cL H}$ be
a $\rr$ linear projection $($i.e. $\pi^2=\pi)$
 that preserves the degrees of the mappings and let $\hat  {\cL G}:= (\I-\pi)({\widehat { \mathfrak M}}_n^2)^n$.
Suppose that there is a positive constant $C$ such that
\eq{pifp}
\pi(E)\prec  CE_{sym}  
\eeq
for any $E\in (\widehat { \mathfrak M}_n^2)^n$.
 Let $F$ be a formal map tangent to the identity. 
There exists a unique decomposition
\eq{decompo}
F=HG^{-1}
\eeq
with $G-I\in\hat  {\cL G}$ and $H-I\in \hat  {\cL H}$.
  If $F$ is convergent, then $G$ and $H$ are also convergent.
\end{lemma}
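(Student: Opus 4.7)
The plan is to build $G$ and $H$ degree by degree using the direct-sum decomposition induced by $\pi$, and then to run a majorant series argument for convergence. Writing $F=I+f$, $G=I+g$, $H=I+h$ with $f,g,h\in(\widehat{\mathfrak M}_n^2)^n$, the equation $F=HG^{-1}$ is equivalent to $H=F\circ G$, which becomes
\[
h=g+f\circ(I+g).
\]
Since the $k$th homogeneous part of $f\circ(I+g)$ depends only on $f_2,\dots,f_k$ and $g_2,\dots,g_{k-1}$, the equation at degree $k\ge 2$ reads
\[
h_k-g_k=f_k+\Psi_k\bigl(f_2,\dots,f_k;\,g_2,\dots,g_{k-1}\bigr),\qquad \Psi_2=0,
\]
where $\Psi_k$ is a universal polynomial coming from the Taylor expansion of the composition.

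Since $\pi^{2}=\pi$, we have $(\widehat{\mathfrak M}_n^2)^n=\hat{\cL H}\oplus\hat{\cL G}$ with $\pi$ the projector onto $\hat{\cL H}$ along $\hat{\cL G}$. Imposing $h_k\in\hat{\cL H}$ and $g_k\in\hat{\cL G}$ then forces
\[
h_k=\pi(f_k+\Psi_k),\qquad g_k=-(I-\pi)(f_k+\Psi_k),
\]
and since the right-hand side depends only on the $g_j$ with $j<k$, this recursion determines $g$ and $h$ uniquely, giving existence and uniqueness of the formal decomposition \rea{decompo}.

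For convergence, assume $f$ converges. The hypothesis \rea{pifp} gives $\pi(E)\prec C\,E_{sym}$, and consequently $(I-\pi)(E)\prec(1+C)\,E_{sym}$. Summing the degree-by-degree estimates and invoking \rea{FiGp} yields
\[
|g|,\ |h|\ \prec\ (1+C)\bigl(f\circ(I+g)\bigr)_{sym}\ \prec\ (1+C)\,f_{sym}\circ\bigl(I_{sym}+g_{sym}\bigr).
\]
By construction $f_{sym}$ and $g_{sym}$ each have all $n$ components equal to a single symmetric scalar power series, say $\phi$ and $\gamma$; moreover $I_{sym}(z)=(w,\dots,w)$ with $w=z_{1}+\cdots+z_{n}$. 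Setting $\Phi(t):=\phi(t,\dots,t)$, the inequality collapses to the scalar majorant relation $\gamma(z)\prec(1+C)\,\Phi\bigl(w+\gamma(z)\bigr)$. Since $f$ converges and has order $\ge 2$, $\Phi$ is analytic near $0$ with $\Phi(0)=\Phi'(0)=0$, so by the implicit function theorem the equation $y=(1+C)\,\Phi(w+y)$ has a unique analytic solution $y=Y(w)$ near the origin; a coefficient-by-coefficient induction then gives $\gamma(z)\prec Y(w)$. Hence $\gamma$, and therefore $g$, converges, and the same bound controls $h$. The main difficulty in the proof will be the bookkeeping in this last step: one must couple the projection hypothesis \rea{pifp} with the composition majorant \rea{FiGp} and exploit the bi-symmetry built into $E_{sym}$ to reduce the multivariable majorant problem to a scalar implicit-function problem that can actually be solved.
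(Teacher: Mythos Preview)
Your proof is correct and follows essentially the same approach as the paper: rewrite $F=HG^{-1}$ as $h-g=f\circ(I+g)$, solve degree by degree via the projection $\pi$, then run a majorant argument using \rea{FiGp} and the implicit function theorem. Your assignment $h_k=\pi(\cdot)$, $g_k=-(I-\pi)(\cdot)$ is the one consistent with the lemma statement (the paper's proof has these swapped, which is a harmless typo since only the majorant estimate matters for convergence); this is also why you carry the constant $1+C$ rather than $C$. Your explicit reduction to a scalar implicit-function problem in $w=z_1+\cdots+z_n$ is a slightly more detailed version of the paper's final step, which applies the implicit function theorem directly to the vector equation $u=Cf_{sym}(I_{sym}+u)$.
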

\begin{proof} If $f$ is a formal mapping, we define the  $k$-jet: 
 $$
 J^kf(z)=\sum_{|Q|\leq k}f_Qz^Q.
 $$
 Write $F=I +f$,  $G=I +g$ and $H=I+h$. We need to solve $FG=H$, i.e to solve
$$
h-g=f(I+g).
$$
Since $f'(0)=0$, then for any $k\geq 2$, the $k$-jet of $f(I +g)$ depends only on the $(k-1)$-jet of $g$. Since $\pi$ is linear and preserves degrees,  \re{pifp} implies that $J^k$ commutes with $\pi$.
 Hence we can define, for all $k\geq 2$,
$$
-J^{k}(g):=\pi\left(J^{k}(f(I +g)\right),\quad J^{k}(h):=(I-\pi)\left(J^{k}(f(I +g)\right).
$$
This solves the formal decomposition uniquely. Assume that $F$ is a germ of holomorphic mapping.
Hence, we have
$$
 g\prec C(f(I +g))_{sym}\prec C f_{sym}(I_{sym}+ g_{sym}).
$$
Since $g_{sym}$ is the smallest left and right $\cL A$ invariant power series that dominates $g$, we have
$$
g_{sym} \prec C f_{sym}(I_{sym}+ g_{sym}).
$$
Therefore, $g_{sym}$ is dominated by the solution $u$ to
$$
u=Cf_{sym}(I_{sym}+ u),\quad u(0)=0.
$$
Notice that $u$  is real analytic near the origin by the implicit function theorem. So, $g_{sym}$ is convergent,  and both $g$ and $h
=g+f(I +g)$ are convergent in a neighborhood of the origin.
\end{proof}
\begin{cor}\label{fhg}
The previous decomposition \rea{decompo} is valid with $\hat  {\cL G}:= {\cL C}_2(\hat S)$ and $\hat  {\cL H}:= {\cL C}_2^{\mathsf{ c}}(\hat S)$ $($resp. $\hat  {\cL G}:={\cL C}_2(\hat T_1,\hat T_2)$ and $\hat  {\cL H}:={\cL C}^{\mathsf{ c}}_2(\hat T_1,\hat T_2)$; $\hat  {\cL G}:={\cL C}_2(D)$ and $\hat  {\cL H}:={\cL C}^{\mathsf{ c}}_2(D))$.
\end{cor}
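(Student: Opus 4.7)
The plan is to verify, in each of the three settings, that the hypotheses of Lemma \ref{fhg-} are fulfilled, and then directly invoke that lemma. Concretely, in each case I will exhibit an $\rr$-linear projection $\pi \colon (\widehat{\mathfrak M}_n^2)^n \to \hat{\cL H}$ that preserves homogeneous degrees, whose complementary image $(I-\pi)(\widehat{\mathfrak M}_n^2)^n$ coincides with $\hat{\cL G}$, and that satisfies the coefficient-wise majorization $\pi(E) \prec C E_{sym}$ for some $C > 0$. The lemma then yields the desired decomposition $F = HG^{-1}$ with uniqueness and convergence.

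For the first case, where $\hat{\cL G} = \cL C_2(\hat S)$ and $\hat{\cL H} = \cL C_2^{\mathsf c}(\hat S)$, I take $\pi$ to be the coefficient-wise projection that annihilates the resonant monomials of $\hat S$ as specified in Definition \ref{ccst}: $\pi(E)_{j,Q} = 0$ when $Q$ is resonant in the $j$-th component of $E$, and $\pi(E)_{j,Q} = E_{j,Q}$ otherwise. By Lemma \ref{cents} the kernel of $\pi$ consists of mappings supported on resonant monomials, which is exactly $\cL C_2(\hat S)$. The majorization follows at once from $|\pi(E)_{j,Q}| \leq |E_{j,Q}| \leq (E_{sym})_{j,Q}$, so $C = 1$ works. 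The third case, with the family $D = \{\mathbf D_i z\}_{i=1,\ldots,\ell}$, is treated identically: the resonance condition $\mu_i^Q = \mu_{ij}$ for all $i$ replaces the $\hat S$-resonance, and everything else carries over verbatim.

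The second case ($\{\hat T_1,\hat T_2\}$) is the only one requiring genuine work, because there the centralizer and the normalized complement are distinguished only within the resonant coefficients. By Lemma \ref{cents}, the centralizer is cut out inside the centralizer of $\hat S$ by the linear relation $a_j = b_j$, i.e.\ $f_{j,(A+e_j)A} = g_{j,A(A+e_j)}$, while Definition \ref{ccst} imposes the opposite sign $f_{j,(A+e_j)A} = -g_{j,A(A+e_j)}$ on the normalized complement. On each resonant pair $(\alpha,\beta) := (f_{j,(A+e_j)A}, g_{j,A(A+e_j)})$ I split
\[
(\alpha,\beta) = \Bigl(\tfrac{\alpha+\beta}{2}, \tfrac{\alpha+\beta}{2}\Bigr) + \Bigl(\tfrac{\alpha-\beta}{2}, -\tfrac{\alpha-\beta}{2}\Bigr),
\]
the first summand lying in $\cL C_2(\hat T_1,\hat T_2)$ and the second in $\cL C_2^{\mathsf c}(\hat T_1,\hat T_2)$. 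The projection $\pi$ keeps the second summand and acts as the identity on non-resonant coefficients; this is a $\cc$-linear, hence $\rr$-linear, degree-preserving projection with the required image and kernel.

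The one remaining point is the majorization in this second case. Since $\cL A$ is the full symmetric group on $\{1,\ldots,n\}$, it contains the transposition of the coordinates $\xi_j$ and $\eta_j$, which maps the multi-index $(A,A+e_j)$ to $(A+e_j,A)$. Hence both $|\alpha|$ and $|\beta|$ are dominated by the single coefficient $(E_{sym})_{j,(A+e_j)A}$, giving $|(\alpha-\beta)/2| \leq (E_{sym})_{j,(A+e_j)A}$ and $\pi(E) \prec E_{sym}$ with $C = 1$. This little symmetric-group observation is the only subtlety in the whole argument; once it is recorded, Lemma \ref{fhg-} applies directly in each of the three settings, and the corollary follows.
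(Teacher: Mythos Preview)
Your proof is correct and follows essentially the same approach as the paper. In particular, the $(\alpha+\beta)/2$, $(\alpha-\beta)/2$ splitting on resonant pairs in the $\{\hat T_1,\hat T_2\}$ case and the use of the transposition $j\leftrightarrow j+p$ in the full symmetric group to obtain $\pi(E)\prec E_{sym}$ match the paper's argument exactly; the paper simply records the latter step as ``$\pi(K):=h\prec K_{sym}$'' without spelling out the permutation.
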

\begin{proof}
We apply the previous lemma by finding $\pi$. 
The first case is obvious since
$K$ is in ${\cL C}_2(\hat S)$ (resp. ${\cL C}^{\mathsf{ c}}_2(\hat S)$) if and only if $K_Qz^Q\in \cL C_2(\hat S)$
(resp. ${\cL C}^{\mathsf{ c}}_2(\hat S)$)
for all $Q$. So we take 
$$
(I-\pi)(K)=\sum_{j=1}^{n}\sum_{e_jz^Q\in\widehat {\cL G}
} K_{j,Q}z^Qe_j.
$$

Next, we consider the case where $\hat  {\cL G}= {\cL C}_2(\hat T_1,\hat T_2)$ and $\hat  {\cL H}= {\cL C}^{\mathsf{ c}}_2(\hat T_1,\hat T_2)$. We need to find a projection such that $\hat  {\cL H}=\pi({\widehat { \mathfrak M}}_n^2)^n$ and $ \hat  {\cL G}=(\id-\pi)({\widehat { \mathfrak M}}_n^2)^n.$
Note that  $g\in\cL C_2(\hat T_1,\hat T_2)$ and $h\in \cL C^{\mathsf{ c}}_2(\hat T_1,\hat T_2)$ are determined by conditions
\gan
g_{j,(\gamma+e_j)\gamma}=g_{(j+p),\gamma(\gamma+e_j)}, \quad
h_{j,(\gamma+e_j)\gamma}=-h_{(j+p),\gamma(\gamma+e_j)}, \quad 1\leq j\leq p,
\\
g_{j,PQ}=g_{(j+p),QP}=0,\quad P-Q\neq e_j.
\end{gather*}
Thus, if $h-g=K$, we determine $g$ uniquely by combining the above identities with
\aln
g_{j,(\gamma+e_j)\gamma}&=\frac{-1}{2}\left\{K_{j,(\gamma+e_j)\gamma}
+K_{(j+p),\gamma(\gamma+e_j)}\right\},\\ 
h_{j,(\gamma+e_j)\gamma}&=
\frac{1}{2}\left\{K_{j,(\gamma+e_j)\gamma}-
K_{(j+p),\gamma(\gamma+e_j)}\right\} 
\end{align*}
for $1\leq j\leq p$.
For the remaining coefficients of $h$, set $h_{i,PQ}=K_{i,PQ}$.
 Therefore, $\pi(K):=h\prec K_{sym}$  and the proof is complete.
\end{proof}

\begin{rem}Let $\cL A,\cL B$ be two subgroups of permutations. Instead of using the full permutations group, we could have used $G_{sym}:=\cL AG\cL B$. We have
$$
G\prec \cL AG\cL B\prec C\cL A(F\circ (I+G))\cL B\prec (\cL AF\cL A)(\cL AI\cL B+\cL AG\cL B).
$$
\end{rem}
\begin{rem}We do not know if  there are convergent $G\in \cL C(\hat S)$ and $H\in\cL C^{\mathsf{ c}}(\hat S)$ such
that $F=GH$ when $F$ is convergent.  Note that the formal decomposition exists.
\end{rem}

Recall that for $j=1,\ldots, p$, we define
$$
Z_j\colon\xi'=\xi, \quad \eta_k'=\eta_k, \ k\neq j, \quad \eta_j'=-\eta_j.
$$
We have seen in section~\ref{secquad} how invariant functions of $Z_j$ play a role in constructing normal form of quadrics. 
In section~\ref{nfin}, we will also need 
a centralizer for non linear maps (see~\rl{cnnl}) to obtain normal forms for two families of involutions. Therefore, let us
first record here  the following description
of centralizer of $Z_1,\ldots, Z_p$. 
\begin{lemma}\label{lehphi}The  centralizer, ${\cL C}(Z_1,\ldots, Z_p)$,  consists of  formal  mappings $$(\xi,\eta)\to (U(\xi,\eta),
 \ldots, \eta_1V_1(\xi,\eta), \ldots, \eta_pV_p(\xi,\eta))$$ such that $U(\xi,\eta), V(\xi,\eta)$
 are even in each $\eta_j$. Let
 $
 {\cL C}^{\mathsf{ c}}(Z_1,\ldots, Z_p) $
 denote the set of mappings $I+ (U, V)$
 which are tangent to the identity such that
 \eq{vjpej}
 U_{j,PQ}=  V_{j,P(e_j+Q')}=0, \quad Q, Q'\in 2\nn^p, \ 
|P|+|Q|>1, \  |P|+|Q'|>1. \eeq
 Let $\psi\in{\cL C}(Z)$  be tangent to the identity. There exist  unique
$\psi_0\in {\cL C}(Z_1,\ldots, Z_p)$
and $\psi_1\in{\cL C}^{\mathsf{ c}}(Z_1,\ldots, Z_p)$ such that
$ 
 \psi=\psi_1\psi_0^{-1}.
$ 
Moreover, if $\psi$ is convergent, then $\psi_0$ and $\psi_1$ are convergent.
\end{lemma}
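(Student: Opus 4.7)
The proof will split naturally into three pieces: an explicit description of the centralizer, a direct-sum decomposition of formal power series, and an application of the abstract decomposition lemma (\rl{fhg-}).

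First, I would derive the centralizer by a coordinate-wise argument. Since each $Z_j$ fixes $\xi$ and $\eta_k$ for $k\neq j$ and flips $\eta_j$, a formal mapping $\psi_0=(U,V^{(1)},\dots,V^{(p)})$ commutes with $Z_j$ if and only if the $\xi$-components and the $V^{(k)}$ for $k\neq j$ are even in $\eta_j$, while $V^{(j)}$ is odd in $\eta_j$. Running this over all $j$ gives: $U$ is even in every $\eta_k$, and each $V^{(j)}$ is odd in $\eta_j$ and even in every $\eta_k$ with $k\neq j$. Factoring out $\eta_j$ from $V^{(j)}$ yields the description of $\cL C(Z_1,\dots,Z_p)$ in the statement. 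Inspecting the monomial condition \re{vjpej}, one sees immediately that $\cL C^{\mathsf c}_2(Z_1,\ldots,Z_p)$ consists of exactly those $(U,V)\in (\widehat{\mathfrak M}_n^2)^n$ whose monomials are the ones \emph{not} admissible for $\cL C_2(Z_1,\ldots,Z_p)$. Thus $\cL C_2(Z_1,\ldots,Z_p)$ and $\cL C^{\mathsf c}_2(Z_1,\ldots,Z_p)$ are complementary $\rr$-linear subspaces of $(\widehat{\mathfrak M}_n^2)^n$, and the associated projection $\pi$ onto $\cL C^{\mathsf c}_2$ acts by retaining a subset of the coefficients. In particular $\pi$ preserves the degrees of mappings and satisfies $\pi(E)\prec E\prec E_{sym}$ term by term, so the hypothesis $\pi(E)\prec C E_{sym}$ of \rl{fhg-} holds with $C=1$.

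Next, I would apply \rl{fhg-} with $\hat{\cL G}=\cL C_2(Z_1,\ldots,Z_p)$ and $\hat{\cL H}=\cL C^{\mathsf c}_2(Z_1,\ldots,Z_p)$ to the mapping $\psi$. The lemma produces a unique formal decomposition $\psi=\psi_1\psi_0^{-1}$ with $\psi_0-I\in\hat{\cL G}$ and $\psi_1-I\in\hat{\cL H}$, and asserts that convergence of $\psi$ forces convergence of both factors. This already yields existence, uniqueness, and the convergence claim.

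The only remaining point is compatibility with the assumption $\psi\in\cL C(Z)$; I expect this to be automatic rather than a genuine obstacle. Since $Z=Z_1\cdots Z_p$, one has $\cL C(Z_1,\ldots,Z_p)\subset \cL C(Z)$, so $\psi_0\in\cL C(Z)$; composing, $\psi_1=\psi\circ\psi_0\in \cL C(Z)$ as well. Thus the decomposition produced by \rl{fhg-} automatically respects the ambient centralizer $\cL C(Z)$, and no extra argument is needed. The main bookkeeping step, and the only place a mistake is likely, is verifying that the monomial conditions in \re{vjpej} are exactly complementary to those defining $\cL C_2(Z_1,\ldots,Z_p)$; once this is checked carefully, everything else reduces to a direct invocation of \rl{fhg-}.
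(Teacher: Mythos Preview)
Your proposal is correct and follows essentially the same approach as the paper: the paper's proof is a single-sentence invocation of \rl{fhg-} with $\hat{\cL H}=\cL C_2^{\mathsf c}(Z_1,\ldots,Z_p)$ and the obvious coefficient-retaining projection, while you spell out in more detail the parity computation for the centralizer and the verification that $\pi(E)\prec E_{sym}$. Your final paragraph on compatibility with $\cL C(Z)$ is harmless but unnecessary, since the lemma does not assert $\psi_1\in\cL C(Z)$ and the decomposition from \rl{fhg-} applies to any $\psi$ tangent to the identity regardless of that hypothesis.
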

\begin{proof} The lemma follows immediately from \rl{fhg-} in which 
$\hat H$ is the $\rr$ linear space of mappings $(U,V)$ without constant or linear terms, which  satisfy  \re{vjpej}. The projection
$\pi$ is the unique projection onto $\hat H$ (i.e. $\pi^2=\pi$, and $\pi$ is the identity on $\hat H$) such that 
 $\pi$ is linear and preserves degrees,  and 
$\pi(E)=0$ if $E(\xi,\eta)=O(|(\xi,\eta)|^2)$ and   $E\in{\cL C}_2(Z_1,\ldots, Z_p)$.
\end{proof}

\setcounter{thm}{0}\setcounter{equation}{0}

\setcounter{thm}{0}\setcounter{equation}{0}
\section{Formal normal forms of the reversible map $\sigma$ 
}\label{secnfs}

Let us first describe our plans to derive the normal forms of $M$.
We would like
to show that two  families of involutions $\{\tau_{1j},\tau_{2j},\rho\}$ and
$\{\tilde\tau_{1j},\tilde\tau_{2j},\tilde\rho\}$ are holomorphically equivalent, if their corresponding
normal forms are equivalent under a much smaller set of changes of coordinates.
Ideally, we would like to conclude that  $\{\tilde\tau_{1j},\tilde\tau_{2j},\tilde\rho\}$ are holomorphically equivalent
if and only if their corresponding normal forms are the same, or if they are the same under a change of coordinates with finitely many parameters. For instance the Moser-Webster normal form for real analytic surfaces ($p=1$) with non-vanishing elliptic Bishop invariant falls into
  the former situation, while the Chern-Moser theory \cite{chern-moser} for real analytic hypersurfaces with non-degenerate Levi-form is an example for the latter.
Such a normal form will tell us if
 the real manifolds have infinitely many invariants or not. One of our goals is to understand if the normal form so achieved can
 be realized by a convergent normalizing transformation.
We will see soon 
that we can achieve our last goal  under  some assumptions on the  family of involutions. Alternatively and perhaps for simplicity of the normal form theory, we would
like to seek normal forms which are dynamically or geometrically significant.

Recall that for each real analytic manifold that
has  $2^p$, the maximum number of,  commuting
deck transformations $\{\tau_{1j}\}$,
we have found
 a unique set of generators $\tau_{11},\ldots, \tau_{1p}$  so that each $\fix(\tau_{1j})$
has codimension $1$. More importantly $\tau_1=\tau_{11}\cdots\tau_{1p}$ is the unique deck
transformation of which the set of fixed points  
has dimension $p$. Let $\tau_2=\rho\tau_1\rho$ and $\sigma=\tau_1\tau_2$.
To normalize $\{\tau_{1j}, \tau_{2j},\rho\}$, we will choose $\rho$ to be the standard anti-holomorphic
involution determined by the linear parts of $\sigma$. Then we
normalize $\sigma=\tau_1\tau_2$ under formal mapping commuting with $\rho$. This will determine a 
normal form for $\{\tau_1^*,\tau_2^*,\rho\}$. This part of normalization is analogous to the Moser-Webster normalization.
  When $p=1$, Moser and Webster obtained a unique normal form by a simple argument.
However, this last step of simple normalization is not available when $p>1$.   By assuming
$\log \hat M$ associated to  $\hat\sigma$
 is tangent to the identity, 
  we
will obtain a unique formal normal form $\hat\sigma, \hat\tau_1,\hat\tau_2$  for $\sigma,\tau_1,\tau_2$. 
Next, we need to construct the normal form for the families of involutions. We first ignore the reality condition, by
finding $\Phi$ which transforms $\{\tau_{1j}\}$ into a set of
involutions  $\{\hat \tau_{1j}\}$ which is decomposed canonically according to
$\hat\tau_1$.  This allows us to express $\{\tau_{11},\ldots, \tau_{1p},\rho\}$
via $\{\hat\tau_1,\hat\tau_2,\Phi,\rho\}$,   as  in the classification of the families of linear involutions. 
Finally, we further normalize $\{\hat\tau_1,\hat\tau_2,\Phi,\rho\}$ to get our normal form.

\

\begin{defn}\label{notation}
 Throughout this section and next, we denote $\{h\}_{d}$ the set of coefficients
of $h_P$ with $|P|\leq d$ if $h(x)$ is a map or function in $x$ as power series. We  
  denote by $\cL A_{P}(t), \cL A(y;t)$, etc.,   a universal {\it polynomial} whose
coefficients and degree depend  on a multiindex. 
The variables in these polynomials will involve a collection of Taylor coefficients of various
mappings. The collection will also depend on $|P|$. 
As such dependency (or independency to  coefficients of higher degrees) is crucial to our computation,
we will remind the reader the dependency when  emphasis is necessary. 
\end{defn}

For instance, let us take two formal mappings $F,G$ from $\cc^n$ into itself. Suppose
that  $F=\id+f$ with $f(x)=O(|x|^2)$ and $ G= LG+g$ 
with $g(x)=O(|x|^2)$ and $LG$ being linear. For $P\in\nn^n$ with $ |P|>1$, we can express
\ga\label{F-1P5}
(F^{-1})_{P}=-f_{P}+\cL F_{P}(\{f\}_{|P|-1}), \\
 (G\circ F)_{P}=g_P+((LG)\circ f)_P+\cL G_{P}(LG;\{f,g\}_{|P|-1}),
\label{GFPg}\\
\label{F-1GF}
 (F^{-1}\circ G\circ F)_{P}=g_P-(f\circ (LG))_P+((LG)\circ f)_P+\cL H_{P}(LG;\{f,g\}_{|P|-1}).
\end{gather}

\subsection{
Formal normal forms of  pair of involutions $\{\tau_1,\tau_2\}$}

We first find a normal form for $\sigma$ in $\cL C(S)$.

\begin{prop}\label{ideal0} Let $\sigma$ be a holomorphic map. Suppose that $\sigma$ has  the  linear part $$
\hat S\colon\xi_j'=\mu_j\xi_j, \quad \eta_j=\mu_j^{-1}\eta_j, \quad 1\leq j\leq p
$$
and $\mu_1,\ldots, \mu_p$ are non-resonant. Then there exists a unique normalized formal map $\Psi\in {\cL C}^{\mathsf{ c}}(\hat S)$ such that $\sigma^*=\Psi^{-1}\sigma\Psi\in{\cL C}(\hat S)$.
Moreover,  $\tilde \sigma=\psi_0^{-1}\sigma^*\psi_0\in\cL C(\hat S)$,
if and only if $\psi_0\in\cL C(\hat S)$  and it is invertible. Let
\gan
\sigma^*\colon\xi_j'=M_j(\xi\eta)\xi_j, \quad \eta_j'=N_j(\xi\eta)\eta_j,\\
\tilde \sigma\colon\xi_j'=\tilde  M_j(\xi\eta)\xi_j, \quad \eta_j'=\tilde  N_j(\xi\eta)\eta_j,\\
\psi_0\colon\xi_j'= a_j(\xi\eta)\xi_j,\quad \eta_j'=  b_j(\xi\eta)\eta_j. 
\end{gather*}
\bppp
\item
Assume that    $\tau_1,\tau_2$
are holomorphic involutions and $\sigma=\tau_1\tau_2$.  Then $\sigma^*=\tau_1^*\tau_2^*$ with
\ga\label{tauis}
\tau_{i}^*=\Psi^{-1}\tau_i\Psi\colon\xi_j'=\Lambda_{ij}(\xi\eta)\eta_j, \quad \eta_j'=\Lambda_{ij}^{-1}(\xi\eta)\xi_j;\\
N_j=M_j^{-1}, \quad  M_j=\Lambda_{1j}\Lambda_{2j}^{-1}.
\nonumber
\end{gather}
Let the linear part of $\tau_i$ be given by
$$
\hat T_i\colon\xi_j'=\lambda_{ij}\eta_j,\quad\eta_j'=\lambda_{ij}^{-1}\xi_j.
$$
Suppose that $\lambda_{2j}^{-1}=\lambda_{1j}$.
There exists a unique $\psi_0\in   {\cL C}^{\mathsf{ c}}(\hat T_1,\hat T_2)$ such that
\ga \nonumber
\tilde\tau_{i}=\psi_0^{-1}\tau_i^*\psi_0\colon\xi_j'=\tilde\Lambda_{ij}(\xi\eta)\eta_j, \quad \eta_j'=\tilde
\Lambda_{ij}^{-1}(\xi\eta)\xi_j;\\
\label{tl21}
\tilde  M_{j}=\tilde \Lambda_{1j}^2  =\tilde N_j^{-1}, \quad \tilde \Lambda_{2j}=\tilde \Lambda_{1j}^{-1}.
\end{gather}
 Let
 $\psi_1$ be a formal biholomorphic map. Then  $\{\psi_1^{-1}\tilde\tau_1\psi_1,
 \psi_1^{-1}\tilde\tau_2\psi\}$ has the same form as of $\{\tilde \tau_{1},\tilde \tau_{2}\}$  
    if and only
if $\psi_1\in{\cL C}(\hat T_1,\hat T_2)$;   moreover, $\tilde\Lambda_{ij}(\xi\eta)$, $\tilde M_j(\xi\eta)$ are transformed
into 
\eq{lamtpsi}
\tilde\Lambda_{ij}\circ\tilde\psi_1, \quad \tilde M_j\circ\tilde\psi_1.
\eeq
Here $\tilde\psi_1(\zeta)=(\diag c(\zeta))^2\zeta$ and
$\psi_1(\xi,\eta)=((\diag c(\xi\eta))\xi,(\diag c(\xi\eta))\eta)$. 
\item
Assume further that $\tau_2=\rho\tau_1\rho$, where    $\rho$
is defined by \rea{eqrh}. Let
\eq{rhze}
\nonumber
{\rho_z}\colon\zeta_j\to\ov\zeta_j, \quad 1\leq j\leq e_*+h_*;
\quad \zeta_{s}\to\ov\zeta_{s+s_*}, \quad e_*+h_*<s\leq p-s_*.
\eeq
 Then $\rho\Psi=\Psi\rho$,   $\tau_2^*=\rho\tau_1^*\rho$,  and $(\sigma^*)^{-1}=\rho\sigma^*\rho$. The 
  last two identities are equivalent to
\begin{alignat}{5} \label{a2e-}
&\Lambda_{2e}^{-1}
&&=\ov{\Lambda_{1e}\circ{\rho_z}}, \quad &&\ov {M_e\circ{\rho_z}}=M_e,
\quad &&1\leq e\leq e_*;\\
&\Lambda_{2h}&&=\ov{\Lambda_{1h}\circ{\rho_z}},\quad&&
\ov {M_h\circ{\rho_z}}=M_h^{-1},\quad&& e_*< h\leq h_*+e_*;\\
  & \Lambda_{2(s)}&&=\ov{\Lambda_{1(s_*+s)}\circ{\rho_z}}, &&
\\
& \Lambda_{2(s_*+s)}&&=\ov{\Lambda_{1s}\circ{\rho_z}},\quad&&
\ov {M_s^{-1}\circ{\rho_z}}=M_{s_*+s}, \quad && h_*+e_*< s\leq p- s_*.
\label{aiss}\end{alignat}
Let   $\psi_0$ and $\tilde\tau_i=\psi_0^{-1}\tau_i^*\psi_0$ be as in (i). Then $\rho\psi_0=\psi_0\rho$, and $\hat\tau_1,\hat\tau_2$ satisfy
\ga\label{ohai}
\tilde\Lambda_{ie}=\ov{\tilde\Lambda_{ie}\circ{\rho_z}}, \quad \tilde\Lambda_{ih}^{-1}=\ov{\tilde\Lambda_{ih}\circ{\rho_z}},\quad
\tilde\Lambda_{i{s+s_*}}=\ov{\tilde\Lambda_{is}^{-1}\circ{\rho_z}}.
\end{gather}
\eppp
\end{prop}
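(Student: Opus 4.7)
The plan is to normalize $\sigma$ by a Poincar\'e--Dulac style induction, then show that the involutive and anti-holomorphic structures are preserved automatically by uniqueness.

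For the opening statement, I would write $\Psi=I+\sum_{d\ge 2}\psi_d$ and $\sigma^\ast=\hat S+\sum_{d\ge 2}\sigma_d^\ast$ in homogeneous degrees, and solve $\sigma\circ\Psi=\Psi\circ\sigma^\ast$ inductively. The degree-$d$ homological equation has the form
$$
\hat S\circ\psi_d-\psi_d\circ\hat S+\sigma_d^\ast=R_d(\sigma,\psi_2,\dots,\psi_{d-1}),
$$
and the non-resonance of $\mu_1,\ldots,\mu_p$ makes the operator $\psi_d\mapsto\hat S\circ\psi_d-\psi_d\circ\hat S$ diagonal on monomials with eigenvalues $\mu^Q-\mu_j^{\pm1}$. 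Its kernel on homogeneous degree-$d$ polynomials is precisely the degree-$d$ part of $\cL C_2(\hat S)$, while its image is the degree-$d$ part of $\cL C^{\mathsf c}_2(\hat S)$; the structure of $\cL C(\hat S)$ is given by \rl{cents}. Projecting $R_d$ onto these two complementary subspaces determines $\sigma_d^\ast$ and $\psi_d$ uniquely, yielding $\Psi\in\cL C^{\mathsf c}(\hat S)$ and $\sigma^\ast\in\cL C(\hat S)$. The characterization of $\psi_0$ follows from the same uniqueness: if $\tilde\sigma=\psi_0^{-1}\sigma^\ast\psi_0\in\cL C(\hat S)$, factor $\psi_0=HG^{-1}$ by \nrc{fhg} with $H\in\cL C^{\mathsf c}(\hat S)$ and $G\in\cL C(\hat S)$; then $\sigma^\ast=H(G^{-1}\tilde\sigma G)H^{-1}$ exhibits a second normalization of $\sigma^\ast$, and uniqueness forces $H=I$, so $\psi_0\in\cL C(\hat S)$. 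The converse direction is immediate since $\cL C(\hat S)$ is closed under conjugation by itself.

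For part (i), since $\tau_1^\ast,\tau_2^\ast$ are involutions whose composition is $\sigma^\ast\in\cL C(\hat S)$, \rl{cents} directly yields the form \rea{tauis} with $M_j=\Lambda_{1j}\Lambda_{2j}^{-1}$ and $N_j=M_j^{-1}$. The further symmetrization to $\tilde\tau_i$ is obtained by applying \nrc{fhg} a second time with $\widehat{\cL G}=\cL C_2(\hat T_1,\hat T_2)$ and $\widehat{\cL H}=\cL C^{\mathsf c}_2(\hat T_1,\hat T_2)$; the symmetry $\tilde\Lambda_{2j}=\tilde\Lambda_{1j}^{-1}$ is precisely the defining condition \rea{bjaj} of $\cL C(\hat T_1,\hat T_2)$, and $\tilde M_j=\tilde\Lambda_{1j}^2$ is then forced by $\tilde\sigma=\tilde\tau_1\tilde\tau_2$. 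The transformation rule \rea{lamtpsi} under a further $\psi_1\in\cL C(\hat T_1,\hat T_2)$, which by \rl{cents} has the form $(c(\xi\eta)\xi,c(\xi\eta)\eta)$, follows by direct computation: such a map reparametrizes the invariant $\zeta=\xi\eta$ via $\tilde\psi_1(\zeta)=c(\zeta)^2\zeta$, so $\Lambda_{ij}$ and $M_j$ are simply precomposed by $\tilde\psi_1$.

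For part (ii), the key observation is $\rho\hat S\rho=\hat S^{-1}$, verified in each of the elliptic, hyperbolic, and complex blocks using the reality relations of $\mu_j$ collected in \rl{t1t2sigrho}. Consequently $\rho\sigma\rho=\sigma^{-1}$, and the formal map $\tilde\Psi:=\rho\Psi\rho$ is holomorphic, tangent to the identity, and satisfies $\tilde\Psi^{-1}\sigma\tilde\Psi=\rho(\sigma^\ast)^{-1}\rho$, which lies in $\cL C(\hat S)$ by the intertwining $\rho\hat S=\hat S^{-1}\rho$. Moreover, the permutation-with-conjugation action of $\rho$ on monomials preserves the resonance decomposition, since the set of resonance relations $\mu^Q=\mu_j^{\pm1}$ is $\rho$-invariant; hence $\tilde\Psi\in\cL C^{\mathsf c}(\hat S)$. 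Uniqueness of the normalization then forces $\tilde\Psi=\Psi$, i.e.\ $\rho\Psi=\Psi\rho$, whence $\tau_2^\ast=\rho\tau_1^\ast\rho$ and $(\sigma^\ast)^{-1}=\rho\sigma^\ast\rho$ follow algebraically. The reality relations \rea{a2e-}--\rea{aiss} are then obtained by writing out $\tau_2^\ast=\rho\tau_1^\ast\rho$ blockwise in coordinates, using that $\rho$ sends $\xi\eta$ to $\rho_z(\xi\eta)$. The analogous claims for $\psi_0$ and \rea{ohai} follow by the identical uniqueness argument applied in the second normalization stage. The main technical point is checking that $\rho$ preserves $\cL C^{\mathsf c}(\hat S)$ and $\cL C^{\mathsf c}(\hat T_1,\hat T_2)$: this amounts to verifying, in each of the three structural types, that the permutation-plus-conjugation action of $\rho$ on coordinates commutes with the classification of monomials into resonant and non-resonant parts. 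Once this compatibility is in hand, the rest of part (ii) is short algebra.
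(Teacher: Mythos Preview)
Your overall strategy is correct and closely parallels the paper's: inductive Poincar\'e--Dulac normalization for $\Psi$, uniqueness via the decomposition of \nrc{fhg}, and the reality in (ii) via uniqueness applied to $\rho\Psi\rho$ (the paper invokes \rrem{rem-rho} for exactly the compatibility you describe). The paper carries out the first normalization with more explicit bookkeeping of the Taylor remainders, because those formulas are reused later in the divergence proof of Section~\ref{div-sect}; but as a proof of the proposition itself your homological-equation version suffices.

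There is, however, a genuine gap in your treatment of part (i), specifically the \emph{existence} of the map $\psi_0\in\cL C^{\mathsf c}(\hat T_1,\hat T_2)$ achieving $\tilde\Lambda_{2j}=\tilde\Lambda_{1j}^{-1}$. Your sentence ``obtained by applying \nrc{fhg} a second time'' is not a valid argument: that corollary decomposes a \emph{given} map $F$ as $HG^{-1}$; it does not construct a $\psi_0$ satisfying a prescribed target condition on the conjugated involutions. And the claim that ``the symmetry $\tilde\Lambda_{2j}=\tilde\Lambda_{1j}^{-1}$ is precisely the defining condition \rea{bjaj} of $\cL C(\hat T_1,\hat T_2)$'' conflates two different objects: \rea{bjaj} is the condition $b_j=a_j$ on the \emph{conjugating} map, not a condition on the resulting $\tilde\Lambda_{ij}$. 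What the paper actually does is write $\psi_0$ explicitly in the generic form of an element of $\cL C^{\mathsf c}(\hat T_1,\hat T_2)$, namely $\xi_j=\tilde\xi_j(1+a_j(\tilde\zeta))$, $\eta_j=\tilde\eta_j(1-a_j(\tilde\zeta))$ with $a_j(0)=0$, compute the product $\tilde\Lambda_{1j}\tilde\Lambda_{2j}$ as an explicit expression in the $a_j$, and then solve the equation $\tilde\Lambda_{1j}\tilde\Lambda_{2j}=1$ for the $a_j$ by induction on degree (equivalently, the implicit function theorem). Once existence of $\psi_0$ is established this way, uniqueness of $\psi_0$ and the ``if and only if'' characterization of form-preserving $\psi_1$ \emph{do} follow from the decomposition in \nrc{fhg}, exactly as you indicate and as the paper also argues.
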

\begin{proof} We will use the Taylor formula
$$
f(x+y)=f(x)+\sum_{k=1}^m\frac{1}{k!}D_kf(x;y)+R_{m+1}f(x;y)
$$
with $D_kf(x;y)=\{\pd_t^kf(x+ty)\}|_{t=0}$ and 
\eq{taylor}
 R_{m+1}f(x;y)=(m+1)\int_0^1(1-t)^m\sum_{|\alpha|=m+1}\frac{1}{\alpha!}\partial^{\alpha}f(x+ty)y^{\alpha}\, dt.
\eeq
Set $D=D_1$.
Let $\sigma$ be given by
\eq{idfg-}
\nonumber
\xi_j'=M_j^0(\xi\eta)\xi_j+f_j(\xi,\eta), \quad \eta_j'=N_j^0(\xi\eta)\eta_j+g_j(\xi,\eta)
\eeq
with
\eq{idfg}
 (f,g)\in  {\cL C}^{\mathsf{ c}}_2(\hat S). 
\eeq

We need to find    $\Phi\in  {\cL C}^{\mathsf{ c}}(S)$
such that $\Psi^{-1}\sigma\Psi=\sigma^*$ is given by
$$
\xi_j'=M_j(\xi\eta)\xi_j, \quad\eta_j'=N_j(\xi\eta)\eta_j.
$$
By definition, $\Psi$ has the form
$$
\xi'_j=\xi_j+U_j(\xi,\eta), \quad\eta_j'= \eta_j+V_j(\xi,\eta), \quad U_{j,(P+e_j)P}=V_{j,P(P+e_j)}=0.
$$
The components of $\Psi\sigma^*$  are
\al\label{xjmj}
\xi_j'&=M_j(\xi\eta)\xi_j+U_j(M(\xi\eta)\xi,N(\xi\eta)\eta)
,\\
\eta_j'&=N_j(\xi\eta)\eta_j+V_j(M(\xi\eta)\xi,
N(\xi\eta)\eta).
\label{ejmj}\end{align}
To derive the normal form, we only need Taylor theorem in order one. This can also demonstrate  small divisors in the normalizing transformation; however, one cannot see the small divisors in the normal forms.  Later we will  show the existence of divergent normal forms. This requires us to use Taylor formula whose remainder
has  order two.
By the Taylor theorem,
we write the components of $\sigma\Psi$ as
\al\label{mj0x}
\xi_j'&=(M^0_j(\xi\eta)+DM_j^0(\xi\eta)(\eta U +\xi V  +UV))(\xi_j+U_j)\\
\nonumber
&\quad +f_j(\xi,\eta)+ Df_j(\xi,\eta)(U, V)+A_j(\xi,\eta),\\
\label{nj0x}\eta_j'&=(N_j^0(\xi\eta)+DN_j^0(\xi\eta)(\eta U +\xi V+UV))(\eta_j+V_j)\\
\nonumber
&\quad +g_j(\xi,\eta)+Dg_j(\xi,\eta)(U,V)+B_j(\xi,\eta).
\end{align}
Recall our notation that
 $UV=(U_1(\xi,\eta)V_1(\xi,\eta),\ldots, U_p(\xi,\eta)V_p(\xi,\eta))$. The second order remainders are
\al \label{ajpq}
A_j(\xi,\eta)&=
 R_2M_j^0(\xi\eta;\xi U+\eta V+UV)(\xi_j+U_j) +R_2f_j(\xi,\eta;U,V),\\
  B_j(\xi,\eta)&=
 R_2N_j^0(\xi\eta;\xi U+\eta V+UV)(\eta_j+V_j) +R_2g_j(\xi,\eta;U,V).\label{bjpq--}
 \end{align}
Note that the remainder $R_2M^0$ is independent of the linear part of $M^0$. Thus
\gan
R_2M_j^0=R_2(M_j^0-LM_j^0), \quad R_2N_j^0=R_2(N_j^0-LN_j^0).
\end{gather*}
Let us calculate the largest degrees of coefficients of $M^0-LM^0, (U,V,f,g)$ on which $A_{j,PQ}$ depend.
We denote the two degrees by $w,d$, respectively.
Since $\ord (f,g,U,V)\geq2$, we have
$$
 2(w- 2)+(d+4)+1\leq |P|+|Q|,\quad \text{or} \quad (d-2)+2d\leq |P|+|Q|,
$$
where the first inequality is obtain from the first term on the right-hand side of \re{ajpq} and the second
term yields the second inequality.
Since $M^0-LM^0, (U,V)$,  and $(f,g)$ do not have linear terms, we have $w\geq2$ and $d\geq2$.
Thus, we have  crude bounds
$$
d\leq |P|+|Q|-1, \quad w\leq\f{|P|+|Q|-1}{2}.
$$
Analogously, we can estimate the degrees of coefficients of $N^0$. We obtain
\aln
A_{j,PQ}&=\cL A _{j,PQ}( \{M^0-LM^0\}_{\f{|P|+|Q|-1}{2}};\{f, U, V\}_{|P|+|Q|-  1}),\\
B_{j, QP}&=\cL B _{j,QP}(\{N^0-LN^0\}_{\f{|P|+|Q|-1}{2}}; \{g, U, V\}_{|P|+|Q|-1}).
\end{align*}
  Recall our notation that $\{f,U,V\}_d$ is the set of coefficients of $f_{PQ}, U_{PQ}, V_{PQ}$
with $|P|+|Q|\leq d$.  Here
 $\cL A_{j,PQ}(t';t''),\cL B_{j,QP}(t';t'')$ are polynomials  of which each has  coefficients that depend only on $j,P,Q$
 and they vanish at $t''=0$.

To finish the proof of the proposition, we will not need the explicit expressions
involving $DM_j^0, DN_j^0$, $Df_j$, $Dg_j$. We will use these derivatives
in the proof of \rl{mj0mj}. So we derive derive these expression in this proof too.

We apply the projection \re{xjmj}-\re{ejmj} and \re{mj0x}-\re{nj0x} onto $\cL C^{\mathsf c}_{2}(S)$,  via monomials in each 
component of both sides of the identities. 
  The images of the mappings
\begin{align*}(\xi,\eta)&\mapsto (U(M(\xi\eta)\xi,N(\xi\eta)\eta)
,V(M(\xi\eta)\xi,N(\xi\eta)\eta)),\\
(\xi,\eta)
&\mapsto (M^0(\xi\eta)U(\xi,\eta), N^0(\xi,\eta)V(\xi,\eta))\end{align*}
under the projection are $0$.
We obtain from \re{xjmj}-\re{nj0x}   and \re{ajpq}-\re{bjpq--}
\al\label{mp-q}
(\mu^{P-Q}-\mu_j)
U _{j,PQ} &=f _{j,PQ}+\cL U _{j,PQ}( \{M^0\}_{\f{|P|+|Q|-1}{2}}; \{f, U, V\}_{|P|+|Q|-1}),\\
(\mu^{Q-P}-\mu_j^{-1})
V _{j,QP}&=g _{j,QP}+\cL V _{j,QP}(\{N^0\}_{\f{|P|+|Q|-1}{2}}; \{g, U, V\}_{|P|+|Q|-1})
\label{mp-q2}\end{align}
for $\mu^{P-Q}\neq\mu_j$, which is always solvable. 
Next, we project \re{xjmj}-\re{ejmj} and \re{mj0x}-\re{nj0x} onto $\cL C_{2}(\hat S)$,  via monomials in each 
component of both sides of the identities. 
  Using \re{idfg} we obtain
\al
\label{mpmp}
M_P&=M^0_P+\cL M_P(\{M^0\}_{|P|-1}; \{f, U, V\}_{2|P|-1}), \\ N_P&=N^0_P+\cL N_P(\{N^0\}_{|P|-1}; \{f, U, V\}_{2|P|-1}).\label{npnp}
\end{align}
  Here $\cL M_{P}, \cL N_{P}$ are polynomials  of which each has  coefficients  that depend only on $P$, and $\{M^0\}_d$
stands for the set of coefficients $M^0_{P}$ with $|P|\leq d$. 
Note that $\cL U _{j,PQ}=\cL V _{j, QP}=0$ when $|P|+|Q|=2$, or $\ord (f,g)> |P|+|Q|$. And $\cL M_P=\cL N_P=0$ when $\ord (f,g)>2|P|$, by \re{idfg}. Inductively, by using
\re{mp-q}-\re{mp-q2} and \re{mpmp}-\re{npnp}, we obtain unique solutions $U,V, M, N$. Moreover, the solutions and their dependence on the coefficients of $f,g$ and small divisors have the form
\al
\label{ujpq}
U _{j,PQ}&=(\mu^{P-Q}-\mu_j)^{-1}\left\{f _{j,PQ}+\cL U _{j,PQ}^*(\del_{d-1},
\{M^0,N^0\}_{[\f{d-1}{2}]};\{ f,g\}_{d-1})\right\},\\
\label{vjpq}
V _{j,QP}&=(\mu^{Q-P}-\mu_j^{-1})^{-1}\left\{g _{j,QP}+\cL V _{j,QP}^*(\del_{d-1},
\{M^0,N^0\}_{[\f{d-1}{2}]};\{ f,g\}_{d-1})\right\},
\end{align}
where $d=|P|+|Q|$ and $\mu^{P-Q}\neq\mu_j$, and
$
\del_{d-1}
$ is the union of $\{\mu_1,\mu_1^{-1},\ldots,
\mu_p,\mu_p^{-1}\}$ and
$$
\left\{\frac{1}{\mu^{A-B}-\mu_j}\colon |A|+|B|\leq d-1,
  A,B\in \nn^p\right\}.
$$
This shows that for any $M^0, N^0$ there exists  a unique mapping $\Psi$ transforms $\sigma$ into $\sigma^*$.
  Furthermore,  $\cL U^*_{j,PQ}(t';t''), \cL V^*_{j,QP}(t';t'')$ are polynomials  of which each has  coefficients that depend only on $j,P,Q$, and they vanish at $t''=0$.

  For later purpose,  let us express $M,N$ in terms of $f,g$. We substitute expressions \re{ujpq}-\re{vjpq}
for $U,V$ in \re{mpmp}-\re{npnp} to obtain
\al
\label{mpmpnsd}
M_P&=M^0_P+\cL M_P^*(\del_{2|P|-1},\{M^0,N^0\}_{|P|-1}; \{f, g\}_{2|P|-1}), \\ N_P&=N^0_P+\cL N_P^*(
\del_{2|P|-1},\{M^0,N^0\}_{|P|-1}; \{f, g\}_{2|P|-1})\label{npnpnsd}
\end{align}
with $f,g$ satisfying \re{idfg}.

Assume that   $\tilde\sigma=\psi_0^{-1}\sigma^*\psi_0$ commutes with $\hat S$. By \nrc{fhg},
we can decompose  
$\psi_0=HG^{-1}$ with $G\in\cL C(\hat S)$ and $H\in\cL C^{\mathsf{ c}}(\hat S)$. Furthermore,
 $G^{-1}\tilde \sigma G$ commutes with $\hat S$ and
  $H^{-1}\sigma^* H$. By the uniqueness conclusion for the above $\psi_0$,   $H$ must be the identity.  This shows that $\psi_0\in\cL C(\hat S)$.

(i). Assume that we have normalized $\sigma$. We now use it to normalize the pair of involutions.   Assume that $\sigma=\tau_1\tau_2$ and $\tau_j^2=I$. Then $\sigma^*=\tau_1^*\tau_2^*$.
Let $T_0(\xi,\eta):=(\eta,\xi)$.
We have 
 $T_0(\sigma^*)^{-1}T_0=T_0\tau_1^*\sigma^*\tau_1^*T_0$.
 By the above normalization,  $T_0(\sigma^*)^{-1}T_0$ commutes with $\hat S$. Therefore, $\tau_1^*T_0$ belongs to the centralizer of $\hat S$ and  it must be of the form
 $(\xi,\eta)\to(\xi \Lambda_1(\xi\eta),\eta  \Lambda_1^*(\xi\eta))$. Then $(\tau_1^*)^2=I$ implies that
 $$
 \Lambda_1(\xi\eta (\Lambda_1 \Lambda_1^*)(\xi\eta)) \Lambda_1^*(\xi\eta)=1.
 $$
  The latter implies, by induction on $d>1$, that $\Lambda_1 \Lambda_1^*=1+O(d)$ for all $d>1$, i.e. $\Lambda_1 \Lambda_1^*=1$.

Let $\tau_i^*$ be given by \re{tauis}. We want to achieve
$\tilde\Lambda_{1j}\tilde\Lambda_{2j}=1$ for $\tilde\tau_i=\psi_0^{-1}\tau_i^*\psi_0$
by applying
 a transformation $\psi_0$ in $\cL C^{\mathsf{ c}}(\hat T_1,\hat T_2)$ that commutes with $\hat S$. According to Definition \ref{ccst}, it has the form
 $$
 \psi_0\colon \xi_j=\tilde\xi_j(1+a_j(\tilde\zeta)), \quad
 \eta_j=\tilde\eta_j (1-a_j(\tilde\zeta))
  $$
with $a_j(0)=0$. Here $\tilde \zeta_j:=\tilde\xi_j\tilde \eta_j$ and
 $\tilde \zeta:=(\tilde \zeta_1,\ldots, \tilde \zeta_p)$. Computing the products $\zeta$ in $\tilde\zeta$ and solving $\tilde\zeta$ in $\zeta$, we obtain
$$
\psi_0^{-1}\colon \tilde\xi_j=\xi_j(1+b_j(\zeta))^{-1}, \quad
 \tilde\eta_j=\eta_j (1-b_j(\zeta))^{-1}.
$$
Note that $(a_j^2)_P=\cL A _{j,P}(\{a\}_{|P|-1})$, and
$$\xi_j\eta_j=\tilde\xi_j\tilde\eta_j(1-a_j^2(\tilde\zeta)),\quad
\tilde\xi_j\tilde\eta_j=\xi_j\eta_j(1-b_j^2(\zeta))^{-1}.$$
>From $\psi_0^{-1}\psi_0=I$, we get
\ga
b_j(\zeta)=a_j(\tilde\zeta), \quad
\label{bjpq}
b _{j,P}=a _{j,P}+\cL B _{j,P}(\{a\}_{|P|-1}).
 \end{gather}
By a simple computation we see that $\tilde\tau_i=\psi_0^{-1}\tau_i^*\psi_0$ is given by
$$
\tilde\xi_j'=\tilde\eta_j\tilde\Lambda_{ij}(\tilde\zeta),
\quad\tilde\eta_j'=\tilde\xi_j\tilde\Lambda_{ij}^{-1}(\tilde\zeta)
$$
with
$$
\tilde\Lambda_{1j}\tilde\Lambda_{2j}(\tilde\zeta)=(\Lambda_{1j}\Lambda_{2j})(\zeta)
(1+b_j(\zeta'))^{-2}(1- a_j(\tilde\zeta))^{2}.
$$
Here $\zeta_j'=\zeta_j(1-a_j^2(\tilde\zeta))$.
 Using \re{bjpq} and 
 the implicit function theorem, we determine $a_j$ uniquely to achieve $\tilde\Lambda_{1j}\tilde\Lambda_{2j}=1$.

To identify the transformations that preserve the form of $\tilde\tau_1,\tilde\tau_2$,
 we first verify that each element $\psi_1\in\cL C(\hat T_1,\hat T_2)$ preserves that form.
According to \re{bjaj}, we have
\gan
\psi_1\colon\xi_j=\tilde\xi_j  \tilde a_j(\tilde\zeta), \quad\eta_j=\tilde\eta_j\tilde a_j(\tilde\zeta),\\
\psi_1^{-1}\colon \tilde\xi_j= \xi_j\tilde b_j( \zeta), \quad\tilde\eta_j= \eta_j\tilde b_j( \zeta),\\
\tilde b_j(\zeta)\tilde a_j(\tilde\zeta)=1.
\end{gather*}
This shows that $\psi_1^{-1}\tilde\tau_i$ is given by  
$$
\tilde\xi_j'=\tilde\Lambda_{ij}(\zeta)\tilde b_j(\zeta)\eta_j,\quad \tilde\eta_j'=\tilde\Lambda_{ij}^{-1}(\zeta)\tilde b_j(\zeta)\xi_j.
$$
Then $\psi_1^{-1}\tilde\tau_i\psi_1$ is given by
$$
\tilde\xi_j'=\tilde\Lambda_{ij}(\zeta)\tilde\eta_j,\quad \tilde\eta_j'=\tilde\Lambda_{ij}^{-1}(\zeta)\tilde\xi_j.
$$
Since $\zeta_j=\tilde\zeta_j\tilde a_j^2(\tilde\zeta)$, then $\psi_1^{-1}\tilde\tau_i\psi_1$
still satisfy \re{tl21}.
Conversely,
suppose  that  $\psi_1$ preserves the forms of $\tilde\tau_1,\tilde\tau_2$.
We apply \nrc{fhg} to decompose  
$\psi_1=\phi_1\phi_0^{-1}$ with $\phi_0\in\cL C(\hat T_1,\hat T_2)$
 and $\phi_1\in\cL C^{\mathsf{ c}}(\hat T_1,\hat T_2)$. Since we just proved that each element in $\cL
 C(\hat T_1,\hat T_2)$ preserves the form of $\tilde\tau_{i}$, then $\phi_1=\psi_1\phi_0$
 also preserves the forms of $\tilde\tau_1,\tilde\tau_2$. On the other hand, we have shown that there exists a unique
 mapping   in $\cL C^{\mathsf c}(\hat T_1,\hat T_2)$ which transforms $\{\tau_1^*,\tau_2^*\}$ into $\{\tilde\tau_1,\tilde\tau_2\}$. This shows that $\phi_0=I$.
We have verified all assertions in (i).

 (ii).  
 According to Remark~\ref{rem-rho}, $\cL C^{\mathsf{ c}}(\hat S)$ and $\cL C^{\mathsf{ c}}(\hat T_1,\hat T_2)$
 are invariant under conjugacy by $\rho$.
 We have $\Psi^{-1}\sigma\Psi=\sigma^*$ and $\Psi\in\cL C^{\mathsf{ c}}(\hat S)$.   Note that $\rho\sigma\rho
 =\sigma^{-1}$ and $\rho\sigma^*\rho$ have the same form as of    $(\sigma^*)^{-1}$, i.e. they
 are in $\cL C(\hat S)$   and have the same linear part.
 We have $\rho\Psi\rho\sigma\rho\Psi^{-1}\rho=\rho(\sigma^*)^{-1}\rho$.
 The uniqueness of $\Psi$ implies that $\rho\Psi\rho
=\Psi$ and
$\tau_2^*=\rho\tau_1^*\rho$. Thus, we obtain relations \re{a2e-}-\re{aiss}.
 Analogously, $\rho\psi_0\rho$ is still in $\cL C^{\mathsf{ c}}(\hat T_1,\hat T_2)$, and
 $\rho\phi_0\rho$ preserves the form of $\tilde\tau_1,\tilde\tau_2$. Thus $\rho\psi_0\rho=\psi_0$ and
 $\tilde\tau_2=\rho\tilde\tau_1\rho$, which gives us \re{ohai}.
\end{proof}

  We will also need the following uniqueness result. 
\begin{cor}\label{finitever} Suppose that $\sigma$ has linear part $\hat S$ with nonresonant $\mu_1, \ldots, \mu_p$. Let $\Psi$ be the 
unique
formal mapping in $\cL C^{\mathsf c}(\hat S)$ such that $\Psi^{-1}\sigma\Psi\in \cL C(\hat S)$. If $\tilde\Psi\in\cL C^{\mathsf c}(\hat S)$ is a polynomial
map of degree at most $d$ such that $\tilde\Psi^{-1}\sigma\tilde\Psi(\xi,\eta)=\tilde\sigma(\xi,\eta)+O(|(\xi,\eta)|^{d+1})$ and $\tilde\sigma\in\cL C(\hat S)$,
then $\tilde\Psi$ is unique. In fact, $\Psi-\tilde\Psi=O(d+1)$. 
\end{cor}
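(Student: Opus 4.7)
The plan is to revisit the inductive construction of $\Psi$ from the proof of Proposition~\ref{ideal0} and observe that it is local in the Taylor filtration. Writing $\tilde\Psi = I + \tilde U$ with $\tilde U$ polynomial of degree $\le d$, the hypothesis $\tilde\Psi^{-1}\sigma\tilde\Psi = \tilde\sigma + O(|(\xi,\eta)|^{d+1})$ is equivalent (since $\tilde\Psi$ is tangent to the identity) to
\[
\sigma \circ \tilde\Psi \;=\; \tilde\Psi \circ \tilde\sigma \;+\; O(|(\xi,\eta)|^{d+1}),
\]
with $\tilde\sigma \in \cL C(\hat S)$ and $\tilde U$ satisfying the vanishing conditions defining $\cL C^{\mathsf c}(\hat S)$, namely $\tilde U_{j,(A+e_j)A}=\tilde V_{j,A(A+e_j)}=0$ for $|A|\ge 1$.

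First I would expand both sides in Taylor series and compare coefficients of total degree $k$ for each $2\le k\le d$. The identity degree-by-degree reproduces exactly the system \rea{xjmj}--\rea{nj0x} used in Proposition~\ref{ideal0}, with $\Psi, M, N$ replaced by $\tilde\Psi, \tilde M, \tilde N$. Projecting the degree-$k$ part onto $\cL C^{\mathsf c}_2(\hat S)$ and onto $\cL C_2(\hat S)$ yields (for $|P|+|Q|=k$ and $|P|=k$, respectively) the analogues of \rea{ujpq}--\rea{vjpq} and \rea{mpmpnsd}--\rea{npnpnsd}: the coefficient $\tilde U_{j,PQ}$ is expressed as
\[
(\mu^{P-Q}-\mu_j)^{-1}\bigl\{f_{j,PQ} + \cL U^*_{j,PQ}(\delta_{k-1};\{\tilde M,\tilde N\}_{[(k-1)/2]};\{f,g\}_{k-1})\bigr\},
\]
and similarly for $\tilde V_{j,QP}$ and for the $\tilde M_P,\tilde N_P$. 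The key point --- already emphasized in Proposition~\ref{ideal0} --- is that the right-hand side depends only on Taylor data of $\sigma$ of degree $\le k$ and on Taylor data of $(\tilde U,\tilde V,\tilde M,\tilde N)$ of strictly smaller degree.

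Now a straightforward induction on $k$ shows $\tilde U_{j,PQ}=U_{j,PQ}$, $\tilde V_{j,QP}=V_{j,QP}$, $\tilde M_P = M_P$ and $\tilde N_P = N_P$ for all multiindices with total degree $\le d$: the base case $k=2$ uses that $\cL U^*,\cL V^*,\cL M^*,\cL N^*$ vanish when their $(\tilde U,\tilde V,\tilde M,\tilde N)$-arguments are zero, and at each step the induction hypothesis makes the polynomial expressions for $\tilde\Psi$ and $\Psi$ literally identical. Combined with $\tilde U$ being polynomial of degree $\le d$, this gives $\tilde\Psi = \Psi + O(|(\xi,\eta)|^{d+1})$, which is exactly the claim.

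There is no real obstacle: the corollary is essentially a formal restatement of the well-posedness of the degree-by-degree induction underlying Proposition~\ref{ideal0}, once one observes that the $O(|(\xi,\eta)|^{d+1})$ remainder does not pollute any coefficient of degree $\le d$ in the polynomial identity $\sigma\tilde\Psi - \tilde\Psi\tilde\sigma = O(|(\xi,\eta)|^{d+1})$. The only verification required is bookkeeping: that the polynomials $\cL U^*_{j,PQ}, \cL V^*_{j,QP}$ depend only on data of degree strictly less than $|P|+|Q|$, which is recorded in \rea{ujpq}--\rea{vjpq}.
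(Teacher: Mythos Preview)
Your proof is correct. One small notational slip: in your displayed formula the arguments should be $\{M^0,N^0\}_{[(k-1)/2]}$ (the centralizer part of $\sigma$ itself, as in \re{ujpq}--\re{vjpq} and \re{mpmpnsd}--\re{npnpnsd}), not $\{\tilde M,\tilde N\}$. This does not affect your induction, since the whole point is that the right-hand side depends only on data coming from the fixed $\sigma$ and on lower-degree pieces of the unknown normalizer.

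The paper argues differently. Instead of rerunning the degree-by-degree recursion for both $\Psi$ and $\tilde\Psi$, it first normalizes $\tilde\Psi^{-1}\sigma\tilde\Psi$ by a further $\Phi\in\cL C^{\mathsf c}(\hat S)$; since the non-centralizer part of $\tilde\Psi^{-1}\sigma\tilde\Psi$ is $O(d+1)$, formulas \re{ujpq}--\re{vjpq} force $\Phi=I+O(d+1)$. Then both $\Psi$ and $\tilde\Psi\Phi$ conjugate $\sigma$ into $\cL C(\hat S)$, so the ``only if'' clause of Proposition~\ref{ideal0} gives $\psi_0:=\tilde\Psi\Phi\Psi^{-1}\in\cL C(\hat S)$, and a short complementarity argument ($\psi_0\in\cL C(\hat S)$, $\Psi,\tilde\Psi\in\cL C^{\mathsf c}(\hat S)$) finishes. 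Your route is more elementary and self-contained, extracting the conclusion directly from the well-posedness of the recursion; the paper's route is more structural, packaging the same recursion as a uniqueness statement and then using the centralizer characterization as a black box.
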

\begin{proof} The proof is contained in the proof of \rp{ideal0}. 
 Let us recap  it by using \re{ujpq}-\re{vjpq} and the proposition.
We take a unique normalized mapping  $\Phi$
such that $\Phi^{-1}\tilde\Psi^{-1}\sigma\tilde\Psi\Phi\in\cL C(\hat S)$. By \re{ujpq}-\re{vjpq},  $\Phi=I+O(d+1)$. From \rp{ideal0} it follows that
$\psi_0:=\tilde\Psi\Phi\Psi^{-1}\in \cL C(\hat S)$.  We obtain $\tilde\Psi\Phi=\psi_0\Psi$. Thus $\psi_0\Psi=\tilde\Psi+O(d+1)$. Since $\psi_0\in\cL C(\hat S)$,
and $\Psi$, $\tilde\Psi$ are in $\cL C^{\mathsf c}(\hat S)$, we conclude that   $\Psi=\tilde\Psi+O(d+1)$. 
\end{proof}

When $p=1$, \rp{ideal0} is due to Moser and Webster. In fact, they  achieved
$$
\tilde M_1(\zeta_1)=e^{\delta (\xi_1\eta_1)^s}. 
$$
Here $\delta=0, \pm1$  
for the elliptic case and  $\delta=0,\pm i$ for the hyperbolic case when $\mu_1$ is not a root of unity,   i.e. 
$\gaa$ is {\em non-exceptional}.
In particular the normal form is always convergent, although the normalizing transformations
are generally divergent for the hyperbolic case.

Let us find out further normalization  that can be performed to preserve the form of $\sigma^*$. 
In \rp{ideal0}, we have proved that if $\sigma$ is tangent to $\hat S$, there exists a unique
$\Psi\in\cL C^{\mathsf{ c}}(\hat S)$ such that $\Psi^{-1}\sigma\Psi$ is an element $\sigma^*$ in the  
centralizer of $\hat S$. Suppose now that $\sigma=\tau_1\tau_2$
while $\tau_i$ is tangent to $\hat T_i$. Let   
$\tau_i^*=\Psi^{-1}\tau_i\Psi$.
 We have also proved that there is a unique $\psi_0\in\cL C^{\mathsf{ c}}(\hat T_1,\hat T_2)$ such that
 $\tilde\tau_i=\psi_0^{-1}\tau_i^*\psi_0$, $i=1,2$, are of the form \re{tl21}, i.e.
 \gan
 \tilde\tau_i\colon\xi_j'=\tilde\Lambda_{ij}(\zeta)\eta_j,
 \quad\eta_j'=\tilde\Lambda_{ij}^{-1}(\zeta)\xi_j;\\
 \tilde\sigma\colon\xi_j'=\tilde M_{j}(\zeta)\xi_j,\quad\eta_j'
 =\tilde M_{j}^{-1}(\zeta)\eta_j.
 \end{gather*}
Here $\zeta=(\xi_1\eta_1,\ldots, \xi_p\eta_p)$,  $\tilde\Lambda_{2j}=\tilde\Lambda_{1j}^{-1}$ and $\tilde M_j=\tilde\Lambda_{1j}^2$. We still have freedom to further normalize $\tilde\tau_1,\tilde\tau_2$ and to preserve their forms. However, any new coordinate transformation must be in $\cL C(\hat T_1,\hat T_2)$, i.e. it must have the form
$$
\psi_1\colon \xi_j\to a_j(\xi\eta)\xi_j, \quad\eta_j\to a_j(\xi\eta)\eta_j.
$$
When $\tau_{2j}=\rho\tau_{1j}\rho$, we require that
  $\psi_1$   commutes with $\rho$, i.e.
$$
a_e=\ov a_e,\quad a_h=\ov a_h, \quad a_s=\ov a_{s+s_*}.
$$
In $\zeta$ coordinates, the transformation $\psi_1$ has the form
\eq{zjaj}
\var\colon \zeta_j\to b_j(\zeta)\zeta_j, \quad 1\leq j\leq p
\eeq
with $b_j=a_j^2$.
Therefore, the mapping $\var$ needs to satisfy
$$
b_e>0, \quad b_h>0, \quad b_s=\ov b_{s+s_*}.
$$
Recall from \re{a2e-}-\re{aiss} the reality conditions on $\tilde M_j$
\begin{alignat*}{4} 
& \ov {\tilde M_e\circ{\rho_z}}&&=\tilde M_e,
\quad &&1\leq e\leq e_*;  \\
&\ov {\tilde M_h\circ{\rho_z}}&&=\tilde M_h^{-1},\quad && e_*< h\leq h_*+e_*;\\
&\tilde M_{s_*+s}&&=\ov {\tilde M_s^{-1}\circ{\rho_z}}, \quad && h_*+e_*< s\leq p- s_*.
\end{alignat*}
Here
\eq{rhoz5}
{\rho_z}\colon\zeta_j\to\ov\zeta_j,
\quad\zeta_s\to\ov\zeta_{s+s_*}, \quad\zeta_{s+s_*}\to\ov\zeta_s
\eeq
for $1\leq j\leq e_*+h_*$ and $e_*+h_*<s\leq p-s_*$.

Therefore, our normal form problem leads to another normal form problem which is interesting in its own right. 
To formulate a new normalization problem, let us  define
\ga\label{logm}
(\log \tilde M)_j(\zeta):=\begin{cases}
\log(\tilde M_j(\zeta)/{\tilde M_j(0)}), & 1\leq j\leq e_*,\\
-i\log(\tilde M_j(\zeta)/{\tilde M_j(0)}), & e_*< j\leq   p. 
\end{cases}
\end{gather}
Let $F=\log\tilde M:=((\log \tilde M)_1,\ldots, (\log \tilde M)_p)$.
Then   the reality conditions on $\tilde M$ become
\ga\label{reaf}
F={\rho_z} F{\rho_z}.
\end{gather} The transformations \re{zjaj}
will then satisfy
\eq{reality-phi}
{\rho_z}\var{\rho_z}=\var, \quad b_j(0)>0, \quad 1\leq j\leq e_*+h_*.
\eeq
Therefore,  when $F'(0)$ is furthermore 
diagonal and invertible and its $j$th diagonal entry is positive for $j=e,h$,
we apply a dilation $\varphi$ satisfying the above condition so that
$F$ is tangent to the identity. Then any further change of coordinates
must be tangent to the identity too.
Thus, we need to normalize the formal holomorphic mapping $F$ by composition $F\circ\var$,
for which we study in next subsection.

\subsection{A normal form
for maps  tangent to the identity  
}
%

Let us consider a germ of holomorphic mapping $F(\zeta)$ in ${\mathbf
C}^p$ with an invertible linear part ${\mathbf A}\zeta$ at the origin.
According to the inverse function theorem, there exists a holomorphic
mapping $\Psi$ with $\Psi(0)=0$, $\Psi'(0)=I$ such that $F\circ
\Psi(\zeta)=\mathbf A\zeta$.  On the other hand, if we impose some
restrictions on $\Psi$, we can no longer linearize
$F$ in general.

To focus on applications to CR singularity and to limit the scope
of our investigation,  we now deliberately restrict
 our analysis to the simplest case : $F$ is tangent to the identity.
We shall apply our result to $F=\log \tilde M$ as defined in the previous subsection. 
In what follows, we shall devise  a normal
form of such an $F$ under right composition by $\Psi$ that
{\it preserve all coordinate hyperplanes}, i.e.
$\Psi_j(\zeta)=\zeta_j\psi_j(\zeta)$, $j=1,\ldots, p$.

%

\begin{lemma}\label{fcfp}
Let $F$ be a  
formal holomorphic map of $\cc^p$ that is tangent to the identity  at the origin.
\bppp\item
There exists a  unique formal biholomorphic map $\psi$
which preserves all $\zeta_j=0$
such that   $\hat F \colonequals F\circ\psi$    has the form
\ga\label{hfj1}
\hat F= I+ 
\hat f, \quad\hat f(\zeta)=O(|\zeta|^2); \quad \pd_{\zeta_j}\hat f_j=0, \quad 1\leq j\leq p.
\end{gather}
\item If $F$ is convergent,  the $\psi$  in $(i)$ is convergent.
 If $F$ commutes with ${\rho_z}$,  so does  the 
 $\psi$.  
 \item   The formal normal form in (i) has the form 
 \eq{hfjqf} 
 \hat f_{j,Q}=f_{j,Q}+\cL F_{j,Q}(\{f\}_{|Q|-1}), \quad q_j=0, 
\quad |Q|>1.
 \eeq
 Here $\cL F_{j,Q}$ are universal polynomials depending only on $F'(0)$ and they vanish at $0$.
\eppp
\end{lemma}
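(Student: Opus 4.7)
The plan is to normalize order by order, using the very restricted form of the allowed changes of coordinates to kill precisely the resonant coefficients. Write $F = I + f$ with $f(\zeta) = O(|\zeta|^2)$ and seek $\psi = I + u$ with $u_j(\zeta) = \zeta_j v_j(\zeta)$, that is, $u_{j,Q} = 0$ whenever $Q_j = 0$. Set $\hat f = \hat F - I$. The equation $\hat F = F \circ \psi$ becomes
\begin{equation}\label{planEq}
\hat f = u + f \circ (I + u).
\end{equation}
Let $\pi$ be the $\rr$-linear projection on formal maps vanishing to order $\geq 2$ defined componentwise by
$$
\pi(E)_j(\zeta) = E_j(\zeta_1,\ldots,\zeta_{j-1},0,\zeta_{j+1},\ldots,\zeta_p), \quad j = 1,\ldots, p,
$$
so that $\pi(E)_j$ is the part of $E_j$ independent of $\zeta_j$, while $(I-\pi)(E)_j$ collects exactly the monomials $\zeta^Q$ with $Q_j\geq 1$. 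The normal form condition is $\hat f \in \mathrm{Im}(\pi)$, and the structural condition on $\psi$ is $u \in \mathrm{Im}(I-\pi)$. Applying $\pi$ and $I-\pi$ to \eqref{planEq} yields the equivalent pair
\begin{equation}\label{planFP}
\hat f = \pi\bigl(f \circ (I+u)\bigr), \qquad u = -(I-\pi)\bigl(f \circ (I+u)\bigr).
\end{equation}

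The key observation is that $f \circ (I + u)$, expanded in powers of $\zeta$, has its degree-$d$ coefficients determined by $\{f\}_d$ together with $\{u\}_{d-1}$, because $f = O(|\zeta|^2)$ and $u = O(|\zeta|^2)$. So \eqref{planFP} can be solved inductively on the total degree: at stage $d$ the second equation determines $\{u\}_d$ in terms of $\{f\}_d$ and $\{u\}_{d-1}$, then the first equation determines $\{\hat f\}_d$. This gives existence and uniqueness of the formal solution and, by tracking the linear dependence on the top-degree coefficient of $f$, the formula
$$
\hat f_{j,Q} = f_{j,Q} + \mathcal F_{j,Q}(\{f\}_{|Q|-1}), \quad |Q|\geq 2,
$$
with $\mathcal F_{j,Q}$ a universal polynomial vanishing at $0$ (since $F'(0)=I$ is fixed); and $\hat f_{j,Q}=0$ whenever $Q_j\geq 1$ by construction. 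This will prove (i) and (iii).

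For (ii), convergence follows from a majorant argument essentially in the spirit of Lemma \ref{fhg-}: since $\pi$ simply drops some coefficients, we have $(I-\pi)(E) \prec E$ coefficient-by-coefficient, and the fixed-point equation for $u$ gives
$$
u \prec f_{\mathrm{maj}} \circ (I + u),
$$
where $f_{\mathrm{maj}}$ is the power series with coefficients $|f_{j,Q}|$; this dominating equation has a convergent solution by the implicit function theorem, hence so does $u$, and then $\hat f$ is convergent. For the equivariance under $\rho_z$: assume $F\circ \rho_z = \rho_z\circ F$ and consider $\tilde\psi := \rho_z \psi \rho_z$, which still preserves every coordinate hyperplane because $\rho_z$ merely conjugates and permutes the coordinates in the block $\{s,s+s_*\}$. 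One checks directly that $\tilde\psi$ is tangent to the identity and that $F\circ\tilde\psi = \rho_z\hat F\rho_z =: \tilde F$ satisfies the same normal form condition $\partial_{\zeta_j}\tilde f_j = 0$, using that this condition is preserved by the permutation built into $\rho_z$. The uniqueness proved above then forces $\tilde\psi = \psi$, i.e.\ $\psi$ commutes with $\rho_z$. The only mildly delicate point, and the one I expect to require the most care, is verifying that the majorant fixed-point equation indeed has a convergent solution in the present composition-of-maps set-up rather than the algebraic one of Lemma \ref{fhg-}; this amounts to noting that $f\circ(I+u)_{\mathrm{maj}} \prec f_{\mathrm{maj}}\circ(I+u_{\mathrm{maj}})$, which is standard.
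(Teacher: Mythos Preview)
Your proof is correct and follows essentially the same approach as the paper's. The paper writes the coefficient equations explicitly (solving $g_{j,Q-e_j}=-\{f_j\circ\psi\}_Q$ when $q_j>0$ and $\hat f_{j,Q}=\{f_j\circ\psi\}_Q$ when $q_j=0$), while you package the same computation in the projection $\pi$; the inductive determination, the majorant/implicit-function argument for convergence (the paper collapses to a scalar dominating function $w=\sum_k\zeta_k\bar g_k$ rather than a vector system, but both work), and the uniqueness argument for $\rho_z$-equivariance are identical in substance.
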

\begin{proof} (i)
Write  $F=I+f$ and 
\gan
\psi\colon \zeta_j' =\zeta_j+\zeta_jg_j(\zeta), \quad g_j(0)=0.
 \end{gather*}
 For $\hat F=F\circ\psi$,  we need to solve for $\hat f, g$ from
 $$
\hat f_j(\zeta) =\zeta_jg_j(\zeta)+f_j\circ\psi(\zeta). 
 $$
Fix $Q=(q_1,\ldots, q_p)\in \nn^p$ with $|Q|>1$.
We obtain unique solutions
\ga
\label{jina} g_{j,Q-e_j}=-\{  f_j(\psi(\zeta))\}_Q,\quad q_j>0,\\
\hat f _{j,Q}:=\{  f_j(\psi(\zeta))\}_Q, \quad q_j=0.
\label{fjpk}
\end{gather} 

(ii)  Assume that $F$ is convergent.   Define
$  \ov h(\zeta)=\sum|h_Q|\zeta^Q.
$
We obtain for every multi-index  $Q=(q_1,\ldots, q_p)$ and for every $j$ satisfying  $q_j\geq 1$
$$
\ov g_{j,Q-e_j}\leq   \left\{\ov{  f_j}(\zeta_1+\zeta_1\ov g_1(\zeta), \ldots, \zeta_p+\zeta_p\ov g_p(\zeta))\right\}_Q.
$$
  Set $w(\zeta)=\sum\zeta_k\ov g_k(\zeta)$.  We obtain
$$
w(\zeta)\prec \sum \ov{f_j}(\zeta_1+  w(\zeta), \ldots, \zeta_p+w(\zeta)). 
$$
Note that $  f_j(\zeta)=O(|\zeta|^2)$ and $w(0)=0$.
By the Cauchy majorization and the implicit function theorem,  $w$ and hence $g, \psi, \hat f$ are convergent.

  Assume that ${\rho_z} F{\rho_z}=F$. Then $\rho_z LF\rho_z$ is   normalized,  $\rho_z\psi\rho_z$
is   tangent to the identity,  and  the $j$th component of $\rho_z\hat F\rho_z(\zeta)-LF(\zeta)$ is   independent of $\zeta_j$. Thus $\rho_z\psi\rho_z$ normalizes $F$ too.
   By the uniqueness of $\psi$, we obtain $\rho_z\psi\rho_z=\psi$.
  
(iii)   By rewriting  \re{fjpk},  we obtain 
\eq{fjqfjq}
\hat f_{j,Q}=f_{j,Q}+\{f_j(\psi)-f_j\}_Q=f_{j,Q}+\cL F'_{j,Q}(\{f\}_{|Q|-1}, \{g\}_{|Q|-2}).
\eeq 
 From \re{jina}, it follows that
 $$g_{k,Q-e_k}=-f_{k,Q}+\cL G_{k,Q-e_k}(\{f\}_{|Q|-1}, \{g\}_{|Q|-2}), \quad |Q|>1.$$
Note that $ \{g\}_0=0$ and $\{f\}_1=0$. Using the identity repeatedly, we obtain
$g_{k,Q-e_k}=-f_{k,Q}+\cL G_{k,Q-e_k}^*(\{f\}_{|Q|-1}).$ Therefore, we can rewrite \re{fjqfjq} as \re{hfjqf}.
\end{proof}

\subsection{A unique formal normal form of a reversible map $\sigma$}

We now state a normal form for $\{\tau_1,\tau_2,\rho\}$
under a 
 condition on the third-order invariants  of $\sigma$.

\begin{thm}\label{ideal5}  Let $\tau_{1}$, $\tau_{2}$
be a pair of holomorphic involutions with linear parts $\hat T_i$. Let $\sigma=\tau_1\tau_2$.
Assume that the linear part of $\sigma$ is $$
\hat S\colon\xi_j'=\mu_j\xi_j, \quad \eta_j=\mu_j^{-1}\eta_j, \quad 1\leq j\leq p
$$
and $\mu_1,\ldots, \mu_p$ are non-resonant. Let
 $\Psi \in   {\cL C}^\mathsf{c}(\hat S)$ be the unique
 formal mapping such that
\gan
\tau_{i}^*=  \Psi^{-1}\tau_i\Psi\colon\xi_j'=\Lambda_{ij}(\xi\eta)\eta_j, \quad \eta_j'=\Lambda_{ij}(\xi\eta)^{-1}\xi_j;\\
\sigma^*=\Psi^{-1}\sigma\Psi\colon\xi_j'=M_j(\xi\eta)\xi_j, \quad \eta_j'=M_j(\xi\eta)^{-1}\eta_j
\end{gather*}
with $M_j=  \Lambda_{1j}\Lambda_{2j}^{-1}$.
Suppose that $\sigma$ satisfies the 
 condition that   $\log M$
is  tangent to the identity. 
\bppp\item
Then there exists an invertible   formal map $\psi_1\in  {\cL C}(\hat S)$   such that
\ga\label{htai}
\hat\tau_{i}=\psi_1^{-1}\tau_i^*\psi_1\colon\xi_j'
=\hat\Lambda_{ij}(\xi\eta)\eta_j, \quad \eta_j'=\hat\Lambda_{ij}(\xi\eta)^{-1}\xi_j;\\
\hat\sigma=\psi_1^{-1}\sigma^*\psi_1\colon\xi_j'=\hat
M_j(\xi\eta)\xi_j, \quad \eta_j'
=\hat M_j(\xi\eta)^{-1}\eta_j.\label{hsig}
\end{gather}
Here $\hat\Lambda_{2j}=\hat\Lambda_{1j}^{-1}$,   and  $\hat T_i$ is the linear part of $\hat\tau_i$.  Moreover, 
 $\log\hat M$ is tangent to the identity  at the origin.
\item    The centralizer of $\{\hat\tau_1,\hat\tau_2\}$ consists of $2^p$ dilations
  $(\xi,\eta)\to (a\xi,a\eta)$ with $a_j=\pm1$. And $\hat\Lambda_{ij}$ are unique.
   If   $ \Lambda_{ij}$ are
convergent, then $\psi_1$ is convergent too.
\item Suppose that    $\hat\sigma$ is
divergent. If $\sigma$ is formally equivalent
to a mapping $\tilde\sigma\in  {\cL C}(\hat S)$ then $\tilde\sigma$ must be divergent too.
\item  Let $\rho$ be given by \rea{eqrh} and let
 $\tau_2=
\rho\tau_1\rho$. Then the above $\Psi$ and $\psi_1$
  commute with $\rho$.
    Moreover, $\hat\tau_i$, $\hat\sigma$ are unique.  
\eppp
\end{thm}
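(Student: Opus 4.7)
The plan is to first invoke Proposition~\ref{ideal0} to reduce to a mapping $\tilde\sigma$ in the centralizer $\cL C(\hat S)$ with $\tilde\Lambda_{2j} = \tilde\Lambda_{1j}^{-1}$, and then to use the remaining freedom---conjugation by elements of $\cL C(\hat T_1,\hat T_2)$---to normalize $\log \tilde M$ through a single application of Lemma~\ref{fcfp}. This two-step reduction converts the non-linear problem for two families of involutions into a one-variable normalization in the $\zeta = (\xi_1\eta_1, \ldots, \xi_p\eta_p)$ coordinates, since all the essential data is encoded in the diagonal functions $M_j(\zeta)$.

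Starting from $\sigma^*, \tau_i^*$ produced by $\Psi$, part (i) of Proposition~\ref{ideal0} yields a unique $\psi_0 \in \cL C^{\mathsf c}(\hat T_1, \hat T_2)$ such that $\tilde\tau_i = \psi_0^{-1}\tau_i^*\psi_0$ satisfies $\tilde\Lambda_{2j} = \tilde\Lambda_{1j}^{-1}$, so $\tilde M_j = \tilde\Lambda_{1j}^2$. Because $\psi_0$ acts trivially on the $\zeta$-coordinates, $\log\tilde M$ remains tangent to the identity. Any further conjugation inside $\cL C(\hat T_1,\hat T_2)$ has the form $\psi_1\colon (\xi,\eta) \mapsto (c_j(\zeta)\xi_j, c_j(\zeta)\eta_j)$ and acts on $\tilde M$ by right composition with $\zeta_j \mapsto c_j(\zeta)^2 \zeta_j$---a map preserving each coordinate hyperplane $\zeta_j = 0$. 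Applying Lemma~\ref{fcfp}(i) to $\log\tilde M$ produces a unique $\varphi(\zeta) = (b_j(\zeta)\zeta_j)_{j=1}^p$ for which $(\log\tilde M)\circ\varphi = I + \hat f$ with $\partial_{\zeta_j}\hat f_j = 0$ for all $j$. Since $\varphi$ is tangent to the identity, the square roots $c_j$ of $b_j$ can be extracted unambiguously by requiring $c_j(0) = 1$, which defines $\psi_1 \in \cL C(\hat T_1,\hat T_2)$; the composite $\Psi\psi_0\psi_1$ then realizes \re{htai}--\re{hsig}.

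For the uniqueness in (ii), any $\check\psi$ commuting with both $\hat\tau_1$ and $\hat\tau_2$ commutes with $\hat\sigma$, hence belongs to $\cL C(\hat S)$ by Lemma~\ref{cents}; writing $\check\psi = (c_j(\zeta)\xi_j, d_j(\zeta)\eta_j)$, commutation with $\hat\tau_1$ together with the uniqueness of $\varphi$ in Lemma~\ref{fcfp} forces $c_j(\zeta)d_j(\zeta) \equiv 1$ and then $c_j = d_j = \pm 1$, giving exactly the $2^p$ dilations. The uniqueness of $\hat M_j$ (equivalently $\hat\Lambda_{1j}^2$) is inherited directly from Lemma~\ref{fcfp}. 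When the $\Lambda_{ij}$ are convergent, Lemma~\ref{fcfp}(ii) ensures $\varphi$ converges, and the square-root lift preserves analyticity, so $\psi_1$ is convergent. For (iii), if $\tilde\sigma \in \cL C(\hat S)$ is formally conjugate to $\sigma$, then the unique normalizer of $\tilde\sigma$ in $\cL C^{\mathsf c}(\hat S)$ is the identity; running the construction above on $\tilde\sigma$ gives a further normal form which, by the uniqueness just proved, must agree with $\hat\sigma$ up to one of the $2^p$ dilations, so convergence of $\tilde\sigma$ would force convergence of $\hat\sigma$.

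Finally, the reality statements in (iv) follow from the uniqueness theorems. Since $\rho\sigma\rho = \sigma^{-1}$, the conjugate $\rho\Psi\rho$ also lies in $\cL C^{\mathsf c}(\hat S)$ and normalizes $\sigma$ into $\cL C(\hat S)$; uniqueness of $\Psi$ forces $\rho\Psi\rho = \Psi$, and the same argument inside $\cL C^{\mathsf c}(\hat T_1,\hat T_2)$ yields $\rho\psi_0\rho = \psi_0$. The reality relation \re{reaf} on $\log\tilde M$ makes $\rho_z\varphi\rho_z$ another normalizer satisfying the conclusion of Lemma~\ref{fcfp}; uniqueness then gives $\rho_z\varphi\rho_z = \varphi$, and the compatible choice of square roots produces $\rho\psi_1\rho = \psi_1$. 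The most delicate point is the square-root lift from $\zeta$ back to $(\xi,\eta)$ while respecting the three distinct reality types; this will be handled type by type, using that $\rho_z$ either fixes $\zeta_j$ (types $e, h$) or swaps $\zeta_s$ with $\zeta_{s+s_*}$ (type $s$), matching the relations \re{ohai} on $\tilde\Lambda_{ij}$.
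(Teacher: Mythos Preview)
Your overall strategy matches the paper's exactly: invoke Proposition~\ref{ideal0} to get the preliminary normal form $\tilde\tau_i$ with $\tilde\Lambda_{2j}=\tilde\Lambda_{1j}^{-1}$, then use the residual freedom $\cL C(\hat T_1,\hat T_2)$ together with Lemma~\ref{fcfp} in the $\zeta$-variables. The paper's proof is essentially the same two-line reduction. However, three steps of your write-up do not go through as stated.

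First, the claim that ``$\psi_0$ acts trivially on the $\zeta$-coordinates'' is false: $\psi_0\in\cL C^{\mathsf c}(\hat T_1,\hat T_2)$ has the form $\xi_j\mapsto(1+a_j(\zeta))\xi_j$, $\eta_j\mapsto(1-a_j(\zeta))\eta_j$, so $\zeta_j\mapsto(1-a_j^2(\zeta))\zeta_j$. What you need---and what is true---is that this map is \emph{tangent to the identity} in $\zeta$, which suffices to keep $\log\tilde M$ tangent to the identity. Second, in your centralizer argument for (ii), commuting with the nonlinear $\hat\sigma$ does not place $\check\psi$ in $\cL C(\hat S)$ via Lemma~\ref{cents}, which concerns only the linear map $\hat S$. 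The correct route (and the one the paper uses) is Proposition~\ref{ideal0}(i): since $\check\psi$ preserves the form of $\hat\tau_1,\hat\tau_2$, it lies in $\cL C(\hat T_1,\hat T_2)$, hence $\check\psi(\xi,\eta)=(c(\zeta)\xi,c(\zeta)\eta)$; then \re{lamtpsi} gives $\hat M\circ\tilde\psi_1=\hat M$ with $\tilde\psi_1(\zeta)=c^2(\zeta)\zeta$, and invertibility of $\log\hat M$ forces $c_j^2\equiv1$. Your intermediate claim $c_jd_j\equiv1$ plays no role. Third, in (iii) you cannot ``run the construction above on $\tilde\sigma$'' directly: the hypothesis that $\log M$ is tangent to the identity does not pass to $\log\tilde M$, whose linear part is $\diag(b_j(0))$ for the $b_j$ coming from the conjugating element of $\cL C(\hat S)$. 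The paper first applies a convergent dilation to $\tilde\sigma$ to restore $(\log\tilde M)'(0)=\mathbf I$, and only then invokes Lemma~\ref{fcfp}(ii) to produce a convergent normalizer; uniqueness in Lemma~\ref{fcfp} then identifies the resulting normal form with $\hat\sigma$. Without that dilation step, Lemma~\ref{fcfp} does not apply and your argument for (iii) has a genuine gap.
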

\begin{proof}  Assertions in (i) and (ii)   are direct consequences of \rp{ideal0} and \rl{fcfp} 
 in which $F$ is the $\tilde M$ in \rp{ideal0}.  The assertion on   the centralizer of $\{\hat\tau_1,\hat\tau_2\}$ is obtained 
from   \re{lamtpsi}   of \rp{ideal0} in which    $\tilde \Lambda_{ij}=\hat \Lambda_{ij}$.
Now (iii) follows from (ii) too. 
Indeed, suppose $\sigma$ is formally equivalent to some convergent
$$
\tilde\sigma\colon\xi_j=\tilde M_j(\xi\eta)\xi_j,\quad\eta'_j=\tilde M_j(\xi\eta)^{-1}\eta_j.
$$
Then  by the assumption on the linear part of $\log
M$,  
 we can apply a dilation to achieve that $(\log \tilde M)'(0)$ is tangent to the identity. 
 By \rl{fcfp}, there exists a unique convergent mapping $\var\colon\zeta_j'
=b_j(\zeta)\zeta_j$ ($1\leq j\leq p$) with $b_j(0)=1$  such that $\log\tilde M\circ\var$ is in the normal form $\log M_*$.
Then 
$$
(\xi'_j,\eta'_j)=(b_j^{1/2}(\xi\eta)\xi_j,b_j^{1/2}(\xi\eta)\eta_j), \quad 1\leq j\leq p
$$
 transforms $\tilde\sigma$ into
a convergent mapping  $\sigma_*$. Since the normal form for $\log M$ is unique, 
then $\hat\sigma=\sigma_*$. In particular, $\hat\sigma$   is convergent.

(iv). Note that $\rho\sigma\rho=\sigma^{-1}$. Also $\rho(\sigma^*)^{-1}\rho$ has the same form as $\sigma^*$. 
By $(\rho\Psi^{-1}\rho)\sigma(\rho\Psi\rho)=(\rho\sigma^*\rho)^{-1}$, we conclude that $\rho\Psi\rho=\Psi$. 
The rest of assertions 
can be verified easily. 
\end{proof}
Under the condition that  $\log M$ is 
tangent to the identity,
the above theorem completely
settles  the formal
classification of $\{\tau_1,\tau_2,\rho\}$.
It also says that {\bf the normal form $\hat\tau_1,\hat\tau_2$ can be achieved by a convergent transformation, if and only if $\sigma^*$ can be achieved by some convergent transformation}, i.e.  the $\Psi$ in the theorem is convergent.

 However, we would like state clear that our results do not rule out the case where
a refined normal form for $\{\tau_1^*,\tau_2^*,\rho\}$ is achieved  by  convergent transformation, while $\Psi$ is divergent, when 
 $\log M$ is tangent to the identity. 

\subsection{An algebraic manifold with linear $\sigma$}

We conclude the section showing that when $\tau_1,\tau_2$ are normalized as   in this section,
$\{\tau_{ij}\}$ might still be very general; 
  in particular $\{\tau_{1j},\rho\}$ cannot always be simultaneously linearized 
even at the formal level. 
This is one of main differences between $p=1$
and $p>1$.

\begin{exmp}\label{texpl} Let $p=2$.
Let $\phi$ be a holomorphic mapping of the form  
$$
\phi\colon\xi'_i=\xi_i+q_i(\xi,\eta), \quad \eta'_i=\eta_i+\la^{-1}_iq_i(T_1(\xi,\eta)),\; i=1,2.
$$
Here $q_i$ is a homogeneous quadratic polynomial map and 
$$T_1(\xi,\eta)=(\la_1\eta_1,\la_2\eta_2,\la^{-1}_1\xi_1,\la^{-1}_2\xi_2).$$
Let $\tau_{1j}=\phi T_{1j}\phi^{-1}$ and $\tau_{2j}=\rho\tau_{1j}\rho$. Then $\phi$ commutes with $T_1$
and $\tau_1=T_1$. In particular $\tau_2=\rho T_1\rho$ and $\sigma=\tau_1\tau_2$ are in linear normal forms.
However,  $\tau_{11}$ is given by
\aln
 \xi_1'&=\la_1\eta_1-q_1(\la\eta,\la^{-1}\xi)+q_1(\la_1\eta_1,\xi_2,\la_1^{-1}\xi_1,\eta_2)+O(3),\\
 \xi_2'&=\xi_2-q_2(\xi,\eta)+q_2(\la_1\eta_1,\xi_2,\la_1^{-1}\xi_1,\eta_2)+O(3),\\
 \eta_1'&=\la_1^{-1}\xi_1-\la_1^{-1}q_1(\xi,\eta)+\la_1^{-1}q_1(\xi_1,\la_2\eta_2,\eta_1,\la_2^{-1}\xi_2)+O(3),\\
 \eta_2'&=\eta_2-\la_2^{-1}q_2(\la\eta,\la^{-1}\xi)+\la_2^{-1}q_2(\xi_1,\la_2\eta_2,\eta_1,\la_2^{-1}\xi_2)+O(3).
 \end{align*}
 Notice that the common zero set $V$ of $\xi_1\eta_1$ and $\xi_2\eta_2$ is invariant under $\tau_1,\tau_2,\sigma$ and $\rho$.
 In fact, they are linear on $V$.
 However, for $(\xi',\eta')=\tau_{11}(\xi,\eta)$, we have
\aln
\xi_1'\eta_1'&=-\eta_1q_1(0,\xi_2,\eta)+\eta_1q_1(0,\la_2\eta_2,\eta_1,\la_2^{-1}\xi_2) -\la_1^{-1}\xi_1q_1(0,\la_2\eta_2,\la^{-1}\xi)\\
&\quad+\la_1^{-1}\xi_1q_1(0,\xi_2,\la_1^{-1}\xi_1,\eta_2)\mod (\xi_1\eta_1,\xi_2\eta_2
, O(4)).
\end{align*}
For a generic $q$, $\tau_{11}$ does not preserve $V$.
\end{exmp}By a simple computation, we can verify that $\sigma_j=\tau_{1j}\tau_{2j}$ for $j=1,2$ do not commute with each other. In fact,
we will prove in section~\ref{abeliancr} that  if the  $\mu_1,\ldots, \mu_p$ are nonresonant, $\sigma_j$ commute pairwise, and $\sigma$ is linear as above, then $\tau_{1j}$ must be linear.

\setcounter{thm}{0}\setcounter{equation}{0}

\section{
Divergence of all 
normal forms of 
 a reversible map $\sigma $}\label{div-sect}

Unlike the Birkhoff normal form for a Hamiltonian system, the Poincar\'{e}-Dulac normal form 
is not unique for a general $\sigma$;   it just belongs to the centralizer of the linear part $S$ of $\sigma$. One can obtain a divergent normal form  easily from any non-linear Poincar\'{e}-Dulac normal form of 
$\sigma=\tau_1\tau_2$ 
by conjugating with a divergent transformation in the centralizer of $S$; see \re{lamtpsi}.
We have seen  how
the small divisors   enter in the computation of the normalizing transformations via \re{ujpq}-\re{vjpq}, but they have not yet
appeared in  \re{mpmp}-\re{npnp}  in the computation of the normal forms. 
To see the effect of small divisors on normal forms,  we  first assume a condition,  to be achieved later, 
 on the third order invariants of $\sigma$ and then we shall need   to modify the
normalization procedure.
We will use two sequences of normalizing mappings to normalize $\sigma$.
The composition of
normalized mappings might not be normalized. Therefore, the new normal form $\tilde \sigma$
 might not be the
$\sigma^*$ in \rp{ideal0}. 
We will show that this $\tilde\sigma$,
  after it is transformed into the normal form $\hat\sigma$
  in  \rt{ideal5} (i),  is divergent. Using the divergence
of  $\hat\sigma$, we will then show that any other normal forms of $\sigma$ that are in the centralizer of $S$ must be divergent too. This last step requires a convergent solution
given by \rl{fcfp}.

Our goal is to see a small divisor in a normal form $\tilde\sigma$; however they appear as a product. This is more complicated 
than the situation for the normalizing transformations, where a small divisor appears in a much simple
way.  In essence, a small divisor problem occurs naturally when one applies a Newton iteration scheme for
a convergence proof.
For a small divisor to show up in the normal form, we have to go beyond the Newton iteration scheme, measured
in the degree or order of approximation in power series. 
Therefore, we first  refine the formulae \re{mpmp}. 
\begin{lemma}\label{mj0mj} Let $\sigma$ be a holomorphic mapping,
given by
$$
\xi_j'=M_j^0(\xi\eta)\xi_j+f_j(\xi,\eta), \quad
\eta_j'=N_j^0(\xi\eta)\eta_j+g_j(\xi,\eta), \quad 1\leq j\leq p.
$$
Here $ M_j^0(0)=\mu_j=N^0_j(0)^{-1}.$
Suppose that $\ord (f,g)\geq d$,  $d\geq4$, and $I+(f,g)\in \cL C^{\mathsf c}(S)$.
Assume that $\mu_1, \ldots, \mu_p$ are non-resonant. Assume that
 \eq{dijm}
 \nonumber
(M^0)'(0)=\diag(\mu_1, \ldots, \mu_p). 
 \eeq
There exist unique polynomials $U,V$
 of degree at most $2d-1$ such that $\Psi=I+(U,V)\in\cL C^{\mathsf c}(S)$  transforms $\sigma$
into
$$
\sigma^*\colon\xi'= M(\xi\eta)\xi+\tilde f(\xi,\eta),\quad
\eta'=N(\xi\eta)\eta+\tilde g(\xi,\eta)
 $$
 with $I+(\tilde f,\tilde g)\in\cL C^{\mathsf c}(S)$ and $\ord(\tilde f,\tilde g)\geq 2d$.
Moreover,  
\al
\label{ujpqcc}
U _{j,PQ}&=(\mu^{P-Q}-\mu_j)^{-1}\left\{f _{j,PQ}+\cL U _{j,PQ}^*(\del_{ \ell-1},
\{M^0,N^0\}_{[\f{\ell-1}{2}]};\{ f,g\}_{\ell-1})\right\},\\
V _{j,QP}&=(\mu^{Q-P}-\mu_j^{-1})^{-1}\left\{g _{j,QP}+\cL V _{j,QP}^*(\del_{\ell-1},
\{M^0,N^0\}_{[\f{\ell-1}{2}]};\{ f,g\}_{\ell-1})\right\},
\label{ujpqcc6}
\end{align}
for  $2\leq |P|+|Q|=\ell\leq 2d-1$ and $\mu^{P-Q}\neq\mu_j$.  In particular,  $\ord(U,V)\geq d$.   For $|P|=d$ and $|P'|<d$,
\ga
M_{j,P'}=M_{j,P'}^0, 
 \label{mjpp0}  \\
\label{mjpp--}
 M_{j,P}=M_{j,P}^0+ \mu_j\left\{2(U_j V_j)_{PP}+(U_j^2)_{(P+e_j)(P-e_j)}\right\}+\{Df_j(\xi,\eta)(U,V)\}_{(P+e_j)P}.
\end{gather} 
\end{lemma}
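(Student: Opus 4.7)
The plan is to adapt the inductive normalization in the proof of \rp{ideal0}, restricted to polynomial transformations truncated at degree $2d-1$, and then to carefully track second-order Taylor contributions at the critical bidegree where $|P|=d$. First I would write $\Psi=I+(U,V)$ and compare the two sides of $\sigma\circ\Psi=\Psi\circ\sigma^*$. Taylor expansion yields, on the right-hand side,
\gan
M_j^0(\xi\eta)\xi_j+f_j(\xi,\eta)+\bigl[M_j^0(\xi\eta)U_j+\xi_j\,DM_j^0(\xi\eta)(\xi V+\eta U)+Df_j(\xi,\eta)(U,V)\bigr]\\
+\bigl[\xi_j\,DM_j^0(\xi\eta)\cdot UV+U_j\,DM_j^0(\xi\eta)(\xi V+\eta U+UV)+\tfrac{1}{2}\xi_j\,D^2M_j^0(\xi\eta)(\cdot)^{\otimes 2}+\cdots\bigr],
\end{gather*}
and the analogous expression for the $\eta_j'$ component. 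Projecting the identity onto $\cL C^{\mathsf c}(\hat S)$ and onto $\cL C(\hat S)$ at each bidegree $(P,Q)$ with $|P|+|Q|\le 2d-1$ reproduces, by the computation leading to (4.19)--(4.20), the small-divisor formulas (5.6)--(5.7). Non-resonance of $\mu_1,\dots,\mu_p$ makes the homological equation uniquely solvable at every bidegree, producing a unique polynomial $\Psi\in\cL C^{\mathsf c}(\hat S)$ of degree $\leq 2d-1$ for which $\ord(\tilde f,\tilde g)\geq 2d$.

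The second step is to propagate the order condition. Because $(f,g)$ is normalized and $\ord(f,g)\geq d$, and because $\cL U^*_{j,PQ}$ and $\cL V^*_{j,QP}$ vanish when their $(f,g)$-arguments vanish, induction on $|P|+|Q|$ shows that $U_{j,PQ}=V_{j,QP}=0$ whenever $|P|+|Q|<d$; hence $\ord(U,V)\geq d$. In particular, every product $U_iV_k$, $U_iU_k$, etc.\ has order $\geq 2d$, and $Df_j(\xi,\eta)(U,V)$ has order $\geq d+(d-1)=2d-1$.

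Next I would verify the statement $M_{j,P'}=M_{j,P'}^0$ for $|P'|<d$. At bidegree $(P'+e_j,P')$ the total degree is $2|P'|+1\leq 2d-1$. The quadratic-and-higher Taylor contributions all carry order $\geq 2d+1$ and so cannot contribute. Among the linear-in-$(U,V)$ Taylor terms, those involving $M_j^0(\xi\eta)U_j$ or $\xi_j\,DM_j^0(\xi\eta)(\xi V+\eta U)$ can reach the bidegree $(P'+e_j,P')$ only through indices $A,B$ with $A-B=e_j$, for which normalization of $U,V$ forces the relevant coefficient to vanish. The term $Df_j(U,V)$ has order $\geq 2d-1$, so it contributes to $(P'+e_j,P')$ only if $2|P'|+1\geq 2d-1$, i.e.\ $|P'|\geq d-1$, and in that edge case the degree count together with $\ord U,\ord V\geq d$ rules it out. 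Finally $f_j$ itself contributes nothing, being normalized. Thus only $M_{j,P'}^0$ survives, yielding \re{mjpp0}.

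For $|P|=d$, the monomial at bidegree $(P+e_j,P)$ has total degree $2d+1$, and genuinely second-order Taylor contributions become active; this is where the hypothesis $(M^0)'(0)=\diag(\mu_1,\ldots,\mu_p)$ is used. It gives $DM_j^0(\xi\eta)=\mu_j e_j^T+O(|\xi\eta|)$, so the quadratic-in-$(U,V)$ contributions to the centralizer projection reduce at leading order to $\mu_j\xi_j(\xi_jV_j+\eta_jU_j)\cdot(\text{linear correction})+\mu_j\xi_j\,U_jV_j+\mu_j U_j(\eta_jU_j)$. Reading off the coefficient of $\xi^{P+e_j}\eta^P$, the cross term $\xi_j(\xi_jV_j+\eta_jU_j)$ contributes $2\mu_j(U_jV_j)_{PP}$, the $U_j\cdot\eta_jU_j$ term contributes $\mu_j(U_j^2)_{(P+e_j)(P-e_j)}$, and $Df_j(U,V)$ contributes its coefficient directly; the left-hand-side contribution $U_j(M\xi,N\eta)$ is killed at this bidegree again by the normalization $U_{j,(B+e_j)B}=0$ combined with the leading-order form of $M$. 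This yields \re{mjpp--}. The main obstacle is precisely this bookkeeping: one must check that all higher-order Taylor corrections beyond those listed in \re{mjpp--} push the total bidegree strictly above $(P+e_j,P)$ with $|P|=d$, and that the only surviving quadratic-in-$(U,V)$ components at the critical bidegree are the three listed terms, with the prefactor $\mu_j$ forced by the diagonality of $(M^0)'(0)$.
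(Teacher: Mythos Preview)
Your overall plan matches the paper's proof: feed the assumptions into the normalization machinery of \rp{ideal0}, deduce $\ord(U,V)\ge d$ from $\cL U^*_{j,PQ}(\cdot;0)=\cL V^*_{j,QP}(\cdot;0)=0$, and then isolate the quadratic-in-$(U,V)$ terms at bidegree $(P+e_j,P)$ with $|P|=d$. However, your identification of the source of $2\mu_j(U_jV_j)_{PP}$ is wrong. You write that ``the cross term $\xi_j(\xi_jV_j+\eta_jU_j)$ contributes $2\mu_j(U_jV_j)_{PP}$'', but that expression is \emph{linear} in $(U,V)$ and cannot produce a $U_jV_j$ coefficient; its projection onto $\xi^{P+e_j}\eta^P$ in fact vanishes by the normalization $U_{j,(B+e_j)B}=V_{j,B(B+e_j)}=0$. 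The two copies of $\mu_j(U_jV_j)_{PP}$ come from two distinct quadratic pieces of $\bigl(DM_j^0(\xi\eta)(\eta U+\xi V+UV)\bigr)(\xi_j+U_j)$: one from $\xi_j\cdot D_jM_j^0(0)\cdot U_jV_j$ (the $UV$ argument of $DM_j^0$, paired with the $\xi_j$ factor) and one from $U_j\cdot D_jM_j^0(0)\cdot \xi_jV_j$ (the $\xi V$ argument, paired with the $U_j$ factor). Your $(U_j^2)_{(P+e_j)(P-e_j)}$ attribution is correct: it comes from $U_j\cdot D_jM_j^0(0)\cdot\eta_jU_j$. The diagonality hypothesis $(M^0)'(0)=\diag\mu$ enters exactly here: for $i\ne j$ one has $D_iM_j^0(\xi\eta)=O(|\xi\eta|)$, which the paper uses to kill the $i\ne j$ pieces either via normalization (for $\eta_iU_i$, $\xi_iV_i$) or by pushing the order to $\ge 2d+3$ (for $U_iV_i$).

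Your treatment of the edge case $|P'|=d-1$ in the claim $M_{j,P'}=M_{j,P'}^0$ is also too quick. You assert that ``the degree count together with $\ord U,\ord V\ge d$ rules it out'', but at total degree $2d-1$ the term $Df_j(\xi,\eta)(U,V)$ pairs the degree-$(d-1)$ part of $Df_j$ with the degree-$d$ part of $(U,V)$, and neither the normalization of $f_j$ nor that of $(U,V)$ forces the resulting coefficient of $\xi^{P'+e_j}\eta^{P'}$ to vanish. A concrete test case is $p=1$, $d=4$, $M^0(\zeta)=\mu(1+\zeta)$, $f=g=\xi^2\eta^2$: one finds $M_3=2(1-\mu^{-1})^{-1}\ne 0=M_3^0$. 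The paper's own one-line justification (``follows directly from \re{ujpq}--\re{mpmpnsd}'') glosses over the same point, so this is likely a minor imprecision in the lemma's statement rather than a defect in your strategy; but you should not claim to have closed a gap that your argument does not actually close.
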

\begin{rem}\label{keyrem}  Formula \re{mjpp--} gives us an effective way to compute the Poincar\'e-Dulac normal form. 
It tells us that under the above conditions, 
the coefficients of  $M_{jP}(\xi\eta)\xi_j$ of degree $2|P|+1$  do not depend on coefficients of $f(\xi,\eta),g(\xi,\eta)$ of degree $\geq 2|P|$, if $2|P|>3$.\end{rem}
\begin{proof}  Identities \re{ujpqcc}-\re{mjpp0} follow directly from \re{ujpq}-\re{mpmpnsd},
where  by notation 
in Definition~\ref{notation}
 $$\cL U_{j,PQ}^*(\cdot; 0)=\cL V^*_{j,QP}(\cdot;0)=\cL M_P^*(\cdot; 0)=0.$$
 
 Let $D_i$ denote $\partial_{\zeta_i}$. Let $Du(\xi,\eta)$ and $Dv(\zeta)$ denote the gradients of two functions.
The right-hand sides of \re{xjmj} and \re{mj0x} give us
\al\label{mjxexc}
M_j(\xi\eta)\xi_j&+U_j(M(\xi\eta)\xi,N(\xi\eta)\eta)=f_j(\xi,\eta)+ Df_j(\xi,\eta)(U, V)+A_j(\xi,\eta)\\
\nonumber&\quad +(M^0_j(\xi\eta)+DM_j^0(\xi\eta)(\eta U +\xi V  +UV))(\xi_j+U_j). 
\end{align}
  We recall  from \re{ajpq} the   remainders
\aln 
A_j(\xi,\eta)&=
 R_2M_j^0(\xi\eta;\xi U+\eta V+UV)(\xi_j+U_j) +R_2f_j(\xi,\eta;U,V). 
 \end{align*}
Here by \re{taylor}, we have the Taylor remainder formula
\eq{}
\nonumber
 R_{2}f(x;y)=2\int_0^1(1-t)\sum_{|\alpha|=2}\frac{1}{\alpha!}\partial^{\alpha}f(x+ty)y^{\alpha}\, dt.
\eeq
Since $\ord(U,V)\geq d$, $\ord(f,g)\geq d$, and  $d\geq4$, then $A_j$, 
defined by \re{ajpq},
 satisfies
\gan
A_j(\xi,\eta)=O(|(\xi,\eta)|^{2d+2}). 
\end{gather*}
Recall that $f_j(\xi,\eta)$ and $ U_j(\xi,\eta)$ do not contain terms of the form $\xi_j\xi^P\eta^P$, while
 $g_j(\xi,\eta)$ and $ V_j(\xi,\eta)$ do not contain terms of the form $\eta_j\xi^P\eta^P$.
Assume that $i\neq j$.  Then $D_iM_j^0(\xi\eta)=O(|\xi\eta|)$. We  see  that
$D_iM_j^0(\xi\eta)\eta_iU_i(\xi,\eta)$ and $ D_iM_j^0(\xi\eta)\xi_iV_i(\xi,\eta)$ do not
contain terms of $\xi^P\eta^P$, and 
$$D_iM_j^0(\xi\eta)\xi_iU_i(\xi,\eta)V_i(\xi,\eta)=O(2d+3).$$
Comparing both sides of \re{mjxexc} for coefficients of $\xi_j\xi^P\eta^P$, we get  \re{mjpp--}.
\end{proof}

Set  $|\delta_N(\mu)|\colonequals\max\left\{|\nu|\colon\nu\in\delta_N(\mu)\right\}$ for
\eq{delnmu}
\delta_N(\mu)=\bigcup_{j=1}^p\left\{\mu_i, \mu_i^{-1}, \f{1}{\mu^{P}-\mu_j}\colon P\in\zz^p, P\neq e_j,|P|\leq N\right\}.
\eeq
 \begin{defn}\label{sdon}We 
say that $\mu^{P-Q}-\mu_j$ and
 $\mu^{Q-P}-\mu_j^{-1}$ are {small divisors} of   {\it height} $N$, if there exists a partition
\gan
\bigcup_j\Bigl\{|\mu^{P-Q}-\mu_j|\colon P,Q\in\nn^p, |P|+|Q|\leq N, \mu^{P-Q}\neq\mu_j\Bigr\}=S_N^0\cup S^1_N
\end{gather*}
with $
 |\mu^{P-Q}-\mu_j|\in S_N^0$ and $
S^1_N\neq\emptyset$ such that
\gan
 \max S_N^0<C\min S_N^0,\\
\max S_N^0<  
(\min S^1_N)^{L_N}<1.
\end{gather*}
Here $C$ depends only on an upper bound of  $|\mu|$ and $|\mu|^{-1}$ and
$$
L_N\geq N.
$$
  If $|\mu^{P-Q}-\mu_j|$ is in $S_N^0$ and if $P,Q\in\nn^p$,  we call $  |P-Q|$ the   {\it degree} of the small
divisors $\mu^{P-Q}-\mu_j$ and $\mu^{Q-P}-\mu_j^{-1}$.  
\end{defn}

  To avoid confusion, let
us call $\mu^{P-Q}-\mu_j$ that appear in $S_N^0$ the  {\it exceptional} small divisors. These small divisors have played important roles in Siegel's works \cite{siegel-divergent, siegel-integral}. Siegel's small divisors technic was extended to a construction of divergent Birkhoff normal form in \cite{Go12} (see also \cite{PM03} for related problems). 
The degree and height play different roles in computation. The height serves as the maximum degree of all small  divisors that 
need to be considered in computation.  

Roughly speaking, the quantities in $S^0_N$ are comparable but they are much smaller than the ones in $S^1_N$.    
We will construct $\mu$ for any prescribed
sequence of positive integers $L_N$ so that 
$$
\max S_N^0<(\min S^1_N)^{L_N}<1
$$
for a subsequence $N=N_k$ tending to $\infty$. Furthermore, to use the small divisors we will  
 identify  all   exceptional small divisors of   height  $2N_k+1$ and  all    degrees of the exceptional small divisors
with $N_k$ being the smallest. 

We start with the following lemma which gives us small divisors that decay as rapidly as we wish.
\begin{lemma}\label{smallvec}
Let  $L_k$ be an increasing
 sequence of positive integers   such that $L_k$ tends to $\infty$
as $k\to\infty$. There exist a real number $\nu\in(0,1/2)$
  and a sequence $(p_k,q_k)\in \nn^2$  such that
$e,1,\nu$ are linearly independent over $\qq$, and
\begin{gather}\label{epq-}
|q_k\nu-p_k-e|\leq\Delta(p_k,q_k)^{(p_k+q_k)L_{k}},  \\
\label{dpkqk}\Delta(p_k,q_k)=\min\Bigl\{\frac{1}{2}, |q\nu-p-re|\colon  0<|r|+|q|<3(q_k+1), 
\\
\nonumber  (p,q,r)\neq  0, \pm(p_k,q_k,1), \pm2(p_k,q_k,1)\Bigr\}.
\end{gather}
\end{lemma}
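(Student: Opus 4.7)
The plan is to construct $\nu$ as the limit of a rapidly converging sequence of rationals built inductively, in the spirit of Liouville's construction of transcendental numbers, but adapted to the two competing requirements: first, $q_k\nu - p_k$ must be extraordinarily close to $e$; second, the quantity $\Delta(p_k,q_k)$ (the next-best approximation in a finite window) must stay bounded below well enough that raising it to a high power still dominates the tiny quantity $|q_k\nu-p_k-e|$.

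First I would set up the inductive construction. Suppose at stage $k$ we have chosen positive integers $q_1<q_2<\cdots<q_k$, integers $p_1,\ldots,p_k$, and a rational approximation $\nu^{(k)}$ of $\nu$ that agrees with $\nu$ up to an error to be specified. To perform step $k+1$, I consider the finite set
\[
\Sigma_k:=\bigl\{(p,q,r)\in\zz^3\colon 0<|r|+|q|<3(q_k+1),\ (p,q,r)\neq 0,\pm(p_k,q_k,1),\pm 2(p_k,q_k,1)\bigr\}.
\]
For $(p,q,r)\in\Sigma_k$, the quantity $|q\nu^{(k)}-p-re|$ (evaluated at the current approximation) is in absolute value at least some positive number $\delta_k$, provided we maintain $\nu^{(k)}$ close enough to the eventual $\nu$ that these finitely many inequalities persist. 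I would first choose $q_{k+1}$ to be much larger than $q_k$ and than the denominator of $\nu^{(k)}$, then set $p_{k+1}$ to be the nearest integer to $q_{k+1}\nu^{(k)}-e$, so that $|q_{k+1}\nu^{(k)}-p_{k+1}-e|\leq 1/2$. The refinement step is then to perturb $\nu^{(k)}$ to $\nu^{(k+1)}=\nu^{(k)}+\epsilon_{k+1}$ by adjusting the tail so that
\[
|q_{k+1}\nu^{(k+1)}-p_{k+1}-e|<\delta_{k+1}^{(p_{k+1}+q_{k+1})L_{k+1}},
\]
where $\delta_{k+1}>0$ is chosen to be a lower bound for $|q\nu^{(k+1)}-p-re|$ over $\Sigma_{k+1}$; the size of $\epsilon_{k+1}$ needed is easily achievable because $q_{k+1}$ is already fixed and we have infinitely many digits of $\nu$ yet to prescribe.

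The main obstacle, and where the bookkeeping has to be delicate, is ensuring that $\delta_{k+1}\le\Delta(p_{k+1},q_{k+1})$ and that the perturbation $\epsilon_{k+1}$ used to force the tiny bound does not itself destroy the lower bound $\delta_{k+1}$ on the competing expressions in $\Sigma_{k+1}$. To control this I would impose a separation of scales: pick $q_{k+1}$ so large that the spacing between distinct values of $q\nu-p-re$ for $(p,q,r)\in\Sigma_{k+1}$ is comparable to a fixed fraction of $\delta_{k+1}$, and then only perturb $\nu$ by an amount substantially smaller than that spacing divided by $\max\{|q|\colon (p,q,r)\in\Sigma_{k+1}\}$. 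Because $L_{k+1}$ is fixed before this choice is made, we can first fix $\delta_{k+1}$, then inflate $q_{k+1}$ so that the admissible perturbation window still leaves room to achieve $|q_{k+1}\nu^{(k+1)}-p_{k+1}-e|<\delta_{k+1}^{(p_{k+1}+q_{k+1})L_{k+1}}$; this is possible because the right-hand side, although minuscule, is positive, while the perturbation budget is of order the spacing, which does not shrink as we inflate $q_{k+1}$.

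Finally I would verify the two auxiliary conclusions. The sequence $\nu^{(k)}$ converges to a real $\nu\in(0,1/2)$ because $|\epsilon_{k+1}|$ is summable (in fact super-exponentially small). For linear independence of $1,e,\nu$ over $\qq$, suppose $a+be+c\nu=0$ with $(a,b,c)\in\zz^3\setminus\{0\}$; then for all sufficiently large $k$ we would have $(a,b,c)\in\Sigma_k$, hence $|c\nu-(-a)-(-b)e|\geq\delta_k>0$, a contradiction. Passing to the limit in \re{epq-} from $\nu^{(k+1)}$ to $\nu$ uses only that the tail perturbations $\sum_{j>k+1}\epsilon_j$ multiplied by $q_{k+1}$ are much smaller than $\delta_{k+1}^{(p_{k+1}+q_{k+1})L_{k+1}}$, which is built into the construction. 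This yields the claimed sequence $(p_k,q_k)$ and concludes the proof.
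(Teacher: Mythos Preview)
Your overall Liouville-type strategy is sound, but the paragraph where you break the apparent circularity between $\delta_{k+1}$ and the perturbation contains real errors. First, $\Sigma_k$ as you define it is not finite ($p$ is unbounded); this is minor, since for each $(q,r)$ only the $p$ nearest to $q\nu-re$ matters, but you should say so. More seriously, you write that one can ``first fix $\delta_{k+1}$, then inflate $q_{k+1}$,'' yet $\delta_{k+1}$ is a minimum over $\Sigma_{k+1}$, which itself grows with $q_{k+1}$; in particular $\Sigma_{k+1}$ contains every earlier triple $(p_j,q_j,1)$, so $\delta_{k+1}\le|q_j\nu-p_j-e|$, which you already made minuscule at stage $j$. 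The assertion that the relevant ``spacing \ldots\ does not shrink as we inflate $q_{k+1}$'' is false for the same reason. The correct order is: fix $q_{k+1}$ (huge, constrained only by the \emph{earlier} stages) and $p_{k+1}$, set $\nu^{(k+1)}=(p_{k+1}+e)/q_{k+1}$ so that $q_{k+1}\nu^{(k+1)}-p_{k+1}-e=0$ exactly, and then compute $\delta_{k+1}=\min_{\Sigma_{k+1}}|q\nu^{(k+1)}-p-re|$. This minimum is positive because vanishing would force $(p,q,r)=m(p_{k+1},q_{k+1},1)$ with $|m|\le 2$, all of which are excluded. Only afterward do you impose on all \emph{future} perturbations that $q_{k+1}|\nu-\nu^{(k+1)}|<(\delta_{k+1}/2)^{(p_{k+1}+q_{k+1})L_{k+1}}$ and $3(q_{k+1}+1)|\nu-\nu^{(k+1)}|<\delta_{k+1}/2$; these translate into lower bounds on $q_{k+2}$ and beyond.

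The paper sidesteps this bookkeeping entirely with an explicit construction tied to the series for $e$: it sets $\nu=\sum_{\ell\ge 1}\frac{1}{m_\ell!}\sum_{j=0}^{n_\ell}\frac{1}{j!}$ and $q_k=m_k!$, so that $q_k\nu-p_k$ is by design a partial sum $\sum_{j=0}^{n_k}1/j!$ of $e$ plus a tail controlled by $n_k$ and $m_{k+1}$. The lower bound on $\Delta(p_k,q_k)$ then comes not from any inductive hypothesis but from the classical irrationality measure of $e$ (roughly $|p+qe|\ge c/(|q|+1)!$): substituting $\nu\approx(p_k+e)/q_k$ reduces $|q\nu-p-re|$ to a linear form in $e$ with integer coefficients, giving an explicit lower bound of order $\bigl((3q_k+4)^2!\bigr)^{-1}$. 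One then simply takes $n_k$ and $m_{k+1}$ large enough to beat this fixed power. Your approach, once repaired, is more flexible (it would work with $e$ replaced by any irrational), but the paper's is shorter and avoids the nested system of constraints.
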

\begin{proof} We consider two increasing sequences  $  \{m_k\}_{k=1}^\infty, \{n_k\}_{k=1}^\infty$ of positive integers, which are
to be chosen. For $k=1,2,\ldots$, we set
\gan
\nu=\nu_k+\nu_k',\quad \nu_k=\sum_{\ell=1}^{ k}\frac{1}{m_\ell!}\sum_{j=0}^{  n_\ell}\f{1}{j!}, \quad
\nu_k'=\sum_{\ell>k}\frac{1}{m_\ell!}\sum_{j=0}^{  n_\ell}\f{1}{j!},\\
q_k=m_k!. \end{gather*}
We choose $m_k>(m_\ell)!(  n_\ell!)$ for $\ell<k$ and decompose
\begin{gather*}
q_k\nu=p_k+e_k+e_k',\\
p_k=m_k!\nu_{k-1}\in\nn,\quad
e_k=\sum_{\ell=0}^{  n_k}\f{1}{k!},\quad e_k'=m_k!\nu_k'.
\end{gather*}
We have   $e_k'<m_k!\sum_{\ell>k}\f{e}{m_\ell!}$ and 
\ga
\nonumber
q_k\nu=p_k+e+e_k'-\sum_{\ell=  n_k+1}^\infty\frac{1}{\ell!},\\
\label{ekpl}
|q_k\nu-p_k-e|\leq m_k!\nu_k'+\sum_{\ell=n_k+1}^\infty\f{1}{\ell!}<\left\{12(3(q_k+1)^3)!\right\}^{-(p_k+q_k)  L_k}.
\end{gather}
Here $\re{ekpl}_k$ is achieved by  choosing   $(m_2,n_1)$, \ldots, $(m_{k+1},n_{k})$ successively.
 Clearly we can get $0<\nu<1/2$ if $m_1$ is sufficiently large. 

Next, we want to show that $re+p+q\nu\neq0$ for all integers $p, q,r$ with $(p,q,r)\neq(0,0,0)$. Otherwise, we  rewrite  $-m_k!p=
m_k!(q\nu+re)$ as
$$
-m_k!p=qp_k+r\sum_{j=  0}^{m_k}\f{m_k!}{j!}+qe+q\left(e_k'-\sum_{\ell=  n_k+1}^\infty\f{1}{\ell!}\right)+r\sum_{j>m_k}\frac{m_k!}{j!}.
$$
The left-hand side is an integer. On the right-hand side,     the first two terms are integers, $qe$ is a fixed
irrational number, and the rest terms tend to $0$
as $k\to\infty$. We get a contradiction.

To verify \re{epq-}, we need to show that for each tuple $(p,q,r)$ satisfying \re{dpkqk},
\eq{qn-p}
|q\nu-p-re|\geq |q_k\nu-p_k-e|^{\f{1}{(p_k+q_k)L_k}}.
\eeq
We first note the following elementary inequality
\eq{elemineq}
|p+qe|\geq \frac{1}{(q-1)!}\min\left\{3-e, \frac{1}{q+1}\right\}, \quad p, q\in \zz, \quad q\geq1.
\eeq
Indeed,  the inequality holds for $q=1$. For $q\geq2$ we have $q!e=m+\e$ with $m\in \nn$ and 
$$
\e:=\sum_{k=q+1}^\infty\frac{q!}{k!}>\frac{1}{q+1}.
$$
Furthermore,  $1-\e>1-\frac{2}{q+1}=\frac{q-1}{q+1}$ as
$$
\e<\frac{1}{q+1}+\sum_{k\geq q+2}\frac{1}{k(k+1)}=\frac{2}{q+1}.
$$
We may assume
that $q\geq0$.  
If $q=0$, then $|r|<3q_k+3$ and hence
$|p+re|\geq \frac{1}{(3q_k+4)!}$. Now \re{qn-p} follows from \re{ekpl}.
  Assume that $q>0$.
We have
\al \label{qnpq}
|-q\nu+p+re|&\geq|-q\f{p_k+e}{q_k}+p+re|-q
\f{|e+p_k-q_k\nu|}{q_k}\\
&=\left|\f{q_kp-qp_k}{q_k}+\f{rq_k-q}{q_k}e\right|-q
\f{|e+p_k-q_k\nu|}{q_k}.
\nonumber\end{align}
We first verify that $q_kp-qp_k$ and $q-rq_k$ do not vanish simultaneously. Assume that both are zero. Then $(p,q,r)=r(p_k,q_k,1)$. Thus $|r|\neq 1,2$,  and   $|r|\geq3$ by conditions in \re{dpkqk}; 
we obtain  $|r|+|q|\geq3(|q_k|+1)$, a contradiction.  Therefore,
 either $q_kp-qp_k$ or $rq_k-q$
is not zero. By \re{elemineq} and \re{qnpq},  
\begin{align*}
&|-q\nu+p+re|\geq  \f{1}{q_k}\cdot \frac{1}{3}\cdot \f{1}{(|rq_k-q|+1)!}-q
\f{|e+p_k-q_k\nu|}{q_k}\\
&\qquad\geq  \f{1}{(3q_k+4)^2!}-4|e+p_k-q_k\nu|.
\end{align*}
Using \re{ekpl} twice,  we obtain the next two inequalities:
$$|-q\nu+p+re|\geq\yt
\left\{(3q_k+4)^2!\right\}^{-1}\geq|p_k+e-q_k\nu|^{\f{1}{(p_k+q_k)L_k}}.$$  The two ends give us \re{qn-p}.
\end{proof}

We now reformulate the above lemma as follows.
\begin{lemma}\label{smallvec+} Let $L_k$ be an increasing sequence of positive integers
such that $L_k$ tends to $+\infty$ as $k\to\infty$. Let  $\nu\in(0,1/2)$, and let $p_k$ and $ q_k$ be
positive integers as in \rla{smallvec}.  
Set 
$(\mu_1,\mu_2,\mu_3):=(  e^{-1},e^\nu, e^{e})$.  Then
\begin{align}\label{epqcc}
|\mu^{P_k}-\mu_3|&\leq (C\Delta^*(P_k))^{|P_k|L_{k}},\quad  P_k= (p_k,q_k,0),\\
\label{delspk} \Delta^*(P_k)&=\min_j\Bigl\{|\mu^{R}-\mu_j|\colon R\in\zz^3,  |R|\leq 2(  q_k+p_k)+1,  
\\
&\quad  R-e_j\neq 0,  \pm(  p_k,q_k,-1), \pm2(p_k,q_k,-1)\Bigr\}.
\nonumber\end{align}
Here $C$ does not depend on $k$.  Moreover, 
  all   exceptional small divisors   of height $2|P_k|+1$ have   degree at least $|P_k|$. Moreover, 
   $\mu^{P_k}-\mu_3$ is the only   exceptional small divisor
of  degree $|P_k|$ and
 height $2|P_k|+1$.
\end{lemma}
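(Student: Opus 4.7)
The plan is to translate the additive Diophantine inequality of \rl{smallvec} into the multiplicative estimate \re{epqcc} through the identity $\mu^R=\exp(-r_1+r_2\nu+r_3 e)$ and the comparability of $|e^x-e^y|$ with $|x-y|$ on any bounded set. First, setting $\Lambda:=q_k\nu-p_k-e$, one has $\mu^{P_k}-\mu_3=e^{e}(e^{\Lambda}-1)$, so $|\mu^{P_k}-\mu_3|\leq C|\Lambda|\leq C\,\Delta(p_k,q_k)^{(p_k+q_k)L_k}$ by \rl{smallvec}; it will then suffice to show $\Delta^*(P_k)\geq c\,\Delta(p_k,q_k)$ and absorb the constants into \re{epqcc} using $\Delta(p_k,q_k)<1/2$ and $L_k\to\infty$.

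For the comparison $\Delta^*(P_k)\geq c\,\Delta(p_k,q_k)$, I would pick an admissible $R=(r_1,r_2,r_3)$ in the minimum defining $\Delta^*(P_k)$ together with $j\in\{1,2,3\}$ and write
\[
\mu^R-\mu_j=\mu_j\bigl(e^{L_{j,R}}-1\bigr),\qquad L_{j,R}:=-r_1+r_2\nu+r_3 e-\ell_j,
\]
with $(\ell_1,\ell_2,\ell_3)=(-1,\nu,e)$. Either $|L_{j,R}|$ is bounded below by an absolute constant (in which case $|\mu^R-\mu_j|$ is already bounded below by a positive constant much larger than $\Delta(p_k,q_k)$), or $|\mu^R-\mu_j|$ is comparable to $|L_{j,R}|$; in the latter case $L_{j,R}$ is a $\zz$-linear form $p+q\nu+re$ whose coefficient tuple, by direct case analysis for $j=1,2,3$, is forbidden to lie in $\{0,\pm(p_k,q_k,1),\pm 2(p_k,q_k,1)\}$ exactly because the exclusion $R-e_j\notin\{0,\pm(p_k,q_k,-1),\pm 2(p_k,q_k,-1)\}$ in the definition of $\Delta^*(P_k)$ translates into this set after a sign change. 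The height constraint $|R|\leq 2(p_k+q_k)+1$ combined with $p_k<q_k/2$ (a consequence of $\nu<1/2$) yields $|q|+|r|<3(q_k+1)$, so \rl{smallvec} applies and gives $|L_{j,R}|\geq\Delta(p_k,q_k)$. Combined with the first paragraph, this establishes \re{epqcc}.

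For the statements about degrees of exceptional small divisors, I would declare the partition so that $S_N^0$ for $N=2|P_k|+1$ consists of those $|\mu^{P-Q}-\mu_j|$ of order $O(|\Lambda|)$. The preceding analysis identifies the exceptional pairs $(R,j)$ as precisely those satisfying $R-e_j\in\{\pm(p_k,q_k,-1),\pm 2(p_k,q_k,-1)\}$, since outside this set one already has $|\mu^R-\mu_j|\geq c\,\Delta(p_k,q_k)$, a quantity which for $k$ large is far larger than $|\Lambda|\leq\Delta(p_k,q_k)^{|P_k|L_k}$. Enumerating the five possibilities for each $j\in\{1,2,3\}$ and computing $|R|$ case by case shows that each exceptional $|R|$ satisfies $|R|\geq p_k+q_k=|P_k|$, with the minimum attained at $(R,j)=(P_k,3)$.

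The main obstacle is the ``uniqueness'' part of this last assertion: naive enumeration produces several admissible $(R,j)$ for which $|R|=|P_k|$ (for instance $(R,j)=((1-p_k,-q_k,1),1)$, where $|\mu^R-\mu_1|\asymp e^{-1}|\Lambda|$), and one must verify that after identifying each small divisor $\mu^{P-Q}-\mu_j$ with its paired partner $\mu^{Q-P}-\mu_j^{-1}$ built into \rd{sdon} (which differ only by the order-one multiplicative factor $-\mu_j^{-1}\mu^{-R}$), all such tuples ultimately encode the single small divisor $\mu^{P_k}-\mu_3$. The careful bookkeeping of signs and index shifts across the cases $j=1,2,3$, together with the rigorous use of the pairing convention of \rd{sdon}, is the delicate point of the argument.
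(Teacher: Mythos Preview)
Your derivation of the main inequality \re{epqcc} is correct and follows the same route as the paper: write $\mu^R-\mu_j=\mu_j(e^{L_{j,R}}-1)$, compare $|\mu^R-\mu_j|$ with the linear form $|L_{j,R}|$, and invoke \rl{smallvec} after checking that the exclusions in $\Delta^*(P_k)$ translate to the additive side and that the height bound $|R|\le 2|P_k|+1$ forces $|q|+|r|<3(q_k+1)$.

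Your concern about the uniqueness claim is well-founded, but the resolution you propose does not work. The pairing in \rd{sdon} sends $\mu^{P-Q}-\mu_j$ to $\mu^{Q-P}-\mu_j^{-1}$ with the \emph{same} index $j$; it never identifies a $j=1$ divisor with a $j=3$ one. For your own example $(R,j)=((1-p_k,-q_k,1),1)$ one has $\mu^R-\mu_1=e^{-1}(e^{-\Lambda}-1)$, whose partner is $\mu^{-R}-\mu_1^{-1}=e(e^{\Lambda}-1)$, still attached to $\mu_1^{\pm1}$ and not to $\mu_3$. Enumerating over $j=1,2,3$ thus produces three genuinely distinct exceptional values of degree $|P_k|$, and no amount of bookkeeping with the pairing will collapse them.

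The paper does not resolve this through pairing either. In the remark immediately preceding the proof it simply \emph{declares} $S_N^0$ to be the four $j=3$ values $|\mu^{R}-\mu_3|$ with $R\in\{P_k,\ -P_k+2e_3,\ 2P_k-e_3,\ -2P_k+3e_3\}$, and lets $S_N^1$ be the remaining admissible divisors. With this convention the uniqueness assertion is tautological, and it is exactly what the application in \rt{divnf1} needs: the dominant quadratic term $(\hat U_3^{(k+1)}\hat V_3^{(k+1)})_{P_kP_k}$ involves only $j=3$ divisors, while the $j=1,2$ exceptional divisors appear at most linearly in the subordinate terms and are handled by the crude bound $|\text{any exceptional}|=O(|\Lambda|)$. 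So the statement should be read as ``unique among the $j=3$ divisors'', not as a literal assertion about the full partition of \rd{sdon}; the paper's own choice of $S_N^0$ does not in fact cover the $j=1,2$ values of order $|\Lambda|$.
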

In the definition of $\Delta^*(P_k)$, equivalently we require that
$$ 
R \neq  P_k,\     R_k^1, \ R_k^2,\  R_k^3
$$ 
with  $R_k^1:=- P_k+2e_3,  R_k^2:=2 P_k-e_3,$ and $ R_k^3:=-2P_k+3e_3$.
Note that $ |R_k^1|=|P_k|+2, |R_k^2|=2|P_k|+1$, and $|R_k^3|=2|P_k|+3$  are bigger 
than  $|P_k|$, i.e.  the degree of the exceptional small divisor $\mu^{P_k}-\mu_3$.
 Each $\mu^{R_k^i}-\mu_3$
is a small divisor comparable with $ \mu^{P_k}-\mu_3$. Finally,  $\Del^*(P_k)$ tends to zero as $|P_k|\to\infty$.
Let us set 
$N:=2|P_k|+1$, 
 and  
\aln
S_N^0:&=\left\{  |\mu^{P_k}-\mu_3|,|\mu^{R_k^1}-\mu_3|,|\mu^{R_k^2}-\mu_3|,|\mu^{R_k^3}-\mu_3|\right\},\\
S_N^1: &= \bigcup_j\Bigl \{ |\mu^{R}-\mu_j|\colon R\in\zz^3,  |R|\leq 2(q_k+p_k)+1,  
\\
\nonumber&\qquad\quad R-e_j\neq 0,\   \pm(p_k,q_k,-1),\  \pm2(p_k,q_k,-1)\Bigr\}.\end{align*}
This
implies that the last paragraph of Lemma~\ref{smallvec+} holds when 
the $L_N$ in Definition~\ref{sdon}, denoted it by $L_N'$, takes the value
$L_N'=\frac{1}{2}|P_k|L_k$ and $k$ is sufficiently large, while  $L_k$ is given in  Lemma~\ref{smallvec+}. 
\begin{proof}
By \rl{smallvec}, we find a real number $\nu\in(0,1/2)$ and positive integers $p_k,q_k$ such that $e,1,\nu$ are linearly independent over $\qq$ and \begin{gather}\label{epq}
|q_k\nu-e-p_k|\leq\Delta(p_k,q_k)^{|P_k|L_{k}},\\
\Delta(p_k,q_k)=\min\left\{|q\nu-re-p|\colon  0<|r|+|q|<3(q_k+1),\right.\nonumber
\\
 \qquad \qquad\qquad\left. (p,q,r)\neq  0, \pm(  p_k,q_k, 1), \pm2(p_k,q_k,1)\right\}.
\nonumber\end{gather}
 Note that $\mu_1,\mu_2,\mu_3$
 are   positive 
  and non-resonant.  We have
  $$
 |\mu^{P_k}-\mu_3|=|\mu_3|\cdot|e^{q_k\nu-p_k-e}-1|. 
 $$
Let $\nu^*:=(  -1,\nu,e)$. If $|R\cdot\nu^*-\nu_j^*|<2$, then by the intermediate value theorem
$$
  e^{-2}|\mu_j||R\cdot\nu^*-\nu_j^*|\leq |\mu^R-\mu_j|\leq  e^2|\mu_j||R\cdot\nu^*-\nu_j^*|.
$$
If $R\cdot\nu^*-\nu_j^*>2$ or $R\cdot\nu^*-\nu_j^*<-2$, we have
$$
|\mu^R-\mu_j|\geq   e^{-2}|\mu_j|.
$$
Thus,  we can restate the properties of $\nu^*$ as   follows:
\begin{align*} 
\nonumber
&|\mu^{-(p_k,q_k,0)}-\mu_3|\leq  C' (C'\tilde\Delta(p_k,q_k))^{|P_k|L_{k}},\\
& \tilde\Delta(p_k,q_k)=\min\left\{ |\mu^{(p,q,r)}-1|\colon   0<|r|+|q|<3(q_k+1),\right.
\\
&\qquad\qquad\qquad  \left. (p,q,r)\neq 0,\pm(  p_k,q_k,-1), 
\pm2(p_k,q_k,-1)\right\}.
\end{align*}
Recall that $0<\nu<1/2$. By \re{epq}, we have $|q_k \nu - e - p_k|<1$. Since
$p_k, q_k$ are positive,  then $p_k<\nu q_k<q_k/2$.
Assume that $|  \mu^{R}-\mu_j|=\Del^*(P_k)$,  $|R|\leq 2(p_k+q_k)+1$, and
$$R-e_j\neq 0,\pm(p_k,q_k,-1), \pm 2(p_k,q_k,-1).$$
Set $R':=R-e_j$ and $(p,q,r):=R'$. Then $\Del^*(P_k)=|\mu_j||\mu^{R'}-1|$.
Also, $|r|+|q|\leq|R'|\leq |R|+1\leq 
2(p_k+q_k)+2\leq q_k+2q_k+2<3(q_k+1)$.  This shows that $|\mu^{Q'}-1|\geq\tilde\Del(p_k,q_k)$.
We obtain $\Del^*(P_k) \geq\mu_j\Del(p_,q_k)$.  We have verified \re{epqcc}. 
  For the remaining assertions, see the remark following the lemma.
\end{proof}

  In the above we have retained $\mu_j>0$ which
are sufficient to realize $\mu_1, \mu_2, \mu_3$, $\mu_1^{-1},\mu_2^{-1},\mu_3^{-1}$ as eigenvalues of $\sigma$ for an elliptic
complex tangent.  Indeed, with $0<\mu_1<1$, interchanging $\xi_1$ and $\eta_1$ preserves $\rho$ and changes the $(\xi_1,\eta_1)$
components of $\sigma$ into   $(\mu_1^{-1}\xi_1,\mu_1\eta_1)$. 

We are ready to prove   \rt{divsig}, which is restated here:
\begin{thm}\label{divnf1} 
There exists a non-resonant elliptic  real analytic $3$-submanifold $M$ in $\cc^6$ such
that $M$
  admits the maximum number of deck transformations 
 and all Poincar\'e-Dulac normal forms of  the $\sigma$ associated to $M$ are divergent.  
 \end{thm}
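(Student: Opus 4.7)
The goal is to construct a non-resonant pure elliptic real analytic germ $M\subset\cc^6$ (so $p=3$, $e_*=3$, $h_*=s_*=0$) whose reversible map $\sigma$ has Taylor data so arranged that a single exceptional small divisor appears \emph{squared} in one coefficient of the canonical normal form $\hat\sigma$ of \rt{ideal5}. Start with \rl{smallvec+}: for any increasing sequence $L_k\to\infty$ it supplies eigenvalues $\mu=(e^{-1},e^\nu,e^e)$ (after swapping $\xi_1\leftrightarrow\eta_1$ to force $\mu_1^{-1}>1$ in the elliptic normalization) and multi-indices $P_k=(p_k,q_k,0)$ such that, for every $k$, the set $S^0_{2|P_k|+1}$ of exceptional small divisors of height $2|P_k|+1$ is exactly $\{\mu^{P_k}-\mu_3,\;\mu^{R_k^i}-\mu_3:i=1,2,3\}$, all mutually comparable, and each smaller than every element of $S^1_{2|P_k|+1}$ by a factor of order $(\min S^1_{2|P_k|+1})^{|P_k|L_k/2}$.

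The algebraic heart is \rl{mj0mj} combined with \rrem{keyrem}: the normal-form coefficient $M_{3,P_k+e_3}$ is determined by \re{mjpp--} from coefficients of $(f,g)$ of degree at most $2|P_k|+1$. In particular, the contribution $\mu_3(U_3^2)_{(P+e_3)(P-e_3)}$ at $P=P_k+e_3$ contains the cross product $U_{3,P_k,0}\cdot U_{3,2e_3,P_k}$ whose two denominators, by \re{ujpqcc}, are $\mu^{P_k}-\mu_3$ and $\mu^{2e_3-P_k}-\mu_3=\mu_3\mu^{-P_k}(\mu_3-\mu^{P_k})$, \emph{both} belonging to $S^0_{2|P_k|+1}$ (the second equals $\mu^{R_k^1}-\mu_3$ up to a bounded factor). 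All other products in $(U_3^2)_{(P+e_3)(P-e_3)}$, and likewise the remaining terms $(U_3V_3)_{PP}$ and $\{Df_3(\xi,\eta)(U,V)\}_{(P+e_3)P}$ in \re{mjpp--}, carry at most one denominator in $S^0_{2|P_k|+1}$, the rest lying in $S^1_{2|P_k|+1}$ and hence only polynomially small. Prescribing $f_{3,P_k,0}$ and $f_{3,2e_3,P_k}$ to be nonzero constants of size one at degree $|P_k|$, and filling all remaining coefficients of $(f,g)$ with a fixed convergent geometric majorant, produces a convergent perturbation $\sigma=\hat S+(f,g)$ for which
\begin{equation*}
|\hat M_{3,P_k+e_3}|\;\geq\; c\,|\mu^{P_k}-\mu_3|^{-2}
\end{equation*}
with a constant $c>0$ independent of $k$.

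To realize this $\sigma$ as the Moser--Webster reversible map of a real analytic $M$, first use \rp{ideal0}(i) to extract $\tau_1$ and $\tau_2=\rho\tau_1\rho$ from $\sigma$ at the formal level. Then deform the linear generators $T_{11},T_{12},T_{13}$ of the associated elliptic product quadric order by order into a commuting triple $\{\tau_{11},\tau_{12},\tau_{13}\}$ of holomorphic involutions satisfying $\tau_{11}\tau_{12}\tau_{13}=\tau_1$, $\tau_{1j}^2=I$, and with $\fix(\tau_{1j})$ a hypersurface; at each order the commutation, involution and codimension conditions amount to finitely many linear equations in the new coefficients, which remain solvable because conditions~B, D and E are stable under higher-order perturbation. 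Finally, \rp{mmtp}(ii) realizes $\{\tau_{11},\tau_{12},\tau_{13},\rho\}$ as the Moser--Webster involutions of a formal submanifold of the form $z_{p+j}=B_j^2(z',\bar z')$, and Artin approximation yields a real analytic $M$ with the same Moser--Webster data.

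After an initial dilation making $\log M$ tangent to the identity, \rt{ideal5} applies. The lower bound on $|\hat M_{3,P_k+e_3}|$ above grows faster than every geometric series in $k$ by the $L_k$-gap, so $\hat\sigma$ diverges; by \rt{ideal5}(iii) no $\tilde\sigma\in\cL C(\hat S)$ formally equivalent to $\sigma$ can be convergent, which is the desired conclusion. The main obstacle is the non-cancellation assertion in the second step: one must guarantee that the squared exceptional divisor actually survives in $\hat M_{3,P_k+e_3}$ despite the many competing terms in \re{mjpp--}. This rests jointly on the height/degree separation of \rd{sdon}, the minimality of the degree $|P_k|$ in the exceptional set from \rl{smallvec+}, and the sparsity of the activated degrees, paralleling the Siegel--Moser divergent Birkhoff normal form construction of \cite{Go12} adapted to the reversible setting.
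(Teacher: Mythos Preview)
Your proposal has a genuine gap in the realization step. You prescribe a convergent $\sigma$ by fixing certain Taylor coefficients of $(f,g)$, but $\sigma$ must satisfy the reversibility constraint $\rho\sigma\rho=\sigma^{-1}$ and, more seriously, must factor as $\tau_1\tau_2$ with $\tau_2=\rho\tau_1\rho$ for \emph{convergent} involutions $\tau_1,\tau_2$. \rp{ideal0}(i) does not produce such a factorization from a given $\sigma$; it assumes one. Your order-by-order deformation of $T_{1j}$ into commuting $\tau_{1j}$ with $\tau_{11}\tau_{12}\tau_{13}=\tau_1$ gives at best \emph{formal} involutions, and your appeal to Artin approximation cannot close the argument: Artin yields a real analytic $M'$ agreeing with your formal $M$ only up to some finite order, so the associated $\sigma'$ differs from your prescribed $\sigma$ at high orders, destroying the control at the infinitely many degrees $d_k$ needed for divergence. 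There is also a mismatch with \rl{mj0mj}: formula \re{mjpp--} is stated only for $|P|=d$ when $\ord(f,g)\geq d$, whereas you evaluate it at $P=P_k+e_3$ while simultaneously requiring $f_{3,P_k,0}\neq0$ at degree $|P_k|$; these two requirements are incompatible with the hypotheses of the lemma.

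The paper avoids all of this by working in the opposite direction. It starts from linear involutions $\tau_{1j}^0$ (hence from a product quadric) and conjugates them by an explicit convergent sequence $\varphi_k(\xi,\eta)=(\xi+h^{(k)}(\xi),\eta)$ with $\ord h^{(k)}=d_k$, $d_k\geq2d_{k-1}$, and $|h^{(k)}_P|\leq1$. This guarantees that each $\tau_{1j}^k$, and the limit family, are genuine holomorphic involutions, so realizability by a real analytic $M$ follows directly from \rp{inmae} with no approximation step. Because $\varphi_k$ does not commute with $\rho$, $\sigma^k$ changes nontrivially; the squared exceptional divisor then appears in the term $2(\hat U^{(k+1)}_3\hat V^{(k+1)}_3)_{P_kP_k}$ of $\hat a^\infty_{3,P_k}$ (note: $(U_3^2)_{(P_k+e_3)(P_k-e_3)}$ vanishes since the third component of $P_k-e_3$ is negative), and $h^{(k)}_{3,P_k}$ is chosen at each step among $\{0,\pm1,\pm i\}$ to force the lower bound. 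The iterative conjugation scheme, not a direct prescription of $\sigma$, is what makes the construction work.
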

\begin{proof} 
We will not construct  the real analytic submanifold $M$ directly. Instead, we will
construct a family  of involutions $\{\tau_{11}, \ldots, \tau_{1p},\rho\}$ so that all Poincar\'e-Dulac
normal forms of $\sigma$ are divergent. By the realization
 in \rp{inmae}, we get the desired submanifold.

We first give an outline of the proof. 
To prove the theorem, we first deal with the associated  $\sigma$ and its normal form $\tilde \sigma$,  which belongs to  
the centralizer of $S$,   the linear part of $\sigma$ at the origin.
Thus $\sigma^*
$ has the form
$$
\sigma^*\colon \xi'= M(\xi\eta)\xi, \quad \eta'= N(\xi\eta)\eta.
$$
We assume that $\log M$ is tangent to identity at the origin.
We then normalize $
\sigma^*$ into the normal form
$\hat \sigma$ stated in \rt{ideal5}  (i). (In \rl{mj0mj} we take $F=\log M$
and $\hat F=\log\hat M$.)
We will show that $\hat\sigma$ is divergent if $\sigma$ is well chosen. By \rt{ideal5} (iii),   all normal forms
 of $\sigma$ in the centralizer of  $S$  
  are divergent. 
To get $\sigma^*$, we use the normalization  of \rp{ideal0} (i). To get $\hat\sigma$, 
we  normalize further  using 
\rl{fcfp}.  To find a divergent $\hat\sigma$,  we need  to tie the normalizations of
two formal normal forms together, by keeping
track of the small divisors  in the two normalizations.

We will start with our initial pair of involutions $\{\tau_1^0,\tau_2^0\}$
 satisfying $\tau_2^0=\rho\tau_1^0\rho$ such that $\sigma^0$
 is  a  third order  perturbation of $S$. We
 require that $\tau_1^0$ be the composition of $\tau_{11}^0, \ldots, \tau_{1p}^0$. The latter
can be realized by a real analytic submanifold by using \rp{inmae}. We will then perform a sequence
of holomorphic  changes of coordinates $\var_k$ such that $\tau_1^{k}=\var_k^{-1}\tau_1^{k-1}\var_k$, $\tau_2^{k}=\rho\tau_1^{k}\rho$, and  $\sigma^{k}=\tau_1^{k}\tau_2^{k}$.
Each $\var_k$  is tangent to the identity
to order $d_k$.
For a suitable choice of $\var_k$, we want to show that
the coefficients of order $d_k$ of the normal form of $\sigma^k$
increase rapidly to the effect that the coefficients of the normal form of the limit
mapping $\sigma^\infty$  increase rapidly too.
Here we will use the  exceptional small divisors to achieve the rapid growth of the coefficients of
the normal forms. 

We now present the proof. Let $\sigma^0=\tau_1^0\tau_2^0$, $\tau_2^0=\rho\tau_1^0\rho$, and
\begin{alignat*}{3}
&\tau_1^0\colon\xi_j'=\Lambda_{1j}^0(\xi\eta)\eta_j, \quad &&\eta_j'=(\Lambda_{1j}^0(\xi\eta))^{-1}\xi_j,\\
&\sigma^0\colon\xi_j'=(\Lambda_{1j}^0(\xi\eta))^{2}\xi_j, \quad &&\eta_j'=(\Lambda_{1j}^0(\xi\eta))^{-2}(\xi\eta)\eta_j.
\end{alignat*}
Since we consider the elliptic case, we require that $(\Lambda_{1j}^0(\xi\eta))^2=\mu_je^{\xi_j\eta_j}$.
 So $\zeta\to (\Lambda_{1}^0)^{2}(\zeta)$ is  biholomorphic. Recall that $\sigma^0$ can be realized by $\{\tau_{11}^0, \ldots, \tau_{1p}^0,\rho\}$. 
We will take
\ga\label{rest1}
\var_k\colon\xi_j'=(\xi+h^{(k)}(\xi),\eta),\quad  \ord h^{(k)}=d_k>3,\\
\label{rest2} d_{k}\geq   2d_{k-1},   \quad |h^{(k)}_P|\leq 1.
\end{gather}
 We will also choose each $h_j^{(k)}(\xi)$ to have one monomial only.  Let $\Del_r:=\Del_r^3$ denote the polydisc of radius $r$. Let $\|\cdot\|$
be the sup norm on $\cc^3$. 
  Let $H^{(k)}(\xi)=\xi+h^{(k)}(\xi)$ and we first verify that $H_k=H^{(k)}\circ \cdots\circ H^{(1)}$ converges to a holomorphic function
 on the polydisc $\Delta_{r_1}$ for $r_1>0$ sufficiently small; consequently, $\var_k\circ\cdots\circ\var_1$ converges to a germ of  holomorphic map at the
 origin. Note that $H^{(k)}$ sends   $\Del_{r_k}$ into $\Del_{r_{k+1}}$ for $r_{k+1}=r_k+r_k^{d_k}$. We want to show that when $r_1$ 
 is sufficiently small,
 \eq{rksk}
 r_k\leq s_k:=(2-\f{1}{k})r_1.
 \eeq
 It holds for $k=1$. 
 Let us show that  ${r_{k+1}}/{r_k}-1\leq \theta_k:= s_{k+1}/s_k-1$, i.e.
 $$
 r_k^{d_k-1}\leq \theta_k=\f{1}{(k+1)(2k-1)}. 
 $$
 We have $(2r_1)^{d_k-1}\leq (2r_1)^k$ when $0<r_1<1/2$.  Fix $r_1$  sufficiently small such that  $(2r_1)^k<\f{1}{(k+1)(2k-1)}$
 for all $k$.  By induction, we obtain \re{rksk} for all $k$. In particular, we have $\|h^{(k)}(\xi)\|\leq \|\xi\|+\|H^{(k)}(\xi)\|\leq
 2r_{k+1}$ for $\|\xi\|<r_k$.   To show the convergence of $H_k$, we write
 $
H_{k}- H_{k-1}(\xi)=h^{(k)}\circ   H_{k-1}.$  By the Schwarz lemma,  we obtain $$
\| h^{(k)}\circ   H_{k-1}(\xi)\|\leq \f{ 2r_{k+1}}{r_1^{d_k}}\|\xi\|^{d_k}, \quad \|\xi\|<r_1.
 $$
Therefore, $  H_k$ converges to a holomorphic function on $\|\xi\|<r_1$. 
   
  Throughout the proof, we make initial assumptions that
  $d_k$ and $h^{(k)}$ satisfy \re{rest1}-\re{rest2},   $e^{-1}\leq \mu_j\leq e^e$, and $\mu^Q\neq1$ for $Q\in\zz^3$ with $Q\neq0$. 
Set $\sigma^k=\tau_1^k\tau_2^k$, $\tau_2^k=\rho\tau_1^k\rho$, and
$$
\tau_1^k=\var_k^{-1}\tau_1^{k-1}\var_k.
$$
We want $\sigma^k$ not to be holomorphically equivalent to $\sigma^{(k-1)}$. Thus we have chosen  
a $\varphi_k$ that does not commute with $\rho$ in general.  Note that $\sigma^{k}$ is still generated by a real analytic submanifold; indeed, when $\tau_{i}^{k-1}
=\tau_{i1}^{(k-1)}\cdots\tau_{ip}^{(k-1)}$ and $\tau_{2j}^{k-1}=\rho\tau_{1j}^{k-1}\rho$, we
still have the same identities if the superscript $(k-1)$ is replaced by $k$ 
and $\tau_{1j}^{(k)}$ equals $\varphi_k^{-1}\tau_{1j}^{(k-1)}\varphi_k$.
It is clear that $\sigma^k=\sigma^{k-1}+O(d_k)$.  As power series, we have
\eq{sinf}
\nonumber
\sigma^{\ell}=\sigma^{k-1}+O(d_k), \quad  k \leq \ell\leq \infty.
\eeq

We know that $\sigma^\infty$ does not have a unique normal form in the centralizer $  S$.
Therefore, we will choose a procedure that arrives at a unique formal normal form in $  S$. We show that this unique normal form is divergent; and hence
by \rt{ideal5} (iii) any normal form of  $\sigma$ that is  in the centralizer of $  S$ must diverge.

We now describe the procedure. For a formal mapping $F$, we have a unique decomposition
$$
F=NF+N^{\mathsf c}F, \quad NF\in \cL C(  S), \quad N^{\mathsf c}F\in\cL C^{\mathsf c}(  S).
$$
Set $\hat\sigma_{0}^\infty=\sigma^\infty$.   For $k=0,1,\ldots,$  we take a normalized polynomial map $\Phi_{k}\in \cL C_2^{\mathsf c}(  S)$  
of degree less than $d_k$  such that $\sigma_{k}^\infty:=\Phi_k^{-1}\hat\sigma_{k}^\infty\Phi_{k}$ is normalized
up to degree $d_k-1$.   Specifically, we require that
\gan
\deg\Phi_k\leq d_k-1, \quad\Phi_k\in \cL C^{\mathsf c}(  S);\quad \quad N^c\sigma_k^\infty(\xi,\eta)=O({d_k}).
\end{gather*}
Take a normalized polynomial map $\Psi_{k+1}$   such that $\Psi_{k+1}$ and
 $\hat\sigma_{k+1}^\infty:=\Psi_{k+1}^{-1}\sigma_k^\infty\Psi_{k+1}$ satisfy
 \ga\label{ncsigm}
 \nonumber
\deg\Psi_{k+1}\leq 2d_k-1; \quad\Psi_{k+1}\in \cL C_2^{\mathsf c}(  S),\quad
N^c\hat\sigma_{k+1}^\infty=O(  2d_k).
\end{gather}
We can repeat this for $k=0,1,\ldots$. Thus we apply two sequences of normalization as follows
 \eq{} \nonumber
   \hat\sigma_{k+1}^\infty=\Psi_{k+1}^{-1}\circ\Phi_k^{-1}\cdots\Psi_1^{-1}
\circ \Phi_0^{-1}\circ \sigma^\infty\circ \Phi_0\circ\Psi_1\cdots\Phi_{k}\circ\Psi_{k+1}.
 \eeq
 We will show that 
 $\Psi_{k+1}=I+O(d_k)$ and $\Phi_k=I+O(  2d_{k-1})$. This shows that  the 
  sequence $\Phi_0\Psi_1\cdots\Phi_k\Psi_{k+1}$
  defines  a formal biholomorphic mapping $\Phi$ so that 
 \eq{hsiginf}
  \hat\sigma^\infty:=\Phi^{-1}\sigma^\infty\Phi 
 \eeq
 is in a normal form. Finally, we need to combine the above normalization with 
the normalization for the unique normal form   
 in \rl{fcfp}.  We will show that the unique normal form    diverges. 
 
Let us recall previous results to show that $\Phi_k,\Psi_{k+1}$ are uniquely determined. 
Set \ga\label{hsink}
\hat\sigma^{\infty}_k\colon\left\{
\begin{split} \xi'=\hat M^{(k)}(\xi\eta)\xi+\hat f^{(k)}(\xi,\eta), \\
\eta'=\hat N^{(k)}(\xi\eta)\eta+\hat g^{(k)}(\xi,\eta),
\end{split}\right.
\\
 (\hat f^{(k)},\hat g^{(k)})\in \cL C_2^{\mathsf c}(  S).
 \end{gather}
 Recall that $\hat\sigma_0
=\sigma^\infty$.  Assume that we have achieved
 \eq{2dk-1}
 (\hat f^{(k)},\hat g^{(k)})=O(2d_{k-1}).
  \eeq
  Here we take $d_{-1}=2$ so that \re{hsink}-\re{2dk-1} hold for $k=0$.
By \rp{ideal0}, there is  a unique normalized  polynomial  mapping $\tilde\Phi_k$ that
transforms $\hat\sigma_k^\infty$ into a normal form.  We denote by $\Phi_k$ 
the truncated polynomial mapping of $\tilde\Phi_k$ of degree $d_k-1$. We write    
\gan
\Phi_{k}\colon\xi'=\xi+U^{(k)}(\xi,\eta), \quad \eta'=\eta+V^{(k)}(\xi,\eta),\\
(U^{(k)},V^{(k)})=O(2),
\quad \deg (U^{(k)}, V^{(k)})\leq d_k-1.\end{gather*}
  By Corollary~\ref{finitever},  $\Phi_k$ satisfies
\ga
\nonumber
\sigma^\infty_k=\Phi_{k}^{-1}\hat\sigma^\infty_{k}\Phi_{k}\colon\left\{
\begin{split}\xi'=M^{(k)}(\xi\eta)\xi+f^{(k)}(\xi,\eta), \\
\eta'=N^{(k)}(\xi\eta)\eta+g^{(k)}(\xi,\eta),
\end{split}\right.
\\
 (f^{(k)},g^{(k)})\in \cL C_2^{\mathsf c}(  S), \quad \ord(f^{(k)},g^{(k)})\geq d_k.\label{fkgkd}
 \end{gather}
  In fact, by \re{ujpq}-\re{vjpq} (or \re{mp-q}-\re{mp-q2}), we have 
 \al \label{ujpq6}
U _{j,PQ}^{(k)}&=(\mu^{P-Q}-\mu_j)^{-1}\left\{\hat f ^{(k)}_{j,PQ}+\cL U _{j,PQ}(\del_{d-1},
\{\hat M^{(k)},\hat N^{(k)}\}_{[\f{d-1}{2}]};\{ \hat f^{(k)},\hat g^{(k)}\}_{d-1})\right\},\\
\label{vjpq6}
V^{(k)} _{j,QP}&=(\mu^{Q-P}-\mu_j^{-1})^{-1}\left\{\hat g^{(k)}_{j,QP}+\cL V _{j,QP}(\del_{d-1},
\{\hat M^{(k)},\hat N^{(k)}\}_{[\f{d-1}{2}]};\{ \hat f^{(k)},\hat g^{(k)}\}_{d-1})\right\},
\end{align}
for  $|P|+|Q|=d< d_k$ and $\mu^{P-Q}\neq\mu_j$. 
 By \re{mpmpnsd}-\re{npnpnsd} (or \re{mpmp}-\re{npnp}), we have
\al
\label{mpmpnsd6}
M_P^{(k)}&=\hat M^{(k)}_P+\cL M_P(\del_{2|P|-1},\{\hat M^{(k)},\hat N^{(k)}\}_{|P|-1}; \{\hat f^{(k)}, \hat g^{(k)}\}_{2|P|-1}), \\ 
N_P^{(k)}&=\hat N^{(k)}_P+\cL N_P(
\del_{2|P|-1},\{\hat M^{(k)},\hat N^{(k)}\}_{|P|-1}; \{\hat f^{(k)}, \hat g^{(k)}\}_{2|P|-1})  \label{npnpnsd6}
\end{align}
  for $2|P|-1< d_k$. 
Recall that  ${\mathcal U}_{j,PQ}, {\mathcal V}_{j,QP}, {\mathcal M}_{j,P}$, 
and ${\mathcal N}_{j,P}$  are universal polynomials in their variables. In  notation defined by Definition~\ref{notation}, 
\eq{uvmn=0}
\nonumber
\cL U_{j,PQ}(\cdot; 0)= \cL V_{j,QP}(\cdot; 0)=0, \quad \cL M_{P}(\cdot; 0)= 
\cL N_{P}(\cdot; 0)=0.
\eeq
Since $d_k\geq   2d_{k-1}$, we apply \re{ujpq6}-\re{vjpq6}   for $d<2d_{k-1}\leq d_k$
and \re{mpmpnsd6}-\re{npnpnsd6}  for $2|P|-1< 2d_{k-1}\leq d_k$ to obtain
\ga
\Phi_k-I=(U^{(k)}, V^{(k)})=O(2d_{k-1}),
 \label{phiki} \\ \label{mphmp}
 M_P^{(k)}=\hat M_P^{(k)}, \quad N_P^{(k)}=\hat N_P^{(k)}, \quad |P|\leq d_{k-1}.
 \end{gather}
  
By \rl{mj0mj},  there is a unique normalized polynomial mapping
 \gan
 \Psi_{k+1}(\xi,\eta)=(\xi+\hat U^{(k+1)}(\xi,\eta), \eta+\hat V^{(k+1)}(\xi,\eta)),\\
 (\hat U^{(k+1)}, \hat V^{(k+1)})\in \cL C_2^{\mathsf c}(  S),\\
 (\hat  U^{(k+1)}, \hat V^{(k+1)})=O(2),\quad
  \deg (\hat U^{(k+1)},\hat V^{(k+1)})\leq 2d_k-1 \end{gather*}
such that
$\hat\sigma^\infty_{k+1}=
\Psi_{k+1}^{-1}\Phi_{k}^{-1}\sigma^\infty_{k}\Phi_{k}\Psi_{k+1}$ satisfies  the following:
\ga
\nonumber
\hat\sigma^\infty_{k+1}\colon\xi'=\hat M^{(k+1)}(\xi\eta)\xi+  \hat f^{(k+1)}, \quad
\eta'=\hat N^{(k+1)}(\xi\eta)\eta+  \hat g^{(k+1)},\\
(\hat f^{(k+1)},\hat g^{(k+1)})\in \cL C^{\mathsf c}_2(  S), \quad
\ord(\hat f^{(k+1)},\hat g^{(k+1)} )\geq2d_k.
\label{hfk1}
\nonumber
\end{gather}
 By     \re{ujpqcc}-\re{ujpqcc6}, we know that
\al
\label{ujpqc}
\hat U _{j,PQ}^{(k+1)}&=(\mu^{P-Q}-\mu_j)^{-1}\left\{f _{j,PQ}^{(k)}+\cL U _{j,PQ}^*(\del_{  \ell-1},
\{M^{(k)},N^{(k)}\}_{[\f{\ell-1}{2}]};\{ f^{(k)},g^{(k)}\}_{\ell-1})\right\},\\
\hat V _{j,QP}^{(k+1)}&=(\mu^{Q-P}-\mu_j^{-1})^{-1}\left\{g _{j,QP}^{(k)}+\cL V _{j,QP}^*(\del_{\ell-1},
\{M^{(k)},N^{(k)}\}_{[\f{\ell-1}{2}]};\{ f^{(k)},g^{(k)}\}_{\ell-1})\right\},
\label{ujpqc6} \end{align}
for $d_k\leq |P|+|Q|=\ell\leq 2d_k-1$ and $\mu^{P-Q}\neq\mu_j$. 
Recall that  ${\mathcal U}^*_{j,PQ}$ and ${\mathcal V}^*_{j,QP}$  are universal polynomials in their variables. In  notation defined by Definition~\ref{notation}, 
$
\cL U_{j,PQ}^*(\cdot; 0)= \cL V_{j,QP}^*(\cdot; 0)=0$. Thus
\ga
\label{psik1}\Psi_{k+1}-I=(\hat U^{(k+1)}, \hat V^{(k+1)})=O(d_k), \\
\label{hUk+1}\hat U^{(k+1)}_{j,PQ}=\f{f^{(k)}_{j,PQ}}{\mu^{P-Q}-\mu_j},\quad
\hat V^{(k+1)}_{j,QP}=\f{g^{(k)}_{j,QP}}{\mu^{Q-P}-\mu_j^{-1}}, \quad |P|+|Q|=d_k.
\end{gather}
Here $ \mu^{P-Q}\neq
\mu_j.$
By \re{mjpp0}-\re{mjpp--}, we have
\al
\label{mjpp-2}
\hat M^{(k+1)}_{j,P'}&=M^{(k)}_{j,P'}, \quad |P'|<d_k,\\
\label{mjpp2}
 \hat M^{(k+1)}_{j,P}&=M^{(k)}_{j,P}+   \mu_j \left\{
 2(\hat U^{(k+1)}_j \hat V^{(k+1)}_j)_{PP}+(
 (\hat U^{(k+1)}_j)^2)_{(P+e_j)(P-e_j})  \right\}
 \\
 &\quad+\left\{Df^{(k)}_j(\xi,\eta)(\hat U^{(k+1)},\hat V^{(k+1)})\right\}_{(P+e_j)P}, \quad |P|=d_k.
\nonumber\end{align}
 As mentioned in \rrem{keyrem}, 
 one of consequence of the above formula is that the coefficients of $\hat M^{(k+1)}_j(\xi\eta)\xi_j$
 of degree $2d_k+1$ do not depend on the coefficients of $f^{(k)}, g^{(k)}$ of degree $\geq 2d_k$, provided
 $(f^{(k)},g^{(k)})=O(d_k)$  is in $\cL C_2^{\mathsf c}(S)$ as we assume.

 Next, we need to estimate the size of coefficients of $M^{(k)}$ 
  that appear
  in \re{mjpp-2}-\re{mjpp2}. Recall that we apply  two sequences of normalization. We have
 \eq{}  
 \nonumber
  \hat\sigma_{k+1}^\infty=\Psi_{k+1}^{-1}\circ\Phi_k^{-1}\cdots\Psi_1^{-1}
\circ \Phi_0^{-1}\circ \sigma^\infty\circ \Phi_0\circ\Psi_1\cdots\Phi_{k}\circ\Psi_{k+1}.
 \eeq
 Thus, $M^{(k)}, N^{(k)}$ depend  only
 on $\sigma^\infty$, $\Phi_0, \Psi_1, \Phi_1, \ldots, \Psi_{k-1},\Phi_k$. 
 
 Recall that if $u_1,\ldots, u_m$ are power seres, then
 $\{u_1,\ldots, u_m\}_d$ denotes the set of their coefficients of degree at most $m$,
 and $|\{u_1,\ldots, u_m\}_d|$ denotes the sup norm. 
We choose $\sigma^\infty$ in such a way that its coefficients of degree $m$ satisfy
 \eq{siginfP}
 \nonumber
|\{ \sigma^k\}_{m}|+|\{\sigma^\infty\}_{m}|\leq C^m. 
 \eeq
Here $C$ does not dependent on $k$,    $\mu_1,\mu_2,\mu_3$,  $d_k$, and  $h^{(k)}$.
 We also need some crude estimates on the
 growth of Taylor coefficients.  
 If $F=I+f$ and 
  $f=O(2)$ is a map in formal power series, then   \re{F-1P5}-\re{F-1GF} imply 
 \al\label{F-1P}
 |(F^{-1})_{P}|&\leq  |F_{P}|+ (2+|\{f\}_{m-1}|)^{\ell_m},\\
\label{GFP} |(G\circ F)_{P}|&\leq  |((LG)\circ F,G)_P|+(2+|\{f,G\}_{m-1}|)^{\ell_m},  \\
 |(F^{-1}\circ G\circ F)_P|&\leq |(G,(LG)\circ F,F\circ LG)_P|+(2+|\{f,G\}_{m-1}|)^{\ell_m},
\label{FGPF} \end{align}
 for $m=|P|>1$ and some positive integer $\ell_m$. 
 Inductively, let us  show that  for $k=0,1, 2, \ldots$, 
\begin{alignat}{2}\label{hmk1-6}
|\{\hat M^{(k)},\hat N^{(k)}\}_{P}|&\leq \del_{d_{k-1}-1}^{L_m}, \quad& &m=  2|P|+1<2d_{k-1},
\\
|\{\hat\sigma^\infty_{k}\}_{PQ}|&\leq \del_{  2d_{k-1}-1}^{L_m}, \quad &&m=|P|+|Q|\geq 2d_{k-1}.
\label{hsigke}\end{alignat}
  We emphasize that here and in what follows 
$L_m$ does not depend on the choices of $\mu_j, d_k,h^{(k)}$ which  satisfy the initial conditions but are arbitrary otherwise. 
The above estimates hold trivially for $k=0$ and $d_{-1}=2$, since $\hat\sigma_0^\infty=\sigma^\infty$ is convergent. For induction, we assume that \re{hmk1-6}-\re{hsigke} hold.  
We need  to  find possibly larger $L_m$ for $m\geq  2 d_{k-1}$  in order to verify
\begin{alignat}{3}\label{hmk1-6+1}
|\{\hat M^{(k+1)},\hat N^{(k+1)}\}_{P}|&\leq \del_{d_{k}-1}^{L_m}, \quad &&m=2|P|+1<2d_{k}, \\
|\{\hat\sigma^\infty_{k+1}\}_{PQ}|&\leq \del_{2d_{k}-1}^{L_m}, \quad &&m=|P|+|Q|\geq 2d_{k}.
\label{hsigke1}
\end{alignat} 
The  $\Phi_k=I+(U^{(k)}, V^{(k)})$  is a polynomial mapping. Its  degree is  at most $d_k-1$ and   its coefficients are polynomials in  
  $\{\hat\sigma_k\}_{d_k-1}$ and $\del_{d_k-1}$; see \re{ujpq6}-\re{vjpq6}. 
  Hence
 \ga\label{fjpqg}
  |U^{(k)}_{j,PQ}|+|V^{(k)}_{j,QP}|\leq \del_{  d_k-1}^{L_m}, \quad m=|P|+|Q|.
 \end{gather}
 Note that the inequality holds trivially for $m<2d_{k-1}$, in which case the left-hand side
 is zero and we do not change the value of $L_m$. For $m\geq 2d_{k-1}$, we might have to increase $L_m$ if necessary to obtain \re{fjpqg}. 
 Applying \re{FGPF} to $\sigma_k^\infty=\Phi_k^{-1}\hat\sigma_k^\infty\Phi_k$,  we obtain from \re{hsigke} and \re{fjpqg} that
 \ga
 |M^{(k)}_{j,P}|+|N^{(k)}_{j,P}|\leq \del_{d_k-1}^{L_m}, \quad m=2|P|+1,
\label{mkjpn} \\ \label{fjpqg+}  |f_{j,PQ}^{(k)}|+|g^{(k)}_{j,QP}|
 \leq \del_{  d_k-1}^{L_m}, \quad m=|P|+|Q|.
 \end{gather}
 Here we use that fact that since $d_k  \geq 2d_{k-1}$, the small divisors in $\del_{2d_{k-1}-1}$ appear in $\del_{d_k-1}$ too. Furthermore, \re{fjpqg+} holds trivially when $m<2d_{k-1}$, as in this case its left-hand side is $0$.  If $|P|<d_{k-1}$ (i.e. $m=2|P|+1<2d_{k-1}+1$),  from \re{mphmp} and \re{hmk1-6} it follows 
   \re{mkjpn}      for the same $L_m$ in \re{hmk1-6}. 
 For \re{fjpqg+} with $m\geq 2d_{k-1}$ and for \re{mkjpn} with $m\geq 2d_{k-1}+1$, we   might have
 to increase the value of $L_m$  so that they are valid.  
We further remark that for possibly  increased $L_m$,
 \re{hmk1-6}-\re{hsigke} remain valid. 
To obtain \re{hmk1-6+1}-\re{hsigke1},  we note that $\Psi_{k+1}$ is a polynomial map that has 
 degree  at most $2d_k-1$ and 
 the coefficients of degree $m$   bounded by $\del_{2d_k-1}^{L_m}$; 
see \re{ujpqc}-\re{ujpqc6}.  This shows that
\eq{hUVk}
|\hat U^{(k+1)}_{j,PQ}|+|\hat V^{(k+1)}_{j,QP}|\leq \del_{2d_k-1}^{L_m}, \quad |P|+|Q|=m.
\end{equation}
The argument to obtain \re{hmk1-6+1}-\re{hsigke1} for $\hat\sigma_{k+1}^\infty=\Psi_{k+1}^{-1}\sigma_k^\infty\Psi_{k+1}$ is similar to the one
to obtain \re{mkjpn}-\re{fjpqg+}  for $\sigma_{k}^\infty=\Phi_k^{-1}
\hat\sigma_k^\infty\Phi_k$. Of course,  we still use \re{FGPF}, while replacing \re{phiki},  \re{mphmp},  \re{hmk1-6}, and \re{hsigke} by \re{psik1}, 
\re{mjpp-2}, \re{mkjpn}, and \re{fjpqg+}, respectively. 
  We emphasize that the sequence $L_m$ can be chosen consistently,  as 
for  $d_k\to\infty$, we only    increase each $L_m$ for  finitely many times.

%
%

Let us summarize the above computation for $\hat\sigma^\infty$ defined by \re{hsiginf}. 
We know that $\hat\sigma^\infty$ is the unique power series such that $\hat\sigma^\infty-
\hat\sigma^\infty_k=O(d_k)$ for all $k$, and $\hat\sigma^\infty$ is a formal formal form of $\sigma^\infty$.
Let us write
\gan
\hat\sigma^{\infty}\colon\left\{
\begin{split} \xi'=\hat M^{\infty}(\xi\eta)\xi, \\
\eta'=\hat N^{\infty}(\xi\eta)\eta.
\end{split}\right.
 \end{gather*}
Let $|P|\leq d_k$. By \re{mphmp}, we get $\hat M^{(k+1)}_P=M^{(k+1)}_P$; by \re{mjpp-2} in which $k$ is replaced by $k+1$, 
we get $\hat M^{(k+2)}_P=M^{(k+1)}_P$ as $|P|\leq d_k<d_{k+1}$.  Therefore,
\eq{mkmdk}\hat M^{\infty}_P=\hat M^{(k+1)}_P, \quad |P|\leq d_k.
\eeq
For $|P|<d_k$,  \re{mjpp-2} says that $\hat M^{(k+1)}_{j,P}=M_P^{(k)}$; by \re{mkjpn} that holds for any $P$,  we obtain
\eq{hmk1}
|\hat M^{\infty}_P|=|\hat M^{(k+1)}_{j,P}|\leq \del_{d_k-1}^{L_{m}},\quad |P|<d_k, \quad m=2|P|+1.
\eeq

To obtain rapid increase of coefficients of $\hat M^{(k+1)}_{j,P}$, we want to use both small divisors
hidden in $\hat U^{(k)}_{j,PQ}$ and $\hat V^{(k)}_{j,QP}$ in \re{mjpp2}.   Therefore, if $M^{(k)}_{j,P}$  is already sufficiently
large for $|P|=d_k$ that will be specified later, we take $\var_k$ to be the identity, i.e. $\tau_1^k=\tau_1^{k-1}$. Otherwise, we need to achieve it by choosing
$$
\tau_1^k=\var_k^{-1}\tau_1^{k-1}\var_k.
$$
Therefore, we  examine the effect of a coordinate change by $\var_k$ on these coefficients.

Recall that we are in the elliptic case. We have 
$
\rho(\xi,\eta)=(\ov\eta,\ov\xi)
$
and $\tau_2^k=\rho\tau_1^k\rho.$ Recall that
$$
\var_k\colon\xi_j'=(\xi+h^{(k)}(\xi),\eta),\quad  \ord h^{(k)}=d_k>3.
$$
By a simple computation, we obtain   
\gan
\tau_1^{k}(\xi,\eta)=\tau_1^{k-1}(\xi,\eta)+(-h^{(k)}(\la\eta),\la^{-1}h^{(k)}(\xi))+O(|(\xi,\eta)|^{d_k+1}),\\
\tau_2^{k}(\xi,\eta)=\tau_2^{k-1}(\xi,\eta)+
(\la^{-1}\ov{h^{(k)}}(\eta),-\ov{h^{(k)}}(\la\xi))+O(|(\xi,\eta)|^{d_k+1}).
\end{gather*}
Then we have
\al
\label{sigkk}\sigma^{k}&=\sigma^{k-1}+(r^{(k)},s^{(k)})+O(d_k+1);\\
r^{(k)}(\xi,\eta)&=
 -\la\ov{h^{(k)}} (\la\xi)-h^{(k)}(\la^{2}\xi), 
 \nonumber
  \\  
s^{(k)}(\xi,\eta)&=
 \la^{-2}\ov{h^{(k)}}(\eta)+\la^{-1}h^{(k)}(\la^{-1}\eta). 
\nonumber
\end{align}
 Since $\sigma^k$ converges to $\sigma^\infty$, from \re{sigkk} it follows that
\eq{siginfk}
\sigma^\infty=\sigma^{k-1}+(r^{(k)}, s^{(k)})+O(d_k+1).
\eeq
For $|P|+|Q|=d_k$, we have
\eq{}\begin{array}{l}
r^{(k)}_{j,PQ}=\left\{ -\la_j\ov{h_j^{(k)}}(\la\xi)-h_j^{(k)}(\la^{2}\xi)\right\}_{PQ},
\vspace{1ex}\\ 
s^{(k)}_{j,QP}=\left\{ \la_j^{-2}\ov{h_j^{(k)}} (\eta)+\la_j^{-1}h_j^{(k)}(\la^{-1}\eta)\right\}_{QP}.
\end{array}\nonumber
\eeq
We obtain
\begin{alignat}{4} \label{gjgj}
r^{(k)}_{j, P0} &=-\la^{P+e_j}\ov{h^{(k)}_{j,P}}-\la^{2P} h^{(k)}_{j,P},   &&
\\
s^{(k)}_{j,0P} &=\la_j^{-2}\ov{h^{(k)}_{j,P}}+\la^{-P-e_j} h^{(k)}_{j,P},\quad && |P|=d_k,\\
\label{gjgj3}
r^{(k)}_{j, PQ}&=s^{(k)}_{j,QP}=0,\quad                                                    & &|P|+|Q|=d_k,  \ Q\neq0. 
\end{alignat}

The above computation is actually sufficient to construct a divergent normal form $\tilde\sigma
\in\cL C(  S)$. To show that all normal forms of $\sigma$ in $\cL C(  S)$ are divergent,
We need to related it to the normal form $\hat\sigma$ in \rt{ideal5}, which is unique. This requires us to
keep track of the small divisors in the normalization 
procedure in the proof of \rl{fcfp}.

Recall that  $  F^{(k+1)}=\log \hat M^{(k+1)}$ is defined by 
\eq{fjlogm}
F^{(k+1)}_j(\zeta)=\log(\mu_j^{-1}\hat M^{(k+1)}_j(\zeta))=\zeta_j+a_j^{(k+1)}(\zeta), \quad 1\leq j\leq 3.
\eeq
We also have $F^{\infty} =\log \hat M^{\infty}$ with $
F^{\infty}_j(\zeta)= \zeta_j+a_j^{\infty}(\zeta)$.
Then  by \re{mkmdk}, 
\eq{hajpa}
 a_{j,P}^{\infty}= a^{(k+1)}_{j,P}, \quad |P|\leq d_k.
\eeq
By \re{GFPg} and \re{fjlogm}, we have
\eq{fjpmuj} \nonumber
a^{(k+1)}_{j,P}(\zeta)= \mu_j^{-1} \hat M^{(k+1)}_{j,P}+ {\cL A}_{j,P}(\{ \hat M^{(k+1)}_j\}_{|P|-1}), \quad|P|>1.
\eeq
By \re{hmk1}, we have
\eq{Ajph}
| {\cL A}_{j,P}(\{ \hat M^{(k+1)}_j\}_{|P|-1})|\leq \del_{d_k-1}^{L_m}, \quad |P|=d_k, \quad m=2|P|+1.
 \eeq
 Recall from the formula \re{hfjqf}
  that  $F^{(k+1)}$, $F^\infty$ have the normal forms
$\hat F^{(k+1)}=I+\hat a^{(k+1)}$ and $\hat F^\infty =I+\hat a^\infty$, respectively.
The coefficients of $\hat a_{j,Q}^{(k+1)}$ and $\hat a_{j,Q}^{\infty}$
   are zero, except the ones given by
\aln
 \hat a^{(k+1)}_{j,Q}&=a^{(k+1)}_{j,Q}+ {\cL B}_{j,Q}(\{a^{(k+1)}\}_{|Q|-1}), 
 \\
  \hat a^{(\infty)}_{j,Q}&=a^{(\infty)}_{j,Q}+ {\cL B}_{j,Q}(\{a^{(\infty)}\}_{|Q|-1}), 
 \end{align*} 
  for $Q=(q_1,\ldots, q_p), q_j=0$, and $ |Q|>1$.
 Derived from the same normalization, the $ {\cL B}_{j,Q}$ in both formulae 
 stands for the same polynomial. 
 Hence $\hat a_{j,P}^{(\infty)}=\hat a_P^{(k+1)}$ for $|P|\leq d_k$, by \re{hajpa}.
Combining \re{mjpp2} and \re{mkmdk} yields 
\al\label{uvprod}
\hat a^{\infty}_{3,P_k} &= \hat a^{(k+1)}_{3,P_k}= 2(\hat U^{(k+1)}_3 \hat V^{(k+1)}_3)_{P_kP_k}  +((\hat U^{(k+1)}_3)^2)_{(P_k+e_3)(P_k-e_3)}
+\mu_3^{-1}M_{3,P_k}^{ (k)}
\\
&\qquad +\{Df^{( k)}_{3}(\xi,\eta)(\hat U^{(k+1)},\hat V^{(k+1)})\}_{(P_k+e_3)P_k} 
+\cL {A}_{  k, P_k}(\{
  \hat M^{(k+1)}\}_{|P_k|-1}).\nonumber
\end{align}
We regard $\hat a^{\infty}_{3,P_k}$ as polynomials in $(\mu^{P-Q}-\mu_j)^{-1}$.    The above
formula    holds for any $P_k$ with $|P_k|=d_k$.  

To examine the effect
of small divisors,  we assume  that $P_k=(p_k,q_k,0)$ are given by \rl{smallvec+}, so are $\mu_1,\mu_2$, 
and $\mu_3$.    Then the second term in \re{uvprod} is $0$ as the third component of $P_k-e_3$ is negative.
We apply the above computation to $$|P_k|=d_k.$$
  Taking a subsequence of $P_k$ if necessary, we may assume that $d_k\geq 2d_{k-1}$ and $d_{k-1}>3$ for all $k\geq1$.
 The $4$ exceptional small divisors of height $2|P_k|+1$ in \re{delspk} are
$$
\mu^{P_k}-\mu_3, \quad \mu^{-P_k}-\mu_3^{-1}, \quad \mu^{2P_k-e_3}-\mu_3, \quad \mu^{-2P_k+e_3}-\mu_3^{-1}.
$$
 The last two  cannot show up in $\hat a_{3,P_k}^\infty$, since   their degree,  $2d_k+1$,  is larger than 
 the degrees of Taylor coefficients in 
$\hat a_{3,P_k}$ . 
We have $3$ products of   the   two exceptional small divisors of height 
$2|P_k|+1$ and   degree $|P_k|$, which
are 
$$
(\mu^{P_k}-\mu_3)(\mu^{-P_k}-\mu_3^{-1}), \quad
(\mu^{P_k}-\mu_3)(\mu^{P_k}-\mu_3), \quad
(\mu^{-P_k}-\mu_3^{-1})(\mu^{-P_k}-\mu_3^{-1}).
$$
The first product, but none of  the other two, appears in  $(\hat U^{(k+1)}_3 \hat V^{(k+1)}_3)_{P_kP_k}$.
The third  term and $f_3^{(k)}$ in $\hat a_{3,P_k}^\infty$ 
do not contain  exceptional small divisors
of degree $|P_k|=d_k>2d_{k-1}-1$. 
  Since $f_3^{(k)}=O(d_k)$ by \re{fkgkd}, the  exceptional small divisors of height $2|P_k|+1$
can show up at most once in the fourth term of  $\hat a_{3,P_k}^\infty$.  
Therefore, we arrive at
\aln
&\hat a^{\infty}_{3,P_k}=2\hat U^{(k+1)}_{3,P_k0}\hat V^{(k+1)}_{3, 0P_k}+\hat{\cL A}_{k}^1(\del_{d_k-1},\f{1}{\mu^{P_k}-\mu_3}; \{f^{(k)},g^{(k)}\}_{d_k})\\
&\quad\quad \quad \  \ +\hat {\cL A}_{ k}^2(\del_{d_k-1};\{f^{(k)},g^{(k)}\}_{d_k})+\mu_3^{-1}M_{3,P_k}^{ (k)}+ \cL {A}_{ k, P_k}( \{
  \hat M^{(k+1)}\}_{|P_k|-1}),\\
&\hat{\cL A}_{k}^1(\del_{d_k-1},
  \f{1}{\mu^{P_k}-\mu_3}; \{f^{(k)},g^{(k)}\}_{d_k})
=(\hat U^{(k+1)}_{3,P_k0},\hat V^{(k+1)}_{3,0P_k})\cdot \hat {\cL A}_{k}^3(\del_{d_k-1};\{f^{(k)},g^{(k)}\}_{d_k}).
\end{align*}
By \re{mkjpn} and \re{Ajph}, we obtain $|M_{3,P_k}^{ (k)}|+| \cL {A}_{ k, P_k}( \{
  \hat M^{(k+1)}\}_{|P_k|-1})|\leq \del_{d_k-1}^{L_m}$ for $m=2d_k+1$. 
Omitting  the arguments in the polynomial functions, we obtain from \re{fjpqg}-\re{hUVk}, 
and \re{mkmdk} that
$$
|\hat{\cL A}_{k}^1|+|\hat{\cL A}_k^2|+|M_{3,P_k}^{(k)}|+|\cL A_{k,P_k}|
\leq\f{|\del_{d_k-1}(\mu)|^{ L_{  m}}}{|\mu^{P_k}-\mu_3|}, \quad m=2|P_k|+1,
$$
for a possibly larger $L_{m}$. 
  We remark that although each term in the inequality depends on the choices of the sequences 
$\mu_i,d_j,h^{(\ell)}$, the $L_m$   does not
depend on the choices, provided that $\mu_j,d_k,h^{(i)}$ satisfy our initial conditions. 
Therefore, we have
\eq{}
\nonumber
|\hat a^{\infty}_{3,P_k}|\geq2 |\hat U^{(k+1)}_{3,P_k0}\hat V^{(k+1)}_{3, 0P_k}|-|\del_{d_k-1}(\mu)|^{ L_{2|P_k|+1}}|\mu^{P_k}-\mu_3|^{-1}.
\eeq
Recall that $\sigma^\infty_k=
\Phi_k^{-1}\Psi_{k-1}^{-1}\cdots\Phi_0^{-1}\sigma^\infty\Phi_0\Psi_1\cdots\Phi_k$.
Set 
\eq{} 
\nonumber
\tilde\sigma^\infty_k:=
\Phi_k^{-1}\Psi_{k-1}^{-1}\cdots\Phi_0^{-1}\sigma^{k-1}\Phi_0\Psi_1\cdots\Phi_k.
\eeq
By \re{siginfk}, we get
\eq{sigtsig} 
\sigma^\infty_k=\tilde\sigma_k^\infty+(r^{(k)}, s^{(k)})+O(d_k+1).
\eeq
  Recall that  $\Phi_k$  depends only on coefficients of
$\hat\sigma^\infty_{k-1}=\Psi_{k-1}^{-1}\sigma^\infty_{k-2}\Psi_{k-1}$ of degree less than $d_k$, while $\Psi_{k-1}$
 depends only on coefficients of $\sigma_{k-1}^\infty=\Phi_{k-1}^{-1}\hat\sigma_{k-1}\Phi_{k-1}$ of
degree at most $2d_{k-1}-1$ which is less than $d_k$ too. Therefore,
 $\Phi_k, \Psi_{k-1}, \ldots, \Phi_0$   depend only on
coefficients of $\sigma^\infty$ of degree less than $d_k$. On the other hand, $\sigma^\infty=\sigma^{k-1}+O(d_k)$. 
Therefore,   $\tilde\sigma_k^\infty$  
depends only on  $\sigma^{k-1}$, and hence it depends only on $h^{(\ell)}$ for $\ell<k$. By \re{sigtsig}, 
we can express
\eq{fktfk} 
f^{(k)}_{j,PQ}=\tilde f^{(k)}_{j,PQ}+r^{(k)}_{j,PQ}, \quad 
g^{(k)}_{j,QP}=\tilde g^{(k)}_{j,QP}+s^{(k)}_{j,QP},\eeq
where $|P|+|Q|=d_k$ and  $\tilde f^{(k)}_{j,PQ}, \tilde g^{(k)}_{j,QP}$ depend only on 
$h^{(\ell)}$ for $\ell<k$.
Collecting  \re{hUk+1},   \re{fktfk}, and \re{gjgj}-\re{gjgj3}, we obtain
\eq{}
\nonumber
|\hat a^{\infty}_{3,P_k}|\geq2 \f{|T_k
|}{|\mu^{P_k}-\mu_3||\mu^{-P_k}-\mu_3^{-1}|}-\f{|\del_{d_k-1}(\mu)|^{ L_{2d_k+1}}}{|\mu^{P_k}-\mu_3|}
\eeq
with
\aln
&T_k =
(-\la^{P_k+e_3}\ov{h^{(k)}_{3,P_k}}-\la^{2P_k} h^{(k)}_{3,P_k} + \tilde f^{(k-1)}_{3,P_k0})
(\la_3^{-2}\ov{h^{(k)}_{3,P_k}}+\la^{-P_k-e_3} h^{(k)}_{3,P_k}+\tilde g^{(k-1)}_{3,0P_k})\\
&=-\la^{2P_k-2e_3}
(\la^{e_3-P_k}\ov{h^{(k)}_{3,P_k}}+ h^{(k)}_{3,P_k} -\la^{-2P_k}\tilde f^{(k-1)}_{3,  P_k0})
(\la^{e_3-P_k} h^{(k)}_{3,P_k}+\ov{h^{(k)}_{3,P_k}}+\la_3^2\tilde g^{(k-1)}_{3,0P_k}).\nonumber
\end{align*}
Set  $\tilde T_k(h^{(k)}_{3,P_k}):=-\la^{2e_3-2P_k}T_k$.
We are ready to choose $h^{(k)}_{3,P_k}$ to get a divergent normal form. We have either
$|\la^{P_k-e_3}+1|\geq1$ or $|\la^{P_k-e_3}-1|\geq1$. 
When the first case occurs, 
 one of $|\tilde T_k(0)|, |\tilde T_k(1)|, |\tilde T_k(-1)|$ is at least $1/4$; otherwise, 
 we would have
\aln
2|\la^{P_k-  e_3}+1|^2=|\tilde T_k(1)+\tilde T_k(-1)-2\tilde T_k(0)|<1,
\end{align*}
which is a contradiction.  
Here the first identity follows from the fact that $\tilde f^{(k)}_{j,PQ}, \tilde g^{(k)}_{j,QP}$ depend only on 
$h^{(\ell)}$ for $\ell<k$.
When the second case occurs, we conclude that one of $|\tilde T_k(0)|$, $|\tilde T_k(i)|$, $|\tilde T_k(-i)|$ is at least $1/4$. 
This shows that by taking $h^{(k)}_{3,P_k}$ to be one of $0, 1,-1,i$,  $-i$, we can achieve 
$$
|T_k|\geq\f{1}{4}\mu^{2P_k -2e_3}.
$$
Therefore,
\eq{divhf}
\nonumber
|\hat a^{\infty}_{3,P_k}|\geq\f{\mu^{2P_k-2e_3} }{2|\mu^{P_k}-\mu_3||\mu^{-P_k}-\mu_3^{-1}|} -\f{|\del_{d_k-1}(\mu)|^{ L_{2d_k+1}}}{|\mu^{P_k}-\mu_3|}.
\eeq
Recall that $\mu_3=e^e$. If $|\mu^{P_k}-\mu_3|<1$ then $1/2<\mu^{P_k-e_3}<2$. The above inequality implies
\eq{divhf+}
|\hat a^{\infty}_{3,P_k}|\geq \f{ \mu^{3P_k-3e_3} }{4|\mu^{P_k}-\mu_3|^2},
\eeq
provided
$$
|\mu^{P_k}-\mu_3|\leq  \f{1}{32} |\del_{d_k-1}(\mu)|^{-L_{2d_k+1}}, \quad |P_k|=d_k.
$$
For the last inequality  to hold, it suffices have
\eq{epqccc}
|\mu^{P_k}-\mu_3|\leq  |\del_{d_k-1}(\mu)|^{-L_{2d_k+1}-  1}, \quad |\del_{d_k-1}(\mu)|^{-1}<1/4.
\eeq
Note that the sequence $L_m$ does not depend on the choice of $\la$.   The existence of $\mu_1,\mu_2,\mu_3$
is ensured by \rl{smallvec+} as follows: We choose the sequence
$L_m$ in \rl{smallvec+}, denoted by $L_m'$ now,
so that  $|P_k|L_{k}'>  2L_{2d_k+1}+2$. Then   \re{epqccc} follows from \re{epqcc}, the definitions of $\del_{d_k-1}(\mu)$ by \re{delnmu}
and    of $\Delta^*(P_k)$ by \re{delspk}; indeed
\aln
|\mu^{P_k}-\mu_3|&\leq  (C\Delta^*(P_k))^{|P_k|L_k'}\leq  (\Delta^*(P_k)^{1/2})^{|P_k|L_k'}\\
&\leq (\delta_{d_k-1}(\mu))^{-|P_k|L_k'/2}
\leq |\del_{d_k-1}(\mu)|^{-L_{  2d_k+1}  -1}.\end{align*}
  Here the second inequality follows from       $C(\Delta^*(P_k))^{1/2}<1$ when $k$ is sufficiently large.
   The third inequality is obtained as follows.   The definition of $\Delta^*(P_k)$ and $|P_k|=d_k$
   imply that any small divisor
  in $\del_{d_k-1}(\mu)$ is contained in $\Delta^*(P_k)$. Also, $\Delta^*(P_k)<\mu_i^{-1}$
  for $i=1,2,3$ and $k$ sufficiently large.  Hence, $\Delta^*(P_k)
  \leq\del^{-1}_{d_k-1}(\mu)$, which gives us the third inequality. 
Without loss of generality, we may assume that $L_k>  k$. From \re{divhf+} and \re{epqccc}
it follows that 
$$ 
|\hat a_{3,P_k}^\infty|>\del_{d_{k-1}}^{d_k+1}(\mu)=\del_{d_{k-1}}^{|P_k|+1}(\mu),
$$
for $k$ sufficiently large.  As $\del_{d_k}(\mu)\to+\infty$, this shows that
the divergence of $ \hat F_3$   and  the
divergence of  the normal form $\hat\sigma$.

As mentioned earlier, \rt{ideal5} (iii) implies
that any normal form of  $\sigma$ that is  in the centralizer of $\hat S$ must diverge.
  \end{proof}

\setcounter{thm}{0}\setcounter{equation}{0}
\section{A unique formal normal form of  a real submanifold}\label{nfin}
 
 Recall that we consider submanifolds of which the complexifications  admit the maximum number of deck transformations.
   The deck transformations of $\pi_1$  are generated by 
   $\{\tau_{i1},\ldots, \tau_{1p}\}$. 
 We also set  $\tau_{2j}=\rho\tau_{1j}\rho$.  Each of $\tau_{i1},\ldots, \tau_{ip}$ fixes a hypersurface and
  $\tau_i=\tau_{11}\cdots\tau_{1p}$ is the unique deck transformation whose set of fixed points has
 the smallest dimension. We first normalize  the composition $\sigma=\tau_1\tau_2$.
 This normalization is reduced to   two normal form problems.
  In \rp{ideal0} we obtain  a transformation  $\Psi$  to transform $\tau_1,\tau_2,$
 and $\sigma$ into
 \begin{alignat*}{4}
 \tau_i^*\colon\xi_j'&=\Lambda_{ij}(\xi\eta)\eta_j,\quad &&\eta_j'=\Lambda_{ij}^{-1}(\xi\eta)\xi_j,\\
 \sigma^*\colon\xi_j'&=M_j(\xi\eta)\xi_j, \quad &&\eta_j'=M_j^{-1}(\xi\eta)\eta_j, \quad
 1\leq j\leq p.
 \end{alignat*}
 Here $\Lambda_{2j}=\Lambda_{1j}^{-1}$ and $M_j=\Lambda_{1j}^2$ are power series in the product
  $\zeta=(\xi_1\eta_1,\ldots, \xi_p\eta_p)$.
  We also  normalize the map $M\colon
 \zeta\to M(\zeta)$ by a transformation  $\var$ which preserves all coordinate
 hyperplanes. 
 This is the  second normal form problem, which is solved formally   in \rt{ideal5} 
under the condition on the normal form of $\sigma$, namely,
 that  $\log \hat M$ is tangent to the identity. 
 This   gives us a map $\Psi_1$
 which transforms  $\tau_1,\tau_2$, and $\sigma$
 into $\hat\tau_1,\hat\tau_2,\hat\sigma$ of the above form
  where $\Lambda_{ij}$ and $M_j$
become  $\hat\Lambda_{ij},\hat M_j$. 

In this section, we derive a {\bf unique formal normal form for
  $\{\tau_{11},\ldots, \tau_{1p},\rho\}$ under the above 
  condition on $\log\hat M$}.
In this     case, 
we know from \rt{ideal5} that $ {\cL C}(\hat\sigma)$ consists of   only  $2^p$ dilatations
\eq{upsi}
R_\e\colon (\xi_j,\eta_j)\to(\e_j\xi_j,\e_j\eta_j), \quad \e_j=\pm1, \quad 1\leq j\leq p. 
\eeq
  We will consider two cases. In the first case, we impose no restriction on the linear parts of $\{\tau_{ij}\}$ but
the coordinate changes are restricted to mappings that are tangent to the identity. The second is for the family $\{\tau_{ij}\}$
that arises from a higher order perturbation of a product quadric, while no restriction is imposed on the changes of coordinates. 
We will   show that in both cases, if the normal form  of $\sigma$ can be achieved
by a convergent transformation,  the normal form of $\{\tau_{11}, \ldots,\tau_{1p},\rho\}$
can be achieved by a convergent transformation too.

We now restrict our real submanifolds to some classes. First, we assume that $\sigma$ and $\tau_1,\tau_2$
are already in the normal form $\hat\sigma$ and $\hat\tau_{1}, \hat\tau_2$ such that
 \ga\label{htaix}
 \hat\tau_i\colon\xi'=\hat\Lambda_{i}(\xi\eta)\eta, \quad \eta'=\hat\Lambda_i(\xi\eta)^{-1}\xi, \quad\hat\Lambda_{2}=\hat\Lambda_1^{-1},\\
\label{hsixi} \hat\sigma\colon \xi'=\hat M(\xi\eta)\xi,\quad \eta'=\hat M(\xi\eta)^{-1}\eta, \quad \hat M=\hat\Lambda_1^2. 
\end{gather}

Let us start with the general situation   without imposing the restriction on  the linear part of $\log M$.
Assume that 
$\hat\sigma$ and $\hat\tau_i$ are in the above forms. 
Recall that $\mathbf Z_j=\diag(1,\ldots, -1,\ldots, 1)$ with $-1$ at the $(p+j)$-th place, and $\mathbf
Z:=\mathbf Z_1\cdots \mathbf Z_p$.   Let $Z_j$ (resp. $Z$) be the linear transformation with the matrix $\mathbf Z_j$
(resp. $\mathbf Z$).   We also use notation
\ga
\mathbf B_*=\begin{pmatrix}
\mathbf I & \mathbf 0 \\
\mathbf 0 &\mathbf  B
\end{pmatrix},\quad
\mathbf E_{\mathbf{\hat{ \Lambda}}_i}=\begin{pmatrix}
\mathbf I &\mathbf {\hat\Lambda}_i  \\
-\mathbf {\hat\Lambda}_i^{-1} &\mathbf  I
\end{pmatrix}.
\label{t1jd+l}
\end{gather}
Here $\mathbf B$, as well as $\mathbf {\hat\Lambda}_i$   given by \re{htaix},   is a non-singular complex $(p\times p)$ matrix. Define two transformations
\eq{bise}
(B_i)_*\colon \begin{pmatrix}\xi\\ \eta\end{pmatrix}\to
 (\mathbf B_i)_* \begin{pmatrix}\xi\\ \eta\end{pmatrix}, \quad
  E_{\mathbf {\hat\Lambda}_i}\colon \begin{pmatrix}\xi\\ \eta\end{pmatrix}\to
\begin{pmatrix}
\mathbf I &\mathbf {\hat\Lambda}_i(\xi\eta)  \\
-\mathbf {\hat\Lambda}_i^{-1}(\xi\eta) &\mathbf  I
\end{pmatrix} \begin{pmatrix}\xi\\ \eta\end{pmatrix}. 
\eeq
  Let us assume that 
in suitable linear coordinates,   the linear parts of two families
of involutions $\{\tau_{i1}, \ldots, \tau_{ip}\}$ for $i=1,2$ are given by 
\eq{ltij}
L\tau_{ij}=T_{ij}, \quad
{ T}_{ij}= { E}_{\mathbf{\Lambda}_i}({ B}_i)_*{ Z}_j({ B}_i)_*^{-1}{ E}_{\mathbf {\Lambda}_i}^{-1}, \quad
 \mathbf{\Lambda_i}=\hat{\mathbf\Lambda}_i(0).
\eeq
Here $T_{ij}$ are in the normal forms described in  \rl{unsol} or in \rp{2tnorm}.
 
Note that $(B_i)_*$ commutes with $Z$.   Also,
$E_{\mathbf {\hat\Lambda}_i}\circ\hat\tau_i=Z\circ E_{\mathbf {\hat\Lambda}_i}$.
 Let us set
\ga
\label{t1jdl}
\hat \tau_{ij}:=  E_{\mathbf {\hat\Lambda}_i }\circ ( B_i)_*\circ Z_j\circ
( B_i)_*^{-1}\circ E_{\mathbf {\hat\Lambda}_i }^{-1}
\end{gather}
and we have   $\hat\tau_1=\hat\tau_{11}\cdots\hat\tau_{1p}$. 
The following lemma is analogous to the scheme used to classify the quadrics with the maximum number of  deck transformations. 
The lemma provides a way to represent all involutions $\{\tau_{11}, \ldots, \tau_{12^p}, \rho\}$ provided that we already have
a normal form for $\sigma$. 
\begin{lemma}\label{clas}  Let $\{\tau_{1j}\}$ and
$\{\tau_{2j}\}$
be two families  of formal holomorphic commuting involutions.
Let $\tau_i=\tau_{i1}\cdots\tau_{ip}$ and $\sigma=\tau_1\tau_2$. Suppose that
\gan
\tau_{i}=\hat\tau_i\colon\xi_j'
=\hat\Lambda_{ij}(\xi\eta)\eta_j, \quad \eta_j'=\hat\Lambda_{ij}(\xi\eta)^{-1}\xi_j;\\
\sigma=\hat\sigma\colon\xi_j'=\hat
M_j(\xi\eta)\xi_j, \quad \eta_j'=\hat M_j(\xi\eta)^{-1}\eta_j
\end{gather*}
with $\hat M_j=\hat\Lambda_{1j}^2$. Assume further that the linear parts $T_{ij}$ of $\tau_{ij}$,  given by \rea{ltij}
are in normal forms in \rla{unsol} or  \rpa{2tnorm}.
 Then we have the following~$:$
\bppp
\item For $i=1,2$
there exists $\Phi_i\in\cL C(\hat{\tau}_{i})$, which is tangent to
the identity,
 such that
$
\Phi_i^{-1}\tau_{ij}\Phi_i=\hat \tau_{ij}
$
for $1\leq j\leq p$.
\item 
Let $\{\tilde\tau_{1j}\}$ and
$\{\tilde\tau_{2j}\}$
be two families  of formal holomorphic commuting involutions.
Suppose that $\tilde\tau_{i}=\hat\tau_i$ and $\tilde\sigma=\hat\sigma$ and
$
\tilde\Phi_i^{-1}\tilde\tau_{ij}\tilde\Phi_i=\widehat{\tilde\tau}_{ij}$ with $\tilde\Phi_i\in\cL C(\hat\tau_i)
$ being tangent to the identity and  
\ga
\label{t1jdls}
\nonumber
\widehat{\tilde\tau}_{ij}=  E_{{\mathbf{\hat\Lambda}}_i}\circ (\tilde B_i)_*\circ Z_j\circ
(\tilde B_i)_*^{-1}\circ E_{\mathbf{\hat\Lambda}_i}^{-1}.
\end{gather}
  Here for $i=1,2$, the matrix $\mathbf {\tilde B}_i$ of $\tilde B_i$ is non-singular. 
Then  
\ga
\Upsilon^{-1}\tau_{ij}\Upsilon=\tilde\tau_{i\nu_i(j)},
\label{bs-1}
\nonumber
\end{gather}
 if and only if 
there exist $\Upsilon\in\cL C(\hat{\tau}_{1},\hat{\tau}_{2})$ and    $\Upsilon_i\in\cL C(\hat\tau_i)$ such
that
\ga\label{tpup}
\tilde\Phi_i=\Upsilon^{-1}\circ\Phi_i\circ\Upsilon_i,\quad i=1,2,\\
\label{el1-}
\nonumber
\Upsilon_i^{-1}\hat\tau_{ij}\Upsilon_i=\widehat {\tilde\tau}_{i\nu_i(j)},\quad 1\leq j\leq p.
\end{gather}
Here each $\nu_i$ is a permutation of $\{1,\ldots, p\}$. 
\item
Assume further that $\tau_{2j}=\rho\tau_{1j}\rho$ with $\rho$ being  defined by  \rea{eqrh}.
Define $\hat \tau_{1j}$ by \rea{t1jdl} and  let
\eq{t2rd}
\nonumber
\hat\tau_{2j}\colonequals\rho\hat\tau_{1j}\rho.
\eeq
Then
we can choose $\Phi_2=\rho\Phi_1\rho$ for $(i)$. Suppose that $\tilde\Phi_2=\rho\tilde\Phi_1\rho$ where $\tilde\Phi_1$ is as in $(ii)$. Then $\{\tilde\tau_{1j},\rho\}$ is equivalent to
$\{\tau_{1j},\rho\}$  if and only if
there exist $\Upsilon_i$,   $\nu_i$ with $\nu_2=\nu_1$,  and $\Upsilon$ satisfying the
conditions in $(ii)$ and
 $\Upsilon_2=\rho\Upsilon_1\rho$. The latter
 implies that $\Upsilon\rho=\rho\Upsilon$.
\eppp
\end{lemma}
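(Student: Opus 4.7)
The plan is to follow the scheme used for the classification of quadrics in Section~\ref{secquad}, lifted to the formal (non-linear) setting. For part (i), I would start by invoking the formal version of \rla{sd} applied to the commuting formal involutions $\tau_{i1},\ldots,\tau_{ip}$: since their fixed-point sets are transverse hyperplanes, there exists a formal biholomorphism $\Psi_i$ with $\Psi_i^{-1}\tau_{ij}\Psi_i=Z_j$ for all $j$. The linear part $L\Psi_i$ then conjugates $T_{ij}$ to $Z_j$, and by the hypothesis \rea{ltij} we have $T_{ij}=E_{\mathbf{\Lambda}_i}(B_i)_*Z_j(B_i)_*^{-1}E_{\mathbf{\Lambda}_i}^{-1}$. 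Since $\Psi_i$ is unique up to composition on the right with elements of the centralizer of $\{Z_1,\ldots,Z_p\}$, I can normalize so that $L\Psi_i=E_{\mathbf{\Lambda}_i}(B_i)_*$. Then I set
\[
\Phi_i:=\Psi_i\circ(B_i)_*^{-1}\circ E_{\mathbf{\hat\Lambda}_i}^{-1}.
\]
A direct computation gives $\Phi_i^{-1}\tau_{ij}\Phi_i=E_{\mathbf{\hat\Lambda}_i}(B_i)_*Z_j(B_i)_*^{-1}E_{\mathbf{\hat\Lambda}_i}^{-1}=\hat\tau_{ij}$. Moreover, the linear part of $\Phi_i$ equals $E_{\mathbf{\Lambda}_i}(B_i)_*(B_i)_*^{-1}E_{\mathbf{\Lambda}_i}^{-1}=I$ (using $\hat\Lambda_i(0)=\Lambda_i$), so $\Phi_i$ is tangent to the identity. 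Finally, to verify $\Phi_i\in\cL C(\hat\tau_i)$, I use that $(B_i)_*$ commutes with $Z=Z_1\cdots Z_p$ (both are block-diagonal of the form $\diag(I,\cdot)$), so $E_{\mathbf{\hat\Lambda}_i}^{-1}\hat\tau_i=ZE_{\mathbf{\hat\Lambda}_i}^{-1}$ and $\Psi_iZ=\hat\tau_i\Psi_i$, which chain together to yield $\Phi_i\hat\tau_i=\hat\tau_i\Phi_i$.

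For part (ii), the forward direction is a bookkeeping exercise. If $\Upsilon^{-1}\tau_{ij}\Upsilon=\tilde\tau_{i\nu_i(j)}$, taking the composition over $j$ and using $\tau_i=\tilde\tau_i=\hat\tau_i$ gives $\Upsilon\in\cL C(\hat\tau_1,\hat\tau_2)$. I then \emph{define} $\Upsilon_i:=\Phi_i^{-1}\Upsilon\tilde\Phi_i$, so that the required decomposition $\tilde\Phi_i=\Upsilon^{-1}\Phi_i\Upsilon_i$ holds by construction. Substituting the defining relations for $\Phi_i,\tilde\Phi_i,\Upsilon$ and computing
\[
\Upsilon_i^{-1}\hat\tau_{ij}\Upsilon_i=\tilde\Phi_i^{-1}\Upsilon^{-1}\tau_{ij}\Upsilon\tilde\Phi_i
=\tilde\Phi_i^{-1}\tilde\tau_{i\nu_i(j)}\tilde\Phi_i=\widehat{\tilde\tau}_{i\nu_i(j)},
\]
gives the second condition; taking a product over $j$ and using that $\widehat{\tilde\tau}_{i1}\cdots\widehat{\tilde\tau}_{ip}=\hat\tau_i$ (again via $(\tilde B_i)_*Z=Z(\tilde B_i)_*$) yields $\Upsilon_i\in\cL C(\hat\tau_i)$. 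The converse is the same computation run backwards.

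For part (iii), the reality statement is obtained by applying $\rho(\cdot)\rho$ to the constructions. Setting $\Phi_2:=\rho\Phi_1\rho$ one checks directly that $\Phi_2^{-1}\tau_{2j}\Phi_2=\rho\Phi_1^{-1}\tau_{1j}\Phi_1\rho=\rho\hat\tau_{1j}\rho=\hat\tau_{2j}$, and $\rho\Phi_1\rho\in\cL C(\rho\hat\tau_1\rho)=\cL C(\hat\tau_2)$. For the equivalence criterion, assume $\{\tilde\tau_{1j},\rho\}$ is equivalent to $\{\tau_{1j},\rho\}$ via some $\Upsilon$ with $\Upsilon\rho=\rho\Upsilon$; this gives permutations $\nu_1$ (and automatically $\nu_2=\nu_1$ since $\tau_{2j}=\rho\tau_{1j}\rho$). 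Defining $\Upsilon_i:=\Phi_i^{-1}\Upsilon\tilde\Phi_i$ as in (ii), the identity $\Upsilon_2=\rho\Upsilon_1\rho$ follows by conjugating the formula for $\Upsilon_1$ by $\rho$ and using $\tilde\Phi_2=\rho\tilde\Phi_1\rho$, $\Phi_2=\rho\Phi_1\rho$, $\Upsilon\rho=\rho\Upsilon$. Conversely, assuming the existence of $\Upsilon,\Upsilon_i,\nu_i$ satisfying (ii) together with $\Upsilon_2=\rho\Upsilon_1\rho$, I apply $\rho(\cdot)\rho$ to $\tilde\Phi_1=\Upsilon^{-1}\Phi_1\Upsilon_1$ to get
\[
\tilde\Phi_2=(\rho\Upsilon^{-1}\rho)\Phi_2(\rho\Upsilon_1\rho)=(\rho\Upsilon^{-1}\rho)\Phi_2\Upsilon_2,
\]
and comparing with $\tilde\Phi_2=\Upsilon^{-1}\Phi_2\Upsilon_2$ yields $\rho\Upsilon^{-1}\rho=\Upsilon^{-1}$, i.e. $\Upsilon\rho=\rho\Upsilon$, provided $\Upsilon$ is determined by the decomposition in (ii) — which it is, by part (i) plus the uniqueness of the formal simultaneous linearization. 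The main delicate point, and the one I would check most carefully, is this uniqueness: the $\Phi_i$ constructed in (i) is only unique up to the ambiguity in choosing $\Psi_i$ (a right factor in $\cL C(Z_1,\ldots,Z_p)$), and one must absorb this ambiguity into the $\Upsilon_i$ to make the decomposition $\tilde\Phi_i=\Upsilon^{-1}\Phi_i\Upsilon_i$ genuinely well-defined and to propagate the reality condition from $\Upsilon_2=\rho\Upsilon_1\rho$ back to $\Upsilon\rho=\rho\Upsilon$.
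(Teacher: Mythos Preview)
Your proposal is correct and follows essentially the same route as the paper. In part (i) the paper linearizes both $\{\tau_{ij}\}$ and $\{\hat\tau_{ij}\}$ to their common linear part $T_{ij}$ via maps $\psi_i$ and $\hat\psi_i$ tangent to the identity, then takes $\Phi_i=\psi_i\hat\psi_i^{-1}$; your explicit formula $\Phi_i=\Psi_i\circ(B_i)_*^{-1}\circ E_{\mathbf{\hat\Lambda}_i}^{-1}$ amounts to the same thing with $\hat\psi_i$ written out concretely. Parts (ii) and (iii) match the paper's bookkeeping almost verbatim.

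One remark: your closing worry about uniqueness in (iii) is unnecessary. Once $\Phi_i,\tilde\Phi_i$ are fixed, the relation $\tilde\Phi_i=\Upsilon^{-1}\Phi_i\Upsilon_i$ is equivalent to $\Upsilon=\Phi_i\Upsilon_i\tilde\Phi_i^{-1}$, so from $\Upsilon_2=\rho\Upsilon_1\rho$ you get directly
\[
\rho\Upsilon\rho=\rho\Phi_1\Upsilon_1\tilde\Phi_1^{-1}\rho=\Phi_2\Upsilon_2\tilde\Phi_2^{-1}=\Upsilon,
\]
with no appeal to uniqueness of any simultaneous linearization. This is exactly how the paper closes the argument.
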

\begin{proof}
(i) 
Note that $\hat\tau_{ij}$ is conjugate to
$Z_j$ via the   map  
$E_{\boldsymbol{\hat{\Lambda}_i}}\circ (B_i)_*$. Fix $i$.  Each  $\hat\tau_{ij}$
is an involution
and its set of fixed-point is a hypersurface. Furthermore,  $\fix(\tau_{11}), \ldots, \fix(\tau_{1p})$
intersect
transversally at the origin.  By \rl{sd} there exists a formal mapping $\psi_i$
 such that
$\psi_i^{-1}\tau_{ij}\psi_i=L\tau_{ij}$.  Now  $L\psi_i$ commutes with $L\tau_{ij}$,
Replacing $\psi_i$ by $\psi_i(L\psi_i)^{-1}$, we may assume that $\psi_i$ is tangent
to the identity. We also find a formal mapping $\hat\psi_i$, which is tangent to the identity,  such
that $\hat\psi^{-1}_i\hat\tau_{ij}\hat\psi_i=L\hat\tau_{ij}=L\tau_{ij}$. Then  
 $\Phi_i= \psi_i\hat\psi_i^{-1}$ fulfills the requirements.

(ii) Suppose that
\eq{tijp}\nonumber
\tau_{ij}=\Phi_i\hat \tau_{ij}\Phi_i^{-1},\quad
\tilde\tau_{ij}=\tilde\Phi_i\widehat {\tilde\tau}_{ij}\tilde\Phi_i^{-1}.
\eeq
Assume that there is a formal biholomorphic mapping $\Upsilon$ that transforms $\{\tau_{ij}\}$ into $\{\tau_{ij}\}$
for $i=1,2$. Then  
\eq{UtUt}
\Upsilon^{-1}\tau_{ij}\Upsilon=\tilde\tau_{i\nu_i(j)}, \quad j=1,\ldots, p,\  i=1,2.
\eeq
Here $\nu_i$ is a permutation of $\{1,\ldots, p\}$. Then
\eq{upsh}
\Upsilon\hat\tau_i=\hat\tau_i\Upsilon, \quad \Upsilon\hat\sigma=\hat\sigma\Upsilon.
\eeq
  Set
$\Upsilon_i\colonequals\Phi_i^{-1}\Upsilon \tilde \Phi_i$. We obtain
\ga\label{el1}
\Upsilon_i^{-1}\hat\tau_{ij}\Upsilon_i=\widehat {\tilde\tau}_{i\nu_i(j)},\quad 1\leq j\leq p,\\
\label{pitp}
\tilde\Phi_i= \Upsilon^{-1}\Phi_i\Upsilon_i, \quad i=1,2.
\end{gather}
Conversely, assume that \re{upsh}-\re{pitp}
 are valid. 
 Then \re{UtUt} holds as
\ga
\label{el2}
\nonumber
\Upsilon^{-1}\tau_{ij}\Upsilon=\Upsilon^{-1}\Phi_i
\hat\tau_{ij}\Phi_i^{-1}\Upsilon= \tilde\Phi_i \Upsilon_i^{-1}
\hat \tau_{ij} \Upsilon_i\tilde\Phi_i^{-1} =\tilde\tau_{\nu_i(j)}. 
\end{gather}
  

(iii) Assume that we have the reality assumption
 $\tau_{2j}=\rho\tau_{1j}\rho$ and
$\tilde\tau_{2j}=\rho\tilde\tau_{1j}\rho$. As before,
we take $\Phi_1$,  tangent to the identity, such that  $\tau_{1j}=\Phi_1\hat \tau_{1j}\Phi_1^{-1}$. Let $\Phi_2
=\rho\Phi_1\rho$. By $\hat \tau_{2j}=\rho \hat \tau_{1j}\rho$, we get $\tau_{2j}=\rho\tau_{1j}\rho=\Phi_2\hat \tau_{2j}\Phi_2^{-1}$
  for $\nu_2=\nu_1$.
Suppose that $\tilde\Phi_i$ satisfy the analogous properties
for $\tilde \tau_{1j}$ and $\rho$.
Suppose that $\Upsilon^{-1}\tau_{ij}\Upsilon=\tilde \tau _{i\nu_i(j)}$, $\nu_2=\nu_1$, 
 and $\Upsilon\rho=\rho\Upsilon$. Letting
$\Upsilon_i= \Phi_i^{-1}\Upsilon\tilde\Phi_i$  we get
$\Upsilon_2=\rho\Upsilon_1\rho$. Conversely, if   $\Upsilon_1$ and $
\Upsilon_2$ satisfy 
 $\Upsilon_2=\rho\Upsilon_1\rho$, then
$$\rho\Upsilon\rho=\rho\Phi_1\Upsilon_1\tilde\Phi_1^{-1}\rho=
\Phi_2\Upsilon_2\tilde\Phi_2^{-1}=\Upsilon.$$
This shows that $\Upsilon$ satisfies the reality condition.
\end{proof}

  Now we assume that 
    $\hat F=\log\hat M$ is 
  tangent to the identity and is in the 
      normal form   
   \re{hfj1}. Recall the latter means that
  the $j$th component of $\hat F-I$ is independent of the $j$ variable. 
We assume that the linear part $T_{ij}$ of $\tau_{ij}$ are given by \re{ltij}, where the non-singular matrix $\mathbf B$ is arbitrary.
  As mentioned earlier in this section,  the group of formal biholomorphisms that preserve  the form of $\hat\sigma$ consists of only linear involutions
$R_\e$ defined by \re{upsi}.  This   restricts  the holomorphic equivalence classes of the quadratic parts of $M$. 
By \rp{2tnorm}, such quadrics are classified 
by a more restricted equivalence relation,   namely, 
$(\mathbf {\tilde B_1},\mathbf {\tilde B}_2)\sim (\mathbf B_1,\mathbf B_2)$, if and only if
$$ 
\tilde {\mathbf B}_i= \mathbf R_\e^{-1}\mathbf B_i\diag_{  \nu_i}\mathbf d, \quad i=1,2.
$$ 
For simplicity, we will now fix a representative $\mathbf B_1,\mathbf B_2$ for its equivalence class. 

Using the normal form $\{\hat\tau_{1},\hat\tau_2\}$ and the matrices $\mathbf B_1,\mathbf B_2$, we first decompose
$\hat\tau_{i}=\hat\tau_{11}\cdots\hat\tau_{1p}$. By \rl{clas} (i), we then find $\Phi_i$   such
that 
\eq{tauPh}
\nonumber
\tau_{ij}=\Phi_i\hat\tau_{ij} \Phi_i^{-1}, \quad 1\leq j\leq p.
\eeq
For each $i$,  $\Phi_i$  commutes with $\hat\tau_i$. It is within this family of $\Phi_i\in\cL C(\hat\tau_i)$
for $i=1,2$ that we will find a normal form for $\{\tau_{ij}\}$.  When restricted  to $\tau_{2j}=\rho\tau_{1j}\rho$,  the classification
 of   the real submanifolds is within the family of  $\{\{\tau_{1j}\},\{\tau_{2j}\}\}$ as described above and such that
$$
\Phi_2=\rho\Phi_1\rho. 
$$

 From \rl{clas} (ii),    the equivalence relation on $\cL C(\hat\tau_i)$ is given by
\ga
\label{pitp+}
\nonumber
\tilde\Phi_i= \Upsilon^{-1} \Phi_i\Upsilon_i, \quad i=1,2.
\end{gather}
Here $\Upsilon_i$ and $\Upsilon$   satisfy
\eq{el1+}
\nonumber
\Upsilon_i^{-1}\hat\tau_{ij}\Upsilon_i=\hat \tau_{i\nu_i(j)},\quad 1\leq j\leq p;
\quad\Upsilon\hat\tau_i\Upsilon^{-1}=\hat\tau_i,\quad i=1,2.
\eeq

We now construct  a normal form for $\{\tau_{ij}\}$ within the above family.
Let us first use the centralizer of $\cL C^{\mathsf c}(Z_1,\ldots, Z_p)$, described in \rl{lehphi}, 
to define the
complement of the
centralizer of the family of non-linear commuting involutions 
$\{\hat\tau_{11}, \ldots, \hat\tau_{1p}\}$.
Recall that 
the mappings  $E_{\mathbf{\hat\Lambda}_i}$ and $(B_i)_*$ are defined by   \re{bise}.
According to \rl{lehphi}, we have the following.
\begin{lemma}\label{cnnl} Let $i=1$,  $2$.
Let $\{\hat\tau_{i1},\ldots, \hat\tau_{ip}\}$ be given by \rea{t1jdl}. Set
\ga
\nonumber
\cL E_i := E_{\boldsymbol{\hat\Lambda}_i}\circ (B_i)_*.
\end{gather}
Then ${\cL C}(\hat\tau_{i1}, \ldots, \hat\tau_{ip})
=\left\{\cL E_i \phi_0 \cL E_i^{-1}\colon
\phi_0\in{\cL C}(Z_1, \ldots,  Z_p)\right\}$. Set 
\ga
{\cL C}^{\mathsf c}(\hat\tau_{i1},\ldots, \hat\tau_{ip})
\colonequals\left\{\cL E_i \psi_1 \cL E_i^{-1}\colon
\psi_1\in{\cL C}^{\mathsf c}(Z_1,\ldots, Z_p)\right\}.\label{ccht}
\nonumber
\end{gather}
Each formal biholomorphic mapping $\psi$ admits a unique decomposition $\psi_1\psi_0^{-1}$
with $$\psi_1\in{\cL C}^{\mathsf c}(\hat\tau_{i1}, \ldots, \hat\tau_{ip}),\quad \psi_0\in
{\cL C}(\hat\tau_{i1}, \ldots, \hat\tau_{ip}).$$ If $\hat\tau_{ij}$ and $\psi$
are convergent, then $\psi_0,\psi_1$ are convergent.
  Assume further that $\tau_{2j}=\rho\tau_{1j}\rho$ with $\rho$ being 
given by \rea{eqrh}. Then $\rho\phi_1\rho\in \cL C^{\mathsf c}(\hat\tau_{11}, \ldots, \hat\tau_{1p})$ for $\phi_1\in \cL C^{\mathsf c}(\hat\tau_{21}, \ldots, \hat\tau_{2p})$.
\end{lemma}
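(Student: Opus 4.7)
The first equality is immediate from the definitional identity $\hat\tau_{ij}=\cL E_i Z_j\cL E_i^{-1}$ of \rea{t1jdl}: a formal biholomorphism $\phi$ commutes with every $\hat\tau_{ij}$ if and only if $\cL E_i^{-1}\phi\cL E_i$ commutes with every $Z_j$, so conjugation by $\cL E_i$ is a bijection from $\cL C(Z_1,\ldots,Z_p)$ onto $\cL C(\hat\tau_{i1},\ldots,\hat\tau_{ip})$.

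For the decomposition, given a formal biholomorphism $\psi$ tangent to the identity, I would set $\tilde\psi:=\cL E_i^{-1}\psi\cL E_i$; since the linear part of $\cL E_i$ is invertible, $\tilde\psi$ is again tangent to the identity. Applying \rl{lehphi} yields a unique factorization $\tilde\psi=\tilde\psi_1\tilde\psi_0^{-1}$ with $\tilde\psi_0\in\cL C(Z_1,\ldots,Z_p)$ and $\tilde\psi_1\in\cL C^{\mathsf c}(Z_1,\ldots,Z_p)$. Setting $\psi_k:=\cL E_i\tilde\psi_k\cL E_i^{-1}$ for $k=0,1$ then gives $\psi=\psi_1\psi_0^{-1}$, with $\psi_0\in\cL C(\hat\tau_{i1},\ldots,\hat\tau_{ip})$ by the first step and $\psi_1\in\cL C^{\mathsf c}(\hat\tau_{i1},\ldots,\hat\tau_{ip})$ by the very definition \rea{ccht}. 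Uniqueness transfers from the uniqueness in \rl{lehphi}. When the $\hat\tau_{ij}$ are convergent, the power series $\hat\Lambda_i$ and the matrix $\mathbf B_i$ are convergent, hence so is $\cL E_i$; convergence of $\psi$ then yields convergence of $\tilde\psi$, convergence of $\tilde\psi_0,\tilde\psi_1$ by the convergence clause of \rl{lehphi}, and finally convergence of $\psi_0,\psi_1$.

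For the reality assertion, combining $\hat\tau_{ij}=\cL E_i Z_j\cL E_i^{-1}$ with $\hat\tau_{2j}=\rho\hat\tau_{1j}\rho$ yields $\Phi Z_j=Z_j\Phi$ for every $j$, where $\Phi:=\cL E_2^{-1}\rho\cL E_1$ is a formal antiholomorphic map with inverse $\Phi^{-1}=\cL E_1^{-1}\rho\cL E_2$ (indeed $\Phi\Phi^{-1}=\cL E_2^{-1}\rho^2\cL E_2=I$). For $\phi_1=\cL E_2\psi_1\cL E_2^{-1}$ with $\psi_1\in\cL C^{\mathsf c}(Z_1,\ldots,Z_p)$ one checks directly that $\cL E_1^{-1}(\rho\phi_1\rho)\cL E_1=\Phi^{-1}\psi_1\Phi$, so it remains to show that $\Phi$-conjugation preserves $\cL C^{\mathsf c}(Z_1,\ldots,Z_p)$.

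The main obstacle is exactly this last step. Because $\Phi$ commutes with each $Z_j$, its linear part must preserve every common eigenspace, i.e. the $\xi$-subspace and each individual $\eta_j$-axis, and hence preserves the $Z_j$-parity of every monomial in $(\xi,\eta)$. The defining conditions \rea{vjpej} of $\cL C^{\mathsf c}(Z_1,\ldots,Z_p)$ are precisely coefficient-vanishing conditions indexed by these parities; since $\Phi$-conjugation acts as complex conjugation of Taylor coefficients composed with conjugation by the holomorphic map underlying $\Phi$ (which itself commutes with each $Z_j$), an induction on degree shows that $\Phi$-conjugation commutes with the degree-preserving projection supplied by \rl{fhg-} in the proof of \rl{lehphi}, and thus carries $\cL C^{\mathsf c}(Z_1,\ldots,Z_p)$ into itself. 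This completes the reality assertion.
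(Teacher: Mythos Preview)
Your treatment of the first three assertions --- the identification of $\cL C(\hat\tau_{i1},\ldots,\hat\tau_{ip})$ via conjugation by $\cL E_i$, the transfer of the decomposition from \rl{lehphi}, and the transfer of convergence --- is correct and is exactly what the paper intends (it simply says ``According to \rl{lehphi}, we have the following'' and gives no further argument).

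The gap is in the last reality assertion. Your reduction to showing that conjugation by $\Phi=\cL E_2^{-1}\rho\cL E_1$ carries $\cL C^{\mathsf c}(Z_1,\ldots,Z_p)$ into itself is set up correctly, and $\Phi$ does commute with every $Z_j$. But the step you then appeal to --- that conjugation by an (anti)holomorphic map commuting with each $Z_j$ preserves $\cL C^{\mathsf c}(Z_1,\ldots,Z_p)$ --- is false in this generality. Already for $p=1$: take $\psi_1(\xi,\eta)=(\xi,\eta+\xi^2)\in\cL C^{\mathsf c}(Z_1)$ and $g(\xi,\eta)=(\xi+\eta^2,\eta)\in\cL C(Z_1)$; a direct computation gives
\[
g^{-1}\psi_1 g(\xi,\eta)=\bigl(\xi-2\xi^2\eta-\xi^4+\cdots,\ \eta+\xi^2+2\xi\eta^2+\eta^4\bigr),
\]
and the monomial $-\xi^4$ in the $\xi$-component has even $\eta$-degree, so $g^{-1}\psi_1 g\notin\cL C^{\mathsf c}(Z_1)$. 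The ``induction on degree'' you invoke cannot fix this, because conjugation by a \emph{nonlinear} map is not a degree-preserving $\rr$-linear operation on $(\widehat{\mathfrak M}_n^2)^n$ and therefore has no reason to commute with the projection $\pi$ supplied by \rl{fhg-}. What is special here is the particular $\Phi$ arising from $\hat\tau_{2j}=\rho\hat\tau_{1j}\rho$ together with the explicit reality relations linking $\hat\Lambda_1,\hat\Lambda_2$ and $B_1,B_2$; a correct argument must use that structure, not merely the fact that $\Phi$ commutes with the $Z_j$.
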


\begin{prop}\label{ideal6}  
Let $\hat\tau_i,\hat\sigma$ be given by \rea{htaix}-\rea{hsixi} in which  $\log\hat M$ is in
 the 
  formal normal form \rea{hfj1}.
 Let $\{\hat\tau_{ij}\}$ be given by \rea{t1jdl}.  
  Assume further that the linear parts $T_{ij}$ of $\hat \tau_{ij}$ 
are in normal forms in \rla{unsol} or  \rpa{2tnorm}.
 Suppose that
 \ga\label{tauPhp}
\tau_{ij}=\Phi_i\hat\tau_{ij} \Phi_i^{-1}, \quad\tilde \tau_{ij}=\tilde\Phi_i\hat\tau_{ij}\tilde \Phi_i^{-1}
 \quad 1\leq j\leq p,\\
 \Phi_i\in\cL C(\hat\tau_i),\quad  \tilde\Phi\in \cL C(\hat\tau_i), \quad
 \tilde\Phi_i'(0)=\Phi_i'(0)= \mathbf I, \quad i=1,2.
\end{gather}
Then   $\{\Upsilon^{-1}\tau_{ij}\Upsilon\}=\{\tilde\tau_{ij}\}$
for $i=1,2$ and for some $\Upsilon\in {\mathcal C}(\hat \tau_1,\hat\tau_2)$,
if and only if there exist formal biholomorphisms  $\Upsilon, \Upsilon_1^*,\Upsilon_2^*$ such that 
\ga\label{quaeqp}
\Upsilon^{-1}\circ(( B_i)_*\circ Z_j\circ
(B_i)_*^{-1})\circ \Upsilon^{-1}=( B_i)_*\circ Z_{\nu_i(j)}\circ
(B_i)_*^{-1},\\
 \label{tpup+-}
\tilde \Phi_i= \Upsilon^{-1}\Phi_i\Upsilon_i^*\Upsilon,  \quad \Upsilon_i^*\in\cL C(\hat\tau_{i1},\ldots,\hat\tau_{ip}), \quad i=1,2,
\\
\Upsilon\hat\sigma\Upsilon^{-1}=\hat\sigma,
\label{el1-+}
\end{gather} 
where each $\nu_i$ is a permutation of $\{1, \ldots, p\}$.  Assume further that
 $\hat\tau_{2j}=\rho\hat\tau_{1j}\rho$ and $\Phi_2=\rho\Phi_1\rho$
and $\tilde\Phi_2=\rho\tilde\Phi_1\rho$. If $\Upsilon$    commutes with $\rho$, one can take
 $\Upsilon_2^*=\rho\Upsilon_1^*\rho$   and $\nu_2=\nu_1$.  
 \end{prop}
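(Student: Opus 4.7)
The plan is to deduce this proposition as a refinement of \rla{clas}(ii)--(iii), by extracting the ``permutation content'' of the centralizer element $\Upsilon_i$ and repackaging it into a condition on a single map $\Upsilon$ acting on the simpler linear building blocks $(B_i)_*Z_j(B_i)_*^{-1}$. First I would apply \rla{clas} directly: assuming $\Upsilon_0\in\cL C(\hat\tau_1,\hat\tau_2)$ conjugates $\{\tau_{ij}\}$ to $\{\tilde\tau_{ij}\}$, I obtain formal biholomorphisms $\Upsilon_i\in\cL C(\hat\tau_i)$ with $\Upsilon_i^{-1}\hat\tau_{ij}\Upsilon_i=\hat\tau_{i\nu_i(j)}$ and $\tilde\Phi_i=\Upsilon_0^{-1}\Phi_i\Upsilon_i$. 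This gives precisely the setup of \rla{clas}(ii); the remaining task is to rewrite the information contained in $\Upsilon_i$ in the form advertised by \re{tpup+-} and \re{quaeqp}.

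The central step is to factor $\Upsilon_i=\Upsilon_i^*\cdot\Upsilon$, where $\Upsilon_i^*\in\cL C(\hat\tau_{i1},\ldots,\hat\tau_{ip})$ absorbs the ``diagonal'' part of $\Upsilon_i$, and the remaining factor $\Upsilon$ (which one hopes is independent of $i$) carries the permutation $\nu_i$. Concretely, using $\hat\tau_{ij}=\cL E_i Z_j \cL E_i^{-1}$ with $\cL E_i=E_{\hat\Lambda_i}(B_i)_*$, I conjugate the identity $\Upsilon_i^{-1}\hat\tau_{ij}\Upsilon_i=\hat\tau_{i\nu_i(j)}$ by $\cL E_i$ to reduce it to $\Xi_i^{-1}Z_j\Xi_i=Z_{\nu_i(j)}$ with $\Xi_i=\cL E_i^{-1}\Upsilon_i\cL E_i$, and then appeal to \rla{cnnl} and \rla{lehphi} to split $\Xi_i$ into a centralizer piece of $(Z_1,\ldots,Z_p)$ and a pure permutation piece realizing $\nu_i$. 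Transporting back via $\cL E_i$ produces the candidate $\Upsilon_i^*$ and a candidate $\Upsilon$.

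To obtain one $\Upsilon$ serving for both $i=1,2$ simultaneously, I would use the fact that $\Upsilon$ (originating from $\Upsilon_0\in\cL C(\hat\tau_1,\hat\tau_2)\subset\cL C(\hat\sigma)$) is constrained by $\Upsilon\hat\sigma\Upsilon^{-1}=\hat\sigma$, which is exactly \re{el1-+}; the fact that $\sigma=\tau_1\tau_2$ couples the two sides so that the compatible permutations $(\nu_1,\nu_2)$ of the factorizations on both sides are forced. Condition \re{quaeqp} is then the linear shadow of $\Upsilon^{-1}\hat\tau_{ij}\Upsilon=\hat\tau_{i\nu_i(j)}$: using that $\Upsilon\in\cL C(\hat\sigma)$ preserves the product $\xi_j\eta_j$, one checks that $\Upsilon$ commutes with $E_{\hat\Lambda_i}$, so the conjugation of $\hat\tau_{ij}$ collapses to the conjugation of $(B_i)_*Z_j(B_i)_*^{-1}$. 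The converse direction (sufficiency of \re{quaeqp}--\re{el1-+}) is then a direct computation: compose $\tau_{ij}=\Phi_i\hat\tau_{ij}\Phi_i^{-1}$ with $\Upsilon$ using \re{tpup+-}, and use $\Upsilon_i^*\in\cL C(\hat\tau_{i1},\ldots,\hat\tau_{ip})$ to collapse the $\Upsilon_i^*$ factors. For part (iii), the reality condition $\Phi_2=\rho\Phi_1\rho$ combined with the uniqueness in the decomposition of \rla{cnnl} forces $\Upsilon_2^*=\rho\Upsilon_1^*\rho$ and $\nu_2=\nu_1$, and commutativity of $\Upsilon$ with $\rho$ follows.

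The hard part will be justifying that the $\Upsilon$ constructed from $\Upsilon_1$ in fact coincides with the one from $\Upsilon_2$, and that it actually lies in $\cL C(\hat\sigma)$ while having the required \emph{linear} conjugation property \re{quaeqp}. This amounts to showing that the nonlinear map $E_{\hat\Lambda_i}$ drops out of the conjugation, which in turn relies on $\Upsilon$ being of the special form $(\xi_j,\eta_j)\mapsto(a_j(\xi\eta)\xi_j,b_j(\xi\eta)\eta_j)$ dictated by \rla{cents} together with the diagonality of $\hat\Lambda_i(\xi\eta)$; this is the delicate verification on which the whole reduction hinges.
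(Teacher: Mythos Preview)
Your plan misses the one hypothesis that makes the proof short, and the step you flag as ``delicate'' cannot be carried out without it. The assumption that $\log\hat M$ is in the normal form \re{hfj1} is not decorative: by \rt{ideal5}(ii) it forces $\cL C(\hat\tau_1,\hat\tau_2)$ to consist only of the $2^p$ linear involutions $R_\e\colon(\xi_j,\eta_j)\mapsto(\e_j\xi_j,\e_j\eta_j)$. The paper uses this immediately after invoking \rla{clas}: since $\Phi_i,\tilde\Phi_i$ are tangent to the identity and $\tilde\Phi_i=\Upsilon^{-1}\Phi_i\Upsilon_i$, one reads off $L\Upsilon_i=\Upsilon$, and $\Upsilon$ is already one of these $R_\e$. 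Because $R_\e$ fixes each product $\xi_j\eta_j$, it commutes with the nonlinear map $E_{\hat\Lambda_i}$ trivially, and \re{quaeqp} drops out at once. One then \emph{defines} $\Upsilon_i^*:=\Phi_i^{-1}\Upsilon\tilde\Phi_i\Upsilon^{-1}$ and checks it lies in $\cL C(\hat\tau_{i1},\ldots,\hat\tau_{ip})$ by rewriting $\Upsilon^{-1}\tau_{ij}\Upsilon=\tilde\tau_{i\nu_i(j)}$; no splitting of $\Upsilon_i$ via \rla{lehphi} is needed.

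By contrast, your argument tries to establish that $\Upsilon$ commutes with $E_{\hat\Lambda_i}$ using only the form $(\xi_j,\eta_j)\mapsto(a_j(\xi\eta)\xi_j,b_j(\xi\eta)\eta_j)$ from \rla{cents}. This fails for nonconstant $a_j$: already with $a_j=b_j$, composing $\Upsilon$ with $E_{\hat\Lambda_i}$ in the two orders produces different arguments in $a$ and in $\hat\Lambda_i$ (the ``product'' after $E_{\hat\Lambda_i}$ is $\hat\Lambda_i\eta_j^2-\hat\Lambda_i^{-1}\xi_j^2$, not $\xi_j\eta_j$). So the nonlinear $E_{\hat\Lambda_i}$ does \emph{not} drop out unless $\Upsilon$ is linear, and you have no mechanism to force linearity without invoking \rt{ideal5}. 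A secondary point: in the final assertion the commutativity $\Upsilon\rho=\rho\Upsilon$ is a \emph{hypothesis}, from which $\Upsilon_2^*=\rho\Upsilon_1^*\rho$ and $\nu_2=\nu_1$ are deduced via the explicit formula $\Upsilon_i^*=\Phi_i^{-1}\Upsilon\tilde\Phi_i\Upsilon^{-1}$ and $\hat\tau_{2j}=\rho\hat\tau_{1j}\rho$; it is not a conclusion obtained from uniqueness in \rla{cnnl}.
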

\begin{proof} Recall that
\eq{tijp+}
\nonumber
\tau_{ij}=\Phi_i\hat \tau_{ij}\Phi_i^{-1},\quad\Phi_i\in \cL C(\hat\tau_i); \qquad
\tilde\tau_{ij}=\tilde\Phi_i\widehat { \tau}_{ij}\tilde\Phi_i^{-1}, \quad\tilde\Phi_i\in \cL C(\hat\tau_i).
\eeq
Suppose that
\eq{UtUt+}
\Upsilon^{-1}\tau_{ij}\Upsilon=\tilde \tau_{i\nu_i(j)}, \quad j=1,\ldots, p,\  i=1,2.
\eeq 
By \rl{clas}, there are invertible $\Upsilon_i$ such that
\ga\label{el1++}
\Upsilon_i^{-1}\hat\tau_{ij}\Upsilon_i=\widehat { \tau}_{i\nu_i(j)},\quad 1\leq j\leq p,\\
\label{tpup+}
\nonumber
\tilde\Phi_i=\Upsilon^{-1}\circ\Phi_i\circ\Upsilon_i,\quad i=1,2. 
\end{gather}
Let us simplify the equivalence relation. 
  By \rt{ideal5}, $\cL C(\hat\tau_1,\hat\tau_2)$
consists of $2^p$ dilations $\Upsilon$ of the form
$(\xi,\eta)\to(a \xi,a\eta)$ with $a_j=\pm1$.
Since $\Phi_i,\tilde\Phi_i$ are tangent to the identity, then $D\Upsilon_i(0)$ is diagonal too. In fact   the linear part of $\Upsilon$ at the origin is
\eq{}
\nonumber
L\Upsilon_i=\Upsilon.
\eeq
   Clearly, $\Upsilon$ commutes with each  non-linear
transformation $  E_{\boldsymbol{\hat\Lambda_i}}$.    
Simplifying the linear parts of both sides of  \re{el1++}, we get
\eq{quaeq}
\Upsilon^{-1}\circ(( B_i)_*\circ Z_j\circ
(B_i)_*^{-1})\circ \Upsilon^{-1}=(\tilde B_i)_*\circ Z_{\nu_i(j)}\circ
(\tilde B_i)_*^{-1}.\eeq
 From the commutativity of $\Upsilon$ and $E_{\hat{\Lambda}_i}$ again and  the above identity, it follows that 
\eq{LUht}
\Upsilon^{-1}\circ\hat\tau_{ij}\circ \Upsilon=\widehat{\tau}_{\nu_i(j)}, \quad j=1,\ldots, p,\  i=1,2.
\eeq
Using \re{tauPhp} and \re{LUht}, we can rewrite \re{UtUt+} as
\eq{}
\nonumber
\Upsilon^{-1}\Phi_i\hat\tau_{ij}\Phi_i^{-1}\Upsilon=\tilde\Phi_i\Upsilon^{-1} \hat{\tau}_{ij}\Upsilon\tilde\Phi_i^{-1}.
\eeq
It is equivalent to  $\Upsilon_i^*\hat\tau_{ij}=\hat\tau_{ij}\Upsilon_i^*$, where  
\eq{dfUp}\nonumber
\Upsilon_i^*:=\Phi_i^{-1}\Upsilon\tilde\Phi_i   \Upsilon^{-1}.
\eeq
 Therefore, by \re{tpup},  in
$\cL C(\hat\tau_i)$, $\tilde\Phi_i $ and $\Phi_i $ are equivalent, if and only if
\eq{tppi}
\nonumber
\tilde \Phi_i= \Upsilon^{-1}\Phi_i\Upsilon_i^*\Upsilon,  \quad \Upsilon_i^*\in\cL C(\hat\tau_{i1},\ldots,\hat\tau_{ip}), \quad i=1,2.
\eeq
Conversely, if $\Upsilon_i^*$ satisfy the above identities, we take $\Upsilon_i=\Upsilon_i^*\Upsilon$. Note that
\re{el1-+} ensures that $\Upsilon$ commutes with $\hat\tau_i$ and $E_{\mathbf{\hat\Lambda_i}}$. Then \re{LUht}, or equivalently
\re{quaeq} as $\Upsilon$ commutes with $E_{\mathbf{\hat\Lambda}_i}$, 
gives us \re{el1++}.
\end{proof}
%
 
\pr{caseI} Let $\{\tau_{ij}\}$, $\{\tilde \tau_{ij}\}$, $\Phi_i$, and $\tilde\Phi_i$ be as in \rpa{ideal6}. 
Decompose $\Phi_i=\Phi_{i1}\circ\Phi_{i0}^{-1}$   with $\Phi_{i1}\in\cL C^{\mathsf c}(\hat\tau_{i1}, \ldots, \hat\tau_{1p})$
and $\Phi_{i0}\in\cL C (\hat\tau_{i1}, \ldots, \hat\tau_{1p})$, and decompose $\tilde\Phi_i$ analogously.
Then $\{\{\tau_{1j}\}, \{\tau_{2j}\}\}$ and $\{\{\tilde\tau_{1j}\}, \{\tilde\tau_{2j}\}\}$ are equivalent under a mapping
that is tangent to the identity  if and only if $\Phi_{i1}=\tilde\Phi_{i1}$ for $i=1,2$.  Assume further that
 $\tau_{2j}=\rho\tau_{1j}\rho$ and $\tilde\tau_{2j}=\rho\tilde\tau_{1j}\rho$. 
Then two families   
are equivalent under a mapping that is tangent to the identity and commutes with $\rho$ if and only if $\Phi_{i1}=\tilde\Phi_{i1}$. 
\end{prop}
\begin{proof} When restricting to changes of coordinates that are tangent to the identity, we have $\Upsilon=I$ in \re{UtUt+}.
Also \re{quaeqp} is the same as $\nu_i$ being the identity. 
By the uniqueness
of the decomposition $\Phi_i=\Phi_{i1}\Phi_{i0}^{-1}$, \re{tpup+-} becomes $\Phi_{i1}=\tilde\Phi_{i1}$. 
\end{proof}
We consider the following special case   without restriction on coordinate changes.  We will
assume that $M$ is a higher order perturbation of non-resonant product quadric. Let us recall $\hat \sigma$
be given by \re{hsixi} and define $\hat\tau_{ij}$ as follows:
\ga\label{7convnfsi}
\hat\sigma:\begin{cases}\xi'_j = \hat M_j(\xi\eta)\xi_i\\ \eta'_j=\hat M_j^{-1}(\xi\eta)\eta_j,\end{cases}\quad
\hat \tau_{ij}:\begin{cases}\xi'_i  =\hat \Lambda_{ij} (\xi\eta)\eta_j\\ \eta'_j = \hat\Lambda_{ij}^{-1}(\xi\eta)\xi_j\\ \xi'_k = \xi_k\\
 \eta'_k = \eta_k,\quad k\neq j\end{cases}
\end{gather}
with $\hat\Lambda_{2j}=\hat\Lambda_{1j}^{-1}$ and $\hat M_j=\hat\Lambda_{1j}^2$. 
Let $\hat\tau_i=\hat\tau_{i1}\cdots\hat\tau_{1p}$.  Recall that $E_{\boldsymbol{\hat\Lambda}_i}$ in \re{bise}. Set
\eq{}
{\cL C}^{\mathsf c}(\hat\tau_{11},\ldots, \hat\tau_{1p})
\colonequals\left\{E_{\boldsymbol{\hat\Lambda}_1} \psi E_{\boldsymbol{\hat\Lambda}_1}^{-1}\colon
\psi\in{\cL C}^{\mathsf c}(Z_1,\ldots, Z_p)\right\}. \label{7ccht}
\eeq
\begin{prop}\label{ideal6+} 
  Let $\{\tau_{11},\dots, \tau_{1p},\rho\}$ be the family of involutions associated with a real analytic submanifold
 $M$ that is a higher order perturbation of a non-resonant product quadric. 
 Assume further that the linear parts $T_{ij}$ of $\tau_{ij}$ 
are in normal forms in \rla{unsol}.
Let $\hat\sigma$ be the formal normal form $\hat\sigma$ of the $\sigma$ associated to $M$ that is 
given by  \rea{hsixi} in which  $\log\hat M$ is 
in the 
 formal normal form  \rea{hfj1}.
  In suitable formal coordinates
  the   involutions $\tau_{ij}$ of $M$
  have the form 
  \eq{tPtP}
  \tau_{1j}=\Psi\hat\tau_{ij} \Psi^{-1},\quad \tau_{2j}=\rho \tau_{1j}\rho, \quad
  \quad \Psi\in\cL C(\hat\tau_1)\cap C^c(\hat\tau_{11}, \ldots, \hat\tau_{1p}).
   \eeq 
Moreover,  if $\tilde\tau_{11}, \dots, \tilde\tau_{1p}$ have the form \rea{tPtP} in which $\Phi$
is replaced by $\tilde\Phi$. Then there exists a formal mapping $R$ commuting with $\rho$ and transforms
the family 
 $\{\tilde\tau_{11},\dots,\tilde\tau_{1p}\}$ into 
$\{ \tau_{11},\dots, \tau_{1p}\}$  if and only if  $R$ is an $R_\e$ defined by \rea{upsi}
and \eq{redequiv}
\tilde\Psi=R_\e^{-1}\Psi R_\e.
\eeq
In particular, $\{\tau_{11},\dots,\tau_{1p},\rho\}$ is formally equivalent to $\{\hat\tau_{11},\ldots,\hat\tau_{1p},\rho\}$
if and only if $\Psi$ in \rea{tPtP} is the identity map.
 \end{prop}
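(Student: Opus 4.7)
The plan is to assemble the previous reduction results: the normalization of $\sigma$ (Theorem~\ref{ideal5}), the existence of conjugators to the model involutions (Lemma~\ref{clas}), the unique decomposition into $\cL C^c\cdot \cL C^{-1}$ (Lemma~\ref{cnnl}), and the general equivalence scheme (Proposition~\ref{ideal6}). The centralizer $\cL C(\hat\tau_1,\hat\tau_2)$ collapses here, by Theorem~\ref{ideal5}(ii), to the finite group of $\pm 1$-dilations $R_\e$, and this is what makes the statement clean.

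First, I would apply Theorem~\ref{ideal5} so that, after a formal change of coordinates commuting with $\rho$, the composition $\sigma$ is brought into the normal form \re{hsixi}, $\tau_i=\hat\tau_i$, and $\log\hat M$ is in the form \re{hfj1}. Lemma~\ref{clas}(i),(iii) then yields $\Phi_1\in\cL C(\hat\tau_1)$, tangent to the identity, with $\tau_{1j}=\Phi_1\hat\tau_{1j}\Phi_1^{-1}$ for all $j$, while $\Phi_2=\rho\Phi_1\rho$ handles the conjugate family. Applying Lemma~\ref{cnnl} I would decompose uniquely $\Phi_1=\Psi\,\Phi_{10}^{-1}$ with $\Psi\in\cL C^c(\hat\tau_{11},\dots,\hat\tau_{1p})$ and $\Phi_{10}\in\cL C(\hat\tau_{11},\dots,\hat\tau_{1p})$. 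Since $\Phi_{10}$ commutes with every $\hat\tau_{1j}$, this produces $\tau_{1j}=\Psi\hat\tau_{1j}\Psi^{-1}$; moreover $\Psi=\Phi_1\Phi_{10}$ lies in $\cL C(\hat\tau_1)$ because both factors do, establishing \re{tPtP}. The identity $\tau_{2j}=(\rho\Psi\rho)\hat\tau_{2j}(\rho\Psi\rho)^{-1}$ is automatic from $\tau_{2j}=\rho\tau_{1j}\rho$.

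For the uniqueness statement, suppose $R$ commutes with $\rho$ and transforms $\{\tilde\tau_{1j}\}$ into $\{\tau_{1j}\}$. Proposition~\ref{ideal6} with $\Upsilon=R$, $\Phi_1=\Psi$, $\tilde\Phi_1=\tilde\Psi$ forces $R\in\cL C(\hat\tau_1,\hat\tau_2)$, which by Theorem~\ref{ideal5}(ii) is the finite group $\{R_\e\}$ of \re{upsi}; the constraint $R_\e\rho=\rho R_\e$ reduces, via \re{eqrh}, to the condition $\e_s=\e_{s+s_*}$ on the complex blocks (with no constraint on $\e_e$ or $\e_h$). The permutation $\nu_1$ appearing in Proposition~\ref{ideal6} must be trivial, because the linear parts $T_{ij}$ have been fixed in normal form by Lemma~\ref{unsol} and conjugation by the diagonal involution $R_\e$ cannot permute the $T_{1j}$. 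Thus \re{tpup+-} yields $\tilde\Psi=R_\e^{-1}\Psi\,\Upsilon_1^*\,R_\e$ for some $\Upsilon_1^*\in\cL C(\hat\tau_{11},\dots,\hat\tau_{1p})$.

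The key observation is that $R_\e$ commutes with each $Z_j$ and with $E_{\boldsymbol{\hat\Lambda}_1}$: its coefficients are $\pm 1$ and it leaves each $\xi_j\eta_j$ invariant, so it commutes with any map whose coefficients depend only on $\xi\eta$. By the explicit description \re{7ccht}, conjugation by $R_\e$ therefore preserves both $\cL C^c(\hat\tau_{11},\dots,\hat\tau_{1p})$ and $\cL C(\hat\tau_{11},\dots,\hat\tau_{1p})$. Hence $\tilde\Psi=(R_\e^{-1}\Psi R_\e)(R_\e^{-1}\Upsilon_1^* R_\e)$ is a decomposition of exactly the form produced by Lemma~\ref{cnnl}, with first factor in $\cL C^c$ and second in $\cL C$; the uniqueness there, applied to $\tilde\Psi\in\cL C^c$, forces $R_\e^{-1}\Upsilon_1^* R_\e=I$ and $\tilde\Psi=R_\e^{-1}\Psi R_\e$, which is \re{redequiv}. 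The converse is a direct verification, and the final clause (equivalence to $\{\hat\tau_{11},\dots,\hat\tau_{1p},\rho\}$ iff $\Psi=I$) is the special case $\tilde\Psi=I$. The main obstacle is the compatibility check ensuring that conjugation by $R_\e$ respects the pair $(\cL C,\cL C^c)$ attached to the family $\{\hat\tau_{1j}\}$ together with the reality-induced restriction on $\e$; once this is settled, the rest is essentially bookkeeping through the uniqueness in Lemma~\ref{cnnl} and Theorem~\ref{ideal5}(ii).
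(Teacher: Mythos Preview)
Your proposal is correct and follows essentially the same route as the paper: specialize Proposition~\ref{ideal6} to $\mathbf B_1=\mathbf B_2=\mathbf I$, use Theorem~\ref{ideal5}(ii) to identify $\Upsilon$ with some $R_\e$, observe that $R_\e$ commutes with each $\hat\tau_{1j}$ (so the permutation is trivial and conjugation by $R_\e$ preserves $\cL C^{\mathsf c}(\hat\tau_{11},\dots,\hat\tau_{1p})$), and then invoke the uniqueness of the decomposition in Lemma~\ref{cnnl} to collapse \re{tpup+-} to \re{redequiv}. Your write-up is actually a bit more explicit than the paper's on the existence step and on the constraint $\e_s=\e_{s+s_*}$ forced by $R_\e\rho=\rho R_\e$.
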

\begin{proof} We apply \rp{ideal6} with $\mathbf B_1=\mathbf B_2=\mathbf I$.
We need to refine the equivalence relation \re{quaeqp}-\re{el1-+}.  First we know that $\re{el1-+}$
means that $\Upsilon$ is some $R_\e$. Since $R_\e$ is diagonal, then \re{quaeqp} is always true
for $\nu_1=\nu_2=I$.  It remains to refine \re{tpup+-}.
 We have $\Phi_2=\rho\Phi_1\rho$.  
We know that  $\Upsilon$ is  a dilation of the form 
$$\xi_j\to\e_j\xi_j,\quad\eta_j\to\e_j\eta_j, \quad 1\leq j\leq p, \quad \e_j=\pm1.
$$
Since  $\mathbf B_1=\mathbf I$,  
then    $\Phi_1\in\cL C^{\mathsf c}(\hat\tau_{11}, \ldots, \hat\tau_{1p})$ implies that 
    $\Upsilon^{-1}\Phi_1\Upsilon\in \cL C^{\mathsf c}(\hat\tau_{11}, \ldots, \hat\tau_{1p})$;  and $\Upsilon$
commutes with each $\hat\tau_{1j}$.   
 By the uniqueness of decomposition,  \re{tpup+-} becomes
\eq{}
\nonumber
\tilde\Phi_{11}=\Upsilon^{-1}\Phi_{11}\Upsilon, \quad \tilde\Phi_{10}^{-1}=\Upsilon^{-1} \Phi_{10}^{-1}\Upsilon^*\Upsilon.
\eeq
The second equation defines $\Upsilon_1^*$ that is in $\cL C(\hat\tau_{11}, \ldots, \hat\tau_{1p})$
as $\Upsilon, \Phi_{i0}, \tilde\Phi_{i0}$ are in the centralizer. 
Rename $\Phi_{11},\tilde\Phi_{11}$ by $\Psi,\tilde\Psi$. 
This shows that the equivalence relation is reduced to  \re{redequiv}.  \end{proof}

We now derive the following formal normal form.
\begin{thm}\label{mnorm} Let $M$ be a real analytic submanifold   that is a higher order perturbation of a non-resonant 
product quadric.
Assume that the formal normal form $\hat\sigma$ of the $\sigma$ associated to $M$ is 
given by  \rea{hsixi} in which  $\log\hat M$ is tangent to the identity and in the 
formal normal form  \rea{hfj1}. Let $ E_{\boldsymbol{\hat\Lambda_1}}$ be defined by \rea{t1jd+l}.
Then $M$
is formally equivalent to a formal submanifold in the $(z_1, \ldots, z_{2p})$-space defined by
\begin{equation}\label{tMzp}
\nonumber
 \tilde M\colon   z_{p+j}=(\la_j^{-1}U_j(\xi,\eta)-V_j(\xi,\eta))^2, \quad1\leq j\leq p, \end{equation} 
where  $(U,V)=E_{\boldsymbol{\hat\Lambda_1}(0)} E_{\boldsymbol{\hat\Lambda}_1}^{-1}\Psi^{-1}$, 
 $\Psi$ is in $\cL C(\hat\tau_1)$ and 
$\cL C^{\mathsf c}(\hat\tau_{11},\ldots, \hat\tau_{1p})$, defined by \rea{7ccht}, and 
$\xi,\eta$ are solutions to
$$
z_j=U_j(\xi,\eta)+\la_jV_j(\xi,\eta), \quad \ov z_j=\ov{U_j\circ\rho(\xi,\eta)}+\ov\la_j\ov{V_j\circ\rho(\xi,\eta)},
\quad 1\leq j\leq p.$$
Furthermore, 
the $\Psi$ is uniquely determined up to   conjugacy  $R_\e\Psi R_\e^{-1}$ by an 
 involution  $R_\e\colon \xi_j\to\e_j\xi_j,
\eta_j\to\e_j\eta_j$ for $1\leq j\leq p$.
The formal holomorphic automorphism group of $\hat M$ consists of
 involutions of the form
$$
L_\e\colon z_j\to\e_j z_j, \quad  z_{p+j}\to z_{p+j}, \quad 1\leq j\leq p
$$
with $\e$ satisfying  $R_\e\Psi=\Psi R_\e$.  If the $\sigma$   associated to  $M$ is holomorphically equivalent to  a Poincar\'e-Dulac normal
form, then  $\tilde M$ can be achieved by a holomorphic transformation too.
\end{thm}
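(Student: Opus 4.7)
The plan is to reduce the classification of $M$ to that of the family $\{\tau_{11},\ldots,\tau_{1p},\rho\}$ via \rp{mmtp}(i), then use \rp{ideal6+} to put this family in a canonical form, and finally realize the submanifold back from that canonical form by the procedure of the proof of \rp{mmtp}(ii), which is the formal analogue of the realization scheme for quadrics in \rt{quadclass}.

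First, by \rp{mmtp}(i), $M$ and $\tilde M$ are formally equivalent if and only if their associated families $\{\tau_{11},\ldots,\tau_{1p},\rho\}$ are formally equivalent. By \rp{ideal6+}, we may choose formal coordinates in which
\[
\tau_{1j}=\Psi\,\hat\tau_{1j}\,\Psi^{-1},\qquad \tau_{2j}=\rho\tau_{1j}\rho,\qquad 1\leq j\leq p,
\]
where $\hat\tau_{1j}$ is given by \rea{7convnfsi}, the map $\Psi$ lies in $\cL C(\hat\tau_1)\cap\cL C^{\mathsf c}(\hat\tau_{11},\ldots,\hat\tau_{1p})$, and $\Psi$ is uniquely determined up to the equivalence $\Psi\sim R_\e\Psi R_\e^{-1}$.

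Next I realize $\{\tau_{1j},\rho\}$ as the Moser--Webster involutions of a formal submanifold. By \rp{mmtp}(ii) it suffices to exhibit generators of the invariant ring of $\{\tau_{1j}\}$ together with the ``square'' generators coming from the fixed hyperplanes. For the linear model $\{Z_1,\ldots,Z_p\}$ with $Z_j\colon\eta_j\to-\eta_j$, invariants are generated by $\xi$ and $\eta_j^2$, and the skew-invariant coordinates are the $\eta_j$ themselves. For the nonlinear family $\{\hat\tau_{1j}\}$, the change of variable $E_{\boldsymbol{\hat\Lambda}_1}$ defined in \rea{bise} (with its nonlinear $\hat\Lambda_1(\xi\eta)$) conjugates $\hat\tau_{1j}$ to $Z_j$ on the nose; compare \rl{cnnl}. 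Consequently the invariants of $\{\hat\tau_{1j}\}$ are generated by the components of $E_{\boldsymbol{\hat\Lambda}_1}^{-1}$ in the ``$\xi$-block,'' and a skew-invariant of $\hat\tau_{1j}$ is the $j$-th component of its ``$\eta$-block.'' Pulling back by $\Psi^{-1}$ produces invariants of $\{\tau_{1j}\}$; to place them in a form that matches the normalization \rea{hatt1} at the linear level I postcompose with $E_{\boldsymbol{\hat\Lambda}_1(0)}$, which is the value of $E_{\boldsymbol{\hat\Lambda}_1}$ at $\xi\eta=0$ (hence the linear conjugation between $L\hat\tau_{1j}$ and $Z_j$). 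This gives exactly
\[
(U,V)=E_{\boldsymbol{\hat\Lambda}_1(0)}\,E_{\boldsymbol{\hat\Lambda}_1}^{-1}\,\Psi^{-1}.
\]
The analogue of \re{zep1}--\re{zep4} with $\mathbf B=\mathbf I$ then yields that $z_j=U_j+\lambda_j V_j$ is $\tau_{1j}$-invariant for every $j$, while $\lambda_j^{-1}U_j-V_j$ is skew-invariant under $\tau_{1j}$ and invariant under $\tau_{1k}$ for $k\ne j$. Its square is therefore invariant under the whole family, and, setting $w_j=\overline{z_j\circ\rho}$, the formal equation
\[
\tilde M\colon\ z_{p+j}=\bigl(\lambda_j^{-1}U_j(\xi,\eta)-V_j(\xi,\eta)\bigr)^2,\qquad 1\le j\le p,
\]
with $(\xi,\eta)$ solved from the equations in the statement, is a formal submanifold whose associated Moser--Webster involutions coincide, in the $(\xi,\eta)$ coordinates, with $\{\tau_{1j},\rho\}$ by the uniqueness in \rl{twosetin}. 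This proves the formal equivalence $M\sim \tilde M$.

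For uniqueness, any equivalence between two such presentations gives, via \rp{mmtp}(i), a formal biholomorphism commuting with $\rho$ that conjugates the two families of $\tau_{1j}$'s; by \rp{ideal6+} such an equivalence is precisely some $R_\e$ with $R_\e\Psi R_\e^{-1}=\tilde\Psi$. The formal automorphism group of $\hat M$ corresponds, under the same dictionary, to those $R_\e$ commuting with $\Psi$; passing $R_\e$ through the realization (it acts diagonally on $\xi,\eta$ and fixes each $\xi_j\eta_j$, hence commutes with $E_{\boldsymbol{\hat\Lambda}_1}$ and with $\hat\tau_{1j}$) shows that on $z$-coordinates it acts by $L_\e\colon z_j\mapsto\e_j z_j$, $z_{p+j}\mapsto z_{p+j}$, exactly as claimed. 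Finally, the convergence assertion is a direct consequence of the convergence part of \rt{ideal5}(ii)--(iii) together with \rl{cnnl}: if the Poincar\'e--Dulac normal form of $\sigma$ is achieved by a convergent transformation, then $\Psi$ can be chosen convergent, hence so can $(U,V)$ and the realization $\tilde M$.

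The main obstacle is the bookkeeping in the second paragraph: one must verify that the nonlinear conjugation $E_{\boldsymbol{\hat\Lambda}_1}$ really conjugates $\hat\tau_{1j}$ to $Z_j$ (so that the linear quadric realization of \rt{quadclass} extends verbatim to the nonlinear situation) and that postcomposition with $E_{\boldsymbol{\hat\Lambda}_1(0)}$ is exactly the correction needed to match the linear normalization fixed throughout Sections~\ref{secquad}--\ref{secnfs}. Once this identification is pinned down, the remaining assertions follow from the uniqueness statements in \rp{ideal6+} and the compatibility of the realization with $\rho$.
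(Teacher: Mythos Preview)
Your approach is essentially the same as the paper's. The paper first fixes linear coordinates so that the linear parts of $\{\tau_{11},\dots,\tau_{1p},\rho\}$ are in the normal form of \rl{t1t2sigrho}, invokes \rp{ideal6+} to put $\tau_{1j}=\Psi\hat\tau_{1j}\Psi^{-1}$, then sets $\psi=(U,V)$ as in the statement, observes $\tau_{1j}=\psi T_{1j}\psi^{-1}$, and runs the realization argument of \rp{mmtp}/\rp{inmae} by pulling back the linear invariants $f_j=\xi_j+\lambda_j\eta_j$ and $h_j$ of $\{T_{1j}\}$ via $\psi$ and applying \rl{twosetin}; you do the same, only phrasing the intermediate step through $Z_j$ and $E_{\boldsymbol{\hat\Lambda}_1}$ rather than $T_{1j}$ directly, which is an equivalent formulation.
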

\begin{proof}  We fist choose linear coordinates so that the linear parts of $\{\tau_{11},\dots, \tau_{1p},\rho\}$ are in the
normal form in
\rla{t1t2sigrho}. 
We apply \rp{ideal6+} and assume that $\tau_{ij}$ are already in the normal form. The rest of proof is essentially in \rp{mmtp}
and we will be brief. Write $T_{1j}=E_{\boldsymbol{\hat\Lambda}_1(0)} \circ Z_j\circ
 E_{\boldsymbol{\hat\Lambda}_1(0)}^{-1}$. Let $\psi=(U,V)$ with $U,V$ being given in the theorem.  We obtain
 $$
\tau_{1j}=\psi T_{1j}\psi^{-1}, \quad 1\leq j\leq p. 
 $$
 Let $f_j=\xi_j+\la_j\eta_j$ and $h_j=(\la_j\xi_j-\eta_j)^2$.  The invariant functions of $\{T_{11}, \ldots, T_{1p}\}$ are generated by
 $f_1,\ldots, f_p,   h_1,\ldots, h_p$. This shows that the invariant functions of $\{\tau_{11}, \ldots, \tau_{1p}\}$ are generated by 
 $f_1\circ\psi, \ldots, f_p\circ\psi, h_1\circ \psi, \ldots, h_p\circ\psi$.
 Set $g:=\ov{f\circ\psi\circ\rho}$.
 We can verify that $\phi=(f,g)$ is biholomorphic. Now
 $
 \phi\rho\phi^{-1}=\rho_0. 
 $
 Then $\tilde M$ is defined by  
 $$
 z_{p+j}=E_j(z',\ov z'), \quad 1\leq j\leq p,
 $$
 where $ E_j= h_j\circ\phi^{-1}$.  Then  $E_j\circ\phi$ and $z_j\circ\phi=f_j$
 are invariant by $\{\tau_{1k}\}$. This shows that \{$\phi\tau_{ij}\phi^{-1}\}$ has the same
 invariant functions as deck transformations of $\pi_1$
 of the complexification of $\tilde M$.   By \rl{twosetin}, 
 $\phi\tau_{1j}\phi^{-1}$ agrees with the unique set of generators for
  the deck transformations of $\pi_1$.
 Then $\hat M$ is a realization of $\{\tau_{11}, \ldots, \tau_{1p},\rho\}$. 
  \end{proof}

\setcounter{thm}{0}\setcounter{equation}{0}

\section{
Normal forms of completely integrable commuting biholomorphisms}\label{nfcb}

In this section, we shall consider a family of commuting germs of  holomorphic diffeomorphisms at a common fixed point, say $0\in \cc^n$. 
We shall give conditions that ensure that the family can be transformed simultaneously and holomorphically to a normal form. This means that there exists a germ of biholomorphism at the origin which conjugates each   germ
of biholomorphism in the family to a mapping that commutes with the linear part of every mapping of the family. We can achieve this under two conditions:
\bpp
\item
 {\it The family is ``formally completely integrable''}. This means that the  the normal form of the family 
 has the ``same resonances'' as the normal form of  the family of the linear parts. 
\item {\it The family of linear parts  is of ``Poincar\'e type''}. In general, individually, each linear part might not satisfy these conditions.
They are satisfied,  collectively, by the family.
\epp

For our convergence proof, both conditions will be essential.
To be more specific, let ${\mathbf D}_1:=\text{diag}(\mu_{11},\ldots,\mu_{1n}),\ldots, {\mathbf D}_\ell:=\text{diag}(\mu_{\ell1},\ldots,\mu_{\ell n})$ be diagonal invertible matrices of $\cc^n$.
Let us consider a family $F:=\{F_i\}_{i=1}^{\ell}$ of  germs of holomorphic diffeomorphisms  of $(\cc^n,0)$ of which the linear of $F_i(x)$ at the origin is 
$$D_i\colon x\to {\mathbf D}_ix.
$$
Let us set $D:=\{  D_i 
\}_{i=1,\ldots \ell}$. Thus
$$
F_i(x)={\mathbf D}_ix+f_i(x), \quad   f_i(0)=0,\quad Df_i(0)=0.
$$

The group of germs of (resp. formal) biholomorphisms tangent to identity acts on 
the family $F$ by  
$\Phi_*F:=\{\Phi^{-1}\circ F_i\circ\Phi\colon 1\leq i\leq\ell\}$.

Let us denote ${\mathcal O}_n$ (resp. $\widehat{\mathcal O}_n$) the ring of
germs of holomorphic functions at the origin (resp. ring of formal power series)
of $\cc^n$. Let $Q=(q_1,\ldots, q_n)\in \nn^n$ and
$x=(x_1,\ldots,x_n)\in \cc^n$, we shall write
$$
|Q|:=q_1+\cdots +q_n,\quad x^Q:=x_1^{q_1}\cdots x_n^{q_n}.
$$

Let us specialize to a family $\{F_i\}_{i=1,\ldots \ell}$ of {\bf commuting germs of holomorphic diffeomorphisms},
that is that $F_i\circ F_j=F_j\circ F_i$ for all $1\leq i,j\leq \ell$. Since it generates an abelian group,  such a
family  is   said to be {\bf abelian}. We emphasize that the family does not necessarily form a group and
$$
\ell<\infty.
$$

Let us recall a result by M. Chaperon (see theorem 4 in \cite{Ch86}, page 132):
\begin{prop} 
If the family of diffeomorphisms is abelian then there exists a formal diffeomorphism $  \Phi$, which is tangent to the identity,
 such that
$$
{\widehat F_i}({\mathbf D}_jx)= {\mathbf D}_j{\widehat F_i}(x),\quad 1\leq i,j\leq \ell
$$
where ${\widehat F_i} :=   \Phi_*F_i$, for $1\leq i\leq \ell$.
We call the family  $\{\widehat F_i\}$ a {formal normal form} of the family $F$ with respect to the family $D$ of linear maps. 
\end{prop}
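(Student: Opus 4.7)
The plan is to construct $\Phi$ by successive approximations, determining a homogeneous component $\phi_d$ of degree $d$ at each step so as to kill, simultaneously in all $F_i$, the non-resonant part of the degree-$d$ terms. I would proceed by induction on $d\geq 2$: assume that after conjugation by some polynomial $\Phi^{(d-1)}=I+\phi_2+\cdots+\phi_{d-1}$, each $\widetilde F_i:=(\Phi^{(d-1)})^{-1}\circ F_i\circ\Phi^{(d-1)}$ satisfies $\widetilde F_i(\mathbf D_jx)=\mathbf D_j\widetilde F_i(x)+O(|x|^d)$ for all $i,j$, and then search for $\phi_d$ homogeneous of degree $d$ so that the same holds with $d$ replaced by $d+1$ after conjugation by $I+\phi_d$.

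For a single $F_i=\mathbf D_ix+\widetilde f_i(x)$ with $\widetilde f_i(x)=\widetilde f_i^{(d)}(x)+O(|x|^{d+1})$, the cohomological equation at degree $d$ reads
\[
\mathbf D_i\phi_d(x)-\phi_d(\mathbf D_ix)=(\widetilde f_i^{(d)})^{\mathsf{nr}}(x),
\]
where the right-hand side is the projection of $\widetilde f_i^{(d)}$ onto the span of non-resonant monomials, i.e.\ those $x^Qe_k$ with $\mu_i^Q\neq\mu_{ik}$. On such a monomial the equation is scalar with nonzero coefficient $\mu_i^Q-\mu_{ik}$, so one gets a candidate value
\[
\phi_{d,Q,k}=\frac{\widetilde f_{i,Q,k}^{(d)}}{\mu_i^Q-\mu_{ik}},\qquad\text{whenever }\mu_i^Q\neq\mu_{ik}.
\]
The heart of the argument is to show that this definition is independent of the choice of $i$, and that on monomials which are resonant for every $i$ (i.e.\ $\mu_i^Q=\mu_{ik}$ for all $i$) no normalization is required. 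This compatibility is forced by the fact that the conjugated family still commutes: expanding $\widetilde F_i\circ\widetilde F_j=\widetilde F_j\circ\widetilde F_i$ and collecting the degree-$d$ homogeneous terms yields, after using the inductive hypothesis that the lower order terms already commute with all $\mathbf D_k$, the identity
\[
\mathbf D_i\widetilde f_j^{(d)}(x)+\widetilde f_i^{(d)}(\mathbf D_jx)=\mathbf D_j\widetilde f_i^{(d)}(x)+\widetilde f_j^{(d)}(\mathbf D_ix),
\]
which on the coefficient of $x^Qe_k$ becomes
\[
(\mu_j^Q-\mu_{jk})\widetilde f_{i,Q,k}^{(d)}=(\mu_i^Q-\mu_{ik})\widetilde f_{j,Q,k}^{(d)}.
\]
Hence the ratios $\widetilde f_{i,Q,k}^{(d)}/(\mu_i^Q-\mu_{ik})$ agree for all $i$ for which the denominator is nonzero, and $\widetilde f_{i,Q,k}^{(d)}=0$ whenever the denominator vanishes for every $i$; this is exactly the well-definedness of $\phi_{d,Q,k}$ and the fact that the fully resonant part needs no correction.

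With $\phi_d$ so defined, a direct computation shows that $(I+\phi_d)^{-1}\circ\widetilde F_i\circ(I+\phi_d)$ commutes with every $\mathbf D_j$ modulo $O(|x|^{d+1})$, closing the induction. Iterating and taking the formal limit $\Phi:=\lim_{d\to\infty}\Phi^{(d)}$ in $\widehat{\mathcal O}_n^n$ (which converges in the Krull topology because each $\phi_d$ is homogeneous of degree $d$) yields the desired formal diffeomorphism, tangent to the identity, with $\widehat F_i:=\Phi^{-1}\circ F_i\circ\Phi$ satisfying $\widehat F_i(\mathbf D_jx)=\mathbf D_j\widehat F_i(x)$ for all $i,j$. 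The main conceptual step is the compatibility derivation from pairwise commutativity; everything else is the standard Poincar\'e--Dulac induction adapted to an indexed family, and no small divisor hypothesis is needed since we are working purely formally.
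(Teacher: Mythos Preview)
The paper does not supply its own proof of this proposition; it is quoted as a result of M.~Chaperon with a reference to \cite{Ch86}. Your argument --- simultaneous Poincar\'e--Dulac induction, with the compatibility relation
\[
(\mu_j^Q-\mu_{jk})\,\widetilde f_{i,Q,k}^{(d)}=(\mu_i^Q-\mu_{ik})\,\widetilde f_{j,Q,k}^{(d)}
\]
derived from $[\widetilde F_i,\widetilde F_j]=0$ at degree $d$ --- is the standard and correct route. The crucial use of the inductive hypothesis is exactly as you indicate: the surviving lower-order terms $R_i$ lie in the centralizer of $D$, so the cross-terms $[R_i\circ(\mathbf D_j+R_j)]_d$ are resonant and drop out of the non-resonant projection, leaving the displayed identity on non-resonant monomials.

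One wording slip: on \emph{fully} resonant monomials (those with $\mu_i^Q=\mu_{ik}$ for every $i$) the coefficient $\widetilde f_{i,Q,k}^{(d)}$ need not vanish --- it simply survives into the normal form, and you set $\phi_{d,Q,k}=0$ there. What your compatibility identity actually gives, and what you need, is that on a monomial that is non-resonant for the family but has $\mu_i^Q=\mu_{ik}$ for some particular $i$, one gets $\widetilde f_{i,Q,k}^{(d)}=0$; hence the single $\phi_d$ built from any index $m$ with $\mu_m^Q\neq\mu_{mk}$ also clears that coefficient in the $i$-th map. With this correction your induction closes as written.
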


As mentioned above, for convenience, we have restricted ourselves to changes of holomorphic coordinates that are tangent to the identity. 
Also $\Phi_*\{F_i\}_{i=1}^\ell=\{\tilde F_i\}_{i=1}^\ell$ means that 
 $$
 \Phi_*F_i=\tilde F_i, \quad 1\leq i\leq \ell.
 $$
These  restrictions will be removed by mild changes.  For instance, if $\Phi$ transforms a family $F$ into a family $\hat F$ that commutes with $LF$, the family of the linear part of the $F$, then $(L\Phi)^{-1}(LF_i)L\Phi=L\hat F_i$. Therefore, $\Phi(L\Phi)^{-1}$ is tangent to the identity
and  transforms $F$ into $ (L\Phi) \hat F (L\Phi)^{-1}$ which commutes with $LF$.

Let $\widehat{\mathcal O}_n^D$ be the ring of formal invariants of the family $D$, that is
$$
\widehat{\mathcal O}_n^D:=\{f\in \widehat{\mathcal O}_n\,|\;f({\mathbf D}_ix)=f(x),\; i=1,\ldots, \ell  \}.
$$
As defined in Definition~\ref{ccst},  ${\mathcal C}_2(D)$ is  the ``non-linear formal centralizer'' of $D$, that is
$$
{\mathcal C}_2(D)=\{H\in (\widehat{\mathfrak M}^2_n)^n\,|\;H({\mathbf D}_ix)={\mathbf D}_iH(x),\; i=1,\ldots, \ell  \}.
$$
Here $\widehat{\mathfrak M}_n$ denotes the maximal ideal of the ring $\widehat{\mathcal O}_n$ of formal power series, that is the set of formal power series vanishing at the origin of $\mathbf C^n$.
Let  $e_j=(0,\ldots,0,1,0,\ldots,0)$ denote the $j$th unit vector of $\cc^n$.  If $Q\in\nn^n$ with $|Q|>0$, then
    $x^Q\in \widehat{\mathcal O}_n^D$ if and only if
\eq{muiq-}
\nonumber
\mu_i^Q=1,\quad \forall\; 1\leq i\leq \ell.
\eeq
Here $\mu_i^Q:=\mu_{i1}^{q_1}\cdots\mu_{in}^{q_n}.$
If $|Q|>1$, then   $x^Qe_j\in {\mathcal C}_2(D)$ if and only if
\eq{muiq}\nonumber
\mu_i^Q=\mu_{ij},\quad \forall\; 1\leq i\leq \ell.
\eeq
%
%

It can be shown (as in proposition 5.3.2 of \cite{St00,GW05}) that $\widehat{ \mathfrak M}_n^D$
is a ring generated by a finite number of monomials $x^{R_1},\ldots, x^{R_p}$ ($R_i\in\nn^n$) and
that the non-linear centralizer ${\mathcal C}_2(D)$ of $D$ is a module over $\widehat{ \mathfrak M}_n^D$ of finite type.
 
\begin{defn}
A formal normal form $\{\hat F_i\}_{i=1,\ldots, \ell}$ is said to be {\bf completely integrable} if
\begin{enumerate}
\item each 
$\hat F_i$ has the form 
$$
x_j' = \hat\mu_{ij}(x)x_j,\quad j=1,\ldots,n
$$
where $ \hat\mu_{ij}$ are  invariant by $D$ (i.e. $\hat\mu_{ij}(x)\in \widehat{\mathcal O}_n^D$) and satisfy $\hat\mu_{ij}(0)=\mu_{ij}$;
\item for each  $(j,Q)\in \{1,\ldots, n\}\times\nn^n$ with $|Q|\geq 2$,
$$
\hat\mu_{i}(x)^Q\equiv \hat\mu_{ij}(x)\;\text{for all $i=1,\ldots, \ell$},\quad\text{if and only if} \quad \mu_{i}^Q=\mu_{ij}\;\text{for all $i=1,\ldots, \ell$}.
$$

\end{enumerate}
\end{defn}
\begin{defn}
A commutative family of germs of diffeomorphisms $F$ is said to be formally (resp. holomorphically) {\bf completely integrable} if it is formally (resp. holomorphically) conjugated to a  completely integrable normal form.
\end{defn}
 \begin{rem}\label{rem-intg}
For a completely integrable normal form, we have that for each $Q\in\nn^n$, $\mu_i^Q=1$ for all $1\leq i\leq \ell$, if and only if $\hat\mu_i(x)^Q=1$ for all $i=1,\ldots, \ell$. Indeed, if $\hat\mu_i(x)^Q=1$ for all $i=1,\ldots, \ell$, then evaluation at zero give the result. On the other hand, if $\mu_i^Q=1$ for all $1\leq i\leq \ell$, then $\mu_{ij}\mu_i^Q=\mu_{ij}$ for all $1\leq i\leq \ell$. Hence, according to the definition, $\hat \mu_{ij}(x)\hat\mu_i^Q(x)=\hat\mu_{ij}(x)$ for all $1\leq i\leq \ell$, which gives $\hat\mu_i^Q(x)=1$ for all $1\leq i\leq \ell$.

\end{rem}

We recall from Definition \ref{ccst} (iii) that a formal diffeomorphism $\Phi$, tangent to identity, is {\it normalized} (w.r.t.~$D$) if it does not have components along the centralizer of $D$, i.e. $\Phi_{j,Q}=0$ if $\mu_i^Q=\mu_{ij}$ for all $i$,  $Q\in \nn^n$ with $|Q|\geq 2$.   Let  $\cL C^{\mathsf c}(D)$ denote the set of the normalized mappings,
and let $\cL C_2^{\mathsf c}(D)$ denote the set of mappings $\Phi-I$ with $\Phi\in \cL C^{\mathsf c}(D)$. 

\begin{lemma}\label{pcn0}
Any formal diffeomorphism $\Phi$ of $(\cc^n,0)$, tangent to identity, can be written uniquely
as $\Phi=\Phi_1\circ\Phi_0^{-1}$  with $\Phi_1\in\cL C^{\mathsf c}(D)$ and $\Phi_0\in \cL C(D)$. Furthermore, $\Phi_0,\Phi_1$
are convergent when $\Phi$ is convergent. 
\end{lemma}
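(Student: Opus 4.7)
The plan is to recognize Lemma~\ref{pcn0} as a direct specialization of Corollary~\ref{fhg}, which in turn rests on the abstract decomposition Lemma~\ref{fhg-}. So the only real work is checking that the hypotheses of Lemma~\ref{fhg-} are met for the pair $(\hat{\cL G}, \hat{\cL H}) = (\cL C_2(D),\, \cL C_2^{\mathsf c}(D))$, and then quoting that lemma.

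First I would verify that $(\widehat{\mathfrak M}_n^2)^n$ splits as a real direct sum $\cL C_2(D) \oplus \cL C_2^{\mathsf c}(D)$. Call a pair $(j,Q)$ with $|Q|\geq 2$ \emph{resonant} if $\mu_i^Q = \mu_{ij}$ for every $1\leq i\leq \ell$, and \emph{non-resonant} otherwise. By definition of the centralizer, a map $K = \sum_{j,Q} K_{j,Q}\, x^Q e_j$ without constant or linear part lies in $\cL C_2(D)$ iff $K_{j,Q}=0$ at every non-resonant $(j,Q)$, and in $\cL C_2^{\mathsf c}(D)$ iff $K_{j,Q}=0$ at every resonant $(j,Q)$. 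Since these two conditions partition the index set, every $K$ decomposes uniquely into its resonant and non-resonant parts, giving the direct sum.

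Next, define $\pi : (\widehat{\mathfrak M}_n^2)^n \to \cL C_2^{\mathsf c}(D)$ by
\begin{equation*}
\pi(K) \;=\; \sum_{(j,Q)\ \text{non-resonant}} K_{j,Q}\, x^Q\, e_j.
\end{equation*}
This is an $\rr$-linear projection that preserves degrees and is the identity on $\cL C_2^{\mathsf c}(D)$, with kernel $\cL C_2(D)$. Since $\pi$ merely zeroes out some Taylor coefficients, one has trivially
\begin{equation*}
\pi(K) \;\prec\; K \;\prec\; K_{\mathrm{sym}},
\end{equation*}
so the majorization hypothesis \eqref{pifp} of Lemma~\ref{fhg-} is satisfied with $C=1$.

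With the hypotheses in place, Lemma~\ref{fhg-} yields at once the existence and uniqueness of the decomposition $\Phi = \Phi_1 \circ \Phi_0^{-1}$ with $\Phi_1 - I \in \cL C_2^{\mathsf c}(D)$ and $\Phi_0 - I \in \cL C_2(D)$, as well as the preservation of convergence: if $\Phi$ is convergent, then $\Phi_0$ and $\Phi_1$ are convergent. There is no real obstacle here—the analytic work (the Cauchy majorant argument solving $u = Cf_{\mathrm{sym}}(I_{\mathrm{sym}} + u)$ by the implicit function theorem) has already been done once and for all in the proof of Lemma~\ref{fhg-}, and we merely plug in the specific projection above.
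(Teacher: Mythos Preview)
Your proof is correct and follows essentially the same route as the paper: both apply Lemma~\ref{fhg-} with the obvious coefficient-wise projection separating resonant from non-resonant monomials (indeed this case is already recorded in Corollary~\ref{fhg}). One cosmetic point: the chain $\pi(K)\prec K\prec K_{\mathrm{sym}}$ is slightly imprecise since $\prec$ requires the majorant to have nonnegative coefficients, but your conclusion $\pi(K)\prec K_{\mathrm{sym}}$ is what is needed and is correct.
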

\begin{proof} This follows from \rl{fhg-}, where $\hat{\cL H}$ is replaced by $\cL C_2(D)$ and $\pi$
is defined by
$$
\pi\left(\sum f_{j,Q}x^Qe_j\right)=\sum_j\sum_{x^Qe_j\in\cL C_2(D)}f_{j,Q}x^Qe_j.
\qquad \qedhere$$
\end{proof}
\begin{lemma}\label{lem-nf-nf}
Let $\hat F:=\{\hat F_i\}$ be a formal normal form of the abelian family of diffeomorphisms $F:=\{F_i\}$.  Let $\tilde F:=\{\tilde F_i\}$ be another formal normal form of $F$. Then, there exists a  
formal diffeomorphism $\Phi$, tangent to identity at the origin, such that $ \Phi\in  {\mathcal C}(D)$ and $\Phi\circ\tilde F_i=\hat F_i\circ\Phi$. 
Furthermore, there is a unique $\Phi\in\cL C^c(D)$ that transforms the family $F$ into a normal form.
\end{lemma}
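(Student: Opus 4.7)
The plan is to reduce the lemma to the following key cohomological fact: any formal diffeomorphism $\Psi$, tangent to the identity, that conjugates one formal normal form of $F$ to another must already lie in $\mathcal{C}(D)$. Granting this, both conclusions fall out quickly: the first assertion is immediate, and the uniqueness claim in $\mathcal{C}^{\mathsf c}(D)$ follows by combining it with the uniqueness of the decomposition in \rl{pcn0}.

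To prove the key fact, I decompose $\Psi=\Psi_1\circ\Psi_0^{-1}$ with $\Psi_1\in\mathcal{C}^{\mathsf c}(D)$ and $\Psi_0\in\mathcal{C}(D)$ (both tangent to the identity) via \rl{pcn0}. The relation $\Psi^{-1}\hat F_i\Psi=\tilde F_i$ rewrites as $\Psi_1^{-1}\hat F_i\Psi_1=\Psi_0^{-1}\tilde F_i\Psi_0=:G_i$, and since $\Psi_0\in\mathcal{C}(D)$, the family $G=\{G_i\}$ is again in normal form. It therefore suffices to show that if $\Psi_1\in\mathcal{C}^{\mathsf c}(D)$ conjugates one normal form $\hat F$ to another normal form $G$, then $\Psi_1=I$. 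Writing $\Psi_1=I+\psi$ with $\psi\in\mathcal{C}^{\mathsf c}_2(D)$, suppose for contradiction that $\psi\neq 0$ and let $\psi_d$ be its first non-vanishing homogeneous component ($d\ge 2$). Writing $\hat F_i=D_i+\hat f_i$ and $G_i=D_i+g_i$ with $\hat f_i,g_i\in\mathcal{C}_2(D)$, comparison of the degree-$d$ parts of $\hat F_i\circ\Psi_1=\Psi_1\circ G_i$ yields the cohomological equation
\begin{equation*}
D_i\psi_d(x)-\psi_d(D_ix)=g_{i,d}(x)-\hat f_{i,d}(x),\qquad 1\le i\le\ell.
\end{equation*}

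The main (and only delicate) point is now a disjoint-support argument. The right-hand side, being a difference of elements of $\mathcal{C}_2(D)$, is supported on tuples $(j,Q)$ that are resonant for the \emph{whole} family, i.e.\ $\mu_i^Q=\mu_{ij}$ for every $i$. On such a resonant tuple the left-hand coefficient $(\mu_{ij}-\mu_i^Q)\psi_{j,Q}$ vanishes identically in $i$, so no constraint on $\psi_{j,Q}$ is produced. On a non-resonant tuple the right-hand side vanishes; choosing an index $i$ for which $\mu_i^Q\neq\mu_{ij}$ (which exists precisely because $(j,Q)$ is non-resonant) forces $\psi_{j,Q}=0$. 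Since $\psi_d\in\mathcal{C}^{\mathsf c}_2(D)$ is by definition supported only on non-resonant tuples, this gives $\psi_d=0$, contradicting the choice of $d$. Hence $\Psi_1=I$, so $\Psi=\Psi_0^{-1}\in\mathcal{C}(D)$ as claimed. The anticipated difficulty of individual $D_i$ possibly failing to separate resonances is precisely what the collective nature of the centralizer $\mathcal{C}(D)$ takes care of here.

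Finally, to obtain the uniqueness in $\mathcal{C}^{\mathsf c}(D)$: if $\Phi,\Phi'\in\mathcal{C}^{\mathsf c}(D)$ both conjugate $F$ to normal forms $\hat F$ and $\tilde F$, then $\Xi:=\Phi^{-1}\Phi'$ satisfies $\Xi^{-1}\hat F_i\Xi=\tilde F_i$, so by the step just established $\Xi\in\mathcal{C}(D)$. Thus $\Phi'=\Phi\cdot\Xi$ exhibits $\Phi'$ as a product of an element of $\mathcal{C}^{\mathsf c}(D)$ and an element of $\mathcal{C}(D)^{-1}=\mathcal{C}(D)$. But $\Phi'=\Phi'\cdot I$ is another such decomposition since $\Phi'\in\mathcal{C}^{\mathsf c}(D)$, so the uniqueness of the decomposition in \rl{pcn0} yields $\Phi=\Phi'$ and $\Xi=I$, completing the proof.
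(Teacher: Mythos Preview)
Your proof is correct and follows essentially the same approach as the paper: decompose the conjugating map via \rl{pcn0}, show the normalized factor is the identity by comparing homogeneous parts and exploiting the disjoint supports of $\mathcal{C}_2(D)$ and $\mathcal{C}_2^{\mathsf c}(D)$, and then deduce uniqueness from the uniqueness of the decomposition. Your exposition of the cohomological step is in fact slightly more explicit than the paper's.
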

\begin{proof}
Since both $\hat F$ and $\tilde F$ are normal forms of $F$,  there exists a formal diffeomorphism $\Phi$, tangent to identity at the origin, such that $\tilde F_i\circ\Phi=\Phi\circ \hat F_i$. According to  \rl{pcn0}, we can decompose  $\Phi=\Phi_1\circ\Phi_0^{-1}$ where $\Phi_0\in \cL C(D)$ and $\Phi_1\in \cL C^{\mathsf c}(D)$. Hence, we have $\Phi_1^{-1}\circ\tilde F_i\circ\Phi_1=\Phi_0^{-1}\circ \hat F_i\circ\Phi_0$.  Let us set $G_i:=\Phi_0^{-1}\circ\hat F_i\circ\Phi_0$. Then $G_i$ is a formal diffeomorphism satisfying $G_i(x)-{\mathbf D}_ix\in {\mathcal C}(D)$. 
Let us show by induction on $N\geq 2$ that if $\Phi_1=I+\Phi_1^N+O(N+1)$ with $\Phi_1^N$ being
homogeneous of degree $N$, then $\Phi_1^N=0$. Indeed, a computation  shows that
$$
\{G_i\}_N=\{\hat F_i\}_N+ D_i\circ \Phi_1^N- \Phi_1^N\circ D_i.
$$
Express   $\Phi_1^N$ as sum of monomial mappings. The monomial mappings are not
in $\cL C(D)$,  while those of $F_i$ and $G_i$ are. We obtain $\Phi_1^N=0$.

To verify the last assertion,  assume that $\Psi_*F=\hat F$ and $\tilde\Psi_*F
=\tilde F$
are in the normal form.  Suppose that $\Psi,\tilde\Psi$ are normalized. Then $(\Psi^{-1}\tilde\Psi)_*\hat F
=\tilde\Psi_*(\Psi^{-1})_*\hat F$ is in the normal form.  Write $\Psi^{-1}\tilde\Psi=\psi_1\psi_0^{-1}$ with $\psi_1\in\cL C^{\mathsf c}(D)$
and $\psi_0\in\cL C (D)$. Then $(\psi_1)_*\hat F$ is in a normal 
form. From the above proof, we know that $\psi_1=I$. Now $\Psi=\tilde\Psi\psi_0$, which implies that $\Psi=\tilde\Psi$.
\end{proof}
%
\begin{lemma}\label{complete-lemma}
If a formal normal form of $F$ is completely integrable so are all other normal forms of $F$; in particular, the unique $\Phi$ in \rla{lem-nf-nf} transforms $F$ into a completely integrable normal form. 
\end{lemma}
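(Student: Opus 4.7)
The plan is to apply Lemma~\ref{lem-nf-nf} to obtain a formal diffeomorphism $\Phi$, tangent to the identity, with $\Phi\in\mathcal{C}(D)$ and $\Phi\circ\tilde F_i=\hat F_i\circ\Phi$ for each $i$, and then to verify conditions (1) and (2) of complete integrability for $\tilde F$ directly. The key observation is the following \emph{co-variance property} of a completely integrable $\hat F$: if $g=\sum_Q g_Q x^Q\in\widehat{\mathcal O}_n$ is supported on monomials with $|Q|\geq 2$ and $\mu_i^Q=\mu_{ij}$ for all $i$, then by condition~(2) applied to $\hat F$ we have $\hat\mu_i(x)^Q\equiv\hat\mu_{ij}(x)$ for all $i$, so that
\[
g(\hat F_i(x))=\sum_Q g_Q\,\hat F_i(x)^Q=\sum_Q g_Q\,\hat\mu_i(x)^Q x^Q=\hat\mu_{ij}(x)\,g(x).
\]

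For condition~(1), write $\Phi=I+\phi$ and $\Phi^{-1}=I+\psi$, where $\phi,\psi\in\mathcal{C}_2(D)$. In particular, both $\phi_j$ and $\psi_j$ are supported on monomials $x^Q$ with $|Q|\geq 2$ and $\mu_i^Q=\mu_{ij}$ for all $i$, so the co-variance property applies to each of them. Define
\[
\tilde\mu_{ij}(x):=\hat\mu_{ij}(\Phi(x)).
\]
Because $\hat\mu_{ij}\in\widehat{\mathcal O}_n^D$ and $\Phi\in\mathcal{C}(D)$, the composition $\tilde\mu_{ij}$ is $D$-invariant and $\tilde\mu_{ij}(0)=\mu_{ij}$. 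Setting $y=\Phi(x)$ and using $\tilde F_i=\Phi^{-1}\circ \hat F_i\circ\Phi$, one computes
\[
\tilde F_i(x)_j=\hat F_i(y)_j+\psi_j(\hat F_i(y))=\hat\mu_{ij}(y)\,y_j+\hat\mu_{ij}(y)\,\psi_j(y)=\tilde\mu_{ij}(x)\bigl[\Phi_j(x)+\psi_j(\Phi(x))\bigr],
\]
where the second equality uses the co-variance property applied to $\psi_j$ (in the $y$ variable). Since $\Phi_j(x)+\psi_j(\Phi(x))=\Phi^{-1}(\Phi(x))_j=x_j$, this yields $\tilde F_i(x)_j=\tilde\mu_{ij}(x)\,x_j$, which is condition~(1).

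For condition~(2), observe that $\tilde\mu_i(x)^Q=\prod_k\tilde\mu_{ik}(x)^{q_k}=\hat\mu_i(\Phi(x))^Q$ as formal power series. If $\mu_i^Q=\mu_{ij}$ for all $i$, then by complete integrability of $\hat F$ we have $\hat\mu_i(y)^Q\equiv\hat\mu_{ij}(y)$ for all $i$, hence $\tilde\mu_i(x)^Q\equiv\hat\mu_{ij}(\Phi(x))=\tilde\mu_{ij}(x)$. Conversely, evaluating $\tilde\mu_i(x)^Q\equiv\tilde\mu_{ij}(x)$ at $x=0$ gives $\mu_i^Q=\mu_{ij}$. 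This verifies condition~(2) for $\tilde F$, which is therefore completely integrable. Applying this to the unique $\Phi\in\mathcal{C}^{\mathsf c}(D)$ from Lemma~\ref{lem-nf-nf} shows in particular that this canonical normalizing transformation produces a completely integrable normal form whenever one exists.

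There is no substantial obstacle once the co-variance identity is isolated; the entire argument is a direct, structural computation. The only point that requires attention is confirming that condition~(2) of complete integrability (which is stated for $|Q|\geq 2$) does apply to every monomial appearing in $\phi_j$ and $\psi_j$, which is guaranteed because $\Phi$ is tangent to the identity so $\phi,\psi\in(\widehat{\mathfrak M}_n^2)^n$.
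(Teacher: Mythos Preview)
Your proof is correct and follows essentially the same approach as the paper's: both arguments rest on the co-variance identity $g(\hat F_i(x))=\hat\mu_{ij}(x)\,g(x)$ for any $g$ supported on monomials $x^Q$ with $\mu_i^Q=\mu_{ij}$ for all $i$, which is exactly what condition~(2) of complete integrability provides. The only cosmetic difference is that the paper applies this identity to $\Phi$ itself (showing $\Phi\circ\hat F_i(x)=\mathrm{diag}(\hat\mu_i(x))\cdot\Phi(x)$ and then substituting), obtaining $\tilde\mu_{ij}=\hat\mu_{ij}\circ\Phi^{-1}$, whereas you apply it to $\psi=\Phi^{-1}-I$ and obtain $\tilde\mu_{ij}=\hat\mu_{ij}\circ\Phi$; this discrepancy simply reflects opposite conventions for the direction of conjugation, and both yield the same conclusion.
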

\begin{proof}
By Lemma \ref{lem-nf-nf}, we transform a normal form $\{\hat F_i\}$ into another one $\{\tilde F_i\}$ by applying a transformation $\Phi$ that commutes with each $D_j$. Hence, we have $\tilde F_i:=\Phi^{-1}\circ \hat F_i\circ \Phi$, for all $i=1,\ldots, \ell$. Let us write $\Phi(x)=\sum_{Q\in\nn^n,\; 1\leq j\leq n}\phi_{j,Q}x^Q e_j$.
Then
$$
\Phi\circ \hat F_i(x)=\sum_{Q\in\nn^n}\phi_{j,Q}\mu_{i}(x)^Qx^Q e_j.
$$
Suppose that $\{\hat F_i\}$ is completely integrable and $\Phi$ commutes with each $D_j$. Then
$$
\Phi\circ \hat F_i(x)= \text{diag}(\mu_{i1}(x),\ldots,\mu_{in}(x))\cdot \Phi(x).
$$
The conjugacy equation leads to
$$
\text{diag}(\mu_{i1}(x),\ldots,\mu_{in}(x))\cdot \Phi(x)=\begin{pmatrix}\tilde {F}_{i1}(\Phi(x))\\\vdots\\ \tilde {F}_{in}(\Phi(x))\end{pmatrix}.
$$
As a consequence, we have
$$
\tilde F_i(x)=\text{diag}((\tilde \mu_{i1}(x),\ldots,\tilde \mu_{in}(x))\cdot x
$$
with
$$
(\tilde\mu_{ij}\circ \Phi(x))\cdot\Phi_  j (x)= \mu_{ij}(x)\cdot\Phi_j(x),\quad \text{  i.e.  }  \tilde \mu_{ij}=\mu_{ij}\circ \Phi^{-1}.
$$
Each function $\tilde\mu_{ij}$ is  an invariant function of $D$ since
$$
\tilde\mu_{ij}({\mathbf D}_k x)= \mu_{ij}\circ \Phi^{-1}({\mathbf D}_kx)= \mu_{ij}\circ D_k (\Phi^{-1}(x))= \mu_{ij}\circ \Phi^{-1}(x).
$$
The second and third conditions of the definition of the complete integrability is obviously satisfied by $\{\tilde{F}_i\}$ since $\tilde \mu_{ij}=\mu_{ij}\circ \Phi^{-1}$.
\end{proof}
\begin{lemma}
If a formal normal form of $F$ is linear so are all other normal forms of $F$.
\end{lemma}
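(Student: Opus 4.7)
The plan is to reduce the statement to the conjugacy lemma for normal forms (Lemma \ref{lem-nf-nf}) and then exploit the centralizer property of the conjugating map. Suppose $\{\hat F_i\}$ is a linear normal form of $F$, i.e.\ $\hat F_i = D_i$ for every $1\leq i\leq \ell$, and let $\{\tilde F_i\}$ be any other formal normal form of $F$. By Lemma \ref{lem-nf-nf}, there exists a formal diffeomorphism $\Phi$, tangent to the identity at the origin, with $\Phi \in \mathcal{C}(D)$ such that
\begin{equation*}
\Phi\circ \tilde F_i = \hat F_i\circ\Phi = D_i\circ \Phi, \qquad 1\leq i\leq\ell.
\end{equation*}

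Since $\Phi\in\mathcal{C}(D)$, it commutes with every $D_i$, so $D_i\circ\Phi = \Phi\circ D_i$. Substituting into the relation above gives $\Phi\circ \tilde F_i = \Phi\circ D_i$, and because $\Phi$ is invertible (being tangent to the identity), we conclude $\tilde F_i = D_i$ for every $i$. Thus $\{\tilde F_i\}$ is itself the linear family $\{D_i\}$, which proves the claim.

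The argument is essentially one line once Lemma \ref{lem-nf-nf} is in hand; there is no real obstacle. The only mild subtlety is remembering that the intertwining diffeomorphism provided by Lemma \ref{lem-nf-nf} lies in the centralizer $\mathcal{C}(D)$ of the family of linear parts, which is precisely what makes the conjugation of a linear family by $\Phi$ trivial.
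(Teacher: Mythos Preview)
Your proof is correct and follows essentially the same route as the paper's own argument: invoke Lemma~\ref{lem-nf-nf} to get a conjugating $\Phi\in\mathcal{C}(D)$, then use that $\Phi$ commutes with each $D_i$ to conclude $\tilde F_i=D_i$. The paper's proof is virtually identical.
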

\begin{proof}
According to Lemma \ref{lem-nf-nf}, we transform a    linear normal form $\{\hat F_i\}$ into another one $\{\tilde F_i\}$ by applying a transformation $\Phi$ that commutes with each $D_j$. Since $ \hat F_i(x)={\mathbf D}_ix$, we have $\tilde F_i=\Phi^{-1}(D_i\Phi(x))$, for all $i=1,\ldots, \ell$. Since $\Phi$ commutes with each map $x\mapsto {\mathbf D}_ix$, then
$$
\tilde F_i=\Phi^{-1}(D_i\Phi(x))=\Phi^{-1}(\Phi({\mathbf D}_ix))={\mathbf D}_ix.
\qquad\qedhere
$$
\end{proof}

\begin{defn}\label{small divisors}
We  say that 
 {\it the family $D$ is of Poincar\'e type}
 if there exist  constants $d>1$ and $c>0$ such that, for each $(j,Q)\in\{1,\ldots, n\} \times\nn^n$
 that satisfies
  $\mu_{m}^{Q}-\mu_{m j}\neq 0$ for some $m$,
  there exists $(i,Q')\in \{1,\ldots, n\}\times \nn^n$ such that
$\mu_{k}^{Q'}=\mu_k^Q$ for all $1\leq k\leq \ell$,
$\mu_{i}^{Q'}-\mu_{ij}\neq0$,
 and
\gan
\max\left(|\mu_{i}^{Q'}|,|\mu_{i}^{-Q'}|\right)>c^{-1}d^{|Q'|}, \quad
 \text{ $Q'-Q\in\nn^n\cup(-\nn^n$)}.
\end{gather*}
 \end{defn} 
  Such a condition has appeared in the definition of the good set in~\cite{BHV10}.
\begin{defn}
Let $f=\sum_{Q\in\nn^n}f_Qx^Q$ and $g=\sum_{Q\in\nn^n}g_Qx^Q$ be two formal power series. We   say that $g$  {\it majorizes} $f$, 
 written as $f\prec g$, if $g_Q\geq 0$ and $|f_Q|\leq g_Q$ for all $Q\in\nn^n$.  Set
$$
\bar f := \sum_{Q\in\nn^n}|f_Q|x^Q.
$$
\end{defn}
\begin{thm}\label{thm-conv-nf}
Let $F$ be an abelian family of germs of holomorphic diffeomorphisms at the origin of $\cc^n$. Assume that it is formally completely integrable and   that its linear part at the origin is of Poincar\'e type.
Then $F$ is holomorphically conjugated to a normal form $\hat F=\{\hat F_1,\ldots, \hat F_\ell\}$
 so that 
 each
 $\hat F_i$ is defined by
$$
x_j' = \mu_{ij}(x)x_j,\quad j=1,\ldots,n
$$
where  $\mu_{ij}(x)$ are   germs of holomorphic functions  invariant under $D$ 
and $\mu_{ij}(0)=\mu_{ij}$.
 In fact, the unique normalized mapping $\Phi$ in \rla{lem-nf-nf}
is convergent. 
\end{thm}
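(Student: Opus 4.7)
By Lemma \ref{lem-nf-nf} and Lemma \ref{complete-lemma}, there is a unique formal $\Phi\in\cL C^{\mathsf c}(D)$ tangent to the identity such that $\Phi^{-1}\circ F_i\circ\Phi=\hat F_i$ for a completely integrable normal form $\hat F=\{\hat F_i\}$; the theorem reduces to showing that this $\Phi$ converges. Writing $\Phi=I+U$ with $U\in\cL C_2^{\mathsf c}(D)$, $F_i=D_i+f_i$ and $\hat F_i=D_i+\hat f_i$, the conjugacy $\Phi\circ\hat F_i=F_i\circ\Phi$ becomes the cohomological system
\[
U\circ \hat F_i \;-\; D_i\circ U \;=\; f_i\circ(I+U)-\hat f_i,\qquad 1\le i\le\ell.
\]
Reading off the coefficient of $x^Q e_j$ inductively on $N=|Q|$ yields, for each index $i$, a recursion
\[
(\mu_i^Q-\mu_{ij})\,U_{j,Q}\;=\;\Xi_{i,j,Q}\bigl(\{U_{k,P}\}_{|P|<N},\;\{f_i,\hat f_i\}_N\bigr),
\]
where $\Xi_{i,j,Q}$ is a universal polynomial depending only on lower-order coefficients of $U$ and on the Taylor data of $f_i$ and $\hat f_i$.

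The central use of the hypotheses comes next. For every $(j,Q)$ with $x^Q e_j\notin\cL C_2(D)$ --- equivalently, every coefficient $U_{j,Q}$ that is not automatically zero --- the Poincar\'e-type condition on $D$ produces an index $i_0=i_0(j,Q)$ and a multi-index $Q'=Q'(j,Q)$ such that $\mu_k^{Q'}=\mu_k^Q$ for every $k$, such that $Q'-Q\in\nn^n\cup(-\nn^n)$, and such that
\[
\max\bigl(|\mu_{i_0}^{Q'}|,\;|\mu_{i_0}^{-Q'}|\bigr)\;>\;c^{-1}d^{\,|Q'|}.
\]
The complete integrability of $\hat F$ (see Remark \ref{rem-intg}) transfers the resonance data faithfully from $Q$ to $Q'$, so that $\hat\mu_{i_0}(x)^{Q'}\not\equiv\hat\mu_{i_0 j}(x)$ as formal power series and the small divisor $\mu_{i_0}^{Q'}-\mu_{i_0 j}$ (or, after passing to $\hat F_{i_0}^{-1}$, the divisor $\mu_{i_0}^{-Q'}-\mu_{i_0 j}^{-1}$) admits a geometric lower bound
\[
\bigl|\mu_{i_0}^{\pm Q'}-\mu_{i_0 j}^{\pm 1}\bigr|\;\ge\;C_1\,d^{\,|Q'|}
\]
for $|Q'|$ sufficiently large.

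The technical heart of the argument is to exploit this favourable divisor to estimate $U_{j,Q}$ itself, rather than $U_{j,Q'}$. Because $Q'-Q\in\pm\nn^n$ and every $\hat\mu_{i_0 j}$ is $D$-invariant, the iterated conjugacy $F_{i_0}^k\circ\Phi=\Phi\circ\hat F_{i_0}^k$ can be rearranged so that $U_{j,Q}$ is expressed as a finite sum, over monomials of degree $\le N$ in the right-hand side, each multiplied by a factor bounded by $C_2\,d^{-|Q'|}\le C_3\,d^{-|Q|}$. This mechanism is the one used by Moser \cite{moser-hyperbolic} and Moser--Webster \cite{MW83} when $\ell=1$, and extended to commuting systems in Stolovitch \cite{St00, stolo-annals}. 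The outcome is a majorant-friendly recursion
\[
|U_{j,Q}|\;\le\;C_3\,d^{-|Q|}\,\overline{\Xi}_{i_0,j,Q}\bigl(\{|U_{k,P}|\}_{|P|<N},\;\{|f|,|\hat f|\}_N\bigr).
\]

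With this geometric estimate in hand, the convergence of $U$ follows from a standard Cauchy majorant argument: one introduces a scalar power series $v(t)=\sum v_N t^N$ dominating the coefficients $|U_{j,Q}|$, whose recursion satisfies a fixed-point equation $v=\mathcal M(t,v)$ with $\mathcal M$ entire, $\mathcal M(0,0)=0$ and $\partial_v\mathcal M(0,0)=0$; the implicit function theorem provides an analytic solution near $t=0$, whence $U$ is convergent in a polydisc and so is $\Phi=I+U$. The main obstacle is the content of the third paragraph --- the rigorous passage from the possibly tiny divisor $\mu_i^Q-\mu_{ij}$ to the well-controlled divisor $\mu_{i_0}^{\pm Q'}-\mu_{i_0 j}^{\pm1}$ without introducing higher-order coefficients of $U$ into the bound for $U_{j,Q}$. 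This requires careful bookkeeping of the polynomial $\Xi$ and relies essentially on the complete integrability, which is precisely what keeps the substitution consistent across all members of the family.
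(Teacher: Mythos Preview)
Your broad strategy --- unique normalized $\Phi$, Poincar\'e-type divisor selection, majorant argument --- matches the paper, but the paragraph you flag as the ``technical heart'' is a genuine gap. Two problems. First, $\hat f_i$ is not input data: the normal form $\hat F_i$ is only formal, and its Taylor coefficients must be estimated \emph{simultaneously} with those of $U$; your recursion treats $\{\hat f_i\}_N$ as known. Second, the mechanism you propose for passing from the possibly bad scalar divisor $\mu_i^Q-\mu_{ij}$ to the good one $\mu_{i_0}^{\pm Q'}-\mu_{i_0 j}^{\pm1}$ does not work as written: iterating $F_{i_0}^k\circ\Phi=\Phi\circ\hat F_{i_0}^k$ does not turn the exponent $Q$ into $Q'$, and no ``finite sum over monomials of degree $\le N$'' arises this way.

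The paper avoids coefficient-level bookkeeping altogether. It groups non-resonant monomials $x^Qe_j$ into equivalence classes $\delta$ on which all the \emph{power series} $\hat\mu_i^Q(x)-\mu_{ij}$ agree, and writes the projected conjugacy as
\[
\bigl[\hat\mu_i^{Q_\delta}(x)-\mu_{ij_\delta}\bigr]\,\phi_\delta(x)=\{f_i(\Phi)\}_\delta(x).
\]
The divisor here is a formal power series, not a scalar. Complete integrability (Remark~\ref{rem-intg}) gives $\hat\mu_i^{Q'_\delta-Q_\delta}\equiv 1$, so one may replace $Q_\delta$ by the Poincar\'e-type exponent $Q'_\delta$ \emph{inside the power series} $\hat\mu_i^{Q_\delta}$; this is the correct substitution, and it happens in one step, not by iteration. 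One then majorizes the power-series inverse $[\hat\mu_i^{Q'_\delta}(x)-\mu_{ij_\delta}]^{-1}\prec S(m-1)$ uniformly in $\delta$, where $m-1$ majorizes the perturbations $\hat\mu_{ij}/\mu_{ij}-1$. The resonant projection of the conjugacy separately gives $(m-1)t\prec \mu_* f^*(\bar\Phi)$, which controls the normal form itself. Adding the two estimates produces a single scalar majorant $W(t)$ dominating both $\phi$ and $m-1$, satisfying an analytic implicit equation. The point you are missing is that the $Q\to Q'$ substitution takes place in the nonlinear multiplier $\hat\mu_i^Q(x)$, not in the coefficients of $U$, and that the convergence of $\hat F$ is part of what the majorant proves.
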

The last assertion follows  from \rl{pcn0}
and \rl{complete-lemma}.
Such a result for commuting germs of vector fields is known \cite{St00} under a Brjuno-type of small divisor  conditions.
Such an integrability result for a single germ of two-dimensional hyperbolic real analytic area-preserving mapping was proved by Moser~\cite{moser-hyperbolic}.
 For a single germ of reversible biholomorphism of very special type, this result was due to Moser-Webster \cite{MW83}; 
indeed, as shown by  Moser-Webster \cite{MW83}[lemma 3.2], a  germ of {(hyperbolic reversible) mapping}
 of the form $\phi=\tau_1\tau_2$ where the $\tau_1,\tau_2$ are germs holomorphic involutions, is formally completely integrable under some condition on the linear parts at the origin of $\tau_1,\tau_2$.
Our proof is inspired from these proofs.   However, 
 in Moser-Webster situation, there is only two eigenvalues $\mu$ and $\mu^{-1}$ and the remaining
 eigenvalues are $1$ with multiplicity. The Poincar\'e   type condition in the above theorem, that is $|\mu|\neq 1$, is necessary   to obtain the convergence as demonstrated by Moser-Webster.
We shall use our result in the next section in order to normalize a special kind of CR-singularities.

\begin{proof}
Let us conjugate, simultaneously, each $F_i= {\mathbf D}_ix+f_i$ to $\hat F_i:=\hat{\mathbf D_i}(x)x$ by the action of $\Phi(x)=x+\phi(x)$ where $\phi(0)=0$ and $\phi'(0)=0$.  Here, $\hat{\mathbf D}_i(x)$ denotes the matrix $\text{diag}(\hat\mu_{i1}(x),\ldots,\hat\mu_{in}(x))$ and each $\hat\mu_{ij}(x)$ is a formal power series invariant under $D$, i.e. $\hat\mu_{ij}(x)\in\widehat{\mathcal O}_n^D$. We can assume that $\Phi$ does not have a non-zero component along the centralizer of $D$; indeed,  by \rl{complete-lemma}, we can assume that $\Phi$ is normalized 
 w.r.t $D$.
 Then, for each $i=1,\ldots, \ell$, we have
$$ 
F_i\circ\Phi(x)= {\mathbf D}_ix+f_i(\Phi)(x)+{\mathbf D}_i\phi(x), \quad \Phi\circ \hat F_i(x)= \hat{\mathbf {\mathbf D}_i}(x)x+ \phi(\hat F_i)(x).
$$
Equation $F_i\circ\Phi=\Phi\circ \hat F_i$  reads
\begin{equation}\label{conjugacy}
\left(\phi(\hat {\mathbf D}_i(x)x)-{\mathbf D}_i\phi(x)\right) +\left(\hat {\mathbf D}_i(x)-{\mathbf D}_i\right)x = f_i(\Phi)(x)\quad i=1,\ldots, \ell.
\end{equation}
Our convergence proof is  based  on two conditions: the existence of a formal $\phi\in\cL C^{\mathsf c}(D)$ that satisfies
the above equation, and the Poincar\'e type condition on the linear part $D$. We already know that
$\phi$ is unique. We shall project equation \re{conjugacy} along the ``non-resonant'' space (i.e. the space ${\mathcal C}^{\mathsf c}(D)$ of normalized mappings w.r.t. ${\mathbf D}$). The mapping $\phi$ also solves this last equation and we shall majorize it using that projected equation.

Let us first decompose these equations along the ``resonant'' and ``non-resonant'' parts,   i.e.
  $\cL C_2(D)$ and $\cL C^{\mathsf c}_2(D)$.  
 Since $\phi=\sum_{Q\in \nn^n, |Q|\geq 2} \phi_{j,Q}x^Qe_j$ is normalized then $\phi_{j,Q}=0$ for some $Q\in\nn^n$, $|Q|\geq 2$ and $1\leq j\leq n$, if we have $\mu_{m}^Q=\mu_{mj}$ for all $m$. We recall that, since each ${\mathbf D}_i$ is a diagonal matrix, then a map belongs to the centralizer of $D$ if and only if each monomial   map of its Taylor expansion at the origin belongs to this centralizer as well.
Since the $\hat \mu_{ij}$ is a  formal invariant function then
$$
\phi(\hat {\mathbf D}_i(x)x)=\sum_{Q\in \nn^n, |Q|\geq 2} \phi_{j,Q}\hat\mu_i^Q(x)x^Qe_j=:\sum_{Q'\in \nn^n, |Q'|\geq 2} \psi_{j,Q'}x^Qe_j.
$$
The latter 
contains only non-resonant terms, that is that if   $\mu_{i}^{Q'}=\mu_{ij}$ for all $i$, then $\psi_{j,Q'}=0$.
  Indeed,  $\hat\mu_i^Q(x)$ contains monomials of the form $x^P$ with $\mu_i^P=1$ for all $1\leq i\leq \ell$.   Hence, $\psi_{j,Q'}$ is a linear combination of $\phi_{j,Q}$ such that $Q'=Q+P$ with $\mu_i^P=1$ for all $i$. Therefore, if $\mu_{i}^{Q'}=\mu_{ij}$ for all $i$, then for all these $Q$'s, we have $\mu_{i}^{Q}=\mu_{i}^{Q'}=\mu_{ij}$ for all $i$ so that  
  $\phi_{j,Q}=0$; that is $\psi_{j,Q'}=0$.

Hence, the projection on  the resonant mappings in $\cL C_2(D)$ 
 leads to
\beq\label{res}
\left(\hat {\mathbf D}_i(x)-{\mathbf D}_i\right)x = \{f_i(\Phi)(x)\}_{res},\quad i=1,\ldots, \ell.
\eeq
Here  for any formal mapping $g(x)=O(|x|^2)$ on $\cc^n$,  we define the projection on $\cL C_2(D)$
by
\eq{gres}
  (g(x))_{res}=\sum_j\sum_{\forall i,\mu_i^Q= \mu_{ij}} g_{j,Q}x^Qe_j.
\eeq
  The projection $g$ onto $\cL C_2^{\mathsf c}(D)$ is defined as
$ 
g(x)-(g(x))_{res},
$ 
i.e. it is the projection of $g$ on the non-resonant mappings.

Let us consider the projection on the non-resonant mappings. We first need to decompose power series according to a non-homogeneous equivalence relation on
their coefficients.
Let us define the equivalence relation on $ \{1,\ldots,n\}\times\nn^n$ by
 $$
 (j,Q)\sim (\tilde{j},\tilde Q), \  \text{if $\mu_{ij}-\hat\mu_i^Q(x)=
 \mu_{i\widetilde{j}}-\hat\mu_i^{\tilde Q}(x)$ for all $1\leq i\leq \ell$}.
 $$
Here the identities hold as formal power series.
Let $\Delta$ be the set of the equivalent classes   on the non-resonant   multiindex set 
$$
 \left\{(j,Q)\in\{1,\dots, n\}\times
\nn^n\colon (\mu_1^Q-\mu_{1j},\dots, \mu_{\ell}^Q-\mu_{\ell j})\neq 0,  |Q|>1\right\}.
$$

If  
 $\mu_k^Q-\mu_{kj}\neq 0$ for some $k$,  clearly 
 $\hat\mu_k^Q-\mu_{kj}$ is not identically zero.
We can decompose any  formal power series map $f$ along these equivalent classes
and the resonant part  of the mapping. Let $\delta \in \Delta$ and $f=\sum_{Q=\in\nn^n, 1\leq j\leq n}  f_{j,Q}x^Qe_j$
  with $f=O(2)$. We can  write
\eq{fdelx}
 f_{\delta}(x):=\sum_{  (j,Q)\in \delta}f_{j,Q}x^Qe_j,   \quad \sum_{\del\in\Del}
 f_{\del}(x)\prec \ov f(x).
\eeq
We   denote by  $\widehat { \mathfrak M}_{n,\delta}^n$ the vector space of such maps.
To a given equivalent class $\delta$, we now associate a representative $(j_{\delta}, Q_{\delta})$, and
 later we shall identify an equation among $n$ equations in \re{conjugacy}
for estimation.

  Since $\phi$ contains no resonant  mappings, 
  then
\eq{phdi}
\phi=\sum_{\delta\in\Delta}\phi_\delta.
\eeq
Let us decompose the projection onto   non-resonant mappings in $\cL C_2^{\mathsf c}(D)$
of equation
 $(\ref{conjugacy})$ along each equivalent class $\delta$ as follows. 
Using the definition of the equivalence class $\Delta$, we obtain
\begin{equation}\label{conjugacy-delta}
\left[\hat\mu_i^{Q_{\delta}}(x)-\mu_{ij_{\delta}}\right]\phi_{\delta}(x) = \left\{f_i(\Phi)\right\}_{\delta}(x),\quad \forall i=1,\ldots, \ell
\end{equation}
where $\{f\}_{\delta}$ denotes the projection of $f$ on $\widehat { \mathfrak M}_{n,\delta}^n$, defined by \re{fdelx}.

For each   $(j_\del,Q_\del) 
\in\Delta$, we know that   $\mu_k^{Q_\del}-\mu_{kj_\del}\neq 0$  
for some $k$.  
By the Poincar\'e type condition, there exist $i$
and   $Q_{\delta}'\in\nn^n$ such that
\eq{mulq}\mu_{i}^{Q_{\delta}'}-\mu_{ij_{\delta}}\neq 0; \quad
\mu_{m}^{Q_{\delta}'}=\mu_{m}^{Q_{\delta}},\quad \forall  1\leq m\leq   \ell;\quad Q_{\delta}'-Q_\del\in\nn^n\cup(-\nn^n)
\eeq
and, furthermore,
one of the following holds:
\ga\label{miqp}
|\mu_{i}^{Q'_{\delta}}|\leq cd^{-|Q'_{\delta}|}, \\
\label{miqp+}|\mu_{i}^{-Q'_{\delta}}|\leq cd^{-|Q'_{\delta}|}.
\end{gather}
  Here, $d>1$ does not depend on $Q_{\delta}$.
So, let us use the $i$th equation of $(\ref{conjugacy-delta})$ to estimate $\phi_{\delta}$. We have, for that $i$,
\eq{pdhm-}
\phi_{\delta} = \left[\hat\mu_{i}^{Q_{\delta}}-\mu_{ij_{\delta}}\right]^{-1}\left\{f_{i}(\Phi)\right\}_{\delta}.
\eeq
Therefore,  we have established the uniqueness of $\phi$ under \re{phdi} and \re{pdhm-}, and under the
condition that $\phi$ satisfies the equation when \re{conjugacy} is projected onto $\cL C^{\mathsf c}(D)$. 
The existence of $\phi$
is ensured by assumption.   We now consider the convergence of $\phi$. 
By \re{mulq} and 
  Remark~\ref{rem-intg},  we obtain $\hat\mu_{i}^{Q_{\delta}'-Q_{\delta}}=1$.
This allows us to rewrite \re{pdhm-} as
\eq{pdhm}
\phi_{\delta} = \left[\hat\mu_{i}^{Q_{\delta}'}-\mu_{ij_{\delta}}\right]^{-1}\left\{f_{i}(\Phi)\right\}_{\delta}.
\eeq
We majorize this power series.

  Recall that $\hat\mu_{ij}(0)=\mu_{ij}$. Let us set
$$
  M_{ij}(x):=\mu_{ij}^{-1} \hat \mu_{ij}(x). 
$$
We have $M_{ij}(0)=1$ and we decompose
$$
 M_{ij}(x)=\sum_{Q\in\nn^n} M_{ij,Q}x^Q.
$$
Let us set $\mu^*:=\max_{ij}\{|\mu_{ij}|,|\mu_{ij}^{-1}|\}$, and
$$
m_{i}=\sum_{Q\in\nn^n} \max_{1\leq j\leq n}|M_{ij,Q}|x^Q,\quad m=\sum_{Q\in\nn^n} \max_{1\leq i\leq \ell,\;1\leq j\leq n}|M_{ij,Q}|x^Q.
$$
Note that $m(0)=1$. Then $M_{ij}\prec m$ and
$$
M_{ij}^{-1}=\frac{1}{1+(M_i-1)}\prec\f{1}{1-(m-1)} =\f{1}{2-m}.
$$
  Here and in what follows, if $f(x)$ is a formal power series with $f(0)=0$, then for any number 
$a\neq0$, $\frac{1}{a-f(x)}$ stands for the  formal power series in $x$ for
$$
\frac{1}{a}\left \{1+\sum_{n=1}^\infty(a^{-1}f(x))^n\right\}.
$$

To simplify notation in  \re{pdhm}, let us write $Q$ 
for $Q_{\delta}'$ and $j$ for $j_{\delta}$.
Fix $d_1$ with $1<d_1<d$.
We  consider the first case that $\mu^*cd^{-|Q|}>d_1^{-|Q|}$. Since $d>d_1$,  we have only finitely many such  $Q's$   (recall
that each $Q$ has the form $Q_\del'$).
  The function  $M_i\mapsto \mu_{ij}-\mu_i^QM_i^Q$ is holomorphic in $M_i\in\cc^p$ at $M_i= 
  (1,\ldots, 1)$ and does not vanish at this point. 
  Hence, the function
$$
(\mu_{ij}-\hat\mu_i^Q)^{-1}=(\mu_{ij}-\mu_i^QM_i^Q)^{-1}
$$
is also holomorphic at $M_i=(1,\ldots, 1)$. For all  $Q's$  in the first case, 
we
have
\aln
  (\mu_{ij}-\hat\mu_i^Q)^{-1}&\prec
\frac{C}{1-C(\ov M_{i1}-1+\cdots+\ov M_{in}-1)}\prec\frac{C}{1-nC(m-1)}.
\end{align*}
We now consider   the second case that  $\mu^*c d^{-|Q|}\leq d_1^{-|Q|}$.  In   case \re{miqp}, we obtain
\begin{eqnarray*}
(\hat\mu_i^{Q}-\mu_{ij})^{-1} & = & -\mu_{ij}^{-1}(1 - \mu_{ij}^{-1}\mu_i^QM_i^Q)^{-1}\\
&\prec &\mu^*\left[1- \mu^*cd^{-|Q|}m^{|Q|}\right]^{-1}
\\
&\prec &\mu^*\left[1-d_1^{-|Q|}m^{|Q|}\right]^{-1}\\
&\prec &\mu^*\left[1-d_1^{-1}m \right]^{-1}.
\end{eqnarray*}
In case \re{miqp+}, we have
\begin{eqnarray*}
(\hat\mu_i^{Q}-\mu_{ij})^{-1} & = & - \mu_{i}^{-Q}M_i^{-Q}\left[1 - \mu_{ij}\hat\mu_i^{-Q}M_i^{-Q}\right]^{-1}\\
&\prec &cd^{-|Q|}(2-m)^{-|Q|}\left[1-\mu^*cd^{-|Q|}(2-m)^{-|Q|}\right]^{-1}\\
&\prec &(\mu^*)^{-1}d_1^{-|Q|}(2-m)^{-|Q|}\left[1-d_1^{-|Q|}(2-m)^{-|Q|}\right]^{-1}\\
&\prec &(\mu^*)^{-1} \left[1-d_1^{-1}(2-m)^{-1}\right]^{-1}.
\end{eqnarray*}
  We have obtained the estimates for the second case.
Therefore, we have shown that for any $Q=Q_\del'$ and $1\leq j\leq \ell$,
\eq{hmiq}
(\hat\mu_i^{Q}-\mu_{ij})^{-1}\prec S(m-1).
\eeq
Here $S(t)$ is a convergent power series in $t$ that is independent of   all  $Q's$ of the form $Q_\del'$.

Let us set
$$
f^*:=\sum_{Q\in \nn^n}\max_{1\leq i\leq \ell,\;1\leq j\leq n}|f_{ij,Q}|x^Q e_j.
$$
  By the definition of the equivalence relation on multiindices, we have
\eq{phdi+}
\sum_{\delta\in\Delta}f_\delta^*\prec f^*.
\eeq
According to \re{pdhm} and \re{hmiq}, we have
$$
\phi_{\delta} \prec S(m-1)\left\{f^*(\bar \Phi)\right\}_{\delta}.
$$
  Now \re{fdelx} and \re{phdi+} imply
\beq\label{maj-nonres}
\phi \prec  S(m-1)f^*(\bar \Phi).
\eeq

Let us  project $(\ref{res})$ onto   the $k$th components of $\cL C_2(D)$ as follows.
For a power series   map $g$, we define
$$
g_{res,k}(x)=\sum_{\mu^Q=\mu_k}\quad g_{k,Q}x^Q.
$$
  By the definition of $g_{res}$ in \re{gres}, $g_{res}=(g_{res,1},\ldots, g_{res,n})$. 
 We have
$$
\mu_{ik}\left(M_{i,k}(x)-1\right)x_k =\left(\hat \mu_{ik}(x)-\mu_{ik}\right)x_k = \{f_{ik}(\Phi)\}_{res,k}(x).
$$
Therefore, for all $1\leq k\leq n$,
\beq\label{maj-res}
(m-1)x_k\prec \frac{1}{\min_{i,j}|\mu_{i,j}|}f^*(\bar \Phi).
\eeq
Let us set $\mu_*:=\frac{1}{\min_{i,j}|\mu_{ij}|}$. We
 set $x_1= t, \ldots, x_n= t$ 
 in $\ov\Phi(x)$ and $m(x)$.
  Let $\phi( t)$,   $\ov\Phi( t)$,  and $ m( t)$
still denote $\phi( t,\ldots,  t)$, $\ov\Phi( t,\ldots, t)$, and $m( t,\ldots, t)$, respectively. Let
$$
 t W( t) := \phi( t) + (m( t)-1) t.
$$
We have $W(0)=0$, $\phi( t)\prec  t W( t)$, and $(m( t)-1)\prec W( t)$.
 From estimates $(\ref{maj-nonres})$ and $(\ref{maj-res})$, we obtain
\beq \label{maj-W}
 t W( t)\prec \mu_*f^*(\bar \Phi( t))+S(m( t)-1)f^*(\bar \Phi( t)).
\eeq
Since  ${f_{ij}}(x)=O(|x|^2)$, 
 there exists a constant $c_1$ such that
$$
f^*(x)\prec \frac{c_1(\sum_j x_j)^2}{1-c_1(\sum_j x_j)}.
$$
Hence, estimate $(\ref{maj-W})$ reads
\begin{eqnarray} \label{rel-dom}
 t W( t)&\prec &\left(\mu_*+S(m( t)-1)\right)\frac{c_1(n( t+\phi))^2}{1-c_1n( t+\phi)} 
 \\
&\prec & \left(\mu_*+S( W( t))\right)\frac{c_1 t^2(n(1+W( t)))^2}{1-c_1n t(1+W( t))}.\nonumber
\end{eqnarray}
Let us consider the equation in the unknown $U$ with $U(0)=0$~:
\beq\label{equ-maj}
U( t)(1-c_1n t(1+U( t)))= \left(\mu_*+S( W( t))\right)c_1 t(n(1+U( t)))^2.
\eeq
According to the implicit function theorem, there exists a unique germ of holomorphic function $ U( t)$, solution to $(\ref{equ-maj})$ with $U(0)=0$.
According to inequality $(\ref{rel-dom})$, the function $W$ is dominated by $U$~: $W( t)\prec U( t)$. This can be seen by induction on the degree of the Taylor polynomials at the origin. Therefore, $W$ converges at the the origin. The theorem is proved.
\end{proof}

\setcounter{thm}{0}\setcounter{equation}{0}

\section{Real manifolds with an abelian CR-singularity}
\label{abeliancr}

Let us consider a real analytic manifold $M$ with a CR-singularity at the origin, as in section~\ref{secinv}. We assume that its complexification ${\mathcal M}$ has the maximum number of deck transformations with respect to each projection $\pi_1$ and $\pi_2$. The deck transformations are then generated by germs of holomorphic involutions of $(\cc^{2p},0)$, which are denoted by
\eq{2taus}\nonumber
\{\tau_{11},\ldots,\tau_{1p}\},\quad \{\tau_{21},\ldots,\tau_{2p}\}.
\eeq
  Recall that both families are abelian, that is that
\eq{tijk}\nonumber
 \tau_{ij}\circ \tau_{ik}= \tau_{ik}\circ \tau_{ij}.
 \eeq
They are intertwined by the antiholomorphic involution $\rho$:
$$ 
\tau_{2j}=\rho\circ\tau_{1j}\circ\rho.
$$ 

Let us consider the following germs of holomorphic diffeomorphisms~:
\ga\label{sigma_i}
\sigma_i:=\tau_{1i}\circ\tau_{2i},\quad 1\leq i\leq e_*+h_*,\\
\label{sigma_i+}
\sigma_s:=\tau_{1s}\circ\tau_{2 (s_*+s)},\quad \sigma_{s+s_*}=\tau_{1 (s+s_*)}\circ\tau_{2s},\quad e_*+h_*<s\leq p-s_*.
\end{gather}
Notice that    the above  property holds for quadrics of  the complex case by
\rp{sigs}. 
The family $\{\sigma_i\}$  is reversible with respect to $\rho$. More precisely, we have the following relations
$$
\sigma_i^{-1}=\rho\sigma_i\rho, \quad 1\leq i\leq e_*+h_*;\quad
\sigma_{s+s_*}^{-1}=\rho\sigma_s\rho, \quad e_*+h_*<s\leq p-s_*.
$$

\begin{defn}
We   say that the manifold $M$ has an {\bf abelian CR-singularity} at the origin if its complexification ${\mathcal M}$ has the maximum number of deck transformation and if the family $\{\sigma_1,\dots,\sigma_p\}$ of germs of biholomorphisms at the origin of $\cc^{2p}$ is abelian, i.e. $$\sigma_i\sigma_j=\sigma_j\sigma_i.$$
\end{defn}
\begin{defn}
A
{\it product quadric} is a submanifold in $\cc^{2p}$ defined by
\begin{alignat*}{3}
z_{p+e} &= (z_e+2\gaa_e\ov z_e)^2,\quad &&1\leq e\leq e_*\\
z_{p+h}&= (z_{h}+2\gaa_{h}\ov z_h)^2,\quad&& e_*+1\leq h\leq e_*+h_*\\
z_{p+s}&= (z_s+2\gamma_s\ov z_{s+s_*})^2,\quad && 
\\
z_{p+s+s_*}&=( z_{s+s_*}+2   (1-\ov\gamma_{s})\ov z_{s}) ^2,\quad&& e_*+h_*<s\leq p-s_*
\end{alignat*}
with
$
 0<\gaa_e<1/2, 
 \gaa_h>1/2,$ and $
 \gaa_s\in   (1/2,\infty)\times i(0,\infty).
$
\end{defn}

In what follows, we assume that $M$ has an {\it abelian CR-singularity} at the origin and that $M$ is a {\it higher order perturbation of a product quadric}.

The aim of this section is to show that such an analytic perturbation with an abelian CR-singularity and no hyperbolic component is holomorphically conjugate to a normal form. We shall give two proofs of this result. The first one rests on Moser-Webster result \cite{MW83}[theorem 4.1] applied successively to each $\sigma_i$. The other one is based on the fact that the family $\{\sigma_i\}$ is formally completely integrable and their linear part is of Poincar\'e type. We then apply \rt{thm-conv-nf}.

\subsection{Normal forms  for real submanifolds with  an abelian CR singularity
}
\begin{thm}\label{abelinv}
Let $M$ be a germ of real analytic submanifold in $\cc^n$ at an
 abelian CR-singularity at the origin. Suppose that $M$  is a higher order perturbation of a product quadric of which 
 $\gaa_1, \ldots, \gaa_p$ satisfy \rea{0ge1}.  Assume that the associated $\sigma$ of $M$
 has distinct eigenvalues. 
Suppose that $M$ does not have a 
 hyperbolic component $($i.e. $e_*\geq 0, s_*\geq0, h_*=0)$.
Then there exists a germ of biholomorphism $\psi$ that commutes with $\rho$ and such that, for $1\leq i\leq p$ and
$k=1,2$
\ga\label{convnfsi}
\psi^{-1}\circ\sigma_{i}\circ\psi:\begin{cases}\xi'_i = M_i(\xi\eta)\xi_i\\ \eta'_i= M_i^{-1}(\xi\eta)\eta_i\\ \xi'_j= \xi_j\\ \eta'_j= \eta_j, \quad j\neq i,\end{cases}\quad
 \psi^{-1}\circ\tau_{ki}\circ\psi:\begin{cases}\xi'_i  = \Lambda_{ki} (\xi\eta)\eta_i\\ \eta'_i = \Lambda_{ki}^{-1}(\xi\eta)\xi_i\\ \xi'_j = \xi_j\\
 \eta'_j = \eta_j,\quad j\neq i.\end{cases}
\end{gather}
 Moreover, we have
\begin{alignat}{4}
\label{lam1e}
\Lambda_{1e}&=\overline{\Lambda_{1e}\circ{\rho_z}},\quad &&1\leq e\leq e_*\\
\label{lam1s}\Lambda_{1s}&=\overline{\Lambda_{1(s+s_*)}^{-1}\circ {\rho_z}},\quad&& e_*<s\leq p-s_*\\
\label{lam2j}\Lambda_{2j}&=\Lambda_{1j}^{-1}, \quad&&
1\leq j\leq p.
\end{alignat}
\end{thm}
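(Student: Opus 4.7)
The strategy is to apply Theorem~\ref{thm-conv-nf} to the commuting family $\mathcal F = \{\sigma_1,\dots,\sigma_p\}$ of germs of biholomorphisms defined in \eqref{sigma_i}--\eqref{sigma_i+}, and then to deduce the normal form of the involutions $\tau_{ki}$ as a consequence. First I would fix linear coordinates in which the standard involutions $\hat T_{ij}$, $\hat S_i$, and $\rho$ of Lemma~\ref{t1t2sigrho} and Proposition~\ref{sigs} are simultaneously realized, so that the linear part $S_i$ of $\sigma_i$ at the origin acts as $\xi_i\mapsto\mu_i\xi_i,\ \eta_i\mapsto\mu_i^{-1}\eta_i$ (with $\mu_i=\lambda_i^2$) on the two coordinates indexed by $i$ (for an elliptic component) or on the pair of coordinates indexed by $(s,s+s_*)$ (for a complex component), and as the identity on the remaining coordinates. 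The assumption that $M$ has distinct eigenvalues and no hyperbolic component means $|\mu_e|>1$ for every elliptic index and $|\mu_s|>1,\ |\mu_{s+s_*}|=|\mu_s|^{-1}<1$ for every complex pair, so no $\mu_i$ lies on the unit circle.

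Next I would verify the two hypotheses of Theorem~\ref{thm-conv-nf}. For the \emph{Poincar\'e type} condition on the collective family $D=\{S_1,\dots,S_p\}$: although each individual $S_i$ has eigenvalue $1$ with high multiplicity, for a resonant multi-index $Q$ with $\mu_k^Q=\mu_{kj}$ failing for at least one $k$, I will produce $Q'$ (differing from $Q$ by a vector in $\pm\mathbb N^n$ supported on the coordinates moved by $S_i$) such that the $i$-th eigenvalue of $\mu_i^{Q'}$ picks up a factor $\mu_i$ or $\mu_i^{-1}$ of modulus bounded away from $1$, giving the required exponential growth in $|Q'|$; this uses crucially that $\mu_i,\mu_i^{-1}$ appear with $|\mu_i|\neq 1$. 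For the \emph{formal complete integrability}, I will combine the pairwise commutativity of the $\sigma_i$ (the abelian CR-singularity hypothesis) with the formal normalization already obtained in Proposition~\ref{ideal0} and Theorem~\ref{ideal5}: any formal normal form of the abelian family must belong to the centralizer of $D$, and a coefficient-by-coefficient induction along the common eigenvalue decomposition shows that the resonance set of the normalized $\hat\mu_{ij}(x)$ agrees with that of $\mu_{ij}$, giving the second axiom of Definition~\ref{small divisors}.

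Theorem~\ref{thm-conv-nf} then yields a convergent biholomorphism $\psi_0$ conjugating each $\sigma_i$ into a map in the form prescribed by \eqref{convnfsi}, with coefficients $M_i$ depending only on the products $\xi\eta$. To recover the normal form of the involutions $\tau_{ki}$, I would use reversibility: the identities $\tau_{1i}\sigma_i\tau_{1i}=\sigma_i^{-1}$ together with $\tau_{1i}^2=\id$ and $\fix(\tau_{1i})$ a hypersurface force, by the argument of Lemma~\ref{cents}, each $\tau_{ki}$ to have precisely the stated form, with $\Lambda_{2i}=\Lambda_{1i}^{-1}$ and $M_i=\Lambda_{1i}\Lambda_{2i}^{-1}=\Lambda_{1i}^2$. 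Finally, for the reality conditions \eqref{lam1e}--\eqref{lam2j}, I would arrange (using the averaging trick $\psi_0\mapsto\tfrac12(\psi_0+\rho\psi_0\rho)$ made rigorous as in Proposition~\ref{ideal0}(ii), or rather a uniqueness argument: if $\psi_0$ normalizes $\mathcal F$ then so does $\rho\psi_0\rho$ because $\rho$ conjugates $\sigma_s$ to $\sigma_{s+s_*}^{-1}$ and each elliptic $\sigma_e$ to $\sigma_e^{-1}$) that $\psi$ commutes with $\rho$, and then conjugating the normalized $\tau_{ki}$ and $\sigma_i$ by $\rho$ gives \eqref{lam1e}--\eqref{lam2j}.

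The main obstacle I anticipate is the verification of formal complete integrability with the precise axioms of Definition~\ref{small divisors}, in particular the ``only if'' direction of the resonance characterization, since the individual $S_i$ have huge resonance spaces and only their \emph{joint} resonances match those of the formal normal form; this will require exploiting the commutativity of the $\sigma_i$ in the inductive construction of the normalizing transformation, much as in the proof of Lemma~\ref{cents} but handling all $p$ maps simultaneously.
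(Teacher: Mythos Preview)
Your overall architecture matches the paper's second proof: apply Theorem~\ref{thm-conv-nf} to the family $\{\sigma_1,\dots,\sigma_p\}$, verify Poincar\'e type and formal complete integrability, then read off the involutions and the reality conditions. Your Poincar\'e-type argument and your $\rho$-uniqueness argument are essentially what the paper does. However, two of your steps have real gaps, and they are intertwined.

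\textbf{Formal complete integrability.} Your citations are misplaced: Proposition~\ref{ideal0} and Theorem~\ref{ideal5} normalize the \emph{single} map $\sigma=\tau_1\tau_2$, not the family $\{\sigma_i\}$. What you actually need is a formal transformation putting \emph{each} $\sigma_i$ into the specific form \eqref{convnfsi}, i.e.\ acting nontrivially only on $(\xi_i,\eta_i)$ with coefficient $M_i(\xi\eta)$. The paper obtains this by running the Moser--Webster normalization (\cite{MW83}, Lemma~3.2 / Theorem~4.1) successively on $\sigma_1,\sigma_2,\dots$ at the formal level: the unique normalized map for $\sigma_1$ gives \eqref{sigma1norm}; commutativity $\sigma_m\sigma_1=\sigma_1\sigma_m$ then forces $\sigma_m\in\mathcal C(S_1)$; the next normalization $\psi_2$ for $\sigma_2$ is shown, again by uniqueness, to lie in $\mathcal C(S_1)$ and hence to preserve the form of $\sigma_1^*$; and so on. Once each $\sigma_i^*$ has the form \eqref{convnfsi}, complete integrability is immediate because no $\mu_i$ is a root of unity. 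Your ``coefficient-by-coefficient induction'' sketch does not supply this successive-preservation argument, which is the actual content.

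\textbf{Form of the $\tau_{ki}$.} Your proposed route via reversibility and Lemma~\ref{cents} is not sufficient. That lemma treats involutions factoring a \emph{linear} $\hat S$, and even the nonlinear analogue (the argument in Proposition~\ref{ideal0}(i)) only tells you that $\tau_{1i}^*T_0$ lies in the centralizer of the linear part $S_i$; it does \emph{not} force $\tau_{1i}^*$ to fix $(\xi_j,\eta_j)$ for $j\neq i$. There is no obvious relation between $\tau_{1i}$ and $\sigma_j$ for $j\neq i$ that would give this (e.g.\ $\tau_{1i}$ need not commute with $\tau_{2j}$). In the paper, the form of $\tau_{ki}^*$ is obtained \emph{simultaneously} with that of $\sigma_i^*$ by the Moser--Webster step, and is then preserved by the later steps because each subsequent $\psi_k\in\mathcal C(S_1,\dots,S_{k-1})$ and conjugation by such elements preserves the form \eqref{sigma1norm}. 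In the convergence argument, the convergent normalizer $\psi_1\in\mathcal C^{\mathsf c}(D)$ differs from the formal one by an element of $\mathcal C(S_1,\dots,S_p)$, so the $\tau_{ki}$ keep their form.

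Finally, to obtain $\Lambda_{2j}=\Lambda_{1j}^{-1}$ you need one more explicit change of coordinates (the paper uses $\phi_2\colon \xi_j\mapsto (\Lambda_{1j}^{1/2}M_j^{1/4})\xi_j$, $\eta_j\mapsto (\Lambda_{1j}^{-1/2}M_j^{-1/4})\eta_j$, checked to commute with $\rho$); this step is absent from your plan.
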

\begin{proof}
  We will present two  convergence proofs: one is based on a convergent
  theorem of Moser and Webster and another is based on  \rt{thm-conv-nf}.
 We first use some formal results obtained by Moser and Webster~\cite{MW83} and  some results in section~\ref{nfcb}. 

Since $M$ is a higher order perturbation of a product quadric, there are linear coordinates such that, for $1\leq i\leq p$ and
$k=1,2$, $\tau_{k,i}$ and $\sigma_i$ are higher order perturbations of 
\gan
S_{i}:\begin{cases}\xi'_i = \mu_i\xi_i\\ \eta'_i= \mu_i^{-1}\eta_i\\ \xi'_j= \xi_j\\ \eta'_j= \eta_j, \quad j\neq i,\end{cases}\quad
 T_{ki}:\begin{cases}\xi'_i  = \lambda_{ki}\eta_i\\ \eta'_i = \lambda_{ki}^{-1}\xi_i\\ \xi'_j = \xi_j\\
 \eta'_j = \eta_j,\quad j\neq i.\end{cases}
\end{gather*}
For elliptic coordinates, this was computed in \cite{MW83} and recalled in \re{tau1e}. For complex coordinates, this is computed in \re{linears},\re{taus}. 
Recall that 
$$
\sigma_m=\tau_{1m}\circ\tau_{2m}, \quad 1\leq m\leq p.
$$
Since $|\mu_1|\neq 1$, then by theorem 4.1 of Moser-Webster (\cite{MW83}), there is a  unique convergent transformation $\psi_1$
 normalized w.r.t. $S_1$
 such that  $\sigma_1^*:=\psi_1^{-1}\circ\sigma_{1}\circ\psi_1$ and $ \tau_{i1}^*:=\psi_1^{-1}\circ\tau_{i1}
 \circ\psi_1$
 are given by   \ga\label{sigma1norm}
\sigma_1^*\colon 
\begin{cases}x'_1 = M_1(\xi,\eta)\xi_1\\ \eta'_1= M_1^{-1}(\xi,\eta)\eta_1\\ \xi'_j= \xi_j\\ \eta'_j= \eta_j, \quad j\neq 1,\end{cases}\qquad
 \tau_{k1}^*\colon 
 \begin{cases}\xi'_1  = \Lambda_{k1} (\xi,\eta)\eta_1\\ \eta'_1 = \Lambda_{k1}^{-1}(\xi,\eta)\xi_1\\ \xi'_j = \xi_j\\
 \eta'_j = \eta_j,\quad j\neq 1.\end{cases}
\end{gather}
Here $k=1,2$.  
It is a  simple fact (e.g. see  \rl{complete-lemma}, $D=\{ S_1\})$ that there is a unique 
$\phi_1\in \cL C^c(S_1)$ such that $\phi^{-1}\sigma_1\phi$ is in the centralizer of $S_1$. 
Therefore,  $\phi_1=\psi_1$ is also convergent. 

Furthermore, we have $M_1(\xi,\eta)=\Lambda_{11}(\xi,\eta)\Lambda_{21}^{-1}(\xi,\eta)$;
and $\Lambda_{11}, \Lambda_{21}, M_1$ are invariant by $ S_1$. In the new coordinates, let
us denote $\tau_{im},\sigma_m$ by the same symbols for $m>1$. However, $\sigma_1=\sigma_1^*$
and $\tau_{k1}=\tau_{k1}^*$. Since each $\sigma_m$ commutes with $\sigma_1$, then
$\sigma_m$ is in the centralizer of $S_1$. Indeed, according to \cite{MW83}[Lemma 3.1](or \rl{pcn0} with $D=\{ S_1\}$), we can decompose $\sigma_m=\sigma_m^1\sigma_m^0$ where $\sigma_m^1$ is normalized w.r.t $S_1$ and $\sigma_m^0$ is in the centralizer of $ S_1$.  Write $\sigma_1\sigma_m=\sigma_m\sigma_1$ as
 $$(\sigma_m^1)^{-1}\sigma_{1}\sigma_m^1=\sigma_m^0\sigma_{1}(\sigma_m^0)^{-1}.$$
Since $\sigma_m^0\sigma_{1}(\sigma_m^0)^{-1}$ belongs to ${\mathcal C}(S_1)$, so does $(\sigma_m^1)^{-1}\sigma_{1}\sigma_m^1$.
Then applying the uniqueness of $\psi_1$ stated earlier to $\sigma_m^1$, we conclude that $\sigma_m^1=I$
and $\sigma_m=\sigma_m^0$ is in the centralizer of $S_1$. 

Let us verify that $\sigma_m^0$ or in general  each (formal) transformation $\var$
  in $\cL C(S_1)$ preserves the form of $\sigma_{1}^*$ and $\tau_{i1}^*$. 
Indeed, $\varphi^{-1}$ commutes with $S_1$ too. Thus $\varphi^{-1}\sigma_1^*\varphi$ commutes with $S_1$ and its linear part is $S_1$.  The linear part of $\varphi_1$ must preserve the eigenspaces of $S_1$ and hence it is given by
$$
\xi_1\to a\xi_1,\quad \eta_1\to b\eta_1, 
\quad (\xi_*,\eta_*) \to \phi(\xi_*,\eta_*)
$$
for $\xi_*=(\xi_2,\dots, \xi_n)$ and $\eta_*=(\eta_2,\dots, \eta_n)$. By a simple computation, the linear part of $\varphi^{-1}\tau_{k1}^*\varphi$ still has  the form \re{sigma1norm}.   According to \cite{MW83}[lemma 3.2], there a unique normalized mapping $\Psi$ that normalizes $\varphi^{-1}\sigma_1^*\varphi$  and  the $\varphi^{-1}\tau_{k1}^*\varphi$'s. According to the uniqueness property of \rl{lem-nf-nf}, $\Psi=Id$. Therefore, $\var$ preserves the forms of $\tau_{i1}^*$ and $\sigma_1^*$. 
 
Let $\psi_2$ be the unique biholomorphic map  normalized w.r.t. $S_2$  such that $\psi_2^{-1}\sigma_2\psi_2=\sigma_2^*$ and $\psi_2^{-1}\tau_{k2}\psi_2=\tau_{k2}^*$ are in the  normal form~:  
\ga\label{sigma2norm}
\sigma_{2}^*:
\begin{cases}\xi'_2 = M_2(\xi,\eta)\xi_2\\ \eta'_2= M_2^{-1}(\xi,\eta)\eta_2\\ \xi'_j= \xi_j\\ \eta'_j= \eta_j, \quad j\neq 2,\end{cases}\quad
\tau_{k2}^*:\begin{cases}\xi'_2  = \Lambda_{k2} (\xi,\eta)\eta_2\\ \eta'_2 = \Lambda_{k2}^{-1}(\xi,\eta)\xi_2\\ \xi'_j = \xi_j\\
 \eta'_j = \eta_j,\quad j\neq 2.\end{cases}
\end{gather}
Here $k=1,2$, and  $M_2$ and $\Lambda_{k2}$ are invariant by $S_2$. 
Since $\sigma_2$ commutes with $S_1$, we have
$$
(S_1^{-1}\psi_2S_1)^{-1}\circ\sigma_2\circ(S_1^{-1}\psi_2S_1)=S_1^{-1}\sigma_2^*S_1.
$$
Note that  $S_1^{-1}\sigma_{2}^*S_1$ (resp. $S_1^{-1}\tau_{k2}^*S_1$) has the form \re{sigma2norm}  
 in which $M_{2}$ (resp. $\Lambda_{k2}$) is replaced by $M_{2}\circ S_1$ (resp. $\Lambda_{k2}\circ S_1$). In other words $S_1^{-1}\sigma_{2}^*S_1$ and $S_1^{-1}\tau_{k2}^*S_1$ are still of the form  \re{sigma2norm}. Since $S_1$ is diagonal, then
$S_1^{-1}\psi_2S_1$ remains normalized w.r.t. $S_2$. Applying the above uniqueness on $\psi_2$ for 
$\sigma_{2}$, we conclude that $\psi_2=S_1^{-1}\psi_2S_1$. This shows that $\psi_2$
preserves the forms of $\tau_{k1}^*$ and $\sigma_1^*$. By the same argument as above, we have $\sigma_m^*\in {\mathcal C}(S_1,S_2)$. 
 
In summary, we have found holomorphic coordinates so that $\tau_{ij}=\tau_{im}^*$ and $\sigma_m=\sigma_m^*$ for $m=1,2$.   As mentioned 
previously, we know that $\sigma_1^*, \sigma_2^*, \sigma_3, \ldots, \sigma_m$
commute with $S_1$ and $S_2$. In particular, $M_1, M_2$ are invariant by $S_1, S_2$. 
Repeating  this procedure,  we find  a holomorphic map $\phi$   so that all $\phi^{-1}\sigma_j\phi=\sigma_j^*$
and $\phi^{-1}\tau_{kj}\phi=\tau_{kj}^*$ are in the normal forms. Furthermore, $M_i$ and $\Lambda_{k,i}$ are invariant by $\{S_1, \ldots, S_p\}$.

By \rl{pcn0}, we decompose $\phi=\phi_1\phi_0^{-1}$ where $\phi_1$ is normalized w.r.t. $\{S_1, \ldots, S_p\}$
and $\phi_0$ is in the centralizer of $\{S_1, \ldots, S_p\}$.  Then 
$\phi_1^{-1}\sigma_j\phi_1=\sigma_j^*$
and $\phi^{-1}_1\tau_{ij}\phi_1=\tau_{ij}^*$ are in the normal forms,  since $\phi_0$ commutes with $S_j$. 
We want to show that $\phi_1$ commutes with $\rho$.

Note that  $\sigma_e^{-1}=\rho\sigma_e\rho$ and $\sigma_{s+s_*}^{-1}=\rho\sigma_s\rho$.
Thus $(\rho\phi_1\rho)^{-1}\sigma_j(\rho\phi_1\rho)=\tilde\sigma_j^*$ where $\tilde\sigma_e^*:= \rho(\sigma_e^*)^{-1}\rho$ and $\tilde\sigma_s^*:= \rho(\sigma_{s+s*}^*)^{-1}\rho$. According to \re{eqrh}, we see that $\rho\phi_1\rho$
is still normalized w.r.t. $\{S_1,\ldots, S_p\}$.
By \rla{lem-nf-nf},  we  know that there is a unique normalized formal  mapping $\phi_1$ such that $\phi_1^{-1}\sigma_j\phi_1$ are in the centralizer of $\{S_1,\ldots,
S_p\}$. Since $\tilde\sigma_j^*$ belongs to the centralizer of $\{S_1,\ldots,S_p\}$, then we have $\rho\phi_1\rho=\phi_1$. 

 Now, $\tau_{2j}^*=\rho\tau_{1j}^*\rho$ follows from $\tau_{2j}=\rho\tau_{1j}\rho$.
This shows that
\begin{eqnarray} 
\nonumber
  \Lambda_{2e}&=&\overline{\Lambda_{1e}^{-1}\circ\rho},\quad 1\leq e\leq e_*,\\
\Lambda_{2s}&=&\overline{\Lambda_{1 (s+s_*)} \circ \rho}, 
\nonumber
\\ 
\nonumber
\Lambda_{2(s+s_*)}&=&\overline{\Lambda_{1 s} \circ \rho},\quad e_*+h_*<s\leq p-s_*. 
\nonumber
\end{eqnarray}
Let $\phi_2$ be defined by $$
\xi_j'=(\Lambda_{1j}^{1/2}M_j^{1/4})(\xi\eta)\xi_j, \quad
\eta_j'=(\Lambda_{1j}^{-1/2}M_j^{-1/4})(\xi\eta)\eta_j, \quad
1\leq j\leq p.$$ For  a suitable choice of the roots, we have $\phi_2\rho=\rho\phi_2$. Furthermore, $\phi_2$ preserves all invariant functions of $\{S_1,\ldots,S_p\}$. Hence, each $\phi_2^{-1} \circ\phi_1^{-1}\circ\tau_{ki}\circ\phi_1\circ\phi_2$ has the form $\tau_{kj}^*$ stated in 
 \rt {abelinv}.

 \medskip
 
 We now present another proof by using the more general \rt{thm-conv-nf}.
 
Note that the above proof is valid at the formal level without using the convergence result of Moser and Webster. 
More specifically, 
 if $\tau_{ij}$ are given by formal power series with 
$\sigma_1, \ldots, \sigma_p$ commuting pairwise, there exists a formal map $\psi$ that is tangent to the identity and commutes
with $\rho$ such that \re{convnfsi} holds. Since each $\mu_j$ is not a root of unity,  then
\re{convnfsi} implies that the   conjugate family $\{\sigma_m^*\}$ is a completely integrable normal form. 

%
 
Let $\sigma_i$ be defined as above. Let $S_i$ be its linear part at the origin of $\cc^{n}$. The eigenvalues $\{\mu_{ij}\}_{1\leq j\leq n}$ of $S_i$ are either $\mu_i$, $\mu_i^{-1}$ or $1$. More precisely, if $Q\in\nn^n$, $|Q|\geq 2$ then
\eq{muiij}
\mu_m^Q-\mu_{mj}= \mu_m^{q_m-q_{m+p}} - \begin{cases}\mu_m\quad\text{if }j=m \\\mu_m^{-1}\quad\text{if }j=m+p \\1\quad\text{otherwise. }\end{cases}
\eeq
We need to verify the condition that the family of linear part  $\{ S_1, \ldots,  S_p\}$ is 
of  the Poincar\'e type. So we can apply \rt{thm-conv-nf}.
 
Suppose that $(j,Q)\in\{1,\ldots, 2p\}\times\nn^{2p}$ satisfies
$$
\mu_l^Q-\mu_{l j}\neq0
$$
for some $1\leq l\leq 2p$. Set $d=\{\min_i\max(|\mu_i|,|\mu_{i}^{-1}|)\}^{1/{(2p)}}$.
 We define
$$
Q'=Q-\sum_{i=1}^p\min(q_i,q_{i+p})(e_i+e_{i+p}):=(q_1',\ldots, q_{2p}').
$$
Then $\mu_i^Q=\mu_i^{Q'}$ for all $i$. Take $i=l$ if $|Q'|\leq 2p$. In this case, we easily get
\eq{miqp9}
\mu_i^{Q'}-\mu_{ij}\neq0, \quad |\mu_i^{Q'}|>c^{-1}d^{|Q'|}
\eeq
by choosing a sufficiently large $c$. Assume that $|Q'|>2p$.
Take $i$ such that
$$
q_{i}+q_{i+p}=\max_k (q_{k}+q_{k+p}).
$$
Then $q_i+q_{i+p}\geq |Q'|/p>1$. By \re{muiij}, we get the first inequality in \re{miqp9}.
We note that $(q'_i,q'_{i+p})=(q_i,0)$ or $(0,q_{i+p})$. Thus
$$
\max(|\mu_i^{Q'}|,|\mu_i^{-Q'}|)=(\max(|\mu_i|,|\mu_i|^{-1}))^{q_i+q_{i+p}}\geq d^{|Q'|}.
$$
This shows that $\{D\sigma_1(0),\ldots, D\sigma_p(0)\}$ is of the Poincar\'e type.

We now apply Theorem~\ref{thm-conv-nf} as follows. 
We decompose $\psi=\psi_1\psi_0^{-1}$ such that $\psi_1\in\cL C^{\mathsf c}( S_1, \ldots,
 S_p)$ and $\psi_0\in\cL C( S_1, \ldots,  S_p)$. Then each $\sigma_i^*=\psi_1^{-1}\sigma_i\psi_1$
still has the form in \re{convnfsi}; in particular, $\{\sigma_1^*, \ldots, \sigma_p^*\}$ is a completely 
 integrable formal normal form. By \rt{thm-conv-nf}, $\psi_1$ is convergent.  Now, $\psi_1^{-1}\tau_{kj}\psi_1
 =\psi_0^{-1}(\psi^{-1}\tau_{kj}\psi)\psi_0$ are still of the form \re{convnfsi}; however \re{lam1e}-\re{lam2j}
might not hold. 
As in the first proof, we can verify
 that $\psi_1\rho=\rho\psi_1$. Applying another change of coordinates that commutes with $\rho$ and
 each $ S_j$ as before, we achieve \re{convnfsi}-\re{lam2j}.  The proof of the theorem is complete.
\end{proof}

\begin{rem}\label{presform} 
When $M$ is non-resonant and   $\log \hat M$ is tangent to the identity, 
 we    apply \rt{ideal5} to obtain a further
holomorphic change of coordinates so that $(M_1,\ldots, M_p)$ are uniquely determined by the real analytic submanifold. 
Then by  \rp{ideal6+},   $\{\hat\tau_{i1},\dots,\tau_{ip}\}$, $i=1,2$, are formally equivalent to $\{\hat\tau_{i1},\dots,
\hat\tau_{ip}\}$, $i=1,2$,  defined by \re{7convnfsi}, if and only if the formal map $\Psi$
in \rp{ideal6+} is the identity.
In other words, $M$ has an abelian CR singularity at the origin if and only if 
$\Phi-I$ vanishes.
\end{rem}

As a corollary of \rt{abelinv}, we have the following normal form for real submanifolds. In order to study
the holomorphic flatness and hull of holomorphy,
 we choose a realization similar to the case
of Moser-Webster for $p=1$. 
\begin{thm}\label{abelm}
Let $M$ be a germ of  real analytic submanifold at an abelian CR singularity.  Assume that $M$ is a higher order perturbation of a product quadric of which 
 $\gaa_1, \ldots, \gaa_p$ satisfy \rea{0ge1}.
Suppose that $M$  has no hyperbolic component of complex tangent at the origin.   Suppose that the associated $\sigma$ of $M$ has distinct
 eigenvalues $\mu_1, \ldots, \mu_p$,
$\mu_1^{-1}, \ldots, \mu_p^{-1}$.
Then $M$ is holomorphically equivalent to
\begin{gather}\label{Hmpp}
\widehat M\colon
z_{p+j}=\Lambda_{1j}(\zeta)\zeta_j,\quad 1\leq j\leq
p,
\end{gather}
where $\zeta=(\zeta_1,\ldots, \zeta_p)$ are the convergent solutions to
\begin{align}\label{ze+L}
\zeta_e&=\frac{1+\Lambda_{1e}^{2}(\zeta)}{(1-\Lambda_{1e}^2(\zeta))^{2}}|z_e|^2-\frac{\Lambda_{1e}(\zeta)}{(1-\Lambda_{1e}^2(\zeta))^{2}}(z_e^2+\ov z_e^2),\\ 
\zeta_s&=\frac{\Lambda_{1s}(\zeta)+\Lambda_{1s}^3(\zeta)}{(1-\Lambda_{1,s}^2(\zeta))^2}z_s\ov z_{s+s_*}-\frac{\Lambda_{1s}(\zeta)}{(1-\Lambda_{1s}^{2}(\zeta))^2}
(z_s^2+\Lambda_{1s}^2(\zeta)\ov z_{s+s_*}^2),\\
\label{zsss9}\zeta_{s+s_*}&= \frac{\Lambda_{1(s+s_*)}(\zeta)+\Lambda_{1(s+s_*)}^3(\zeta)}{(1-\Lambda_{1(s+s_*)}^2(\zeta))^2}\ov z_s z_{s+s_*}\\
&\quad -\frac{\Lambda_{1(s+s_*)}(\zeta)}{(1-\Lambda_{1(s+s_*)}^{2}(\zeta))^2}
(z_{s+s_*}^2+\Lambda_{1(s+s_*)}^2(\zeta)\ov z_{s}^2).
\nonumber
\end{align}  
Here $\Lambda_{1j}(\zeta)=\la_j+O(\zeta)$ $(1\leq j\leq p)$ satisfy \rea{lam1e}-\rea{lam1s}.
In particular, $\widehat M$ is contained in $z_{p+e}=\ov z_{p+e}$ and $z_{p+s}=\ov z_{p+s+s_*}$. 
\end{thm}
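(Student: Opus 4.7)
The strategy is to combine the normal form for involutions obtained in Theorem~\ref{abelinv} with a non-linear analogue of the realization scheme of Proposition~\ref{inmae}(ii), then extract the explicit implicit system satisfied by the coordinates $\zeta$.

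First, I would apply Theorem~\ref{abelinv} to produce holomorphic coordinates $(\xi,\eta)$ on the complexification $\cL M$ in which each $\tau_{1j}$ takes the normal form \eqref{convnfsi} and $\rho$ takes the standard form \eqref{eqrh}, with $\Lambda_{1j}(\zeta)$ satisfying the reality conditions \eqref{lam1e}--\eqref{lam2j}. Here $\zeta=(\xi_1\eta_1,\dots,\xi_p\eta_p)$ is preserved componentwise by every $\tau_{1k}$. This step is where the hypothesis that $M$ has no hyperbolic component is essential.

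Second, I would identify generators of the ring $\cL O_{2p}^{\cL T_1}$. Because $\tau_{1j}$ preserves $\zeta$ and acts only on the $j$th block, a straightforward check gives that
\[
A_j(\xi,\eta):=\xi_j+\Lambda_{1j}(\zeta)\eta_j,\qquad C_j(\xi,\eta):=\Lambda_{1j}(\zeta)\zeta_j
\]
are invariant under every $\tau_{1k}$, while $B_j:=\xi_j-\Lambda_{1j}(\zeta)\eta_j$ is skew-invariant under $\tau_{1j}$ and invariant under $\tau_{1k}$ for $k\neq j$, and $B_j^2=A_j^2-4C_j$. A version of the argument in Lemma~\ref{sd1}(ii) shows that $\{A_j,C_j\}$ generate the full invariant ring.

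Third, I would define a holomorphic map $(\xi,\eta)\mapsto(z',w')$ on the complexification by $z_j=A_j$, $w_j=\overline{A_j\circ\rho}$. In each elliptic block the linear part is $\bigl(\begin{smallmatrix}1&\lambda_e\\ \lambda_e&1\end{smallmatrix}\bigr)$ with determinant $1-\lambda_e^2\neq 0$ since $\mu_e=\lambda_e^2$ is simple and $\neq 1$; the complex blocks give an analogous $4\times 4$ non-degenerate system. Under this map $\rho$ becomes the standard $\rho_0(z',w')=(\bar w',\bar z')$, and we set $z_{p+j}=C_j$, $w_{p+j}=\overline{C_j\circ\rho}$. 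Invoking Proposition~\ref{inmae}(ii) in its formal version (Proposition~\ref{mmtp}(ii))---justified in the convergent category because all ingredients are now convergent---produces a real analytic submanifold $\widehat M\subset\cc^{2p}$ whose families of deck transformations coincide with the normalized $\{\tau_{1j},\rho\}$. By Proposition~\ref{inmae}(i), $M$ and $\widehat M$ are holomorphically equivalent.

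Fourth, I would derive the explicit implicit system for $\zeta$ in terms of $(z',\bar z')$. On $M=\mathrm{Fix}(\rho)$, the elliptic block gives the linear system
\begin{gather*}
z_e=\xi_e+\Lambda_{1e}(\zeta)\eta_e,\qquad \bar z_e=\eta_e+\Lambda_{1e}(\zeta)\xi_e,
\end{gather*}
whose determinant is $1-\Lambda_{1e}^2(\zeta)$; solving and computing $\zeta_e=\xi_e\eta_e$ gives exactly \eqref{ze+L}. The complex pair $(s,s+s_*)$ is handled similarly using \eqref{eqrh} and \eqref{lam1s}, yielding \eqref{zsss9}. The resulting system has the form $\zeta=F(\zeta,z,\bar z)$ with $F(\zeta,0,0)=0$ and $F(\zeta,z,\bar z)=O(|(z,\bar z)|^2)$, so the implicit function theorem provides a unique convergent solution $\zeta(z,\bar z)$. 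The flatness $z_{p+e}=\bar z_{p+e}$ and $z_{p+s}=\bar z_{p+s+s_*}$ follows from the reality relations \eqref{lam1e}--\eqref{lam1s} together with the corresponding symmetries of $\zeta$ under complex conjugation.

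The principal obstacle will be step four for the complex blocks: one must invert a $4\times 4$ linear system whose entries depend nonlinearly on $\Lambda_{1s}(\zeta)$ and $\Lambda_{1(s+s_*)}(\zeta)$ and then identify the result with the specific formula \eqref{zsss9}; the bookkeeping is elementary but the interplay between the reality conditions and the off-diagonal action of $\rho$ on complex blocks requires care to avoid sign and conjugation errors.
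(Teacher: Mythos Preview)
Your approach is essentially identical to the paper's: your invariants $A_j=\xi_j+\Lambda_{1j}(\zeta)\eta_j$ and $C_j=\Lambda_{1j}(\zeta)\zeta_j$ are exactly the paper's $f_j=\xi_j+\xi_j\circ\tau_{1j}$ and $F_j=(\xi_j\circ\tau_{1j})\xi_j$, and the explicit inversion and computation of $\zeta_j=\xi_j\eta_j$ proceed the same way. Two small points worth noting: first, Proposition~\ref{inmae}(ii) as stated produces the \emph{square} realization $z_{p+j}=B_j^2$, not $z_{p+j}=C_j$, so rather than invoking that proposition the paper carries out the realization argument directly via Lemma~\ref{twosetin} (this is what the paper means by ``a realization which is different from \re{realM2}''); second, the complex block actually decouples into two $2\times2$ systems (pairing $(z_s,\bar z_{s+s_*})$ with $(\xi_s,\eta_s)$ and $(z_{s+s_*},\bar z_s)$ with $(\xi_{s+s_*},\eta_{s+s_*})$), not a single $4\times4$ system, which makes the bookkeeping you flag as the ``principal obstacle'' considerably lighter.
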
 
By \rl{disga2},  that $\sigma$ has distinct eigenvalues is equivalent  to $\gaa_1, \ldots, \gaa_p$ being distinct. 
\begin{proof}   We use a realization which is different from \re{realM2}. 
We assume
that $M$ already has the normal form as in \rt{abelinv}. Thus for $j=1,\ldots, p$, we have
\eq{ta1j}
\tau_{1j}\colon\xi_j'
=\Lambda_{1j}(\xi\eta)\eta_j,\quad \eta_j'=\Lambda_{1j}^{-1}(\xi\eta)\xi_j, \quad (\xi_k',\eta_k')=(\xi_k,\eta_k), \quad k\neq j.
\eeq
 Let us define
$$
f_j(\xi,\eta)=\xi_j+\xi_j\circ\tau_{1j}, \quad
g_j=\ov{f_j\circ\rho}, \quad 1\leq j\leq p. 
$$
The latter implies that the biholomorphic mapping $\varphi (\xi,\eta)= (f(\xi,\eta),
g(\xi,\eta))$ transforms $\rho$ into the standard
complex conjugation $(z',w')\to(\ov w',\ov z')$. 
Define $$
F_{j}(\xi,\eta)=\xi_j\circ\tau_{1j}(\xi,\eta) \xi_j, \quad
1\leq j\leq p.
$$
 Using the expressions of $\tau_{1j}$ given by \re{ta1j}, we verify that  $f_j$ and $F_{j}$ 
are invariant by $\tau_{1k}$.  
Note that the linear part of $f_j(\xi,\eta)$ is $\xi_j+\la_j\eta_j$
for $1\leq j\leq p$, and the
quadratic part of $F_{j}(\xi,\eta)$ is $\la_j\xi_j^2$. 
By \rl{twosetin},  $f_1,\dots, f_p$ and $F_{1},\dots, F_{p}$
 generate all invariant functions 
of $\{\tau_{11},\ldots,\tau_{1p}\}$. 

Next using $\ov{\Lambda_{1 e}\circ{\rho_z}}=\Lambda_{1e}$ and $\ov{\Lambda_{1s}\circ{\rho_z}}=\Lambda_{1(s+s_*
)}^{-1}$,
we rewrite $z_j=f_j(\xi,\eta), w_j=g_j(\xi,\eta)$ as
\begin{alignat*}{5}
 &\xi_e&&=\frac{z_e-\Lambda_{1e}(\xi\eta)w_e}{1-\Lambda_{1e}^2},
\quad &\eta_e&=\frac{w_e-\Lambda_{1e}(\xi\eta)z_e}{1-\Lambda_{1e}^2},\\
& \xi_s&&=\frac{z_s-\Lambda_{1s}^2(\xi\eta)w_{s+s_*}}{1-\Lambda_{1s}^2(\xi\eta)}, \quad
&\eta_s&=\frac{\Lambda_{1s}(\xi\eta)(w_{s+s_*}-z_s)}{1-\Lambda_{1s}^2(\xi\eta)},\\
 &\xi_{s+s_*}&&=\frac{z_{s+s_*}-\Lambda_{1(s+s_*)}^2(\xi\eta)w_{s}}{1-\Lambda_{1(s+s_*)}^2(\xi\eta)}, \quad
&\eta_{s+s_*}&=\frac{\Lambda_{1(s+s_*)}(\xi\eta)(w_{s}-z_{s+s_*}) }{1-\Lambda_{1(s+s_*)}^2(\xi\eta)}.
\end{alignat*}
Using the above formulae and  $w_j=\ov z_j$, we compute $\zeta_j=\xi_j\eta_j$ to 
obtain  \re{ze+L}-\re{zsss9}. 

Note that $F_j(\xi,\eta)=\zeta_j\Lambda_{1j}(\zeta)$. This shows 
that $z_{p+j}=F_j\circ\varphi^{-1}(z',\ov z')$ 
have the form \re{Hmpp}. Again, we use the
formula of $\tau_{1,k}$ to verify that $z=(z',z'')$
are invariant by all $ \var\tau_{1k}\var^{-1}$.   On the other hand, $z=(z',z'')$ generate invariant functions
of   the deck transformations
of $\pi_1$ for the complexification of $\hat M$ given by \re{Hmpp}.  This shows that $\{\var\tau_{11}\var^{-1}, \ldots, \var\tau_{1p}\var^{-1}\}$
and the deck transformations of $\pi_1$, of which each family consists of commuting involutions, have the same
invariant functions.  By \rl{twosetin}, we know that the two families must be identical. 
This shows
that \re{Hmpp} is a realization for $\{\tau_{11},
\ldots, \tau_{1p},\rho\}$.  

To verify the last assertion of the theorem, we 
set $z_{p+e}^*=\ov z_{p+e}$, $z_{p+s}^*=\ov z_{p+s+s_*}$,
and $z^*_{p+s+s_*}=\ov z_{p+s}$. Set $\zeta_e^*=
\ov \zeta_e$, $\zeta_s^*=\ov \zeta_{s+s_*}$,
and $\zeta^*_{s+s_*}=\ov \zeta_s$.  We take complex conjugate
on  identities \re{Hmpp}-\re{zsss9}. By \rea{lam1e}-\rea{lam1s}, we have
$$
  \Lambda_{1e}=\ov{\Lambda_{1e}\circ{\rho_z}}, \quad \Lambda_{1s}=\ov{\Lambda_{ 1(s+s_*)}^{-1}\circ{\rho_z}}.
$$
We verify that $z^*,\zeta^*$ still satisfy \re{Hmpp}-\re{zsss9}, if $z_{j+p},\zeta_{j}$ are replaced by
$z_{p+j}^*,\zeta_j^*$, respectively, and $z_j$ are unchanged for $1\leq j\leq p$. By the
uniqueness of solutions $\zeta$ to \re{ze+L}-\re{zsss9}, we conclude that $\zeta^*_j=\zeta_j$.
Therefore, $z_{p+j}=z_{p+j}^*$. 
The proof is complete. 
\end{proof}
 
\subsection{Hull of holomorphy of  real submanifolds with an abelian CR singularity}
Let $X$ be a subset of $\cc^n$. We  
define the hull of holomorphy of $X$, denoted by $\cL H(X)$,
to be the intersection of domains of holomorphy in $\cc^n$ that contain $X$.

We assume that $M$ is real analytic and has a non-resonant complex
tangent at the origin of elliptic type only.
By \rt{abelm}, we may assume that $M$ is given by
\begin{gather*}
M\colon
z_{p+j}=\Lambda_{1 j}(\zeta)\zeta_j,\quad 1\leq j\leq
p,
\end{gather*}
where $\zeta_j=\zeta_j(z')$ $(j=1,\ldots, p)$ are the convergent real-valued solutions to
\begin{gather}\label{zj1L}
\zeta_j=\frac{1+\Lambda_{1 j}^{2}(\zeta)}{(1-\Lambda_{1 j}^2)^{2}(\zeta)}|z_j|^2-\frac{\Lambda_{1 j}(\zeta)}{(1-\Lambda_{1 j}^2(\zeta))^{2}}(z_j^2+\ov z_j^2), \quad 1\leq j\leq p.
\end{gather}
For  $\zeta\in\rr^p$ with small $|\zeta|$, we know that  $\Lambda_{1 j}(\zeta)>1$.

In a neighborhood of the origin in $\rr^p$, let us define the following germ of real analytic  diffeomorphism:   
$$
R\colon \zeta\to 
\left(\Lambda_{1 1}(\zeta)\zeta_1,\dots,\Lambda_{1 p}(\zeta)\zeta_p \right).
$$
 If $\e$ is small enough, for each $x''\in [0,\e]^{p}$, we can define $\zeta=R^{-1}(x'')$.
  Note that $R$ sends $\zeta_j=0$ into $x_{p+j}=0$ for each $j$. We can write 
 $$
 R^{-1}(x'')=(x_{p+1}S_1(x''), \ldots, x_{2p}S_p(x''))$$
  with $S_j(0)>0.$
 Then
$M\cap\{z''=x''\}$ is given by \re{zj1L}. 
For $x''\in[0,\e]^p$ let $D_j(x'')$ be the compact   set in  the $z_j$ plane whose boundary is defined 
by the $j$th equation in \re{zj1L} where $\zeta=R^{-1}(x'')$.  When $x_{p+j}>0$,   the boundary 
of $D_j(x'')$ is an ellipse with  
\eq{djxpp}
D_j(x'')\subset \Del_{C_1\sqrt{x_{p+j}}}. 
\eeq
Here and in what follows constants  will depend only on $\la_1,\ldots, \la_p$. 
 Thus 
$$
D(x''):=D_1(x'')\times\cdots\times D_p(x'')\times\{x''\}\subset\cc^p\times\rr^p
$$ is a product
of ellipses and its dimension equals the number of positive numbers among $x_{p+1},\ldots, x_{2p}$. We will call 
$D(x'')$ an {\it analytic polydisc} and  $\pd^*D(x''):=\pd D_1(x'')\times
\cdots\times\pd D_p(x'')\times\{x''\}$ its distinguished boundary.  Note that $\pd^*D(x'')$ is contained in $M$.
In fact, $M$ is foliated by $\pd^*D(x'')$ as $x''$ vary in $[0,\e]^p$ and $\e$ is sufficiently small. We will specify the value
of $\e$ later.  We will use this foliation  
and Hartogs' figures in   analytic polydiscs  to find the local hull of holomorphy of $M$  at the origin.

As $x''$ vary in $[0,\e]^p$, let
$
M^\epsilon
$
be the union of $\pd^*D(x'')$, and  $\cL H^\e$  the union of  $D(x'')$. Both $\cL H^\e$ and $M^\e$ are 
  compact
subsets in $\cc^{2p}$.   For any open ball $B^{\e_*}$ in $\cc^{2p}$ centered at the origin with radius $\e_*$,
$$
B^{\e_*}+M^\e:=\{a+b\colon a\in B^{\e_*}, b\in M^\e\}
$$
is contained in a given neighborhood of $M^\e$, if $\e_*$ is sufficiently small.  Analogously,  $B^{\e_*}+\cL H^\e$
is a   connected open neighborhood of $\cL H^\e$.  
Let us first verify that  a function that
is holomorphic in a connected neighborhood of $M^\e$ in $\cc^n$  extends holomorphically to a neighborhood of $\cL H^\e$. 
 Assume that $f$ is holomorphic in a neighborhood $\cL U$ of $\pd_*^\e D:=\cup_{x''\in[0,\e]^p}\pd^*D(x'')$. 
 We first note that  $\cL H^\e$ is defined by
 \begin{align}\label{xpj}
A_j(x'')|z_j|^2  -B_j(x'')(z_j^2+\ov z_j^2)&\leq x_{p+j}, \quad 1\leq j\leq p;\\
y''=0, \quad x''&\in[0,\e]^p\label{y''x''}
\end{align} 
with
\begin{gather}\nonumber
A_j(x'')=\frac{1+\Lambda_{1 j}^2(R^{-1}(x''))}{S_j(x'')(1-\Lambda_{1 j}^2(R^{-1}(x''))},
\quad
B_j(x'')=\frac{\Lambda_{1 j}(R^{-1}(x''))}{S_j(x'')(1-\Lambda_{1 j}^2(R^{-1}(x''))}.
\end{gather}
Let $\del$ be a small positive number.  For $x''\in[-\del,\e]^p$, let $D_j^\del(x'')\subset\cc$ be defined by
 \begin{align}\nonumber
A_j(x'')|z_j|^2  -B_j(x'')(z_j^2+\ov z_j^2)&\leq x_{p+j}+\del.
\end{align} 
 Fix $\del>0$
sufficiently small.  Let
$
P_{\e,\del}
$
(resp.~$\pd^*P_{\e,\del}$)
be the set of $z=(z',z'')$ satisfying the following: 
$y''\in[-\del,\del]^p$,  $x''\in[-\del,\e]^p$,  and $z_j\in D_j^{\del}(x'')$ (resp.~$z_j\in\pd D_j^{\del}(x'')$) for $1\leq j\leq p$.
Let $\cL U_{\e,\del}$ (resp. $\cL U_{\e,\del_1}$) be a small neighborhood of $P_{\e,\del}$ (resp. $P_{\e,\del_1}$). 
Assume that $0<\del_1<\del$ and $\del_1$ is sufficiently small.  We may also assume that $\cL U_{\e,\del_1}$
is contained in $\cL U_{\e,\del}$ and  $\pd^*P_{\e,\del}\subset \cL U.$
  Thus, for $(z',z'')\in \cL U_{\e,\del_1}$, we can define $$
F(z',z'')=\int_{\zeta_1\in \pd D_1^{\del}(x'')}
\cdots\int_{\zeta_p\in \pd D^{\del}_p(x'')}\f{f(\zeta',z'')\, d\zeta_1\cdots d\zeta_p}{(\zeta_1-z_1)\cdots(\zeta_p-z_p)}.
$$
When $z$ is sufficiently small, $F(z)=f(z)$ as $f$ is holomorphic near the origin.  Fix $z_0\in\cL U_{\e,\del_1}$. We want to show that
  $F$ is holomorphic at $z_0$.  So $F$ is a desired extension of $f$.
By  continuity, when $z=(z_1,\dots, z_{2p})$ tends  to $z_0$, $x''$ tends to $x_0''$
and  $\pd D_j^\del(x'')$ tends to $\pd D_j^{\del}(x_0'')$,
while $z_j\in D_j^{\del}(x_0'')$ when $z$ is sufficiently close to $z_0$.
  By Cauchy theorem, for $z$ sufficiently close to $z_0$ we change contour integrals 
  successively  to get
\aln
F(z',z'')=
\int_{\zeta_1\in \pd D_1^{\del}(x_0'')}\int_{\zeta_2\in \pd D_2^{\del}(x_0'')}
\cdots\int_{\zeta_p\in \pd D^{\del}_p(x_0'')}\f{f(\zeta',z'')\, d\zeta_1\cdots d\zeta_p}{(\zeta_1-z_1)\cdots(\zeta_p-z_p)}.\end{align*}
The set of integration is fixed. The integrand is holomorphic in $z$. Hence   $F$ is holomorphic at $z=z_0$.

Next we want to show that $\cL H^\e$ is the hull of holomorphy of $M^\e$ in $B^{2p}_{\e_0}$ for suitable
$\e,\e_0$ that can be arbitrarily small. 

Let us first show that 
 $\cL H^\e$ is the intersection  of domains of holomorphy in $\cc^n$. 
  Recall that  $ \cL H^\e$ is defined by \re{xpj}-\re{y''x''}.
Next, we define   for $\delta':=(\delta_1,\ldots,\delta_p)$ with $\del_j>0$ 
\begin{align*}
\rho_j^{\delta'}&=A_j(x'')|z_j|^2-B_j(x'')(z_j^2+\ov z_j^2)-x_{p+j}+(  \delta_1^{-1}+\cdots+ \delta_p^{-1})
\sum_{i=1}^py_{p+i}^2\\
&\quad +\sum_{i\neq j} \delta_i^{-1}
\left\{A_i(x'')|z_i|^2-B_i(x'')(z_i^2+\ov z_i^2)-x_{p+i}\right\}.
\end{align*}
When $p=1$, the last summation is $0$. 
 The complex Hessian of $\rho_j^{\delta'}$ is
\begin{align*}
&\sum_{\all,\beta=1}^{2p}\frac{\pd^2\rho^{\delta'}_j}{\pd z_\alpha\ov z_\beta}t_\alpha\ov t_\beta=A_j(x'')|t_j|^2
+\frac{ \delta_1^{-1}+\cdots+ \delta_p^{-1}}{2}\sum_i |t_{p+i}|^2+\sum_{i\neq j}\frac{1}{ \delta_i}A _i(x'')|t_i|^2\\
&\qquad +\RE\sum_k a_{jk}(x''; z_j)t_j \ov t_{p+k} +
\RE\sum_{k,\ell} b_{j,k\ell}(x''; z_j)t_{p+k} \ov t_{p+\ell} \\
&\qquad+\RE\sum_{i\neq j} \sum_k\frac{1}{ \delta_i} c_{j,ik}(x''; z_i)t_i \ov t_{p+k}
+\RE\sum_{i\neq j} \sum_{k,\ell}\frac{1}{ \delta_i} d_{j,k\ell}(x''; z_i)t_{p+k} \ov t_{p+\ell}.\end{align*}
Here $a_{jk}(x'';0)=b_{j,kl}(x'';0)= c_{j,ik}(x'';0)=d_{j,kl}(x'';0)=0$, 
and  $i,j,k,\ell$ are in $\{1,\ldots, p\}$.  From the Cauchy-Schwarz inequality,
it follows  that for $|z|<\e_0$ with $\e_0>0$ sufficiently
small and  $0<\delta_j<1$, 
\begin{align*}
\sum_{\all,\beta=1}^{2p}\frac{\pd^2\rho^{\delta'}_j}{\pd z_\alpha\ov z_\beta}t_\alpha\ov t_\beta&\geq\f{1}{2}A_j(x'')|t_j|^2
+\frac{ \delta_1^{-1}+\cdots+ \delta_p^{-1}}{4}\sum_j|t_{p+j}|^2+ \frac{1}{2}\sum_{i\neq j} \delta_i^{-1}A_i(x'')|t_i|^2.
\end{align*} 
Therefore, each $\rho_j^{\delta'}$ is  strictly plurisubharmonic on $|z|<\e_0$ for all $0< \delta_i<1$. 
 Hence for $ \del=(\del_0,\ldots, \del_p)=(\del_0,\del')\in(0,1)^{p+1}$, 
$$
\rho^\delta(z)=\max_j\{\rho_j^{\delta'},|y''|^2- \delta_0^2, x_{p+j}^2-\e^2\}
$$
is plurisubharmonic on the ball $B^{2p}_{\e_0}$.   By \re{djxpp},   
$ D(x'')$ is contained in $B^{2p}_{C_2\e^{1/2}}$ for $x''\in[0,\e]^p$.   We now fix $\e<(\e_0/{C_2})^2$ to ensure
\eq{dxbe}
D(x'')\subset B_{\e_0}^{2p}, \quad \forall x''\in[0,\e]^p. 
\eeq
This shows that
$$
 \cL H_\del^\e:=\{z\in  B_{\e_0}^{2p}\;|\;\colon \rho_\del^\e(z)<0\}
$$
is a domain of holomorphy.   

Let us verify that
\eq{}\nonumber
\cL H^{\e}=\bigcap_{ \delta_0>0,\ldots,\del_p>0}\cL H_\del^\e.
\eeq
Fix $z\in \cL H^\e$. From \re{dxbe} we get $z\in B_{\e_0}^{2p}$. 
We have $y''=0$.  Hence \re{xpj} hold and $x_{p+j}^2\leq\e^2$. Clearly, $\rho_j^ \delta(z)<0$ for
each $j$ and $ \delta\in(0,1)^p$. This shows that $z\in \cL H^\e$ is in the intersection.  For the other inclusion, 
let us assume that $z$ is in the intersection. Then $y''=0$ and $x''\in[0,\e]^p$. So \re{y''x''} holds.
 With $\rho_j^\del(z)<0$, we let $ \delta_i$ tend to $0$ for $i\neq j$.
We conclude
\begin{gather}\nonumber
A_i(x'')|z_i|^2-B_i(x'')(z_i^2+\ov z_i^2)\leq x_{p+i}
\end{gather}
for all $i\neq j$, and hence for all $i$ as $p>1$.  When $p=1$ the above inequality can be obtained directly from
$\rho_1^ \delta$.  We have verified \re{xpj}. This shows that $z\in\cL H^\e$.
 
In view of \re{xpj}-\re{y''x''}, the boundary of $\cL H^\e$ is the union $\cup_{j=1}^p\cL H_j^\e$ with  $\cL H_j^\e$
being defined by
\begin{alignat*}{2}
A_j(x'')|z_j|^2 -B_j(x'')(z_j^2+\ov z_j^2)&= x_{p+j},\\
A_i(x'')|z_i|^2 -B_i(x'')(z_i^2+\ov z_i^2)&\leq x_{p+i}, \quad 1\leq i\leq p,\  i
\neq j;\\
y''=0, \qquad x_{p+j}&\leq\e \quad 1\leq j\leq p.
\end{alignat*}

Therefore, we have proved the following theorem.
\begin{thm}\label{hullj}
 Let $M$ be a germ of real analytic submanifold at an abelian CR singularity. Assume that the complex tangent of $M$
is purely elliptic and has distinct eigenvalues
 at the origin. There is a base of neighborhoods $\{U_j\}$ of the origin in $\cc^n$ which satisfies the following: For each $U_j$,  the local hull of holomorphy $H(M\cap U_j)$ of $M\cap U_j$ is foliated by embedding complex submanifolds with boundaries. Furthermore, near the origin $H(M\cap U_j)$
is the transversal intersection of $p$ real analytic submanifolds of dimension $3p$ with boundary.
 The boundary of $H(M\cap U_j)$ contains $M\cap U_j$; and two sets are the same if and only if $p=1$. 
\end{thm}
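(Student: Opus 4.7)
The plan is to identify the local hull of holomorphy with the family of analytic polydiscs constructed in the discussion that immediately precedes the statement. By \rt{abelm}, a biholomorphic change of coordinates flattens $M$ so that $M\subset\cc^p_{z'}\times\rr^p_{x''}$ and each slice $M\cap\{z''=x''\}$ with $x''\in[0,\e]^p$ is the distinguished boundary $\pd^*D(x'')$ of the product of closed elliptical discs $D(x'')=D_1(x'')\times\cdots\times D_p(x'')$ cut out by \re{xpj}. I set $\cL H^\e:=\bigcup_{x''\in[0,\e]^p}D(x'')$ and, using the estimate $D(x'')\subset\Del^p_{C_1\sqrt{|x''|}}$, I pick a base $\{U_j\}$ of open neighborhoods of the origin of the form $U_j=B^{2p}_{\e_{0,j}}$ with $\e_{0,j}\downarrow 0$ together with $\e_j\downarrow 0$ chosen so small that $\cL H^{\e_j}$ is relatively compact in $U_j$.

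The equality $H(M\cap U_j)=\cL H^{\e_j}$ rests on the two computations carried out just above the statement. For the inclusion $\cL H^{\e_j}\subset H(M\cap U_j)$ I use the iterated Cauchy integral $F(z)=\int_{\pd D_1^\del(x'')}\cdots\int_{\pd D_p^\del(x'')}\frac{f(\zeta',z'')\,d\zeta_1\cdots d\zeta_p}{\prod_k(\zeta_k-z_k)}$: it extends any germ $f$ holomorphic on a connected neighborhood of $M^{\e_j}$ to a holomorphic function on a neighborhood of each point of $\cL H^{\e_j}$, independence from the contour being standard Cauchy theory applied slice by slice in the $\del$-fattened polydiscs $P_{\e_j,\del}$. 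For the reverse inclusion, the strictly plurisubharmonic functions $\rho_j^{\del'}$ built above realise $\cL H^{\e_j}=\bigcap_{\del\in(0,1)^{p+1}}\cL H^{\e_j}_\del$ as an intersection of strictly pseudoconvex open subsets of $U_j$, hence as an intersection of domains of holomorphy, so $H(M\cap U_j)\subset\cL H^{\e_j}$.

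For the geometric description I define $\cL H_k^{\e_j}$ to be the real analytic $3p$-dimensional manifold-with-boundary sitting in $\{y''=0,\,x''\in[0,\e_j]^p\}\cong\rr^p\times\cc^p$ cut out by the single inequality $A_k|z_k|^2-B_k(z_k^2+\ov z_k^2)\leq x_{p+k}$. The explicit relations \re{xpj}--\re{y''x''} give $\cL H^{\e_j}=\bigcap_{k=1}^p\cL H_k^{\e_j}$, and the gradients of $\rho_k^0=A_k|z_k|^2-B_k(z_k^2+\ov z_k^2)-x_{p+k}$ are linearly independent at each common boundary point because $\rho_k^0$ depends non-trivially on $(z_k,\ov z_k)$ while each $\rho_\ell^0$ with $\ell\neq k$ does not involve $z_k$; this delivers the claimed transversal intersection. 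The foliation of $\cL H^{\e_j}$ by the complex polydiscs $D(x'')$, $x''\in[0,\e_j]^p$, is manifest from the construction. Finally $M\cap U_j$ is the skeleton $\bigcup_{x''}\pd^*D(x'')$, i.e.\ the locus where all $p$ inequalities become simultaneous equalities, and so sits inside $\pd\cL H^{\e_j}=\bigcup_k\cL H_k^{\e_j}$; for $p\geq 2$ any point in the relative interior of a single face $\cL H_k^{\e_j}$ (where the remaining $p-1$ inequalities are strict) is a boundary point of $\cL H^{\e_j}$ outside $M$, so the two sets coincide precisely when $p=1$.

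The main obstacle I anticipate is the careful handling of the Cauchy extension at points where one or more of the elliptical discs $D_k(x'')$ degenerate to a point as $x_{p+k}\downarrow 0$; the cure is to work uniformly on the thickened polydiscs $P_{\e_j,\del}$ and let $\del\downarrow 0$, using the identity principle to check that the local extensions produced by different contour choices agree on overlaps and assemble into a single holomorphic function. Once this technical point is in hand, the transversality and boundary statements reduce to inspection of the inequalities \re{xpj}--\re{y''x''}.
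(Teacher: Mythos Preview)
Your proposal is correct and follows essentially the same approach as the paper: the proof of the theorem is precisely the construction carried out in the paragraphs preceding the statement, and you have faithfully reproduced all of its ingredients---the normal form from \rt{abelm}, the iterated Cauchy integral over the $\del$-thickened polydiscs for the extension, the family $\rho^\del=\max_j\{\rho_j^{\del'},|y''|^2-\del_0^2,x_{p+j}^2-\e^2\}$ of plurisubharmonic exhaustions for the reverse inclusion, and the description of the boundary as the union of the faces $\cL H_k^{\e}$.

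One small point to tighten: your transversality argument relies on $\rho_k^0$ depending non-trivially on $(z_k,\ov z_k)$, but at points with $z_k=0$ the $z_k$-gradient of $\rho_k^0$ vanishes. The independence near the origin comes instead from the $x''$-gradients: $\partial_{x_{p+k}}\rho_k^0=-1+O(|z_k|^2)$ while $\partial_{x_{p+\ell}}\rho_k^0=O(|z_k|^2)$ for $\ell\neq k$, so the gradients are close to $-e_{p+k}$ and hence independent. With this adjustment your argument is complete and matches the paper's.
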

 
 \begin{rem} The proof shows that the hull of $H(M\cap U_j)$ is foliated by analytic polydiscs, where an analytic polydisc is a biholomorphic embedding
of closed unit polydisc in some $\cc^k$ with $1\leq k\leq p$. 
\end{rem}

\setcounter{thm}{0}\setcounter{equation}{0}

\section{Rigidity of product quadrics}\label{rigidquad}

The aim of this section is to prove the following rigidity theorem: Let us consider 
a higher order analytic perturbation of a product quadric. If this manifold is formally equivalent to the product quadric, then under a small divisors  condition, it is also holomorphically equivalent to it.

The proof goes   as follows~: Since the manifold is formally equivalent to the quadric,   the associated sets of involutions $\{\tau_{1 i}\}$ and $\{\tau_{2 i}\}$ are simultaneously linearizable by a formal biholomorphism that commutes with $\rho$. In particular,  $\sigma_1,\ldots, \sigma_p$,  as defined by \re{sigma_i} and \re{sigma_i+}, are formally linearizable and  they   commute pairwise. These are germs of biholomorphisms with a diagonal linear part. According to \cite{stolo-bsmf}[theorem 2.1], this abelian family can be holomorphically linearized under a collective Brjuno type condition \re{bruno-cond}. Furthermore, the transformation commutes with $\rho$.
Then, we linearize simultaneously and holomorphically both $\tau_1:=\tau_{1 1}\circ\cdots\circ \tau_{1 p}$ and $\tau_2:=\tau_{2 1}\circ\cdots\circ \tau_{2 p}$ by a transformation that commutes with both $\rho$ and $\cL S$, the family of linear parts of the $\sigma_1,\ldots,\sigma_p$.
Finally, we linearize simultaneously and holomorphically both families $\{\tau_{1 i}\}$ and $\{\tau_{2 i}\}$ by a transformation that commutes with $\rho$, $\cL S$, $T_1$ and $T_2$. 

These last two steps will be obtained through a majorant method and the application of a holomorphic implicit function theorem. This is done in 
Proposition~\ref{linearS}. They first require a complete description of the various centralizers and their   associated normalized  mappings, i.e. suitable complements. This is a goal of Proposition~\ref{STiR}.

Throughout this section, we do not assume that  $\mu_1,\ldots, \mu_p$ are non resonant in the
sense that $\mu^Q\neq1$ if $Q\in\zz^p$ and $Q\neq0$.  In fact, we will apply our results to $M$
which might be resonant. 
However, we will retain the assumption that $\sigma$ has distinct
eigenvalues when we apply the results to the manifolds.

\

We recall from \re{sigma_i} and \re{sigma_i+}, the definition of germs of holomorphic diffeomorphisms~:
\begin{eqnarray}\label{sigmai1}
\sigma_i&:=&\tau_{1i}\circ\tau_{2i},\quad 1\leq i\leq e_*+h_*;\\
\sigma_s&:=&\tau_{1s}\circ\tau_{2 (s_*+s)},\\
\sigma_{s+s_*}&:=&\tau_{1(s+s_*)}\circ\tau_{2s},\quad e_*+h_*<s\leq p-s_*.
\label{sigmai3}
\end{eqnarray}
They satisfy 
$$
\sigma_i^{-1}=\rho\sigma_i\rho, \quad 1\leq i\leq e_*+h_*;\quad
\sigma_{s+s_*}^{-1}=\rho\sigma_s\rho, \quad e_*+h_*<s\leq p-s_*.
$$
Recall the  linear maps
\begin{align}
&S\colon\xi_j'=\mu_j\xi_j,\quad \eta_j'=\mu_j^{-1}\eta_j;\nonumber\\
\label{rSxi-}
&S_j\colon\xi_j'=\mu_j\xi_j,\quad \eta_j'=\mu_j^{-1}\eta_j,\quad \xi_k'=\xi_k,\quad \eta_k'=\eta_k, 
\quad k\neq j;\\
\label{rTij-} 
&T_{ij}\colon\xi_j'=\la_{i j}\eta_j,\quad\eta_j'=\la_{i j}^{-1}\xi_j,\quad\xi_k'=\xi_k,\quad
\eta_k'=\eta_k, \quad k\neq j; 
\\
\label{rRho-}
& \rho\colon \left\{\begin{array}{ll}
(\xi_e',\eta_e',\xi_h',\eta_h')=
(\ov\eta_e,\ov\xi_e,\ov\xi_h,\ov\eta_h),\vspace{.75ex}
\\
(\xi_{s}', \xi_{s+s_*}',\eta_{s}',\eta_{s+s_*}')=(\ov\xi_{s+s_*},
\ov\xi_{s},\ov\eta_{s+s_*}, \ov\eta_{s}).
\end{array}\right.
\end{align}

We need to introduce   notation for the indices to describe various centralizers 
regarding $T_{1j}, S_j$ and $\rho$.
We first introduce index sets for the centralizer of $\cL S, T_1,\rho$. We recall that $\cL S$ and
$\cL T_i$ denote the families $\{S_1,\ldots, S_p\}$ and   $\{T_{i 1}, \ldots, T_{ip}\}$,
respectively. Also, $T_i=T_{i 1}\circ\cdots \circ T_{i p}$ .

Let  $(P,Q)\in\nn^p\times\nn^p$ and $1\leq j\leq p$. By definition, $\xi^P\eta^Qe_j$ belongs to the centralizer of $\cL S$ if and only if it commutes with each $S_i$. In other words,  $\xi^P\eta^Qe_j\in\cL C(\cL S)$ if and only if
\eq{mukpk}
 \mu_k^{p_k-q_k}=1,\quad \forall k\neq j; \quad\mu_j^{p_j-q_j}=\mu_j.
\eeq
Note that the same condition holds for  $\xi^Q\eta^Pe_{p+j}$ to belong to $\cL C(\cL S)$. This leads us to define the set of multiindices
$$
\cL R_j:=\{(P,Q)\in\nn^{2p}\colon\mu_j^{p_j-q_j}=\mu_j,\   \mu_i^{p_i-q_i}=1, \forall i\neq j\}, \quad 1\leq j\leq p.
$$
We observe that if $(P,Q)\in\cL R_j$, then
\gan
p_j=q_j+1, \quad j\neq h; \quad p_i=q_i, \quad \forall i\neq j,h;\\
 \la_h^{p_h-q_h}=\pm1, \quad h\neq j.
\end{gather*}

For convenience, we define  for $P=(p_e,p_h,p_s,p_{s+s_*})$ and $Q=(q_e,q_h,q_s,q_{s+s_*})$
\al
\rho( P Q) &:=
( q_e ,p_h, p_{s+s_*}, p_{s}, p_e, q_h, q_{s+s_*}, q_{s}),
\nonumber
\\
\label{rhoapq}\rho_a(PQ) &:= (q_e ,p_h, p_{s+s_*}, p_{s}),\\
\label{rhobpq}\rho_b(PQ) &:=(p_e, q_h, q_{s+s_*}, q_{s}),\\
\ov f_{\rho( P Q)} &:=(\ov{f\circ\rho})_{ P Q}=:\ov {f_{\rho( P Q)}}.
\nonumber
\end{align}
Hence, we have $\xi^P\eta^Q\circ \rho = \ov\xi
^{\rho_a(PQ)}\ov\eta^{\rho_b(PQ)}$ as well as $\rho( P Q)=(\rho_a( P Q),\rho_b( P Q))$. 

According to \re{mukpk} and  equation $(\ref{rRho-})$ of $\rho$,  
the restriction of $\rho$ to  $\cL R_h$ is an involution, which will be denoted by $\rho_h$. Moreover, $\rho$ is  a bijection  $\rho_s$
from $\cL R_s$ onto $\cL R_{s+s_*}$.   We define an involution on $\cL R_e$ by
$$ 
\rho_e(PQ):=(\rho_b(PQ),\rho_a(PQ)).
$$ 
Note that  $\rho_e$ is not a restriction of $\rho$, and $\rho_s$ is not an involution either.


Next, we introduce sets of indices to be used to compute the centralizers on $\cL T_1,\cL T_2,\rho$. Set
\gan
\cL N_j:=\cL R_j\cap\{(P,Q) \colon p_i\geq q_i, \quad \forall i\neq j\}, \quad 1\leq j\leq p.
\end{gather*}
Note that when $p=1$, $\cL N_j=\cL R_j$ for $j=e$ or $h$.
Let us set
\gan
A_{jk}( P, Q):=
\max\{ p_k, q_k\}, \quad k\neq j,\quad
A_{jj}( P, Q)=
 p_j;\\
B_{jk}( P, Q):=
\min\{ p_k, q_k\}, \quad k\neq j,\quad
B_{jj}( P, Q)=
 q_j.
\end{gather*}
We define a mapping
$$
(A_j,B_j)\colon \cL R_j\to\cL N_j
$$
with
$$
A_j:=(A_{j1},\ldots, A_{jp}),\quad B_j:=(B_{j1},\ldots, B_{jp}).
$$
We notice that, for $(P,Q)\in \cL N_j$, $A_j\circ \rho_j(P,Q)=(p_e,p_h,p_{s+s^*}, p_s)$ and $B_j\circ \rho_j(P,Q)=(q_e,q_h,q_{s+s^*}, q_s)$. In other words, on  $\cL N_j$
 for $j=e$ or $h$, $A_j\circ\rho_j$ just interchanges the $s$th   and  the $(s+s_*)$th coordinates for each $s$, so does $B_j\circ\rho_j$.

Finally, 
 for $(P,Q)\in\cL R_j$ we define
\al \label{nujab}
\nu_{ P Q}& :=\begin{cases}\prod_{h'}\la_{h'}^{ q_{h'}- p_{h'}}, & j\neq h, \\
  \la_h^{p_h-q_h-1}\prod_{h'\neq h }\la_{h'}^{ q_{h'}- p_{h'}},& j=h;
\end{cases}
\\
\nu_{ P Q}^+ &:=\begin{cases}
\prod_{h'| q_{h'}> p_{h'}}\la_{h'}^{ q_{h'}- p_{h'}},   & j\neq h;\\
\label{nujab+}\prod_{h'\neq h, q_{h'}> p_{h'}}\la_{h'}^{ q_{h'}- p_{h'}}, & j=h.
\end{cases}
\end{align}
Here $e_*<h',h\leq e_*+h_*$. 
Note that $\nu_{PQ}^+$ is only defined for $(P,Q)\in\cL R_j$. 
For convenience, we however define
$$
\nu_{QP}:=\nu_{PQ}, \quad (P,Q)\in\cL R_j. 
$$
 If $p=1$ we set $\nu_{P Q}^{ +}=1$.
 \begin{lemma}\label{nunu+} Let $(P,Q)\in\cL R_j$. Then
\begin{alignat}{3} \label{nupm1}
 &\nu_{PQ}=\pm1,   \quad  (P,Q)\in\cL R_j, \ j\neq h;\qquad && \nu_{PQ}^+=\pm1;\\
 \label{nupm1+}
 &\nu_{\rho_e(PQ)}=\nu_{PQ}, \quad (P,Q)\in\cL R_e; \quad &&\nu_{\rho(PQ)}=\nu_{PQ}, \quad
 \forall (P,Q);\\
 \label{nupm1++}  
 & \nu_{\rho_e(PQ)}^+=\nu_{PQ}\nu_{PQ}^+, \quad (P,Q)\in\cL R_e; \quad &&\nu_{\rho(PQ)}^+=  \nu_{PQ}^+, \quad
 (P,Q)\in\cL R_h\cup\cL R_{s+s_*}.
 \end{alignat}
\end{lemma}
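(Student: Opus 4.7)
The plan is to reduce everything to elementary identities on the hyperbolic exponents by first constraining the index sets $\mathcal{R}_j$. The non-resonance of the linear part on the elliptic and complex blocks forces the products defining $\nu_{PQ}$ and $\nu_{PQ}^+$ to involve only hyperbolic factors, each of which is $\pm 1$; once this is in place, each of the four identities is a one-line manipulation.

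First I would unpack the defining conditions of $\mathcal{R}_j$. For $(P,Q)\in\mathcal{R}_j$, the relations $\mu_k^{p_k-q_k}=1$ for $k\neq j$ combined with $\mu_e>1$ and $|\mu_s|>1$ force $p_e=q_e$ and $p_s=q_s$, $p_{s+s_*}=q_{s+s_*}$ for every elliptic/complex index different from $j$; the remaining $j$-coordinate satisfies $p_j-q_j=1$ in the elliptic and complex cases, while in the hyperbolic case $\lambda_j^{p_j-q_j-1}=\pm 1$. For every hyperbolic index $h'$ (whether or not $h'=j$), the relation $\mu_{h'}^{p_{h'}-q_{h'}}=1$ yields
\[
\lambda_{h'}^{q_{h'}-p_{h'}}\in\{\pm 1\}.
\]
Identity \eqref{nupm1} is then immediate: $\nu_{PQ}$ and $\nu_{PQ}^+$ are finite products of such $\pm 1$ factors.

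For \eqref{nupm1+}, I would compute directly using the description of $\rho$ and $\rho_e$ on multiindices. For the second identity $\nu_{\rho(PQ)}=\nu_{PQ}$, the point is that $\rho$ leaves every hyperbolic pair $(p_{h'},q_{h'})$ fixed (only the elliptic, complex and other coordinates are permuted), so the product defining $\nu_{PQ}$ is literally unchanged. For the first identity $\nu_{\rho_e(PQ)}=\nu_{PQ}$ on $\mathcal{R}_e$, the involution $\rho_e$ swaps $p_{h'}\leftrightarrow q_{h'}$, which replaces each factor $\lambda_{h'}^{q_{h'}-p_{h'}}$ by its inverse; but since each factor equals $\pm 1$, it is its own inverse, and the product is preserved.

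The main (though still routine) step is \eqref{nupm1++}. For $(P,Q)\in\mathcal{R}_e$, set $d_{h'}:=q_{h'}-p_{h'}\in\mathbb{Z}$ and split
\[
\nu_{PQ}=\Bigl(\prod_{d_{h'}>0}\lambda_{h'}^{d_{h'}}\Bigr)\Bigl(\prod_{d_{h'}\leq 0}\lambda_{h'}^{d_{h'}}\Bigr)=\nu_{PQ}^+\cdot\!\!\prod_{d_{h'}\leq 0}\!\!\lambda_{h'}^{d_{h'}}.
\]
Under $\rho_e$ the signs of all $d_{h'}$ are reversed, so
\[
\nu_{\rho_e(PQ)}^+=\prod_{d_{h'}<0}\lambda_{h'}^{-d_{h'}}=\prod_{d_{h'}<0}\lambda_{h'}^{d_{h'}},
\]
using once more that $\lambda_{h'}^{d_{h'}}=\pm1$. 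Multiplying the two displayed equations and collapsing the squared factors on the $d_{h'}>0$ part (each giving $1$) yields
\[
\nu_{PQ}\cdot\nu_{PQ}^+=\prod_{d_{h'}<0}\lambda_{h'}^{d_{h'}}=\nu_{\rho_e(PQ)}^+,
\]
which is the first line of \eqref{nupm1++}. For the second line, when $(P,Q)\in\mathcal{R}_h\cup\mathcal{R}_{s+s_*}$ the map $\rho$ again fixes each pair $(p_{h'},q_{h'})$, so the set $\{h':q_{h'}>p_{h'}\}$ and the exponents in the product defining $\nu^+$ are left invariant, giving $\nu_{\rho(PQ)}^+=\nu_{PQ}^+$.

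I do not foresee a serious obstacle; the only place that requires mild care is bookkeeping the index set on which $\rho$ acts in each case (for instance $\rho$ sends $\mathcal{R}_s$ to $\mathcal{R}_{s+s_*}$ and fixes $\mathcal{R}_h$, which is why the second identity of \eqref{nupm1++} is stated only on $\mathcal{R}_h\cup\mathcal{R}_{s+s_*}$), and recognizing at each step that every hyperbolic factor squares to $1$, which is what allows sign inversions and inversions of exponents to be absorbed freely.
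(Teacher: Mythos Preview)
Your proof is correct and follows essentially the same approach as the paper. The paper's own proof is very terse: it establishes that $\lambda_{h'}^{p_{h'}-q_{h'}}=\pm1$ and $\lambda_h^{p_h-q_h-1}=\pm1$ from the defining relations of $\mathcal R_j$, then simply says the remaining identities ``follow from the definition of $\rho_e$, $\rho$, and the above identities.'' Your argument supplies exactly those routine verifications in full, splitting $\nu_{PQ}$ according to the sign of $d_{h'}=q_{h'}-p_{h'}$ and tracking how $\rho$ and $\rho_e$ act on the hyperbolic block; this is precisely what the paper leaves implicit.
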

\begin{proof}
 From the definition
of $\cL R_j$, we have $(\la_i^{p_{i}-q_{i}})^2=\mu_i^{p_{i}-q_{i}}=1$ for $i=h'$
in \re{nujab}-\re{nujab+}.
We also have $\mu_h^{p_{h}-q_{h}-1}=1$ for terms in \re{nujab}-\re{nujab+}. Thus
\eq{Lhph}
\nonumber
\la_{h'}^{p_{h'}-q_{h'}}=\pm1,\quad \la_h^{p_h-q_h-1}=\pm1.
\eeq
    Thus  we obtain \re{nupm1}; the rest identities 
follow from the definition of $\rho_e$, $\rho$, and the above identities.    \end{proof}

\begin{lemma}\label{comput-rhoT1}
For all multiindices  $(P,Q)\in  \cL R_e\cup\cL R_h$, we have
\ga\label{lambda}
\ov{\la^{\rho_a( P, Q)-\rho_b( P, Q)}}=
\la^{ Q- P},\quad \ov{\mu^{ \rho_b-\rho_a}}=\mu^{P-Q},\\
\label{T1rho}
\xi^P\eta^Q\circ \rho\circ T_1= \la^{ Q- P}\ov\xi^{\rho_b(PQ)}\ov\eta^{\rho_a(PQ)},\\
\label{rhoS-1}
\xi^P\eta^Q\circ \rho\circ S^{-1}= \mu^{ P-Q}\ov\xi^{\rho_a(PQ)}\ov\eta^{\rho_b(PQ)}.
\end{gather}
\end{lemma}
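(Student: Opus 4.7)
The lemma is a direct computation: having fixed the block-diagonal shapes of $\rho$, $T_1$, and $S$ in \eqref{rTij-}--\eqref{rRho-}, one simply evaluates $\xi^P\eta^Q$ at $\rho\circ T_1(\xi,\eta)$ and at $\rho\circ S^{-1}(\xi,\eta)$ block by block. I would organize the bookkeeping by splitting $(P,Q)=(p_e,p_h,p_s,p_{s+s_*};q_e,q_h,q_s,q_{s+s_*})$ and exploiting the three regimes $\ov\la_e=\la_e$ (since $\la_e>1$), $\ov\la_h=\la_h^{-1}$ (since $|\la_h|=1$), and $\ov\la_s=\la_{s+s_*}^{-1}$ (from $\la_{s+s_*}=\ov\la_s^{-1}$), together with $\mu_j=\la_j^2$.

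For \eqref{T1rho} the plan is as follows. Writing $T_1(\xi,\eta)=(\la\eta,\la^{-1}\xi)$ componentwise and then applying $\rho$ yields, block by block,
\begin{gather*}
(\xi_e,\eta_e)\longmapsto(\la_e^{-1}\ov\xi_e,\la_e\ov\eta_e),\qquad (\xi_h,\eta_h)\longmapsto(\ov\la_h\ov\eta_h,\ov\la_h^{-1}\ov\xi_h),\\
(\xi_s,\xi_{s+s_*},\eta_s,\eta_{s+s_*})\longmapsto(\la_s^{-1}\ov\eta_{s+s_*},\ov\la_s\ov\eta_s,\la_s\ov\xi_{s+s_*},\ov\la_s^{-1}\ov\xi_s).
\end{gather*}
Substituting these into $\xi^P\eta^Q$ and collecting exponents one sees that each $\ov\xi_j$-exponent is $(p_e,q_h,q_{s+s_*},q_s)=\rho_b(PQ)$ and each $\ov\eta_j$-exponent is $(q_e,p_h,p_{s+s_*},p_s)=\rho_a(PQ)$, while the scalar prefactor is
$\la_e^{q_e-p_e}\ov\la_h^{p_h-q_h}\la_s^{q_s-p_s}\ov\la_s^{p_{s+s_*}-q_{s+s_*}}=\la^{Q-P}$,
using the three conjugation rules above. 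This is exactly \eqref{T1rho}. The identity \eqref{rhoS-1} is obtained in the same way starting from $S^{-1}(\xi,\eta)=(\mu^{-1}\xi,\mu\eta)$; the roles of $\xi$ and $\eta$ in each block are swapped relative to the $T_1$ case, producing exponent patterns $\rho_a(PQ)$ on $\ov\xi$ and $\rho_b(PQ)$ on $\ov\eta$, with scalar $\mu^{P-Q}$ after the same three substitutions.

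Finally, \eqref{lambda} is an algebraic byproduct of the coefficient bookkeeping already carried out: the first equality reads, explicitly,
\[
\ov{\la_e^{q_e-p_e}\la_h^{p_h-q_h}\la_s^{p_{s+s_*}-q_{s+s_*}}\la_{s+s_*}^{p_s-q_s}}=\la^{Q-P},
\]
and this is immediate from $\ov\la_e=\la_e$, $\ov\la_h=\la_h^{-1}$, $\ov\la_s=\la_{s+s_*}^{-1}$. The second equality of \eqref{lambda} then follows by squaring (and conjugating) via $\mu_j=\la_j^2$, or equivalently by noting that the scalar in \eqref{rhoS-1} is the conjugate of $\mu^{\rho_b-\rho_a}$.

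There is no real obstacle: the statement is purely combinatorial, and the only point that requires care is keeping the indexing straight when $s$ and $s+s_*$ are swapped by $\rho$. I would therefore write the computation just once for a single block of each of the three types $e$, $h$, $(s,s+s_*)$, and then assemble the product. Note that the argument actually makes no use of the defining relations of $\cL R_e\cup\cL R_h$, so the identities hold for all $(P,Q)\in\nn^{2p}$; the restriction in the statement reflects only the indices at which the lemma will be invoked later.
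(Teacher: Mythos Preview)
Your proof is correct and follows essentially the same direct computation as the paper. The paper organizes it slightly differently: it first establishes \eqref{lambda} and then derives \eqref{T1rho}, \eqref{rhoS-1} by writing $\xi^P\eta^Q\circ\rho\circ T_1=\ov\la^{\rho_a-\rho_b}\ov\xi^{\rho_b}\ov\eta^{\rho_a}$ and invoking \eqref{lambda}; you instead compute $\rho\circ T_1$ block by block first and obtain the scalar $\la^{Q-P}$ in one step. Your observation that the identities actually hold for all $(P,Q)\in\nn^{2p}$ is correct and slightly sharper than the paper, which appeals to $p_s=q_s$, $p_{s+s_*}=q_{s+s_*}$ on $\cL R_e\cup\cL R_h$ to shortcut the $s$-block in \eqref{lambda}; the identity $\ov\la_s=\la_{s+s_*}^{-1}$ that you use handles the $s$-block without that restriction.
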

\begin{proof}
The first identity in \re{lambda} follows from \re{rhoapq}-\re{rhobpq} and the fact that $\la_e$ and $\mu_e$ are reals, $\la_h^{-1}=\ov\la_h$,  $p_s=q_s$, and  $p_{s+s_*}=
q_{s+s_*}$. 
This gives us the first identity in \rea{lambda}, and the second identity follows from the first.
A direct computation shows that 
$$
\xi^P\eta^Q\circ \rho\circ T_1= \ov\la^{ \rho_a- \rho_b}\ov\xi^{\rho_b(PQ)}\ov\eta^{\rho_a(PQ)},\quad \xi^P\eta^Q\circ \rho\circ S^{-1}= \ov\mu^{ \rho_b-\rho_a}\ov\xi^{\rho_a(PQ)}\ov\eta^{\rho_b(PQ)}.
$$
The result follows from \re{lambda}.
\end{proof}
 
It is tedious to find  necessary and sufficient conditions to describe the centralizer of   $\cL T_1, \cL T_2, \rho$, as the mappings
in the families are non diagonal. There are different ways to described these conditions too.
 To keep computation relatively simple, we 
do not aim a minimum set of conditions. Of course, when we use the centralizers we will verify all the
sufficient conditions.  
\begin{prop}\label{STiR} Let $\cL S=\{S_1,\ldots, S_p\}$,  $\cL T_i=\{T_{i1},\ldots, T_{ip}\}$
and $\rho$ be given by \rea{rSxi-}-\rea{rRho-}.
Let $\var=I+(U,V)$ be a formal biholomorphic map
that is tangent to the identity.
\bppp
\item $\var\in{\cL C}(\cL S)$ if and only if
\ga\label{muimuj}
U_{j,PQ}=0=V_{j,QP}, \quad\forall (P,Q)\not\in\cL R_j.
\end{gather}
Also, $\var\in{\cL C}(\cL S,\rho)$ if and only if additionally
\gan
\label{uhus} U_{h,PQ}= \ov{U}_{h,\rho(PQ)}, \  (P,Q)\in\cL R_h;
\quad U_{s+s_*,PQ}=\ov{U}_{s,\rho(PQ)},\ (P,Q)\in\cL R_{s+s_*};\\
 V_{e,QP}=\ov U_{e,\rho(PQ)},\quad (P,Q)\in\cL R_e;\\
V_{h,QP}=\ov V_{h,\rho(QP)},\  (P,Q)\in\cL R_h; \quad V_{s+s_*,QP}=\ov{V}_{s,\rho(QP)}, \ (P,Q)\in\cL R_{s+s_*}.
\label{uhqp}
\end{gather*}
\item $\var\in{\cL C}(\cL S, T_1)$ if and only if \rea{muimuj} holds and
\eq{vjlju}
V_{j,QP}=\la_j^{-1}\la^{P-Q}U_{j,PQ}, \quad \forall (P,Q)\in\cL R_j.
\eeq
Also, $\var\in{\cL C}(\cL S,T_1,\rho)$ if and only if in addition to \rea{muimuj} 
 and \rea{vjlju}
\begin{alignat}{4}\label{uenu}
 U_{e,PQ}&= \nu_{PQ}\ov{U}_{e, \rho_e(PQ)},\quad && (P,Q)\in\cL R_e;\\
 U_{h,PQ}&= \ov{U}_{h,\rho(PQ)},\quad  &&(P,Q)\in\cL R_h;
 \label{uhnu}\\
\label{usnu} U_{s,PQ}&= \ov{U}_{s+s^*,\rho(PQ)}, \quad && (P,Q)\in\cL R_{s}.
\end{alignat}
\item
$\var\in{\cL C}(\cL T_1,\cL T_2)$ if and only if  in addition to \rea{muimuj} and \rea{vjlju}
\begin{eqnarray}
\label{ujrjnj}
U_{j, P Q}&=&\nu_{P Q}^+U_{j,(A_j,B_j)( P, Q)},\quad (P,Q)\in\cL R_j\setminus\cL N_j. 
\end{eqnarray}
Also, $\var\in{\cL C}(\cL T_1,\cL T_2,\rho)$ if and only if
additionally 
\begin{alignat}{4}
\label{sqnupq}
U_{j, P Q}&=  
 \nu_{PQ}^+ 
\ov U_{j ,(A_j,B_j)\circ \rho_j( PQ)},\quad &&(P,Q)\in  \cL N_j, \quad j=e,h;\\  
 U_{s+s_*,  P Q}&=\nu_{PQ}^+\ov U_{s,(A_s,B_s)\circ  \rho( P Q)}, \quad  &&(P,Q)\in  \nn_{s+s_*}.
\label{sqnupq2}
\end{alignat}
\eppp
\end{prop}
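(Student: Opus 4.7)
The plan is to prove each of the three parts by expanding the commutation relations monomial by monomial, using that every involution or diagonal map in sight is known explicitly in the $(\xi,\eta)$ coordinates. Throughout I would write $\varphi=I+(U,V)$ with $U_j(\xi,\eta)=\sum U_{j,PQ}\xi^P\eta^Q$ and $V_j(\xi,\eta)=\sum V_{j,PQ}\xi^P\eta^Q$, and compare coefficients of $\xi^P\eta^Q e_j$ (resp.\ $\xi^P\eta^Q e_{p+j}$) on both sides of each identity $\varphi\circ L=L\circ\varphi$, where $L$ runs over $S_k$, $T_{ij}$, and $\rho$.

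For part (i), commuting $\varphi$ with $S_k$ acting by $\xi_k\mapsto\mu_k\xi_k$, $\eta_k\mapsto\mu_k^{-1}\eta_k$ forces $\mu_k^{p_k-q_k}U_{j,PQ}=U_{j,PQ}$ when $j\neq k$ and $\mu_k^{p_k-q_k}U_{k,PQ}=\mu_k U_{k,PQ}$ when $j=k$; intersecting these conditions over all $k$ gives exactly $(P,Q)\in\cL R_j$, and the same analysis on $V$ yields the symmetric condition, proving \rea{muimuj}. The reality condition $\rho\varphi=\varphi\rho$ is then handled by expanding $\rho$ from \rea{rRho-}: on the elliptic block $\rho$ swaps $\xi$ with $\bar\eta$, on the hyperbolic block it is conjugation only, and on the complex block it swaps the $s$ and $s+s_*$ indices; identifying coefficients on both sides produces the block-by-block reality relations.

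For part (ii), once \rea{muimuj} is established, commuting with $T_1=T_{11}\cdots T_{1p}$, which acts as $\xi_j'=\la_j\eta_j,\ \eta_j'=\la_j^{-1}\xi_j$ simultaneously on all $j$, swaps the roles of $P$ and $Q$ in each monomial with a compensating factor $\la^{P-Q}$; comparing the $\xi_j$-components on both sides of $T_1\varphi=\varphi T_1$ produces \rea{vjlju}. For the reality part, substituting the relation just obtained and using Lemma~\ref{comput-rhoT1} (formulae \rea{lambda} and \rea{T1rho}) reduces $\rho\varphi=\varphi\rho$ to the identities \rea{uenu}--\rea{usnu}; the sign bookkeeping is controlled by Lemma~\ref{nunu+}, since the only freedom in $\la^{Q-P}$ restricted to $\cL R_j$ is given by $\nu_{PQ}=\pm 1$.

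The main obstacle is part (iii), where each $T_{1j}$ must be commuted separately. Since $T_{1j}$ only permutes the $j$-th pair $(\xi_j,\eta_j)$ with factor $\la_j$, the commutation $\varphi T_{1j}=T_{1j}\varphi$ relates $U_{k,PQ}$ to $U_{k,P'Q'}$ where $(P',Q')$ is obtained from $(P,Q)$ by swapping the $j$-th entries, multiplied by a product of $\la_{j'}$'s arising from the other fixed pairs; iterating over all hyperbolic indices where $p_{h'}<q_{h'}$, each such monomial gets identified with the unique representative in $\cL N_j$ (obtained by taking $A_{jk}=\max(p_k,q_k)$, $B_{jk}=\min(p_k,q_k)$ for $k\neq j$), with total accumulated phase equal to $\nu_{PQ}^+$ as defined by \rea{nujab+}; this gives \rea{ujrjnj}. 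The $T_{2j}$-commutation produces the matching relation for $V$ which, combined with \rea{vjlju}, turns out to be automatic. Finally, the reality condition \rea{sqnupq}--\rea{sqnupq2} is derived by substituting the representation on $\cL N_j$ into $\rho\varphi=\varphi\rho$ and tracking how $\rho_j$ interacts with $(A_j,B_j)$: since $A_j\circ\rho_j$ and $B_j\circ\rho_j$ merely interchange the $s$ and $s+s_*$ coordinates, the identities in Lemma~\ref{nunu+} for $\nu_{\rho(PQ)}^+$ give the precise multiplicative factor $\nu_{PQ}^+$ in front of $\overline U$. The main hurdle will be keeping track of signs and which subset of hyperbolic indices contributes to $\nu_{PQ}^+$ before versus after applying $\rho$, but Lemma~\ref{nunu+} was designed for exactly this purpose.
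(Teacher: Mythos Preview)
Your approach is essentially the same as the paper's: monomial-by-monomial comparison of $\varphi\circ L=L\circ\varphi$ for $L$ running over $S_k$, $T_{1k}$, and $\rho$, with the sign bookkeeping handled by Lemma~\ref{nunu+} and Lemma~\ref{comput-rhoT1}. Parts (i) and (ii) go through exactly as you describe.

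One point deserves more care in part (iii). You derive \rea{ujrjnj} by successively composing with the $T_{1k}$ at those hyperbolic indices $k$ where $q_k>p_k$, which indeed shows \rea{ujrjnj} is \emph{necessary}. But the ``if'' direction is not automatic: the single relation \rea{ujrjnj}, together with \rea{muimuj} and \rea{vjlju}, must reproduce the full family of commutation identities $\varphi T_{1k}=T_{1k}\varphi$ for \emph{every} $k$ (not only the hyperbolic ones used to derive it). The paper does this verification explicitly: for each $k\neq j$ one must check $(U_j\circ T_{1k})_{PQ}=U_{j,PQ}$, which for $k=h$ requires applying \rea{ujrjnj} twice (once to $(P,Q)$ and once to $(P,Q)_h$, the index obtained by swapping $p_h,q_h$) and then checking that $\la_h^{p_h-q_h}\nu^+_{(PQ)_h}\,\ov{\nu^+_{PQ}}=1$. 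The analogous verification is needed for the $\rho$-part, where one checks that \rea{ujrjnj} and \rea{sqnupq}--\rea{sqnupq2} together imply $\rho\varphi\rho=\varphi$, again by applying the relations twice and invoking the identities $\nu^+_{PQ}\nu^+_{\iota_j(PQ)}=1$ on $\cL N_j$. Your plan should flag this sufficiency step; it is where most of the computational effort lies.
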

 We remark that condition \re{ujrjnj}   holds trivially when $(P,Q)\in \cL N_j$, in which case it becomes
 $U_{j,PQ}=U_{j,PQ}$.  
\begin{proof}
 To simplify  notation, 
we abbreviate 
$$
\rho_a=\rho_a(PQ), \quad\rho_b=\rho_b(PQ), \quad A_j=A_j(P,Q), \quad B_j=B_j(P,Q).
$$

Recall that $\la_e=\ov\la_e, \la_h=\ov\la_h^{-1}$ and $\la_{s+s_*}=\ov\la_s^{-1}$. By definition,
\ga\label{rSet-}\nonumber
S_e=T_{1e}T_{2e}, \quad S_h=T_{1h}T_{2h}, \quad S_{s}=T_{1s}T_{2(s+s_*)}, \quad S_{s+s_*}=T_{1(s+s_*)}T_{2s}.
\end{gather}
In the proof, we will   use the fact
  that $S_j$ is reversible by both involutions in the composition for $S_j$. In particular,
\eq{tijsj}
T_{1j}S_jT_{1j}=S_j^{-1},\quad \forall j.
\eeq
 However, we have $T_{2(s+s_*)}S_sT_{2(s+s_*)}=S_s^{-1}$
and $T_{2s}S_{s+s_*}T_{2s}=S_{s+s_*}^{-1}$.
For simplicity,  we will  derive identities by using \re{tijsj} and
\eq{se-1}
 S_e^{-1}=\rho S_e\rho, \quad S_h^{-1}=\rho S_h\rho, \quad S_{s+s_*}^{-1}=\rho S_s\rho.
\eeq
 Finally, we need one more identity.
Recall that
\begin{alignat*}{4}
&T_{1e}T_{2j}=T_{2j}T_{1e}, \quad j\neq e;\quad
&&T_{1h}T_{2j}=T_{2j}T_{1h}, \quad j\neq h;\\
&T_{1s}T_{2j}=T_{2j}T_{1s}, \quad j\neq s+s_*;\quad
&&T_{1(s+s_*)}T_{2j}=T_{2j}T_{1(s+s_*)}, \quad j\neq s.
\end{alignat*}
Therefore, for any $j$ we have the identity
\eq{sjt1}
T_1S_j T_1=S_j^{-1}.
\eeq
In what follows, we will derive all identities by using \re{tijsj}, \re{se-1} and \re{sjt1}, as well
as $S_iS_j=S_jS_i$,  $T_{1i}T_{1j}=T_{1j}T_{1i}$ and $T_2=\rho T_1\rho$.

(i)
 The centralizer of $\cL S$ is easy to describe.
Namely, $\var\in\cL C(\cL S)$  if and only if
\gan
\label{ujs=-}
U_j\circ S_j=\mu_jU_j, \quad U_j\circ S_k=U_j, \quad k\neq j,\\
\quad V_j\circ S_j=\mu_j^{-1}V_j,\quad V_j\circ S_k=V_j, \quad k\neq j.\label{vjsj}
\end{gather*}
 For $\var\rho=\rho\var$, we need
\ga\label{uhou}
  U_h=\ov {U_h\circ\rho},
\quad U_{s+s_*}=\ov{U_s\circ\rho},\\
V_{e}=\ov{U_e\circ\rho},
 \quad V_h=\ov {V_h\circ\rho}, \quad V_{s+s_*}=\ov{V_s\circ\rho}.
\label{ueve}
\end{gather}

(ii) Suppose that $\var\in\cL C(\cL S,T_1)$. Then, it also belongs to $\cL C(S,T_1)$. Hence, it satisfies
\begin{gather}\label{ujs=}
V_j=\lambda_{j}^{-1}U_{j}\circ T_{1}.
\end{gather}
This implies $(\ref{vjlju})$.

Assume furthermore that $\var\in \cL C(\cL S, T_1, \rho)$. Eliminating $V_e$ in \re{ujs=} with \re{ueve}, we obtain
 \eq{uele}
 \nonumber
 U_e=\la_e\ov{U_e\circ\rho\circ T_1}.
 \eeq
According to \re{T1rho}, we obtain
$$
U_{e,\rho_b\rho_a}=\la_e\ov\la^{Q-P}\ov U_{e,PQ}.
$$
If $(P,Q)\in\cL R_e$ and since $\mu_e, \mu_s, \mu_{s+s^*}$ are of norm greater than $1$, then we have $p_{s+s_*}=q_{s+s^*}$, $p_{s}=q_{s}$ and $p_{e}=q_{e}+1$.  By
$\la_e\ov\la^{Q-P}=\ov \nu_{PQ}=\nu_{PQ}^{-1}$ we get 
 \re{uenu}.

Using \re{ujs=}, we eliminate $V_j$ from \re{ueve} and \re{uhou} to  obtain
 \gan
 \la_h^{-1}U_h\circ T_1=\ov{\la_h^{-1}U_h\circ T_1\circ\rho},\quad
 \la_{s+s_*}^{-1}U_{s+s_*}\circ T_1=\ov{\la_{s}^{-1}U_{s}\circ T_1\circ\rho}.
 \end{gather*}
Since $T_1\rho T_1= \rho T_2 T_1=\rho S^{-1}$, the previous equalities read
\gan
 U_h=\la_h^2\ov{U_h\circ \rho\circ S^{-1}},\quad
 U_{s+s_*}=\la_{s+s_*}^2\ov{U_{s}\circ\rho\circ S^{-1}}.
 \end{gather*}
We recall that $\la_{s+s_*}=\ov\la_{s}^{-1}$. According to \re{rhoS-1}, we obtain
$$
U_{h,\rho(PQ)}=\la_h^2 \ov\mu^{P-Q}\ov U_{h,PQ},\quad U_{s+s_*,\rho(PQ)}=\la_{s+s_*}^2 \ov\mu^{P-Q}\ov U_{s,PQ}.
$$
If $(P,Q)\in\cL R_h$, then $\ov\mu^{P-Q}=\ov\mu_h=\la_h^{-2}$. If $(P,Q)\in\cL R_s$, then $\ov\mu^{P-Q}=\ov\mu_s=\la_{s+s_*}^{-2}$. The result then follows.

(iii)  Let $\var\in\cL C(\cL T_1,\cL T_2)$. Then, in particular, we have
\aln 
U_{j}=U_j(T_{1k}), \quad k\neq j; \quad 
V_j=\la_j^{-1}U_j\circ T_1.
\end{align*}
Let $(P,Q)\in \cL R_j\setminus \cL N_j$. For each $k$ such that $q_k>p_k$, we compose $U_j$ by $T_{1 k}$. We emphasize that when $(P,Q)\in\cL R_j$,
 such a $k$ is a hyperbolic index. Using the previous identity, we obtain
\eq{uL}
U_{j,PQ}=L_{j,PQ}U_{j,A_jB_j}
\eeq
with 
$$
L_{j, P Q}:=\prod_{k\neq j,  p_k< q_k}\la_{k}^{ q_k- p_k}.
$$
By the definition of $\nu_{PQ}^+$, we conclude 
\eq{Ljpqn}
L_{j,PQ}=\nu_{PQ}^+, \quad (P,Q)\in\cL R_j. 
\eeq
If $(P,Q)\in  \cL N_j$, then $(A_j,B_j)=(P,Q)$ and we   have $L_{j,PQ}=\nu_{PQ}^+=1$, so that the   relation \re{uL} just becomes the identity $U_{j,PQ}=U_{j,PQ}$.
 

Assume now that $\var\in\cL C(\cL T_1,\cL T_2,\rho)$. In addition to the previous conditions, we have \re{uhou} and \re{ueve}.
Hence, \re{uenu}-\re{usnu} and \re{uL} lead to: 
\begin{alignat*}{4}
\nu_{PQ}\ov U_{e,\rho_e(PQ)}&=U_{e,PQ}=L_{e,PQ}U_{e,A_eB_e},\quad && (P,Q)\in \cL R_e;\\
\ov U_{h,\rho_h(PQ)}&=U_{h,PQ}=L_{h,PQ}U_{h,A_hB_h},\quad &&(P,Q)\in \cL R_h;\\
\ov U_{s+s_*,\rho(PQ)}&=U_{s,PQ}=L_{s,PQ}U_{s,A_sB_s},\quad &&(P,Q)\in \cL R_s. 
\end{alignat*}
Since $\rho_e, \rho_h$ are involutions on  $\cL R_e$ and $\cL R_h$, respectively, 
and since $\rho$ is a bijection from $\cL R_s$ onto $\cL R_{s+s_*}$, we obtain 
\begin{alignat*}{4}
\nu_{\rho_e(PQ)}\ov U_{e,PQ}&=L_{e,\rho_e(PQ)}U_{e,(A_e,B_e)\circ\rho_e(PQ)},\quad && (P,Q)\in \cL R_e;\\
\ov U_{h,PQ}&=L_{h,\rho_h(AB)}U_{h,(A_h,B_h)\circ\rho_h(PQ)},\quad && (P,Q)\in \cL R_h(PQ);\\
\ov U_{s+s_*,PQ}&=L_{s,\rho(AB)}U_{s,(A_s,B_s)\circ\rho(PQ)},\quad && (P,Q)\in \cL R_{s+s_*}.
\end{alignat*}
By \re{Ljpqn}, we copy  the values $L_{j,\rho( P Q)}=\nu_{\rho(PQ)}^+$ from  \re{nupm1++}. We have
\begin{alignat*}{4}
\nu^+_{\rho_j(PQ)}&=\nu^+_{PQ}, \quad &&\text{if $j \neq e
$, and $ (P,Q)\in\cL R_j$;}\\
 \nu^+_{\rho_e(PQ)}&=\nu_{PQ}\nu_{PQ}^+,
 \quad && \text{if $(P,Q)\in\cL R_e$};\\
 \nu_{\rho_e(PQ)}&=\nu_{PQ}, \quad &&\text{if $(P,Q)\in\cL R_e$}.
\end{alignat*}
Finally, we obtain
\begin{eqnarray*}
U_{j,PQ}&=& \nu_{PQ}^+ 
\ov U_{j,(A_j,B_j)\circ\rho_j(PQ)},\quad (P,Q)\in \cL R_j, \quad j=e,h; \\
U_{s+s_*,PQ}&=&\nu_{PQ}^+\ov U_{s,(A_s,B_s)\circ\rho(PQ)},\quad (P,Q)\in \cL R_{s+s_*}.
\end{eqnarray*}

Therefore, we have derived necessary conditions for the centralizers. 
Let us verify that the conditions   are also sufficient. 
Of course, the verification for (i) is straightforward. 
Furthermore,  that $\var=I+(U,V)$ commutes with $S_1, \ldots, S_p$
is equivalent to $U_{j,PQ}=V_{j,QP}=0$ for all $(P,Q)\in \cL R_j$, which
is also trivial in cases (ii) and (iii).

For (ii),  \re{muimuj} and \re{vjlju} imply that $\var$ commutes with $T_1$. We  verify that $\var$ commutes 
with $\rho$. Write $\rho\var\rho=(\tilde U,\tilde V)$.  Applying \re{vjlju} and \re{uenu} each twice, we get for $(P,Q)
\in \cL R_e$
$$
\tilde U_{e,PQ}=\ov V_{e,\rho(PQ)}=\la_e^{-1}\la^{\rho_b-\rho_a}\ov U_{e,\rho_e(PQ)}=\la_e^{-1}\la^{\rho_b-\rho_a}
\nu_{PQ}U_{e,PQ}.
$$
We get $\tilde U_{e,PQ}=U_{e,PQ}$. The  identities for  hyperbolic and complex  components of $\rho\var\rho=\var$ are easy to verify.

For (iii),  let us  verify that \re{ujrjnj}, 
 \re{muimuj}, and \re{vjlju} are sufficient conditions for $\var\in\cL C(\cL
T_1,\cL T_2)$. By \re{vjlju}, we get $\var T_1=T_1\var$.  
 Also,  for $\var\in\cL C(\cL T_1)$ it remains to show that for $(P,Q)\in\cL R_j$
 \eq{uit1j}
 (U_j\circ T_{1k})_{PQ}=U_{j,PQ}, \quad k\neq j; \quad (U_{j}\circ T_{1j})_{QP}=\la_jV_{j,QP}.
 \eeq
   We introduce $(P_j,Q_j)$ via $\xi^P\eta^Q\circ T_{1j}=\la_j^{p_j-q_j}\xi^{P_j}\eta^{Q_j}$
 and also denote $(P_j,Q_j)$ by $(P,Q)_j$. 
 
We first remark that \re{ujrjnj} 
  also holds for $(P,Q)\in\cL N_j$.
Therefore, we will use \re{ujrjnj} 
for all $(P,Q)\in\cL R_j$.

  For $k\neq j, h$, we  have $(P_k,Q_k)=(P,Q)$. Thus in this case we immediately 
 get the first identity in \re{uit1j}. 
  Using \re{ujrjnj} twice, we obtain for $j\neq h$
 \begin{align*}
 (U_j\circ T_{1h})_{PQ}&=\la_h^{p_h-q_h}U_{j,(PQ)_h}=\la_h^{p_h-q_h}\nu_{(PQ)_h}^+U_{j,(A_j,B_j)(P,Q)}\\
 &=\la^{p_h-q_h}\nu_{(PQ)_h}^+ \ov \nu_{PQ}^+U_{j,PQ}= U_{j,PQ}.
 \end{align*}
 Combining with the identities which we have proved, we get    $(U_j\circ T_{1j})_{QP}=(U_j\circ T_1)_{QP}=(\la_jV_j)_{QP}$ for $j\neq h$.
 This gives us all the identities in \re{uit1j} for $(P,Q)\in\cL R_j$. These identities are trivial when $(P,Q)$ is not in $\cL R_j$. Therefore, we have shown that these conditions are sufficient for $\var\in\cL C(\cL T_1,\cL T_2)$. 
 
 Finally, we need to verify that \re{muimuj}, \re{vjlju}, and \re{ujrjnj}-\re{sqnupq2} imply that $\var$ and $\rho$ commute.
 
 To shorten operations applied to multiindices, let us introduce the follow notation. 
 For $(P,Q)\in \cL \cL N_j$, define
 $$
 \iota_j\colon (P,Q)\mapsto (A_j,B_j)\circ\rho_j(P,Q), \quad j=e,h; \quad
  \iota_s\colon (P,Q)\mapsto (A_s,B_s)\circ\rho(P,Q). $$
Then $\iota_j$ 
 is an involution on $\cL N_j$ when $j=e,h$, and it is a bijection from $\cL N_{s+s_*}$ onto $\cL N_{s}$ when $j=s$. Furthermore, the inverse of $\iota_s$ is given by 
 $$
\iota_{s+s_*}\colon (P,Q)\mapsto  (A_{s+s_*},B_{s+s_*})
 \circ \rho(P,Q).
  $$
 
Fix $(P,Q)\in\cL R_e$. By \re{ujrjnj} and \re{sqnupq}, we have 
 \al\label{uepqn}
 U_{e,PQ}=\nu_{PQ}^+U_{e,(A_e,B_e)(P,Q)}=\nu_{PQ}^+  \nu^+_{(A_e,B_e) (P,Q)}  \ov U_{e,\iota_e\circ (A_e,B_e) (P,Q)}.
 \end{align}
  We know that $p_j=q_j$ when $j\neq h$ or $j$ does not equal the $e$
($j$ can represent other elliptic components). We know that $p_{e}=q_e+1$ for the   $e$.
By treating case by case for $p_h\geq q_h$ or $p_h<q_h$, i.e. $2^{h_*}$ cases in total, we verify that
$$
\iota_e\circ (A_e,B_e) (P,Q)=\iota_e(P,Q),
 \quad \nu_{PQ}^+  \nu^+_{(A_e,B_e) (P,Q)}=\la_e^{-1}\la^{\rho_b-\rho_a}\nu^+_{\rho_b\rho_a}.
$$
This allows us to apply \re{vjlju} and 
\re{ujrjnj} to rewrite the right-hand side of \re{uepqn} as $\ov V_{e,\rho(PQ)}$. We repeat a simpler  procedure for $U_{h,PQ}$
with $(P,Q)\in\cL R_h$: 
We apply $(A_h,B_h)$ to the multi-index $(P,Q)$ and use \re{sqnupq} once. We then check the multiindex and the coefficient to conclude
that the result is $\ov U_{h,\rho(PQ)}$. (Here we do not need apply \re{vjlju}.) For $U_{s+s_*, PQ}$ with $(P,Q)\in\cL R_{s+s_*}$, we apply
$(A_{s+s_*},B_{s+s_*})$ to $(P,Q)$ and use \re{sqnupq2} once. The result is $\ov U_{s,\rho(PQ)}$.  
With $U_{h,PQ}=\ov U_{h,\rho(PQ)}$ and $U_{s+s_*,PQ}=\ov U_{s,\rho(PQ)}$, we apply
 \re{vjlju} to obtain $V_{h,PQ}=\ov V_{h,\rho(PQ)}$ and $V_{s+s_*,PQ}=\ov V_{s,\rho(PQ)}$. This shows that $\var$ commutes with $\rho$.
 The proof is complete.
\end{proof}

We have described the conditions on centralizers. We now determine  complements of these conditions
to define normalized mappings. 
\begin{defn} Let $\var=I+(U,V)$ be a formal mapping tangent to the identity.
\bppp
\item We say that $\var$ is {\it normalized} with respect to $S_1,\ldots, S_p$ if
\eq{ujpq=0}\nonumber
U_{j,PQ}=0=V_{j,QP}, \quad \text{if $(P,Q)\in\cL R_j, \quad \forall j$}.
\eeq
Furthermore, $\rho\var\rho$ is  normalized w.r.t. $S_1,\ldots, S_p$ if and only if $\var$ is.
\item We say that $\var$ is {\it normalized} with respect to
$\{\cL S, T_1,\rho\}$ if
\begin{alignat}{4}
\label{uhus+1} U_{h,PQ}&=- \ov{U}_{h,\rho(PQ)}, \  &&\forall (P,Q)\in\cL R_h;
\\ 
\label{ussp}
 U_{s+s_*,PQ}&=-\ov{U}_{s,\rho(PQ)},\  &&\forall(P,Q)\in\cL R_{s+s_*};\\
\label{uhus+3} U_{e,PQ}&=-\nu_{PQ}\ov{U}_{e,\rho_e(PQ)},\quad  &&\forall(P,Q)\in\cL R_e;\\
\label{vjlju++}V_{j,QP}&=-\la_j^{-1}\la^{P-Q}U_{j,PQ}, \qquad && \forall (P,Q)\in\cL R_j.
\end{alignat}
\item We say that $\var$ is {\it normalized} w.r.t. $\{\cL T_1,\cL T_2,\rho\}$ if
\begin{alignat}{4}
\label{ujrjnjC}
U_{j, P Q} &=-\nu_{P Q}^+U_{j,(A_j,B_j)( P, Q)},\quad &&\forall(P,Q)\in\cL R_j\setminus\cL N_j,\\
\label{UePQ-}
U_{j, P Q} &= -  
\nu_{PQ}^+ 
 \ov U_{j,(A_j,B_j)\circ\rho_j( P, Q)},\quad &&\forall (P,Q)\in \cL N_j, \  j=e,h;\\
\label{UssP} U_{s+s_*,PQ}&=-\nu_{PQ}^+\ov U_{s,(A_s,B_s)\circ\rho( P, Q)}, \quad &&\forall (P,Q)\in  \cL N_{s+s_*}.
\end{alignat}
\eppp 
\end{defn}

\begin{lemma}\label{FHG-}
 Let $F$ be a formal map which is tangent to the identity.
There exists a unique formal decomposition $F=HG^{-1}$ with $G\in{\cL C}(\cL S,T_1,\rho)$
$($resp. ${\cL C}(\cL T_1,\cL T_2, \rho))$
and $H\in {\cL C}^\mathsf{c}(\cL S,T_1,\rho)$ $($resp. ${\cL C}^\mathsf{c}(\cL T_1,\cL T_2, \rho)))$.
If $F$ is convergent, then $G$ and $H$ are also convergent.
\end{lemma}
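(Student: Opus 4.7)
The plan is to reduce the lemma to an application of Lemma~\ref{fhg-}, which requires an $\rr$-linear, degree-preserving projection $\pi$ of $(\widehat{\mathfrak M}_n^2)^n$ onto the candidate subspace $\hat{\cL H}$ of normalized mappings, satisfying the majorization $\pi(E) \prec C E_{\text{sym}}$ for some constant $C$ depending only on the eigenvalues. In both cases, Proposition~\ref{STiR} describes $\cL C(\cdots)$ as the locus cut out in coefficient space by $\rr$-linear relations of the form ``coefficient equals $c$ times another coefficient or its complex conjugate,'' while the normalized complement $\cL C^{\mathsf c}(\cdots)$ is cut out by the same relations with the opposite sign.

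First, I would read these relations as an action of a finite group on the index set $\{1,\dots,2p\}\times\nn^{2p}$, generated by the involutions and bijections $(P,Q)\mapsto\rho_j(P,Q)$, $(P,Q)\mapsto(Q,P)$ (coming from \eqref{vjlju}--\eqref{vjlju++}), and — in the $\cL C(\cL T_1,\cL T_2,\rho)$ case — also $(P,Q)\mapsto(A_j,B_j)(P,Q)$, possibly accompanied by complex conjugation. On each orbit the space of coefficients splits as an $\rr$-linear direct sum of a $(+1)$-eigenspace (the centralizer part, satisfying the $+$ relations of Proposition~\ref{STiR}) and a $(-1)$-eigenspace (the normalized part, satisfying \eqref{uhus+1}--\eqref{UssP}). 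I would define $\pi$ orbit by orbit as the projection onto the $(-1)$-eigenspace, and extend it by the identity on coefficients $U_{j,PQ}$, $V_{j,QP}$ with $(P,Q)\notin\cL R_j$, which are already in $\hat{\cL H}$ by \eqref{muimuj}. Degree preservation is automatic because each generator preserves $|P|+|Q|$, and $\pi^2=\pi$ reduces to the compatibility identities $\rho_j^2=I$, $(A_j,B_j)\circ(A_j,B_j)=(A_j,B_j)$, and the relations among $\rho$, $\rho_j$ and $(A_j,B_j)$ used already in the sufficiency part of the proof of Proposition~\ref{STiR}.

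Next, I would verify the majorization. By Lemma~\ref{nunu+} we have $|\nu_{PQ}|=1$ on $\cL R_e$ and $|\nu_{PQ}^+|=1$ on every $\cL R_j$; moreover the factor $\lambda_j^{-1}\lambda^{P-Q}$ in \eqref{vjlju} has modulus one on $\cL R_j$ because $\mu^{P-Q}=\mu_j$ forces $(\lambda_j^{-1}\lambda^{P-Q})^2=1$. Hence, orbit by orbit, $\pi$ is a convex combination of the coordinate evaluations and their complex conjugates, so each coefficient of $\pi(E)$ is bounded by the maximum modulus of the coefficients of $E$ in the same orbit. Since the orbits are generated by permutations of the variables $\xi,\eta$ and permutations of the exponent multi-indices, each orbit lies inside a single permutation orbit, and this yields $\pi(E)\prec E_{\text{sym}}$, i.e.\ the hypothesis of Lemma~\ref{fhg-} with $C=1$. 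That lemma then supplies the unique decomposition $F=HG^{-1}$ with $G-I\in\cL C_2(\cdots)$ and $H-I\in\cL C^{\mathsf c}_2(\cdots)$, together with the convergence of $G$ and $H$ when $F$ is convergent.

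The main obstacle I expect is the combinatorial bookkeeping in the case $(\cL T_1,\cL T_2,\rho)$: one must check that the finite group generated by $\rho$, $\rho_j$, $(A_j,B_j)$, and $(P,Q)\mapsto(Q,P)$ acts consistently on the index set, that the induced relations on an orbit really do cut out complementary $(\pm1)$-eigenspaces (so that $\pi$ is well defined), and that the representative choice of $(A_j,B_j)(P,Q)\in\cL N_j$ is compatible with $\rho_j$ via $(A_j,B_j)\circ\rho_j$ as used in \eqref{UePQ-}--\eqref{UssP}. These are precisely the identities already verified in the sufficiency half of Proposition~\ref{STiR}; once they are in hand, the analytic content of the present lemma is entirely supplied by Lemma~\ref{fhg-}.
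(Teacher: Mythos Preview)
Your strategy is correct and is essentially the paper's own: both reduce to Lemma~\ref{fhg-} by building an $\rr$-linear, degree-preserving projection $\pi$ onto $\cL C^{\mathsf c}_2(\cdots)$ via coefficient-by-coefficient averaging over the relations of Proposition~\ref{STiR}, using that $\nu_{PQ}$, $\nu_{PQ}^+$, and $\lambda_j^{-1}\lambda^{P-Q}$ all have modulus one on $\cL R_j$ to obtain $\pi(E)\prec E_{\text{sym}}$. The paper writes out the averaging formulas case by case (elliptic, hyperbolic, complex; and $(P,Q)\in\cL R_j\setminus\cL N_j$ versus $(P,Q)\in\cL N_j$) rather than packaging them as a group action---your caveat that $(A_j,B_j)$ is only a projection is well placed---but the verification that $\pi^2=\pi$ rests on exactly the identities from Lemma~\ref{nunu+} and the sufficiency half of Proposition~\ref{STiR}, as you anticipate.
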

\begin{proof} We will apply \rl{fhg-} as follows. Let
$\hat H$   be the set of mappings 
in $ {\cL C}_2^{\mathsf{c}}(\cL S, \cL T_{1}, \rho)$.
Note that $\hat H$ is a $\rr$-linear subspace of   $({\widehat { \mathfrak M}}_n^2)^n$. 
We will define a $\rr$-linear projection $\pi$ from  $({\widehat { \mathfrak M}}_n^2)^n$ 
onto $\hat H$ such that $\pi$ preserves the degree of $F$ if $F$ is homogeneous. We will
 show that $\hat G=(\operatorname{I}-\pi)\hat H$  agrees with ${\cL C}_2^\mathsf{c}(\cL S, \cL T_{1}, \rho)$. 
We will derive estimates on $\pi$ stated in \rl{fhg-}, from which we conclude the convergence of $H, G$. 

The same argument will be applied  to  the second case of $\cL C(\cL T_1, 
\rho)$ and $\cL C^{\mathsf c}(\cL T_1,
\rho)$. 

For the first case, let us define a projection $\pi \colon ({\widehat { \mathfrak M}}_n^2)^n 
\to \hat H$.
We decompose
$$
(U,V)=(U'+U'',V'+V''),\quad
\pi (U,V)=(U',V').
$$
 We first define
\eq{notinrj}\nonumber
U'_{j,PQ}=U_{j,PQ}, \quad V_{j,PQ}'=V_{j,PQ}, \quad U_{j,PQ}''=0,
\quad V_{j,PQ}''=0, \eeq
for  $ (P,Q)\not\in\cL R_j.$
Suppose that $(P,Q)\in \cL R_e$. We have
\begin{align*}
U_{e,PQ}&=U_{e,PQ}'+U_{e,PQ}'', \\
U_{e,\rho_e(PQ)}&=U_{e,\rho_e(PQ)}'+U_{e,\rho_e(PQ)}''.
\end{align*}
According to \re{uhus+3} and  \re{uenu}, we  need to seek solutions that satisfy
\eq{uepq'}
U_{e,PQ}'+\nu_{PQ}\ov U_{e,\rho_e(PQ)}'=0, \quad
U_{e,PQ}''-\nu_{PQ}\ov U_{e,\rho_e(PQ)}''=0.
\eeq
Hence,  for $(P,Q)\in \cL R_e$ we  choose
\begin{gather}\label{uepq'+}
\nonumber
 U_{e,PQ}'=\f{1}{2}(U_{e,PQ}-\nu_{PQ}\ov U_{e,\rho_e(PQ)}), \   U_{e,PQ}''=\f{1}{2}(U_{e,PQ}+\nu_{PQ}\ov U_{e,\rho_e(PQ)}).
\end{gather}
 We verify directly that the solutions satisfy \re{uepq'} as follows: 
\aln
U_{e,PQ}'+\nu_{PQ}\ov U_{e,\rho_e(PQ)}'&=
\f{1}{2}(U_{e,PQ}-\nu_{PQ}\ov U_{e,\rho_e(PQ)})\\ &\quad +
\f{1}{2}(\nu_{PQ}\ov U_{e,\rho_e(PQ)}-\nu_{PQ}\nu_{\rho_e(PQ)}\ov U_{e,PQ})=0.
\end{align*}
 Here we have used the fact that $\rho_e$ is an involution 
and $\nu_{\rho_e(PQ)}\nu_{PQ}=1$ from \re{nupm1+}.

Analogously, for $(P,Q)\in \cL R_h$,   we achieve   \re{uhus+1} and \re{uhnu} by taking
\ga\label{uhpq'}\nonumber
 U_{h,PQ}'=\f{1}{2}(U_{h,PQ}- \ov U_{h,\rho_h(PQ)}),
\quad  U_{h,PQ}''=\f{1}{2}(U_{h,PQ}+ \ov U_{h,\rho_h(PQ)}).
\end{gather}
For $(P,Q)\in\cL R_{s+s_*}$,  we achieve \re{usnu} and \re{ussp} by taking
\ga\label{usspq'}\nonumber
 U_{s+s_*,PQ}'=\f{1}{2}(U_{s+s_*,PQ}-\ov U_{s,\rho(PQ)}),
\quad  U_{s+s_*,PQ}''=\f{1}{2}(U_{s+s_*,PQ}+ \ov U_{s,\rho(PQ)}).
\end{gather}

We have determined coefficients for $U_{j,PQ}', U_{j,PQ}''$ with $(P,Q)\in\cL R_j$.
Let us set for $(P,Q)\in\cL R_j$,
\begin{eqnarray}
V_{j,QP}' &=& -\la_j^{-1}\la^{P-Q}U_{j,PQ}'\label{v'},\\
V_{j,QP}'' &=& \la_j^{-1}\la^{P-Q}U_{j,PQ}''\label{v''}.
\end{eqnarray}
This fulfills the conditions on $V_j'$ and $V_j''$ easily. Note that the last identity means that $(U'',V'')$
commutes with $T_1$. We have obtained the required formal decomposition. 

To prove the convergence,  we start with 
\begin{equation}\label{nupq}
\la_j^{-1}\la^{P-Q} = \nu_{PQ}=\pm1
\end{equation}
 for $(P,Q)\in\cL R_j$.
So $\pi$ is indeed an $\rr$-linear projection which preserves  degrees. Since  
$|\nu_{PQ}|=1$, 
we have that 
$$
|U_{PQ}'|\leq \max_{(P',Q')}|U_{P'Q'}|.
$$ 
Here $(P',Q')$ runs over all permutations of $(P,Q)$ in $2p$ coordinates. 
The same holds for $V'$. Hence, with the notation of \rl{fhg-}, we have
$$
\{\pi (U,V)\}_{sym}\prec (U, V)_{sym}.
$$
The existence and uniqueness as well as the convergence also follow
 from \rl{fhg-}.

We now consider the second case of $\cL C(\cL T_1,\cL T_2,\rho)$ by  minor changes.
Let us define a projection $\pi\colon(({\widehat { \mathfrak M}}_n^2)^n\to \hat H$. Here $\hat H$
is the space associated with the mappings satisfying the normalized conditions \re{ujrjnjC}-\re{UssP}.
Let  $\hat G=(\operatorname{I}-\pi)\hat H.$ 
We decompose as above
$$
(U,V)=(U'+U'',V'+V''),\quad
\pi (U,V)=(U',V').
$$
We  choose~:
\begin{alignat}{4}
\label{ujrjnjCs}
U_{j, P Q}'' &=\f{1}{2}(U_{j, P Q}'+\nu_{P Q}^+U_{j,(A_j,B_j)( P, Q)}),\  && (P,Q)\in\cL R_j\setminus\cL N_j, 
\\
\label{consist-1}
U_{j, P Q}' &=\f{1}{2}(U_{j, P Q}'-\nu_{P Q}^+U_{j,(A_j,B_j)( P, Q)}),\  && (P,Q)\in\cL R_j\setminus\cL N_j, 
\\
U_{j,PQ}''&=\f{1}{2}(U_{j,PQ}+ \nu_{PQ}^+ 
\ov U_{j,(A_j,B_j)\circ\rho_j(PQ)}), \quad && (P,Q)\in\cL N_j,\  j=e,h,\\
\label{consist1} U_{j,PQ}'&=\f{1}{2}(U_{j,PQ}-  \nu_{PQ}^+ 
\ov U_{j,(A_j,B_j)\circ\rho_j(PQ)}),\quad  && (P,Q)\in\cL N_j,\  j=e,h,\\ 
U_{s+s_*,PQ}'' &= \f{1}{2}(U_{s+s_*,PQ}+\nu_{PQ}^+\ov U_{s,(A_s,B_s)\circ\rho(PQ)}),\quad && (P,Q)\in\cL N_{s+s_*},\\
\label{consist3}
U_{s+s_*,PQ}'&=\f{1}{2}(U_{s+s_*,PQ}-\nu_{PQ}^+\ov U_{s,(A_s,B_s)\circ\rho(PQ)}),\quad && (P,Q)\in\cL N_{s+s_*}.
\end{alignat}
We set $U''_{j,PQ}=0=V_{j,QP}''$ for $(P,Q)\not\in\cL R_j$. 
Let us   verify that $\pi (U,V)=(U',V')$ is in $\hat H$.  Recall that
$$
\iota_e\colon (P,Q)\to (A_e,B_e)\circ\rho_e(PQ)=(A_e,B_e)(\rho_b(P,Q),\rho_a(P,Q)), \quad (P,Q)\in \cL N_e.
$$
To verify  \re{UePQ-} for  $j=e$,   via  \re{consist1} 
we compute
\aln
U'_{e,PQ}+  \nu_{PQ}^+ 
\ov U'_{e,(A_e,B_e)\circ\rho_e(PQ)}&=
\f{1}{2}(U_{e,PQ}-  \nu_{PQ}^+ 
\ov U_{e,\iota_e(PQ)})
\\  &\quad +
\f{ \nu_{PQ}^+}{2}(U_{e,\iota_e(PQ)}- \nu_{\iota_e(PQ)}^+ 
\ov U_{e,PQ})=0.
\end{align*}
Here we have used the fact that $\iota_e\colon\cL N_e\to\cL N_e$ is an involution and 
\eq{forrem2}\nonumber
\nu_{PQ}\nu_{\iota_e(PQ)}=1, \quad \nu_{PQ}^+ 
{\nu_{\iota_e(PQ)}^+}=1, \quad (P,Q)\in\cL \cL N_e.
\eeq
Recall that
$$
\iota_j(P,Q)=(A_j,B_j)(\rho_a(PQ),\rho_b(PQ)), \quad j=h,s.
$$
We also know that  $\iota_h$ is an involution on $\cL N_h$ and $\iota_s$ is a bijection from $\cL N_{s+s_*}$ onto $\cL N_{s}$.
Analogously, we verify \re{UePQ-}  for $U'_h$ and  \re{UssP} via \re{consist1} and \re{consist3}.
Note that $(P,Q)\to (A_j,B_j)(P,Q)$ is a projection on $\cL R_j$. Analogously, we verify \re{ujrjnjC} via \re{consist-1}.
This shows that $\pi (U,V)$ is in $\hat H$. We can also verify that $(U'',V'')=(\operatorname{I}-\pi
)(U,V)$ satisfies the conditions
on the centralizer,  i.e. it  is in $\hat G$. 
  
As before, we have 
$$
|U_{j,PQ}'|, |U_{j,PQ}''|\leq \max_i\max_{(P',Q')\text{permutation of } (P,Q)}|U_{i,P'Q'}|. 
$$
Equations \re{v'}, \re{v''} lead to the same inequality for $V',V''$. Hence, again the result follows from \rl{fhg-}.
\end{proof}

\begin{prop}\label{linearS} Assume that the family of involutions $\{\cL T_1, \cL T_2, \rho\}$ is 
formally linearizable. Assume further that $\sigma_1,\ldots, \sigma_p$
 defined by \rea{sigmai1}-\rea{sigmai3}, are linear.   
 \bppp
\item There is
 a  biholomorphic mapping in the centralizer of $\{\cL S,\rho\}$
which linearizes $\tau_1$ and $\tau_2$.
\item
Assume further that $\tau_1=T_1$ and $\tau_2=T_2$.
Then $\{\tau_{11},\ldots, \tau_{1p},\rho\}$ is holomorphically linearizable.
\eppp
\end{prop}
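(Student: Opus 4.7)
The plan is to treat (i) and (ii) by a common scheme: use the formal linearizability to produce a formal linearizing transformation, decompose it via Lemma~\ref{FHG-} so that the normalized component is the unique candidate for the convergent linearizer, and then prove convergence by a majorant/implicit function theorem argument whose decisive feature is that the ``divisors'' that arise are $\pm1$ (so no small-divisor problem).

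For (i), let $\widetilde\Phi$ be a formal linearizer of $\{\cL T_1,\cL T_2,\rho\}$. Because $\sigma_j$ is already the linear $S_j$, the conjugacy $\widetilde\Phi^{-1}\sigma_j\widetilde\Phi=S_j$ combined with $\sigma_j=S_j$ forces $\widetilde\Phi\in \cL C(\cL S)$, and $\widetilde\Phi\rho=\rho\widetilde\Phi$ by hypothesis, so $\widetilde\Phi\in\cL C(\cL S,\rho)$. It also linearizes $\tau_1=\tau_{11}\cdots\tau_{1p}$ (and hence $\tau_2=\rho\tau_1\rho$). Applying Lemma~\ref{FHG-} write $\widetilde\Phi=HG^{-1}$ with $G\in\cL C(\cL S,T_1,\rho)$ and $H\in\cL C^{\mathsf c}(\cL S,T_1,\rho)$; since $G$ commutes with $T_1$, the normalized $H$ also linearizes $\tau_1$. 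The key step is to show $H$ converges. Writing $\tau_1=T_1+f_1$ with $f_1=O(2)$ and $H=I+h$, the conjugacy equation $\tau_1 H=HT_1$ reduces to
\begin{equation*}
h\circ T_1-T_1 h=f_1(I+h).
\end{equation*}
Since $h\in\cL C(\cL S)$, only the coefficients $h_{j,PQ}$ with $(P,Q)\in \cL R_j$ are present, and on these indices the action of $T_1$ produces the factors $\la^{P-Q}$ which by Lemma~\ref{nunu+} satisfy $\la^{P-Q}=\pm1$. Projecting the equation onto $\cL C^{\mathsf c}(\cL S,T_1,\rho)$ via the projector built in the proof of Lemma~\ref{FHG-}, the linear operator on the left-hand side is therefore invertible with inverse of operator norm bounded by a universal constant $($divisors are $\pm2$ rather than vanishingly small$)$. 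The resulting fixed-point equation $h=L^{-1}\pi\bigl(f_1(I+h)\bigr)$ can be majorized by $u\prec C\overline{f_1}(I+u)$, and the implicit function theorem furnishes a convergent solution $u$ dominating $h$, as was done for instance in the proof of Lemma~\ref{fhg-} and Theorem~\ref{thm-conv-nf}. This gives convergence of $H$, proving (i).

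For (ii), the hypothesis $\tau_1=T_1$, $\tau_2=T_2$ lets us work inside the richer centralizer $\cL C(\cL T_1,\cL T_2,\rho)$. Arguing as above, the formal linearizer of $\{\tau_{11},\ldots,\tau_{1p},\rho\}$ lies in $\cL C(\cL T_1,\cL T_2,\rho)$ (since it already linearizes $\tau_1=T_1$, $\tau_2=T_2$, and commutes with $\rho$). Apply the second half of Lemma~\ref{FHG-} to decompose it as $HG^{-1}$ with $G\in\cL C(\cL T_1,\cL T_2,\rho)$ and $H\in\cL C^{\mathsf c}(\cL T_1,\cL T_2,\rho)$; again $H$ is the unique normalized candidate and also linearizes each $\tau_{1j}$ (and then automatically $\tau_{2j}=\rho\tau_{1j}\rho$). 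The convergence of $H$ is then obtained by the same majorant scheme applied to the system $h\circ T_{1j}-T_{1j}h=(\tau_{1j}-T_{1j})(I+h)$, $j=1,\ldots,p$, subject to the normalization conditions \eqref{ujrjnjC}--\eqref{UssP}. The crucial point is again that on the relevant index set $\cL R_j$ the factors $\nu_{PQ}$ and $\nu_{PQ}^+$ are $\pm1$ by Lemma~\ref{nunu+}, so the linearized system is boundedly invertible on $\cL C^{\mathsf c}(\cL T_1,\cL T_2,\rho)$, and a standard majorant/implicit function argument closes the proof.

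The main obstacle in both parts is not the existence of a formal solution (granted by hypothesis) nor the algebra of normalized components (supplied by Proposition~\ref{STiR} and Lemma~\ref{FHG-}), but the quantitative control needed to pass from the formal identity to a convergent one. This is where one must verify carefully that after projecting onto the normalized complement, the divisors produced by the linear part truly reduce to $\pm1$ on every relevant resonant multiindex, so that the majorant series can be dominated by the solution of an analytic implicit equation $u=C\overline{f}(I+u)$ with $u(0)=0$.
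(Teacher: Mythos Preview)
Your outline for (i) is on the right track and follows the paper's scheme (decompose via Lemma~\ref{FHG-}, then majorize), but two points need correction. First, the divisors are not literally $\pm2$: from the $\eta_j$--component of $h\circ T_1-T_1h=f_1(I+h)$ together with the normalizing relation $V_{j,QP}=-\la_j^{-1}\la^{P-Q}U_{j,PQ}$ one obtains $-2\la_j^{-1}U_{j,PQ}=(g_j\circ H)_{PQ}$, so the divisor is $2\la_j^{-1}$, bounded away from $0$ but not $\pm2$. Second, the paper does not argue via an abstract ``project and invert''. It instead exploits that $H$ is \emph{simultaneously} normalized with respect to $\{\cL S,T_1,\rho\}$ (reality--type conditions with a minus sign) and in $\cL C(\cL S,\rho)$ (the same conditions with a plus sign); combining these forces $U_h,U_s,U_{s+s_*}$ and the corresponding $V$'s to vanish identically, leaving only the elliptic block $U_e,V_e$, for which the explicit recursion $U_{e,PQ}=-\tfrac{\la_e}{2}\{g_e\circ\Psi\}_{PQ}$ majorizes directly. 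Your approach can be salvaged by extracting this same $\eta$--component identity for every $j$, but the phrasing ``the projected linear operator is invertible'' is not the right formulation.

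For (ii) there is a genuine gap. The system $h\circ T_{1j}-T_{1j}h=(\tau_{1j}-T_{1j})(I+h)$, $j=1,\ldots,p$, is \emph{not} boundedly invertible on $\cL C^{\mathsf c}(\cL T_1,\cL T_2,\rho)$. Since $\tau_1=T_1$, the normalized $\Psi_1$ also lies in $\cL C(T_1)$, and the paper shows that the diagonal ($\xi_j,\eta_j$) components of the $j$th conjugacy have vanishing left side; they yield no recursion. One is forced to the off--diagonal equations $U_k-U_k\circ T_{1j}=-f_{jk}\circ\Psi$ ($k\neq j$), and these are degenerate: for $(P,Q)\in\cL R_k$ with $p_j=q_j$ (automatic whenever $j\neq k$ is not hyperbolic) the left side is zero. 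The paper's remedy is a telescoping device: compose the $k=e$ equation on the right successively by all $T_{1j'}$ with $j'\neq e$ and sum, producing the non--degenerate identity
\[
U_e\circ T_{1e}-U_e\circ T_1=-\Bigl\{\sum_i f_{j,e}\circ\tau_{11}\cdots\widehat{\tau_{1e}}\cdots\tau_{1i}\Bigr\}\circ\tau_{1e}\circ\Psi,
\]
which combined with the first diagonal identity eliminates $U_e\circ T_{1e}$ and gives $\la_eV_e-U_e\circ T_1=\tilde f_e\circ\Psi$ with convergent $\tilde f_e$. Only then does the normalizing condition yield the clean recursion $V_{e,QP}=\tfrac{1}{2\la_e}\{\tilde f_e\circ\Psi\}_{QP}$ amenable to majorization. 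This combinatorial step is the heart of~(ii) and is missing from your sketch.
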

\begin{proof} (i) Suppose that $\Psi$ is a formal mapping satisfying
\eq{conj-tj}\nonumber
\Psi^{-1}\tau_{1j}\Psi=T_{1i_j}, \quad \Psi\rho=\rho\Psi.
\eeq
Then $T_{1j}=(L\Psi)\circ T_{1i_j}\circ (L\Psi)^{-1}$, and $L\Psi$ commutes with $\rho$. 
 Replacing $\Psi$ by $\Psi\circ L\Psi^{-1}$, we may assume that $\Psi$ is tangent to the identity and $i_j=j$.
We decompose
 $
\Psi=\Psi_1\Psi_0^{-1},
$
where $\Psi_1$ is normalized w.r.t. $\cL S,T_1,\rho$ and $\Psi_0$ is in the centralizer of $\cL S,T_1,\rho$.
Since $\Psi,\Psi_0$ commute with $S_j$ and $\rho$, then $\Psi_1$ commutes with $S_j,\rho$ too.
We now let $\Psi$ denote $\Psi_1$.

To be more specific, let us write
$$
\tau_{1}\colon \begin{cases}
    \xi'_i= \lambda_{i}\eta_{i}+ f_{i}(\xi,\eta)
    \quad i=1,\ldots, p,\\ \eta'_i=\lambda_{i}^{-1}\xi_{i}+ g_{i}
    (\xi,\eta)\quad i=1,\ldots, p,\\
\end{cases}
$$
and
$$
\Psi\colon\begin{cases}
\xi'_i= \xi_{i}+U_{i}(\xi,\eta)\quad i=1,\ldots,p,\\
\eta'_i=\eta_{i}+V_{i}(\xi,\eta)\quad i=1,\ldots, p.\\
\end{cases}
$$
Let us write that $\Psi$ conjugates $\tau_{1}$ to
$$
T_1\colon
    \xi'_i= \lambda_{i}\eta_{i},\quad
      \eta'_i=\lambda_{i}^{-1}\xi_{i}, \quad i=1,\ldots,p.
$$
 We have $\Psi\circ T_{1}= \tau_{1} \circ
\Psi$; that is
\ga
\lambda_{i}V_i-U_i\circ T_{1}=   - f_{i}\circ \Psi(\xi,\eta)\quad i=1,\ldots, p,\\
\label{lai-1}\lambda_{i}^{-1}U_i-V_i\circ T_{1}= -g_{i}\circ \Psi(\xi,\eta)\quad i=1,\ldots, p.
\end{gather}
Since  $\Psi$ commutes with each $S_j$, then $U_{j,PQ}=V_{j,QP}=0$ for $(P,Q)\not\in\cL R_j$.
Let us find an equation involving only the unknown $U_e, U_h,  V_h, U_{s}, V_s$. By the reality conditions,
they determine $U,V$ completely.

Since the normalized mapping $\Psi$ commutes with $\rho$, we  have
\ga
U_{h,PQ}= \ov{U}_{h,\rho(PQ)}, \  (P,Q)\in\cL R_h,
\quad U_{(s+s_*),PQ}=\ov{U}_{s,\rho(PQ)},\ (P,Q)\in\cL R_{s+s_*},\nonumber\\
\label{VeQP} 
V_{e,QP}=\ov U_{e,\rho_e(QP)},\quad (P,Q)\in\cL R_e,\\
V_{h,QP}=\ov V_{h,\rho(PQ)},\  (P,Q)\in\cL R_h, \quad V_{s+s_*,QP}=\ov{V}_{s,\rho(QP)}, \ (P,Q)\in\cL R_{s+s_*}.\nonumber
\end{gather}
Let us combine the above identities with the (first two) normalizing conditions 
\begin{alignat}{4}
U_{h,PQ}&=- \ov{U}_{h,\rho(PQ)}, \  &&(P,Q)\in\cL R_h,\nonumber\\
U_{s+s_*,PQ}&=-\ov{U}_{s,\rho(PQ)},\  &&(P,Q)\in\cL R_{s+s_*},\nonumber\\
\label{UePQ}
U_{e,PQ}&=-\nu_{PQ}\ov{U}_{e,\rho_e(PQ)},\quad && (P,Q)\in\cL R_e,\nonumber\\
V_{j,QP}&=-\la_j^{-1}\la^{P-Q}U_{j,PQ}, \quad && (P,Q)\in\cL R_j.\nonumber
\end{alignat}
Recall that $\Psi$ belongs to the centralizer of $\cL S$ so that $U_{j,PQ}=V_{j,QP}=0$ for $(PQ)\not\in\cL R_j$ 
and  all $j$.
We then immediately see that $U_h,U_s,U_{s+s_*},V_h,V_s,V_{s+s_*}$ are $0$.

We now use the two last conditions to determine $U_e,V_e$ and majorize them.  By \re{lai-1}, \re{VeQP} and \re{nupq}, we obtain
$$
U_{e,PQ}-\nu_{PQ}\ov{U}_{e,\rho_e(QP)}=-\la_e\{g_e\circ\Psi\}_{PQ}.
$$ 
Using \re{uhus+3}, we obtain that, for $(P,Q)\in\cL R_e$,
$$
U_{e,PQ}=-\f{1}{2}\la_e\{g_e\circ\Psi\}_{PQ},
$$
as well as
$$
V_{e,QP}=\f{1}{2}\nu_{PQ}\la_e\{g_e\circ\Psi\}_{PQ}.
$$
Therefore, we have
$$
|V_{e,QP}|,|U_{e,PQ}|\leq C \left|\{g_e\circ\Psi\}_{PQ}\right|.
$$
 In view of \re{FiGp},  
we then have
$$
\psi_{sym}\prec C g_{sym}\circ\Psi_{sym}= g_{sym}\circ(I_{sym}+\psi_{sym}).
$$
Therefore, $\psi_{sym}$ is convergent at the origin and so is $\Psi$.

(ii)
Assume now that $\sigma=S, \tau_1=T_1, \tau_2=T_2$ are linear. Suppose that $\Psi$ linearizes the $\{\tau_{ij}\}$ and
commutes with $\rho$. We decompose $\Psi=\Psi_1\Psi_0^{-1}$ with $\Psi_1$ being normalized w.r.t. $\cL S,T_1,T_2,\rho$ and 
with $\Psi_0$ being in the centralizer of $\cL S,T_1,T_2,\rho$. 
 From (i), we know that $\Psi$ is diagonal and $\Psi^{-1}\tau_{ij}\Psi=T_{ij}$.  We have
$$
\Psi_1^{-1}\tau_{i j}\Psi_1=\Psi_0^{-1}T_{i j}\Psi_0=T_{i j}.
$$
Hence, $\Psi_1$ linearizes the $\tau_{i j}$ and is normalized w.r.t $\cL S, T_1, T_2,\rho$. Since $\Psi,\Psi_1$ commute with $\cL S$ and $\rho$, so does $\Psi_1$. Let us denote $\Psi=\Psi_1$ and let us write $\Phi=I+(U,V)$.

We recall
$$
T_{1 j}\colon \begin{cases}
    \xi'_j= \lambda_{j}\eta_{j} \\
    \eta'_j=\lambda_{j}^{-1}\xi_{i} \\
    \xi'_k=\xi_{k},\quad k\neq j\\
    \eta'_k=\eta_{k},\quad k\neq j,\\
\end{cases}
\quad
\tau_{1 j}\colon \begin{cases}
    \xi'_j= \lambda_{j}\eta_{j}+ f_{j j}(\xi,\eta)\\
    \eta'_j=\lambda_{j}^{-1}\xi_{i}+ g_{j j}(\xi,\eta)\\
    \xi'_k=\xi_{k}+ f_{jk}(\xi,\eta),\quad k\neq j\\
    \eta'_k=\eta_{k}+ g_{jk}(\xi,\eta),\quad k\neq j.\\
\end{cases}
$$
Since we have $\Psi\circ T_{1j}= \tau_{1j} \circ \Psi$, we obtain the following relations
\begin{equation}\label{lin-invol}
\begin{cases}
\lambda_{j}V_j-U_j\circ T_{1 j}= - f_{j j}\circ \Psi\\
\lambda_{j}^{-1}U_j-V_j\circ T_{1 j}=-g_{j j}\circ \Psi\\
U_k-U_k\circ T_{1 j}=- f_{jk}\circ \Psi,\quad k\neq j\\
V_k-V_k\circ T_{1 j}=- g_{jk}\circ \Psi,\quad k\neq j.
\end{cases}
\end{equation}
According to \re{vjlju}, the left-hand side of the two first equations are zero. We shall use the two last ones to obtain estimates. 
According to the normalizing conditions, we find as above, that $U_h,U_s,U_{s+s_*},V_h,V_s,V_{s+s_*}$ are $0$. Thus we only have to show that $U_e, V_e$ are convergent.

In the second last  identity  in \re{lin-invol} with $k=e$, let us compose on the right by all $T_{1 j}$ with $j\neq e$. We have
  for  $j'\neq e$ and 
$j\neq e$,
$$
U_e\circ T_{1 j}-U_e\circ T_{1 j}\circ T_{1 j'}=- f_{j,e}\circ \Psi\circ T_{1 j'} =f_{j,e}\circ \tau_{1 j'}\circ \Psi.
$$
Repeating this for all $T_{1j}$ except for $j=e$ and taking summation, we get
$$
U_{e}-U_e\circ T_{1e}^{-1}\circ T_1 = -\left\{\sum_{i=1}^p f_{j,e}\circ\tau_{11}\cdots
\circ\widehat{\tau_{1e}}\circ\cdots\circ\tau_{1i} \right\}\circ \Psi.
$$
Here $\widehat{\tau_{1e}}$ means that $\tau_{1e}$ is not included in composition if $i\geq e$.
Thus 
$$
U_{e}\circ T_{1e}-U_e \circ T_1 = -\left\{\sum_{i=1}^p f_{j,e}\circ\tau_{11}\cdots
\circ\widehat{\tau_{1e}}\circ\cdots\circ\tau_{1i} \right\}\circ \tau_{1e}\circ \Psi.
$$
Combining with the first identity in \re{lin-invol}  and eliminating $U_{e}\circ T_{1e}$, we obtain 
$$
\la_e V_{e}-U_e\circ T_{1} = \tilde f_e\circ \Psi
$$
for a convergent power series $\tilde f_e$.  
The normalizing condition \re{vjlju++} says that $\la_1^{Q-P}U_{e,PQ}=-\la_eV_{e,QP}$ for $(P,Q)\in R_e$. We obtain
$$
V_{e,QP}=\frac{1}{2\la_e}\{\tilde f_e\circ\Psi\}_{QP}
\prec \frac{1}{2\la_e}\{\ov{\tilde f_e}\circ\ov {\Psi}\}_{QP} , \quad (P,Q)\in \cL R_e.  $$
If $(P,Q)$ is not in $\cL R_e$, the above still holds as $V_{e,QP}=0$.

Indeed this is the key point, if $U_{j,PQ}\neq 0$ then $(P,Q)\in \cL R_j$ so that 
$\mu_j^{p_j-q_j}=\mu_j$ and $\mu_\ell^{p_\ell-q_\ell}=1$, $\ell\neq j$. As we have observed and since $j\neq h$ ($j$ is actually $e$),  this implies that $p_j=q_j+1$ and $p_\ell=q_\ell$, $\ell\neq j,h$. Since the hyperbolic $\lambda_h$  are of modulus one, we have either $|\la^{P-Q}|=\la_e$. Thus
$$
|U_{e,PQ}|=|V_{e,QP}|\leq\frac{1}{2\la_e}\{\ov{\tilde f_e}\circ\ov {\Psi}\}_{QP}
\leq  \frac{1}{2\la_e}\{\ov{\tilde f_e}\circ\ov \Psi_{sym}\}_{PQ}.
$$
We obtain
$$
\psi_{sym}\prec  C\left( \ov {\tilde f}_{sym}\circ (I_{sym} +\psi_{sym}\right).
$$
Therefore,  $U_e,V_e$
 are convergent at the origin since they are majorized by a solution of an analytic implicit function theorem.
\end{proof}

\begin{rem}The results obtained so far in this section does not require that $\sigma$ has 
distinct eigenvalues. To apply the results to the real manifolds, we impose it again as in previous sections.
\end{rem}
 
\begin{thm}\label{rigidQ} Let $M$ be a germ of analytic submanifold that is a third order perturbation of a product quadric $Q$ in $\cc^{2p}$.
  Suppose that $M$, i.e. its $\sigma$,
has $n$ distinct eigenvalues.  Suppose that $M$ is formally equivalent to
the   product quadric $Q$. Suppose that each hyperbolic component has an eigenvalue $\mu_h$ which
is either a root of unity or satisfies the Brjuno condition \re{bruno-cond}. Then $M$ is holomorphically equivalent
to the product quadric.
\end{thm}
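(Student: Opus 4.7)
The plan is to reduce the rigidity claim to the linearization results already developed in this section and to the ideal-free case $\cL I=0$ of the linearization theorem from \cite{stolo-bsmf}. First I would apply \rp{mmtp}(i): since $M$ is formally equivalent to the product quadric $Q$, the associated families of Moser-Webster involutions are formally equivalent. Composing with a preliminary linear change of coordinates if necessary, I may assume that the linear parts of $\tau_{1j}$ coincide with the $T_{1j}$ of $Q$, that $\rho$ is already in the form \rea{rRho-}, and that the formal conjugating map $\Phi$ is tangent to the identity and commutes with $\rho$. In particular the maps $\sigma_1,\dots,\sigma_p$ defined by \rea{sigmai1}--\rea{sigmai3} form a commuting abelian family that is formally conjugate to the diagonal linear family $\{S_1,\dots,S_p\}$ via $\Phi$.

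Second, I would apply the theorem of \cite{stolo-bsmf} (the case $\cL I=0$ of \rt{theo-invariant}) to holomorphically linearize the family $\{\sigma_1,\dots,\sigma_p\}$ simultaneously. The arithmetic hypotheses are satisfied component by component: elliptic and complex components contribute only eigenvalues of modulus $\ne 1$ (Poincar\'e-type directions, no small divisors), while each hyperbolic component has $|\mu_h|=1$ and is controlled either by the Brjuno assumption \rea{bruno-cond} or by $\mu_h$ being a root of unity, in which case the relevant resonant monomials are already killed by formal linearizability from step one. This yields a convergent $\Psi$ with $\Psi^{-1}\sigma_i\Psi=S_i$ for all $i$; by replacing $\Psi$ with its appropriate normalized representative and averaging with $\rho\Psi\rho$ (both of which linearize the $\sigma_i$, and using the uniqueness in \rla{lem-nf-nf}), I can arrange that $\Psi$ commutes with $\rho$.

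Third, after conjugation by $\Psi$ I am exactly in the setting of \rp{linearS}: the $\sigma_i$ have been made linear, the family $\{\cL T_1,\cL T_2,\rho\}$ is still formally linearizable (via the transformation $\Psi^{-1}\Phi$), and all reality conditions are preserved. Part (i) of \rp{linearS} produces a holomorphic map in $\cL C(\cL S,\rho)$ that linearizes $\tau_1$ and $\tau_2$. After this further conjugation, $\tau_1=T_1$ and $\tau_2=T_2$, and part (ii) of \rp{linearS} then linearizes the entire family $\{\tau_{11},\dots,\tau_{1p},\rho\}$ holomorphically. Finally, running \rp{mmtp}(i) in reverse converts this holomorphic equivalence of the involution data into a holomorphic equivalence $M\simeq Q$. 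The principal obstacle is the second step: verifying that the small-divisor conditions of \cite{stolo-bsmf} are met uniformly across the three types of components and ensuring that the resulting linearization can be chosen to commute with the anti-holomorphic involution $\rho$; the remainder is essentially bookkeeping using the centralizer descriptions in \rp{STiR} and the decomposition \rl{FHG-}.
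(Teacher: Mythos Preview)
Your proposal is correct and follows essentially the same route as the paper: linearize $\{\sigma_1,\dots,\sigma_p\}$ holomorphically via the $\cL I=0$ case of \rt{theo-invariant} (\cite{stolo-bsmf}), then invoke both parts of \rp{linearS}, and finally pass back to the manifold. One small correction: the last step should cite \rp{inmae} (the convergent version) rather than \rp{mmtp}, which is its formal analogue; the paper also observes directly that the small-divisor hypothesis of \cite{stolo-bsmf} reduces here to the Brjuno/root-of-unity condition on each $\mu_h$, since only the hyperbolic eigenvalues lie on the unit circle.
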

\begin{proof}
We first apply Theorem \ref{theo-invariant} with $\mathcal I=0$ (\cite{stolo-bsmf}) that linearize simultaneously and holomorphically the $\sigma_1,\ldots, \sigma_p$.
 Note that the small divisor condition in this special case
is equivalent that each $\mu_h$ is either a root of unity or a Brjuno number. 
 Then, we apply successively the two assertions of \rp{linearS}. Hence, in good holomorphic coordinates, $\{\tau_{11},\ldots,\tau_{1p},\rho\}$ are linear. Then, by \rp{inmae}, the manifold is holomorphically equivalent to the quadric.
\end{proof}

We present two   convergence proofs for \rt{abelinv}: one is based on normalization for
each member of the family $\{\sigma_1, \ldots, \sigma_p\}$, and another is based on simultaneous 
normalization for the whole family.    
 Besides the simultaneous linearization in a more general frame work~\cite{stolo-bsmf}  used above, 
  the first approach by linearizing the 
family $\{\sigma_1,\ldots, \sigma_p\}$ one by one is still valid. Here it is crucial that 
the linear maps of $\{\sigma_1, \ldots, \sigma_p\}$ have a very simple structure. 
Indeed, let $\phi_1$ be
a holomorphic mapping that linearizes $\sigma_1$; the existence of such a convergent
$\phi_1$ is ensured \cite{Ru02}.  With the transformation by $\phi_1$, we
may assume that $\sigma_1$ is the linear $\hat S_1$.
Let $\phi_2$ be the unique holomorphic mapping that is normalized w.r.t. $\hat S_2$ and 
linearizes $\sigma_2$. Since $\hat S_1$ and $\sigma_2$ commute, we verify that $\hat S_1
\phi_2\hat S_1^{-1}$ is normalized w.r.t. $\hat S_2$ and  linearizes $\sigma_2$. Then $\phi_2$
commutes with $\hat S_1$ and linearizes $\sigma_2$. Inductively, we find a biholomorphic mapping
that linearizes all $\sigma_1, \ldots, \sigma_p$.   The remaining argument is as in the   proof of 
the theorem.

\setcounter{thm}{0}\setcounter{equation}{0}
\section{
Existence of attached  complex manifolds}
\label{secideal}

We are interested in 
complex submanifolds $K$  in $\cc^{2p}$ that    intersect the real
submanifold $M$ at the origin. Recall that $M$ has real dimension $2p$. Generically, the origin is 
an isolated intersection point if $\dim K=p$.  Let us consider the situation when
the intersection has dimension $p$. Without further restrictions, there are many such
complex submanifolds; for instance, we can take a $p$-dimensional
 totally real and real analytic
submanifold $K_1$ of $M$. We then let $K$ be the complexification of $K_1$.  
 To ensure the uniqueness or finiteness of  the complex submanifolds $K$,
we therefore introduce
the following.
\begin{defn} Let $M$ be a formal real submanifold of dimension $2p$ in $\cc^n$.
We say that a formal complex submanifold $K$ is  {\it attached} to $M$ if $K\cap M$
 contains at least two germs of  totally real and formal submanifolds $K_1, K_2$  of dimension $p$
that intersect transversally at the origin. Such a pair $\{K_1,K_2\}$ are called a pair of {\it asymptotic} formal submanifolds of $M$.  \end{defn}

Before we present the details, let us describe the main steps to derive the results. 
We first  derive the results at the formal level.
We then apply the results of \cite{Po86} and \cite{stolo-bsmf}. The proof of the co-existence of convergent and divergent attached
submanifolds will rely on a theorem of
P\"oschel  on stable invariant submanifolds and Siegel's small divisor technique used in
the proof of  the divergent
normal form in  section~\ref{div-sect}. However, the argument for the divergent part will be simpler. 

We now describe the formal results. 
When $p=1$ and $M$ has a non-resonant hyperbolic complex tangent,
it admits a unique attached formal holomorphic curve~\cite{Kl85}.  When $p>1$,  new situations arise. 
First, we show that there are obstructions to
attach formal submanifolds. 
However, the  formal obstructions disappear when  
 $M$ admits the maximum number
of deck transformations and   $M$ is non-resonant. 
 These two conditions allow us to express $M$
in an 
equivalent form \re{masym}.  This equivalent form for $M$, which has not been used so far, will play an essential
role in our proof 
 for $p>1$.

 We will   consider  a real submanifold 
$M$ which is   a higher order perturbation of a non-resonant  product quadrics. 
By adapting the proof of Klingenberg~\cite{Kl85} to  the manifold $M$  \re{masym}, 
we will show the existence of a unique attached  formal submanifold for a prescribed non-resonance condition. As in~\cite{Kl85},
we  also show that  the complexification of $K$ in $\cL M$ is a pair of invariant formal submanifolds $\cL K_1,\cL K_2$ of $\sigma$.
Furthermore,  $K$ is convergent if and only if $\cL K_1$ is convergent. 
 


Let us first recall the values of the Bishop invariants. The types of the invariants play an important role
 for the existence 
and the convergence of  attached formal complex submanifolds. 
 From \re{gsss}, and \re{gens}, 
we recall that
\ga\label{gens11}
\gamma_e=\frac{1}{\la_e+\la_e^{-1}}, \quad
\gamma_h=\frac{ 1}{\la_h+\ov\la_h},\quad
\gamma_s=\frac{1}{1+\la_s^{ -2}},\\
\label{gehs11}
 0<\gaa_e<1/2, \quad
 \gaa_h>1/2, \quad
\gaa_s\in (1/2,\infty)+i(0,\infty), \quad
\gaa_{s+s_*}=1-\ov\gaa_s.
\end{gather}
As in \rl{t1t2sigrho}, we normalize
\ga\label{Larange}
\la_e>1,\quad |\la_h|=1,\quad |\la_s|>1,\quad\la_{s+s_*}=\ov\la_{s}^{-1};\\
\arg\la_{h}\in(0,\pi/2), \quad\arg\la_s\in(0,\pi/2).
\label{Larange+}
\end{gather}
Recall that $\mu_j=\la_j^2$. By \re{gens11},  we have 
\eq{gsgssmu}
\gaa_j^2=\frac{\mu_j}{(1+\mu_j)^2}, \quad j=e,h; \qquad
\gaa_s\ov\gaa_{s+s_*}= \frac{\mu_s}{(1+\mu_s)^2}.
\eeq
We first verify the following.
\le{disga2} Let $\gaa_j,\la_j$ be given by \rea{gens11}-\rea{Larange+}. Let $\mu_j=\la_j^2$.
Assume that $\mu_1$,   $\mu_1^{-1}, \ldots$,  $ \mu_p^{-1}$ are distinct. Then 
  $$\gaa_{e}^2, \quad \gaa_{h}^2, \quad \ov\gaa_s\gaa_{s+s_*}, \quad \gaa_{s}\ov\gaa_{s+s_*}$$
are distinct $p$ numbers.  The latter is equivalent to $\gaa_1, \ldots, \gaa_p$ being distinct. 
\ele
\begin{proof} Note that $x^{-1}+x$ and $x^{-1}$
   decrease strictly  on $(0, 1)$.  So $\gaa_e^2,\gaa_h^2$ are distinct. 
 We also have
 $$
 \gaa_s\ov\gaa_{s+s_*}=\gaa_s-\gaa_s^2. 
 $$
If $a,b$ are complex numbers, then 
 $a-a^2=b-b^2$ if and only if $a=b$ or $a+b=1$. Since $\gaa_s$ is not real,
 then $\gaa_s\ov\gaa_{s+s_*}$ are different from $\gaa_e^2$ and $\gaa_h^2$. 
 For any distinct complex numbers $a_{1},   a_2$ in $(0,\infty)+i(1/2,\infty)$.
 We have $1-a_2\neq 1-a_1, a_1, a_2$. The lemma
 is proved.
 \end{proof}

Let us 
first investigate the numbers  of pairs of formal asymptotic submanifolds and attached formal submanifolds. 
\begin{lemma}\label{asynum}
Let $M$ be a formal submanifold that is  a third order perturbation of  a product quadric $Q$ in $\cc^{2p}$.  
Assume that the associated $S$
of   $Q$ has distinct eigenvalues $$\mu_1,\dots, \mu_p, \quad\mu_1^{-1}, \ldots, \mu_p^{-1}.$$
  \bppp
\item If $M$ admits an attached formal submanifold,  its CR singularity has no elliptic component. 
\item If $Q$ has no elliptic components, then
$Q$ has at least $2^{h_*+s_*-1}$ pairs of asymptotic totally real and real analytic submanifolds
and all of  them are contained in a single  attached complex submanifold.
\item  There is no formal  submanifold   attached to
$$
M\colon z_3=(z_1+2\gaa_1\ov z_1)^2+(z_2+2\gaa_2\ov z_2)^3,
\quad z_4=(z_2+2\gaa_2\ov z_2)^2.
$$
Here  $M$ has a hyperbolic complex tangent at the origin. 
\item Assume that $M$ has no elliptic component and it admits the maximum number of formal deck transformations. 
Let 
\eq{nunue-}
\nu=\mu_\e=(\mu_1^{\e_1},\ldots, \mu_p^{\e_p}), \quad \e_j=\pm1, \quad \e_{s+s_*}=\e_s. 
\eeq
Suppose that  
\eq{nunue+}
\nu^Q\neq\nu_j^{-1}, \quad \forall Q\in\nn^p, \quad |Q|>0, \quad 1\leq j\leq p.
\eeq
Then $M$ admits  a unique pair of asymptotic formal submanifolds $K_1,K_2$ such that  each  $K_i$
is defined by $z'=\rho_i(z')$ for   a formal anti-holomorphic involution $\rho_i$ and 
the linear part of  $\ov\rho_2^{-1}\ov\rho_1$  has eigenvalues   $\nu_1,\ldots, \nu_p$.  
In particular, if \rea{nunue+} holds for each $\nu$ of the form \rea{nunue-} then 
  $M$ admits exactly $2^{h_*+s_*-1}$ pairs
of asymptotic formal submanifolds.  \eppp
 \end{lemma}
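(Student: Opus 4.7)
The plan is to place all four assertions inside a single framework: attached formal complex submanifolds of $M$ correspond, via complexification, to formal complex submanifolds $\cL K\subset\cL M$ of dimension $p$ that are invariant under $\sigma=\tau_1\tau_2$, tangent at $0$ to a $p$-dimensional sum of eigenspaces of $L\sigma$, and compatible with $\rho$ in the sense that $\rho(\cL K)$ is again $\sigma$-invariant. An asymptotic pair $(K_1,K_2)$ then arises as $K_i=\pi_i(\cL K_i)$ with $\cL K_2=\rho(\cL K_1)$, and $K=K_1\cup K_2$ spans an attached complex $p$-submanifold. Under this dictionary, the eigenvalues of $L\sigma|_{\cL K}$ determine the ``type'' $\nu=(\nu_1,\dots,\nu_p)$ with each $\nu_j\in\{\mu_j,\mu_j^{-1}\}$, and $\rho$-compatibility forces $\e_{s+s_*}=\e_s$ in the complex block (because $\rho$ sends the $\mu_s$-eigenspace to the $\mu_{s+s_*}$-eigenspace) while it permits the two choices in each hyperbolic block (since $|\mu_h|=1$ makes each eigenspace $\rho$-stable).

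For (i), assume an attached $K$ exists and let $e$ index an elliptic component. Projecting $K$ to the $\cc^2_{(z_e,z_{p+e})}$ factor yields a formal complex curve $z_{p+e}=f(z_e)$ attached to the elliptic Bishop quadric $z_{p+e}=(z_e+2\gaa_e\ov z_e)^2$. On the intersection, $f(z_e)=(1+2\gaa_e)x_e^2+(1-2\gaa_e)y_e^2+2i(1-4\gaa_e^2)x_ey_e$ after expanding; the real part is a positive-definite quadratic form in $(x_e,y_e)$ (because $0<\gaa_e<1/2$), while $f$ is holomorphic, so $\RE f$ is harmonic. A harmonic function that dominates a positive-definite form near $0$ and vanishes at $0$ must force $(x_e,y_e)=0$ on the intersection, so $K\cap M$ has dimension $0$ in the $e$-direction, contradicting the existence of two transversal totally real $p$-submanifolds.

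For (ii), build $K$ as a product. In each hyperbolic factor, solving $cz_h^2=(z_h+2\gaa_h\ov z_h)^2$ as in the Moser-Webster analysis for $p=1$ yields $c=-(4\gaa_h^2-1)$ (valid precisely when $\gaa_h>1/2$); the curve $K_h=\{z_{p+h}=-(4\gaa_h^2-1)z_h^2\}$ meets $Q_{\gaa_h}$ along the two transversal real lines $\bar z_h/z_h=(\pm i\sqrt{4\gaa_h^2-1}-1)/(2\gaa_h)$. In each complex block one constructs analogously a $2$-complex-dim $K_s\subset\cc^4$ meeting $Q_{\gaa_s}$ along two real $2$-planes, using $(1-\gaa_s)^{-1}$ and $\gaa_s^{-1}$ as in \re{linears}. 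The product $K=\prod_h K_h\times\prod_s K_s$ is complex $p$-dimensional with $K\cap Q$ containing the $2^{h_*+s_*}$ products of these factor-wise real leaves. Two such leaves are mutually transversal iff they differ in every sign; pairing each with its ``antipodal'' partner gives exactly $2^{h_*+s_*-1}$ asymptotic pairs, all sitting inside the single complex submanifold $K$.

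For (iii), attempt $K=\{z_3=F(z_1,z_2),\ z_4=G(z_1,z_2)\}$ with $F,G\in\vt_2$. Setting $L_j:=z_j+2\gaa_j\bar z_j$, the intersection equations read $F=L_1^2+L_2^3$ and $G=L_2^2$; the second forces, at lowest order, $G(z_1,z_2)=c_2z_2^2$ with $\bar z_2=\la z_2$ on the intersection, and substituting into the first we would need a holomorphic $F$ to match $L_1^2+\la^3 z_2^3$ on the union of two real $2$-planes, where $\la^3$ is complex. A Taylor comparison at order three shows the term $L_2^3$ cannot be absorbed into the square $L_1^2$ or into $F$: no holomorphic $F(z_1,z_2)$ has a real-analytic restriction to the candidate real $2$-plane matching the cubic $\la^3z_2^3$ with the correct reality. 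The obstruction manifests, equivalently, in that $\pi_1$ for this $M$ fails to admit the maximum number of deck transformations (solving $w_1'$ from the $z_3$-equation requires a formal square root of $(z_1+2\gaa_1 w_1)^2+2(z_2+2\gaa_2 w_2)^3$, which does not exist formally since the cubic correction is not absorbed into the square). For (iv), apply the classical Klingenberg/Moser-Webster invariant-manifold scheme to each admissible choice of signs $\e\in\{\pm1\}^p$ with $\e_{s+s_*}=\e_s$: one writes the candidate $\cL K$ as the graph of $\eta_j$ (or $\xi_j$) over the $p$ variables in the $\nu$-eigenspaces and solves recursively for the Taylor coefficients; the linear operator whose invertibility is required at each order is precisely $\nu^Q-\nu_j^{-1}$, which is nonzero by hypothesis, giving both existence and uniqueness of the tangent formal invariant submanifold and hence of the pair $(K_1,K_2)$. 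Each pair is counted twice as $\nu$ and the $\rho$-conjugate choice both produce it, yielding $2^{h_*+s_*-1}$ distinct pairs in total. The main technical hurdle throughout is (iii), where one must keep track of formal power series obstructions order by order rather than appeal to a general principle.
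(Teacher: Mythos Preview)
Your overall framework (attached submanifolds $\leftrightarrow$ $\sigma$-invariant submanifolds of $\cL M$) is morally right, but you invoke it as an input rather than proving it; in the paper this correspondence is the content of the \emph{next} theorem, and the present lemma is proved by working directly with the defining equations of $M$. More seriously, parts (i) and (iii) of your argument contain genuine errors.

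\textbf{Part (i).} Your harmonicity argument fails on two counts. First, the real part of $(z_e+2\gaa_e\bar z_e)^2$ is $(1+2\gaa_e)^2x_e^2-(1-2\gaa_e)^2y_e^2$, which is \emph{indefinite}; your formula and the positive-definiteness claim are simply wrong. Second, ``projecting $K$ to the $\cc^2_{(z_e,z_{p+e})}$ factor'' does not produce a complex curve attached to $Q_{\gaa_e}$: the defining functions of $K$ depend on all of $z'$, and $K\cap M$ need not fiber nicely over the $e$th coordinate. The paper's argument is algebraic and direct: write each $K_i$ as $\bar z'=\mathbf A z'+R(z')$ (resp.\ $\tilde{\mathbf A}$), impose that the two graphs $z''=Q(z',\bar\rho_i(z'))+H(\cdots)$ agree, and compare quadratic terms. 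Using $\mathbf A\bar{\mathbf A}=\mathbf I$, $\tilde{\mathbf A}\bar{\tilde{\mathbf A}}=\mathbf I$, and the distinctness of the $\gaa_j$'s forces $\mathbf A$ to be block-diagonal and the elliptic entry $a_e$ to satisfy both $a_e+\bar a_e=-\gaa_e^{-1}$ and $|a_e|^2=1$, which is impossible since $\gaa_e^{-1}>2$. This same computation simultaneously yields the $2^{h_*+s_*-1}$ solutions for (ii), so (i) and (ii) are a single calculation.

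\textbf{Part (iii).} Your claim that the obstruction ``manifests, equivalently,'' as failure of maximal deck transformations is not a proof: absence of deck transformations does not by itself preclude attached submanifolds. The paper continues the order-by-order scheme set up in (i)--(ii): having fixed the linear parts $\mathbf A,\tilde{\mathbf A}$, one attempts to determine $R_{j;k},\tilde R_{j;k}$ recursively. For hyperbolic indices the $k$th-order equation is $4\sqrt{1-4\gaa_h^2}\,z_h(R_{h;k}+\tilde R_{h;k})=(\text{known})$, which is solvable only if the right side is divisible by $z_h$. For the specific $M$ in (iii), substituting the linear solution and comparing cubic terms forces $z_1$ to divide $2(1+2\gaa_2\la_2)^3z_2^3$, a contradiction.

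\textbf{Part (iv).} The paper does \emph{not} go through $\sigma$-invariant submanifolds here. Instead it uses the key fact that maximal deck transformations allow one to write $M$ in the form $z_{p+j}=(\,\text{linear}+E_j(z',\bar z')\,)^2$. One then sets up the paired system (your $\bar\rho_i$ being the unknowns) together with auxiliary holomorphic functions $f,f^*$; eliminating the auxiliaries and comparing order $k$ yields a $2\times 2$ linear system for $(R_{j,Q},\tilde R_{j,Q})$ with determinant $\nu^Q-\nu_j^{-1}$, giving existence/uniqueness. The involution property of $\rho_i$ is then verified by a symmetry argument on the system. Your invariant-manifold recursion would give an equivalent small-divisor, but you have not shown that a $\sigma$-invariant graph in $\cL M$ actually descends to an asymptotic pair in $M$; that step needs the realization via invariant functions and is not automatic.
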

\begin{proof} (i) Let $M$ be defined by
$$
z_{p+j}=Q_j(z',\ov z')+H_j(z',\ov z'), \quad 1\leq j\leq p
$$
where $H_j(z',\ov{z'})=O(|z'|^3)$ and each $Q_j$ is quadratic. 
Let $\{K_1,K_2\}$ be a pair of asymptotic formal submanifolds of $M$. 
We know that $K_1,K_2$ are tangent to $M$ at the origin. Let $K_i'$ be the projection of $K_i$
onto the $z'$-subspace. Since $T_0M$ is a $p$-dimensional
complex subspace, then $K_1',K_2' $ are still totally real. 
Let $K_1'$ be defined by
\gan
K_1'\colon \ov z'={\mathbf A}z'+R(z'), \quad \ov {\mathbf A}{\mathbf A}={\mathbf I},  \quad R(z')=O(2)
\end{gather*}
such that $\rho_1(z'):=\ov{\mathbf A}\ov z'+\ov R(\ov z')$ defines anti-holomorphic  formal involutions.  Let $K_2$ be the (formal)
 fixed-point
set of  the anti-holomorphic
involution 
$\rho_2(z')=\ov{\tilde{\mathbf A}}\ov z'+\ov{\tilde R(z')}$ with $\tilde R(z')=O(2)$.  Then $K_1,K_2$ intersect transversally at the origin
if and only if 
$$
\det(\tilde{\mathbf A}-\mathbf A)\neq0. 
$$
Let us define  holomorphic mappings
$$
\ov\rho_i(z'):=\ov{\rho_i(z')}, \quad i=1,2. 
$$
Then $K$ is given by
$$
z_{p+j}''=Q_j(z',\ov\rho_{i}(z'))+H_j(z',\ov\rho_i(z')), \quad i=1,2, \quad j=1, \ldots, p.
$$
The  two equations agree, if and only if
\eq{Qzpb}
Q_j(z', \ov\rho_{1}(z'))+H_j(z',\ov\rho_1(z'))=Q_j(z',\ov\rho_{2}(z'))+H_j(z',\ov\rho_2(z')), \quad 1\leq j\leq p. 
\eeq
Recall that
\aln
Q_j(z',\ov z)&=(z_j+2\gaa_j\ov z_j)^2,\quad j=e,h;\\
Q_s(z',\ov z')&=(z_{s+s_*}+2\gaa_{s+s_*}\ov z_s)^2,\\
Q_{s+s_*}(z',\ov z')&=
(z_{s}+2\gaa_{s}\ov z_{s+s_*})^2.
\end{align*}
Let us first find necessary conditions on the linear parts of $\rho_i$ for \re{Qzpb} to  be solvable.
Let $w'={\mathbf A}z'$ and $\tilde w'=\tilde{\mathbf A}z'$. 
Comparing the quadratic terms in \re{Qzpb}
for $i=1,2$, we see that
\gan
(z_j+2\gaa_jw_j)^2=(z_j+2\gaa_j\tilde
w_j)^2,\\
( z_{s+s_*}+2\gaa_{s+s_*}w_s)^2=( z_{s+s_*}+2\gaa_{s+s_*}\tilde
w_s)^2,\\
(z_{s}+2\gaa_{s}w_{s+s_*})^2=( z_{s}+2\gaa_{s}\tilde
w_{s+s_*})^2.
\end{gather*}
Here $\gaa_{s+s_*}=1-\ov\gaa_s$, by \re{gehs11}.
For each $j$, $w_j\neq\tilde w_j$. Otherwise, the fixed points of $\rho_1$ and $\rho_2$
do not intersect transversally.   Therefore, the above $3$ identities can be written as
\gan
z_j+2\gaa_jw_j=-(z_j+2\gaa_j\tilde
w_j),\\
z_{s+s_*}+2\gaa_{s+s_*}w_s=-( z_{s+s_*}+2\gaa_{s+s_*}\tilde
w_s),\\
z_{s}+2\gaa_{s}w_{s+s_*}=-( z_{s}+2\gaa_{s}\tilde
w_{s+s_*}).
\end{gather*}
In the matrix form, we get $\tilde {\mathbf{A}}=-{\boldsymbol{\gamma}}^{-1} -\mathbf{A}$ with
$$
\boldsymbol{\gamma}:=\begin{pmatrix}
  \boldsymbol{\gaa}_{e_*}    &\mathbf{0} &\mathbf{0} &\mathbf{0}\\
    \mathbf{0}&\boldsymbol\gaa_{h_*}&\mathbf{0}& \mathbf{0}  \\
   \mathbf{0}&\mathbf{0}&\mathbf{0}&\boldsymbol{\gaa}_{s_*} \\
   \mathbf{0}&\mathbf{0}&{\boldsymbol{\tilde \gaa}}_{s_*}&\mathbf{0}
\end{pmatrix}.
$$
Here $\tilde{\boldsymbol{\gaa}}_{s_*}=\mathbf I_{s_*}-\ov {\boldsymbol{\gaa}}_{s_*}$.  
Let us express in block matrices
$$
{\mathbf A}=\begin{pmatrix}
{\mathbf A}_{e_*e_*}  &{\mathbf A}_{e_*h_*} &{\mathbf A}_{e_*s_*} &{\mathbf A}_{e_*(2s_*)} \\
{\mathbf A}_{h_*e_*}  &{\mathbf A}_{h_*h_*} &{\mathbf A}_{h_*s_*} &{\mathbf A}_{h_*(2s_*)} \\
{\mathbf A}_{s_*e_*}  &{\mathbf A}_{s_*h_*} &{\mathbf A}_{s_*s_*} &{\mathbf A}_{s_*(2s_*)} \\
{\mathbf A}_{(2s_*)e_*}  &{\mathbf A}_{(2s_*)h_*} &{\mathbf A}_{(2s_*)s_*} &{\mathbf A}_{(2s_*)(2s_*)}
\end{pmatrix}
$$
where the diagonal block matrices are of sizes $e_*\times e_*,h_*\times h_*,s_*\times s_*$, and $s_*\times s_*$, respectively.
When  $\mathbf{A}\ov {\mathbf{A}}=\mathbf I$, for $\tilde {\mathbf{A}}\ov{\tilde {\mathbf{A}}}=\mathbf I$ we need
$
{\boldsymbol{\gamma}}^{-1}+\mathbf{A}+\boldsymbol{\gamma} ^{-1}\ov {\mathbf{A}}\ov{\boldsymbol{\gamma}}=0.
$
Recall that $\gaa_1^2,\ldots, \gaa_{e_*+h_*}^2$ 
 are real and distinct. It is easy to see that $\mathbf{A}_{e_*h_*}=0$, $\mathbf{A}_{h_*e_*}=0$, and $\mathbf{A}_{e_*e_*}, \mathbf{A}_{h_*h_*}$ are diagonal. Also,
\eq{aeses}
\mathbf{A}_{e_*e_*}+\ov {\mathbf{A}}_{e_*e_*}=-\boldsymbol{\gamma}_{e_*}^{-1}, \quad \mathbf{A}_{h_*h_*}+\ov {\mathbf{A}}_{h_*h_*}=-\boldsymbol{\gamma}_{h_*}^{-1}.
\eeq
In block matrices, we obtain 
\ga 
{\boldsymbol{\gamma}}_j^{-1} \ov {\mathbf{A}}_{j(2s_*)}\ov{\tilde{\boldsymbol{\gamma}}}_{s_*}=- {\mathbf{A}}_{js_*},   \qquad
{\tilde{\boldsymbol{\gamma}}}_{s_*} ^{-1}\ov{\mathbf{A}}_{ (2s_*)j}\ov{\boldsymbol{\gamma}}_{j}=-{\mathbf{A}}_{s_*j}; \\ 
{\boldsymbol{\gamma}}_j^{-1}\ov {\mathbf{A}}_{js_*}\ov{\boldsymbol{\gamma}}_{s_*}=-{\mathbf{A}}_{j(2s_*)},  
 \qquad{\boldsymbol{\gamma}}_{s_*}^{-1} \ov {\mathbf{A}}_{s_*j}{\boldsymbol{\gamma}}_{j}=- {\mathbf{A}}_{(2s_*)j}; \\ 
{\tilde{\boldsymbol{\gamma}}}_{s_*}^{-1}\ov {\mathbf{A}}_{(2s_*)(2s_*)}\ov{
\tilde{\boldsymbol{\gamma}}}_{s_*}=- {\mathbf{A}}_{s_*s_*},  
 \qquad{\tilde{\boldsymbol{\gamma}}}_{s_*}^{-1}\ov {\mathbf{A}}_{(2s_*)s_*}\ov{\boldsymbol{\gamma}}_{s_*}=-{\mathbf{A}}_{s_*(2s_*)}-{\tilde{\boldsymbol{\gamma}}}_{s_*}^{-1},
\label{tgss}
\\
{\boldsymbol{\gamma}}_{s_*}^{-1}\ov {\mathbf{A}}_{s_*(2s_*)}\ov{\tilde{\boldsymbol{\gamma}}}_{s_*}=- {\mathbf{A}}_{(2s_*)s_*}-{\boldsymbol{\gamma}}_{s_*}^{-1}, 
 \qquad  {\boldsymbol{\gamma}}_{s_*}^{-1}\ov {\mathbf{A}}_{s_*s_*}\ov{\boldsymbol{\gamma}}_{s_*}=-{\mathbf{A}}_{(2s_*)(2s_*)}.
\end{gather} 
In the first $4$ equations,  we have $j=e_*,h_*$. 
Note that the last two equations are of the form  
 \re{tgss}.

By \rl{disga2},  we know that $\gaa_e^2,\gaa_h^2$, and $\gaa_{s}\ov\gaa_{s+s_*}$ are distinct.
 Thus,  ${\mathbf{A}}_{js_*}={\mathbf{A}}_{j(2s)_*}=\mathbf 0$
and ${\mathbf{A}}_{s_*j}={\mathbf{A}}_{(2s_*)j}=\mathbf 0$ for $j=e_*,h_*$.  Since $\gaa_{s}\ov{{\gaa}}_{s+s_*}$ is
different from all $\gaa_{s+s_*}\ov\gaa_{s}$, then ${\mathbf{A}}_{s_*s_*}={\mathbf{A}}_{(2s_*)(2s_*)}=\mathbf 0$ while
${\mathbf{A}}_{s_*(2s_*)}$, ${\mathbf{A}}_{(2s_*)s_*}$ are diagonal.  
Now ${\mathbf{A}}\ov {\mathbf{A}}=\mathbf I$ implies that
\eq{aehs}
{\mathbf{A}}_{e_*e_*}\ov {\mathbf{A}}_{e_*e_*}=\mathbf I, \quad
{\mathbf{A}}_{h_*h_*}\ov {\mathbf{A}}_{h_*h_*}=\mathbf I, \quad
{\mathbf{A}}_{s_*(2s_*)}\ov {\mathbf{A}}_{(2s_*)s_*}=\mathbf I.
\eeq
 Combining the first identities  in \re{aeses} and \re{aehs}, we know that  the  diagonal $e$th element $a_e$ of 
 ${\mathbf{A}}_{e_*e_*}$ must satisfy
 $$
 2a_e=\gaa_e^{-1}, \quad a_e^2=1.
 $$
 Since   $0<\gaa_e<1/2$,  there is no such solution $a_e$ if $e_*>0$.
  We have verified (i).

Note that $\gaa_h^{-1}=\la_h+\ov\la_h$ with $|\gaa_h|=1$.   For the hyperbolic components, by the second identities in 
  \re{aeses} and \re{aehs}, one set of solutions is given by
  \ga\label{ahsh}\nonumber
\mathbf A_{h_*h_*}=-{\boldsymbol\la}_{h_*}
\quad
\tilde {\mathbf A}_{h_*h_*}=-{\boldsymbol{\la}}_{h_*}^{-1}.
\end{gather}
For the complex components, we  use  ${\mathbf{A}}_{s_*(2s_*)}\ov {\mathbf{A}}_{(2s_*)s_*}=\mathbf I$
and multiply both sides of the second identity in \re{tgss} by $\mathbf A_{s_*(2s_*)}$. The $(s-s_*)$th diagonal element $a_s$
must satisfy
$$
a_s(a_s+\tilde\gaa_s^{-1})+\tilde \gaa_s^{-1}\ov\gaa_s=0.
$$
By the last identity in \re{gsgssmu}, we get 
$$
a_s^2+(1-\ov\mu_s)a_s+\ov\mu_s=0.
$$
Hence, $a_s=-1$ or $a_s=-\ov\mu_s$.  We get one set of solutions 
\begin{alignat}{4}
\nonumber
 {\mathbf A}_{(2s_*)s_*}&=-\mathbf I, \quad& {\mathbf A}_{s_*(2s_*)}&=-\mathbf I,\\
\tilde {\mathbf A}_{(2s_*)s_*}&=-{\boldsymbol \mu}_{s_*}^{-1}, \quad
&\tilde {\mathbf A}_{s_*(2s_*)}&=-\ov{\boldsymbol{\mu}}_{s_*}.
\nonumber
\end{alignat}
There are exactly $2^{h_*+s_*-1}$ solutions for $\mathbf A,\tilde {\mathbf A}$  since we can only determine the
pairs 
$$\{\mathbf A_{h_*h_*}, \tilde {\mathbf A}_{h_*h_*}\}, \quad
\{\mathbf A_{s_*(2s_*)},\mathbf A_{s_*(2s_*)}\}.
$$
Note that 
\begin{gather}\label{ata-1}
\mathbf {\tilde A}^{-1}\mathbf A=\diag\nu,\\
 \label{nunue}
 \nu=\mu_\epsilon=(\mu_1^{\e_1},\ldots,\mu_{p}^{\e_{p}}), \quad \e_j^2=1, \quad \nu_{s+s_*}=\ov \nu_s^{-1},
 \end{gather}
 where there are $2^{h_*+s_*-1}$ distinct combinations.  
Thus,  we get exactly $2^{h_*+s_*-1}$ pairs $\{K_{\e}^1,K_{\e}^2\}$
of asymptotic linear submanifolds indexed by  $\e=(\e_1, \ldots, e_{h_s+s_*})$ with $\e_j^2=1$
 for the product quadric.   We may restrict to $\e_1=1$.
 The attached formal submanifolds associated  to these linear asymptotic submanifolds are  unique and it  is given by
\aln
z_{p+h}&=(1-{4}\gaa_h^{2})z_h^2,\\
z_{p+s}&=(1-2\gaa_{s+s_*})^2z_{s+s_*}^2,\\
z_{p+s+s_*}&=(1-2\gaa_s)^2z_{s}^2.
\end{align*}
This finishes the proof of  (ii).

(iii).
Let us continue the computation for the perturbations. 
We have determined linear parts of antiholomorphic involutions $\rho_i$.
We expand components of $R(z')$ as
$$
R_{j}(z')=\sum_{k=2}^{\infty}R_{j; k}(z'), \quad 1\leq j\leq p.
$$
Here $R_{j;k}$ are homogeneous terms of degree $k$.  We expand $\tilde R_j$ analogously. 
Suppose that terms of order up to $k-1$ in $R_j,\tilde R_j$
have been determined. For the hyperbolic components, we need to solve the equations
\ga\label{zhrh}
4\sqrt{1-4\gaa_h^{2}}z_h(R_{h;k}(z')+{\tilde R_{h;k}}(z'))=\cdots,
\end{gather}
where the right-hand side has been determined.  Indeed, let us compute the $(k+1)$-jet of \re{Qzpb}. We obtain
$$
(1-2\gaa_j\lambda_j)^2z_j^2 + 2(1-2\gaa_j\lambda_j)z_jR_{j;k}= (1-2\gaa_j\lambda_j^{-1})^2z_j^2 + 2(1-2\gaa_j\lambda_j^{-1})z_j\tilde R_{j;k}+ {\mathcal R}
$$
where ${\mathcal R}$ is polynomial that depends on $\tilde R_{j;l},R_{j;l}$, $l<k$. Since $(1-2\gaa_j\lambda_j)=-(1-2\gaa_j\lambda_j^{-1})$, we obtain \re{zhrh}. 
  
	When $p>1$, the system of equations \re{zhrh} cannot be solved
even formally, unless the right-hand side is divisible by $z_h$.  When $p=1$, 
the equation \re{zhrh} is clearly solvable. In fact, under the non-resonant condition on $\mu_1$, the formal  anti-holomorphic  involutions $\{\rho_1,\rho_2\}$
can be uniquely determined.

 Let us keep the above notation and compute for the example stated in (iii).
We need to solve
\aln
(z_1+2\gaa_1\tilde  w_1)^2+(z_2+2\gaa_2\tilde  w_2)^3& =
(z_1+2\gaa_1 w_1)^2+(z_2+2\gaa_2 w_2)^3,\\
(z_2+2\gaa_2\tilde  w_2)^2&=(z_2+2\gaa_2 w_2)^2.
\end{align*}
Again $\tilde w_2- w_2$ cannot be identically zero. Thus
$
\tilde w_2=- w_2-\gaa_2^{-1} z_2.
$
Then we need to solve
\aln
(z_1+2\gaa_1\tilde  w_1)^2 =
(z_1+2\gaa_1 w_1)^2+2(z_2+2\gaa_2 w_2)^3.
\end{align*}
By (ii), we know that $ w_1=\la_1z_1+R_1(z')$ and $ w_2=\la_2 z_2+R_2(z')$ with $R_i(z')=O(2)$.
Also $\tilde  w_1=\ov\la_1z_1+\tilde R_1(z')$ and $\tilde w_2=\ov\la_2 z_2+\tilde R_2(z')$.
Comparing the cubic terms implies that $z_1$ must divide
$
2(1+2{\gaa_2}\la_2)^3z_2^3,
$
which is a contradiction. 

(iv)   
For a general $M$, following Klingenberg \ci{Kl85} we reformulate the problem by considering the
following equations
\aln 
h(z')&=q(z',\ov \rho_i(z'))+H(z',\ov \rho_i(z')),\quad i=1,2,\\
h^*(\ov\rho_i(z'))&=\ov q( \ov\rho_i(z'), z')+\ov {H}(\ov\rho_i(z'),z'), \quad i=1,2.
\end{align*}
Here $h,h^*, \ov\rho_i$ are unknowns.  Initially, we only require that
 $\ov\rho_1,\ov\rho_2$ be arbitrary biholomorphic maps, except 
 their linear parts  
match with $z'\to Az'$ and $z'\to\tilde Az'$.
This will ensure that the solutions $\ov\rho_i$ are unique and they are involutions.

 As demonstrated in (iii), in general there is no formal submanifold attached to $M$.
 We now assume that $M$ admits the maximum number of deck transformation.
By \rl{sd1} and \rp{inmae} we know that in suitable holomorphic coordinates, $M$ is given by  
\aln
z_{p+j}&=\Bigl(\sum_hb_{jh}(z_h+2\gaa_h\ov z_h)+
\sum_sb_{js}(z_s+2\gaa_s\ov z_{s+s_*})\\
&\quad +\sum_sb_{j(s+s_*)}(
z_{s+s_*}+2\gaa_{s+s_*}\ov  z_s)+E_j(z,\ov z)\Bigr)^2, \quad 1\leq j\leq p.
\end{align*}
Here $(b_{jk})$  is invertible and $E_j(z,\ov z)=O(2)$.  
This special form, which has not played significant roles until now,  will allow us
removing the obstruction to formal solutions $\rho_i$. 

For the proof of our result,  we will restrict  $(b_{jk})$ to be the identity matrix. 
Let $M$ be defined by 
\aln
z_{p+h}&=(z_h+2\gaa_h\ov z_h+E_j(z',\ov z'))^2, \\
\quad z_{p+s}&=(z_s+2\gaa_s\ov z_{s+s_*}+  E_{p+s}(z',\ov z'))^2,\\
z_{p+s+s_*}&=(
z_{s+s_*}+2\gaa_{s+s_*}\ov  z_s+  E_{p+s+s_*}(z',\ov z'))^2.
\end{align*}
 We fix linear parts of $\rho_i$ such that 
 $$
  \rho_1(z')=\ov A\ov z'+\ov R(\ov z'), \quad  \rho_2(z')=\ov {\tilde A}\ov z'+\ov {\tilde R}(\ov z').
   $$
For $i=1,2$ we then need to solve $w,\tilde w$ from
 \al\label{zh2g}
 z_h+2\gaa_h\ov \rho_{ih}+ E_h(z',\ov\rho_i)&=(-1)^if_h,\\ 
 z_s+2\gaa_s\ov \rho_{is+s_*}+E_{p+s}(z',\ov\rho_i)&=(-1)^if_{s},\\ 
\label{zsss}
 z_{s+s_*}+2\gaa_{s+s_*}\ov\rho_{is}+E_{p+s+s_*}(z',\ov\rho_i)&=(-1)^if_{s+s_*},\\ 
 \label{2ghz}
  2\gaa_hz_h+\ov \rho_{ih}+ \ov E_h(\ov\rho_i,z')&=(-1)^i f^*_h(\ov\rho_i),\\ 
  2\ov\gaa_sz_{s+s_*}+\ov \rho_{is}+\ov E_{p+s}(\ov\rho_i,z')&=(-1)^if_s^*(\ov\rho_i),\\ 
  2\ov\gaa_{s+s_*}z_s+\ov\rho_{is+s_*}+\ov E_{p+s+s_*}(\ov\rho_i,z')&=
  (-1)^if_{s+s_*}^*(\ov\rho_i). \label{zh2g6}
  \end{align}
 Suppose that we have already determined terms of $R_{j},\tilde R_j,
 f_j,f_{j}^*$ of order $<k$. 
 We have 
 $$
 \ov\rho_1(z')=\mathbf Az'+R(z'), \quad \ov\rho_1^{-1}(z')=\mathbf A^{-1}z'-\mathbf A^{-1}R'(\mathbf A^{-1}z'),
 $$
 where  the terms in $R'-R$ of order $k$ depend only on terms of $R$ of order $<k$. 
 Recall that $\mathbf{\tilde A}^{-1}{\mathbf A}=\diag\nu$ is given by \re{ata-1}.  For terms of
 order $k$, by eliminating $f_j,f_j^*$, we therefore  need to solve  
 \ga\label{rjq}
 R_{jQ}+\tilde R_{jQ}=\cdots
 \end{gather}
 where the dots denote terms which have been determined.   
 We compose from
 right in the last 3 identities for $i=1$ (resp. $i=2$)  by $\ov\rho_1^{-1}$ (resp $\ov\rho_2^{-1}$).  From the new identities, we obtain
 $$
 \mathbf A^{-1}R (\mathbf A^{-1}z') +
  \tilde {\mathbf A}^{-1}\tilde R (\tilde{\mathbf A}^{-1}z') =\cdots.
  $$
Recall that the linear part of $\tilde{\mathbf A}^{-1}{\mathbf A}$ is $\diag\nu$ with $\nu:=\nu_{\e}$. 
 Thus we need to solve \re{rjq} and 
$$
\nu_j^{-1}R_{j,Q}+\nu^Q {\tilde R}_{j,Q}=\cdots.$$  
  The equation admits a unique solution as   \eq{nuqn0}
  \nu^Q\neq\nu_j^{-1}, \quad Q\in\nn^p, \quad |Q|>1,\quad 1\leq j\leq p.
 \eeq
This shows that  $R_{j,Q}, \tilde R_{j,Q}$ are uniquely determined. 
 
 To verify that $\rho_i$ are involutions,  we compose by $\ov \rho_i^{-1}$ from
 right  
 in \re{zh2g}-\re{zsss}, and we apply complex conjugate to the coefficients of the new identities.
 This results in   \re{2ghz}-\re{zh2g6} in which $(\ov\rho_i, f_j^*)$ are replaced
 by $(\ov{(\ov\rho_i)^{-1}}, \ov f_i)$. We can also start with  \re{2ghz}-\re{zh2g6}
  and apply the same procedure to get \re{zh2g}-\re{zsss}, in which $(\ov\rho_i, f_i)$ 
 are replaced by $(\ov{(\ov\rho_i)^{-1}}, \ov f^*_i)$.  By the uniqueness of the solutions, we conclude
 that  $\ov {(\ov\rho_i)^{-1}}=\ov \rho_i$ as both sides have the same linear part.
 We now have
 $
 \ov{(\ov\rho_i)^{-1}(\ov z')}=\ov{\rho_i(z')}$. Hence $ \ov z'=\ov\rho_i(\rho_i(z'))=\ov{\rho_i^2(z')}.
  $
   This shows that each $\rho_i$ is an involution.
 \end{proof}
 We remark that given  complex numbers
 $$
 \mu_1,\ldots, \mu_{h_*}, \quad \mu_{h_*+1},\ldots, \mu_{h_*+s_*}, \quad
 \mu_{h_*+s_*+s}=\ov\mu_{h_*+s}^{-1}
 $$
 with $|\mu_h|=1$. Let $\nu=\mu_\e$ be given by \re{nunue-}.  The set of $\nu$ that violate \re{nunue+} is contained in the union of the sets defined by
 $
 \nu^Q=\nu_j^{-1}.
 $
 Here $Q\in\nn^p$, $|Q|>1$ and $1\leq j\leq p$. For each $Q,j$, the above equations define an algebraic set of codimension at least $1$ in the space
 $
 (S^1)^{h_*}\times\cc^{s_*}.
 $
   
We now can prove the following theorem.
\begin{thm}\label{invep}
  Let $M$ be a higher order perturbation of a product quadric. 
  Assume that in $(\xi,\eta)$ coordinates,
   its associated  $\sigma$ has a linear part given by   the diagonal matrix
   with diagonal entries $\mu_1,\ldots, \mu_p$,  $\mu_1^{-1}, \ldots, \mu_p^{-1}$.
Let $\nu=\nu_e$ be of the form \rea{nunue} and satisfy \rea{nuqn0}.
  Then $M$ admits 
a unique pair of asymptotic submanifold  $\{K^{\e}_1, K^{\e}_2\}$ such that the complexification  
 of $K^{\e}_1$ in $\cL M$
is an invariant formal submanifold $\cL H_\e$
of $\sigma$ that is tangent to 
\eq{clhe}
\cL H_\e=
\Bigl(\bigcap_{e_j=1, 1\leq j\leq p}\{\xi_j=0\}\Bigr)   \cap\Bigl(\bigcap_{e_i=-1, 1\leq i\leq p}\{ \eta_i=0\}\Bigr).
\eeq 
Furthermore, the complexification of $K_2^\e$ equals $\tau_1\cL H_\e$. \end{thm}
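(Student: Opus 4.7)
The plan is to combine Lemma~\rla{asynum}(iv), which produces the asymptotic pair $\{K_1^\e,K_2^\e\}$, with a direct analysis of how the two holomorphic maps $\bar\rho_1,\bar\rho_2$ sit inside the branched cover $\pi_1\colon\cL M\to\cc^{2p}$. The main structural fact to exploit is that the two branches parametrizing $K_1^\e$ and $K_2^\e$ inside $\cL M$ are preimages of the same point under $\pi_1$, and the specific deck transformation $\tau_1$ is the one exchanging them; together with the reality structure this will force $\sigma$-invariance of the complexification of $K_1^\e$.

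First I would invoke \rla{asynum}(iv) under the hypothesis \re{nuqn0} to obtain $\{K_1^\e,K_2^\e\}$ together with the anti-holomorphic involutions $\rho_1,\rho_2$ of $z'$-space and their holomorphic counterparts $\bar\rho_i(z'):=\overline{\rho_i(z')}$. Then I would define the two formal complex $p$-dimensional submanifolds of $\cL M$,
\[
\cL H_\e:=\{(z,w)\in\cL M:w'=\bar\rho_1(z')\},\qquad \cL K_2^\e:=\{(z,w)\in\cL M:w'=\bar\rho_2(z')\},
\]
which are by construction the complexifications in $\cL M$ of $K_1^\e$ and $K_2^\e$ respectively. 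The identity $E(z',\bar\rho_1(z'))=E(z',\bar\rho_2(z'))$ furnished by equation \re{Qzpb} in the proof of \rla{asynum}(iv) says precisely that $(z',\bar\rho_1(z'))$ and $(z',\bar\rho_2(z'))$ are distinct preimages of the same point under $\pi_1$, hence differ by a deck transformation.

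The key step is to identify this deck transformation as $\tau_1=\tau_{11}\circ\cdots\circ\tau_{1p}$. For the product quadric this is a direct linear-algebra computation using the explicit values $\mathbf A_{h_*h_*}=-\boldsymbol\lambda_{h_*}$ and $\mathbf A_{s_*(2s_*)}=-\mathbf I$ from the proof of \rla{asynum}(iv) compared with the formulas for $T_{1j}$ in the normal form of \rla{t1t2sigrho}; for a general higher-order perturbation one concludes by formal uniqueness of the asymptotic pair. Together with the elementary fact that the complexification of a real submanifold sitting inside $M=\fix(\rho_0)$ is $\rho_0$-invariant as a set (so $\rho_0\cL H_\e=\cL H_\e$ and $\rho_0\cL K_2^\e=\cL K_2^\e$), and the reversibility relation $\tau_2=\rho_0\tau_1\rho_0$, this yields $\tau_2\cL H_\e=\rho_0\tau_1\rho_0\cL H_\e=\rho_0\cL K_2^\e=\cL K_2^\e$, hence
\[
\sigma\cL H_\e=\tau_1\tau_2\cL H_\e=\tau_1\cL K_2^\e=\cL H_\e
\]
since $\tau_1$ is an involution. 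The tangent space of $\cL H_\e$ at the origin is then computed by transforming $\{w'=\mathbf A z'\}$ through the linear change of coordinates diagonalizing $\hat S=L\sigma$ from \rla{t1t2sigrho}, and a block-matrix calculation shows it equals the coordinate subspace $L_\e$ in \re{clhe}. The final claim $\tau_1\cL H_\e=\cL K_2^\e$ is then exactly what was established above.

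The main obstacle I expect is the identification of the branch-exchanging deck as $\tau_1$ specifically for higher-order perturbations: while the product-quadric calculation is transparent, extending it requires showing that both $\tau_1\cL H_\e$ and $\cL K_2^\e$ are the unique formal $p$-dimensional $\sigma$-invariant submanifold of $\cL M$ tangent to $L_{-\e}$. This uniqueness is itself a Poincar\'e--Dulac-type formal invariant-manifold statement whose cohomological solvability is exactly the non-resonance \re{nuqn0} (applied with $-\e$ in place of $\e$, which gives the same condition by symmetry of the pair). The remaining bookkeeping amounts to matching the sign convention linking the index $\e$ in \re{nunue-} with the index $e_j$ in \re{clhe} through the diagonalizing change of coordinates.
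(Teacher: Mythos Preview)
Your overall architecture matches the paper's: complexify $K_i^\e$ inside $\cL M$ via $w'=\bar\rho_i(z')$, use that these are $\rho_0$-invariant, establish that $\tau_1$ exchanges the two sheets, and conclude $\sigma$-invariance. The one substantive gap is precisely the step you flag as the ``main obstacle'': identifying the exchanging deck transformation as $\tau_1$ for higher-order perturbations.

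Your proposed cure---show that both $\tau_1\cL H_\e$ and $\cL K_2^\e$ are the unique $\sigma$-invariant $p$-plane tangent to $L_{-\e}$---is circular. To apply the Poincar\'e--Dulac uniqueness you would first need to know that $\cL K_2^\e$ is $\sigma$-invariant; but your derivation of $\sigma\cL H_\e=\cL H_\e$ already passes through $\tau_1\cL H_\e=\cL K_2^\e$ (via $\tau_2=\rho_0\tau_1\rho_0$), so you cannot appeal to $\sigma$-invariance of either sheet before that identity is established.

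The paper closes the gap differently, and this is where the special form \re{masym} of $M$ (available exactly because of the maximum-deck-transformation hypothesis) earns its keep. With $M$ written as $z_{p+j}=(\tilde L_j(z',\bar z')+E_j(z',\bar z'))^2$, the complexified function $(\tilde L_j+E_j)^2=z_{p+j}$ is $\tau_1$-invariant on $\cL M$, so $\tilde L_j+E_j$ is either invariant or skew-invariant under $\tau_1$; a linear-part check gives skew-invariance. Equations \re{zh2g}--\re{zsss} from the proof of \rla{asynum}(iv) present $\cL K_i^\e$ as $\tilde L+E=(-1)^i f(z')$, and since $f$ depends only on $z'$ it is $\tau_1$-invariant. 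Composing with $\tau_1$ thus flips the sign of the left side and fixes the right, sending $\cL K_1^\e$ to $\cL K_2^\e$ directly---no uniqueness argument needed.

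There is also a much shorter repair of your own line of reasoning: the paper records (in the lemma on formal deck transformations in Section~\ref{fsubm}) that each formal deck transformation of $\pi_1$ is uniquely determined by its linear part. You already argue correctly that \emph{some} deck transformation sends $(z',\bar\rho_1(z'))$ to $(z',\bar\rho_2(z'))$; its linear part is a deck transformation for the quadric doing the same job, which your quadric calculation identifies as $T_1$. Hence the full deck transformation is $\tau_1$, with no circularity. This is the cleanest way to complete your argument and avoids importing an invariant-manifold uniqueness statement that you have not yet earned.
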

\begin{proof}
We will follow Klingenberg's approach for $p=1$,  by using the deck transformations. 
Here we
assume that $M$ admits the maximum number of deck transformations.  Suppose that
$K$ is an attached formal complex submanifold which intersects with $M$ at two
totally real formal submanifolds $K_1,K_2$.
We first embed
$K_{1}\cup K_{2}$ into $\cL M$ as $M$ is embedded into $\cL M$. Let $\cL K_{i}$ be
the complexification of $K_{i}$ in $\cL M$. Since $\rho$ fixes $K_{i}$ pointwise,
then $\rho\cL K_{i}=\cL K_{i}$.

 We want to show that $\tau_1(\cL K_{1})=\cL K_{2}$;
thus $\cL K_{i}$ is invariant under $\sigma$.  Recall that $\cL K_{i}$ is defined by
\eq{briz}
\ov\rho_i(z')=w'.
\eeq
On $\cL K_{1}$,  by \re{zh2g} and \re{zsss} we have  
$\tilde L(z',w')+E(z',w')=-f(z')$.  The latter defines a complex
submanifold of dimension $p$. Thus it must be  $\cL K_{1}$.
On $\cL M$,
$$
(\tilde L_j(z',w')+E_j(z',w'))^2=z_{p+j}
$$
are invariant by $\tau_{1}$. Thus each $\tilde L_j(z',w')+E_j(z',w')$ is either
invariant or skew-invariant by $\tau_1$. Computing the linear part, we conclude
that they are all skew-invariant by $\tau_1$. Hence $\tau_1(\cL K_1)$ is defined by
$$
\tilde L(z',w')+E(z',w')=f(z'),
$$
which is the defining equations for $\cL K_2$.   

Finally, if $\cL K_1$ is convergent, then \re{briz} implies that $\ov\rho_1$ is convergent. Hence $K_1$, the fixed point set of $\rho_1$,
 is convergent.\end{proof}

We now study the convergence of attached formal submanifolds. 
Let us first recall a  theorem of P\"oschel~\cite{Po86}. Let $\nu$ and $\e$ be as in \re{nunue}.
Define
$$
\omega_{\nu}(k)=\min_{1<|P|\leq 2^k, P\in\nn^p}\min_{1\leq i\leq p}
\left\{|\nu^P-\nu_i|,
|\nu^P-\nu_i^{-1}|\right\}.
$$
Suppose that 
\eq{omnu}
-\sum\frac{\log\omega_{\nu}(2^k)}{2^k}<\infty.
\eeq
Then the unique invariant formal submanifold of $\sigma$ that is tangent  to the $\cL H_\e$  defined by \re{clhe}
is convergent. 

We now obtain a consequence of \rt{invep} and P\"oschel's theorem. 
\begin{thm}\label{pocor}
 Let $M$ be a higher order perturbation of a product quadric. Suppose that $M$ admits the maximum number of deck
transformations. Assume that the CR singularity of $M$ has no elliptic components. 
 Let $\nu=\mu_\e$ be given by \rea{nunue}.   Assume that $\nu=(\mu_1^{\epsilon_1}, \ldots, \mu_{p}^{\epsilon_{p}})$ 
satisfy  \rea{omnu}. Then $M$ admits an attached complex submanifold. 
\end{thm}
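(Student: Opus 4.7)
The plan is to combine the formal construction from \rt{invep} with the convergence theorem of P\"oschel which is cited immediately before the corollary; with both results in hand, the only work is to check that the hypotheses match and to transfer convergence from $\sigma$-invariant submanifolds of $\cL M$ back to a complex submanifold attached to $M$.

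First I would verify that the small divisor hypothesis \re{omnu} implies the non-resonance condition \re{nuqn0} needed to invoke \rt{invep}. The finiteness of $-\sum 2^{-k}\log\omega_\nu(2^k)$ forces $\omega_\nu(k)>0$ for every $k$, and the definition of $\omega_\nu$ then gives $|\nu^P-\nu_j^{-1}|>0$ for every $P\in\nn^p$ with $|P|>1$ and every $1\leq j\leq p$. This is exactly \re{nuqn0}. Hence \rt{invep} applies and produces the unique pair $\{K_1^\e, K_2^\e\}$ of asymptotic formal submanifolds associated to $\nu=\mu_\e$, together with the identification of the complexification $\cL K_1^\e\subset\cL M$ with the unique formal $\sigma$-invariant submanifold tangent to the coordinate subspace $\cL H_\e$ defined by \re{clhe}, and the relation $\cL K_2^\e=\tau_1\cL K_1^\e$.

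Next I would invoke P\"oschel's theorem in the form packaged in the paragraph containing \re{omnu}: under \re{omnu}, the unique formal $\sigma$-invariant submanifold tangent to $\cL H_\e$ is convergent. Therefore $\cL K_1^\e$ is convergent, and $\cL K_2^\e=\tau_1\cL K_1^\e$ is convergent as well since $\tau_1$ is a deck transformation of the analytic complexification $\cL M$, hence holomorphic. By the final paragraph of the proof of \rt{invep}, $\cL K_i^\e$ is defined in $\cL M$ by $w'=\ov{\rho_i(z')}$ for a formal antiholomorphic involution $\rho_i$, so convergence of $\cL K_i^\e$ is equivalent to convergence of $\rho_i$. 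Thus both $\rho_1$ and $\rho_2$ are holomorphic, and the totally real asymptotic submanifolds $K_1^\e=\fix(\rho_1)$ and $K_2^\e=\fix(\rho_2)$ are convergent.

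To produce the attached complex submanifold itself, I would use the defining relation for $K$ derived in the proof of \rl{asynum}(i): the $p$ equations $z_{p+j}=Q_j(z',\ov\rho_i(z'))+H_j(z',\ov\rho_i(z'))$ coincide for $i=1,2$ by the very construction of the $\rho_i$, and with $\rho_1$ now convergent they define a germ of holomorphic complex submanifold $K\subset\cc^{2p}$ of complex dimension $p$ containing both $K_1^\e$ and $K_2^\e$. Transversality of $K_1^\e$ and $K_2^\e$ at the origin follows from the linear condition $\det(\tilde{\mathbf A}-{\mathbf A})\neq 0$ already enforced in \rl{asynum}(ii), so $K$ is attached to $M$ in the sense of the section's definition. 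The principal technical point is the compatibility between the tangential eigenvalues of $L\sigma$ on $\cL H_\e$, which are $\mu_j^{-\e_j}=\nu_j^{-1}$, and the $\nu_j$ appearing in $\omega_\nu$; but the symmetry $\nu\leftrightarrow\nu^{-1}$ built into the definition of $\omega_\nu$ makes P\"oschel's small divisor condition coincide with \re{omnu} on the nose, so this alignment is automatic once \rt{invep} has been applied.
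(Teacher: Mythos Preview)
Your proposal is correct and follows exactly the route the paper indicates: the paper gives no explicit proof, stating only that the theorem is ``a consequence of \rt{invep} and P\"oschel's theorem,'' and you have supplied the details faithfully. The only remark I would make is that your closing paragraph about the $\nu\leftrightarrow\nu^{-1}$ symmetry is unnecessary: the paper has already packaged P\"oschel's theorem in precisely the form ``condition \re{omnu} implies convergence of the unique invariant formal submanifold of $\sigma$ tangent to $\cL H_\e$,'' so no additional matching of eigenvalue conventions is required on your end.
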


Since the eigenvalues of $\sigma$ are special, we verify that the condition \re{omnu} can be satisfied.

Let us first prove \rp{oneconv}, by  considering the case when the complex tangent has pure complex type. 
Then     condition \re{omnu} always holds  if $\nu_1,\ldots, \mu_p$ satisfy  the weaker non-resonance condition
\re{nuqn0}.
  Indeed,  in this case, the eigenvalues of $\sigma$ are
$$
\mu_s, \quad \mu_{s_*+s}=\ov\mu_s^{-1},\quad \mu_{p+s}=\mu_s^{-1}, \quad \mu_{p+s_*+s}=\ov\mu_s.
$$
 Recall that $1\leq s\leq s_*$ and $p=2s_*$.  We may assume that $|\mu_s|>1$.  We take
 \eq{nusmu}
 \nu_s=\mu_s, \quad \nu_{s+s_*}=\ov\mu_s. 
 \eeq
  Assume that
\eq{nu-n}
\nonumber
\nu^Q-\nu_j\neq0, \quad Q\in\nn^p, \quad |Q|>1, \quad 1\leq j\leq p.
\eeq
Under the condition \re{nusmu},  we can find a positive integer $r$ such that
\eq{nu1r}
\nonumber
\min\left\{|\nu_1|^r, \ldots, |\nu_p|^r\right\}>\max\left\{|\nu_1|,\ldots, |\nu_p|\right\}. 
\eeq
It is easy to see that  $|\mu^P-\nu_j|\geq c$ for some positive constant and all $Q\in\nn^p$ with $|Q|>1$. Hence \re{omnu} holds.  
We have proved \rp{oneconv}.

We now consider the general case by showing that the set of  $\{\mu_h,\mu_{s+s_*},\ov\mu_{s+s_*}^{-1}\}$
that satisfy \re{omnu} for some choice of $\nu$ has the full measure. Without loss of generality, we may assume that
$ 
|\mu_{h_*+s}|>1.
$ 
Thus we  list the eigenvalues of $\sigma$ as
\begin{gather*}
 \boldsymbol{\mu}_{h_*}, \quad \boldsymbol{\mu}_{s_*}, \quad \tilde {\boldsymbol{\mu}}_{s_*},\quad
  \ov{\boldsymbol{\mu}_{h_*}},\quad \ov{\tilde{\boldsymbol{\mu}}_{s_*}}, \quad \ov{\boldsymbol{\mu}_{s_*}}
  \end{gather*}
  with
 $$
 \boldsymbol{\mu}_{h_*}=(\mu_1,\ldots,\mu_{h_*}), \quad
\boldsymbol{\mu}_{s_*}=(\mu_{h_*+1},\ldots,\mu_{h_*+s_*}), 
\quad\tilde {\boldsymbol{\mu}}_{s_*}=(\ov\mu_{h_*+1}^{-1},\ldots,\ov\mu_{h_*+s_*}^{-1}).
 $$
We take $(\nu_1,\ldots,\nu_p)=( \boldsymbol{\mu}_{h_*},  \boldsymbol{\mu}_{s_*}, \ov{\boldsymbol{\mu}_{s_*}})$.
We first note that there are only finitely many $R_1,\ldots, R_d\in\nn^{2s_*}$ such that
$$
|(\nu_{h_*+1}, \ldots, \nu_p)^{R_i}|<2C_*, \quad C_*:=\max\{|\nu_1|, \ldots,|\nu_p|\}.
$$
 Denote by $\{b_1,\ldots, b_{4s_*d}\}$ the set of numbers:
 $$
 \nu_j(\nu_{h_*+1}, \ldots, \nu_p)^{-R_i}, \quad  \nu_j^{-1}(\nu_{h_*+1}, \ldots, \nu_p)^{-R_i}
 $$
with $ h_*< j\leq p$ and $1\leq i\leq d$. Let $\cL S_m$ be the set of $ \boldsymbol{\mu}_{h_*}\in(S^1)^{h_*}$ satisfying the Siegel  
condition
\eq{mijm}
\min_{i,j}\left\{\left | \boldsymbol{\mu}_{h_*}^P-b_j \right|, | \boldsymbol{\mu}_{h_*}^P-\mu_h|,| \boldsymbol{\mu}_{h_*}^P-\mu_h^{-1}|\right\}\geq \frac{C}{(1+|P|)^{m}}, 
\eeq
for  $P\in\nn^{h_*}$ and $|P|>2.$
  One can verify that, $\cup_{m=2}^\infty \cL S_m$ has the full measure on $(S^1)^{h_*}$ for a fixed set of $\{b_j\}$.

We take any $\mu_{h_*+1}, \ldots, \mu_p$ such that 
$$
(|\mu_{h_*+1}|, \ldots, |\mu_p|)^Q\neq |\mu_j|, \quad \forall Q\in\nn^{p-_*}, \quad |Q|>1.
$$
We then take $(\mu_1,\ldots, \mu_{h_*})\in(S^1)^{h_*}$ satisfying \re{mijm}.  We have
\eq{nuPn}
|\nu^P-\nu_j|\geq \frac{C}{(1+|P|)^{m}}, \quad P\in\nn^{h_*}, |P|>2.
\eeq
To verify it, we write $P=(P',P'')$ with $P'\in\nn^{h_*}$. If $P''\neq R_i$ for $1\leq i\leq d$, we have $|\nu^P-\nu_j|\geq C_*$, which satisfies \re{nuPn}. Suppose that $P''=R_i$. Then
for $j>h_*$ we have
$$
|\nu^P-\nu_j|\geq |(\nu_{h_*+1},\ldots,\nu_p)^{R_i}|\min_i\{|(\mu_1,\ldots,\mu_{h_*})^{P'}-b_i|\}\geq \frac{C'}{(1+|P|)^m}.
$$
Suppose that $1\leq j\leq h_*$. If $P''\neq0$, we have 
$$
|\nu^P-\nu_j|\geq\min( |\mu_{h_*+1}|, \ldots, |\mu_p|)-1.
$$
 This gives us \re{nuPn}. Suppose now that $P''=0$. Then we get \re{nuPn}
immediately.  We have verified \re{nuPn} for all cases.

 We have proved that the non-resonant product quadric has a unique attached complex submanifold. 
  Let us first show 
 that the unique 
complex submanifolds attached 
to the product quadric may split into two  attached submanifolds after a  perturbation. In fact,  a stronger result hods;
it could split into a
divergent attached submanifold and a convergent one simultaneously.

\begin{prop}\label{coexist}
There is a non-resonant $4$-dimensional real analytic submanifolds $M$ that has pure complex type and 
 admits a convergent attached submanifold and a divergent one too.  \end{prop}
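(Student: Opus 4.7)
Since $\dim M=4$ and $M$ has pure complex type, we have $p=2$, $s_*=1$, and $M$ is a higher-order analytic perturbation of the quadric $Q_{\gaa_s}$ of \rea{z3z4}. With $\mu_s=\la_s^2$ non-resonant and $|\la_s|>1$, the two admissible parameters $\epsilon\in\{(1,1),(-1,-1)\}$ yield, via \rta{invep}, two distinct formal $\sigma$-invariant submanifolds $\cL H_\epsilon$ of $\cL M$ tangent respectively to $\{\xi_1=\xi_2=0\}$ and $\{\eta_1=\eta_2=0\}$. Their $\pi_1$-images are two genuinely distinct attached formal complex submanifolds $K^{(1,1)}$ and $K^{(-1,-1)}$ of $M$, as can be verified already on the quadric (where both are explicit analytic surfaces of $\cc^4$, differing by factors involving $\mu_s$). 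The plan is to choose $\mu_s$ arithmetically and the perturbation so that one of these is holomorphic and the other is divergent.

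The arithmetic step constructs $\la_s$ with $|\la_s|>1$ and $\arg\la_s\in(0,\pi/2)$ by a Liouville-type iteration modelled on \rla{smallvec}--\rla{smallvec+}: we produce a sequence of multi-indices $Q_k\in\nn^2$ with $|Q_k|\to\infty$ and $L_k\to\infty$ such that the small divisors $\mu^{Q_k}-\mu_j^{-1}$ (for a suitable index $j$ matched to $K^{(-1,-1)}$) decay faster than $\delta^*(Q_k)^{|Q_k|L_k}$ while all other small divisors of comparable height remain controlled, and $\mu_s$ is non-resonant in the sense of \rea{nuqn0}. This provides the Siegel-type engine for divergence.

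For the convergent submanifold, we choose the perturbation $M=Q_{\gaa_s}+H$ so that the quadric's own attached submanifold $K^{(1,1)}_Q$ (an explicit analytic surface) is contained in $M$ to all orders; this reduces to an analytic implicit-function problem and yields $K^{(1,1)}=K^{(1,1)}_Q$ as a holomorphic attached submanifold of $M$. For the divergent submanifold $K^{(-1,-1)}$, we exploit the remaining freedom in the Taylor expansion of $H$: at each critical degree $|Q|=|Q_k|$ we select a Taylor coefficient from $\{0,\pm1,\pm i\}$ so that the recursion defining the formal expansion of $\cL H_{(-1,-1)}$, which involves small divisors of the form $\mu^{Q}-\mu_j^{-1}$, yields a coefficient bounded below by $c\,|\mu^{Q_k}-\mu_j^{-1}|^{-1}$. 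By the arithmetic step, this lower bound grows faster than any geometric rate, exactly in the manner of the bookkeeping leading to \rea{divhf}--\rea{divhf+}, forcing $K^{(-1,-1)}$ to diverge.

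The principal technical obstacle is to verify that this two-part prescription on $H$ is compatible with the involution and reality constraints imposed on the Moser--Webster involutions $\{\tau_{11},\tau_{12},\rho\}$, which couple the Taylor coefficients of $H$ in a nontrivial way through the nonlinear composition $\sigma=\tau_1\tau_2$. This is resolved order by order: at each degree the compatibility relations form a finite affine system on a small subset of coefficients, leaving enough free parameters both to enact the analytic matching that fixes $K^{(1,1)}$ and to perform the oscillatory choice that forces divergence of $K^{(-1,-1)}$. The argument is technically lighter than that of \rta{divnf1} because only one exceptional small divisor is active per critical degree, and we construct a single analytic perturbation $H$ rather than an infinite composition of transformations $\varphi_k$.
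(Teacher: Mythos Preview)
Your central claim—that the $\pi_1$-images of $\cL H_{(1,1)}$ and $\cL H_{(-1,-1)}$ are two distinct attached complex submanifolds—is incorrect, and this collapses the whole strategy. The indices $\e=(1,1)$ and $\e=(-1,-1)$ do not label distinct attached submanifolds: they label the two members $K_1,K_2$ of the \emph{same} asymptotic pair with roles swapped. By \rta{invep} the complexification of $K_2^{(1,1)}$ is $\tau_1\cL H_{(1,1)}$, and since $\tau_1$ is a deck transformation of $\pi_1$ one has $\pi_1\circ\tau_1=\pi_1$, hence $\pi_1(\cL H_{(1,1)})=\pi_1(\tau_1\cL H_{(1,1)})$. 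Both invariant submanifolds of $\sigma$ project to the same $K$. Your ``verification on the quadric'' directly contradicts \rla{asynum}(ii), which states that all asymptotic pairs of the quadric lie in a \emph{single} attached complex submanifold; for $s_*=1$ there is only one such pair. You therefore cannot make one of these convergent and the other divergent: they are the same object.

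The paper's argument is organized very differently and is much shorter. It does not try to name the two submanifolds in advance; instead it invokes \rpa{oneconv} to obtain a convergent attached submanifold for free, so only a divergent one must be produced. For that it takes the explicit perturbation $z_3=(z_1+2\gaa_1\ov z_2+a(z_1z_2))^2$, $z_4=(z_2+2\gaa_2\ov z_1)^2$, from which the recursion for $\ov\rho_1$ isolates the single relevant small divisor $(\mu_s\ov\mu_s^{-1})^k-1$; since $|\mu_s/\ov\mu_s|=1$, a Liouville choice of $\mu_s$ together with an inductive choice of the coefficients $a_k$ forces $R_{1,(k,k)}$ to diverge. The heavier machinery you borrow from section~\ref{div-sect}—the arithmetic of \rla{smallvec}--\rla{smallvec+}, the explicit preservation of the quadric's attached surface, and the order-by-order compatibility analysis—is unnecessary here.
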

\begin{proof}
By  \rp{oneconv},  it suffice to show the existence of a divergent attached submanifold. 
The proof is an application of small divisors, as shown in previous divergent result. However, the proof is much simple. We will
be brief.  

Consider 
$$
M\colon z_{3}=(z_1+2\gaa_1\ov z_{2}+ a(z_1z_2))^2,\\
\quad z_{4}=(
z_{2}+2\gaa_{2}\ov  z_1)^2.
$$
Here $a$ is holomorphic in $z_1z_2$ and $a(0)=a'(0)=0$.  By \re{zh2g} for $i=1,2$, we eliminate $f_1$ to obtain
\begin{align*}
2\gaa_1R_{1}+2\gaa_1\tilde R_{1}+a(z_1z_2)&=\cdots,\\
R_{1}\circ\ov\rho_1\circ\ov\rho_2+R_2+\ov a((z_1z_2)\circ\ov\rho_1(z_1,z_2))&=\cdots.
\end{align*}
Here the right-hand sides depend on coefficients of lower orders. Thus for $Q=(k,k)$, we have
$$
R_{1,kk}=\frac{a_{k}-(\mu_1\ov\mu_1^{-1})^k\ov a_{k}+e_{kk}}{(\mu_s\ov\mu_s^{-1})^k-1}. 
$$
Here $e_{kk}$ depends only on coefficients of $a_j$ with $j<k$. We will choose $a_k$ as follows. If $|e_{kk}|>1$, 
we choose $a_k=0$. If $|e_{kk}|\leq 1$, we choose an $a_k$ such that $|a_k|=1$ and 
$|a_{k}-(\mu_1\ov\mu_1^{-1})^k\ov a_{k}|=2$.  In both cases, we obtain
$$
|R_{1,kk}|\geq 
\frac{1}{|(\mu_1\ov\mu_1^{-1})^k-1|}.
$$
We can find $\mu_1$ such that $0<|(\mu_1\ov\mu_1^{-1})^k-1|\leq \frac{1}{k!}$ for a sequence of integer $k=k_j\to
\infty$. Furthermore, $\mu_1\ov\mu_1^{-1}$ is not a root of unity and $|\mu_1|\neq 1$.
 This shows that $R_1$ is divergent. 
\end{proof}

\begin{rem}
It is plausible that  there are $2^{h_*+s_*-1}$ attached formal complex submanifolds
to a generic $M$ that is a higher order perturbation of  non-resonant product
quadric and has the maximum number of deck transformations.
\end{rem}

 To study the existence of convergence of all attached formal manifolds, we   use the following theorem
in~\cite{stolo-bsmf} to conclude simultaneous convergence of all attached formal submanifolds. In fact the conclusion
is much more stronger. Here we recall the technique
of  linearization of $\sigma$ on the resonant ideal, i.e. the ideal generated by $\xi_1\eta_1,\ldots, \xi_p\eta_p$. 

For the convenience of the reader, we state the result only  for the family $F=\{F_1,\ldots, F_l\}$, where $F$ is  
$\{\sigma_1, \ldots, \sigma_p\}$, or a single mapping $\sigma$. 
 Recall that the linear part $D=\{D_i\colon 1\leq i\leq l\}$ of $F$ is  $\{S_1,\ldots, S_p\}$ or $S$.  The matrix of $D_i$
is diagonal, which is denoted by $\diag\mu_i$ for $\mu_i=(\mu_{i,1}, \ldots, \mu_{i,n})$.  
 Let $\cL I$ be a monomial ideal on $\cc^n$. Define 
$$
\omega_k(D,{\cL  I})=\inf\left\{\max_{1\leq i\leq l}|\mu_{i}^Q-\mu_{i,j}|\neq 0\colon
|\;2\leq |Q|\leq 2^k, 1\leq j\leq n,Q\in \nn^n, x^Q\not\in {\cL  I}\right\}
$$
where $\mu_i^Q:=\mu_{i,1}^{q_1}\cdots\mu_{i,n}^{q_n}$.
Let $\{\omega_k(D)\}_{k\geq 1}$ be the sequence of positive numbers defined by
$$
\omega_k(D)=\inf\left\{\max_{1\leq i\leq l}|\mu_{i}^Q-\mu_{i,j}|\neq 0\colon 
|\;2\leq |Q|\leq 2^k, 1\leq j\leq n,Q\in \nn^n \right\}.
$$
According to~\cite{stolo-bsmf}, we say that the family $D$ is {\it diophantine} (reps. on $\cL I$), if 
\eq{bruno-cond}
-\sum\frac{\log w_k(D)}{2^k}<\infty, \quad (resp. 
-\sum\frac{\log w_k(D,\cL I)}{2^k}<\infty). 
\eeq
When $D$ is reduced to one element and ${\mathcal I}=\{0\}$, this condition is Brjuno condition \cite{BR71,Ru02}. Let $\cL C^D$ denote the centralizer of the family $D$. 
 
 We now state the following theorem proved  in~\cite{stolo-bsmf}. 
\begin{thm}\label{theo-invariant}
Let ${\cL I}$ be a monomial ideal on $\cc^n$. Let $F$ be the above family of holomorphic mappings.  
Assume that the family $D$ is diophantine  on
${\cL  I}$.  Suppose that there is a formal mapping $\Phi$ satisfies the following: 
\bppp
\item
$\Phi$  is tangent to the identity and has a zero  projection on ${\cL  C}_D\cup \hat{\cL I}^n$, i.e.   
$\Phi=(\Phi_1,\ldots, \Phi_n)$ satisfy that $\Phi_{j,Q}x^Qe_j=0$ if $x^Q\in\cL I$ or $x^Qe_j\in\cL C_D$. 
\item $\Phi^{-1}F_i\Phi=D_i$ modulo $\hat{\cL I}^n$ for all $i$. 
\epp
Then $\Phi$ is convergent. 
\end{thm}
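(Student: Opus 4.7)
The plan is to prove convergence of $\Phi$ by a majorant method in which the small divisors are controlled via the Diophantine hypothesis on $\cL I$, and the standard Brjuno-type accumulation is carried out only over multi-indices outside $\cL I$. The key observation is that the normalizing conditions on $\Phi$ confine all non-trivial Taylor coefficients of $\Phi$ to those $(j,Q)$ for which $x^Q\not\in\cL I$ and $\mu_i^Q\neq\mu_{i,j}$ for some $i$, so only divisors of the form appearing in $\omega_k(D,\cL I)$ ever need to be inverted.

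First I would recast the conjugacy equation. Writing $\Phi=I+\phi$ and $F_i=D_i+f_i$ with $f_i=O(|x|^2)$, the hypothesis $\Phi^{-1}F_i\Phi=D_i\pmod{\hat{\cL I}^n}$ becomes
\begin{equation*}
D_i\phi(x)-\phi(D_ix)=f_i(x+\phi(x))\pmod{\hat{\cL I}^n},\qquad 1\leq i\leq \ell.
\end{equation*}
Reading off the coefficient of $x^Qe_j$ for $x^Q\not\in\cL I$ gives, for every $i$,
\begin{equation*}
(\mu_{i,j}-\mu_i^Q)\phi_{j,Q}=\bigl(f_i\circ(I+\phi)\bigr)_{j,Q}.
\end{equation*}
By the hypothesis on the projection of $\Phi$, the unknown $\phi_{j,Q}$ is non-trivial only when $x^Qe_j\notin\cL C_D$, i.e.\ when some factor $\mu_{i,j}-\mu_i^Q$ is non-zero. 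For each such $(j,Q)$ I would choose $i=i(j,Q)$ realizing $\max_i|\mu_{i,j}-\mu_i^Q|$; by definition of $\omega_k(D,\cL I)$, if $2^{k-1}<|Q|\leq 2^k$ then this divisor is bounded below by $\omega_k(D,\cL I)$.

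Next I would set up the majorant scheme. Introduce $\bar f(x):=\sum_Q\max_{i,j}|f_{i,j,Q}|\,x^Q$ and set $\bar\phi$ to be the coefficient-wise absolute value of $\phi$, symmetrized over variables as in \rl{fhg-}. A standard computation using $\prec$ yields
\begin{equation*}
\bar\phi\prec \Omega\bigl[\bar f\circ(I+\bar\phi)\bigr],
\end{equation*}
where $\Omega$ is the diagonal operator multiplying the coefficient of $x^Q$ by $\omega_{k(|Q|)}(D,\cL I)^{-1}$, with $k(m)$ the unique integer such that $2^{k(m)-1}<m\leq 2^{k(m)}$. Since $\cL I$ is a monomial ideal, the set $\{Q\colon x^Q\not\in\cL I\}$ is closed under taking componentwise-smaller indices, so each Taylor coefficient $(\bar f\circ(I+\bar\phi))_{j,Q}$ with $x^Q\not\in\cL I$ expands only into sub-coefficients of $\bar\phi$ indexed by multi-indices outside $\cL I$; this closure property is what makes the ideal-restricted divisor bound $\omega_k(D,\cL I)$ suffice.

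Finally, I would carry out the Brjuno accumulation. Following R\"ussmann and \cite{St00,stolo-bsmf}, one constructs an auxiliary majorant $U(t):=\bar\phi(t,\ldots,t)$ and shows, by induction on $|Q|$ together with a dyadic partition of $|Q|=|Q_1|+\cdots+|Q_r|$, that the coefficients of $U$ satisfy
\begin{equation*}
U_m\leq C^m\prod_{j=1}^{r(m)}\omega_{k_j}(D,\cL I)^{-1}
\end{equation*}
for some partition with $\sum k_j\leq\text{const}\cdot\log_2 m$. The Brjuno-type series $-\sum_k 2^{-k}\log\omega_k(D,\cL I)<\infty$ of \re{bruno-cond} then bounds $\log\prod_j\omega_{k_j}^{-1}$ linearly in $m$, giving $U_m\leq C_1 C_2^m$ and hence a positive radius of convergence for $\bar\phi$; this transfers back to $\phi$ and to $\Phi$. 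The main obstacle is the combinatorial bookkeeping in this last step: one has to verify that in the iterative expansion of $\bar f\circ(I+\bar\phi)$ every tree of compositions contributing to an index $Q\not\in\cL I$ has all of its internal indices outside $\cL I$, so that every small divisor encountered is genuinely of size at least $\omega_k(D,\cL I)$ and not of the potentially much smaller unrestricted size $\omega_k(D)$. This is guaranteed by the monomial nature of $\cL I$, and its careful implementation is the technical heart of the argument.
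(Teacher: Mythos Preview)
The paper does not prove this theorem; immediately before the statement it writes ``We now state the following theorem proved in~\cite{stolo-bsmf}.'' There is therefore no in-paper proof to compare your proposal against. What the paper does is \emph{use} the result (for instance in \rt{rigidQ} and in \nrc{stcor}), citing \cite{stolo-bsmf} for its proof.

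That said, your outline follows the standard template for Brjuno-type linearization on an ideal and is broadly in the spirit of \cite{stolo-bsmf}: isolate the homological equation modulo $\hat{\cL I}^n$, observe that only coefficients $(j,Q)$ with $x^Q\notin\cL I$ and $x^Qe_j\notin\cL C_D$ survive, pick for each such $(j,Q)$ the index $i$ maximizing $|\mu_i^Q-\mu_{i,j}|$, and control the resulting product of inverse divisors via the series \re{bruno-cond}. Your observation that the complement of a monomial ideal is downward closed under the componentwise order is correct and is exactly what pins the small divisors to $\omega_k(D,\cL I)$ rather than $\omega_k(D)$. The part you flag as the ``technical heart''---the R\"ussmann-type combinatorial accumulation over trees---is indeed where all the work is, and your sketch does not carry it out; a full proof requires the detailed arboreal or KAM-style estimates of \cite{stolo-bsmf}, not just the dyadic bound you wrote down. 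So your proposal is a correct high-level plan, but it is a plan rather than a proof, and in any case the paper itself defers the argument entirely to the cited reference.
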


We apply the above theorem to $\Phi$ in $\cL C^c(S, ResI)$ and $\sigma$ which arises from a real analytic submanifold which
is a higher order perturbation of a non-resonant product quadric. Note that $\cL C_S$ is contained in the resonant ideal and
 the condition on the projection (ii) of the above
theorem is satisfied by the unique normalized map that linearizes $\sigma$ on $\hat{\cL I}^n$.
 
 As a corollary of the above theorem,
 we have the following result.
 \begin{cor} \label{stcor}
 Let $ResI$ be the resonant ideal of $S$. Assume that $\sigma$ satisfies the diophantine on $\cL I$. Then 
$\sigma$ is holomorphically linearizable on $\cL I$. In particular, if $\{\mu_1,\ldots, \mu_p\}$ is  non-resonant in $\zz^p$, i.e. 
$\mu^Q\neq1$ for all $Q\in\zz^p$ with $|Q|>0$,  then in suitable holomorphic coordinates, the
$\sigma$ is linear  and diagonal on the $(\xi_{i_1},\ldots, \xi_{i_s}, \eta_{i_{s+1}}, \ldots,\eta_{i_p})$-subspace
for any partition $\{i_{1}, \ldots, i_p\}=\{1,\ldots, p\}$. 
\end{cor}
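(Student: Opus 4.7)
The plan is to apply \rt{theo-invariant} to the single-element family $F = \{\sigma\}$ with linear part $D = \{S\}$ and $\cL I = ResI$. I would build a formal map $\Phi$ tangent to the identity that satisfies the two hypotheses of that theorem; convergence then follows at once from the diophantine assumption.

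For the formal construction I proceed degree by degree. Writing $\Phi = I + \phi$ with $\phi = \sum \phi_{j,Q} x^Q e_j$, the equation $\sigma \circ \Phi - \Phi \circ S \in \hat{\cL I}^n$ at degree $d$ becomes, after projection onto a monomial $x^Q e_j$ with $x^Q \notin \cL I$, the cohomological identity
\[
(\mu^Q - \mu_j)\phi_{j,Q} = -\sigma_{j,Q} + \mathcal R_{j,Q}(\{\phi\}_{<d}),
\]
whose right-hand side depends only on coefficients of $\phi$ of strictly smaller degree. The normalization required in condition~(i) of \rt{theo-invariant} forces $\phi_{j,Q} = 0$ whenever $x^Q \in \cL I$ or $x^Q e_j \in \cL C_S$; in the remaining case $\mu^Q \neq \mu_j$, and the small divisor is inverted to determine $\phi_{j,Q}$ uniquely. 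Granted that the residual projection in the forbidden directions lies in $\hat{\cL I}^n$, one obtains $g := \sigma\Phi - \Phi S \in \hat{\cL I}^n$, and then
\[
\Phi^{-1}\sigma\Phi - S = \Phi^{-1}(\Phi S + g) - S \in \hat{\cL I}^n
\]
because $\cL I$ is an ideal and $\Phi^{-1}$ is tangent to the identity. Hypotheses (i) and (ii) of \rt{theo-invariant} are thereby verified, and the diophantine hypothesis on $\cL I$ produces the convergence of $\Phi$, giving the first assertion.

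For the ``in particular'' claim, when $\{\mu_1, \ldots, \mu_p\}$ is non-resonant in $\zz^p$, the equation $\mu^Q = 1$ for $Q = (P, P') \in \nn^{2p}$ (with $\mu_{p+k} = \mu_k^{-1}$) forces $P = P'$, so $\cL I = ResI$ is precisely the ideal generated by $\xi_1\eta_1, \ldots, \xi_p\eta_p$. For any partition $\{i_1, \ldots, i_p\} = \{1, \ldots, p\}$ and any splitting index $s$, the subspace $V$ defined by $\eta_{i_k} = 0$ for $k \leq s$ and $\xi_{i_k} = 0$ for $k > s$ kills every generator $\xi_\ell\eta_\ell$, so $\cL I|_V = 0$. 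The congruence $\Phi^{-1}\sigma\Phi \equiv S \pmod{\hat{\cL I}^n}$ therefore restricts to an exact identity on $V$, and in the holomorphic coordinates produced by $\Phi$ the map $\sigma$ is linear and diagonal on every such subspace.

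The main obstacle is the consistency check in the first step: having set $\phi_{j,Q} = 0$ on both the centralizer and the ideal directions, one must verify that the resulting residue of $\sigma\Phi - \Phi S$ in those directions indeed lands in $\hat{\cL I}^n$. This amounts to tracking how each nonlinear resonance $\mu^Q = \mu_j$ with $x^Q \notin \cL I$ propagates through the recursion and showing that it contributes only via ideal terms; this step is essentially a variant of the decomposition lemma (\rl{fhg-} and \nrc{fhg}) applied with the ideal projection rather than the centralizer projection, but the combinatorics of the resonant ideal becomes more delicate in the presence of non-trivial relations among the $\mu_j$.
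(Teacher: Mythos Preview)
Your approach matches the paper's: apply \rt{theo-invariant} to the single map $\sigma$ with $D=\{S\}$ and $\cL I = ResI$, then for the ``in particular'' assertion restrict to the coordinate subspaces on which all generators $\xi_k\eta_k$ vanish. Your treatment of the second part is correct.

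The difference is that you make the consistency check harder than it is. The paper's entire argument is one observation: under the non-resonance assumption $\mu^Q\neq 1$ for $Q\in\zz^p\setminus\{0\}$, the nonlinear centralizer $\cL C_2(S)$ is contained in the resonant ideal. Indeed, if $\xi^P\eta^Q e_j$ (with $1\le j\le p$, $|P|+|Q|\ge 2$) commutes with $S$ then $\mu^{P-Q}=\mu_j$, hence $P-Q=e_j$; but then $p_j\ge 1$ and, since the total degree exceeds one, some $q_k\ge 1$. If $k\neq j$ then $p_k=q_k\ge 1$; if $k=j$ then $p_j=q_j+1\ge 2$, so $p_j,q_j\ge 1$. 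Either way $\xi^P\eta^Q$ lies in $ResI$. The analogous argument handles the $\eta_j$-components. Thus the case ``$\mu^Q=\mu_j$ with $x^Q\notin\cL I$'' never occurs for $|Q|\ge 2$, and your dichotomy collapses: the only directions in which you set $\phi_{j,Q}=0$ are already ideal directions, so the residue automatically lies in $\hat{\cL I}^n$. No delicate combinatorics or variant of \rl{fhg-} is needed.

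Note also that the first sentence of the corollary is stated a bit tersely: the formal linearizability on $\cL I$ already presupposes that resonances $\mu^Q=\mu_j$ do not occur outside $\cL I$, which is exactly what the non-resonance hypothesis provides. Your recursive construction of $\Phi$ is otherwise fine.
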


  As a consequence of \nrc{stcor} and \rt{invep}, we obtain immediately \rt{allconv}, which we restate
  here in a stronger form.  
\begin{thm}
Let $M$ be a third order perturbation of a product quadric.  
Suppose that $M$  admits the maximum number of deck transformations and 
is non resonant. Suppose that $M$ has no elliptic component and that the eigenvalues of $\sigma$ satisfy diophantine 
 condition \rea{bruno-cond},  then
 all attached formal submanifolds are convergent. 
 Moreover, and the restrictions of $\sigma$ on these
invariant submanifolds are simultaneously linearizable by a single 
change of holomorphic coordinates of  the ambient space.
\end{thm}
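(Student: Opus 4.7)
The proof will be built by combining the enumeration of attached formal submanifolds from \rl{asynum}, their identification with $\sigma$-invariant formal submanifolds in $\cL M$ from \rt{invep}, and the simultaneous holomorphic linearization of $\sigma$ on the resonant ideal supplied by \nrc{stcor}. First I would invoke \rl{asynum}(iv): since $M$ admits the maximum number of deck transformations, has no elliptic component, and is non-resonant in the strong sense $\mu^Q\neq 1$ for every $Q\in\zz^p\setminus\{0\}$, the condition \re{nunue+} is automatic for every sign choice $\e=(\e_1,\dots,\e_p)$ with $\e_j^2=1$ and $\e_{s+s_*}=\e_s$. This yields exactly $2^{h_*+s_*-1}$ pairs of asymptotic formal submanifolds $\{K_1^\e,K_2^\e\}$, each defined by an anti-holomorphic formal involution as in the proof of \rl{asynum}. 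By \rt{invep}, the complexification of $K_1^\e$ inside $\cL M$ is a formal $\sigma$-invariant submanifold $\cL H_\e$ tangent to the linear mixed coordinate subspace
\[
L_\e=\Bigl(\bigcap_{\e_j=1}\{\xi_j=0\}\Bigr)\cap\Bigl(\bigcap_{\e_i=-1}\{\eta_i=0\}\Bigr),
\]
and $K_2^\e$ complexifies to $\tau_1\cL H_\e$. Consequently, convergence of $\cL H_\e$ will translate back into convergence of $K_1^\e$ and $K_2^\e$ through their defining equations $w'=\ov\rho_i^\e(z')$.

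Next I would appeal to \nrc{stcor}. Because $\mu_1,\dots,\mu_p$ is non-resonant in $\zz^p$ and the Bruno-type condition \re{bruno-cond} is imposed on the eigenvalues of $\sigma$, that corollary produces a single germ of biholomorphism $\Psi$ of $(\cc^{2p},0)$, tangent to the identity, such that $\Psi^{-1}\sigma\Psi$ is linear and diagonal on \emph{every} mixed coordinate subspace $L_\e$ simultaneously. In these $\Psi$-coordinates each $L_\e$ is then an honest convergent $\sigma$-invariant linear submanifold, and in particular a (trivial) formal invariant submanifold tangent to itself. The uniqueness clause built into \rt{invep} (which again depends only on \re{nunue+} and therefore applies in our setting) then identifies the formal invariant submanifold tangent to $L_\e$ unambiguously: the transport $\Psi^{-1}\cL H_\e$ and the linear subspace $L_\e$ are two formal $\Psi^{-1}\sigma\Psi$-invariant submanifolds with the same tangent direction at the origin, so they must coincide. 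Pulling back, $\cL H_\e=\Psi(L_\e)$ is convergent, hence so are $K_1^\e$ and $K_2^\e$. The simultaneous linearization assertion is precisely the content of \nrc{stcor} and needs no further work.

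The one step that requires care rather than being a serious obstacle is justifying that a \emph{single} $\Psi$ handles all $2^{h_*+s_*-1}$ relevant subspaces at once, as opposed to one normalization per $\e$. This is exactly what the collective small divisor quantity $\omega_k(\{\sigma\},\cL I)$ attached to the full resonant ideal $\cL I$ of $\hat S$ is designed to control inside \rt{theo-invariant}: the diophantine hypothesis on $\cL I$ secures holomorphic linearization of $\sigma$ modulo $\hat{\cL I}^n$, which in turn forces diagonal behavior of $\Psi^{-1}\sigma\Psi$ along every mixed coordinate subspace simultaneously, with one and the same $\Psi$. Once that collective linearization is granted, the uniqueness argument of the previous paragraph converts it directly into the convergence of every attached formal submanifold.
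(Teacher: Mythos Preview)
Your proposal is correct and follows essentially the same route as the paper, which simply states that the theorem is an immediate consequence of \nrc{stcor} and \rt{invep}; you have fleshed out precisely how these two ingredients combine, including the role of the uniqueness of the formal $\sigma$-invariant submanifold tangent to each $L_\e$ under \re{nuqn0}. One small attribution point: the uniqueness you invoke is really the uniqueness of the formal $\sigma$-invariant submanifold tangent to a given $L_\e$ (the standard fact underlying P\"oschel's theorem, mentioned just before \rt{pocor}), rather than literally the uniqueness of the asymptotic pair in \rt{invep}; but the two are linked through the complexification in \rt{invep}, so the logic is sound.
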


As mentioned earlier, the eigenvalues of $\sigma$ are special. 
Let us  verify that the set of $\mu$ that satisfy the
diophantine  condition \re{bruno-cond} has the full measure. 
Recall that the resonant ideal  is 
generated by $\xi_1\eta_1,\ldots, \xi_p\eta_p$. 
Suppose that
$\xi^P\eta^Q$ is not in the ideal. Then $p_jq_j=0$ and $|p_i-q_j|=p_j+q_j$. We need to consider non-zero small divisors of the form
$$
 \boldsymbol{\mu}_{h_*}^P\boldsymbol{\mu}_{s_*}^Q \ov{\boldsymbol{\mu}}_{s_*}^R-\mu_j,\quad 1\leq j\leq p$$
for $(P,Q,R)\in\zz^p$.
Let $m$ be a positive number such that 
$$
\left||\boldsymbol{\mu}_{s_*}^Q \ov{\boldsymbol{\mu}}_{s_*}^R|-|\mu_j|\right|\geq \frac{1}{2}\min_j\{|\mu_j|,|\mu_j|^{-1}\}, \quad|Q+R|\geq m.
$$
Let us define
$$
\mu_s=r_s\nu_s, \quad r_s=|\mu_s|, \quad \mu_h=\nu_h, \quad r_h=1.
$$
Then we can write
$$
\mu^{P-Q}-\mu_j=\mu_j^{-1}\left(r_j^{-1}\nu_j^{-1}\prod_sr_s^{p_s-p_s'-q_s+q_s'}\cdot \prod\nu_h^{p_h-q_h}\prod\nu_s^{p_s+p'_s-q_s-q'_s}-1\right).
$$
Here $P=(p_h,p_s,p_s')$ and $Q=(q_h,q_s,q_s')$. We set 
$$
\prod_sr_s^{p_s-p_s'-q_s+q_s'}=r^{R'},\quad
\nu_j^{-1}\prod\nu_h^{p_h-q_h}\prod\nu_s^{p_s+p'_s-q_s-q'_s}=
\nu_j^{-1}\nu^R.
$$ 
Note that $|R'|\leq |P|+|Q|$ and $|R|\leq |P|+|Q|$. 
In view of  $$|\rho e^{i\theta}-1|^2=(r-1)^2+r\sin^2(\theta/2)\geq C\max\{|r-1|^2,|e^{i\theta}-1|\}$$ we obtain
$$
|\mu^{P-Q}-\mu_j|\geq Cr_j^{-1}\max\left\{|r_j^{-1}r^{R'}-1|, |\nu_j^{-1}\nu^{R}-1|\right\}.
$$
Now one can see that the set of $\mu=\{\mu_h,\mu_{h_*+s}, \ov\mu_{h_*+s}^{-1}\}$ that satisfies the diophantine 
condition \re{bruno-cond}
has the full measure.

Finally, we indicate a consequence of  $\sigma$ being linear on the zero set of the resonant
 ideal.  In this case the solutions $\{\ov\rho_1,\ov\rho_2\}$
to \re{zh2g}-\re{zh2g6} are linear and there are  $2^{h_*+s_*-1}$  pairs $\{\ov\rho_{j1},\ov\rho_{j2}\}$ of solutions.  Now \re{zh2g}-\re{zh2g6} imply that
$$
E(z',\ov\rho_1(z'))=-E(z',\ov\rho_2(z')), \quad \ov E(\ov\rho_1(z'),z')=-\ov E(\ov\rho_2(z'),z'). 
$$
The complex submanifold associated to $\{\rho_{i1},\rho_{i2}\}$ then has the form
$$
K_j\colon z_{p+i}=(L_i(z',\ov\rho_{j1}(z'))+E_i(z',\ov\rho_{j1}(z')))^2, \quad 1\leq i\leq p.
$$
Of course, there are additional hidden symmetries in $E$ for $\sigma$ to preserve the resonant ideal. On the other hand, $E$  can be quite general
as shown by the algebraic example (Example~\ref{texpl}).

\bibliographystyle{alpha}
\bibliography{gong-stolo1}
\end{document}